\newcommand\blfootnote[1]{%
  \begingroup
  \renewcommand\thefootnote{}\footnote{#1}%
  \addtocounter{footnote}{-1}%
  \endgroup
}
\newtheorem{thm}{Theorem}[section]
\newtheorem{cor}[thm]{Corollary}
\newtheorem{claim}[thm]{Claim}
\newtheorem{fact}[thm]{Fact}
\newtheorem{lemma}[thm]{Lemma}
\newtheorem{prop}[thm]{Proposition}
\theoremstyle{definition}
\newtheorem{definition}[thm]{Definition}
\newtheorem{ex}[thm]{Example}
\newtheorem{exo}[thm]{Exercise}
\newtheorem{remark}[thm]{Remark}
\def\rquotient#1#2{%
	\makeatletter
	\raise.3ex\hbox{$#1$}/\lower.3ex\hbox{$#2$}%
	\makeatother
}	
\newcommand{\subjclass}[2][2010]{%
	\let\@oldtitle\@title%
	\gdef\@title{\@oldtitle\footnotetext{#1 \emph{Mathematics subject classification.} #2}}%
}
\newcommand{\keywords}[1]{%
	\let\@@oldtitle\@title%
	\gdef\@title{\@@oldtitle\footnotetext{\emph{Key words and phrases.} #1.}}%
}
\newcommand{\Address}{{
		\bigskip
		\small
		
\noindent \textsc{Universit\'e de Montpellier\\ 
Institut Math\'ematiques Alexander Grothendieck\\
Place Eug\`ene Bataillon\\
34090 Montpellier (France)}\par\nopagebreak
\noindent \textit{E-mail address}: \texttt{anthony.genevois@umontpellier.fr}
		
}}
\title{\Huge An illustrated introduction to the coarse topology of lamplighters}
\date{\today}
\author{Anthony Genevois}
\subjclass{Primary 20F65. Secondary 20F69.}
\keywords{Wreath product, lamplighters, quasi-isometry, coarse topology}
\begin{document}

\maketitle

\begin{minipage}{0.28\linewidth}
\includegraphics[width=\linewidth]{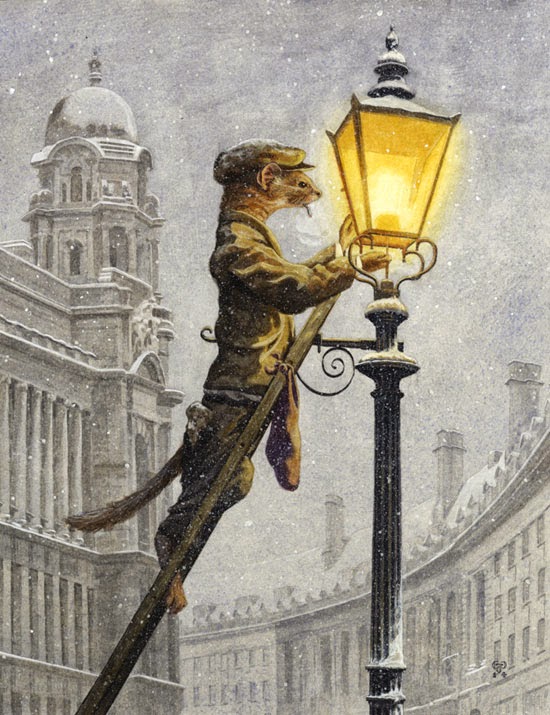}
\end{minipage}
\begin{minipage}{0.71\linewidth}
\begin{abstract}
Roughly speaking, lamplighter graphs encode the possible configurations of a lamplighter that moves along a given graph and that modifies the colours of lamps at vertices. This article is dedicated to the following delicate question: when do two lamplighter graphs have the same coarse geometry, i.e.\ when are they quasi-isometric? Inspired by elementary ideas from topology, which we will ``coarsify'', I will survey some techniques that allow us to compare efficiently lamplighter graphs (and more) up to quasi-isometry. Based on a minicourse given during the Journ\'ees de Topologie G\'eom\'etrique at the Institut Fourier in August 2025.
\end{abstract}
\end{minipage}
\blfootnote{Painting realised by \href{https://chrisdunnillustration.blogspot.com/2014/04/lamplighter.html}{Chris Dunn} (2014).}

\tableofcontents

\vspace{1cm}
\section{Introduction}

\noindent
Given a graph $X$ and an integer $n \geq 2$, the lamplighter graph $\mathcal{L}_n(X)$ encodes the possible configurations of a lamplighter that moves along $X$ and that modifies the $n$ possible colours of lamps at vertices. We refer to Section~\ref{section:CayleyGraphs} for a precise definition, which is illustrated by the following example:

\medskip
\begin{center}
\begin{tabular}{|c|c|c|} \hline
\includegraphics[trim=4cm 11cm 24cm 4cm,clip,width=0.29\linewidth]{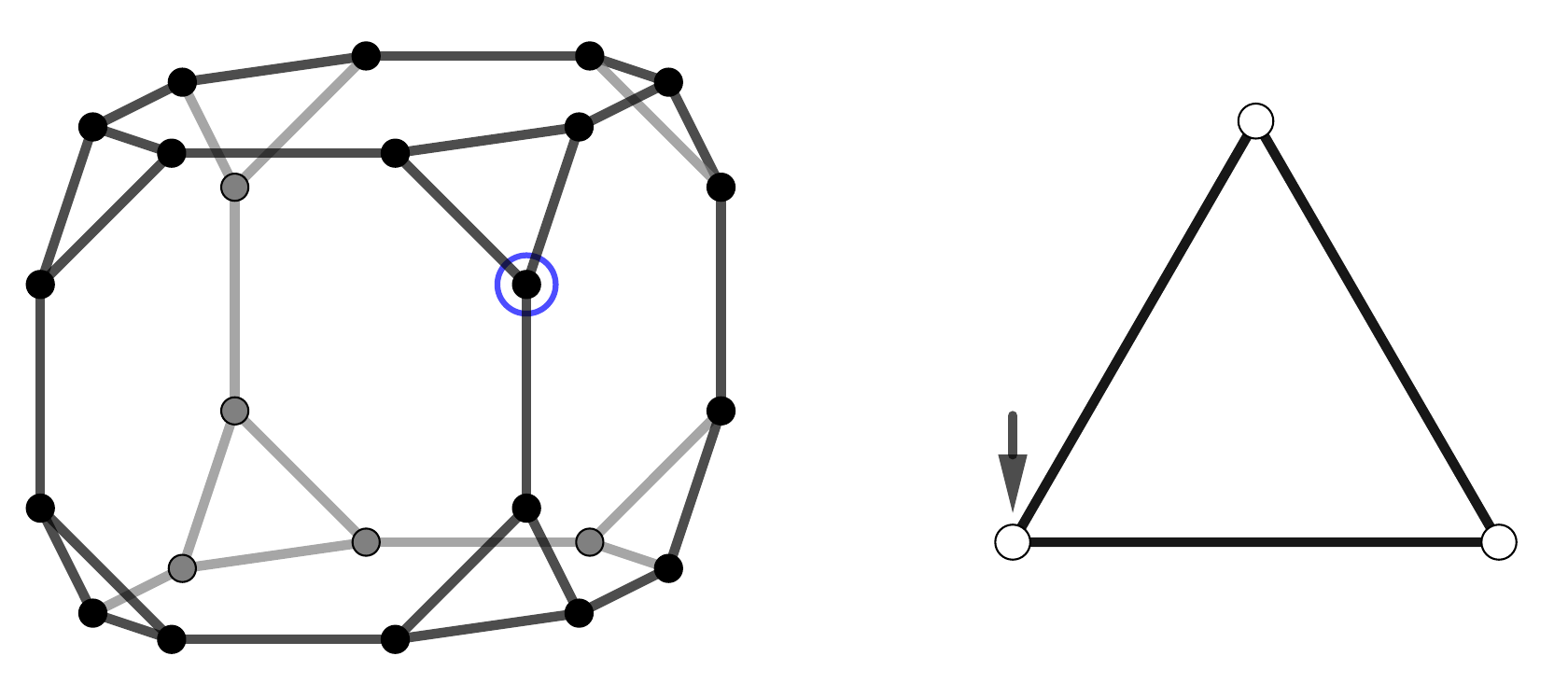} & \includegraphics[trim=4cm 11cm 24cm 4cm,clip,width=0.29\linewidth]{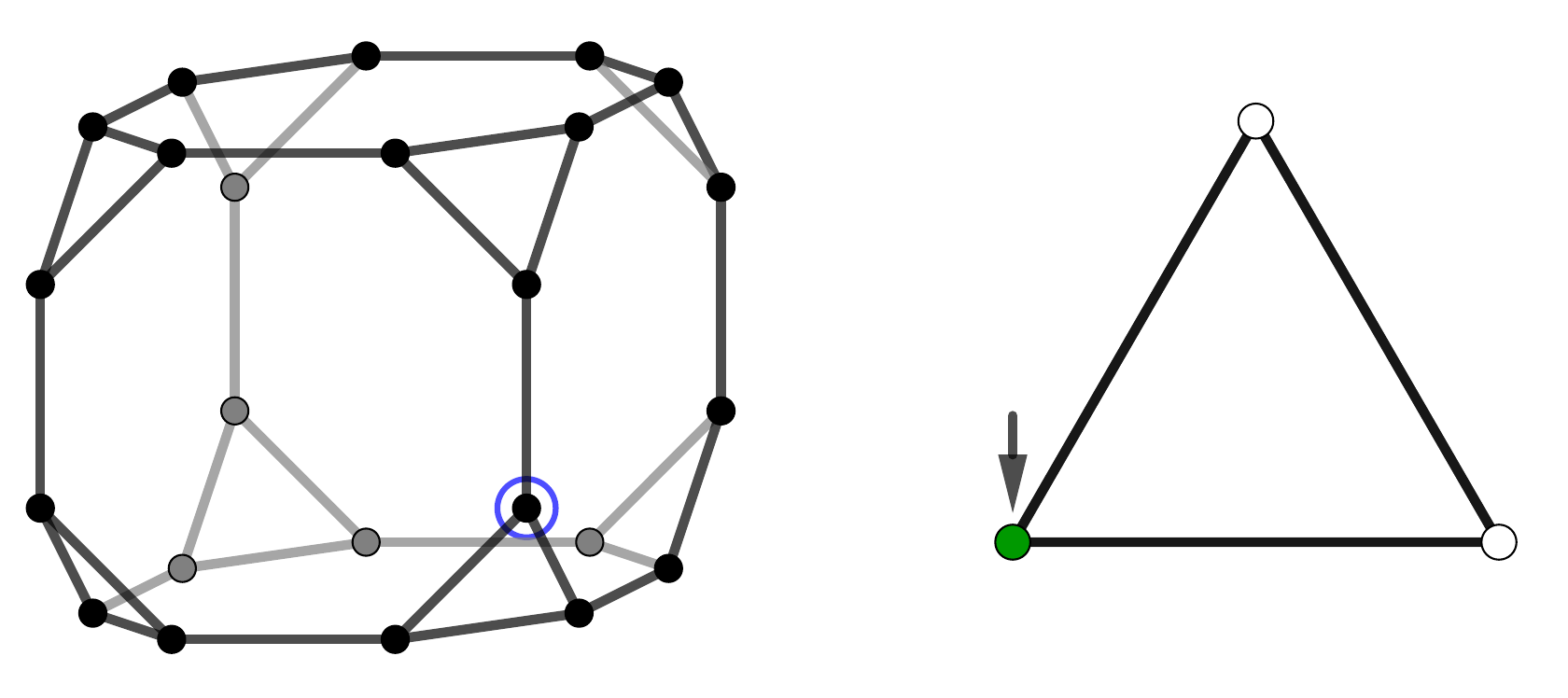} & \includegraphics[trim=4cm 11cm 24cm 4cm,clip,width=0.29\linewidth]{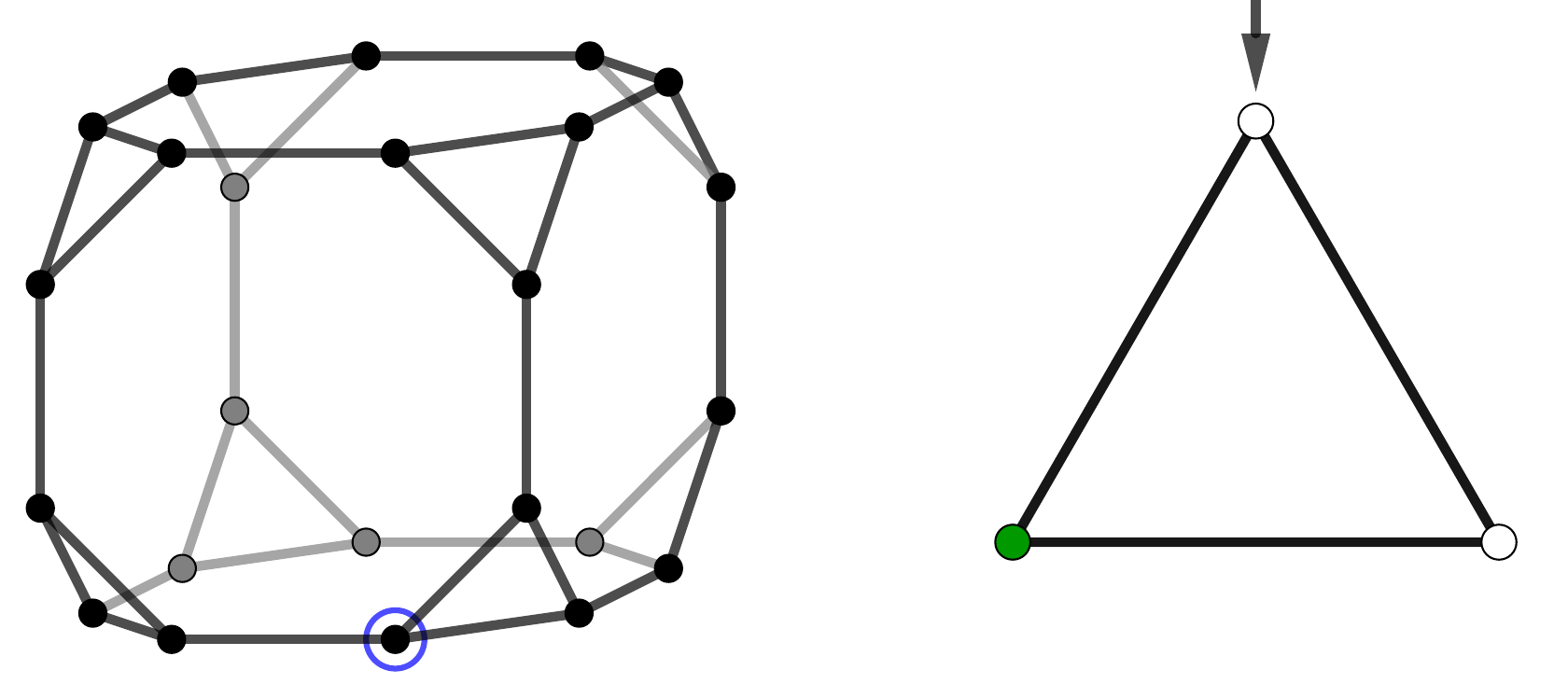}  \\ \hline
\includegraphics[trim=4cm 11cm 24cm 4cm,clip,width=0.29\linewidth]{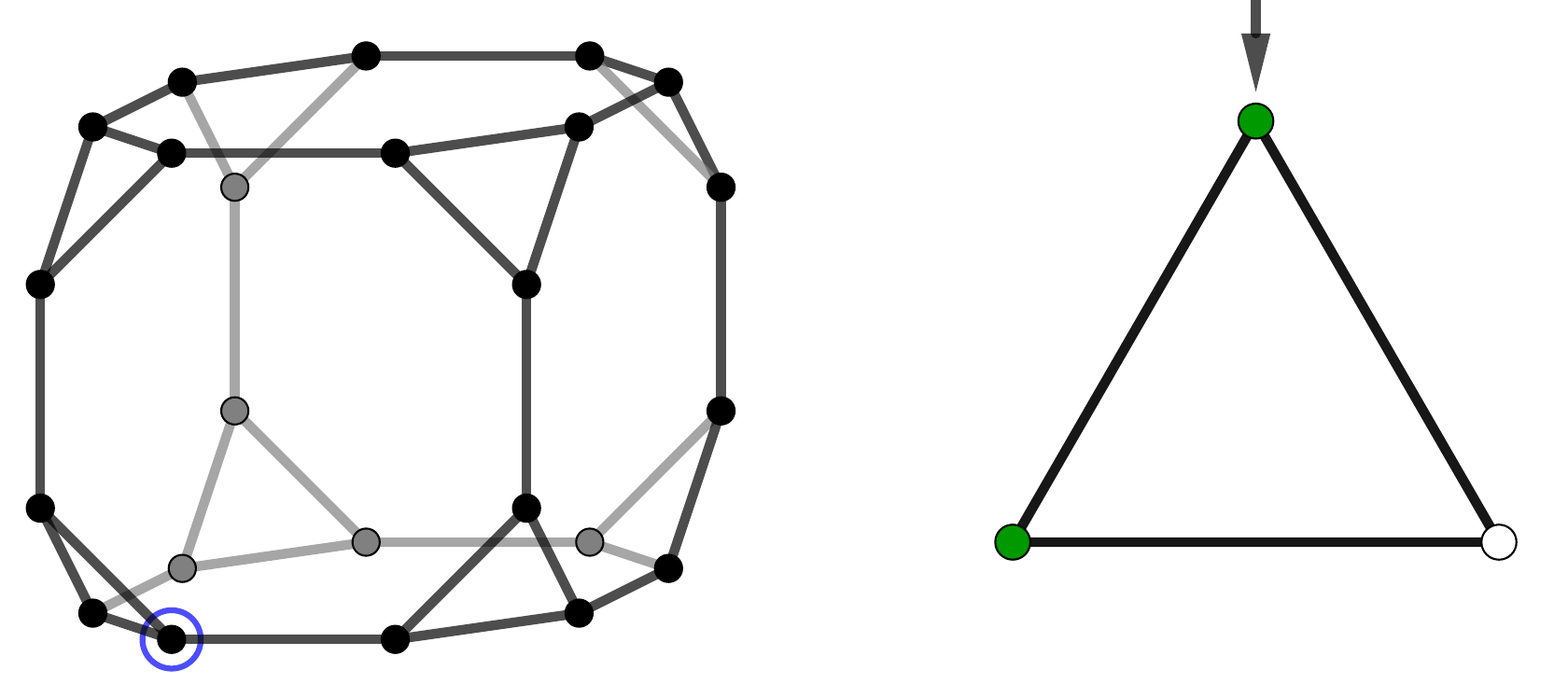} & \includegraphics[trim=4cm 11cm 24cm 4cm,clip,width=0.29\linewidth]{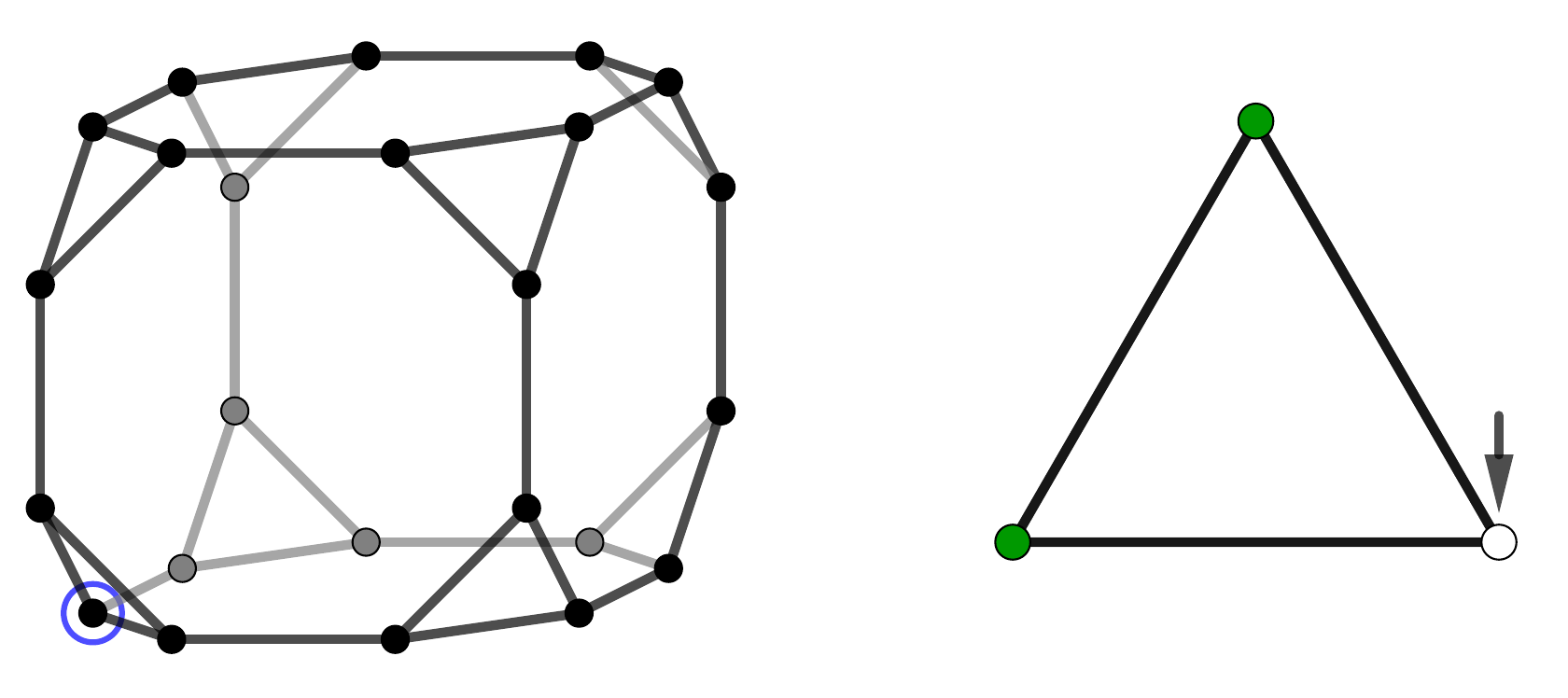} & \includegraphics[trim=4cm 11cm 24cm 4cm,clip,width=0.29\linewidth]{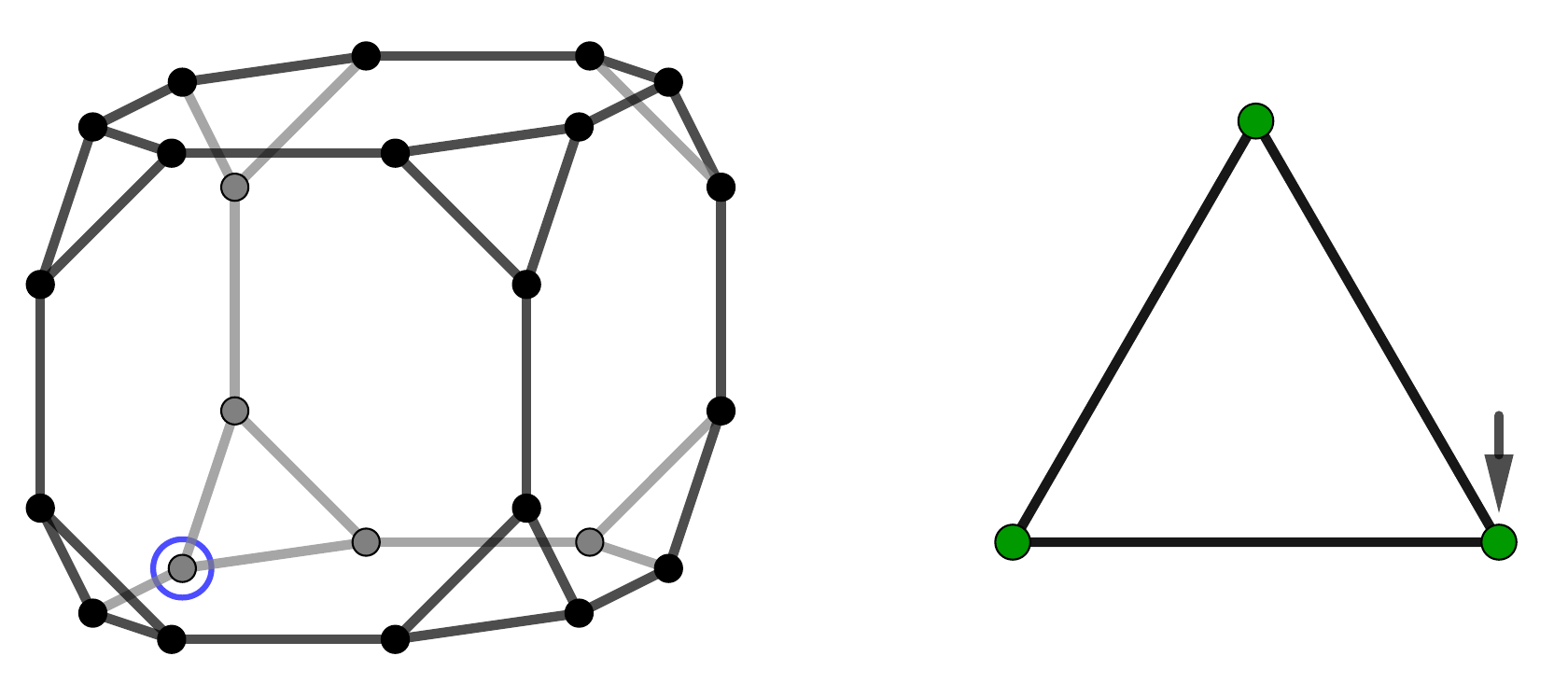}\\  \hline
\includegraphics[trim=4cm 11cm 24cm 4cm,clip,width=0.29\linewidth]{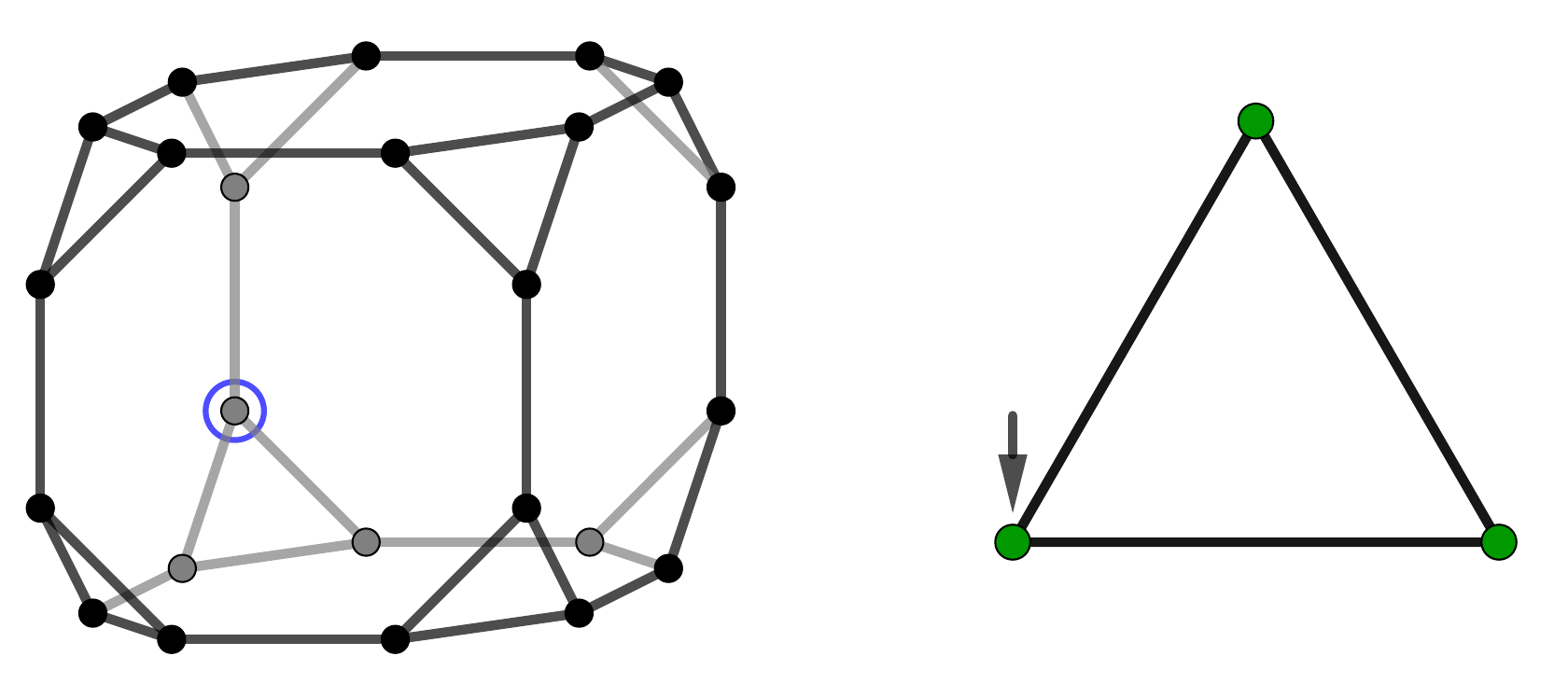} & \includegraphics[trim=4cm 11cm 24cm 4cm,clip,width=0.29\linewidth]{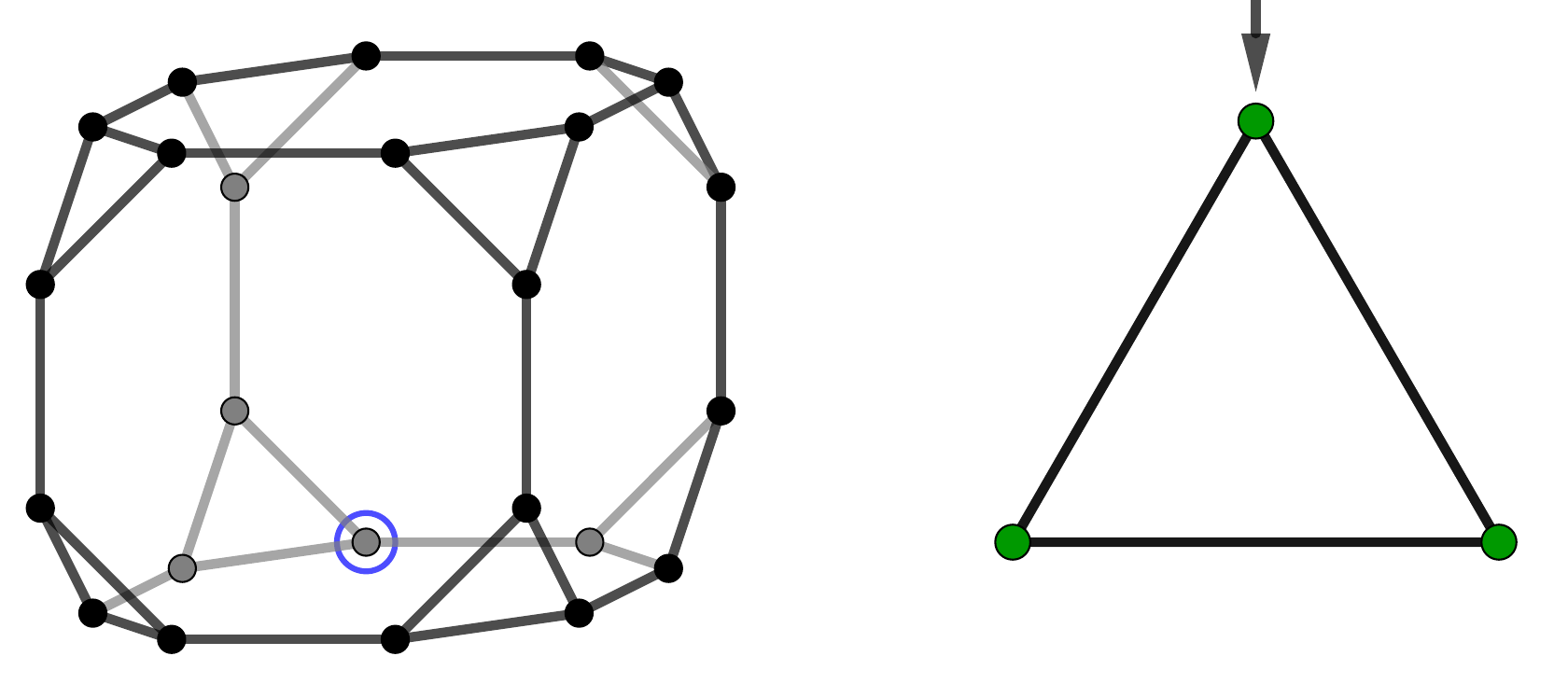} & \includegraphics[trim=4cm 11cm 24cm 4cm,clip,width=0.29\linewidth]{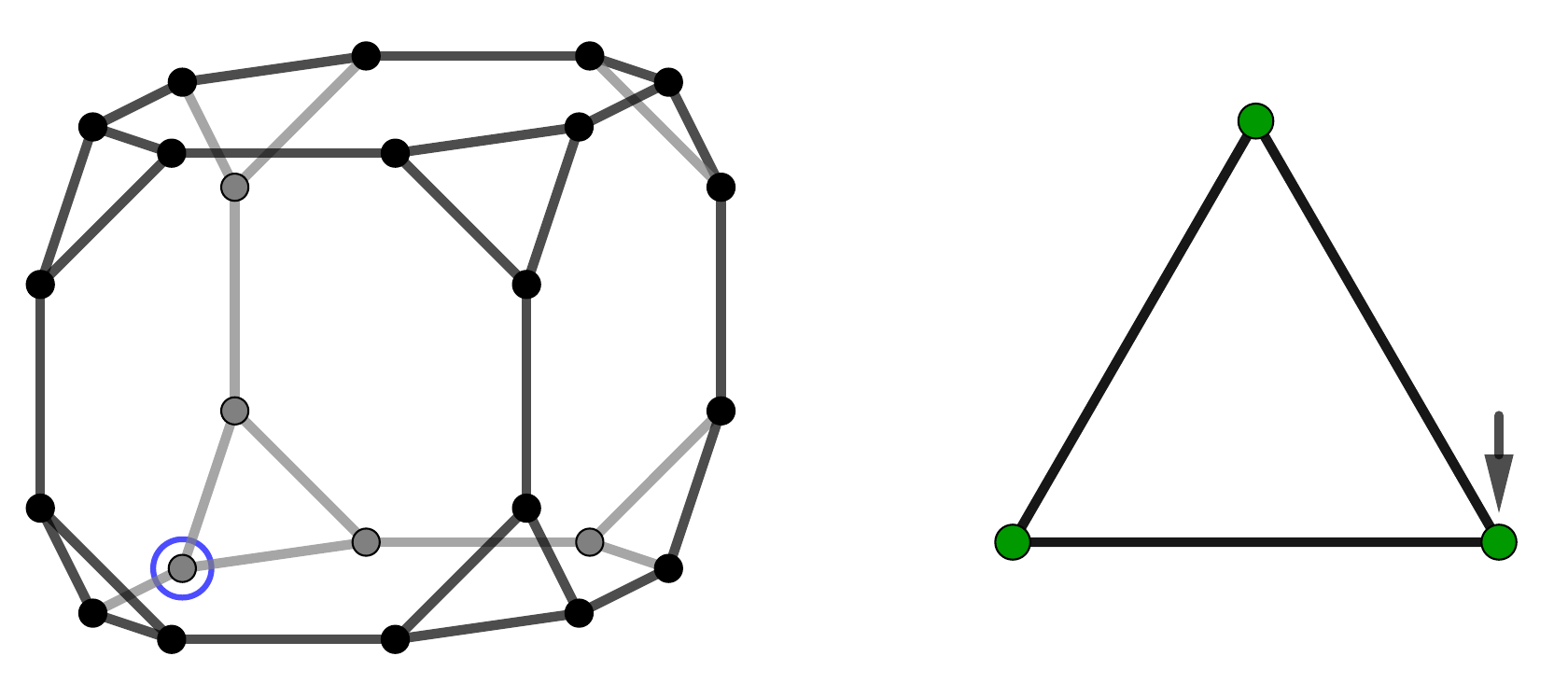} \\ \hline
\end{tabular}
\end{center}

\medskip \noindent
This mini-course is dedicated to the large-scale geometry of lamplighter graphs, with an emphasis on coarse topology. Roughly speaking, we use our intuition from general and algebraic topology in order to create various concepts that still make sense up to coarse equivalences of metrics. 

\medskip \noindent
Sections~\ref{section:QI} and~\ref{section:CoarseTopo} describe basic notions of coarse geometry and topology, which we illustrate with lamplighter graphs. Sections~\ref{section:BigEmbedding} and~\ref{section:Aptolic} present a(n almost self-contained and partly new) proof of the classification up to quasi-isometry of lamplighter graphs over one-ended coarsely simply connected graphs \cite{MR4794592,Halo}.

\section{Lamplighters and quasi-isometries}\label{section:QI}

\subsection{Algebraic definition}

\noindent
A \emph{wreath product}\footnote{Wreath product of permutation groups is a construction that can be found early in the history of group theory. For instance, it is used by Cauchy in 1844 in order to construct subgroups of symmetric groups with specific orders \cite[p.\ 194]{Cauchy}. Identifying a precise origin of the concept seems to be rather delicate and would require a rigorous historical investigation. However, the terminology can be traced back to Polya's monograph \cite{MR1577579} (see \cite{MR884155} for an English translation), where the German word \emph{Kranz} is used, later translated by \emph{wreath}.} $(G_1 \curvearrowright \Omega_1) \wr (G_2 \curvearrowright \Omega_2)$ is a natural way to combine two permutation groups $G_1 \curvearrowright \Omega_1$ and $G_2 \curvearrowright \Omega_2$ into a single permutation group over $\Omega_1 \times \Omega_2$.

\medskip \noindent
\begin{minipage}{0.45\linewidth}
\includegraphics[width=0.95\linewidth]{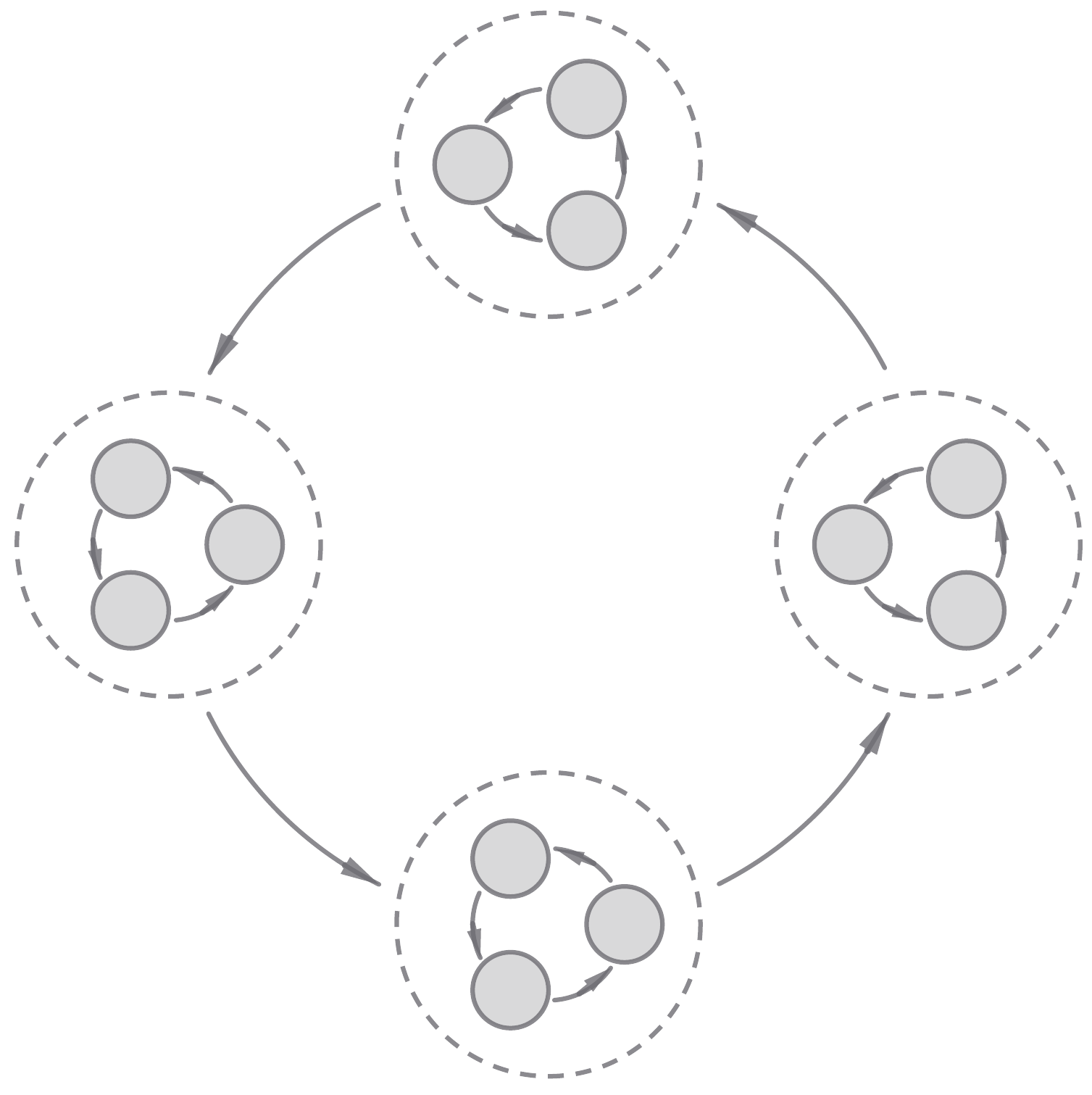}
\end{minipage}
\begin{minipage}{0.53\linewidth}
 Imagine that $\Omega_2$ is a set of bags, and that, in each bag, we set a copy of $\Omega_1$ (thought of as a collection of tokens). We get a set of tokens in bijection with $\Omega_1 \times \Omega_2$. Then, a permutation of $\Omega_1 \times \Omega_2$ belongs to the wreath product when it permutes the bags according to a permutation of $G_2$ and when the tokens in each bag are permuted according to a permutation of $G_1$. On the left, the figure illustrates a permutation of $(\mathbb{Z}_3 \curvearrowright \{0,1,2\}) \wr ( \mathbb{Z}_4 \curvearrowright \{0,1,2,3\})$. 
\end{minipage}

\medskip \noindent
Some wreath products naturally appear as symmetry groups of elementary objects. The most notable example is given by rooted trees. 

\medskip \noindent
\begin{minipage}{0.4\linewidth}
\includegraphics[width=\linewidth]{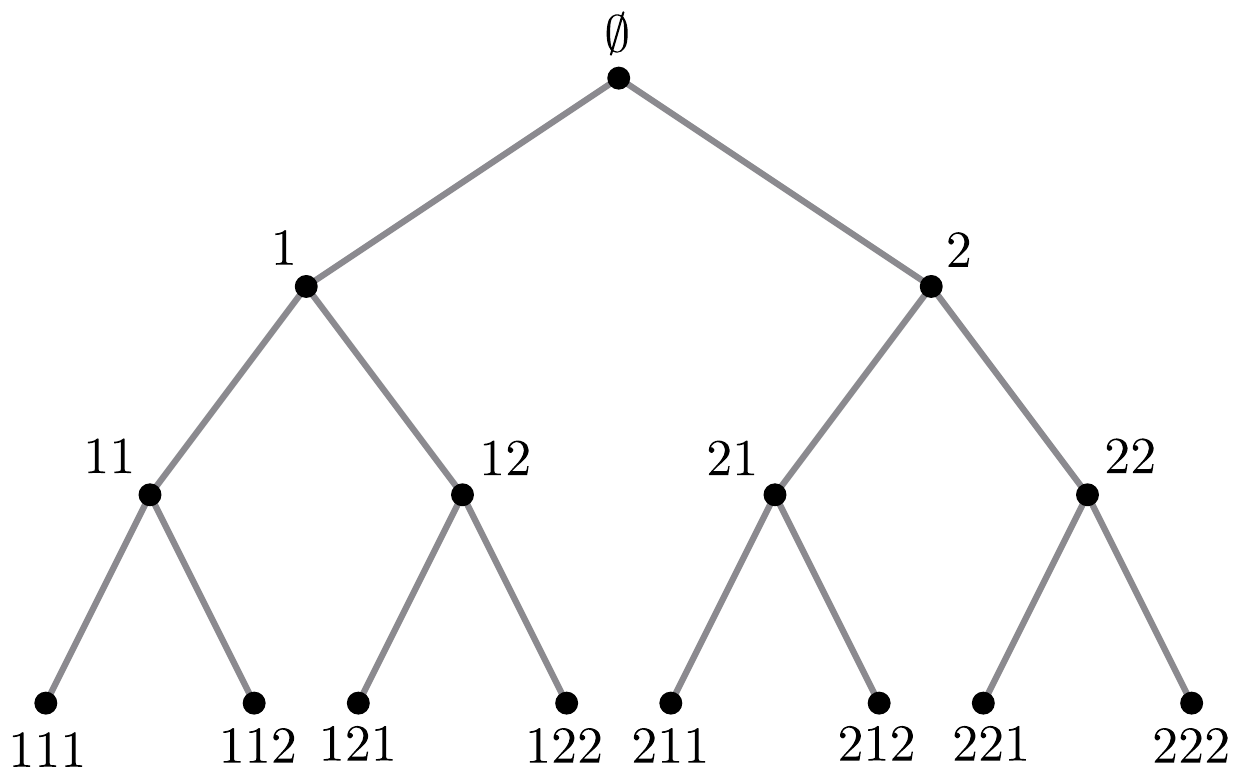}
\end{minipage}
\begin{minipage}{0.58\linewidth}
Let $T(n,k)$ denote the $n$-regular rooted tree of height $k$. Formally, $T(n,k)$ is the graph whose vertices are the words of length $\leq k$ written over $[n]:=\{1, \ldots, n\}$ and whose edges connect two words whenever one can be obtained from the other by adding a new letter at the end of the word. The figure on the left illustrates $T(2,3)$. 
\end{minipage}

\medskip \noindent
A \emph{leaf} of $T(n,k)$ refers to a vertex of degree one. Notice that the action of $\mathrm{Aut}(T(n,k))$ on the leaves of $T(n,k)$ preserves $n$ bags: the vertices below the vertex $1$, the vertices below the vertex $2$, $\ldots$, the vertices below the vertex $n$. The leaves in each bag can be thought of as the leaves of a copy of $T(n,k-1)$ in $T(n,k)$. This allows us to show that the permutation group $\mathrm{Aut}(T(n,k)) \curvearrowright \{ \text{leaves of } T(n,k)\}$ is isomorphic to 
$$(\mathrm{Aut}(T(n,k-1)) \curvearrowright \{\text{leaves of } T(n,k-1)\} ) \wr (\mathrm{Sym}(n) \curvearrowright [n] ),$$
and consequently to the iterated wreath product
$$(( ((\mathrm{Sym}(n) \curvearrowright [n] ) \wr (\mathrm{Sym}(n) \curvearrowright [n] )) \wr (\mathrm{Sym}(n) \curvearrowright [n] )) \wr \cdots ) \wr (\mathrm{Sym}(n) \curvearrowright [n] )$$
by iterating the decomposition. 

\medskip \noindent
It is worth mentioning that similar iterated wreath products can be used in order to described Sylow subgroups in symmetric groups. See for instance \cite[p. 176-177]{MR1307623}.

\medskip \noindent
As another interesting example, it can be shown that the symmetry group of the $n$-dimension cube $[0,1]^n$ can be realised as a permutation group
$$(\mathrm{Sym}(2) \curvearrowright \{1,2\}) \wr (\mathrm{Sym}(n) \curvearrowright \{1, \ldots, n\}).$$
See Exercise~\ref{exo:Hypercubes} for details. See also Exercise~\ref{exo:ChainSquares} for other examples of symmetry groups that can be realised as wreath products.

\medskip \noindent
Now, let us focus on the algebraic structure of our wreath product $(G_1 \curvearrowright \Omega_1) \wr (G_2 \curvearrowright \Omega_2)$. The permutation of the bags yields a morphism to $G_2$. And the kernel of this morphism corresponds to the permutations that fix each bag and that permute the tokens in each bag according to an element of $G_1$. Notice that the permutations in each bag are independent of each other. Thus, the kernel is isomorphic to a product of copies of $G_1$, one copy for each bag. More formally, we have the short exact sequence
$$1 \to G_1^{\Omega_2} \to (G_1 \curvearrowright \Omega_1) \wr (G_2 \curvearrowright \Omega_2) \to G_2 \to 1.$$
In fact, the sequence splits, i.e.\ the wreath product contains a copy of $G_2$, so we get the semidirect product decomposition
$$(G_1 \curvearrowright \Omega_1) \wr (G_2 \curvearrowright \Omega_2) = G_1^{\Omega_2} \rtimes G_2 \text{ as an abstract group}$$
where $G_2$ acts on $G_1^{\Omega_2}$ by permuting the coordinates according to its action on $\Omega_2$. Notice that, despite the fact that the realisation of $(G_1 \curvearrowright \Omega_1) \wr (G_2 \curvearrowright \Omega_2)$ as a permutation group over $\Omega_1 \times \Omega_2$ does depend on the permutation action $G_1 \curvearrowright \Omega_1$, the algebraic structure of the wreath product only depends on the algebraic structure of $G_1$. The permutation action $G_2 \curvearrowright \Omega_2$, on the other hand, does matter for the algebraic structure of the wreath product. 

\medskip \noindent
Since every abstract group can be realised as a permutation group, just by making it act on itself by left-multiplication, the wreath product of permutation groups can be used in order to define a product of abstract groups. 

\begin{definition}
The \emph{(unrestricted) wreath product} of two groups $A$ and $B$ is
$$A~\mathrm{Wr}~B:= A^B \rtimes B$$
where $B$ acts on the direct produt by permuting its coordinates according to its action by left-multiplication on itself, i.e.\ $g \ast (x_b)_{b \in B}=(x_{gb})_{b \in B}$ for all $g \in B$ and $(x_b)_{b \in B} \in A^B$. 
\end{definition}

\noindent
In other words, an element of $A ~\mathrm{Wr}~ B$ is a pair $((x_b)_{b \in B}, c)$ with $c \in B$ and $(x_b)_{b \in B} \in A^B$; and, for any two elements $((x_b)_{b \in B}, c)$ and $((y_b)_{b \in B}, d)$ of $A ~\mathrm{Wr}~ B$, their product is
$$((x_b)_{b \in B},c) \cdot ((y_b)_{b \in B}, d) := ((x_by_{cb})_{b \in B}, cd).$$
Notice that, when $A$ is non-trivial and $B$ infinite, the direct product $A^B$, and a fortiori the wreath product $A ~\mathrm{Wr}~ B$, is uncountable. In practice, we often focus on a smaller subgroup by replacing the direct product $A^B$ with the direct sum $A^{(B)}$. Recall that $A^{(B)}$ is the subgroup of $A^B$ given by the element all but finitely many of whose coordinates are trivial. 

\begin{definition}
The \emph{(restricted) wreath product} of two groups $A$ and $B$ is
$$A~\mathrm{wr}~B \text{ or } A \wr B := A^B \rtimes B$$
where $B$ acts on the direct sum by permuting its coordinates according to its action by left-multiplication on itself, i.e.\ $g \ast (x_b)_{b \in B}=(x_{gb})_{b \in B}$ for all $g \in B$ and $(x_b)_{b \in B} \in A^{(B)}$. 
\end{definition}

\medskip \noindent
Of course, when $B$ is a finite group, there is no difference between the restricted and unrestricted wreath products over $B$. But, when $B$ is infinite and $A$ non-trivial, $A \wr B$ is always a proper subgroup of $A~\mathrm{Wr}~B$, usually much smaller. We will see in the next section that the restricted wreath product of two finitely generated groups is finitely generated. 

\medskip \noindent
\textbf{In the rest of the course, a wreath product will always refers to a restricted wreath product.}

\subsection{Cayley graphs}\label{section:CayleyGraphs}

\noindent
Interestingly, every group can be pictured as a graph. This perspective will be developed further in the next section in order to think of (finitely generated) groups as geometric spaces. 

\begin{definition}
Let $G$ be a group and $S \subset G$ a set of elements. The \emph{Cayley graph} $\mathrm{Cayl}(G,S)$ is the graph 
\begin{itemize}
	\item whose vertices are the elements of $G$;
	\item and whose edges connect two distinct elements $a,b \in G$ whenever there exists $s \in S$ such that $b=as$. 
\end{itemize}
\end{definition}

\noindent
In other words, a Cayley graph $\mathrm{Cayl}(G,S)$ is a space whose points are the elements of our group $G$ and in which one travels by right-multiplications with elements in $S \cup S^{-1}$. Figure~\ref{Cayley} provides a few easy examples of Cayley graphs. In general, drawing a Cayley graph is a rather difficult task. We emphasize that Cayley graphs for the same group but for distinct subsets may be distinct. 

\begin{figure}[h!]
\begin{center}
\includegraphics[width=0.8\linewidth]{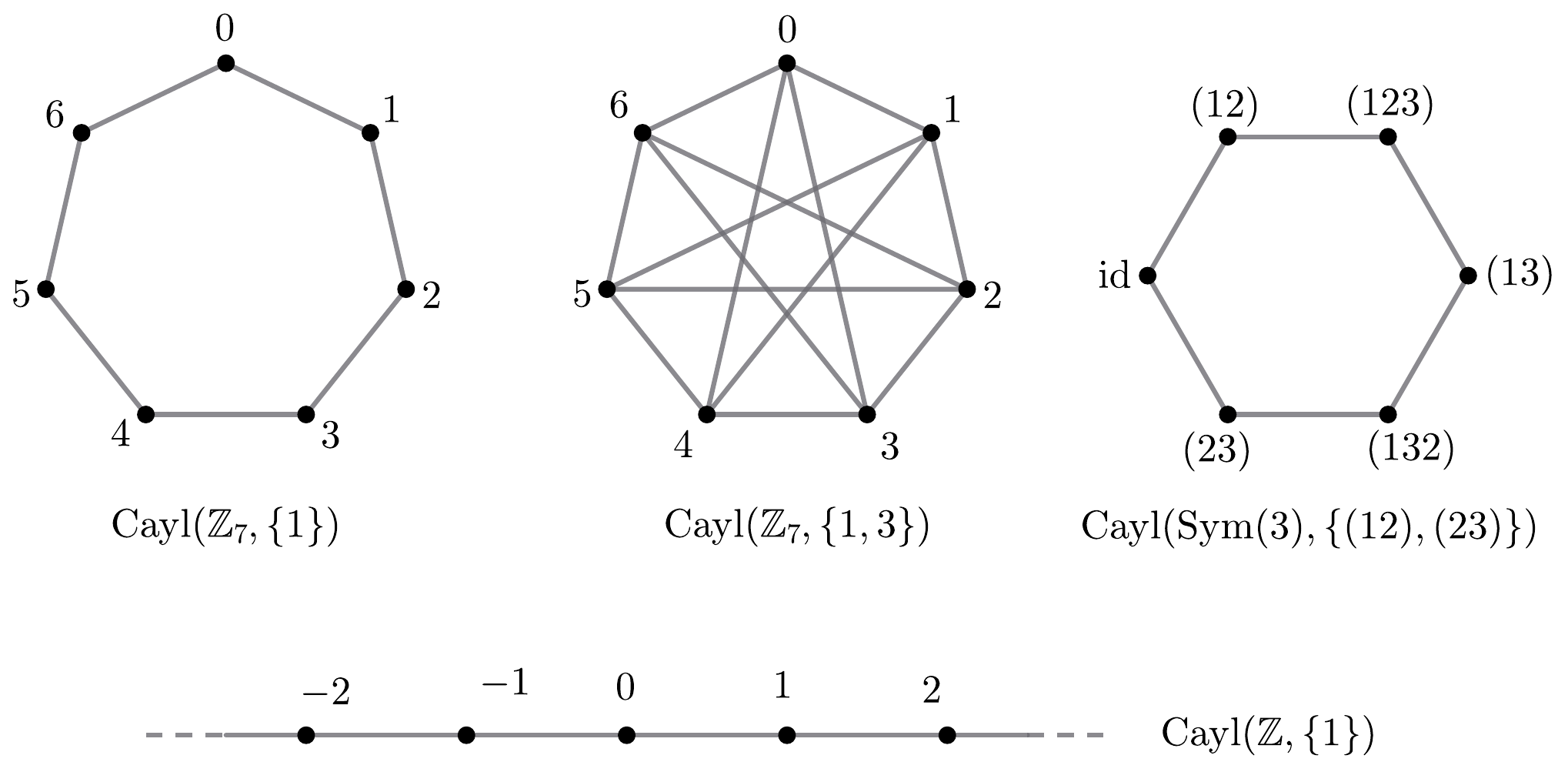}
\caption{A few examples of Cayley graphs.}
\label{Cayley}
\end{center}
\end{figure}

\noindent
Notice that our group $G$ naturally acts on $\mathrm{Cayl}(G,S)$ by left-multiplication, i.e.
$$\left\{ \begin{array}{ccc} G & \to & \mathrm{Aut}(\mathrm{Cayl}(G,S)) \\ g & \mapsto & (x \mapsto gx) \end{array} \right.$$
is an injective morphism. Thus, every group can be realised as a group of symmetries of a graph. In fact, of many graphs, since we have a lot of choices for $S$. In practice, we will usually assume that $S$ is a generating set (which assures that the Cayley graph is connected) and is finite (which is only possible if $G$ is finitely generated). 

\medskip \noindent
Endowing a given group with the structure of a connected graph allows us to endow our group with a metric, and consequently to think geometrically. Recall that, in a graph, the distance between two vertices is defined as the shortest length of a path connecting the two vertices.

\begin{prop}\label{prop:WordLength}
Let $G$ be a group and $S \subset G$ a subset. The graph $\mathrm{Cayl}(G,S)$ is connected if and only if $S$ generates $G$. If so, then
$$d_{\mathrm{Cayl}(G,S)}(g,h)  = \| g^{-1}h \|_S \text{ for all } g,h \in G$$
where $\|k\|_S:= \min \{ n \geq 0 \mid \exists s_1, \ldots, s_n \in S \cup S^{-1}, \ k=s_1 \cdots s_n\}$ denotes the \emph{word length} of an element $k \in G$.
\end{prop}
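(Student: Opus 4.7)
The plan is to reduce everything to a bijection between edge-paths in $\mathrm{Cayl}(G,S)$ and expressions as products of elements of $S \cup S^{-1}$. Concretely, a sequence of vertices $g = g_0, g_1, \ldots, g_n = h$ is the vertex-sequence of an edge-path precisely when, for every $i$, there exists $s_{i+1} \in S$ with either $g_{i+1} = g_i s_{i+1}$ or $g_i = g_{i+1} s_{i+1}$. Rewriting the second case as $g_{i+1} = g_i s_{i+1}^{-1}$, this is the same as saying $g_{i+1} = g_i t_{i+1}$ for some $t_{i+1} \in S \cup S^{-1}$. Telescoping gives $h = g \cdot t_1 t_2 \cdots t_n$, i.e.\ $g^{-1} h = t_1 \cdots t_n$. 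Conversely, every such expression of $g^{-1}h$ yields an edge-path of the same length from $g$ to $h$ by setting $g_i := g \cdot t_1 \cdots t_i$. So edge-paths of length $n$ from $g$ to $h$ are in bijection with length-$n$ words over $S \cup S^{-1}$ representing $g^{-1}h$.

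From this bijection, both halves of the statement fall out quickly. For the connectedness claim, I would note that $\mathrm{Cayl}(G,S)$ is connected if and only if every vertex can be joined by a path to the identity vertex $1$, which by the bijection means that every $k \in G$ can be written as a product of elements of $S \cup S^{-1}$; since $\langle S \rangle = \langle S \cup S^{-1} \rangle$ as subgroups of $G$, this is exactly the statement that $S$ generates $G$.

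For the distance formula, assuming $S$ generates $G$, the bijection shows that the set of lengths of edge-paths from $g$ to $h$ coincides with the set of lengths of expressions of $g^{-1}h$ as a word in $S \cup S^{-1}$; taking minima gives $d_{\mathrm{Cayl}(G,S)}(g,h) = \|g^{-1} h\|_S$. Alternatively, and perhaps more conceptually, one could observe once and for all that left multiplication by any fixed $k \in G$ sends edges to edges (since $(ka)(ka)^{-1}(kb) = kb$ and $a^{-1}b \in S \cup S^{-1}$ iff $(ka)^{-1}(kb) \in S \cup S^{-1}$) and is therefore a graph automorphism, hence an isometry; applying this to $k := g^{-1}$ reduces the problem to the case $g = 1$, where $d(1,h) = \|h\|_S$ is a direct instance of the bijection above.

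There is no genuine obstacle here; the proof is essentially bookkeeping. The only point requiring a small amount of care is the direction of travel along an edge: since edges of the Cayley graph are undirected, the generator $s \in S$ associated to an edge $\{a, as\}$ may be used as $s$ or $s^{-1}$ depending on the direction in which the path traverses that edge, which is why $S \cup S^{-1}$ rather than $S$ appears in the definition of $\|\cdot\|_S$.
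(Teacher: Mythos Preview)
Your proof is correct and follows essentially the same approach as the paper: both arguments rest on the correspondence between edge-paths from $g$ to $h$ in $\mathrm{Cayl}(G,S)$ and words over $S\cup S^{-1}$ representing $g^{-1}h$, from which connectedness and the distance formula are read off directly. Your treatment is slightly more explicit about the bijection and the undirected-edge subtlety, and you add the optional observation that left multiplication is a graph automorphism, but the substance is the same.
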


\begin{proof}
If $\mathrm{Cayl}(G,S)$ is connected, then, given an arbitrary element $g \in G$, there exists a path connecting $1$ to $g$. Such a path can be written as
$$1, \ s_1, \ s_1s_2, \ldots,\ s_1 s_2 \cdots s_{n-1}, \ s_1s_2 \cdots s_{n-1}s_n=g$$
where $s_1, \ldots, s_n \in S \cup S^{-1}$. Therefore, $S$ generates $G$. Conversely, If $G= \langle S \rangle$, then, given an arbitrary element $g \in G$, we can write $g=s_1 \cdots s_n$ for some $s_1, \ldots, s_n \in S \cup S^{-1}$, providing a path
$$1, \ s_1, \ s_1s_2, \ldots,\ s_1 s_2 \cdots s_{n-1}, \ s_1s_2 \cdots s_{n-1}s_n=g$$
in $\mathrm{Cayl}(G,S)$. Thus, every vertex can be connected by a path to $1$, proving that our Cayley graph is connected, as desired.

\medskip \noindent
From now on, assume that $G=\langle S \rangle$. A path in $\mathrm{Cayl}(G,S)$ connecting $g$ to $h$ amounts to right-multiplying $g$ with generators from $S \cup S^{-1}$ in order to get $h$. The product of these generators, then, represents $g^{-1}h$. Thus, the distance between $g$ and $h$ in $\mathrm{Cayl}(G,S)$ coincides with the smallest number of generators from $S \cup S^{-1}$ in a product representing $g^{-1}h$, namely $\|g^{-1}h\|_S$.
\end{proof}

\noindent
Now, we are interested in describing some Cayley graphs of wreath products. So, given two groups $A$ and $B$, let us consider the wreath product $A \wr B$. For convenience, we identify $A$ and $B$ with subgroups of $A \wr B:= A^{(B)} \rtimes B$, namely with the copy of $A$ indexed by $1 \in B$ and with the $B$-factor. Fixing two subsets $R \subset A$ and $S \subset B$, we want to describe  $\mathrm{Cayl}(A \wr B, R \cup S)$. 

\medskip \noindent
Formally, a vertex of our Cayley graph is an element of $A \wr B$, so a pair $( (a_b)_{b \in B}, p )$ where $(a_b)_{b \in B} \in A^{(B)}$ and $p \in B$. We think of first coordinate $(a_b)_{b \in B}$ as a colouring $b \mapsto a_b$ of the vertices of $\mathrm{Cayl}(B,S)$ by vertices of $\mathrm{Cayl}(A,R)$, and we think of the second coordinate $p$ as an arrow pointing to the vertex $p$ of $\mathrm{Cayl}(B,S)$.
\begin{figure}[h!]
\begin{center}
\includegraphics[width=0.7\linewidth]{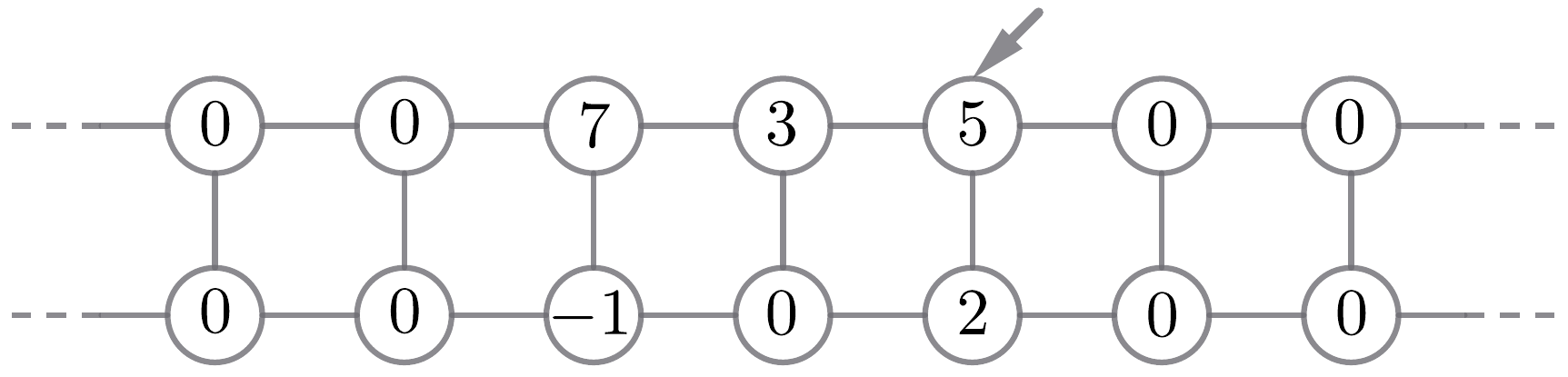}
\caption{An element of $\mathbb{Z} \wr \mathbb{D}_\infty$. }
\label {LampConf}
\end{center}
\end{figure}

\noindent
Now, we need to understand how this representation changes when one right-multiply with an element of $R \cup S$. Notice that, given an element $((a_b)_{b \in B}, p) \in A \wr B$, we have
$$\begin{array}{lcl} ((a_b)_{b \in B}, p ) \cdot r & = & ((a_b)_{b \in B}, p) \cdot \left( b \mapsto \left\{ \begin{array}{cl} r & \text{if } b=1 \\ 1 & \text{otherwise} \end{array} \right., 1 \right) \\ \\ & = & \left( b \mapsto \left\{ \begin{array}{cl} a_br & \text{if } b=p \\ a_b & \text{otherwise} \end{array} \right.,p \right) \end{array}$$
for every $r \in R$, and
$$((a_b)_{b \in B}, p ) \cdot s= ((a_b)_{b \in B}, p ) \cdot ( b \mapsto 1, s) =((a_b)_{b \in B} , ps)$$
for every $s \in S$. In the first case, the arrow does not move and the colour where the arrow points is right-multiplied by an element of $R$ (which amounts to saying that the new colour is a neighbour in $\mathrm{Cayl}(A,R)$); and, in the second case, the colouring is not modified but the arrow is moved to a neighbour in $\mathrm{Cayl}(B,S)$. 

\begin{figure}[h!]
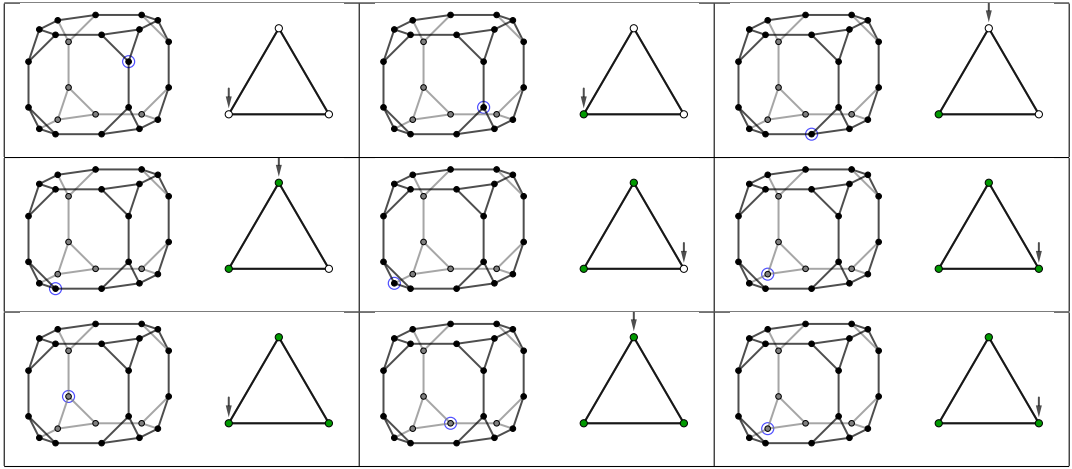

\begin{center}
\begin{tabular}{|c|c|c|} \hline
\includegraphics[trim=4cm 11cm 24cm 4cm,clip,width=0.29\linewidth]{L1} & \includegraphics[trim=4cm 11cm 24cm 4cm,clip,width=0.29\linewidth]{L21} & \includegraphics[trim=4cm 11cm 24cm 4cm,clip,width=0.29\linewidth]{L41}  \\ \hline
\includegraphics[trim=4cm 11cm 24cm 4cm,clip,width=0.29\linewidth]{L61} & \includegraphics[trim=4cm 11cm 24cm 4cm,clip,width=0.29\linewidth]{L81} & \includegraphics[trim=4cm 11cm 24cm 4cm,clip,width=0.29\linewidth]{L101}\\  \hline
\includegraphics[trim=4cm 11cm 24cm 4cm,clip,width=0.29\linewidth]{L121} & \includegraphics[trim=4cm 11cm 24cm 4cm,clip,width=0.29\linewidth]{L141} & \includegraphics[trim=4cm 11cm 24cm 4cm,clip,width=0.29\linewidth]{L161} \\ \hline
\end{tabular}
\caption{A Cayley graph of $\mathbb{Z}_2 \wr \mathbb{Z}_3$.}
\label{LampTriangle}
\end{center}
\end{figure}

\medskip \noindent
The picture to keep in mind is that we have an arrow $p$ pointing to some vertex of $\mathrm{Cayl}(B,S)$, which is allowed to move from vertices to adjacent vertices. The vertices of $\mathrm{Cayl}(B,S)$ are coloured with vertices of $\mathrm{Cayl}(A,R)$, with the restriction that all but finitely many colours are $1 \in A$. The arrow has the ability to modify the colour at the vertex where it is, modifying the colour to a neighbour in $\mathrm{Cayl}(A,R)$. Due to this interpretation, wreath products are also referred to as \emph{lamplighter groups}\footnote{As coined in \cite{MR1062874}, where the terminology is attributed to Jim Cannon. We warn the reader that, in the literature, ``lamplighter group'' may sometimes refer to $\mathbb{Z}_2 \wr \mathbb{Z}$ specifically, or to $\mathbb{Z}_n \wr \mathbb{Z}$.}.  

\medskip \noindent
Our description motivates the following definition.

\begin{definition}
Let $(X,o)$ be a pointed graph and $Y$ a graph. The \emph{wreath product} $(X,o) \wr Y$ is the graph 
\begin{itemize}
	\item whose vertices are the pairs $(\varphi : V(Y) \to V(X), p \in V(Y))$ where $\varphi(y)=o$ for all but finitely many $y \in V(Y)$;
	\item whose edges connect $(\varphi_1,p_1)$ and $(\varphi_2,p_2)$ if either $\varphi_1= \varphi_2$ and $\{p_1,p_2\} \in E(Y)$ or $\varphi_1,\varphi_2$ disagree only at $p_1=p_2$ where they take adjacent values.
\end{itemize}
\end{definition}

\noindent
Figure~\ref{WreathProductGraph} illustrates the wreath product of two edges, which just yields a cycle of length eight. Figure~\ref{LampTriangle} above shows that the wreath product between a triangle and an edge yields truncated $3$-cube. 
\begin{figure}
\begin{center}
\includegraphics[width=0.45\linewidth]{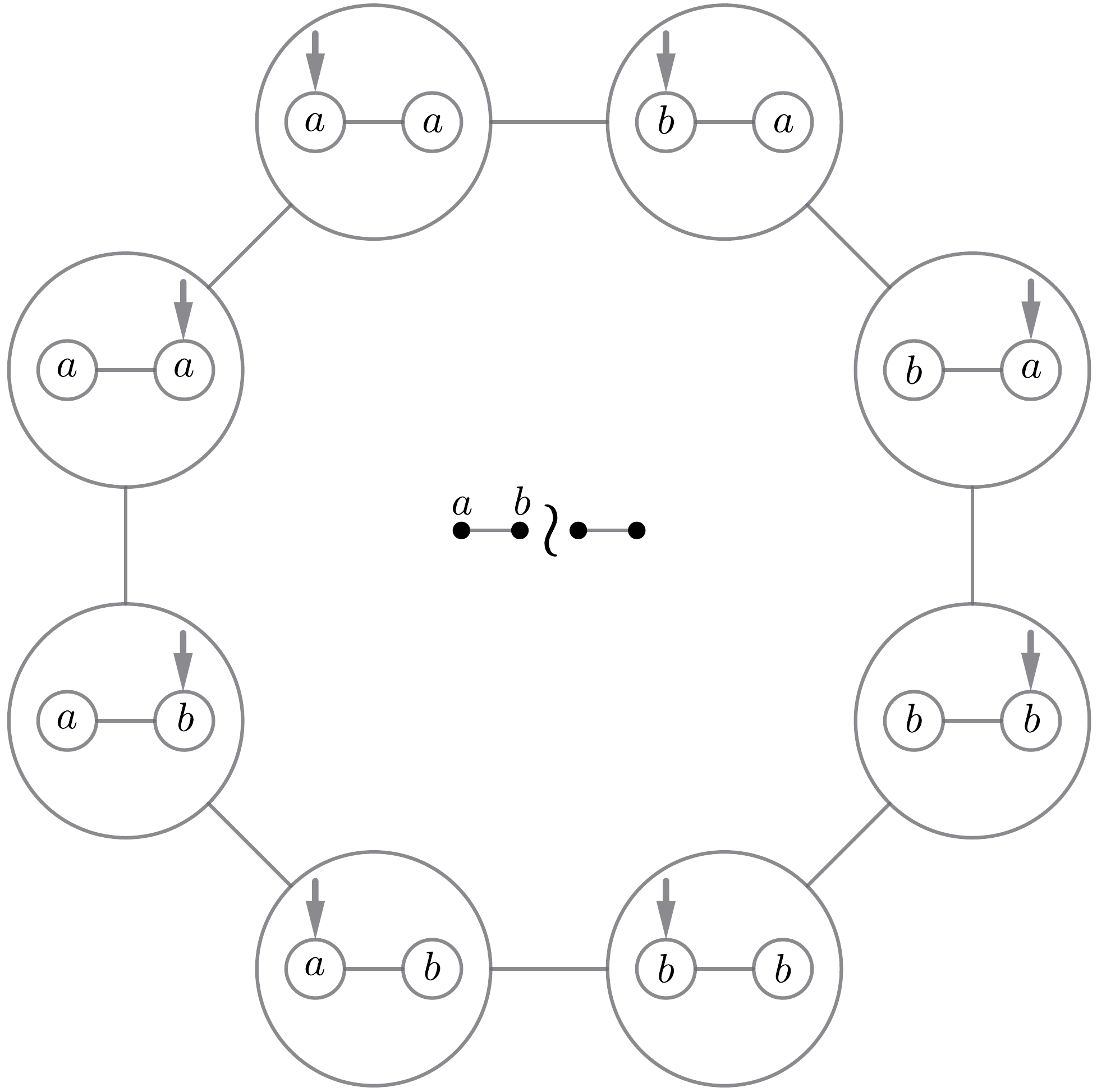}
\caption{Wreath product of two edges}
\label{WreathProductGraph}
\end{center}
\end{figure}

\medskip \noindent
As a consequence of the description given above of Cayley graphs of wreath products, we get the following statement:

\begin{prop}\label{prop:CaylWreath}
Let $A$ and $B$ be two groups. Given two subsets $R \subset A$ and $S \subset B$, 
$$\mathrm{Cayl}(A \wr B , R \cup S) \simeq (\mathrm{Cayl}(A, R), 1) \wr \mathrm{Cayl}(B,S).$$
\end{prop}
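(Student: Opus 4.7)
The proof is a direct verification that the bijection suggested by the informal discussion preceding the statement is indeed a graph isomorphism. I would begin by writing down the map $\Phi$ on vertices explicitly: a vertex $((a_b)_{b \in B}, p) \in A \wr B$ is sent to the pair $(\varphi, p)$ with $\varphi : B \to A$ defined by $\varphi(b) = a_b$. The condition that $(a_b)_{b \in B}$ has finite support matches the condition that $\varphi(b) = 1_A$ for all but finitely many $b$, and since $V(\mathrm{Cayl}(B,S)) = B$ and $V(\mathrm{Cayl}(A,R)) = A$, this $\Phi$ is a bijection onto the vertex set of $(\mathrm{Cayl}(A,R), 1) \wr \mathrm{Cayl}(B, S)$.

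Next, I would verify that $\Phi$ is edge-preserving using the two product formulas already computed just above the statement. Every edge of $\mathrm{Cayl}(A \wr B, R \cup S)$ has the form $\{g, g \cdot t\}$ for some $t \in R \cup S$. If $t = r \in R$, the first formula shows that $g$ and $g \cdot r$ have the same $B$-component $p$ and their colourings differ only at $p$, where the two values are adjacent in $\mathrm{Cayl}(A, R)$; this is exactly an edge of the second type in the graph wreath product. If $t = s \in S$, the second formula shows that the colouring is unchanged and the $B$-component moves from $p$ to the neighbour $ps$; this is an edge of the first type. Hence $\Phi$ sends edges to edges.

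Finally, I would show that $\Phi^{-1}$ also sends edges to edges, by splitting on the two edge types in the graph wreath product. An edge of the first type corresponds to moving from $p_1$ to an adjacent $p_2 = p_1 s$ with $s \in S \cup S^{-1}$, hence to right-multiplication by $s$. An edge of the second type corresponds to changing the colour at $p$ from $\varphi_1(p)$ to $\varphi_2(p) = \varphi_1(p) r$ with $r \in R \cup R^{-1}$, hence to right-multiplication by $r$. In both cases the two vertices are joined by an edge in the Cayley graph.

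I do not anticipate any substantial obstacle: the only care required is the standard bookkeeping that Cayley graphs and graph wreath products are both undirected, so ``right-multiplication by a generator'' must be understood as right-multiplication by an element of $R \cup S \cup R^{-1} \cup S^{-1}$, which matches the symmetric notion of adjacency used on both sides. Everything else amounts to unwinding the two definitions against the computations already displayed in the text.
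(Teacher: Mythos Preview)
Your proposal is correct and follows exactly the approach the paper intends: the paper does not give a separate formal proof but states the proposition ``as a consequence of the description given above'', i.e.\ the two displayed product formulas showing the effect of right-multiplying by $r \in R$ and by $s \in S$. Your write-up is simply a careful unpacking of that same verification, matching each generator to one of the two edge types in the graph wreath product and checking both directions; there is nothing to add or correct.
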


\noindent
In the rest of the course, we will be mainly interested in wreath products $A \wr B$ with $A$ finite. In this case, our description of Cayley graphs can be simplified by choosing convenient generating subsets. 

\begin{definition}
Let $X$ be a graph and $n \geq 2$ an integer. The \emph{lamplighter graph} $\mathcal{L}_n(X)$ is the graph
\begin{itemize}
	\item whose vertices are the pairs $(\varphi : V(X) \to \mathbb{Z}_n, p \in V(X))$ where $\varphi$ has finite support;
	\item whose edges connect $(\varphi_1,p_1)$ and $(\varphi_2,p_2)$ if either $\varphi_1=\varphi_2$ and $\{p_1,p_2\} \in E(X)$ or $\varphi_1,\varphi_2$ differ only at $p_1=p_2$.
\end{itemize}
\end{definition}

\medskip 
\begin{minipage}{0.35\linewidth}
\includegraphics[width=0.85\linewidth]{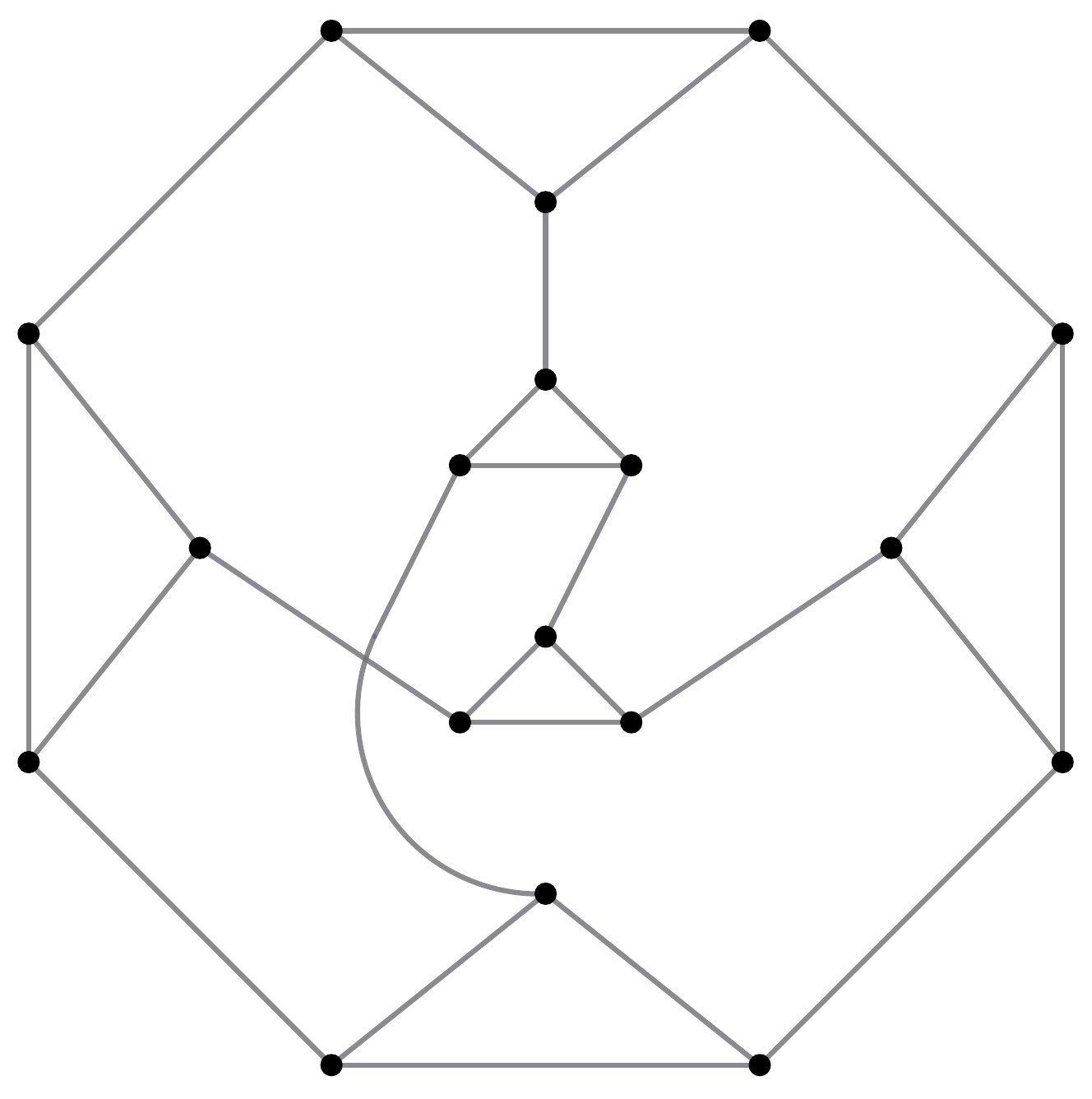}
\end{minipage}
\begin{minipage}{0.58\linewidth}
Notice that a lamplighter graph $\mathcal{L}_n(X)$ coincides with the wreath product $(K_n,o) \wr X$ where $K_n$ denotes the complete graph on $n$ vertices. So we know from the figures given above that $\mathcal{L}_2(\text{edge})$ is an $8$-cycle and that $\mathrm{L}_2(\text{triangle})$ is a truncated $3$-cube. The graph on the left illustrates $\mathcal{L}_3(\text{edge})$. We leave the details to the reader as an exercise.
\end{minipage}

\medskip \noindent
 As an immediate consequence of Proposition~\ref{prop:CaylWreath}, it follows that:

\begin{cor}
Let $H$ be a group and $n \geq 2$ an integer. Given a subset $S \subset H$,
$$\mathrm{Cayl}(\mathbb{Z}_n \wr H , \mathbb{Z}_n \cup S) \simeq \mathcal{L}_n(\mathrm{Cayl}(H,S)).$$
\end{cor}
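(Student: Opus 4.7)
The plan is to apply Proposition~\ref{prop:CaylWreath} directly, with the choice $A := \mathbb{Z}_n$, $B := H$, $R := \mathbb{Z}_n$, and $S := S$. The proposition then yields the identification
$$\mathrm{Cayl}(\mathbb{Z}_n \wr H, \mathbb{Z}_n \cup S) \simeq (\mathrm{Cayl}(\mathbb{Z}_n, \mathbb{Z}_n), 1) \wr \mathrm{Cayl}(H, S),$$
so the only remaining task is to recognize the right-hand side as $\mathcal{L}_n(\mathrm{Cayl}(H,S))$.

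For that, I would observe that $\mathrm{Cayl}(\mathbb{Z}_n, \mathbb{Z}_n)$ is isomorphic to the complete graph $K_n$: its vertex set is $\mathbb{Z}_n$, and for any two distinct elements $a, b \in \mathbb{Z}_n$, the element $s := a^{-1}b$ lies in $\mathbb{Z}_n = S$, so $a$ and $b$ are adjacent. Since the basepoint $1 \in \mathbb{Z}_n$ plays the same role for $K_n$ as the basepoint $o$ used in the discussion preceding the statement, we have
$$(\mathrm{Cayl}(\mathbb{Z}_n, \mathbb{Z}_n), 1) \wr \mathrm{Cayl}(H,S) \simeq (K_n, o) \wr \mathrm{Cayl}(H,S).$$
Finally, by the very definition of the lamplighter graph recalled just above the corollary, $(K_n, o) \wr \mathrm{Cayl}(H,S) = \mathcal{L}_n(\mathrm{Cayl}(H,S))$, which closes the argument.

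I do not expect any serious obstacle here: the corollary is advertised as immediate, and the proof really only consists in unpacking the generating set $\mathbb{Z}_n \cup S$ into its two parts and noting that taking the full group as the generating set for $\mathbb{Z}_n$ produces $K_n$. The only point deserving a brief remark is the compatibility of basepoints, which is harmless since every pair of vertices of $K_n$ plays a symmetric role.
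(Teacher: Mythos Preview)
Your proof is correct and follows exactly the approach the paper intends: the corollary is presented there as an immediate consequence of Proposition~\ref{prop:CaylWreath} together with the remark that $\mathcal{L}_n(X) = (K_n,o)\wr X$, and you have unpacked precisely these two ingredients. (One harmless slip: when you write ``$s := a^{-1}b$ lies in $\mathbb{Z}_n = S$'', you mean $\mathbb{Z}_n = R$, the generating set you chose for the lamp group.)
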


\noindent
Section~\ref{section:CoarseHilbert} also contains a remarkable description of other Cayley graphs for the lamplighter groups $\mathbb{Z}_n \wr \mathbb{Z}$.

\medskip \noindent
Our definitions of wreath products and lamplighter graphs make sense for arbitrary graphs, but, since our goal is to do some geometry, we are mostly interested in connected graphs, so that the distance between any two vertices is finite.

\medskip \noindent
\textbf{In the rest of the course, unless otherwise stated a graph will be always connected.}

\subsection{Quasi-isometries and coarse embeddings}

\noindent
We saw in the previous section how one can think of a (finitely generated) group as a geometric space thanks to its Cayley graphs. However, different Cayley graphs lead to different geometries. Our next observation motivates the idea that these geometries are ``not too different''.

\begin{prop}
Let $G$ be a group and $R,S \subset G$ two finite generating sets. There exist constants $A,B>0$ such that
$$A \cdot d_{\mathrm{Cayl}(G,S)}(g,h) \leq d_{\mathrm{Cayl}(G,R)} (g,h) \leq B \cdot d_{\mathrm{Cayl}(G,S)} (g,h)$$
for all $g,h \in G$. 
\end{prop}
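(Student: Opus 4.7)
The plan is to reduce the statement to a comparison of word-length functions and then exploit finiteness of the generating sets. By Proposition~\ref{prop:WordLength}, for any generating set $T$ we have $d_{\mathrm{Cayl}(G,T)}(g,h) = \|g^{-1}h\|_T$. So, writing $k := g^{-1}h$, the asserted inequalities amount to proving the existence of constants $A,B>0$ such that
$$A \cdot \|k\|_S \leq \|k\|_R \leq B \cdot \|k\|_S \text{ for every } k \in G.$$
Thus the problem becomes purely algebraic: show that the two word-length functions on $G$ coming from finite generating sets are bi-Lipschitz equivalent.

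First I would exploit finiteness. Because $R$ generates $G$, every element $s \in S$ can be expressed as a product of elements of $R \cup R^{-1}$, so $\|s\|_R$ is a well-defined non-negative integer; and since $S$ is finite, the quantity
$$M := \max_{s \in S \cup S^{-1}} \|s\|_R$$
is itself a well-defined non-negative integer. Symmetrically, because $R$ is finite and $S$ generates $G$, the quantity $N := \max_{r \in R \cup R^{-1}} \|r\|_S$ is finite. These two constants are the only data we need.

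Next, given an arbitrary $k \in G$, write $k = s_1 \cdots s_n$ with $s_i \in S \cup S^{-1}$ and $n = \|k\|_S$. Replace each $s_i$ by a shortest expression in $R \cup R^{-1}$, whose length is at most $M$ by definition. Concatenating these expressions yields a word in $R \cup R^{-1}$ of length at most $Mn$ representing $k$, so $\|k\|_R \leq M \cdot \|k\|_S$. The symmetric argument gives $\|k\|_S \leq N \cdot \|k\|_R$. Setting $A := 1/N$ and $B := M$ completes the proof, and passing from word lengths back to distances via Proposition~\ref{prop:WordLength} yields the inequality in its original form.

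There is no genuinely hard step here; the only subtlety — and it is the \emph{essential} one — is that both $M$ and $N$ require finite generating sets in order to be finite. If one drops the finiteness hypothesis, already the inclusion $S \subset R \cup R^{-1}$ can fail to be Lipschitz in word length, and the two metrics need not remain bi-Lipschitz equivalent (though, as we shall see in later sections, they will still be coarsely equivalent under much weaker assumptions).
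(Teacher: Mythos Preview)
Your proof is correct and follows essentially the same approach as the paper: both arguments reduce to word lengths via Proposition~\ref{prop:WordLength}, define the constants as $\max\{\|s\|_R \mid s \in S \cup S^{-1}\}$ (and symmetrically), and apply the triangle inequality to a minimal-length expression. The paper's write-up is slightly terser, but the content is identical.
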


\begin{proof}
Let $g,h \in G$ be two elements. Write $g^{-1}h$ as a product $s_1 \cdots s_n$ with the smallest possible number of generators from $S \cup S^{-1}$. Notice that $n=\|g^{-1}h\|_S$. As a consequence of Proposition~\ref{prop:WordLength} and of the fact that the word length $\|\cdot\|_S$ satisfies the triangle inequality, we get:
$$d_{\mathrm{Cayl}(G,R)}(g,h) = \|g^{-1}h\|_R \leq \sum\limits_{i=1}^n \|s_i\|_R \leq \underset{=:B}{\underbrace{\max \{ \|s\|_R \mid s \in S \cup S^{-1} \} }} \cdot d_{\mathrm{Cayl}(G,S)}(g,h).$$
The second inequality of our proposition is proved symmetrically. 
\end{proof}

\noindent
In other words, the metrics induced on a group by its Cayley graphs with respect to finite generating sets are all \emph{biLipschitz equivalent}. More generally, let us introduce the following more general definition:

\begin{definition}
A map $\eta : X \to Y$ between two metric spaces is a \emph{coarse embedding} if there exist two maps $\rho_-,\rho_+ : [0,+ \infty) \to [0,+ \infty)$ satisfying $\rho_-(t),\rho_+(t) \to + \infty$ as $t \to + \infty$ such that
$$\rho_-(d(a,b)) \leq d(\eta(a), \eta(b)) \leq \rho_+(d(a,b))$$
for all $a,b \in X$. It is a \emph{coarse equivalence} if moreover there exists some constant $C \geq 0$ such that every point of $Y$ lies at distance $\leq C$ from $\eta(X)$.
\end{definition}

\noindent
When $\rho_-$ and $\rho_+$ are linear, one says that $\eta$ is \emph{biLipschitz}. A \emph{biLipschitz equivalence} is a surjective biLipschitz map. When $\rho_-$ and $\rho_+$ are affine, one says that $\eta$ is \emph{quasi-isometric}. Quasi-isometries are fundamental examples, especially due to the fact that a coarse equivalence between two graphs is automatically a quasi-isometry (see Exercise~\ref{ex:CoarseQI}). In particular, coarsely equivalent finitely generated groups are always quasi-isometric. 

\medskip \noindent
Let us mention a few elementary examples of quasi-isometries and coarse embeddings. See also Exercise~\ref{exo:ExQI} for an elementary example. 

\begin{ex}
It is standard that any two norms on a given finite-dimensional vector space are biLipschitz equivalent. In particular, the metric spaces $(\mathbb{R}^n, \| \cdot \|_p)$ and $(\mathbb{R}_n,\|\cdot\|_q)$ are biLipschitz equivalent for all $n \geq 1$ and $p,q \in [1, + \infty]$. 
\end{ex}

\begin{ex}
Given an $n \geq 1$, the inclusion map $\iota : \mathbb{Z}^n \hookrightarrow \mathbb{R}^n$ is a quasi-isometry, where $\mathbb{R}^n$ is endowed with any of its norms. In fact, if we endow $\mathbb{Z}^n$ with the word-length given by its coordinate vectors and if we endow $\mathbb{R}^n$ with the norm $\| \cdot\|_1$, then $\iota$ is an isometric embedding. A quasi-inverse (see Exercise~\ref{exo:QuasiInverse}) of $\iota$ is
$$\left\{ \begin{array}{ccc} \mathbb{R}^n & \to & \mathbb{Z}^n \\ (x_i)_{1 \leq i \leq n} & \mapsto & \left( \lfloor x_i \rfloor \right)_{1 \leq i \leq n} \end{array} \right..$$
\end{ex}

\begin{ex}
As shown on the figure below, tile the plane using squares whose bottom edges is subdivided. One gets a graph $\mathbb{H}$, in which we fix an horizontal line $L$. As suggested by the figure below, two vertices at distance $2^n$ in $L$ are at distance $\leq 2n+1$ in $\mathbb{H}$. Thus, the inclusion map $L \hookrightarrow \mathbb{H}$ is a coarse embedding but not a quasi-isometric embedding.
\begin{center}
\includegraphics[trim=0 16cm 0 0,clip,width=\linewidth]{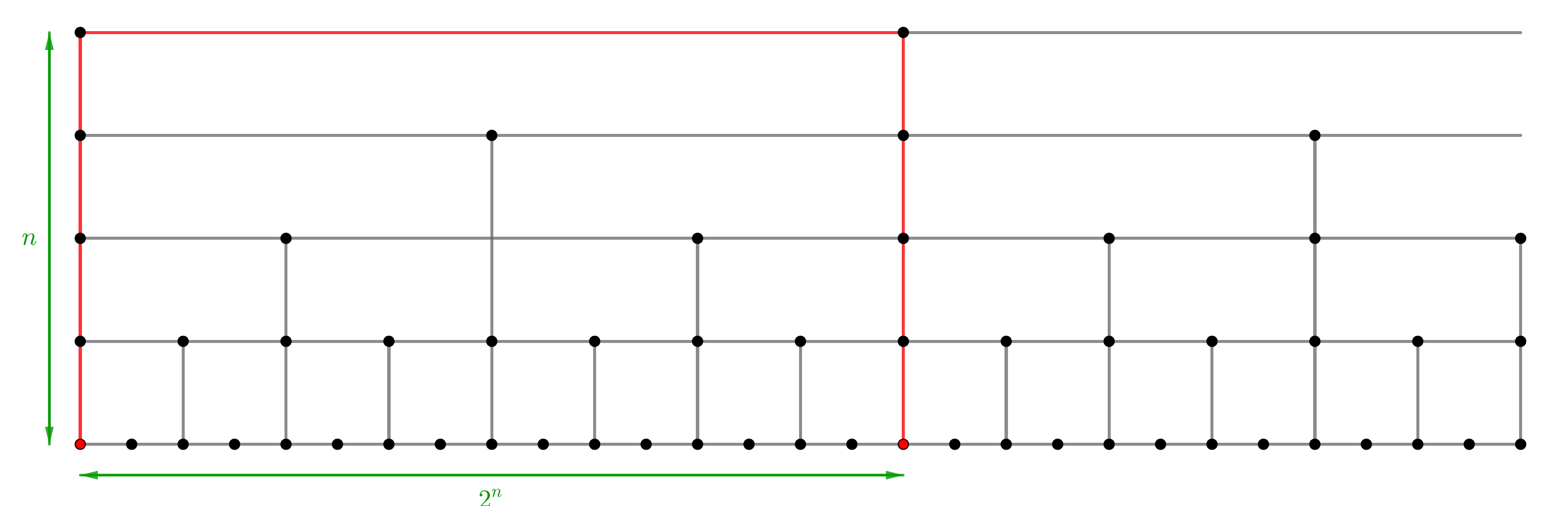}
\end{center}
In fact, $\mathbb{H}$ turns out to be quasi-isometric to the hyperbolic plane $\mathbb{H}^2$ and $L$ corresponds to an horocycle. Our graph exemplifies the fact that, in a hyperbolic space $\mathbb{H}^n$, horospheres yield coarse embeddings $\mathbb{Z}^n \hookrightarrow \mathbb{H}^n$ that are not quasi-isometric embeddings.
\end{ex}

\medskip \noindent
Examples of quasi-isometries between finitely generated groups include:

\begin{lemma}\label{lem:QIComm}
Let $G$ be a finitely generated group.
\begin{itemize}
	\item[(i)] If $H \leq G$ is a finite-index subgroup, then $H$ is finitely generated and the inclusion map $H \hookrightarrow G$ is a quasi-isometry.
	\item[(ii)] If $N \lhd G$ is a finite normal subgroup, then $G/N$ is finitely generated and the quotient map $G \twoheadrightarrow G/N$ is a quasi-isometry. 
\end{itemize}
\end{lemma}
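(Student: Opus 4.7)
The plan is to handle the two items separately: the first requires the small combinatorial Schreier rewriting construction of a finite generating set for $H$, while the second is essentially bookkeeping around the finiteness of $N$.

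For (i), fix a finite generating set $S$ of $G$, a finite set of representatives $T \subset G$ of the right cosets $H \backslash G$ with $1 \in T$, and write $\bar{g} \in T$ for the unique representative of $Hg$. I would first exhibit the finite set
$$S_H := \{ t s \overline{ts}^{-1} : t \in T, \ s \in S \cup S^{-1} \} \subset H$$
as a generating set of $H$. Given $h \in H$ with a shortest factorisation $h = s_1 \cdots s_n$ of length $n = \|h\|_S$ in $S \cup S^{-1}$, the telescoping identity
$$h = (t_0 s_1 t_1^{-1})(t_1 s_2 t_2^{-1}) \cdots (t_{n-1} s_n t_n^{-1}),$$
with $t_0 := 1$ and $t_i := \overline{s_1 \cdots s_i}$ (so that $t_n = 1$ since $h \in H$), simultaneously shows that $S_H$ generates $H$ and, using Proposition~\ref{prop:WordLength}, that $\|h\|_{S_H} \leq \|h\|_S$. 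Conversely, each element of $S_H$ has $S$-word length at most $2D + 1$, where $D := \max_{t \in T} \|t\|_S$, so the triangle inequality yields $\|h\|_S \leq (2D+1)\|h\|_{S_H}$ on $H$. Hence the inclusion $H \hookrightarrow G$ is biLipschitz, and it is cobounded since every $g \in G$ decomposes as $g = h_g \bar{g}$ with $d_G(g, h_g) = \|\bar{g}\|_S \leq D$; the inclusion is therefore a quasi-isometry.

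For (ii), I would endow $G/N$ with the finite generating set $\bar{S} := \{ sN : s \in S \}$. The quotient map $\pi : G \twoheadrightarrow G/N$ is clearly $1$-Lipschitz, since any factorisation of $g_1^{-1} g_2$ in $S \cup S^{-1}$ projects to a factorisation of $\pi(g_1)^{-1}\pi(g_2)$ in $\bar{S} \cup \bar{S}^{-1}$. Conversely, if $\pi(g_1)^{-1}\pi(g_2) = (s_1 N) \cdots (s_n N)$ realises $n = d_{G/N}(\pi(g_1), \pi(g_2))$, then $g_1^{-1} g_2 = s_1 \cdots s_n \cdot \nu$ for some $\nu \in N$, so
$$d_G(g_1, g_2) \leq n + M = d_{G/N}(\pi(g_1), \pi(g_2)) + M,$$
with $M := \max_{\mu \in N}\|\mu\|_S < \infty$ because $N$ is finite. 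Since $\pi$ is surjective, it is a quasi-isometry.

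The main obstacle is the construction in (i): a naive candidate generating set of $H$ (such as $H \cap B_R^G(1)$ for some radius $R$) requires a bound that is not transparent a priori, whereas Schreier's set $S_H$ packages finite generation of $H$ and the biLipschitz comparison between $d_G|_H$ and $d_H$ into the same telescoping identity. Item (ii) is then routine, essentially reducing to the observation that the finite normal subgroup $N$ has bounded diameter in $(G, d_S)$.
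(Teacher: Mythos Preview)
Your proof is correct and follows essentially the same route as the paper: both arguments hinge on the same telescoping identity through coset representatives, and part~(ii) is identical. The only cosmetic difference is that the paper takes $R := \{h \in H : \|h\|_S \leq 1 + 2L\}$ as its generating set for $H$ --- precisely the ``naive candidate'' $H \cap B_R^G(1)$ you dismiss in your final paragraph --- and makes it work via the very same telescoping formula you wrote down.
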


\begin{proof}
Fix a finite generating set $S \subset G$. We start by proving (i). So let $H \leq G$ be a finite-index subgroup. Fix a set of representatives $H=Ha_1, \ldots, Ha_k$ of $G$ modulo $H$ and note $L:= \max \{ \|a_i\|_S \mid 1 \leq i \leq k\}$. We claim that the finite subset
$$R:= \{ h \in H \mid \|h\|_S \leq 1+2L \}$$
generates $H$. So let $h \in H$ be an arbitrary element. Write $h$ as a product $s_1 \cdots s_n$ with $s_1, \ldots, s_n \in S \cup S^{-1}$ and $n=\|h\|_S$. For every $0 \leq i \leq n$, there exists $1 \leq r(i) \leq k$ such that $s_1 \cdots s_i \in Ha_{r(i)}$, say $s_1 \cdots s_i = h_ia_{r(i)}$ for some $h_i \in H$. 
\begin{center}
\includegraphics[width=0.6\linewidth]{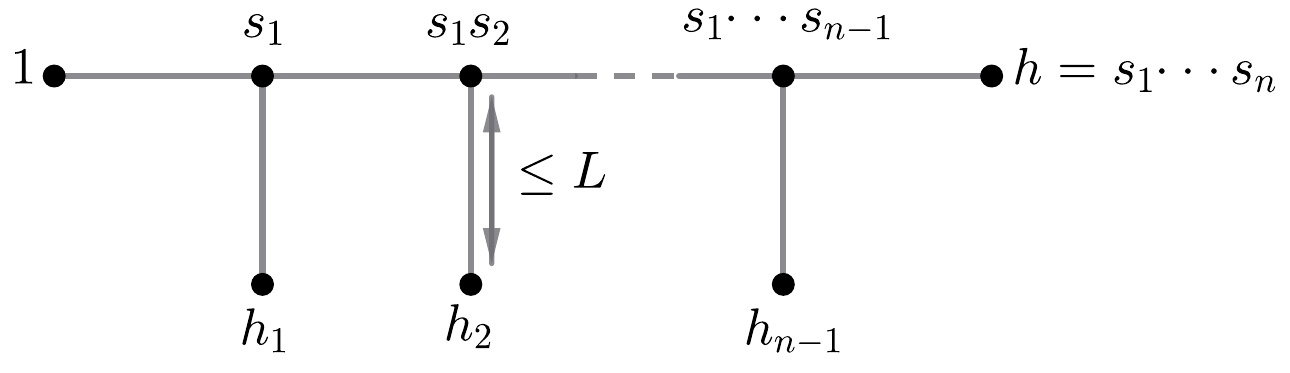}
\end{center}
Notice that $h_0=1$. Since $s_1 \cdots s_n \in H$, we can assume that $h_n=h$. Then
$$h= (h_0^{-1}h_1) \cdot (h_1^{-1}h_2) \cdot (h_2^{-1}h_3) \cdots (h_{n-1}^{-1}h_n)$$
where, for every $0 \leq i \leq n-1$, we have
$$\| h_i^{-1} h_i\|_S \leq \| h_i^{-1}s_1 \cdots s_i \|_S + \|s_{i+1}\|_S + \|s_{i+1}^{-1} \cdots s_1^{-1} h_{i+1}\|_S \leq 1+ 2 L,$$
hence $h_i^{-1}h_i \in R$. Thus, it is possible write $h$ as a product of $n= \|h\|_S$ elements from $R$. We conclude that $R$ indeed generates $H$, but also that $\|\cdot\|_R \leq \| \cdot \|_S$. Next, given an element $h \in H$, which we write as $r_1 \cdots r_m$ where $r_1, \ldots, r_m \in R$ and $m=\|h\|_R$, we have
$$\|h\|_S \leq \sum\limits_{i=1}^m \|r_i\|_S \leq (1+2L)m = (1+2L) \cdot \|h\|_R$$
by definition of $R$. Thus, the word length $\|\cdot\|_R$ and $\|\cdot\|_S$ are biLipschitz equivalent on $H$, proving that the inclusion map $H \hookrightarrow G$ is a quasi-isometric embedding. Notice that, given an arbitrary element $g \in G$, we can write $g=ha_i$ for some $h \in H$ and $1 \leq i \leq k$, hence 
$$d_{\mathrm{Cayl}(G,S)}(g,H) \leq d_{\mathrm{Cayl}(G,S)}(ha_i,h) = \|a_i\|_S \leq L.$$
Therefore, the inclusion map $H \hookrightarrow G$ is actually a quasi-isometric embedding. This concludes the proof of (i).

\medskip \noindent
Now, let us prove (ii). Let $N \lhd G$ be a finite normal subgroup and let $\pi : G \twoheadrightarrow G/N$ denote the quotient map. Notice that, given a finite generating set $S \subset G$, clearly $\pi(S)$ is a finite generating set of $G/N$. Since $\pi$ is surjective, it only remains to verify that $\pi$ is a quasi-isometric embedding. So let $g \in G$ be an arbitrary element. First, write $g=s_1 \cdots s_n$ with $s_1, \ldots, s_n \in S \cup S^{-1}$ and $n= \|g\|_S$. We have
$$\| \pi(g) \|_{\pi(S)} \leq \sum\limits_{i=1}^n \| \pi(s_i)\|_{\pi(S)} = n = \|g\|_S.$$
Next, write $\pi(g)= \pi(r_1) \cdots \pi(r_m)$ with $r_1, \ldots, r_m \in S \cup S^{-1}$ and $m= \|\pi(g)\|_{\pi(S)}$. Since $\pi(g)= \pi(r_1 \cdots r_m)$, $g$ must belong to $r_1 \cdots r_m N$, say $g=r_1 \cdots r_m h$ for some $h \in N$. Hence
$$\|g\|_S \leq \sum\limits_{i=1}^m \|r_i\|_S  + \|h \|_S \leq \|\pi(g)\|_{\pi(S)} + D$$
where $D:= \max \{ \|k\|_S \mid k \in N\}$, which is finite as $N$ is finite. This concludes the proof of (ii). 
\end{proof}

\noindent
However, coarse embeddings between groups may not be quasi-isometric embeddings, as justified by the following example. (See also Proposition~\ref{prop:DistortedZwrZ} below.)

\begin{ex}\label{ex:DistortionBS}
Given an integer $n \geq 2$, consider the Baumslag-Solitar group
$$\mathrm{BS}(1,2):= \langle a,t \mid tat^{-1}= a^2 \rangle.$$
We claim that the inclusion map $\langle a \rangle \hookrightarrow \mathrm{BS}(1,2)$ is not a quasi-isometric embedding. Notice that, for every $n \geq 1$,
$$\begin{array}{lcl} t^nat^{-n} & = & \displaystyle t^{n-1} a^2 t^{-n+1} = \left( t^{n-1}a t^{-n+1} \right)^2 = \left( t^{n-2}a^2t^{-n+2} \right)^2 \\ \\ & = & \displaystyle \left( t^{n-2} a t^{-n+2} \right)^4 = \cdots = a^{2^n}, \end{array}$$
hence $\|a^{2^n}\|_{\{a\}} = 2^n$ but $\|a^{2^n}\|_{\{a,t\}} \leq 1+2n$. So there is no constants $A$ and $B$ such that $\|\cdot\|_{\{a\}} \leq A \cdot \|\cdot\|_{\{a,t\}} +B$ on $\langle a \rangle$. 
\end{ex}

\noindent
Despite the fact that subgroups may not be quasi-isometrically embedded, they are always coarsely embedded.

\begin{lemma}
Let $G$ be a finitely generated group and $H \leq G$ an arbitrary finitely generated subgroup. The inclusion map $H \hookrightarrow G$ is a coarse embedding.
\end{lemma}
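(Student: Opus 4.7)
The plan is to fix finite generating sets $R \subset H$ and $S \subset G$, and to exhibit two control functions $\rho_-, \rho_+ : [0,+\infty) \to [0,+\infty)$ for the inclusion $\iota : (H, \|\cdot\|_R) \hookrightarrow (G, \|\cdot\|_S)$. By translation invariance of the word metrics on both sides, it suffices to produce $\rho_-, \rho_+$ such that $\rho_-(\|h\|_R) \leq \|h\|_S \leq \rho_+(\|h\|_R)$ for every $h \in H$.

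For the upper bound, I would repeat the argument already used several times above. Writing $h \in H$ as a product $r_1 \cdots r_m$ with $r_1, \ldots, r_m \in R \cup R^{-1}$ and $m = \|h\|_R$, the triangle inequality for the $S$-word length gives
$$\|h\|_S \leq \sum_{i=1}^m \|r_i\|_S \leq C \cdot \|h\|_R, \quad \text{where } C := \max\{\|r\|_S : r \in R \cup R^{-1}\}.$$
Thus $\rho_+(t) := Ct$ works; in particular $\iota$ is already Lipschitz, which is the easy half.

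The interesting half is the lower bound, which, by Example~\ref{ex:DistortionBS}, cannot be taken linear in general. The key ingredient is that the Cayley graph $\mathrm{Cayl}(G,S)$ is locally finite, so for every integer $N \geq 0$ the ball $B_N := \{g \in G : \|g\|_S \leq N\}$ is finite. Hence $B_N \cap H$ is finite too, and the quantity
$$f(N) := \max \{ \|h\|_R \mid h \in H, \ \|h\|_S \leq N \}$$
is well-defined and finite (set $f(N)=0$ if $B_N \cap H = \{1\}$). Note that $f$ is non-decreasing.

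Finally I would define
$$\rho_-(n) := \inf \{ N \geq 0 \mid f(N) \geq n \}$$
and check the two required properties. First, for any $h \in H$, taking $N = \|h\|_S$ gives $f(\|h\|_S) \geq \|h\|_R$, hence $\rho_-(\|h\|_R) \leq \|h\|_S$. Second, $\rho_-(n) \to +\infty$ as $n \to +\infty$: indeed, for every fixed $N$, the value $f(N)$ is finite, so any $n > f(N)$ forces $\rho_-(n) > N$. This concludes the proof. The only potential obstacle is purely bureaucratic — extending $\rho_-$ from integers to $[0,+\infty)$ and ensuring monotonicity — which is handled by the infimum definition above; there is no genuine difficulty, since the statement is essentially the observation that any proper Lipschitz map between locally finite graphs is automatically a coarse embedding.
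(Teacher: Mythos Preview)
Your proof is correct and follows essentially the same approach as the paper: both arguments hinge on the local finiteness of $\mathrm{Cayl}(G,S)$ (balls are finite) to control the lower bound. The only cosmetic difference is that the paper defines $\rho(k):=\min\{d_G(a,b)\mid d_H(a,b)=k\}$ and shows $\rho(k)\to\infty$ by contradiction, whereas you build the control function directly via the generalized inverse of $f(N)=\max\{\|h\|_R:\|h\|_S\le N\}$; these are dual formulations of the same idea.
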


\begin{proof}
Fix two finite generating sets $R \subset H$ and $S \subset G$. Since we can choose our generating set without modifying the metric up to biLipschitz equivalence, we can assume that $R \subset S$ (up to replacing $S$ with $R \cup S$). We have
$$\rho(d_H(a,b)) \leq d_G(a,b) \leq d_H(a,b) \text{ for all } a,b \in H$$
where $d_G$ (resp.\ $d_H$) denotes the word-metric with respect to $S$ (resp.\ $R$) and where
$$\rho(k):= \min \{ d_G(a,b) \mid a,b \in H \text{ such that } d_H(a,b)=k \} \text{ for every } k \geq 0.$$
In order to conclude that the inclusion map $H \hookrightarrow G$ is a coarse embedding, it suffices to verify that $\rho(k) \to + \infty$ as $k \to + \infty$. If it is not the case, then we can find a constant $D \geq 0$ and two sequences $(a_n)_{n \geq 0}, (b_n)_{n \geq 0}$ of elements in $H$ such that, for every $ n \geq 0$, $d_G(a_n,b_n) \leq D$ but $d_H(a_n,b_n) \geq n$. Since $\|a_n^{-1}b_n\|_S \leq D$ for every $n \geq 0$, necessarily $a_n^{-1}b_n$ can take only finitely many values (as balls in $G$ are finite). A fortiori, $d_H(a_n,b_n) = \| a_n^{-1}b_n\|_R$ also takes finitely many values, a contradiction with the fact that $d_H(a_n,b_n) \geq n$ for every $n \geq 0$. 
\end{proof}

\noindent
But let us come back to lamplighters. The central question addressed by this mini-course is: when are two finitely generated wreath products quasi-isometric? A first step in this direction is the following observation:

\begin{prop}[\cite{MR1800990}]\label{prop:biLipWreath}
Let $(X_1,o_1), (X_2,o_2)$ be two pointed graphs and $Y_1,Y_2$ two graphs. If $X_1$ and $Y_1$ are respectively biLipschitz equivalent to $X_2$ and $Y_2$, then $(X_1,o) \wr Y_1$ and $(X_2,o) \wr Y_2$ are biLipschitz equivalent as well. 
\end{prop}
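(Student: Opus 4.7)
The plan is to base the proof on an explicit distance formula for wreath products of graphs. Given two vertices $(\varphi_1, p_1), (\varphi_2, p_2) \in (X,o) \wr Y$, set $\Delta := \{y \in V(Y) : \varphi_1(y) \neq \varphi_2(y)\}$. Any edge-path between them must perform at least $d_X(\varphi_1(y), \varphi_2(y))$ lamp-changing edges at each $y \in \Delta$ and trace a walk in $Y$ from $p_1$ to $p_2$ visiting every vertex of $\Delta$, and both contributions can be realised independently. This yields
$$d_{(X,o) \wr Y}((\varphi_1, p_1), (\varphi_2, p_2)) = T_Y(p_1, p_2, \Delta) + \sum_{y \in V(Y)} d_X(\varphi_1(y), \varphi_2(y)),$$
where $T_Y(p_1, p_2, \Delta)$ denotes the minimum length of a walk in $Y$ from $p_1$ to $p_2$ visiting every vertex of $\Delta$. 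I would first establish this formula.

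Now let $f : X_1 \to X_2$ and $g : Y_1 \to Y_2$ be biLipschitz bijections. The natural candidate is $F(\varphi, p) := (f \circ \varphi \circ g^{-1}, g(p))$. However, as written $F$ lands in $(X_2, f(o_1)) \wr Y_2$ rather than $(X_2, o_2) \wr Y_2$, since $\varphi$ being equal to $o_1$ outside a finite set is equivalent to $f \circ \varphi \circ g^{-1}$ being equal to $f(o_1)$ outside a finite set. To fix this, I would precompose $f$ with the involution $\sigma$ of $V(X_2)$ that transposes $f(o_1)$ and $o_2$ and fixes every other vertex. A triangle-inequality argument shows that $\sigma$ is biLipschitz on $X_2$ with constants depending only on $d_{X_2}(f(o_1), o_2)$, which is finite since $X_2$ is connected. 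Up to replacing $f$ with $\sigma \circ f$, I can then assume $f(o_1) = o_2$, which makes $F$ a genuine bijection onto $(X_2, o_2) \wr Y_2$.

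It remains to check that $F$ is biLipschitz. Plugging $F(\varphi_1, p_1)$ and $F(\varphi_2, p_2)$ into the distance formula and reindexing via the bijection $g$, the lamp-changing term becomes $\sum_{y \in V(Y_1)} d_{X_2}(f(\varphi_1(y)), f(\varphi_2(y)))$, which is within biLipschitz constants of $\sum_{y \in V(Y_1)} d_{X_1}(\varphi_1(y), \varphi_2(y))$ by the biLipschitz property of $f$. For the walk term, any walk in $Y_1$ of length $L$ visiting $\Delta$ pushes forward under $g$ to a walk in $Y_2$ visiting $g(\Delta)$ of length at most a constant multiple of $L$, by connecting consecutive $g$-images with shortest paths in $Y_2$; a symmetric argument using $g^{-1}$ gives the reverse inequality.

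The principal subtlety is the rigorous justification of the distance formula and the basepoint reduction; once these are in place, the result follows by routine bookkeeping of biLipschitz constants.
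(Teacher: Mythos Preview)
Your proposal is correct and follows essentially the same route as the paper: establish the travelling-salesman distance formula (the paper's Lemma~\ref{lem:DistWreath}), define the map $(\varphi,p)\mapsto (f\circ\varphi\circ g^{-1}, g(p))$, and control each term in the formula separately using the biLipschitz constants of $f$ and $g$. The one point where you are actually more careful than the paper is the basepoint reduction: the paper writes the map down and asserts it is ``clearly a bijection'' without addressing whether $\alpha(o_1)=o_2$, whereas you explicitly fix this via the involution $\sigma$.
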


\noindent
It is worth mentioning that we already know that, given a finitely generated group $H$ and two finite groups $F_1, F_2$, the two wreath products $F_1 \wr H$ and $F_2 \wr H$ are biLipschitz equivalent whenever $|F_1|=|F_2|$. Indeed, fixing a finite generating set $S \subset H$, it follows from Proposition~\ref{prop:CaylWreath} that the Cayley graphs $\mathrm{Cayl}(F_1 \wr H, F_1 \cup S)$ and $\mathrm{Cayl}(F_2 \wr H, F_2 \cup S)$ are both isomorphic to a wreath product $(K_n,o) \wr \mathrm{Cayl}(H,S)$ where $K_n$ denotes the complete graph of size $|F_1|=|F_2|$, or equivalently to the lamplighter graph $\mathcal{L}_n(\mathrm{Cayl}(H,S))$. So $F_1 \wr H$ and $F_2 \wr H$ turn out to share a common Cayley graph. 

\medskip \noindent
In order to prove the general case of our proposition, we first need to understand distances in wreath products of graphs. This is the purpose of our next lemma.

\begin{lemma}\label{lem:DistWreath}
Let $(X,o)$ be a pointed graph and $Y$ a graph. The equality
$$d((c_1,p_1),(c_2,p_2)) = \mathrm{TS}(p_1, c_1 \triangle c_2, p_2) + \sum\limits_{q \in c_1 \triangle c_2} d(c_1(q),c_2(q))$$
holds for all vertices $(c_1,p_1),(c_2,p_2) \in (X,o) \wr Y$. 
\end{lemma}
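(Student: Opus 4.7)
\medskip \noindent
\textbf{Proof plan.} The quantity $\mathrm{TS}(p_1, c_1 \triangle c_2, p_2)$ should be interpreted as the \emph{travelling salesman} length in $Y$: the minimal length of a walk starting at $p_1$, ending at $p_2$, and visiting every vertex of $c_1 \triangle c_2$. The plan is to prove the two inequalities separately, both of which rest on decomposing edges of $(X,o) \wr Y$ into two types: \emph{motion edges}, joining $(c,p)$ to $(c,p')$ where $\{p,p'\} \in E(Y)$, and \emph{lamp edges}, joining $(c,p)$ to $(c',p)$ where $c$ and $c'$ differ only at $p$ and take there adjacent values of $X$.

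\medskip \noindent
\textbf{Upper bound.} I would construct an explicit path realising the right-hand side. Fix a walk $p_1 = q_0, q_1, \ldots, q_N = p_2$ in $Y$, of length $N = \mathrm{TS}(p_1, c_1 \triangle c_2, p_2)$, that visits every vertex of $c_1 \triangle c_2$. The lamplighter follows this walk; the first time the arrow reaches a vertex $q \in c_1 \triangle c_2$, the lamplighter interrupts the motion and modifies the colour at $q$ from $c_1(q)$ to $c_2(q)$ along a geodesic of $X$, using $d(c_1(q),c_2(q))$ lamp edges. Concatenating these segments produces a path in $(X,o) \wr Y$ of total length $\mathrm{TS}(p_1, c_1 \triangle c_2, p_2) + \sum_{q \in c_1 \triangle c_2} d(c_1(q), c_2(q))$, yielding the desired upper bound.

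\medskip \noindent
\textbf{Lower bound.} Now take an arbitrary path $\gamma = e_1 \cdots e_\ell$ from $(c_1,p_1)$ to $(c_2,p_2)$ in $(X,o) \wr Y$. Partition its edges into motion edges and lamp edges. Erasing the lamp edges yields a walk in $Y$ from $p_1$ to $p_2$ (obtained by reading off the second coordinates). This walk necessarily passes through every vertex of $c_1 \triangle c_2$, since a lamp at $q$ can only be modified while the arrow sits at $q$; hence the number of motion edges is at least $\mathrm{TS}(p_1, c_1 \triangle c_2, p_2)$. Next, group the lamp edges according to the vertex of $Y$ at which they occur: the first coordinates of the endpoints, restricted to a fixed $q \in V(Y)$, trace a walk in $X$ from $c_1(q)$ to $c_2(q)$ whose length equals the number of lamp edges occurring over $q$. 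This number is therefore at least $d(c_1(q), c_2(q))$, which is nonzero precisely for $q \in c_1 \triangle c_2$. Summing yields $\ell \geq \mathrm{TS}(p_1, c_1 \triangle c_2, p_2) + \sum_{q \in c_1 \triangle c_2} d(c_1(q),c_2(q))$, as required.

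\medskip \noindent
\textbf{Main obstacle.} The upper bound is essentially a construction. The subtle point is the lower bound, specifically the bookkeeping when the arrow visits the same vertex $q$ multiple times and the colour at $q$ undergoes successive changes. The key observation is that, regardless of the order in which these visits occur, the successive values taken by the colour at $q$ form a single walk in $X$ from $c_1(q)$ to $c_2(q)$, so the triangle inequality in $X$ controls their count uniformly. Once this is recognised, the motion part and the lamp part decouple cleanly, and the bound drops out.
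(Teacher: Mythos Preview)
Your proof is correct and follows essentially the same approach as the paper: both arguments split edges into motion edges and lamp edges (the paper calls them ``lamp-type'' and ``type $q$''), bound the motion part below by $\mathrm{TS}(p_1,c_1\triangle c_2,p_2)$ and the lamp part at each $q$ below by $d(c_1(q),c_2(q))$, and construct an explicit path for the upper bound by following a TS-optimal walk and fixing colours along the way. Your treatment of the lower bound is in fact slightly more careful than the paper's, since you explicitly address the bookkeeping when the arrow revisits the same vertex.
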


\noindent
Here, given two colourings $c_1,c_2 : V(Y) \to V(X)$, $c_1 \triangle c_2$ refers to the set of vertices where $c_1$ and $c_2$ differ; and, given two vertices $p_1,p_2 \in V(Y)$ and a set of vertices $S \subset V(Y)$, $\mathrm{TS}(p_1,S,p_2)$ refers to the shortest length of path that starts from $p_1$, ends at $p_2$, and visits all the vertices in $S$ in between. (In other words, $\mathrm{TS}$ corresponds to the solution to \emph{travelling salesman problem}.) 

\medskip \noindent
The intuition behind Lemma~\ref{lem:DistWreath} should be clear. Moving in $(X,o) \wr Y$ amounts to moving an arrow in $Y$ that is allowed to modify the colours of the lamps it visits. Consequently, in order to go from $(c_1,p_1)$ to $(c_2,p_2)$, one has to move the arrow from $p_1$ to $p_2$, visiting along the way all vertices where $c_1$ and $c_2$ differ, where the arrow has to stop in order to modify the colour of $c_1$ into the colour of $c_2$. See Figure~\ref{GeodesicEx}.

\begin{figure}
\begin{center}
\begin{tabular}{|c|c|c|} \hline
\includegraphics[trim=0 18cm 23cm 0,clip,width=0.29\linewidth]{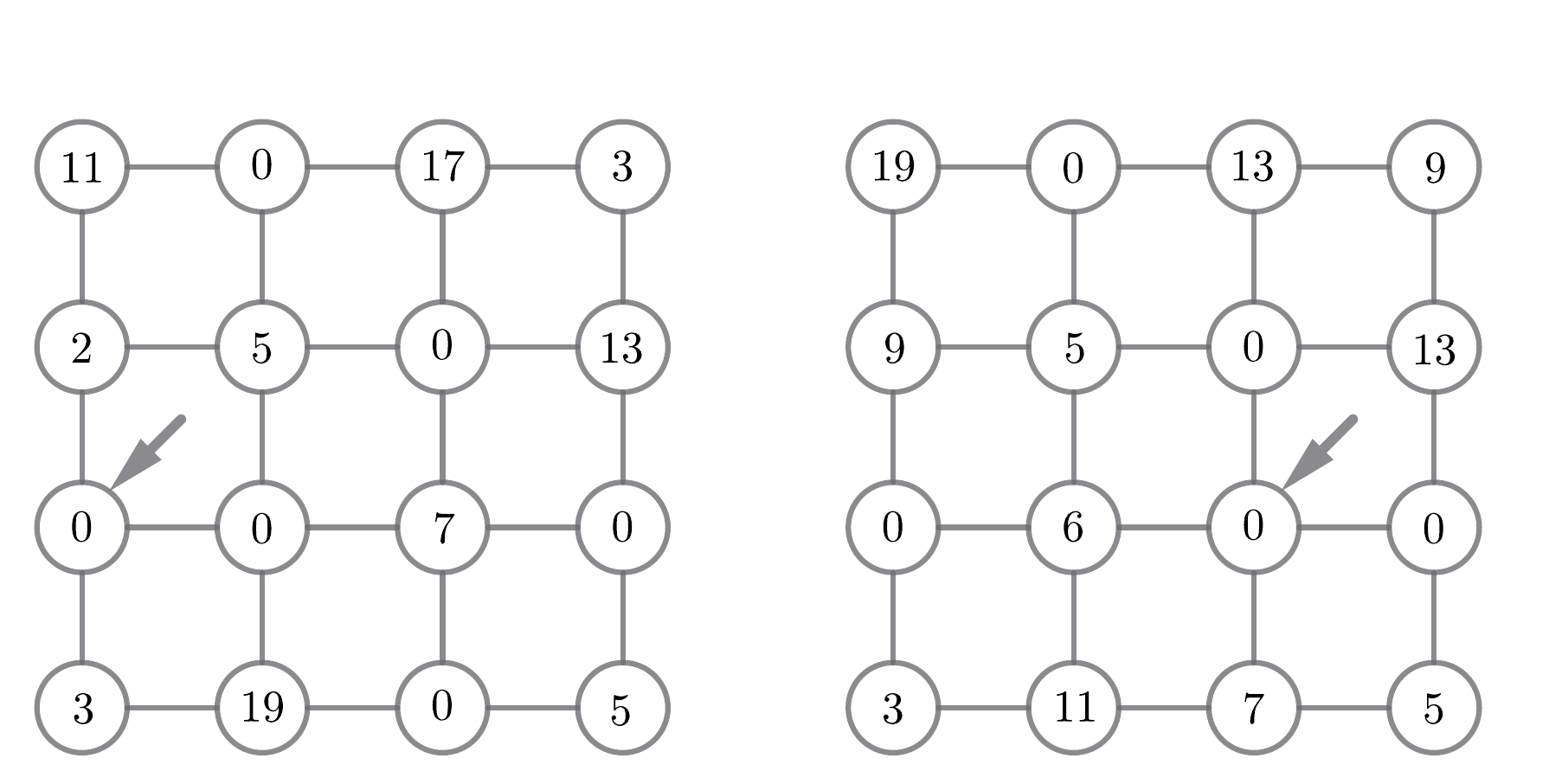} &
\includegraphics[trim=0 18cm 23cm 0,clip,width=0.29\linewidth]{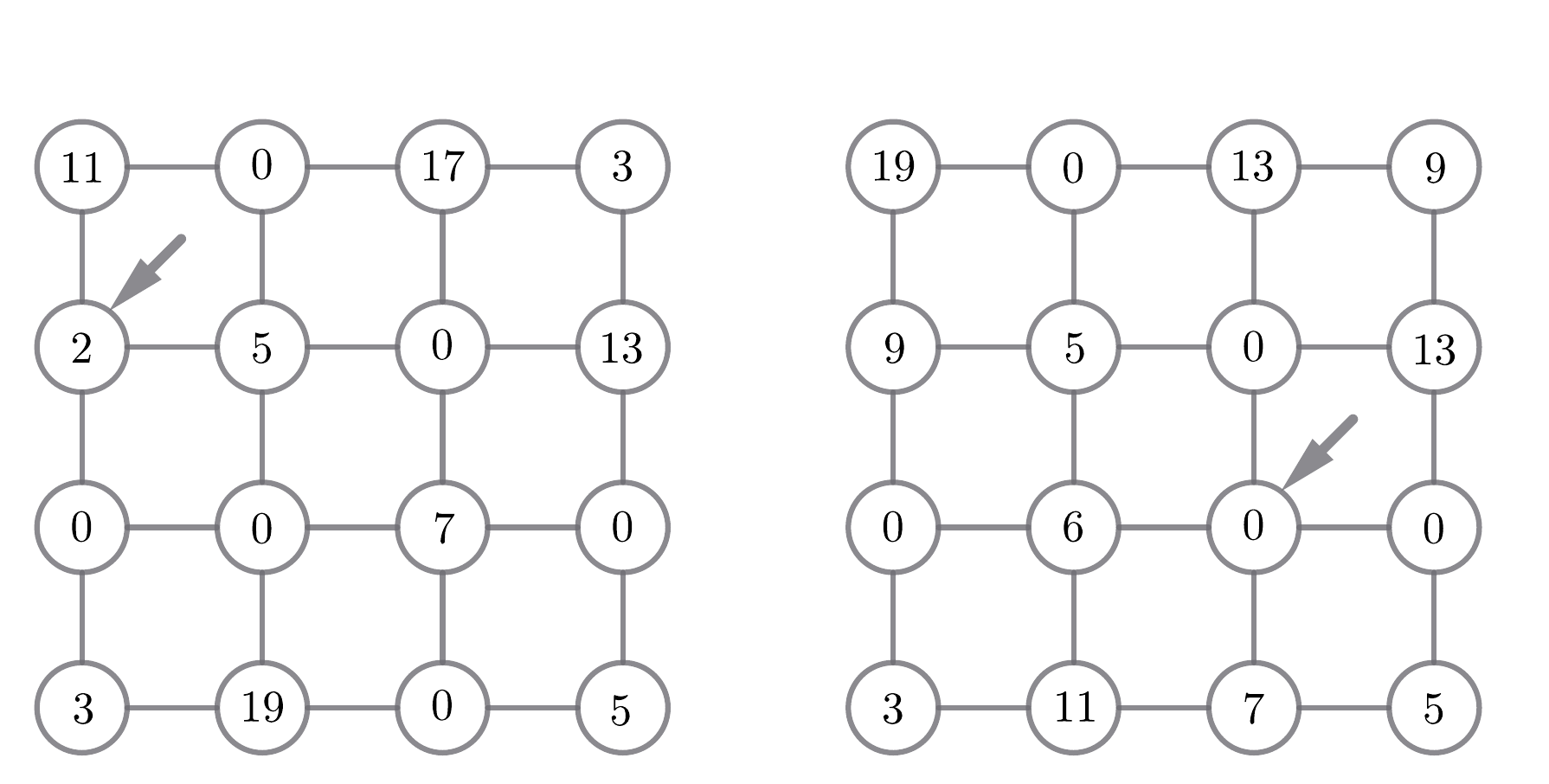} &
\includegraphics[trim=0 18cm 23cm 0,clip,width=0.29\linewidth]{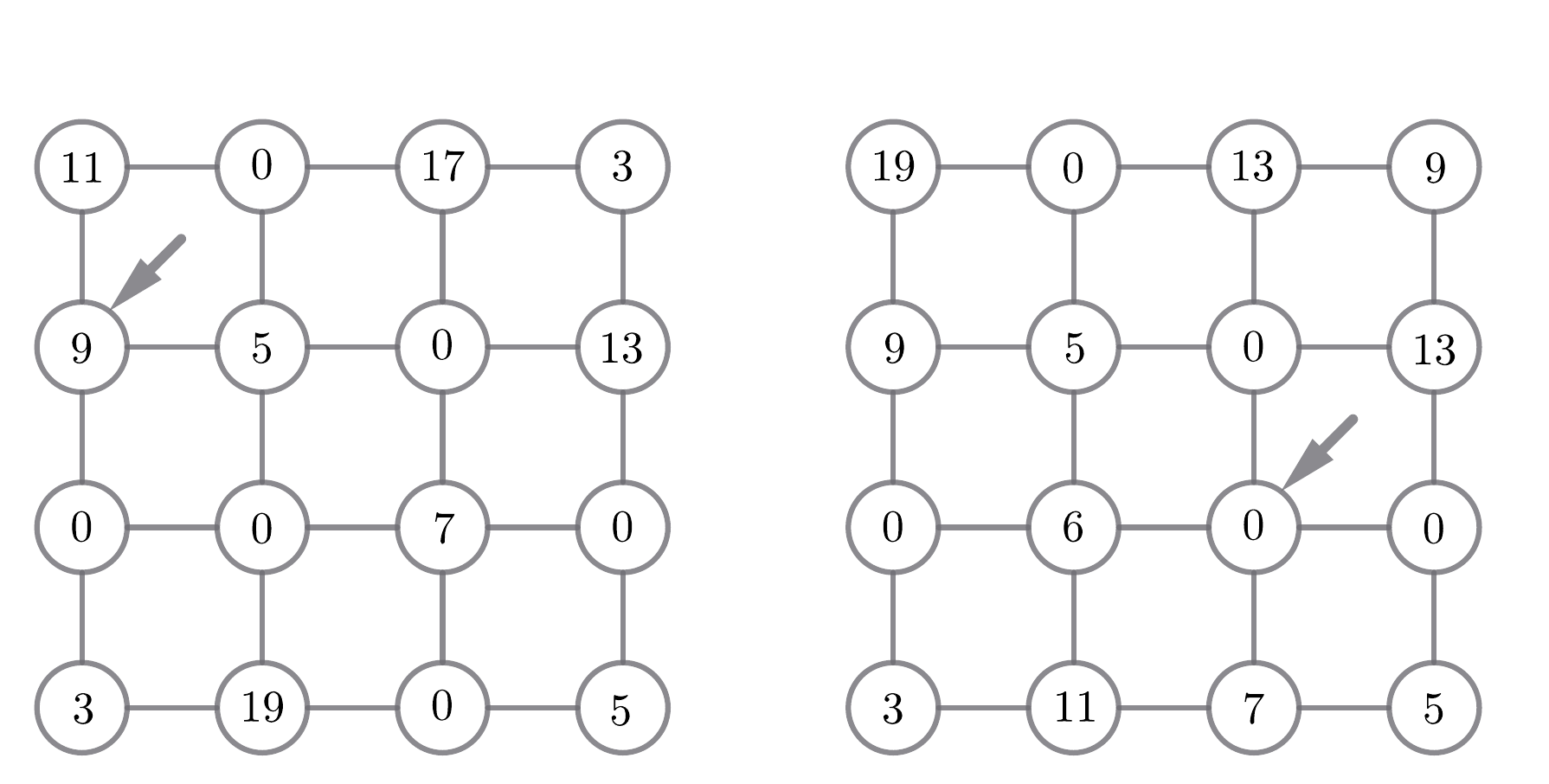} \\ \hline
\includegraphics[trim=0 18cm 23cm 0,clip,width=0.29\linewidth]{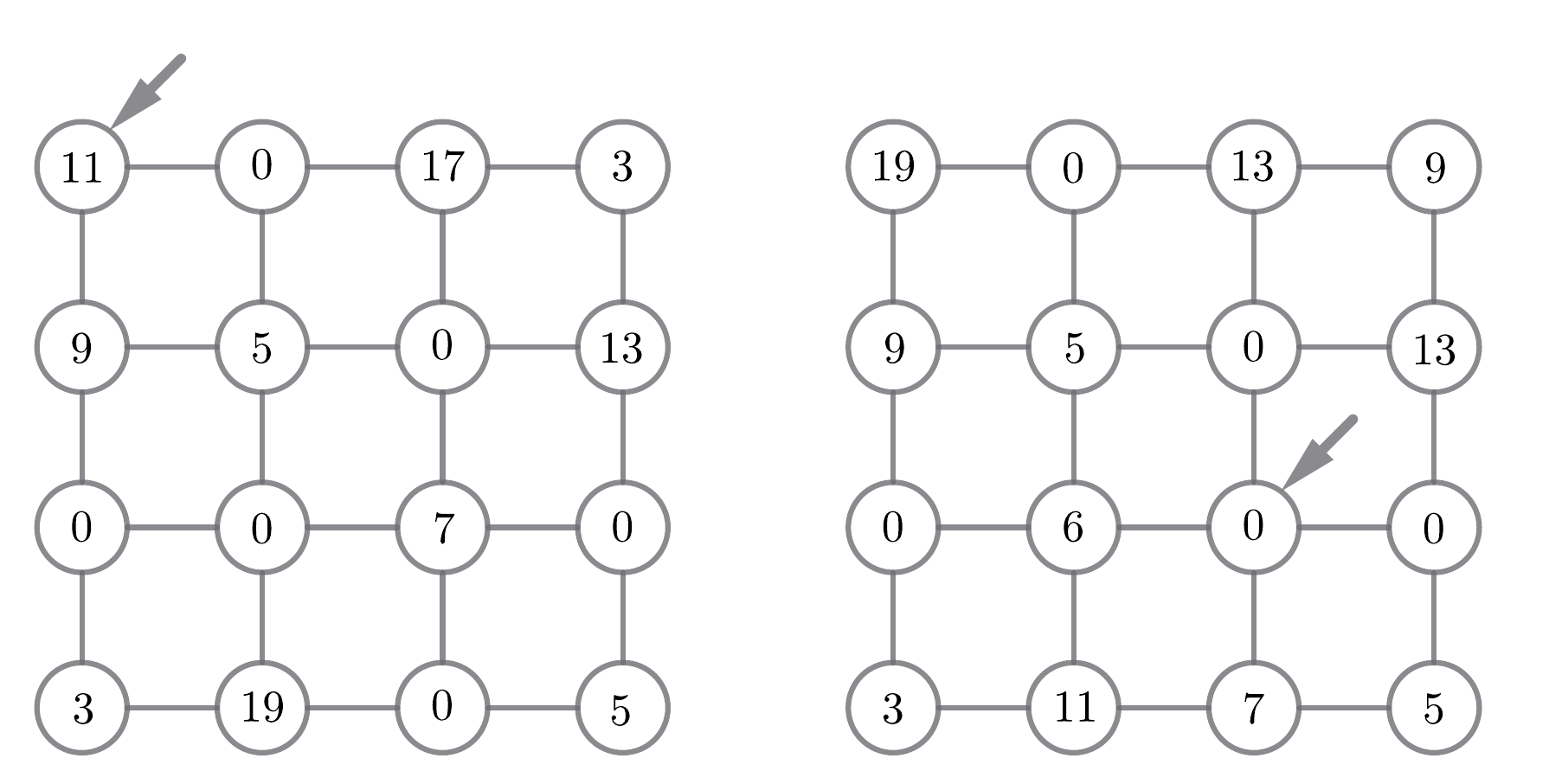} &
\includegraphics[trim=0 18cm 23cm 0,clip,width=0.29\linewidth]{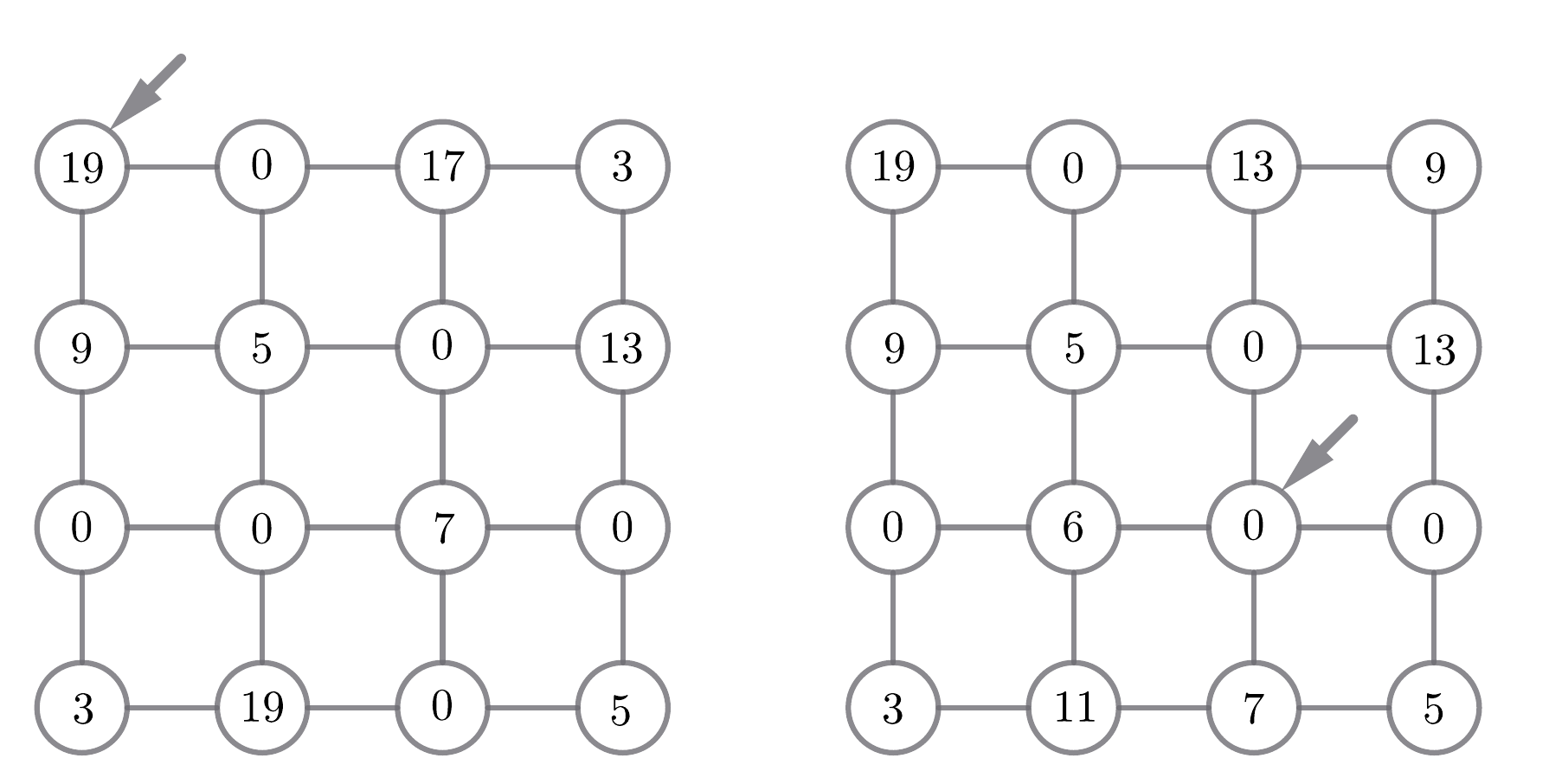} &
\includegraphics[trim=0 18cm 23cm 0,clip,width=0.29\linewidth]{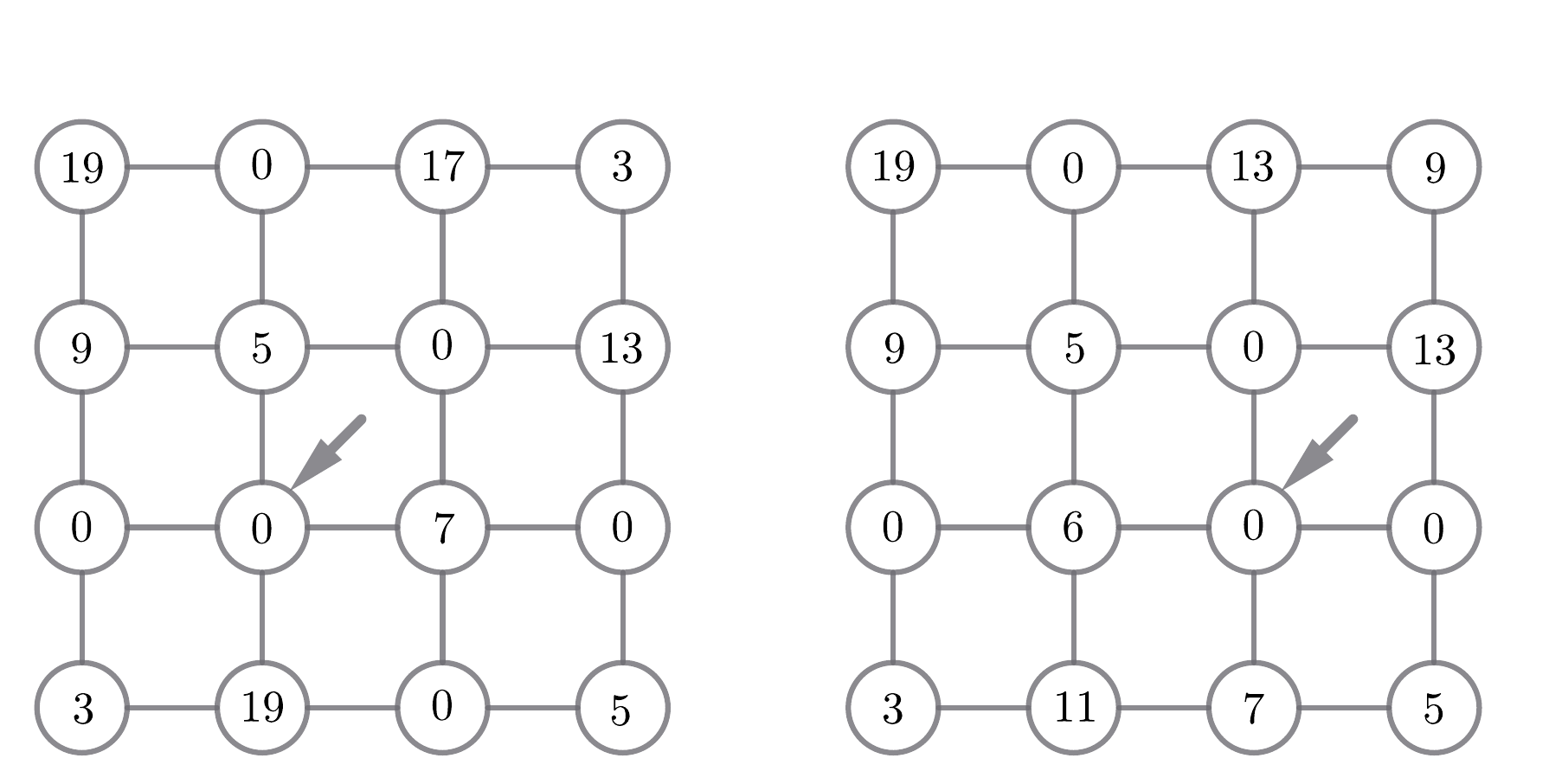} \\ \hline\includegraphics[trim=0 18cm 23cm 0,clip,width=0.29\linewidth]{G1} &
\includegraphics[trim=0 18cm 23cm 0,clip,width=0.29\linewidth]{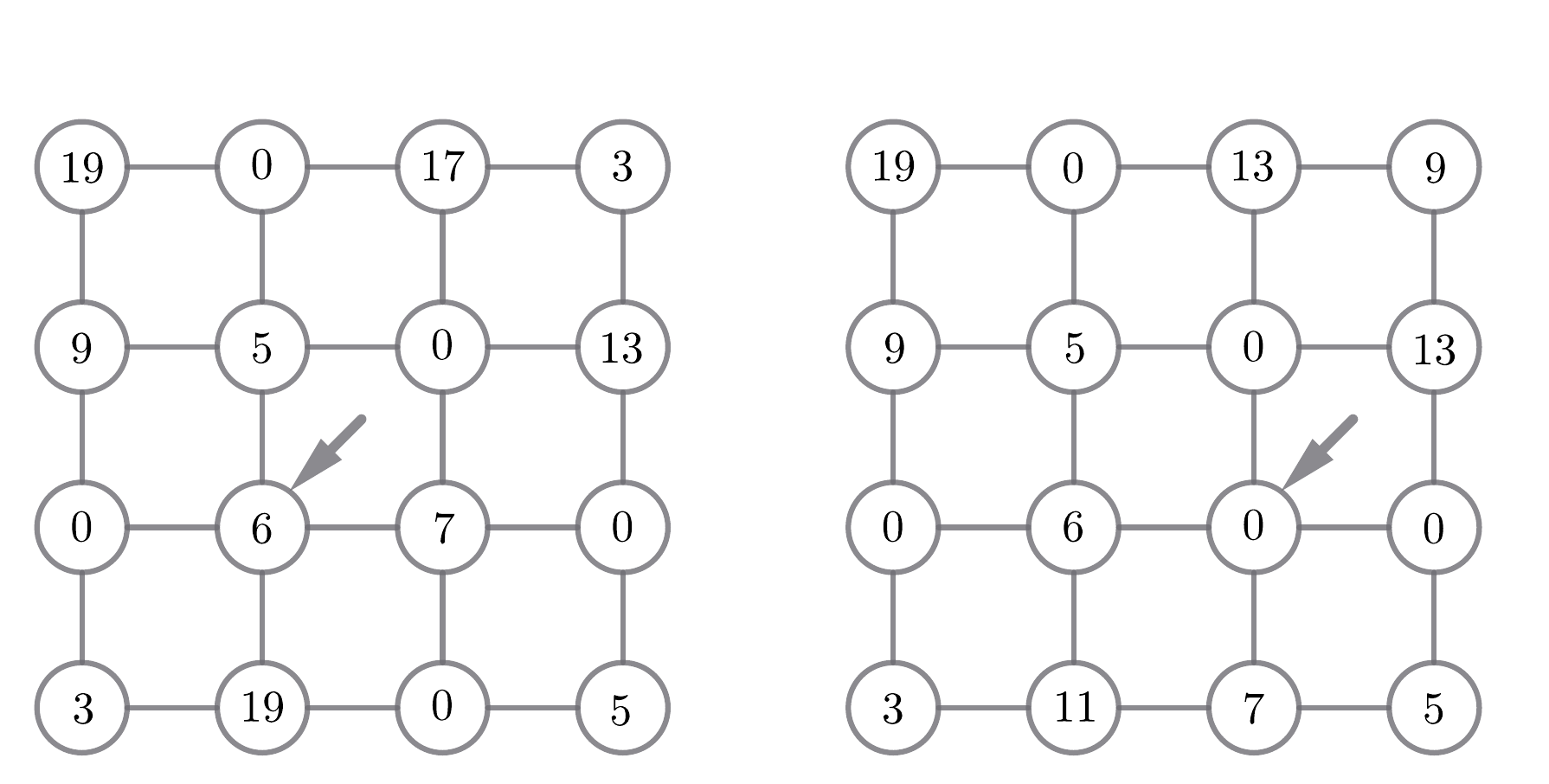} &
\includegraphics[trim=0 18cm 23cm 0,clip,width=0.29\linewidth]{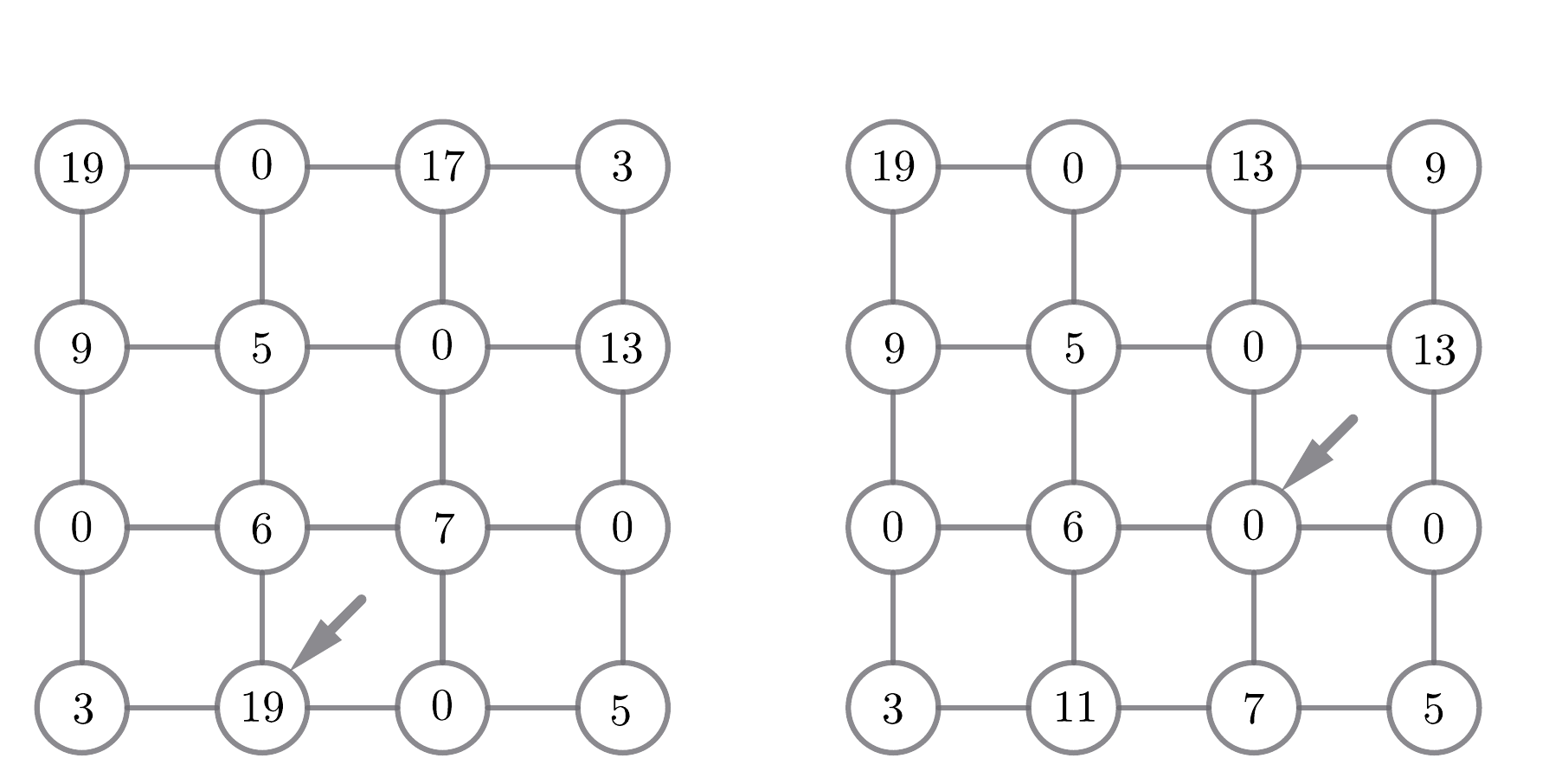} \\ \hline
\includegraphics[trim=0 18cm 23cm 0,clip,width=0.29\linewidth]{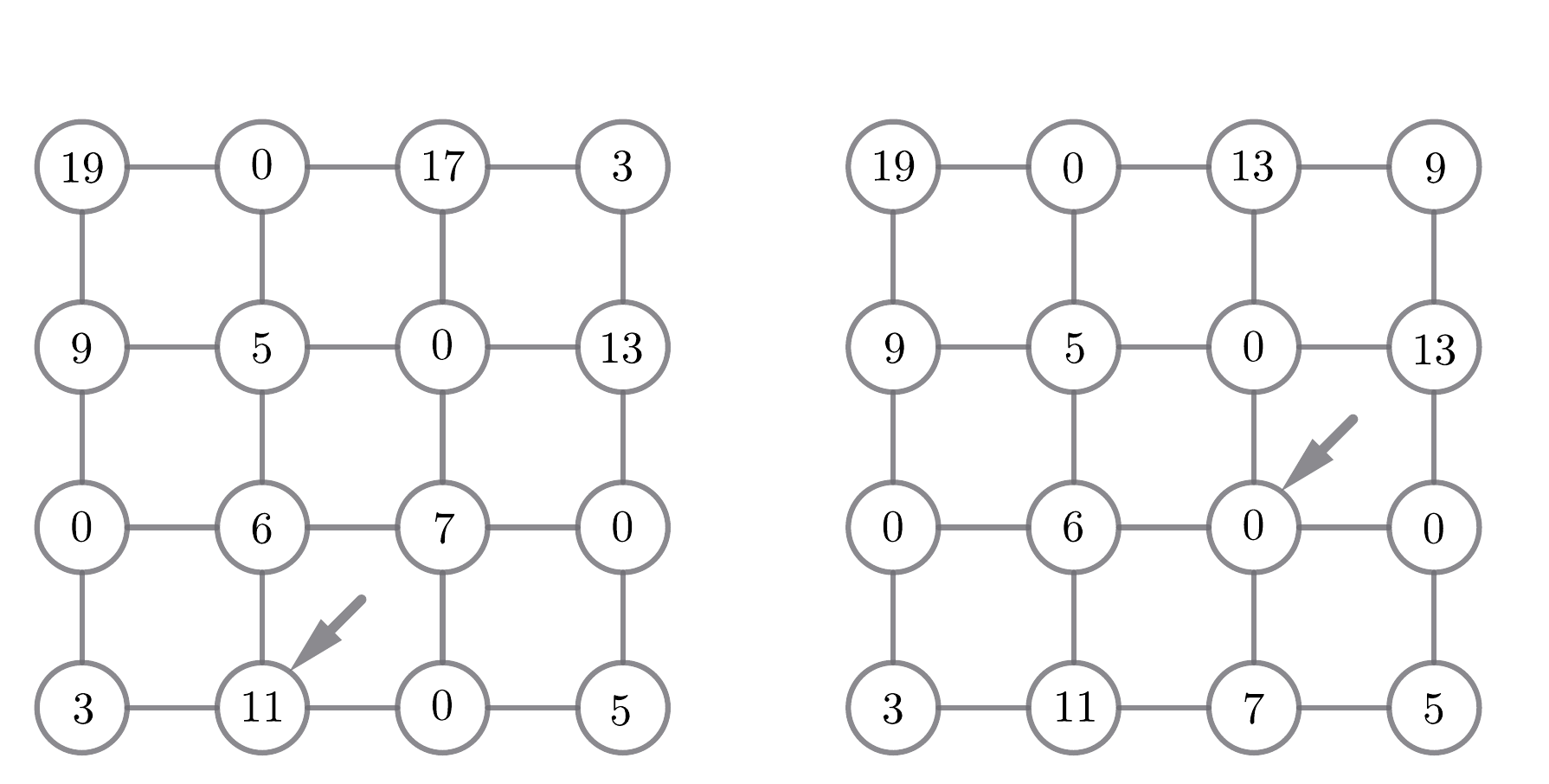} &
\includegraphics[trim=0 18cm 23cm 0,clip,width=0.29\linewidth]{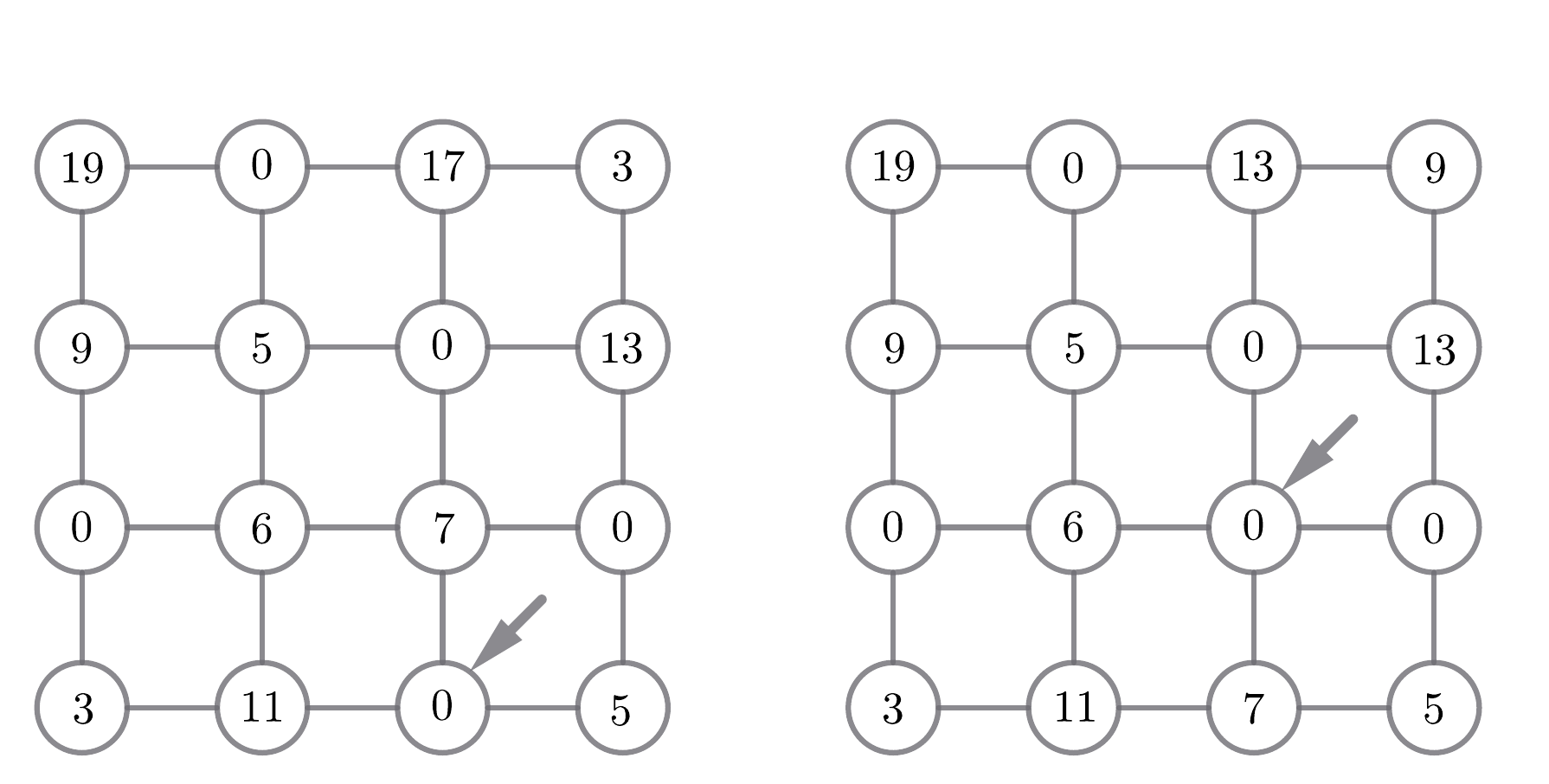} &
\includegraphics[trim=0 18cm 23cm 0,clip,width=0.29\linewidth]{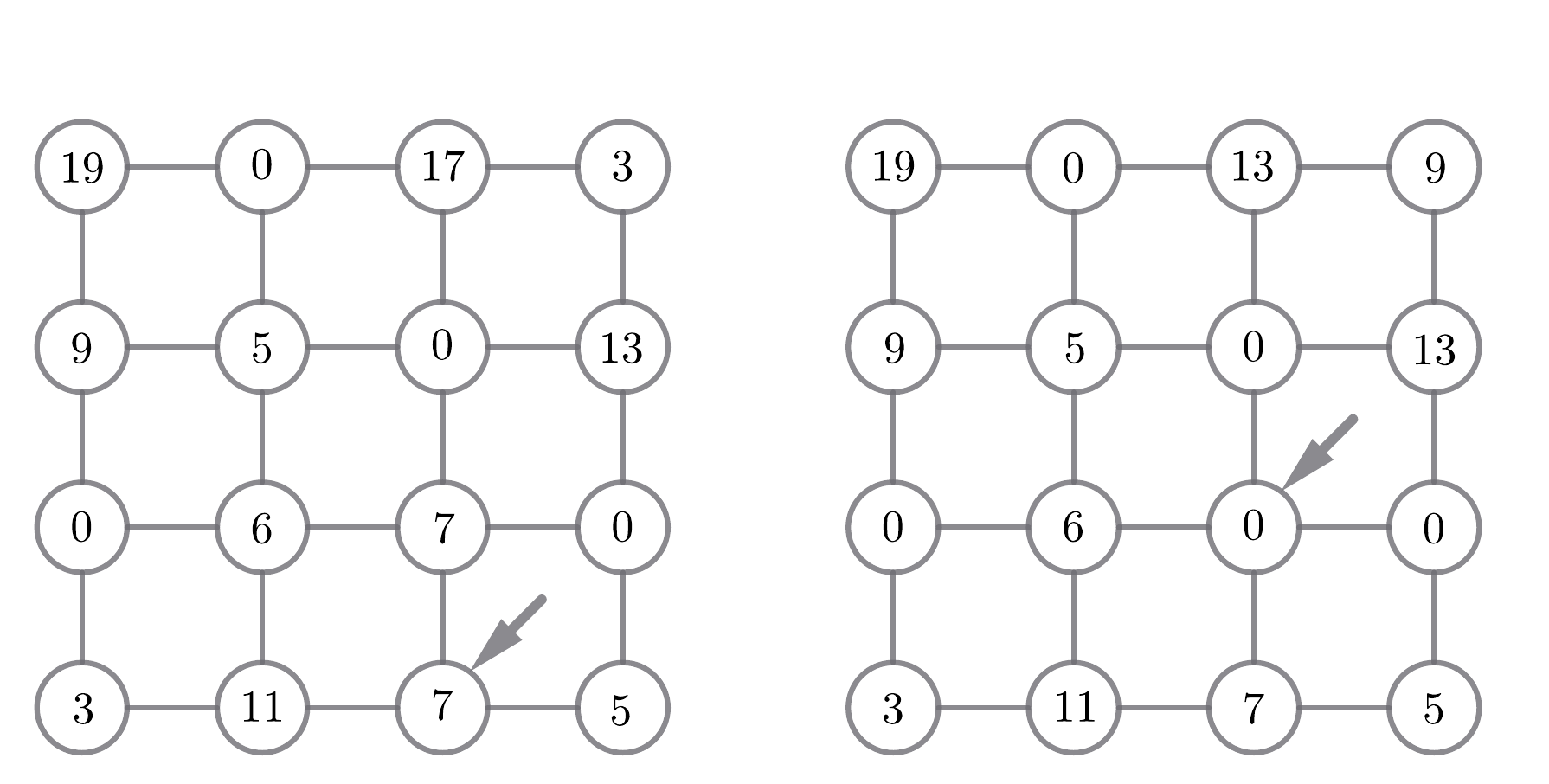} \\ \hline
\includegraphics[trim=0 18cm 23cm 0,clip,width=0.29\linewidth]{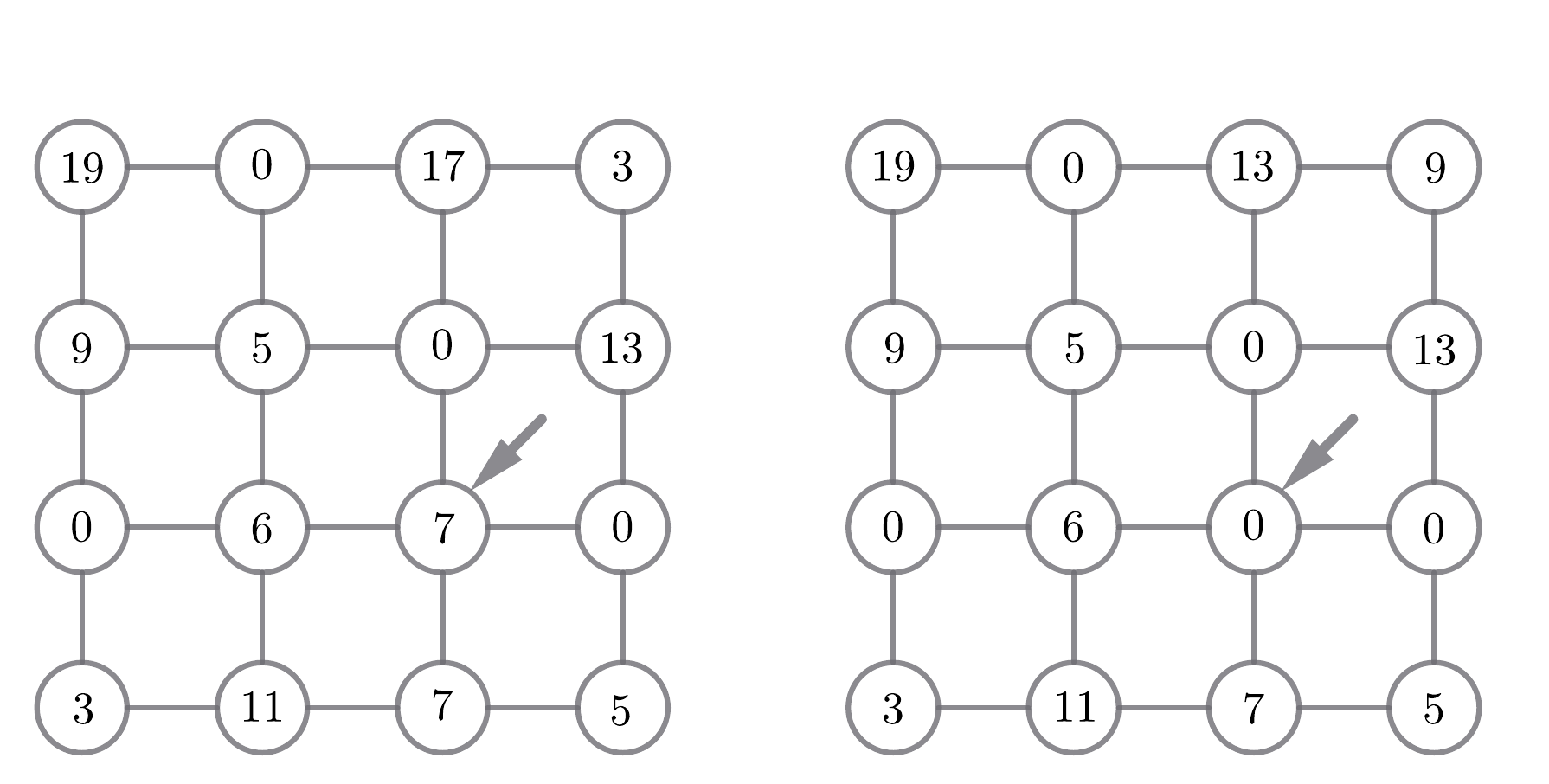} &
\includegraphics[trim=0 18cm 23cm 0,clip,width=0.29\linewidth]{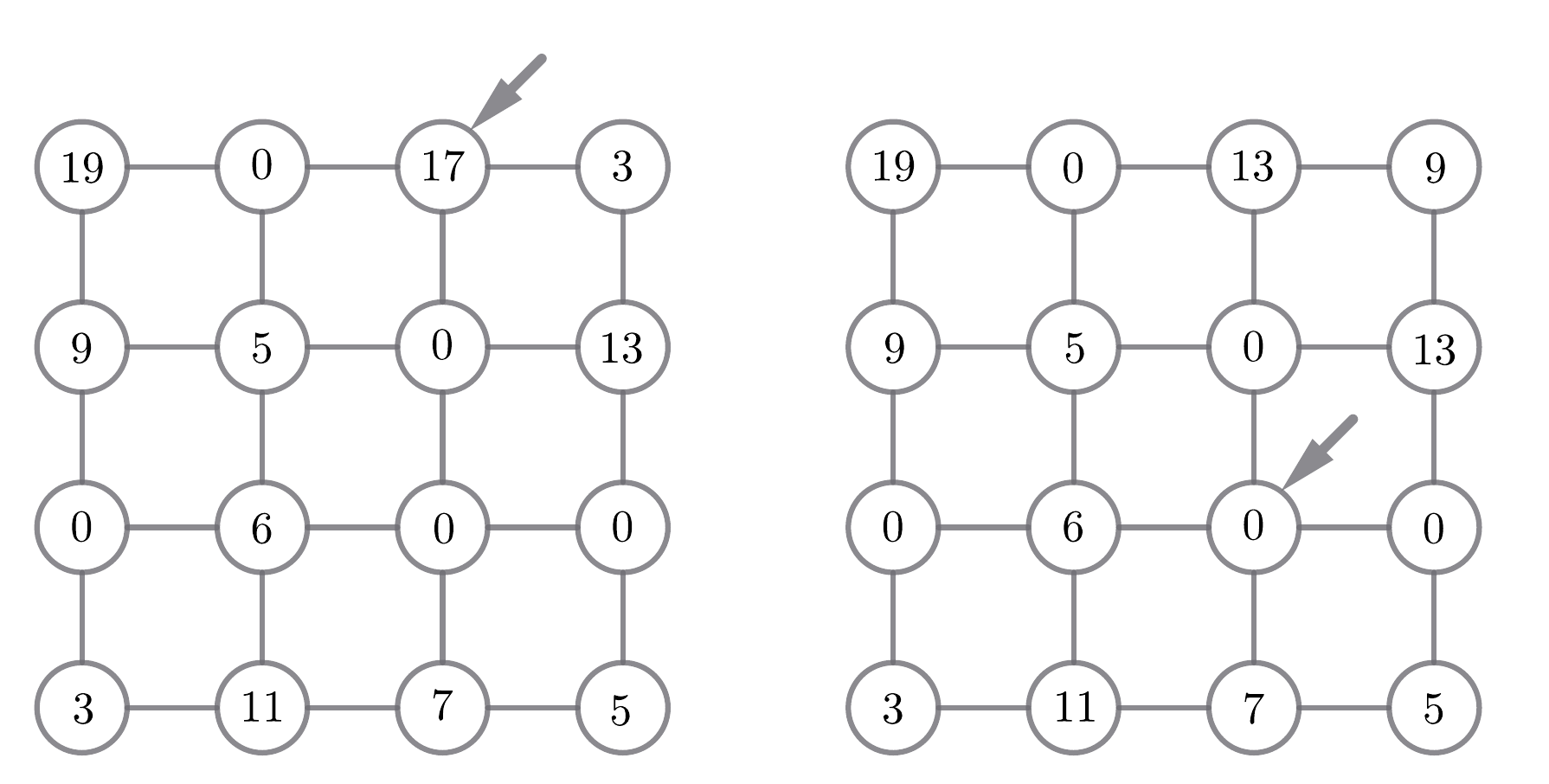} &
\includegraphics[trim=0 18cm 23cm 0,clip,width=0.29\linewidth]{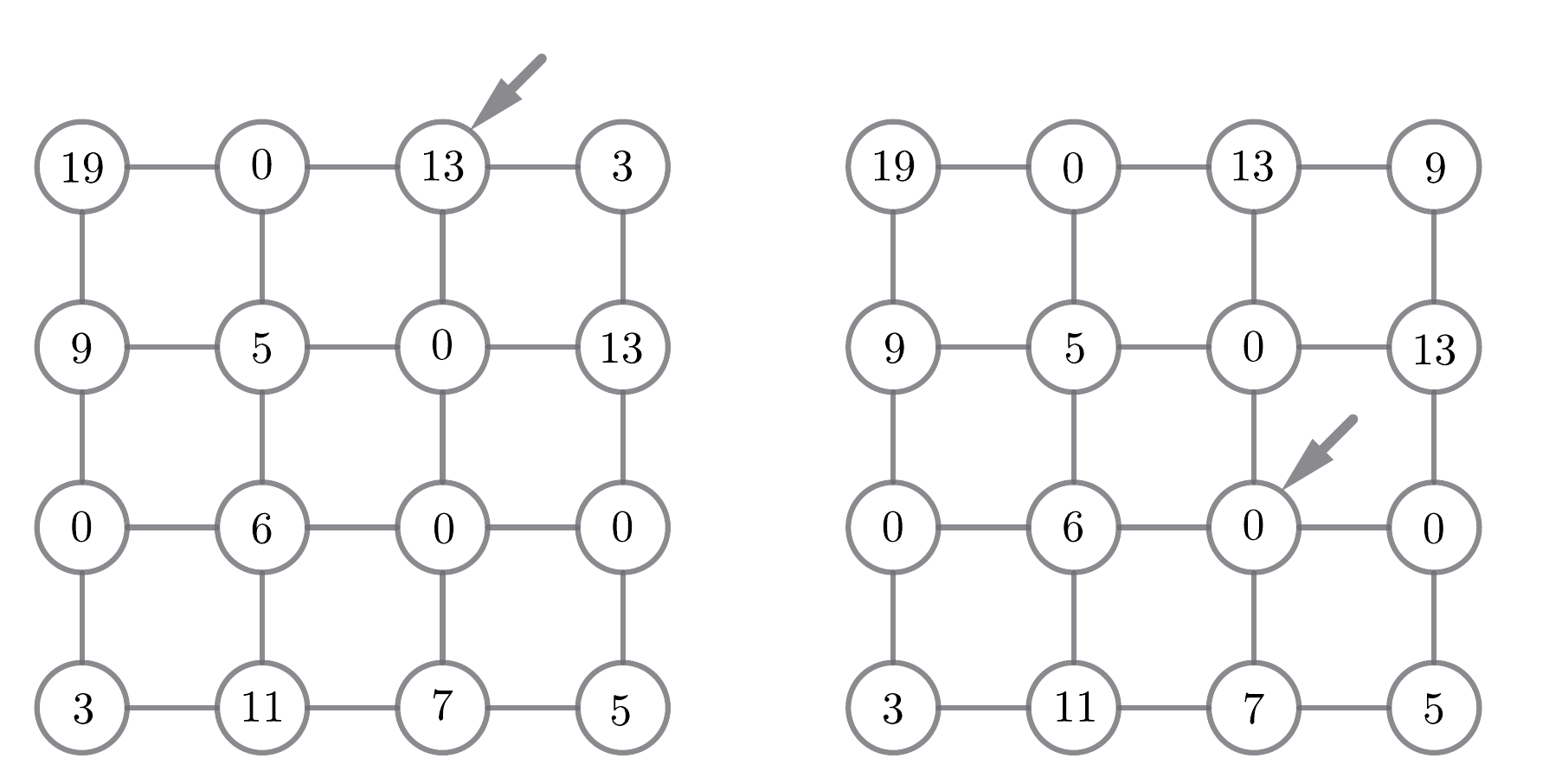} \\ \hline
\includegraphics[trim=0 18cm 23cm 0,clip,width=0.29\linewidth]{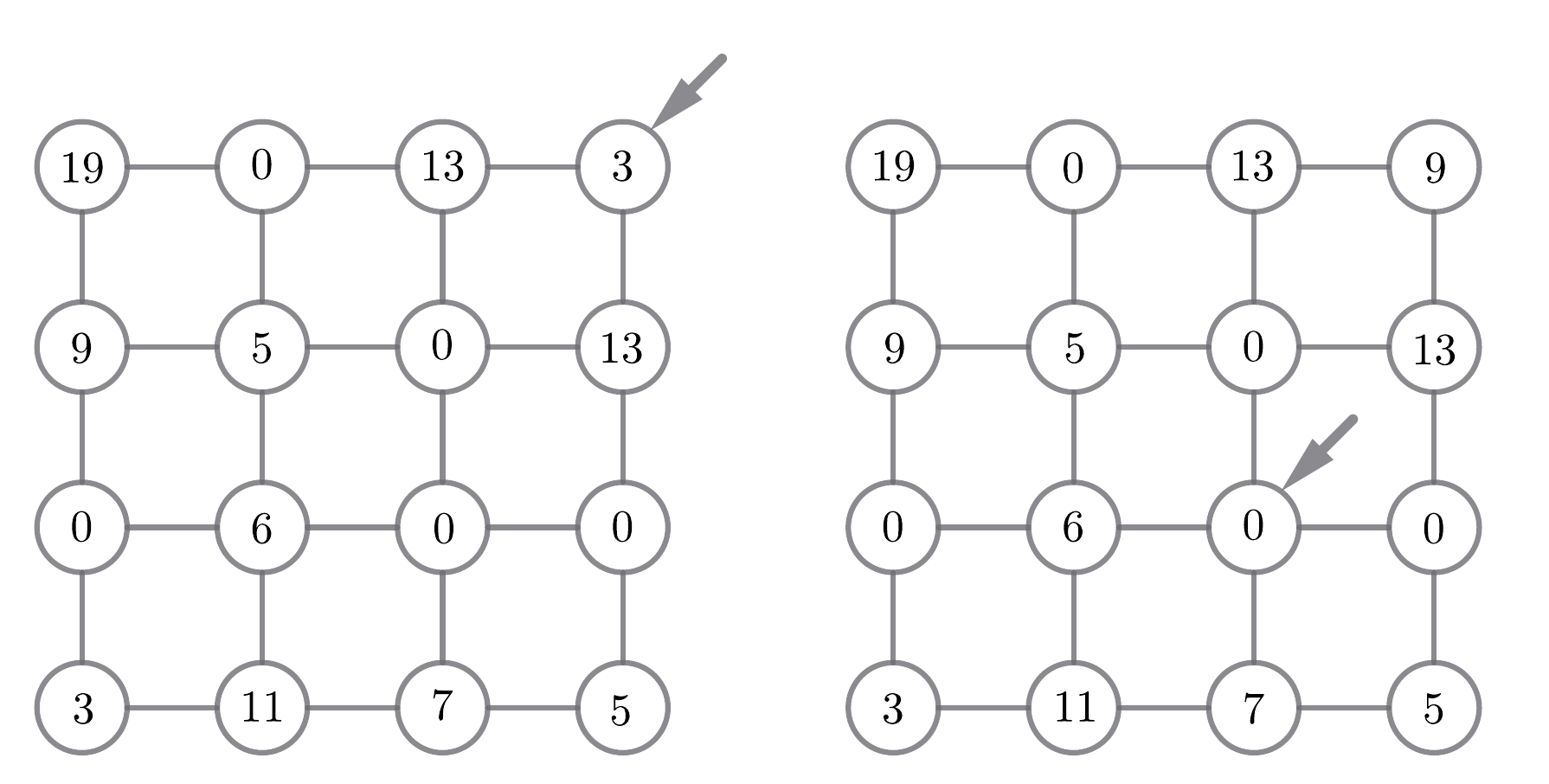} &
\includegraphics[trim=0 18cm 23cm 0,clip,width=0.29\linewidth]{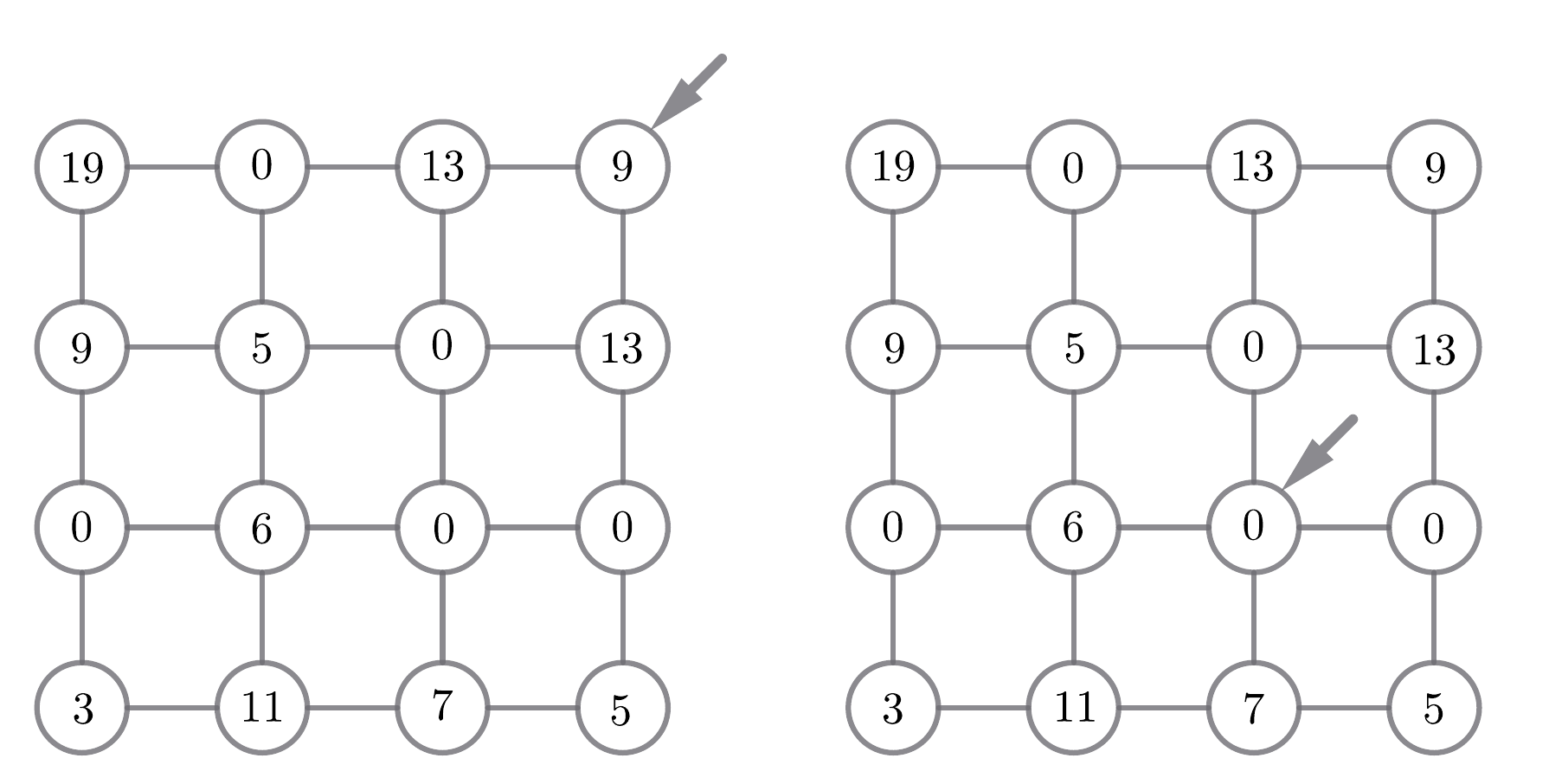} &
\includegraphics[trim=0 18cm 23cm 0,clip,width=0.29\linewidth]{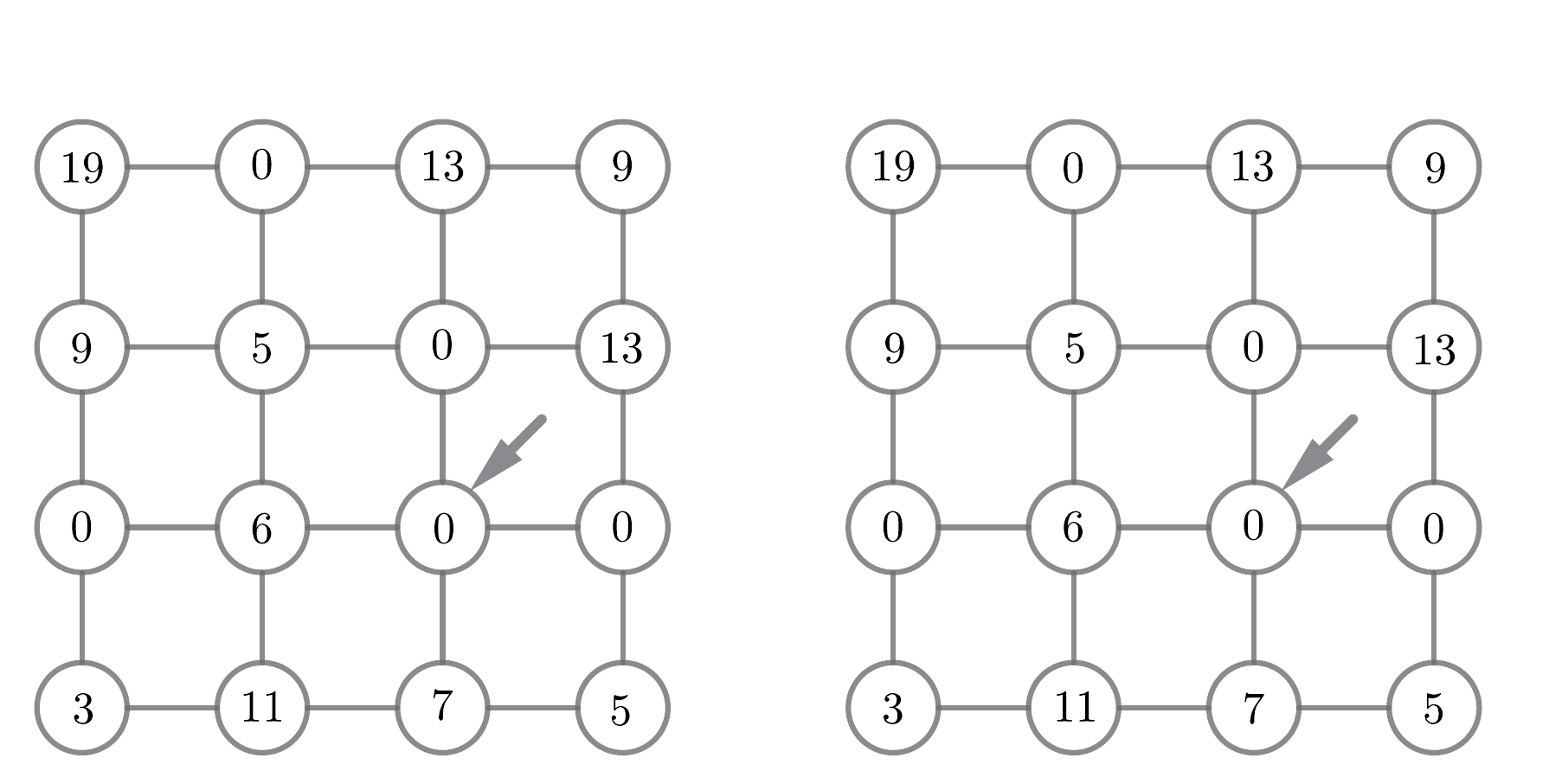} \\ \hline
\end{tabular}
\caption{A geodesic in the lamplighter graph over a grid.}
\label{GeodesicEx}
\end{center}
\end{figure}

\medskip \noindent
The two terms in the sum in Lemma~\ref{lem:DistWreath} refers to the cost of moving the arrow and to the cost of modifying the colouring. Below is a formal argument.

\begin{proof}[Proof of Lemma~\ref{lem:DistWreath}.]
Fix two vertices $(c_1,p_1),(c_2,p_2) \in (X,o) \wr Y$. A path $\gamma$ in $(X,o) \wr Y$ connecting $(c_1,p_1)$ to $(c_2,p_2)$ amounts to move the arrow from $p_1$ to $p_2$ in $Y$ and, along the way, to modify the colours of the lamps where $c_1$ and $c_2$ differ. Thus, we can distinguish different types of edges along $\gamma$: edges of lamp-type, corresponding to a move of the arrow; and, for every $q \in c_1 \triangle c_2$, edges of type $q$, corresponding to a modification of the colour of the lamp at $q$. Clearly, there must be at least $\mathrm{TS}(p_1, c_1 \triangle c_2,p_2)$ edges of lamp-type and, for every $q \in c_1 \triangle c_2$, at least $d(c_1(q),c_2(q))$ edges of type $q$, hence
$$d((c_1,p_1),(c_2,p_2)) \geq \mathrm{TS}(p_1, c_1 \triangle c_2, p_2) + \sum\limits_{q \in c_1 \triangle c_2} d(c_1(q),c_2(q)).$$
In order to prove the reverse equality, it suffices to construct a path of length 
$$\mathrm{TS}(p_1, c_1 \triangle c_2, p_2) + \sum_{q \in c_1 \triangle c_2} d(c_1(q),c_2(q))$$ 
connecting $(c_1,p_1)$ to $(c_2,p_2)$. Fix an enumeration $c_1 \triangle c_2 = \{v_1, \ldots, v_n\}$ such that 
$$\mathrm{TS}(p_1, c_1 \triangle c_2, p_2) = d(p_1,v_1) + \sum\limits_{i=1}^{n-1} d(v_i,v_{i+1}) + d(v_n,p_2).$$
Now, we define a path $\alpha$ connecting $(c_1,p_1)$ to $(c_2,p_2)$ in $(X,o) \wr Y$ thanks to the following sequence of operations:
\begin{itemize}
	\item we move the arrow from $p_1$ to $v_1$ along some geodesic in $Y$, and we modify the colour at $v_1$ from $c_1(v_1)$ to $c_2(v_2)$ by following a geodesic in $X$;
	\item we move the arrow from $v_1$ to $v_2$ along some geodesic in $Y$, and we modify the colour at $v_2$ from $c_1(v_2)$ to $c_2(v_2)$ by following a geodesic in $X$;
	\item etc.
	\item we move the arrow from $v_{n-1}$ to $v_n$ along some geodesic in $Y$, and we modify the colour at $v_n$ from $c_1(v_n)$ to $c_2(v_n)$ by following a geodesic in $X$;
	\item we move the arrow from $v_n$ to $p_2$.
\end{itemize}
Then, our path $\alpha$ has the desired length, concluding the proof of our lemma. 
\end{proof}

\begin{proof}[Proof of Proposition~\ref{prop:biLipWreath}.]
Fix two biLipschitz equivalences $\alpha : X_1 \to X_2$ and $\beta : Y_1 \to Y_2$, and define
$$\Psi : \left\{ \begin{array}{ccc} (X_1,o_1) \wr Y_1 & \to & (X_2,o_2) \wr Y_2 \\ (c,p) & \mapsto  & (\alpha \circ c \circ \beta^{-1}, \beta(p)) \end{array} \right..$$
Checking that $\Psi$ defines a biLipschitz equivalence is now just a computation. Given two vertices $(c_1,p_1), (c_2,p_2) \in (X_1,o_1) \wr B_1$, we know from Lemma~\ref{lem:DistWreath} that
$$\begin{array}{lcl} d(\Psi(c_1,p_1),\Psi(c_2,p_2)) & = & \mathrm{TS} \left(\beta(p_1), \alpha \circ c_1 \circ \beta^{-1} \triangle \alpha \circ c_2 \circ \beta^{-1}, \beta(p_2) \right) \\ \\ & & \displaystyle  + \sum\limits_{q \in \alpha \circ c_1 \circ \beta^{-1} \triangle \alpha \circ c_2 \circ \beta^{-1}} d(\alpha \circ c_1 \circ \beta^{-1} (q), \alpha \circ c_2 \circ \beta^{-1} (q)). \end{array}$$
Noticing that 
$$\alpha \circ c_1 \circ \beta^{-1} \triangle \alpha \circ c_2 \circ \beta^{-1} = \beta(c_1 \triangle c_2),$$
our equality simplifies as
\begin{equation}\label{eq:TSzero}
d(\Psi(c_1,p_1), \Psi(c_2,p_2)) = \mathrm{TS}\left( \beta(p_1), \beta(c_1 \triangle c_2), \beta(p_2) \right) + \sum\limits_{q \in c_1 \triangle c_2} d(\alpha(c_1(q)), \alpha(c_2(q))).
\end{equation}
First, if $L >0$ denotes a constant such that $\alpha$ and $\beta$ are $L$-biLipschitz, then we clearly have
\begin{equation}\label{eq:TSone}
\frac{1}{L} \sum\limits_{q \in c_1 \triangle c_2} d(c_1(q),c_2(q)) \leq \sum\limits_{q \in c_1 \triangle c_2} d(\alpha(c_1(q)), \alpha(c_2(q))) \leq L \sum\limits_{q \in c_1 \triangle c_2} d(c_1(q),c_2(q)).
\end{equation}
Next, there exists an enumeration $c_1 \triangle c_2 = \{r_1, \ldots, r_n\}$ such that 
$$\mathrm{TS}(\beta(p_1), \beta (c_1 \triangle c_2), \beta(p_2)) = d(\beta(p_1),\beta(r_1)) + \sum\limits_{i=1}^{n-1} d(\beta(r_i),\beta(r_{i+1})) + d(\beta(r_n),\beta(p_2)),$$
which implies that
\begin{equation}\label{eq:TStwo}
\begin{array}{lcl} \mathrm{TS}(\beta(p_1), \beta (c_1 \triangle c_2), \beta(p_2)) & \geq & \displaystyle \frac{1}{L} \left( d(p_1,r_1) + \sum\limits_{i=1}^{n-1} d(r_i,r_{i+1}) + d(r_n,p_2) \right) \\ \\ & \geq & \frac{1}{L} \mathrm{TS}(p_1, c_1 \triangle c_2,p_2) \end{array}
\end{equation}
Similarly, there exists an enumeration $c_1 \triangle c_2 = \{s_1, \ldots, s_n\}$ such that
$$\mathrm{TS}(p_1,c_1 \triangle c_2, p_2) = d(p_1,s_1) + \sum\limits_{i=1}^{n-1} d(s_i,s_{i+1}) + d(s_n, p_2),$$
from which we deduce that
\begin{equation}\label{eq:TSthree}
\begin{array}{lcl} \mathrm{TS}(p_1, c_1 \triangle c_2, p_2) & \geq  & \displaystyle \frac{1}{L} \left( d(\beta(p_1), \beta(s_1)) + \sum\limits_{i=1}^{n-1} d(\beta(s_i), \beta(s_{i+1})) + d(\beta(s_n), \beta(p_2)) \right) \\ \\ & \geq & \frac{1}{L} \mathrm{TS} (\beta(p_1), \beta(c_1 \triangle c_2), \beta(p_2)). \end{array}
\end{equation}
By combining Equations~\ref{eq:TSone}, \ref{eq:TStwo}, and~\ref{eq:TSthree} with Equation~\ref{eq:TSzero}, we deduce thanks to Lemma~\ref{lem:DistWreath} that
$$\frac{1}{L} \cdot d((c_1,p_1),(c_2,p_2)) \leq d(\Psi(c_1,p_1), \Psi(c_2,p_2)) \leq L \cdot d((c_1,p_1),(c_2,p_2)).$$
Since $\Psi$ is clearly a bijection, we conclude that $\Psi$ is an $L$-biLipschitz equivalence. 
\end{proof}

\noindent
As an application of Proposition~\ref{prop:biLipWreath}, we can exhibit various interesting examples:

\begin{cor}\label{cor:NotQIpreserved}
The following assertions hold. 
\begin{itemize}
	\item The groups $\mathbb{Z}_6 \wr \mathbb{Z}$ and $\mathrm{Sym}(3) \wr \mathbb{Z}$ are quasi-isometric but not commensurable (even up to finite kernel).
	\item The solvable group $\mathbb{Z}_{60} \wr \mathbb{Z}$ is quasi-isometric to $\mathrm{Alt}(5) \wr \mathbb{Z}$, which is not virtually solvable.
	\item The torsion-free group $\mathbb{Z} \wr \mathbb{Z}$ is quasi-isometric to $\mathbb{D}_\infty \wr \mathbb{Z}$, which is not virtually torsion-free.
\end{itemize}
\end{cor}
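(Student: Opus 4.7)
The statement combines three pairs, each pairing a quasi-isometry with an algebraic distinction. For the quasi-isometry parts I will feed Proposition~\ref{prop:biLipWreath} suitable biLipschitz equivalences between the $A$-factors (the $B$-factor being $\mathbb{Z}$ throughout). For~(i) and~(ii), the two $A$-factors are finite groups of equal cardinality ($6$ and $60$), so, as pointed out right after Proposition~\ref{prop:biLipWreath}, the two wreath products even share the Cayley graph $\mathcal{L}_n(\mathrm{Cayl}(\mathbb{Z},\{1\}))$. For~(iii), I use that $\mathrm{Cayl}(\mathbb{Z},\{1\})$ and $\mathrm{Cayl}(\mathbb{D}_\infty,\{s_1,s_2\})$ (with $s_1,s_2$ the two standard involutions generating $\mathbb{D}_\infty$) are both isomorphic to the bi-infinite line, so Proposition~\ref{prop:biLipWreath} provides the desired biLipschitz equivalence.

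For the algebraic distinctions I will use invariants preserved under commensurability up to finite kernel. For~(i) the invariant is \emph{some finite-index subgroup has finite second derived subgroup}: this holds for the metabelian group $\mathbb{Z}_6\wr\mathbb{Z}$. The heart of the argument is to rule it out for $G=\mathrm{Sym}(3)\wr\mathbb{Z}$. Given $H\leq G$ of finite index, I would work with its normal core $K\lhd G$ and set $B:=K\cap\mathrm{Sym}(3)^{(\mathbb{Z})}$, a shift-invariant finite-index normal subgroup of $\mathrm{Sym}(3)^{(\mathbb{Z})}$. Writing $K=B\rtimes\langle s\rangle$ with $s=b_0t^m$ projecting to a generator of $K/B\cong m\mathbb{Z}$, I will show that $K'$ contains simultaneously enough $\mathrm{Alt}(3)$-valued elements (from $[B,B]$) and a ``transposition-carrying'' element of the form $[b,s]=b\cdot b_0\sigma^m(b)^{-1}b_0^{-1}$ (for a suitable $b\in B$ with a transposition at some coordinate); their commutator is a non-trivial $3$-cycle in $K''$, and shift-invariance of $K''$ then forces $K''$, and hence $H''$, to be infinite.

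For~(ii) the invariant is \emph{virtual solvability}: $\mathbb{Z}_{60}\wr\mathbb{Z}$ is metabelian, while in $\mathrm{Alt}(5)\wr\mathbb{Z}$ I pass to $B=K\cap\mathrm{Alt}(5)^{(\mathbb{Z})}$ for $K$ a normal finite-index subgroup. Simplicity of $\mathrm{Alt}(5)$ forces each intersection $B\cap\mathrm{Alt}(5)_i$ to be $\{1\}$ or $\mathrm{Alt}(5)_i$; if the former occurred for every $i$, then the classification of normal subgroups of $\mathrm{Alt}(5)^n$ as direct products of factors (valid because $\mathrm{Alt}(5)$ is simple non-abelian and centreless) would give $B\cap\mathrm{Alt}(5)^n=\{1\}$ for every $n$, making $\mathrm{Alt}(5)^{(\mathbb{Z})}/B$ infinite and contradicting finite index. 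Hence $B$ contains some $\mathrm{Alt}(5)_i$ and $H$ is non-solvable. For~(iii) the invariant is \emph{virtual torsion-freeness}: $\mathbb{Z}\wr\mathbb{Z}$ is torsion-free, since any torsion element of $\mathbb{Z}^{(\mathbb{Z})}\rtimes\mathbb{Z}$ must have trivial $\mathbb{Z}$-projection and thus lie in the torsion-free base. In $\mathbb{D}_\infty\wr\mathbb{Z}$, fixing a reflection $\rho\in\mathbb{D}_\infty$, the involutions $\rho_i$ (with $\rho$ at coordinate $i\in\mathbb{Z}$) form an infinite family, so by pigeonhole on cosets, any finite-index $H$ contains $\rho_i\rho_j^{-1}=\rho_i\rho_j$ for some $i\neq j$, a non-trivial involution since $\rho_i$ and $\rho_j$ commute across distinct direct summands.

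The main obstacle is~(i). Unlike~(ii), where simplicity of $\mathrm{Alt}(5)$ makes every shift-invariant finite-index normal subgroup immediately non-solvable, and unlike~(iii), where a direct pigeonhole on involutions does the job, the proper normal subgroup $\mathrm{Alt}(3)\lhd\mathrm{Sym}(3)$ allows for a whole zoo of intermediate shift-invariant subgroups $B$ (essentially indexed by cofinite ideals of $\mathbb{F}_2[t^{\pm 1}]$ and $\mathbb{F}_3[t^{\pm 1}]$), so the commutator computation in $K''$ has to be carried out with care over the various cases for $B$ and for the lift $s=b_0t^m\in K$.
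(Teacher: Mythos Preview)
Your strategy is sound and each of the three algebraic arguments will go through. The paper does not actually give a proof; it points to Exercise~\ref{exo:NotQIpreserved}, whose scaffolding is rather different from yours: one first shows that $A\wr B$ (with $A$ non-trivial, $B$ infinite) has no non-trivial finite normal subgroup, reducing ``up to finite kernel'' to ordinary commensurability, and then proves that every finite-index subgroup of $A\wr B$ contains a fixed subgroup $K(A)\leq A^{(B)}$ from which the relevant property (torsion, non-solvability, \ldots) is read off uniformly. Your treatment is more ad hoc---derived length for~(i), simplicity for~(ii), pigeonhole on involutions for~(iii)---but each case becomes a short self-contained computation, and you avoid having to make precise sense of $K(A)$ when $A$ is non-abelian.

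One simplification you are missing in~(i): the ``zoo'' is much tamer than you fear. With $K\lhd G$ the normal core and $B=K\cap\mathrm{Sym}(3)^{(\mathbb{Z})}$, normality of $B$ under conjugation by a single-coordinate transposition $\tau_j$ forces $B\supseteq\mathrm{Alt}(3)^{(\mathbb{Z})}$ automatically: if $a\in B\cap\mathrm{Alt}(3)^{(\mathbb{Z})}$ has $a_j\neq 1$, then $(\tau_j a\tau_j^{-1})a^{-1}$ is the single-coordinate $3$-cycle at $j$, and shift-invariance finishes. So there are \emph{no} $\mathbb{F}_3[t^{\pm1}]$-cases at all; only the image $\bar B\subset\mathbb{F}_2[t^{\pm1}]$ varies. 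From there your outline runs uniformly: $[B,B]=\mathrm{Alt}(3)^{(\mathbb{Z})}\subseteq K'$; for any $b\in B$ with $\bar b\neq 0$ the sign of $[b,s]$ equals $(1+t^m)\bar b\neq 0$ in the domain $\mathbb{F}_2[t^{\pm1}]$, so $K'$ carries a transposition at some coordinate $j$; commuting that element with $\sigma_j\in K'$ produces a non-trivial $3$-cycle in $K''$. No case-splitting on $B$ or on the lift $s=b_0t^m$ is needed.
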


\noindent
Recall that two groups $G$ and $H$ are (abstractly) commensurable if they contain isomorphic finite-index subgroups; and they are commensurable up to finite kernel if one can find two finite normal subgroups $G_0 \lhd G$ and $H_0 \lhd H$ such that $G/G_0$ and $H/H_0$ are commensurable. It follows from Lemma~\ref{lem:QIComm} that two finitely generated groups that commensurable up to finite kernel are necessarily quasi-isometric. The first item provides an elementary counterexample to the converse of this assertion. It is also worth mentioning that the second assertion contrasts with Gromov's theorem about groups of polynomial growth, which implies that being virtually nilpotent is preserved by quasi-isometries. As another application of Proposition~\ref{prop:biLipWreath}, it can be shown that being residually finite is not preserved by quasi-isometries either; see Exercise~\ref{ex:WreathRF}. We leave the proof of Corollary~\ref{cor:NotQIpreserved} as an exercise for the interested reader. 

\medskip \noindent
We saw with Example~\ref{ex:DistortionBS} that subgroup may not be quasi-isometrically embedded, i.e.\ they may be \emph{distorted}. We conclude this section by providing another such example, but in the family of lamplighter groups. 

\begin{prop}[\cite{MR2811580}]\label{prop:DistortedZwrZ}
The group $\mathbb{Z} \wr \mathbb{Z}$ contains distorted copy of itself.
\end{prop}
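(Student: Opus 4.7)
My plan is to construct an injective homomorphism $\Phi : \mathbb{Z} \wr \mathbb{Z} \to \mathbb{Z} \wr \mathbb{Z}$ whose image is a proper subgroup that fails to be a quasi-isometric embedding. I will define $\Phi$ by specifying images $\Phi(a) = A$ and $\Phi(t) = T$ of the canonical generators. Since the lamp subgroup $L := \mathbb{Z}^{(\mathbb{Z})}$ is abelian, the defining relations of $\mathbb{Z}\wr\mathbb{Z}$—namely $[t^iat^{-i},t^jat^{-j}] = 1$ for all $i,j$—are automatically satisfied whenever $A \in L$: the images $T^i A T^{-i}$ all lie in $L$ and therefore commute. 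Consequently $\Phi$ extends to a group morphism as soon as $A$ is chosen to be a lamp element, regardless of $T$.

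The first substantive task is to arrange injectivity together with properness of the image. Taking $A = a$ and $T = t^d \cdot w$ with $d \geq 2$, the conjugates $T^i A T^{-i} = t^{di}at^{-di}$ are the distinct generators of $L^{(d\mathbb{Z})}$, hence $\mathbb{Z}$-linearly independent, which forces $\Phi$ to be injective. To make the image proper, I would choose the lamp element $w \in L$ so that it does \emph{not} lie in the $\mathbb{Z}[x^d,x^{-d}]$-submodule of $L$ generated by $a$ (identifying $L$ with $\mathbb{Z}[x,x^{-1}]$). With this choice, $t^d$ cannot belong to $H := \Phi(\mathbb{Z}\wr\mathbb{Z})$, because extracting $t^d$ from $T$ would require multiplying by an element of $H \cap L$ cancelling $w$, and $H \cap L$ is precisely the $\mathbb{Z}[x^d,x^{-d}]$-span of $a$. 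An abelianisation computation then shows $[G : H] \geq 2$.

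The heart of the argument—and the main obstacle—is demonstrating that $\Phi$ is not a quasi-isometric embedding. This requires a sequence of elements $g_n \in \mathbb{Z}\wr\mathbb{Z}$ with $\|g_n\|_{\mathbb{Z}\wr\mathbb{Z}} / \|\Phi(g_n)\|_G \to \infty$. The comparison is carried out using Lemma~\ref{lem:DistWreath} on both sides: in the abstract $\mathbb{Z}\wr\mathbb{Z}$, the length of $g_n = (Q_n(y), y^{k_n})$ is determined by the travelling-salesman cost through $\mathrm{supp}(Q_n)$ plus the coefficient-sum $\sum|Q_n|$; in $G$, the length of $\Phi(g_n)$ is determined by the same quantities for the \emph{expanded} lamp configuration $Q_n(x^d) + (\text{lamp part of } T^{k_n})$ inside $L$. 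The idea is to choose $w$ (and then the sequence $g_n$) so that the lamp part of $T^{k_n}$ arranges massive cancellations with $Q_n(x^d)$ in $L$, drastically shrinking both the TSP cost and the coefficient sum in $G$; the abstract configuration $Q_n$, on the other hand, cannot benefit from these cancellations because it sits in a genuinely finer module structure.

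I expect the delicate part to be the explicit verification that such a choice of $w$ and $(g_n)$ indeed produces an unbounded distortion, which amounts to a careful combinatorial estimate comparing two applications of Lemma~\ref{lem:DistWreath}. One may hope to reach a polynomial distortion function of arbitrarily large degree, in line with the results of \cite{MR2811580}; I would first tackle the quadratic case, where $w$ can be taken to be a short two-term lamp like $a - tat^{-1}$ and $g_n$ an iterate of the base generator, and then iterate the construction to upgrade the distortion exponent.
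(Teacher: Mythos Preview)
Your construction has a genuine gap: by keeping $A=a$ and modifying only $T$, you force the induced map on the lamp group to be an $\ell^1$-isometry, and this kills any hope of distortion. Concretely, with $A=a$ and $T=t^d w$ one computes
\[
\Phi(c,p)=\bigl(\Phi_L(c)+W_p,\,dp\bigr),
\]
where $\Phi_L$ is the dilation $\delta_i\mapsto\delta_{di}$ (so $\|\Phi_L(c)\|_1=\|c\|_1$) and $W_p$ is the lamp part of $T^p$, which satisfies $\|W_p\|_1\le C|p|$ and has support in an interval of length $O(|p|)$. From $\|\Phi(c,p)\|_G\ge\|c\|_1-C|p|$ and $\|\Phi(c,p)\|_G\ge d|p|$ you get $\|\Phi(c,p)\|_G\gtrsim\|c\|_1+|p|$; a similar argument handles the travelling-salesman term, since the only portion of $\mathrm{supp}(\Phi_L(c))$ that $W_p$ can touch lies in a window of width $O(|p|)$. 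The upshot is that your $\Phi$ is a quasi-isometric embedding for every choice of $d\ge 2$ and every lamp element $w$: the ``massive cancellations'' you hope for are bounded linearly in $|p|$ and cannot compress $\|c\|_1$.

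The paper makes the opposite choice: keep $T=t$ and take $A=[a,t]=ata^{-1}t^{-1}=(\delta_0-\delta_1,0)$. Then $\Phi_L$ becomes multiplication by $(1-x)$ on $\mathbb{Z}[x,x^{-1}]$, which is \emph{not} an $\ell^1$-isometry: the polynomial $n(1+x+\cdots+x^{n-1})$, of $\ell^1$-norm $n^2$, is sent to $n(1-x^n)$, of $\ell^1$-norm $2n$. This is precisely the compression that produces quadratic distortion. The element you should look at is $(n\delta_0-n\delta_n,\,n)$, which has $G$-length $\asymp n$ but $H$-length $\asymp n^2$. Ironically, your suggested $w=a\cdot(tat^{-1})^{-1}$ is exactly the paper's $A$; the fix is to use it as the new lamp generator rather than to perturb the shift.
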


\begin{proof}[Sketch of proof.] 
Let $a$ (resp.\ $t$) denote a generator of the left (resp.\ right) $\mathbb{Z}$-factor of $\mathbb{Z} \wr \mathbb{Z}$. Thus, thinking of elements of $\mathbb{Z}$ as pairs of a $\mathbb{Z}$-colourin of $\mathcal{Z}$ with an arrow pointing at some integer, right-multiplying with $a$ amounts to adding $1$ in the lamp where the arrow is and right-multiplying with $t$ amounts to shifting to the right the arrow. Consider the subgroup $H := \langle ata^{-1}t^{-1}, t \rangle$. The group $H$, which turns out to be isomorphic to $\mathbb{Z} \wr \mathbb{Z}$, has a lamplighter-like interpretation as $\mathbb{Z} \wr \mathbb{Z}$, the difference being that the element $atat^{-1}$, instead of adding $1$ in the lamp where the arrow is, add $1$ in the lamp where the arrow is and add $-1$ in the lamp at the right from the arrow. Now, given an $n \geq 1$, consider the element $(c,n)$ where $c$ is the colouring that takes the value $n$ at $0$, $-n$ at $n$, and $0$ elsewhere. It belongs to our subgroup $H$, and we can compare its distance from $(0,0)$ both in $H$ (with respect to the generating set $\{ata^{-1}t^{-1},t\}$) and in $\mathbb{Z} \wr \mathbb{Z}$ (with respect to the generating set $\{a,t\}$).
\begin{center}
\begin{tabular}{|c|c|c|}\hline
\includegraphics[trim=8cm 24cm 8cm 0,clip,width=0.29\linewidth]{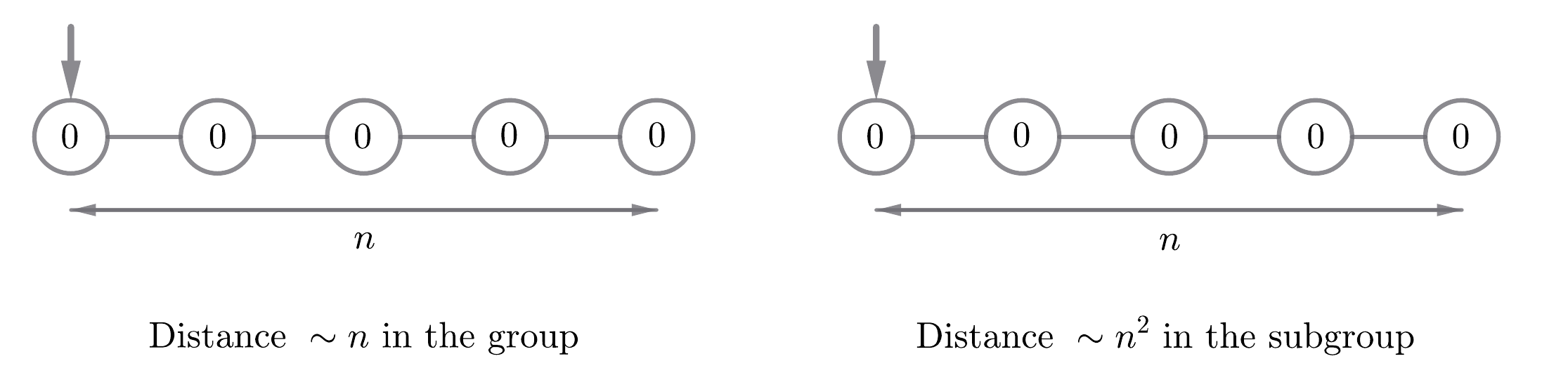} & 
\includegraphics[trim=8cm 24cm 8cm 0,clip,width=0.29\linewidth]{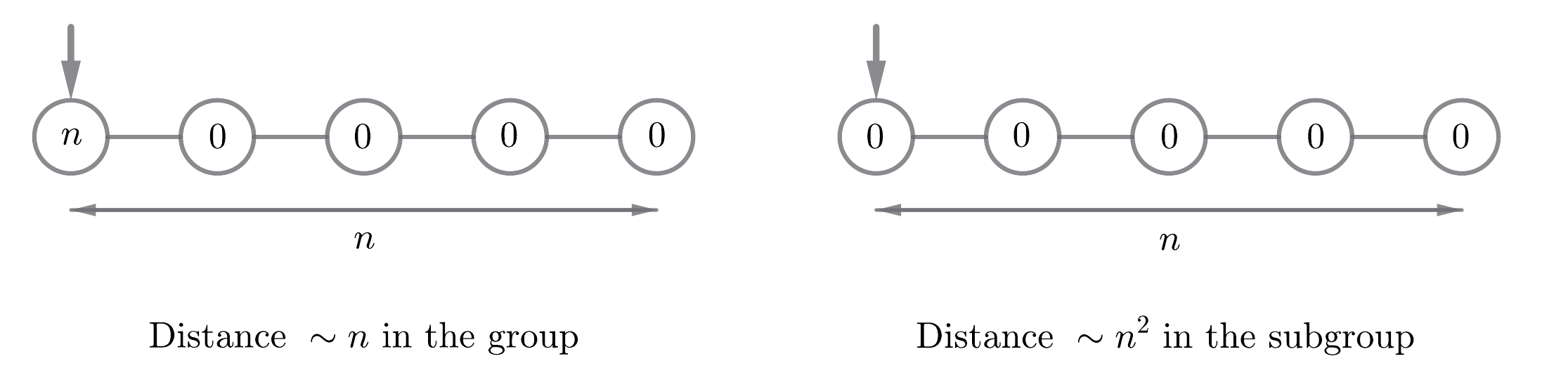} & 
\includegraphics[trim=8cm 24cm 8cm 0,clip,width=0.29\linewidth]{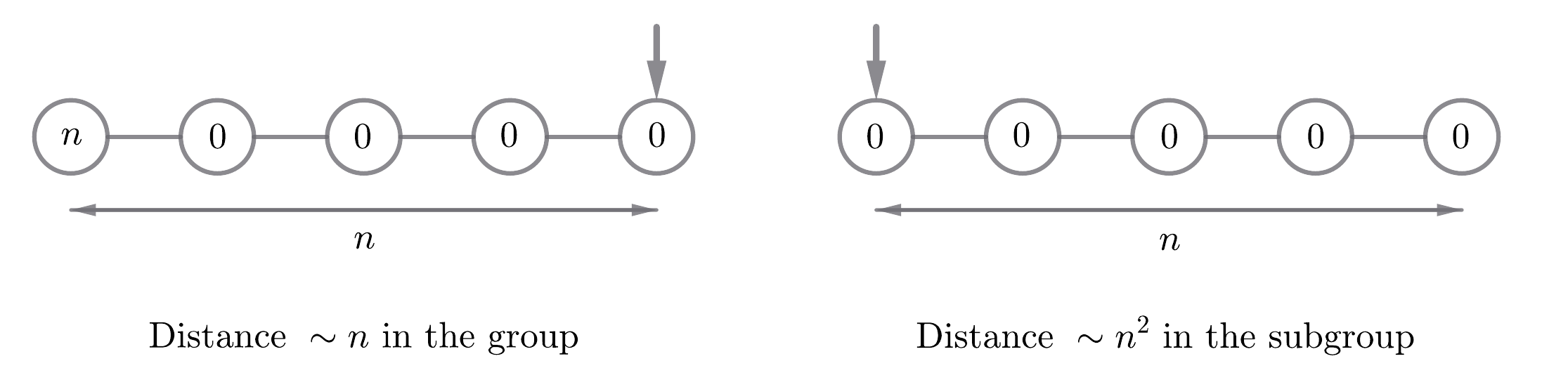} \\ \hline
\includegraphics[trim=8cm 24cm 8cm 0,clip,width=0.29\linewidth]{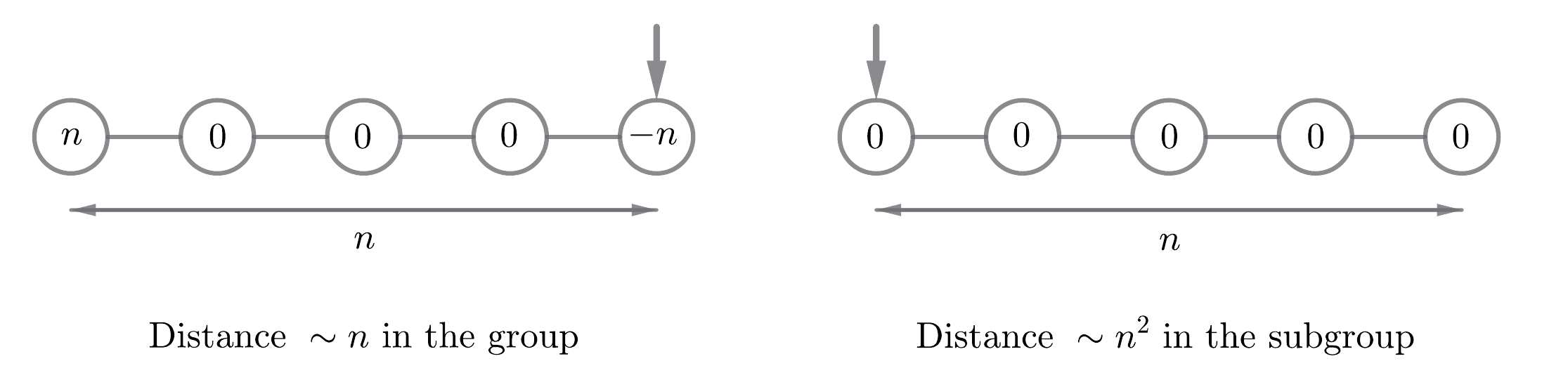} & 
\includegraphics[trim=8cm 24cm 8cm 0,clip,width=0.29\linewidth]{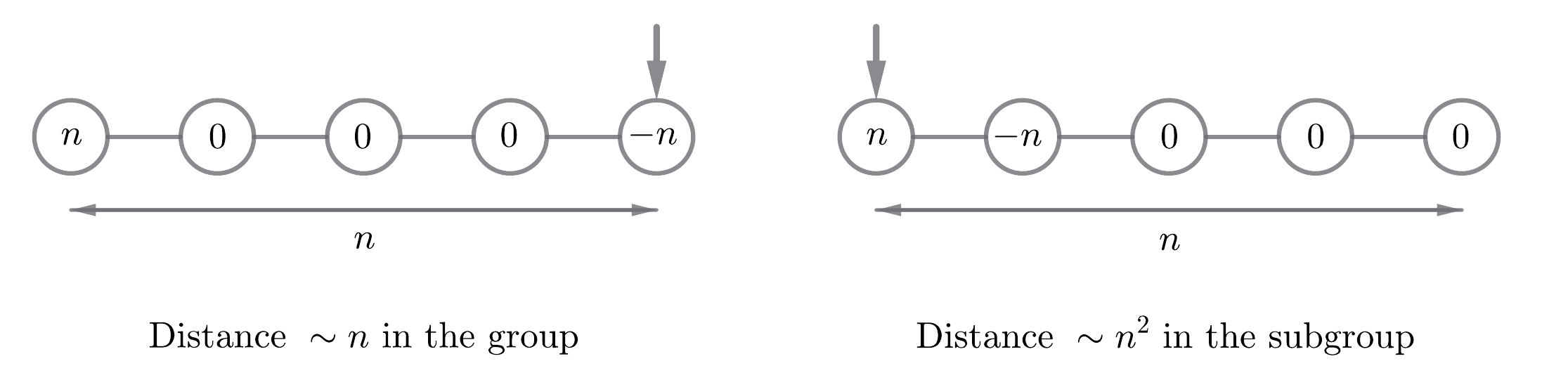} & 
\includegraphics[trim=8cm 24cm 8cm 0,clip,width=0.29\linewidth]{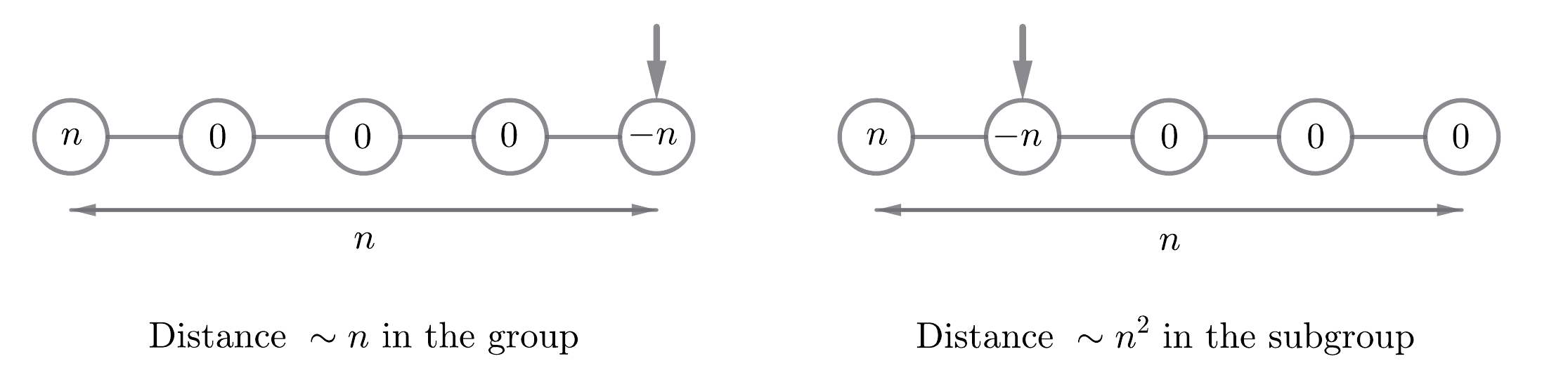} \\ \hline
\includegraphics[trim=8cm 24cm 8cm 0,clip,width=0.29\linewidth]{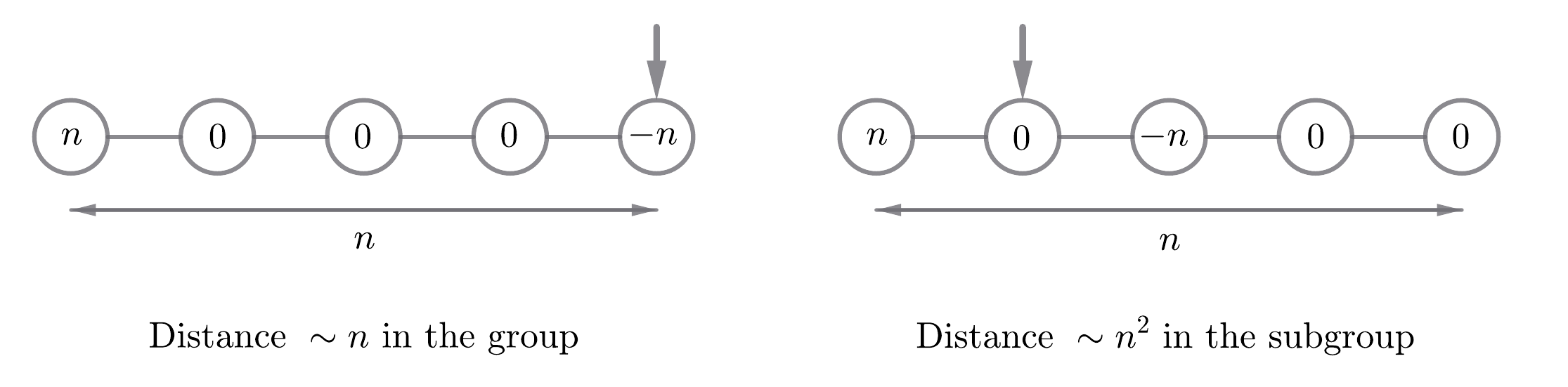} & 
\includegraphics[trim=8cm 24cm 8cm 0,clip,width=0.29\linewidth]{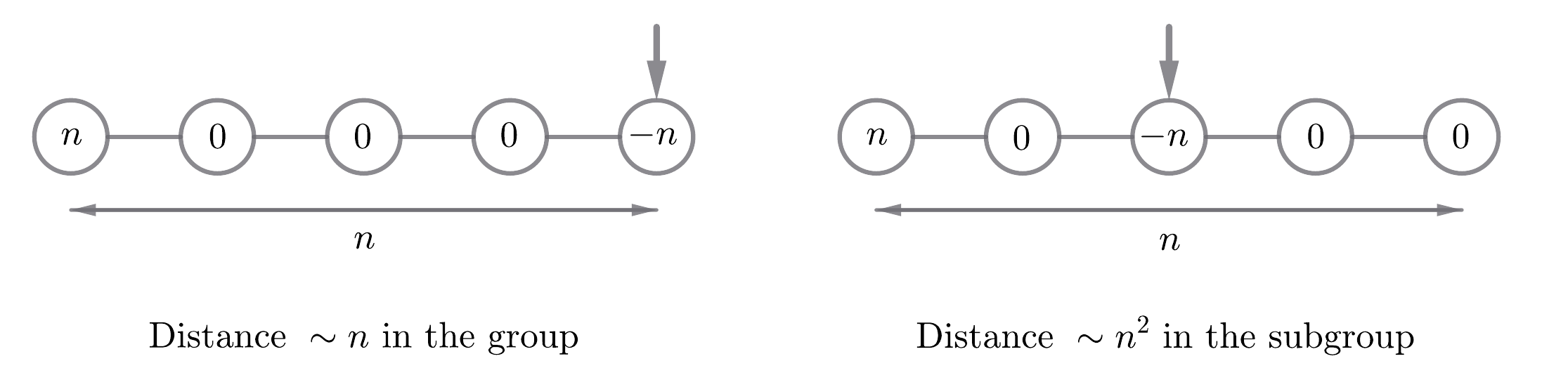} & 
\includegraphics[trim=8cm 24cm 8cm 0,clip,width=0.29\linewidth]{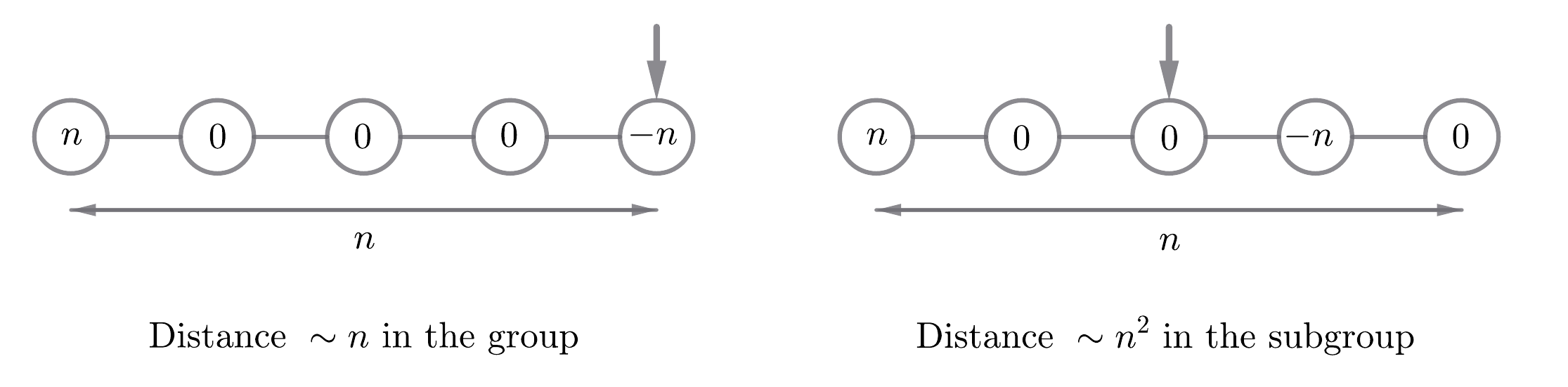} \\ \hline
\includegraphics[trim=8cm 24cm 8cm 0,clip,width=0.29\linewidth]{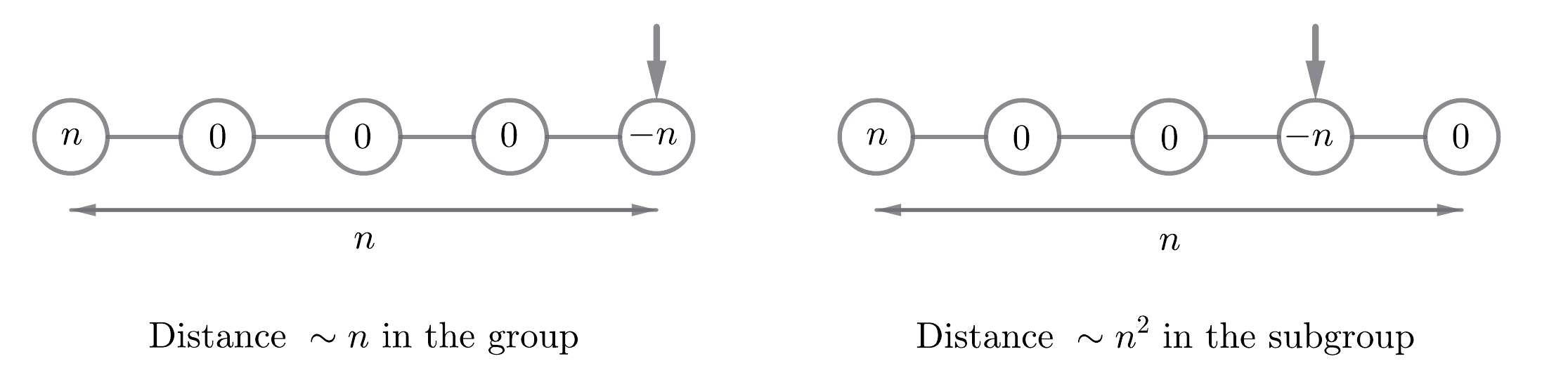} & 
\includegraphics[trim=8cm 24cm 8cm 0,clip,width=0.29\linewidth]{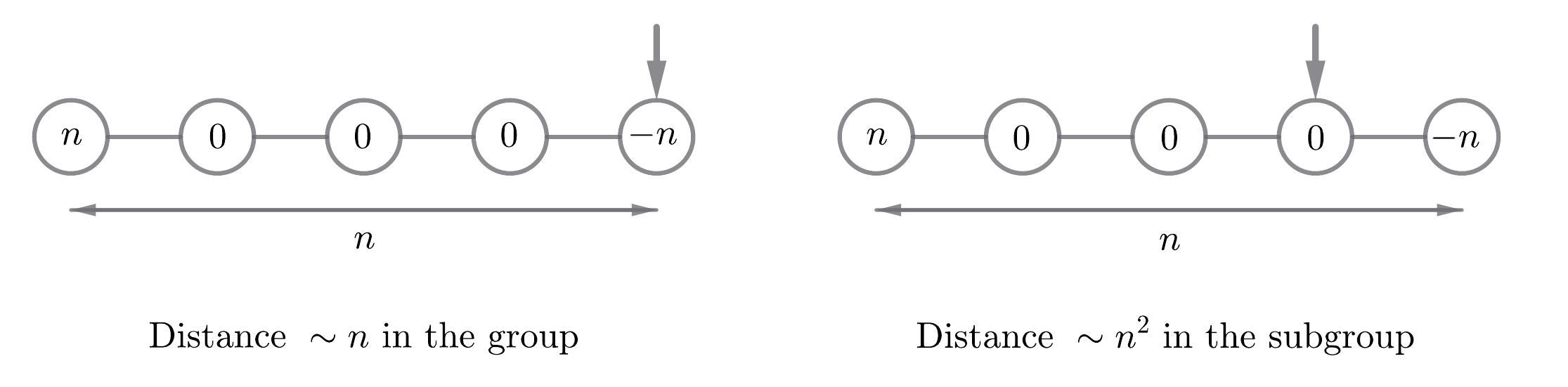} & 
\includegraphics[trim=8cm 24cm 8cm 0,clip,width=0.29\linewidth]{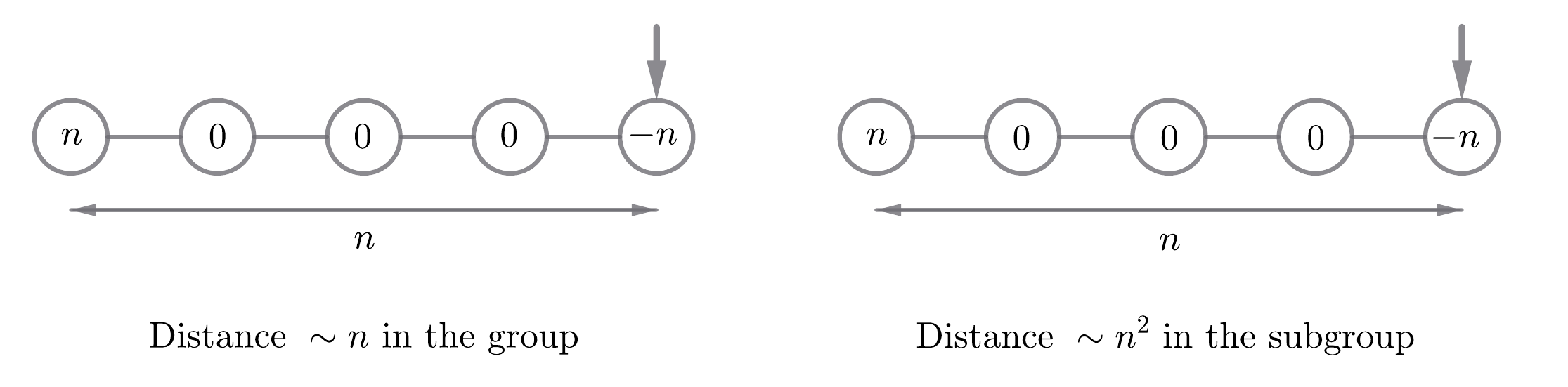} \\ \hline
\end{tabular}
\end{center}
As suggested by the figure above, the distance between $(c,n)$ and $(0,0)$ is about $n$ in $\mathbb{Z} \wr \mathbb{Z}$ but about $n^2$ in $H$. Thus, the inclusion map $H \hookrightarrow \mathbb{Z} \wr \mathbb{Z}$ cannot be a quasi-isometric embedding. 
\end{proof}

\subsection{What is known}

\noindent
Let us review in a few words what is known about the comparison of wreath products of groups up to quasi-isometry. First of all, various quasi-isometric invariants have been successively computed for wreath products, including for instance the number of ends (see Section~\ref{section:CoarseSeparation}), finite presentability (see Section~\ref{section:CoarseSimplyConnected}), asymptotic dimension (see Section~\ref{section:CoarseDim}), isoperimetric profile \cite{MR2011120}, Hilbert space compression (see Section~\ref{section:CoarseHilbert}). However, such invariants are usually not able to distinguish up to quasi-isometry to wreath products $F_1 \wr H$ and $F_2 \wr H$ for some finitely generated group $H$ and two finite groups $F_1,F_2$ of distinct cardinality. 

\medskip \noindent
In the opposite direction, as already mentioned (Proposition~\ref{prop:biLipWreath}), there exist many examples of non-isomorphic (or even non-commensurable) wreath products that turn out to be quasi-isometric:

\begin{prop}[\cite{MR1800990}]\label{prop:BiLipWreathGroups}
Let $A_1,A_2,B_1,B_2$ be four finitely generated groups. If $A_1$ (resp.\ $B_2$) is biLipschitz equivalent to $A_2$ (resp.\ $B_2$), then the wreath products $A_1 \wr B_1$ and $A_2\wr B_2$ are biLipschitz equivalent.
\end{prop}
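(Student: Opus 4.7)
The plan is to reduce this group-theoretic statement to its graph-theoretic counterpart, Proposition~\ref{prop:biLipWreath}. First, I would fix finite generating sets $R_i \subset A_i$ and $S_i \subset B_i$ for $i=1,2$. By Proposition~\ref{prop:CaylWreath}, there are graph isomorphisms $\mathrm{Cayl}(A_i \wr B_i, R_i \cup S_i) \simeq (\mathrm{Cayl}(A_i, R_i), 1_{A_i}) \wr \mathrm{Cayl}(B_i, S_i)$ for each $i$, so it is enough to exhibit a biLipschitz equivalence between the two wreath products of graphs on the right. The assumption supplies biLipschitz equivalences $\alpha_0 : A_1 \to A_2$ and $\beta : B_1 \to B_2$ between the word metrics, which are exactly the metrics induced on the vertex sets by the corresponding Cayley graphs.

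The only subtle point in applying Proposition~\ref{prop:biLipWreath} is a matter of basepoints. Reading its proof, the candidate map $(c,p) \mapsto (\alpha \circ c \circ \beta^{-1}, \beta(p))$ only lands in the target wreath product of graphs when the biLipschitz map between the ``colour graphs'' sends the basepoint $o_1$ to the basepoint $o_2$; otherwise, a colouring equal to $o_1$ off a finite set is sent to a colouring equal to $\alpha(o_1)$ off a finite set, which need not be $o_2$. To correct this, I would replace $\alpha_0$ by
$$\alpha : a \longmapsto \alpha_0(1_{A_1})^{-1} \cdot \alpha_0(a),$$
which is again biLipschitz with the same constants, since left-multiplication is an isometry of any Cayley graph, and which now satisfies $\alpha(1_{A_1}) = 1_{A_2}$.

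With this adjustment in hand, Proposition~\ref{prop:biLipWreath} applied to the pair $(\alpha,\beta)$ yields a biLipschitz equivalence between $(\mathrm{Cayl}(A_1, R_1), 1_{A_1}) \wr \mathrm{Cayl}(B_1, S_1)$ and $(\mathrm{Cayl}(A_2, R_2), 1_{A_2}) \wr \mathrm{Cayl}(B_2, S_2)$, and hence between $A_1 \wr B_1$ and $A_2 \wr B_2$. The argument is essentially a bookkeeping exercise: the genuine geometric content has already been packaged into Proposition~\ref{prop:biLipWreath}, and the only obstacle, the basepoint normalisation, is resolved by the left-translation invariance of Cayley graph metrics.
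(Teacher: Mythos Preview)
Your argument is correct and is exactly the derivation the paper sets up: the group statement is meant to follow from Proposition~\ref{prop:biLipWreath} via the Cayley-graph identification of Proposition~\ref{prop:CaylWreath}, and you carry this out cleanly. Your observation about the basepoint is well taken---the map $(c,p)\mapsto(\alpha\circ c\circ\beta^{-1},\beta(p))$ in the proof of Proposition~\ref{prop:biLipWreath} indeed requires $\alpha(o_1)=o_2$ for $\alpha\circ c\circ\beta^{-1}$ to be finitely supported with respect to $o_2$, and your left-translation fix is the natural remedy in the group setting.
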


\noindent
The first notable classification of a family of wreath products up to quasi-isometry is the following:

\begin{thm}[\cite{MR2925383, MR3034290}]\label{thm:EFW}
For all non-trivial finite groups $F_1,F_2$, the lamplighters $F_1 \wr \mathbb{Z}$ and $F_2 \wr \mathbb{Z}$ are quasi-isometric if and only if $|F_1|$ and $|F_2|$ are powers of a common number. 
\end{thm}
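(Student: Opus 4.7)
The proof splits into two implications of very different nature.

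\emph{Sufficiency.} Suppose $|F_1|=k^a$ and $|F_2|=k^b$ for some integer $k\ge 2$ and integers $a,b\ge 1$. By the corollary of Proposition~\ref{prop:CaylWreath}, a Cayley graph of $F_i\wr\mathbb{Z}$ is isomorphic to the lamplighter graph $\mathcal{L}_{|F_i|}(\mathrm{Cayl}(\mathbb{Z},\{1\}))$, so it suffices to prove that $\mathcal{L}_{k^a}(\mathbb{Z})$ and $\mathcal{L}_{k^b}(\mathbb{Z})$ are quasi-isometric. The plan is to realize both as horocyclic products of regular trees: with a well-chosen generating set, $\mathrm{Cayl}(\mathbb{Z}_n\wr\mathbb{Z})$ is isomorphic to the Diestel--Leader graph $\mathrm{DL}(n,n)$, the horocyclic product of two bi-infinite $n$-branching trees. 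It then remains to exhibit a quasi-isometry between $\mathrm{DL}(k^a,k^a)$ and $\mathrm{DL}(k^b,k^b)$. This is achieved by a level-regrouping argument: a chunk of $a$ consecutive generations of the $k$-ary tree collapses naturally to a single generation of the $k^a$-ary tree, producing a QI between the two underlying tree factors which can be made compatible with both horocyclic structures simultaneously (concretely by working on scales that are multiples of $ab$).

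\emph{Necessity.} Conversely, suppose that $F_1\wr\mathbb{Z}$ and $F_2\wr\mathbb{Z}$ are quasi-isometric; the goal is to conclude $\log|F_1|/\log|F_2|\in\mathbb{Q}$, which is equivalent to the announced common-power condition. This is the crux of the Eskin--Fisher--Whyte theorem. The plan is to pass again to the Diestel--Leader model and analyze quasi-isometries $\mathrm{DL}(m,m)\to\mathrm{DL}(m',m')$. One shows that such a QI must, at large scales, preserve the two opposite tree-foliations of the horocyclic product, and therefore induces bi-Lipschitz maps on the two ``horospherical boundaries'', which are ultrametric spaces of Cantor type on $\mathbb{Z}_m^{\mathbb{N}}$. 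Since the Hausdorff dimension of such an ultrametric boundary is proportional to $\log m$ and is a bi-Lipschitz invariant, the ratio $\log m/\log m'$ must be rational. The key tool that makes this foliation-preservation statement rigorous is the \emph{coarse differentiation} technique: at most scales and most basepoints, a quasi-isometry between horocyclic products of trees looks like a product of two boundary maps.

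\emph{Main obstacle.} The sufficiency is combinatorial and admits an explicit construction; the necessity is the deep content of the theorem. The classical quasi-isometric invariants surveyed in Section~\ref{section:QI} (growth, ends, asymptotic dimension, isoperimetric profile, Hilbert space compression) all depend on $F_i$ only through its finiteness, so they are blind to the common-power condition. Coarse differentiation is at present essentially the only known route, and honestly unwinding it within the elementary ``coarse topology'' framework of this minicourse would require importing substantial machinery from outside it. This is where the genuine difficulty of Theorem~\ref{thm:EFW} lies.
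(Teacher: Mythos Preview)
The paper does not actually prove Theorem~\ref{thm:EFW}: it is quoted as a result of Eskin--Fisher--Whyte, with only a short paragraph of commentary. That paragraph handles sufficiency by a different and more elementary route than yours: it observes that $\mathbb{Z}_{n^k}\wr\mathbb{Z}$ sits as a finite-index subgroup of $\mathbb{Z}_n\wr\mathbb{Z}$ (take the index-$k$ subgroup where the arrow position lies in $k\mathbb{Z}$; the lamp group regroups into $k$-blocks and becomes $\mathbb{Z}_n^k$), so both $F_1\wr\mathbb{Z}$ and $F_2\wr\mathbb{Z}$ are commensurable to $\mathbb{Z}_k\wr\mathbb{Z}$ via Lemma~\ref{lem:QIComm}. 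Your horocyclic-product construction is correct in spirit and is essentially the geometric manifestation of the same regrouping, but it is heavier than needed: the finite-index argument avoids building the Diestel--Leader model altogether.

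For necessity, your outline is an honest sketch of the Eskin--Fisher--Whyte strategy and you are right that this is where the depth lies. The paper says exactly the same thing, only more tersely: ``The converse, on the other hand, is much more difficult.'' Neither you nor the paper attempts to carry it out, and your assessment that coarse differentiation is currently the only known approach, and that it lies outside the elementary toolkit of this minicourse, is accurate. One small sharpening: the boundary invariant you invoke is not quite Hausdorff dimension per se (the trees are discrete), but rather the conformal structure on the visual boundary together with the module of annuli, which forces the $\log m/\log m'$ rationality; your statement is morally right but would need care to make precise.
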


\noindent
We already saw that the quasi-isometry type of $F \wr \mathbb{Z}$ only depends on $|F|$. And it is not difficult to realise $\mathbb{Z}_{n^k} \wr \mathbb{Z}$ as a finite-index subgroup of $\mathbb{Z}_n \wr \mathbb{Z}$ for all $n,k \geq 2$. Thus, it is clear that $F_1 \wr \mathbb{Z}$ and $F_2 \wr \mathbb{Z}$ are quasi-isometric if $|F_1|$ and $|F_2|$ are powers of a common number. The converse, on the other hand, is much more difficult. 

\medskip \noindent
In fact, more information is found in \cite{MR2925383, MR3034290}: lamplighter groups are not only compared between them, but also to arbitrary finitely generated groups. Indeed, it is proved that an arbitrary finitely generated group is quasi-isometric to the lamplighter group $\mathbb{Z}_n \wr \mathbb{Z}$ if and only if it can be realised as a uniform lattice in the Diestel-Leader graph $\mathrm{DL}(n)$, a specific Cayley graph of $\mathbb{Z}_n \wr \mathbb{Z}$ that we will define in Section~\ref{section:CoarseHilbert}. As a consequence, some information can be extracted on the algebraic structure of groups quasi-isometric to lamplighters over $\mathbb{Z}$, see \cite{MR2991419}. 

\medskip \noindent
As a notable application of \cite{MR2925383, MR3034290}, it can be proved that there exist finitely generated groups that are quasi-isometric but not bijectively quasi-isometric (or equivalently, biLipschitz equivalent): 

\begin{thm}[\cite{MR2730576}]\label{thm:Dymarz}
Let $F_1,F_2$ be two non-trivial finite groups such that $|F_2|=|F_1|^k$ for some $k \geq 2$. There does not exist any bijective quasi-isometry between $F_1 \wr \mathbb{Z}$ and $F_2 \wr \mathbb{Z}$ if $k$ is not a product of prime factors appearing in $|F_1|$. 
\end{thm}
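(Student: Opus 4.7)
The plan is to follow Dymarz's original argument from \cite{MR2730576}, which rests on two pillars: Whyte's matching characterization of biLipschitz equivalences between amenable spaces, and the Eskin--Fisher--Whyte structure theorem for quasi-isometries of lamplighters over $\mathbb{Z}$.

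First I would make a reduction. Since a bijective quasi-isometry between two graphs of bounded valence is automatically a biLipschitz equivalence, and since Proposition~\ref{prop:BiLipWreathGroups} allows us to replace each finite group $F_i$ by the cyclic group $\mathbb{Z}_{|F_i|}$, the problem becomes: show that no biLipschitz bijection exists between $\mathbb{Z}_n \wr \mathbb{Z}$ and $\mathbb{Z}_{n^k} \wr \mathbb{Z}$, where $n = |F_1|$ and $k$ has a prime factor not dividing $n$. It is useful to work with the Diestel--Leader model of these groups (to be developed in Section~\ref{section:CoarseHilbert}): $\mathrm{DL}(n)$ sits inside the product of two $(n+1)$-regular trees $T_1 \times T_2$, with vertices $(x_1,x_2)$ satisfying a Busemann relation $b_1(x_1) + b_2(x_2) = 0$, and the coordinate $h := b_1$ plays the role of the arrow position.

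Next I would invoke the structure theorem of Eskin--Fisher--Whyte \cite{MR2925383, MR3034290}: every $(L,C)$-quasi-isometry $\phi : \mathrm{DL}(n) \to \mathrm{DL}(m)$ coarsely respects the height function, in the sense that there exists an affine map $\alpha(t) = \pm t + \mathrm{cst}$ and a constant $D$, depending only on $L$ and $C$, such that $|h_m(\phi(x)) - \alpha(h_n(x))| \leq D$ for all $x \in \mathrm{DL}(n)$. Consequently, a biLipschitz bijection $\phi$ sends each horocycle $H_R := h_n^{-1}(R)$ into a bounded neighbourhood of a horocycle $H'_{R'}$ in $\mathrm{DL}(m)$. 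Now I would apply Whyte's matching criterion: the biLipschitz bijection induces, after restriction and slight fattening, a bijective matching of bounded displacement between large finite chunks of $H_R$ and of $H'_{R'}$. Combined with the tree-like product structure, this forces a precise comparison between cardinalities of "boxes" in $\mathrm{DL}(n)$ (which are products of branch sets of sizes powers of $n$) and in $\mathrm{DL}(m)$ (powers of $m = n^k$).

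The final step is the arithmetic extraction. Write any intermediate level in $\mathrm{DL}(m)$ as gluing of $n^k$ sub-trees; a Hall-type counting argument applied to the bijection on the bounded-geometry bipartite graph given by "$y$ is within the displacement constant of $\phi(x)$" imposes that the ratio of box sizes be integer-bounded on both sides. Expressed in logarithms (base $n$), this means $k$ must decompose as a sum of integers of the form $\log_n(p^a)$ where $p$ ranges over primes dividing $n$, i.e., $k$ must be a sum and hence a product of prime factors of $n$. If it is not, the matching fails by a Hall obstruction on some large box, giving the desired contradiction.

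The hard part, in my view, is not the arithmetic at the end but the quantitative upgrade of the EFW theorem needed to make the "nearly preserves horocycles" statement tight enough to run a Hall-theorem obstruction: one must control the displacement of $\phi$ relative to the horocyclic foliation uniformly over arbitrarily large boxes, and only then does the divisibility condition on $k$ emerge cleanly. This is precisely where \cite{MR2730576} does the delicate work, and I would import that quantitative version as the key technical input.
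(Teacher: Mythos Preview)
The paper does not actually prove Theorem~\ref{thm:Dymarz}; it is stated as a citation of \cite{MR2730576} with the remark ``See Sections~\ref{section:MeasureScaling} and~\ref{section:ApplicationsTwo} for more details and other examples,'' but those later sections develop the analogous phenomenon for lamplighters over \emph{pancylindrical} (in particular one-ended finitely presented) groups via the aptolic machinery, not for lamplighters over $\mathbb{Z}$. So there is no in-paper proof to compare against, and your sketch should be read as an attempt to outline the original argument from \cite{MR2730576}.

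On that front, your broad strokes are right: the EFW structure theorem is the essential input, and a bijective quasi-isometry forces an arithmetic relation between $n$ and $n^k$. But the mechanism you describe in the last two paragraphs is not the one Dymarz uses, and as written it does not close. The EFW theorem gives more than ``coarsely preserves height'': it shows that every quasi-isometry of $\mathrm{DL}(n)$ is at bounded distance from a \emph{standard} map, i.e.\ one that acts as a product of biLipschitz maps on the two tree factors and hence induces biLipschitz maps on the boundaries $\partial T_i$, which are metrically copies of $\mathbb{Q}_n$ (or more precisely, spaces whose local self-similarities scale by powers of $n$). The obstruction to bijectivity is then extracted from these \emph{boundary} maps: a biLipschitz bijection between the Diestel--Leader graphs forces the induced boundary maps to be, in a suitable sense, measure-scaling with an integer scaling factor, and the possible scaling factors are constrained by the prime decomposition of $n$. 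Your proposed route via Hall matchings on finite horocyclic boxes and the sentence ``$k$ must decompose as a sum of integers of the form $\log_n(p^a)$'' does not correspond to an argument that actually runs; in particular, horocycles in $\mathrm{DL}(n)$ are infinite and not canonically cut into boxes of size a power of $n$, and the divisibility condition you want emerges from local scaling on the boundary, not from a pigeonhole on level sets.
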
 

\noindent
See Sections~\ref{section:MeasureScaling} and~\ref{section:ApplicationsTwo} for more details and other examples. 

\medskip \noindent
A second notable classification of wreath products up to quasi-isometry is given by the next statement. It is one the goal of this course to give a(n almost self-contained) proof of this classification (from a perspective different from the original paper).

\begin{thm}[\cite{MR4794592}]\label{thm:QIclassification}
Let $F_1,F_2$ be two finite groups and $H_1,H_2$ two one-ended finitely presented groups. The wreath products $F_1 \wr H_1$ and $F_2 \wr H_2$ are quasi-isometric if and only if one of the following two conditions occurs:
\begin{itemize}
	\item $H_1,H_2$ are non-amenable and quasi-isometric, and $|F_1|,|F_2|$ have the same prime divisors;
	\item $H_1,H_2$ are amenable, $|F_1|,|F_2|$ are powers of a common number, say $|F_1|=q^a$ and $|F_2|=q^b$, and there exists a quasi-$\frac{b}{a}$-to-one quasi-isometry $H_1 \to H_2$. 
\end{itemize}
\end{thm}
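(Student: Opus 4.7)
The plan is to reduce both directions to the analysis of \emph{aptolic} quasi-isometries — quasi-isometries between lamplighter graphs that respect the wreath product structure, a notion developed in Section~\ref{section:Aptolic}. Observe first that whichever of the two cases applies is itself a quasi-isometry invariant of the wreath products: amenability of $F \wr H$ is equivalent to amenability of $H$ when $F$ is finite, and amenability of finitely generated groups is preserved under quasi-isometry, so no configuration can straddle the two cases.

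\emph{Sufficiency.} By Proposition~\ref{prop:CaylWreath}, $F_i \wr H_i$ has a Cayley graph isomorphic to $\mathcal{L}_{n_i}(X_i)$, with $X_i$ a Cayley graph of $H_i$ and $n_i=|F_i|$. In the amenable case, a quasi-$\frac{b}{a}$-to-one quasi-isometry $\theta : H_1 \to H_2$ should extend to an aptolic quasi-isometry of lamplighters by being combined with a bijection $F_1^b \simeq F_2^a$, which exists because $|F_1|^b = q^{ab} = |F_2|^a$: one groups vertices of $H_1$ into blocks of size $b$ that $\theta$ roughly maps onto blocks of size $a$ in $H_2$, and repackages the $b$ colours from $F_1$ above a source block as $a$ colours from $F_2$ above its target block. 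The travelling-salesman description of the distance (Lemma~\ref{lem:DistWreath}) then makes the verification of the quasi-isometry estimate routine. In the non-amenable case, Proposition~\ref{prop:biLipWreath} combined with a quasi-isometry $H_1 \to H_2$ reduces the problem to showing that $\mathcal{L}_{n_1}(X)$ and $\mathcal{L}_{n_2}(X)$ are quasi-isometric whenever $n_1,n_2$ share prime divisors; for this I would use the big-embedding construction of Section~\ref{section:BigEmbedding}, exploiting linear isoperimetry to spread each $F_1$-lamp as an $F_2$-valued tuple supported on a uniformly bounded covering of $X$.

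\emph{Necessity (aptolic rigidity).} Given a quasi-isometry $\Phi : F_1 \wr H_1 \to F_2 \wr H_2$, the main task is to show that $\Phi$ is at bounded distance from an aptolic quasi-isometry. My plan is to identify the lamp fibres $fF_i^{(H_i)}$ as canonical coarse-topological subsets by combining coarse separation (Section~\ref{section:CoarseSeparation}) — which uses one-endedness of $H_i$ to pin down lamp fibres as maximal coarsely separating subsets with a prescribed topology at infinity — with coarse simple connectedness (Section~\ref{section:CoarseSimplyConnected}), where finite presentability of $H_i$ upgrades this characterisation into a property preserved by quasi-isometry. Once $\Phi$ is known to permute lamp fibres up to bounded error, it descends to a base map $H_1 \to H_2$. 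Distinguishing the two regimes is then a volumetric matter: in the amenable regime, F\o{}lner-type counting pins down a fibre multiplicity $b/a$ that forces both $|F_1|^a = |F_2|^b$ (the common-power condition) and the quasi-$b/a$-to-one property of the base map; in the non-amenable regime, exponential growth absorbs the volume discrepancy, leaving only the prime-divisor constraint on $|F_1|, |F_2|$ and an honest quasi-isometry $H_1 \to H_2$ as the induced base map.

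\emph{Main obstacle.} Aptolic rigidity — proving that every quasi-isometry is at bounded distance from an aptolic one — is the longest and most delicate step. The hypotheses on $H_i$ are indispensable: the failure of one-endedness for $H_i = \mathbb{Z}$ (two-ended, hence outside the scope of the theorem) already admits the exotic quasi-isometries revealed by Theorem~\ref{thm:Dymarz}, showing that without one-endedness the aptolic paradigm breaks down. Ruling out coarse ``mixing'' of lamp fibres under a general quasi-isometry is the technical heart of the argument; by contrast, extracting the algebraic conditions once aptolicity is in hand reduces to comparatively clean isoperimetric and volumetric comparisons.
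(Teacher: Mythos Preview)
Your overall architecture matches the paper's: reduce to aptolic quasi-isometries, then extract arithmetic constraints by a volumetric argument (Proposition~\ref{prop:AptolicQILamp}), with F{\o}lner sequences doing the work in the amenable case. The sufficiency in the amenable case is essentially Proposition~\ref{prop:AptoQI}, and your sketch is accurate there.

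However, your plan for aptolic rigidity misidentifies the subsets that are coarsely preserved. You propose to characterise the \emph{lamp fibres} $fF_i^{(H_i)}$ (cosets of the lamp subgroup, i.e.\ sets with the arrow fixed) via coarse separation. The paper does the opposite: it shows that the \emph{leaves} $X(c)=\{(c,p):p\in H_i\}$ (copies of the base group, i.e.\ sets with the colouring fixed) are preserved. This is not a matter of terminology; the mechanisms are different. The role of one-endedness and finite presentability of $H_i$ is to make $H_i$ coarsely simply connected and uniformly one-ended, hence uniformly locally one-ended, hence \emph{pancylindrical}. Then the Embedding Theorem (Theorem~\ref{thm:Embedding}, proved via the fact that $\mathcal{L}_n(X)$ is stringy relative to its $X$-leaves, Theorem~\ref{thm:LampStringy}) forces any pancylindrical coarse embedding, in particular each leaf of the source, into a neighbourhood of a leaf of the target. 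This gives a map $\alpha$ on colourings and a family $\{\beta_c\}$ of quasi-isometries on the base; the remaining step, showing all $\beta_c$ agree up to bounded error, is the ladders-of-leaves argument (Proposition~\ref{prop:LadderLeaf} and the proof of Theorem~\ref{thm:AptolicCoarse}), which your outline does not anticipate. A direct coarse-separation characterisation of lamp fibres would be much harder to make work, since those subsets are not quasi-isometrically embedded and have no evident coarse-topological signature.

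Two smaller points. For the non-amenable sufficiency you cite Section~\ref{section:BigEmbedding}, but that section is the Embedding Theorem (a necessity tool); the actual construction is Proposition~\ref{prop:QInonamenable}, which uses that a non-amenable graph admits an $n$-to-$1$ self-map at bounded distance from the identity (via Theorem~\ref{thm:BijNonQI}). And when you invoke Proposition~\ref{prop:biLipWreath} together with a quasi-isometry $H_1\to H_2$, note that the proposition requires a biLipschitz equivalence; this is available in the non-amenable case precisely because of Theorem~\ref{thm:BijNonQI}, which you should cite.
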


\noindent
For instance, given integers $n,m,p,q \geq 2$, the wreath products $\mathbb{Z}_n \wr \mathbb{Z}^p$ and $\mathbb{Z}_m \wr \mathbb{Z}^q$ are quasi-isometric if and only if $p=q$ and $n,m$ are powers of a common number. And, given two integers $n,m \geq 2$ and a non-abelian free group $\mathbb{F}$ of finite rank, the wreath products $\mathbb{Z}_n \wr (\mathbb{F} \times \mathbb{Z})$ and $\mathbb{Z}_m \wr (\mathbb{F} \times \mathbb{Z})$ are quasi-isometric if and only if $n$ and $m$ have the same prime divisors. 

\medskip \noindent
Some information about arbitrary finitely generated groups that are quasi-isometric to lamplighters over one-ended finitely presented groups can also be extracted, but the global picture is far from being understood, even in simple cases such as $\mathbb{Z}_n \wr \mathbb{Z}^2$. See \cite{Halo} for more details.

\medskip \noindent
In view of Theorems~\ref{thm:EFW} and~\ref{thm:QIclassification}, lamplighters over infinitely ended groups (such as non-abelian free groups) remain to be investigated. Given a non-abelian free group $\mathbb{F}$, we know that $\mathbb{Z}_n \wr \mathbb{F}$ and $\mathbb{Z}_m \wr \mathbb{F}$ are quasi-isometric when $n$ and $m$ have the same prime divisors (see Section~\ref{section:LampNonAmenable}), but it is not known whether the converse holds. Moreover, there exist rather unexpected finitely generated groups that turn out to be quasi-isometric to lamplighters over free groups:

\begin{thm}[\cite{MR4240607}]
For every $n \geq 2$ and every non-abelian free group of finite rank $\mathbb{F}$, there exists a simple group quasi-isometric to $\mathbb{Z}_n \wr \mathbb{F}$. 
\end{thm}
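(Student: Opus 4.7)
The plan is to construct the required simple group as a (derived) topological full group of a carefully chosen self-similar action, and then to identify its quasi-isometry type with that of $\mathbb{Z}_n \wr \mathbb{F}$ by exhibiting a geometric realisation on which both groups act properly and cocompactly.

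First, I would build a self-similar group $G$ acting on a rooted regular tree $T_d$ (with $d$ chosen compatibly with $n$ and with the rank of $\mathbb{F}$) whose recursive structure encodes a lamplighter-like dynamics. For $\mathbb{Z}_n\wr\mathbb{Z}$ there are well-known two-state Mealy automata that realise the lamplighter as a self-similar group; for lamplighters over a non-abelian free group one needs a branchier automaton whose states encode the free generators and whose recursion propagates the $\mathbb{Z}_n$-lamp alphabet on the first coordinate. The combinatorics of the free group's Cayley tree should guide this design.

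Second, I would invoke Nekrashevych's construction: to any contracting, self-replicating self-similar group $G$ one associates a topological full group $V_G$ acting on the Cantor boundary $\partial T_d$, together with a finitely generated simple subgroup (typically the commutator subgroup $[V_G,V_G]$, or Nekrashevych's alternating variant). The abstract simplicity theorem reduces to explicit checks on the generating automaton (contraction, recurrence, non-degeneracy on germs), so the task becomes the design of a self-similar action for which these conditions are verifiable. Call the resulting simple group $S$.

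Third, to match $S$ with $\mathbb{Z}_n\wr\mathbb{F}$ up to quasi-isometry, the strategy is to find a single locally finite geometric model on which both groups act properly and cocompactly, and then invoke the Milnor--Schwarz lemma. The natural candidate is a horocyclic-product-style complex built from $T_d$ and from the self-similar orbit schema: $\mathbb{Z}_n\wr\mathbb{F}$ has a lamplighter graph with a transparent horocyclic-product description (generalising the Diestel--Leader picture used for $\mathbb{Z}_n\wr\mathbb{Z}$ in Section~\ref{section:CoarseHilbert}), while $S$ arises as a group of compactly supported homeomorphisms with a geometric action on essentially the same object.

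The main obstacle is the engineering step: one must cook up a contracting self-similar action whose associated Nekrashevych simple group retains, \emph{as a large-scale geometric object}, exactly the lamplighter geometry over $\mathbb{F}$. Checking contraction and self-replication for the bespoke automaton is already non-trivial, but the deeper difficulty is that Nekrashevych's $V_G$ tends to enlarge the ambient group quite drastically, so one needs fine control on the geometric model to ensure that the cocompact action does not change the quasi-isometry class. I expect this matching between the simplicity construction and the target lamplighter geometry to be where the real content of the theorem lies.
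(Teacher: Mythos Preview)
This theorem is only \emph{cited} in the paper (as \cite{MR4240607}); no proof is given here, so there is nothing in the present text to compare your proposal against. The survey mentions the result to illustrate that the quasi-isometry classes of lamplighters over infinitely-ended groups contain unexpected members, and immediately moves on.

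That said, your sketch has a structural problem worth flagging. The third step leans on a ``horocyclic-product-style complex'' generalising the Diestel--Leader picture to $\mathbb{Z}_n\wr\mathbb{F}$, but the horocyclic-product description in Section~\ref{section:CoarseHilbert} is genuinely special to the base $\mathbb{Z}$: it relies on a Busemann function (a height) on a tree and on the fact that $\mathbb{Z}$ is a line, so that a configuration splits cleanly into a left half and a right half. There is no analogous decomposition when the base is a non-abelian free group, and indeed the paper emphasises (e.g.\ in Proposition~\ref{prop:AsConeDimLamp} and the surrounding discussion) that lamplighters over $\mathbb{Z}$ behave quite differently from lamplighters over larger bases. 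So the geometric model you propose for the Milnor--Schwarz step does not exist in the form you describe.

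The actual construction in \cite{MR4240607} goes through a different geometric model: certain asymptotically rigid mapping class groups (of an infinite-type surface built over a regular tree) act properly cocompactly on a cube complex that one can independently show is quasi-isometric to $\mathbb{Z}_n\wr\mathbb{F}$; simplicity comes from the mapping-class-group side rather than from Nekrashevych's topological full group machinery. Your instinct that one needs a common geometric object on which both the simple group and the lamplighter act is right, but the object is not a horocyclic product.
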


\noindent
It is expected that a similar phenomenon also occurs for lamplighters over some one-ended groups, but we do not know whether a simple group can be quasi-isometric to a wreath product such as $\mathbb{Z}_n \wr \mathbb{Z}^m$ for some $n,m \geq 2$.

\subsection{Exercises}

\begin{exo}\label{exo:Hypercubes}
Let $S$ be a set. Define the hypercube $Q(S)$ as the graph whose vertices are the finite subsets of $S$ and whose edges connect two subsets whenever their symmetric difference has size $1$. Prove that $\mathrm{Aut}(Q(S))$ is isomorphic to the permutation group $(\mathrm{Sym}(2) \curvearrowright \{1,2\} )  \wr (\mathrm{Sym}(S) \curvearrowright S)$. 
\end{exo}

\begin{exo}\label{exo:ChainSquares}
In this exercise, we realise some wreath products as automorphism groups of graphs.
\begin{enumerate}
	\item Let $X$ denote the bi-infinite chain of squares \includegraphics[width=0.15\linewidth]{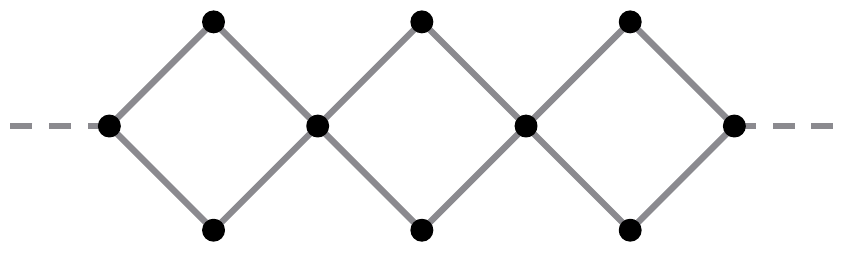}. Show that $\mathrm{Aut}(X)$ is isomorphic to $\mathbb{Z}_2 \wr \mathbb{D}_\infty$. 
	\item Given an integer $n \geq 1$, let $Y_n$ denote the graph \includegraphics[width=0.15\linewidth]{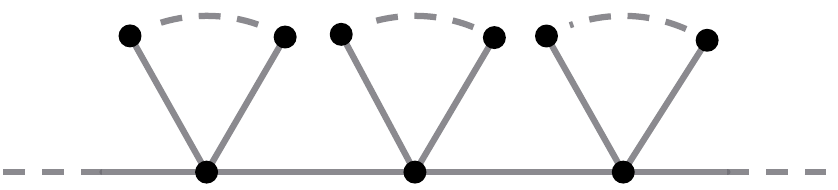} obtained from a bi-infinite line by adding $n$ spikes at each vertex. Prove that $\mathrm{Aut}(Y_n)$ is isomorphic to $\mathrm{Sym}(n) \wr \mathbb{D}_\infty$.  
	\item Given a graph $X$ and an integer $n \geq 1$, constructed a graph $X(n)$ whose automorphism groups is isomorphic to the permutation group $(\mathrm{Sym}(n) \curvearrowright \{1, \ldots, n\} ) \wr  (\mathrm{Aut}(X) \curvearrowright V(X))$.
\end{enumerate}
\end{exo}

\begin{exo}
Let $X$ and $Y$ be two graphs. Define the \emph{lexicographic product} $X[Y]$ as the graph obtained from disjoint copies $Y_x$ of $Y$ indexed by $V(X)$ by adding an edge between every vertex of $Y_a$ and every vertex of $Y_b$ whenever $a,b \in V(X)$ are adjacent in~$X$. 
\begin{enumerate}
	\item Draw $C_5[C_3]$, where $C_n$ denotes the cycle of length $n$.
	\item Realise $\mathrm{Aut}(Y) \wr \mathrm{Aut}(X)$ as a subgroup of $\mathrm{Aut}(X[Y])$.
	\item Let $K_{n,m}$ denote the complete bipartite graph with $n+m$ vertices and $\overline{K_s}$ the graph that has $s$ vertices but no edges. Show that $K_{n,m}[K_s]$ is isomorphic to $K_{sn,sm}$.
	\item Deduce that the inclusion $\mathrm{Aut}(Y) \wr \mathrm{Aut}(X) \subset \mathrm{Aut}(X[Y])$ previously constructed is not always an equality. 
\end{enumerate}
\end{exo}

\begin{exo}\label{exo:AssociativeWreath}
The goal here is to show that the wreath product is highly non-associative. 
\begin{enumerate}
	\item Show that, given two finite groups $A$ and $B$, the cardinality of $A \wr B$ is $|A|^{|B|} |B|$.
	\item Deduce that, given three finite groups $A,B,C$, the wreath products $A \wr (B \wr C)$ and $(A \wr B) \wr C$ are isomorphic if and only if one of $A,B,C$ is trivial.
	\item Verify that the groups $A = \cdots (((\mathbb{Z} \wr \mathbb{Z}) \wr \mathbb{Z}) \wr \mathbb{Z}) \cdots$ and $C= \cdots (\mathbb{Z} \wr (\mathbb{Z} \wr ( \mathbb{Z} \wr \mathbb{Z}))) \cdots$ are well-defined and prove that $A \wr (\mathbb{Z} \wr C) \simeq A \wr C \simeq (A \wr \mathbb{Z}) \wr B$. Thus, the previous item does not generalise to arbitrary infinite groups. 
\end{enumerate}
\end{exo}

\begin{exo}\label{exo:QuasiInverse}
Let $X,Y$ be two metric spaces and $\varphi : X \to Y$ a quasi-isometry. Fix $A\geq 1$ and $B \geq 0$ such that $\varphi$ is an $(A,B)$-quasi-isometry, i.e.\ 
$$\frac{1}{A} d(x,y) - B \leq d(\varphi(x), \varphi(y)) \leq A d(x,y) + B \text{ for all } x,y \in X$$
and every point in $Y$ lies at distance $\leq B$ from the image of $\varphi$. Prove $\varphi$ admits a \emph{quasi-inverse}, i.e.\ a quasi-isometry $\bar{\varphi} : Y \to X$ such that $\varphi \circ \bar{\varphi}$ (resp.\ $\bar{\varphi}\circ \varphi$) lies at finite distance from $\mathrm{Id}_Y$ (resp.\ $\mathrm{Id}_X$). Moreover, verify that $\bar{\varphi}$ can be chosen as a $(A,3AB)$-quasi-isometry.
\end{exo}

\begin{exo}\label{exo:ExQI}
Prove that the map
\begin{center}
\includegraphics[width=0.8\linewidth]{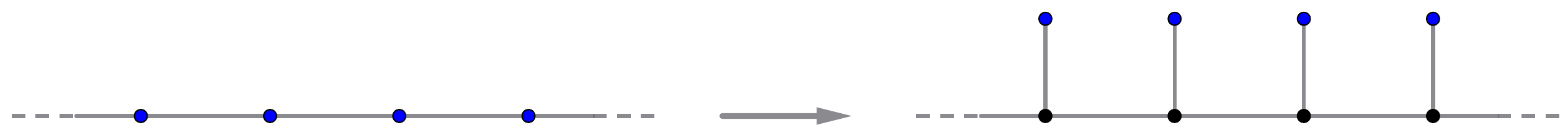}
\end{center}
is a $(1,1)$-quasi-isometry.
\end{exo}

\begin{exo}\label{ex:CoarseQI}
Let $\varphi : X \to Y$ be a map between two graphs.
\begin{enumerate}
	\item If $\varphi$ is a coarse embedding, show that $\varphi$ is Lipschitz. (\emph{Hint: Apply the triangle inequality along geodesics.})
	\item Deduce that, if $\varphi$ is a coarse equivalence, then it must be a quasi-isometry.
\end{enumerate}
\end{exo}

\begin{exo}
Let $X$ and $Y$ be two graphs, which we do not assume connected. Fix a vertex $o \in X$. 
\begin{enumerate}
	\item Show that the wreath product $(X,o) \wr Y$ is connected if and only if $X$ and $Y$ are both connected.
	\item Deduce that, if $A$ and $B$ are two groups generated respectively by $R \subset A$ and $S \subset B$, then $A \wr B$ is generated by $R \cup S$.
	\item Reprove this assertion algebraically.
\end{enumerate}
\end{exo}

\begin{exo}
Let $X$ and $Y$ be two graphs and $o \in V(X)$ a basepoint.
\begin{enumerate}
	\item Compute the girth of $(X,o) \wr Y$. 
	\item Assuming that $X$ and $Y$ are finite, compute the diameter of $(X,o) \wr Y$.
	\item The chromatic number of $\mathcal{L}_2(X)$ is the same as the chromatic number of $X$.
\end{enumerate}
\end{exo}

\begin{exo}
Let $\mathbb{E}^2$ denote the infinite grid. For every $n \geq 2$, describe the automorphism group of $\mathcal{L}_n(\mathbb{E}^2)$. (\emph{Hint: Verify that every automorphism is aptolic.})
\end{exo}

\begin{exo}
Let $k \geq 2$ be an integer.
\begin{enumerate}
	\item Prove that the multiplicative group $\mathbb{Z}_k[\mathbb{Z}]^\times$ of the invertible elements of the ring $\mathbb{Z}_k[\mathbb{Z}]$ is finitely generated if and only if $k$ is square-free (i.e.\ $1$ is the only square dividing $k$).
	\item By identifying $\bigoplus_\mathbb{Z} \mathbb{Z}_k$ with $\mathbb{Z}_k[\mathbb{Z}]$, realise $\mathbb{Z}_k[\mathbb{Z}]^\times$ as a subgroup of $\mathrm{Aut}(\mathbb{Z}_k \wr \mathbb{Z})$.
	\item Show that $\mathrm{Aut}(\mathbb{Z}_k \wr \mathbb{Z})$ is finitely generated if and only if $k$ is square-free.
\end{enumerate}
\end{exo}

\begin{exo}\label{ex:WreathRF}
Recall that a group $G$ is residually finite if, for every non-trivial $g \in G$, there exists a finite-index subgroup that does not contain $g$. 
\begin{enumerate}
	\item Prove that a group containing a finite-index subgroup that is residually finite must be residually finite.
	\item Show that, if $A$ is non-abelian and $B$ infinite, then $A \wr B$ is not residually finite. (\emph{Hint: Notice that, if $a,b \in A$ do not commute, we know that $a$ and $tbt^{-1}$ commute for every non-trivial $t \in B$.})
	\item Deduce that $A \wr B$ is residually finite if and only if either $A$ is residually finite and $B$ finite or $A$ is abelian residually finite and $B$ is infinite but residually finite.
	\item Prove that being residually finite is not preserved by quasi-isometries. 
\end{enumerate}
\end{exo}

\begin{exo}\label{exo:NotQIpreserved}
Let $A$ and $B$ be two groups.
\begin{enumerate}
	\item Prove that, if $A$ is non-trivial and $B$ infinite, then $A \wr B$ does not contain a finite normal subgroup.
	\item Denote by $K(A)$ the kernel of the morphism $A \times A \times \cdots \twoheadrightarrow A$ that sends each factor identically to $A$. Show that, if $A$ contains a non-trivial finite-order element (resp.\ two non-trivial finite-order elements that commute), then so does $K(A)$.
	\item Show that, if $B$ is infinite, every finite-index subgroup of $A \wr B$ contains a subgroup isomorphic to $K(A)$.
	\item Prove Corollary~\ref{cor:NotQIpreserved}. 
\end{enumerate}
\end{exo}

\begin{exo}
In a graph $X$, a \emph{dead end of depth $D$} is the data of two vertices $u,v \in V(X)$ such that $d(u,w)<d(u,v)$ for all $w \in B(v,D)$. By considering the vertices $u:=(0,0)$ and $v_n:= (c_n,0)$ in $\mathcal{L}_2(\mathbb{Z})$, where $c_n$ denotes the colouring that takes the value $1$ on $[-n,n]$ and $0$ elsewhere, prove that $\mathcal{L}_2(\mathbb{Z})$ contains dead ends of arbitrarily large depth. 
\end{exo}

\begin{exo}
Our goal here is to construct two quasi-isometric groups respectively with a solvable and an unsolvable word problem. 
\begin{enumerate}
	\item Prove that $\mathbb{Z}_2 \wr \mathbb{Z}$ admits $$\langle a,t \mid a^2=1, [a, t^nat^{-n}]=1 \ (n \in \mathbb{Z}) \rangle$$ as a presentation. 
	\item Show that $\mathbb{Z}_2 \wr \mathbb{Z}$ has a solvable word problem, i.e.\ there exists an algorithm that takes as an input a word written over $\{a, t^{\pm 1}\}$ and outputs whether or not it represents the trivial element in $\mathbb{Z}_2 \wr \mathbb{Z}$. 
	\item For every $I \subset \mathbb{Z}$, let $L(I)$ denote the group given by the presentation $$\langle a,t,z \mid z^2 = 1, z \text{ central}, [a,t^nat^{-n}]= z \text{ if } n \in I \text{ and } 1 \text{ otherwise} \rangle.$$Prove that $z$ is non-trivial in $L(I)$.
	\item Show that, for every $I \subset \mathbb{Z}$, $L(I)$ is quasi-isometric to $\mathbb{Z}_2 \wr \mathbb{Z}$. 
	\item Prove that, for all $I,J \subset \mathbb{Z}$, the groups $L(I)$ and $L(J)$ are isomorphic if and only if $I=J$.
	\item Deduce that there exists some $I \subset \mathbb{Z}$ such that $L(I)$ has an unsolvable word problem. (\emph{Hint: there exists only countably many algorithms.})
\end{enumerate}
\end{exo}

\begin{exo}[\cite{MR1610411}]
Let $a$ (resp.\ $t$) be a generator of left (resp.\ right) $\mathbb{Z}$-factor in $\mathbb{Z} \wr \mathbb{Z}$.
\begin{enumerate}
	\item Prove that an element $g \in \mathbb{Z} \wr \mathbb{Z}$ belongs to $\bigoplus_\mathbb{Z} \mathbb{Z}$ if and only if $$\exists x, \ \left[ g,xax^{-1} \right]=1.$$
	\item For all integers $n,m \geq 1$, show that $n$ divides $m$ if and only if $$\exists u \in \bigoplus_\mathbb{Z} \mathbb{Z}, \ u^{-1} b^n u b^{-n} = a^{-1} b^m a b^{-m}.$$
\end{enumerate}
The combination of these two observations shows that the arithmetic can be interpreted in the first-order logic of the group $\mathbb{Z} \wr \mathbb{Z}$. As a consequence of G\"{o}del's theorem, the first-order theory of $\mathbb{Z} \wr \mathbb{Z}$ is undecidable. 
\end{exo}

\section{Some ideas from coarse topology}\label{section:CoarseTopo}

\subsection{Coarse simple connectedness}\label{section:CoarseSimplyConnected}

\noindent
In topology, a space is simply connected if every loop can be continuously shrunk to a single point. In order to ``coarsify'' this idea, we replace, roughly speaking, continuous deformations with arbitrary modifications at small scales. More formally:

\begin{definition}
Let $X$ be a graph. Given a \emph{scale} $E \geq 0$, two paths $\alpha,\beta$ with the same endpoints are \emph{$E$-coarsely homotopy equivalent} (\emph{relative to their endpoints}) if there exists a sequence of paths
$$\gamma_1=\alpha, \ \gamma_2, \ldots, \ \gamma_{n-1}, \ \gamma_n=\beta$$
with the same endpoints such that, for every $1 \leq i \leq n-1$, the symmetric difference $\gamma_{i+1} \triangle \gamma_i$ has diameter $\leq E$. 
\end{definition}
\begin{center}
\begin{tabular}{|c|c|c|c|} \hline
\includegraphics[trim=0 20cm 37cm 0,clip,width=0.20\linewidth]{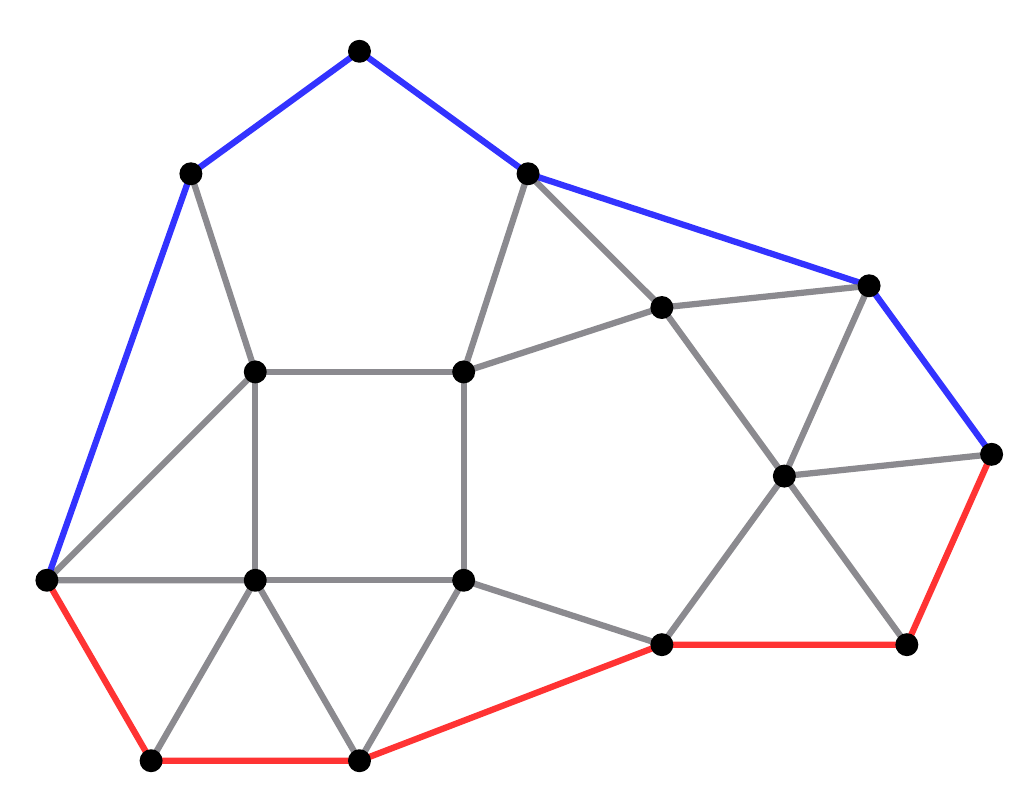} &
\includegraphics[trim=0 20cm 37cm 0,clip,width=0.20\linewidth]{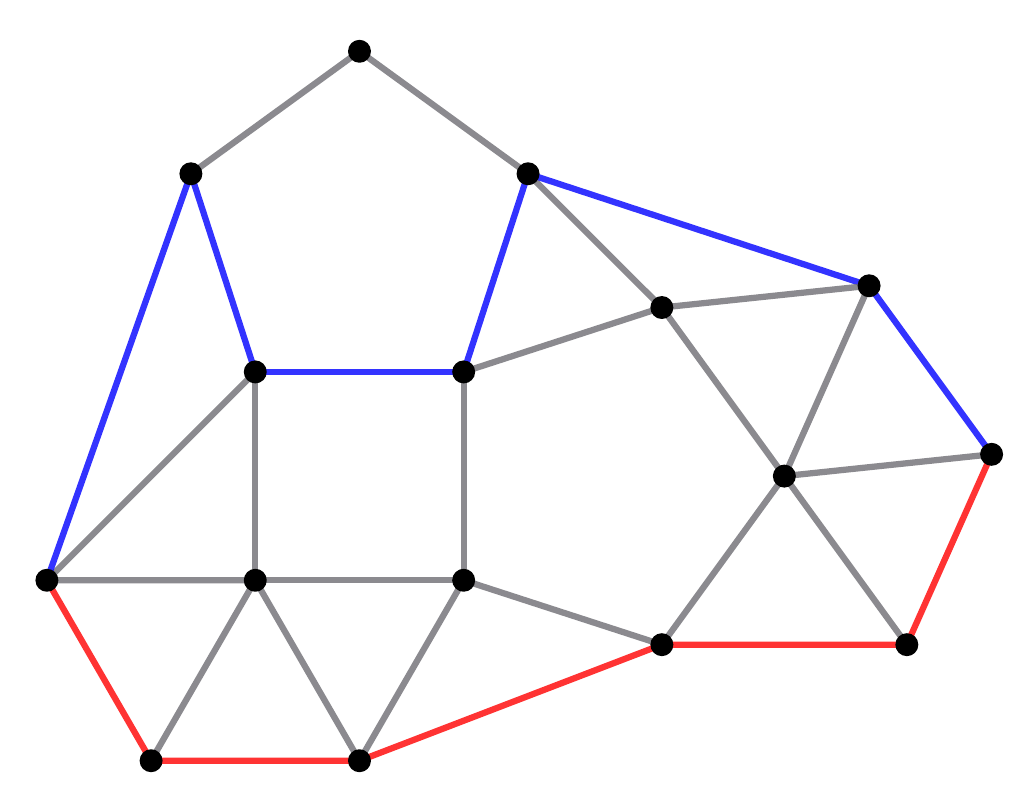} &
\includegraphics[trim=0 20cm 37cm 0,clip,width=0.20\linewidth]{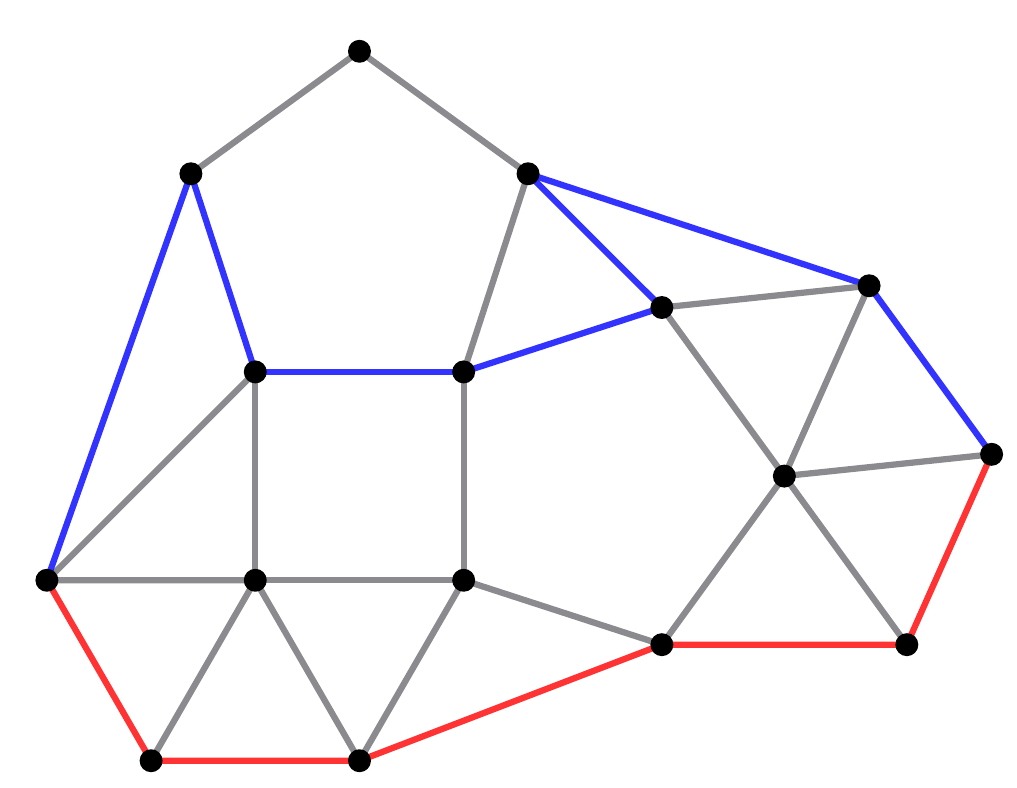} &
\includegraphics[trim=0 20cm 37cm 0,clip,width=0.20\linewidth]{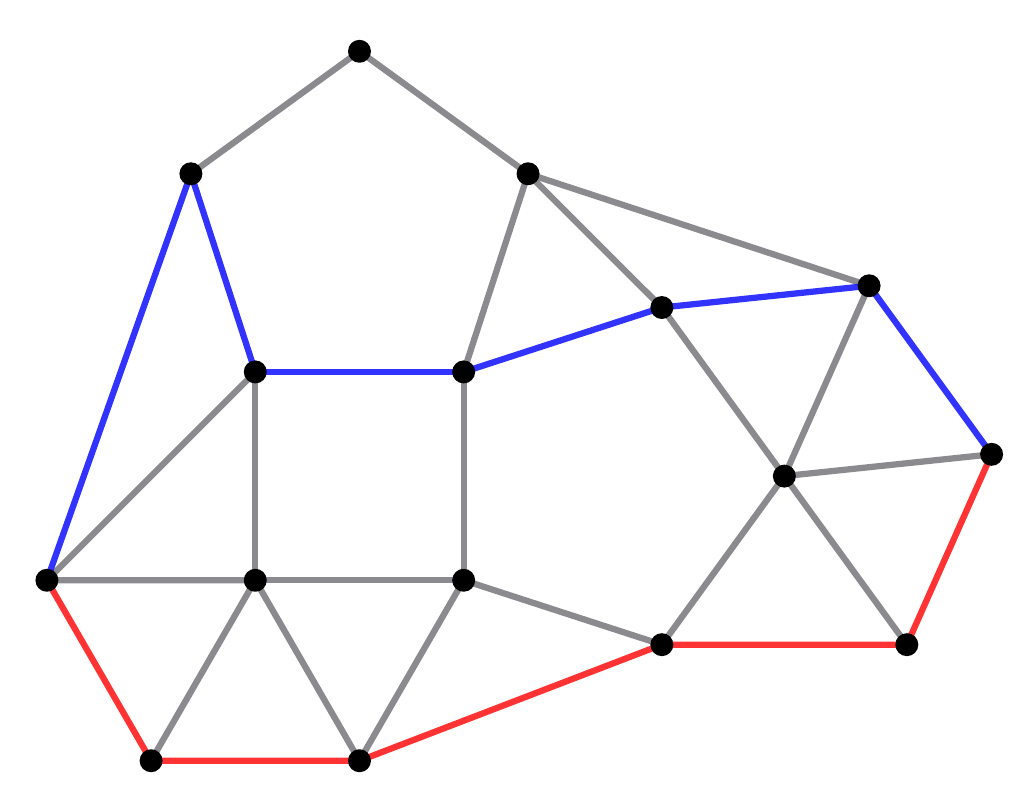} \\ \hline
\includegraphics[trim=0 20cm 37cm 0,clip,width=0.20\linewidth]{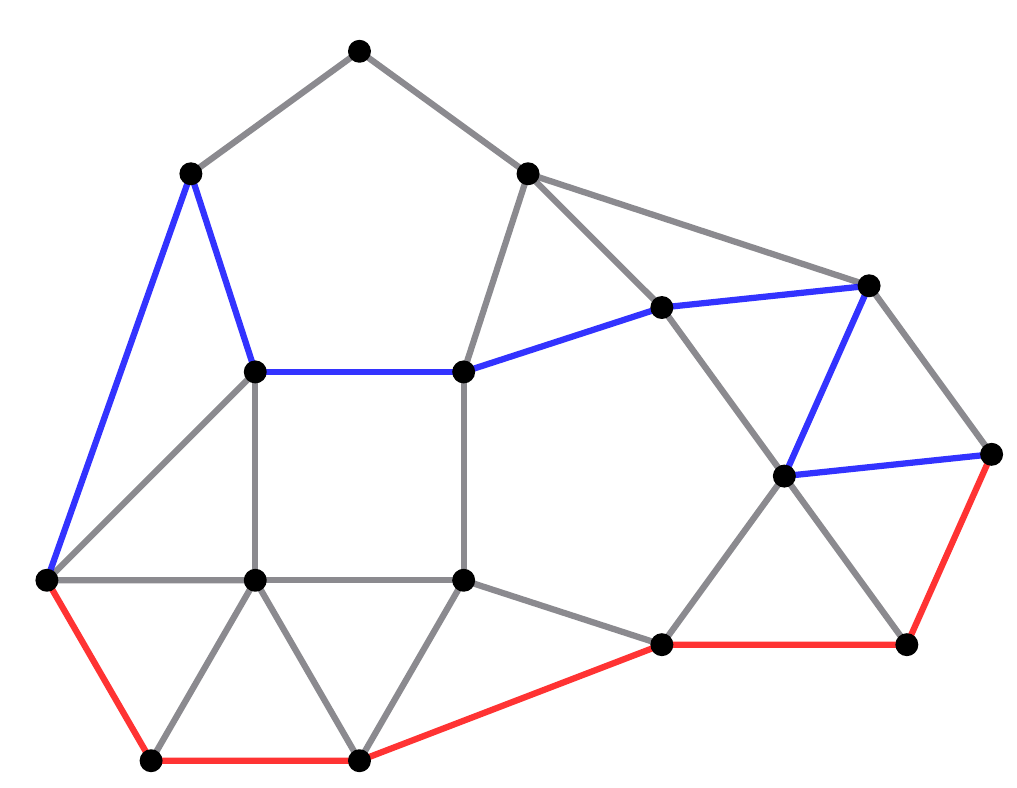} &
\includegraphics[trim=0 20cm 37cm 0,clip,width=0.20\linewidth]{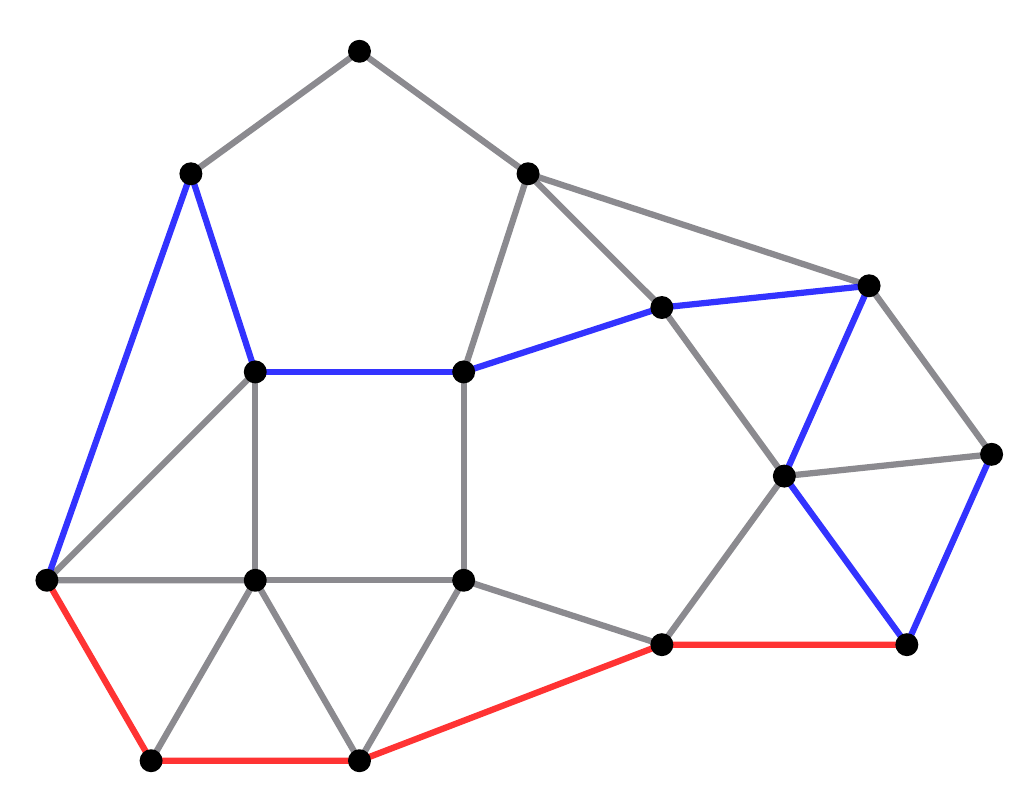} &
\includegraphics[trim=0 20cm 37cm 0,clip,width=0.20\linewidth]{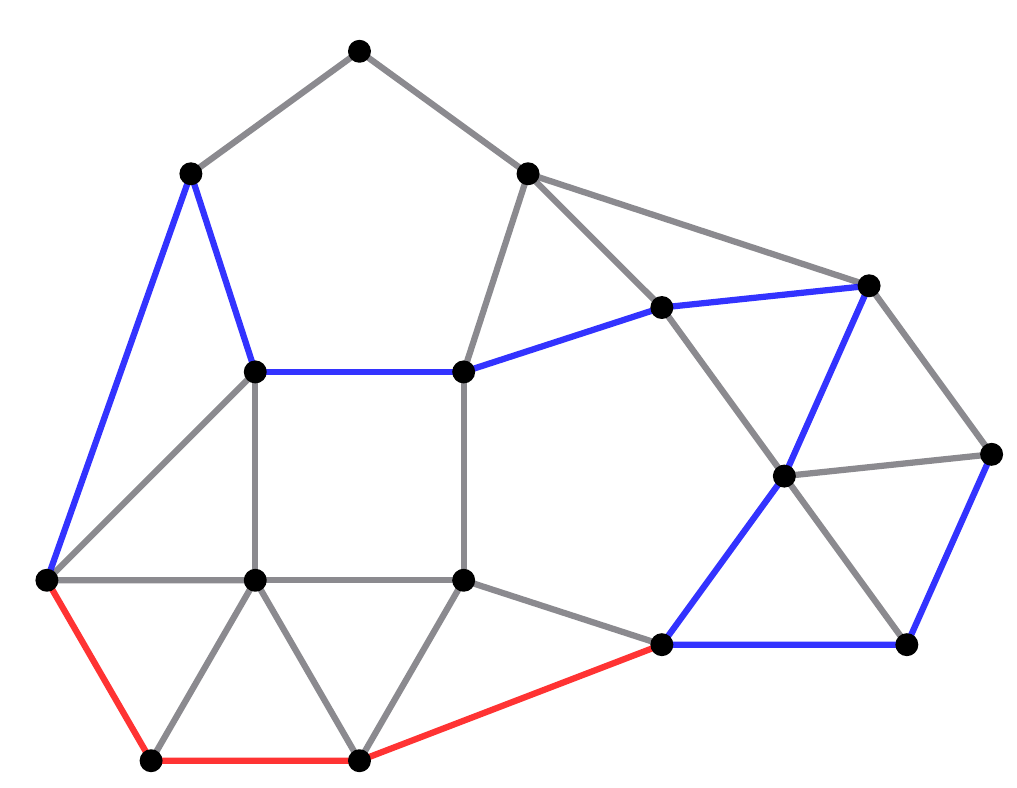} &
\includegraphics[trim=0 20cm 37cm 0,clip,width=0.20\linewidth]{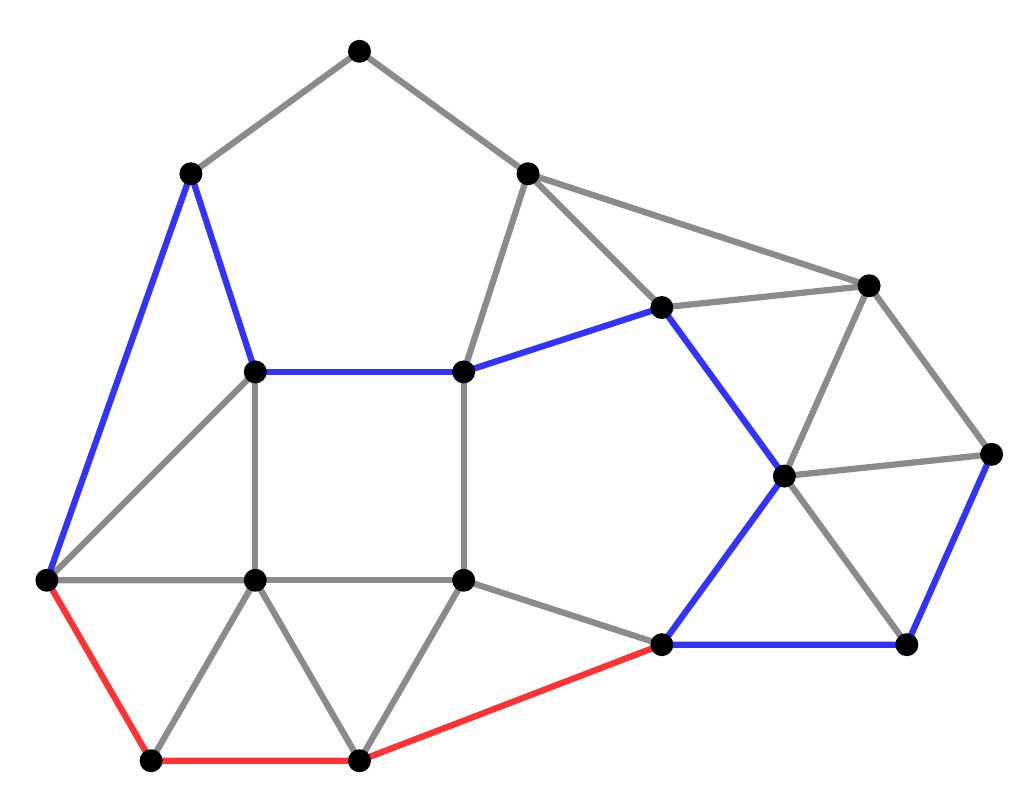} \\ \hline
\includegraphics[trim=0 20cm 37cm 0,clip,width=0.20\linewidth]{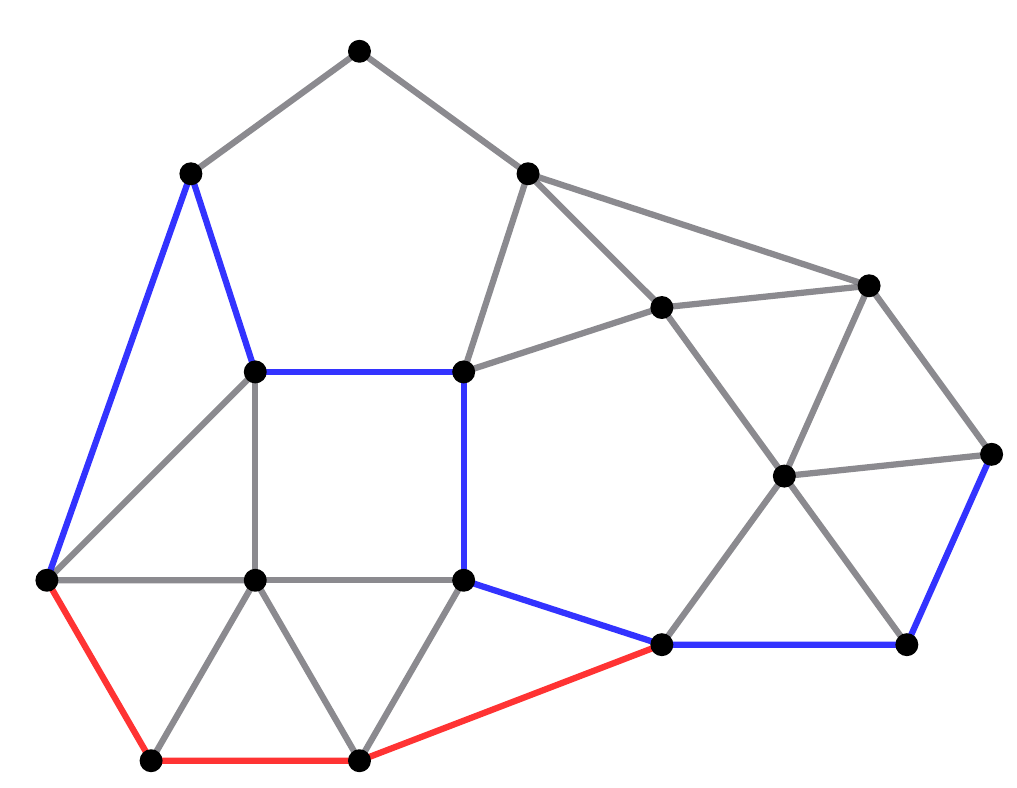} &
\includegraphics[trim=0 20cm 37cm 0,clip,width=0.20\linewidth]{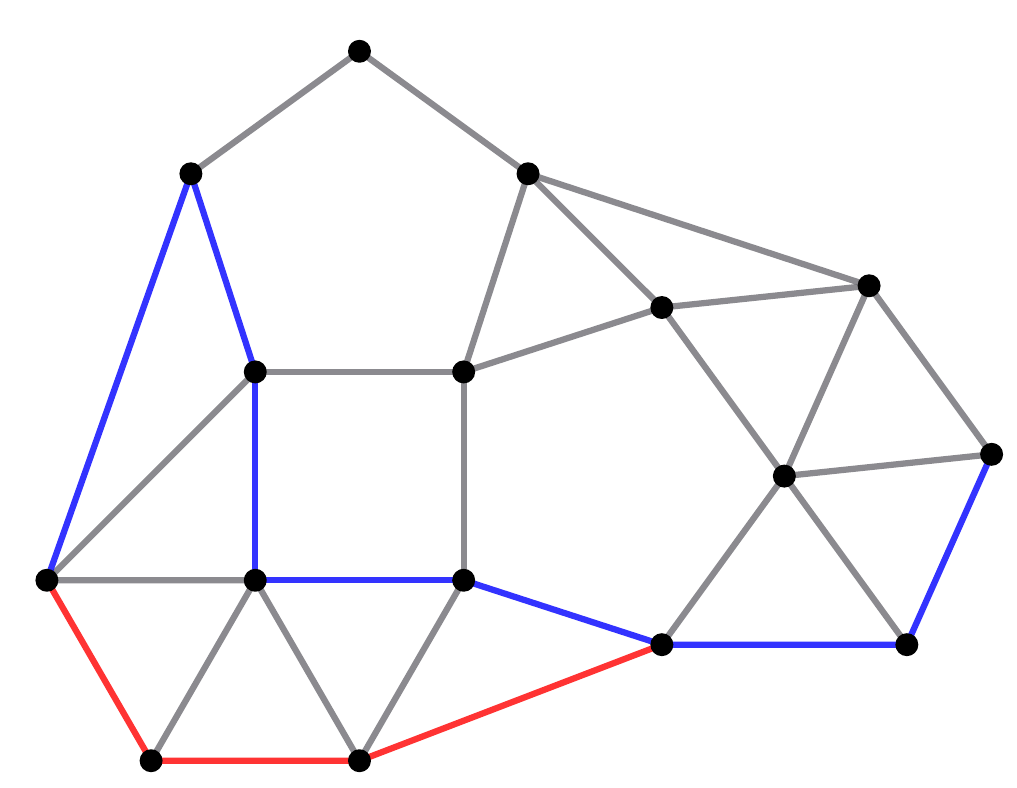} &
\includegraphics[trim=0 20cm 37cm 0,clip,width=0.20\linewidth]{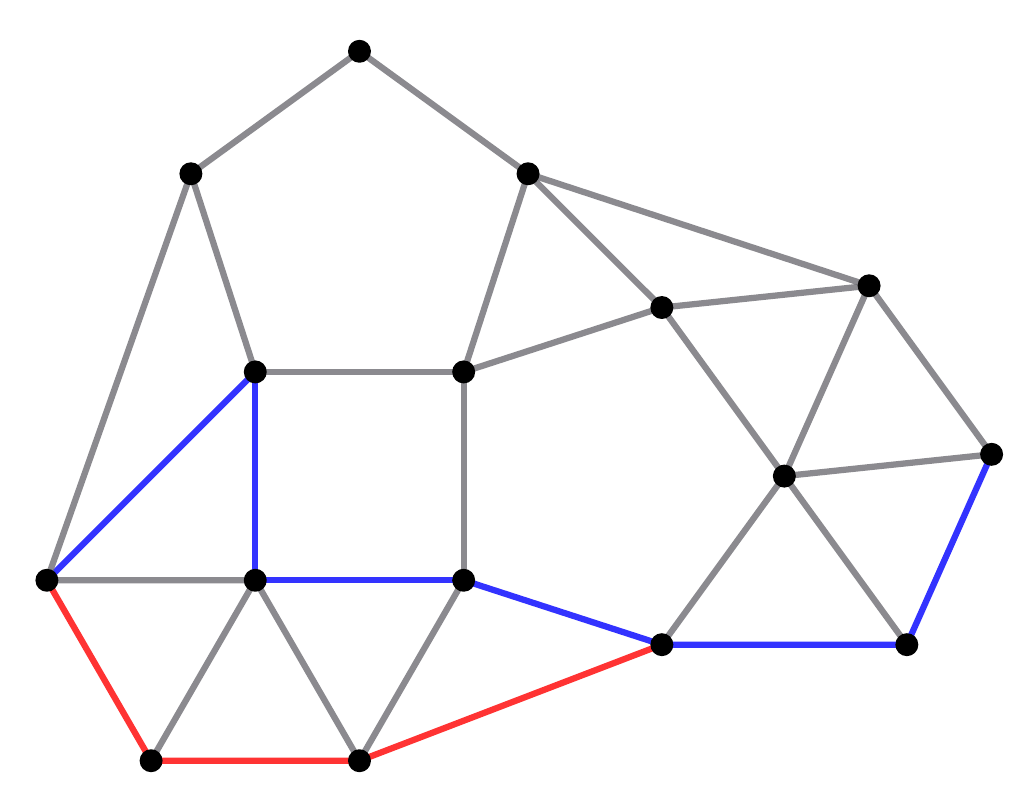} &
\includegraphics[trim=0 20cm 37cm 0,clip,width=0.20\linewidth]{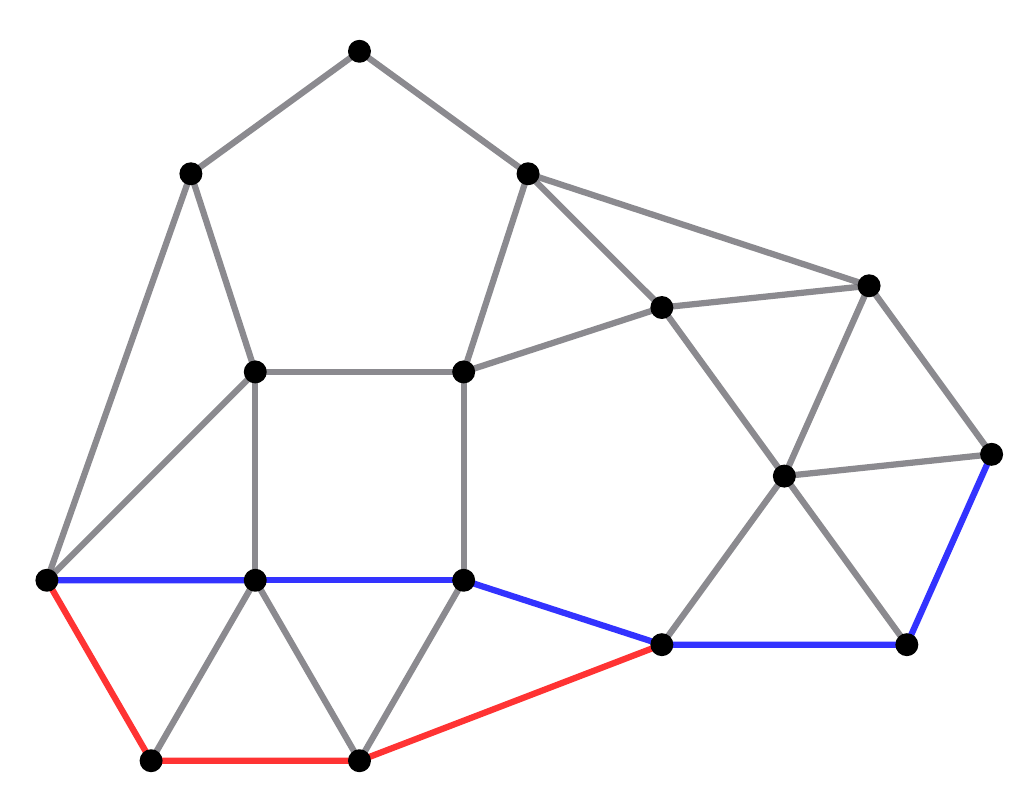} \\ \hline
\includegraphics[trim=0 20cm 37cm 0,clip,width=0.20\linewidth]{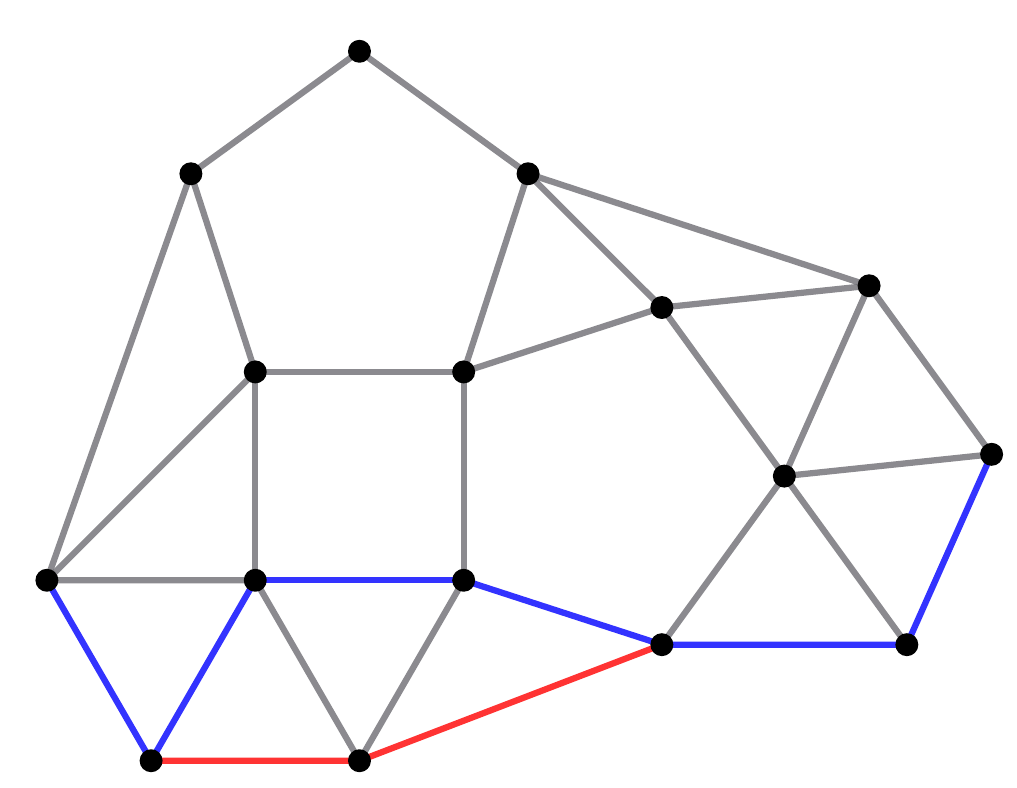} &
\includegraphics[trim=0 20cm 37cm 0,clip,width=0.20\linewidth]{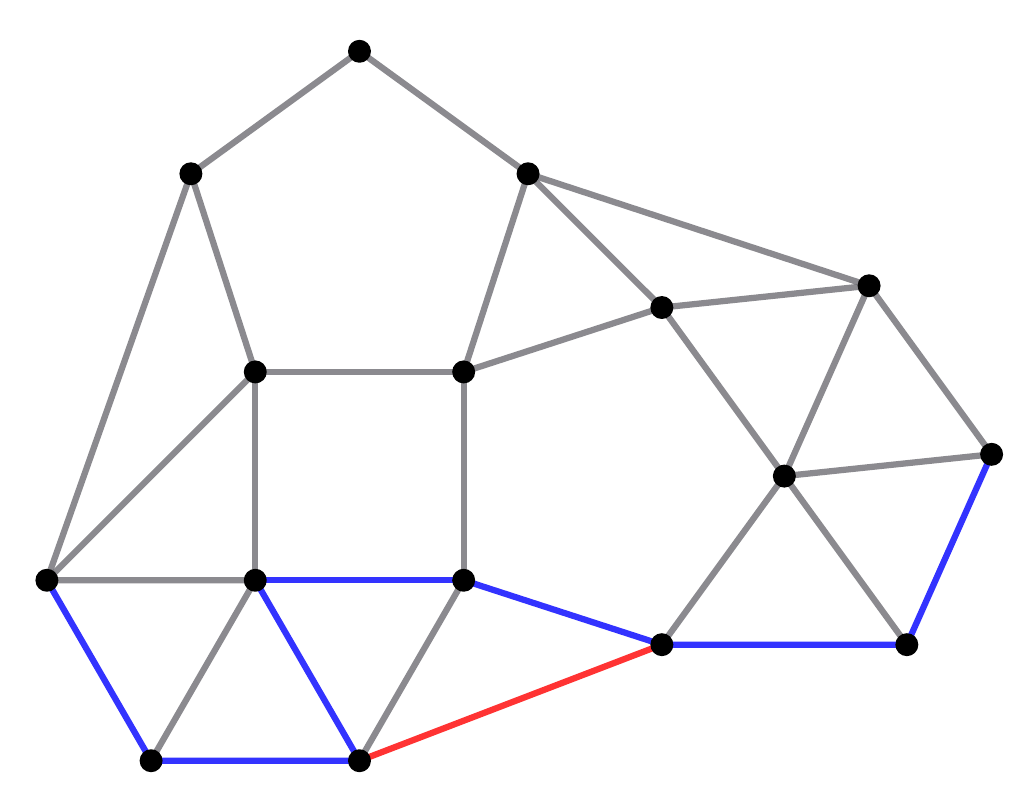} &
\includegraphics[trim=0 20cm 37cm 0,clip,width=0.20\linewidth]{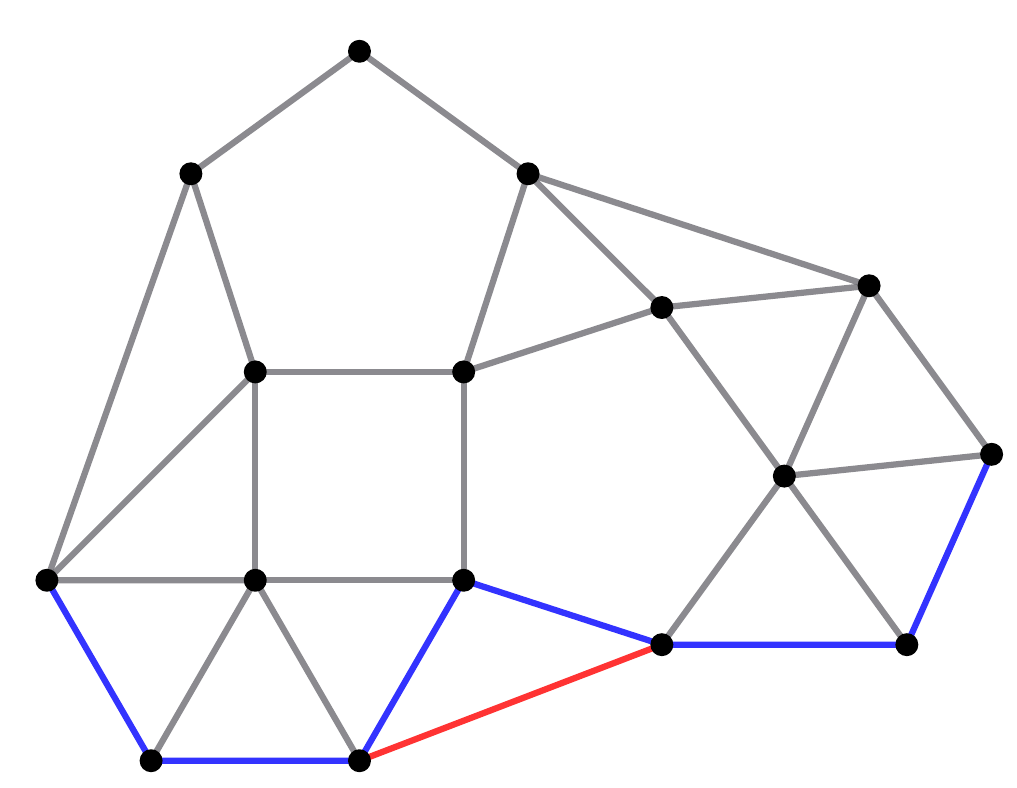} &
\includegraphics[trim=0 20cm 37cm 0,clip,width=0.20\linewidth]{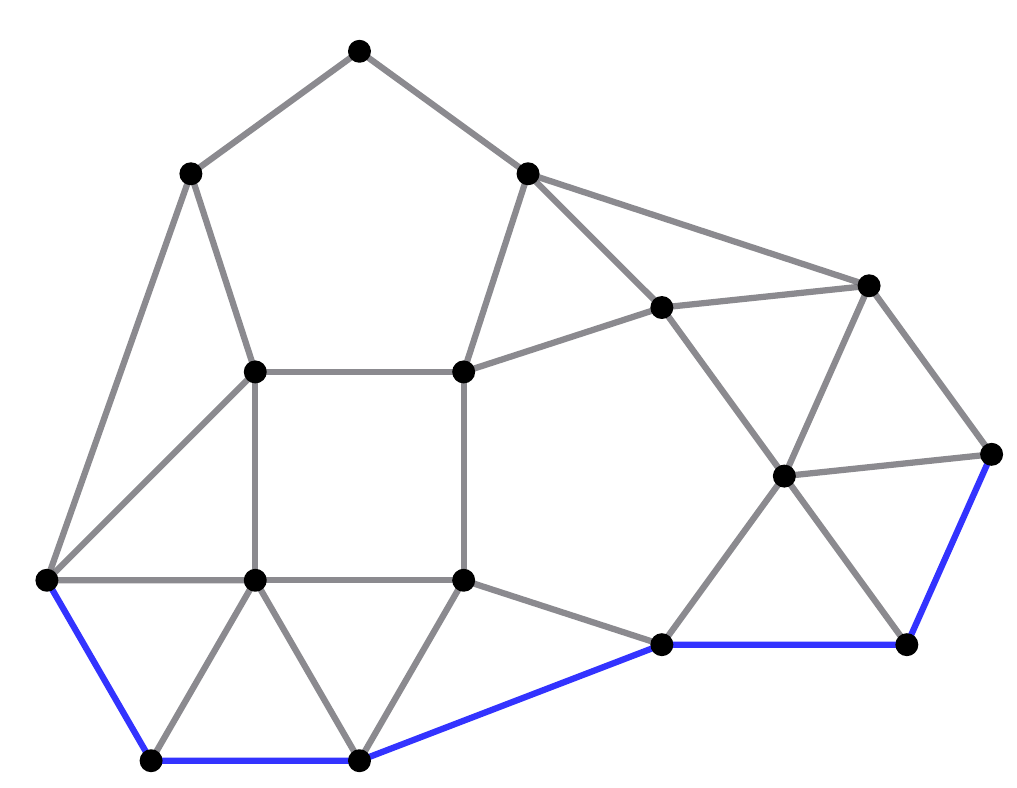} \\ \hline
\end{tabular}
\end{center}

\noindent
Then, by using this notion of homotopy equivalence, we can mimic the definition of the fundamental group of a topological space:

\begin{definition}
Let $X$ be a connected graph. Given a basepoint $o \in V(X)$ and a scale $E \geq 0$, the \emph{$E$-coarse fundamental group} $\pi_1(X,o,E)$ is the set of loops based at $o$ up to $E$-coarse homotopy equivalence endowed with concatenation. If $\pi_1(X,o,E)$ is trivial, then $X$ is \emph{$E$-coarsely simply connected}. A graph is \emph{coarsely simply connected} if it is $E$-coarsely simply connected for some $E \geq 0$. 
\end{definition}

\noindent
For instance, the square and hexagonal tilings of the plane define graphs that are coarsely simply connected. An example of a graph that is not coarsely simply connected is:
\begin{center}
\includegraphics[width=\linewidth]{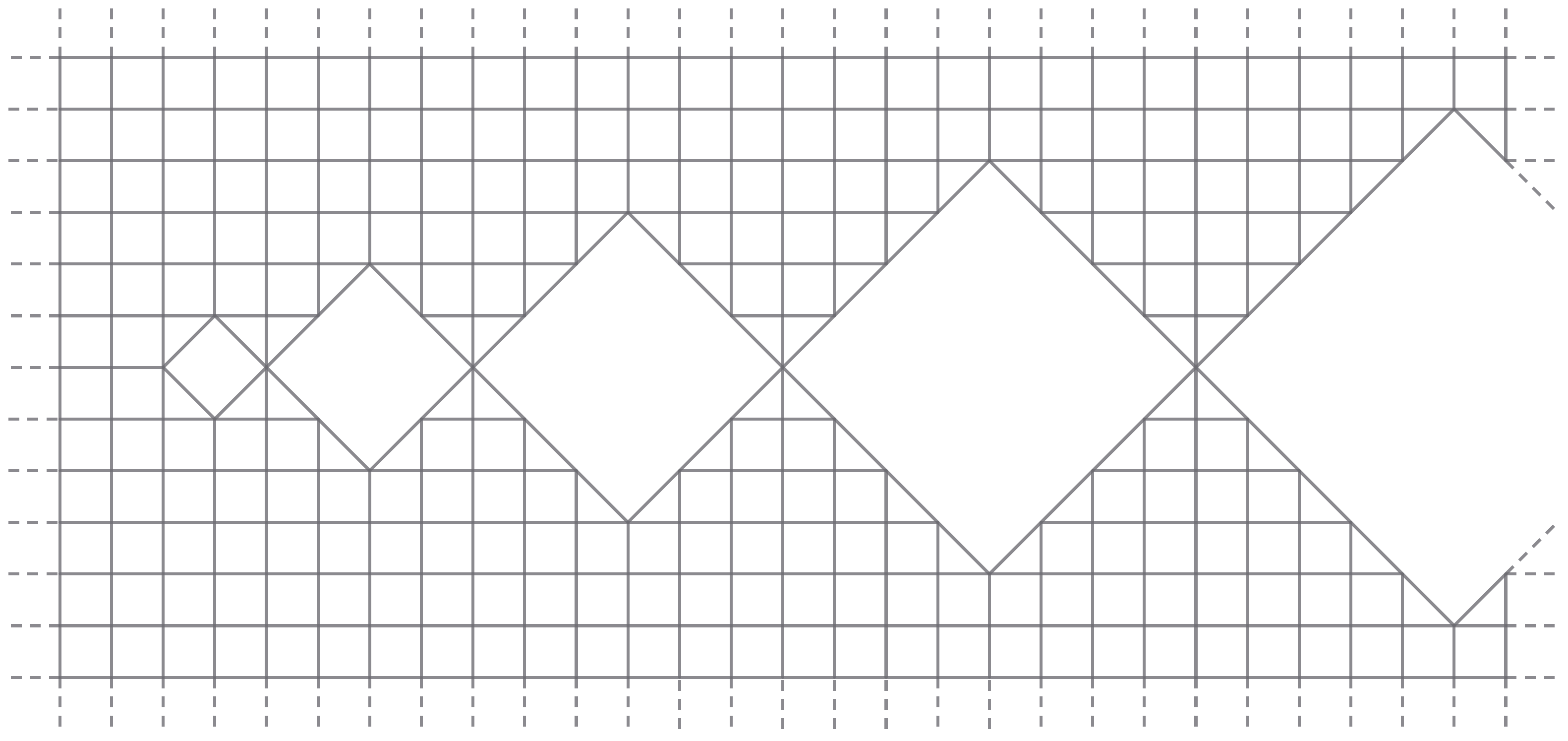}
\end{center}

\noindent
It is worth mentioning that being coarsely simply connected can be characterised through a genuine simply connectedness thanks to the so-called Rips' complexes. See Exercise~\ref{exo:Rips}. 

\medskip \noindent
Now, let us verify that being coarsely simply connected is indeed stable under quasi-isometries, as desired.

\begin{prop}\label{prop:CoarselySimplyConnected}
If $X$ and $Y$ are two quasi-isometric graphs, then $X$ is coarsely simply connected if and only if so is $Y$.
\end{prop}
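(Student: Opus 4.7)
The plan is to transport coarse homotopies across an \((A,B)\)-quasi-isometry \(f \colon X \to Y\) with a chosen quasi-inverse \(g \colon Y \to X\) (as in Exercise~\ref{exo:QuasiInverse}). For a combinatorial path \(\alpha = (x_0, \ldots, x_n)\) in \(X\), I would define the \emph{push-forward} \(f_\sharp(\alpha)\) as the concatenation, for \(i = 1, \ldots, n\), of a geodesic segment in \(Y\) of length at most \(A+B\) from \(f(x_{i-1})\) to \(f(x_i)\); define \(g_\sharp\) symmetrically. The proof then rests on two lemmas.

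The first lemma (push-forward respects coarse homotopy) would assert that if \(\alpha \sim_E \beta\) with common endpoints in \(X\), then \(f_\sharp(\alpha) \sim_{E_1} f_\sharp(\beta)\) in \(Y\), for some \(E_1 = E_1(A,B,E)\). By induction it suffices to treat a single elementary step: if the symmetric difference \(\alpha \triangle \beta\) has vertex set of diameter \(\leq E\), then \(f_\sharp(\alpha)\) and \(f_\sharp(\beta)\) coincide outside the corresponding region and, inside, lie in the \((A+B)\)-neighbourhood of \(f(\alpha \triangle \beta)\), a set of diameter at most \(AE + B + 2(A+B)\). The second lemma would say that for every loop \(\gamma = (y_0, \ldots, y_n = y_0)\) in \(Y\), fixing for each \(i\) a geodesic \(\sigma_i\) of length \(\leq B\) from \(y_i\) to \(f(g(y_i))\) (with \(\sigma_0 = \sigma_n =: \sigma\)), the conjugated round-trip \(\sigma \cdot f_\sharp(g_\sharp(\gamma)) \cdot \sigma^{-1}\) is \(E_2\)-coarsely homotopic to \(\gamma\) for some \(E_2 = E_2(A,B)\). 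I would prove it by interpolating vertex by vertex: let \(\pi_k\) be the loop obtained from \(\gamma\) by replacing, for each \(i \leq k\), the edge \(\{y_{i-1}, y_i\}\) by the ``wobble'' \(\sigma_{i-1} \cdot [f(g(y_{i-1})), f(g(y_i))] \cdot \sigma_i^{-1}\) while keeping the other edges of \(\gamma\) intact; then \(\pi_0 = \gamma\), each symmetric difference \(\pi_k \triangle \pi_{k+1}\) has diameter bounded in terms of \(A, B\), and \(\pi_n\) differs from \(\sigma \cdot f_\sharp(g_\sharp(\gamma)) \cdot \sigma^{-1}\) only by internal backtracking \(\sigma_i^{-1} \cdot \sigma_i\) that I would peel off at the end via \(1\)-coarse homotopies.

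Granting these two lemmas, the conclusion would follow as follows. Suppose \(X\) is \(E\)-coarsely simply connected, and let \(\gamma\) be a loop in \(Y\) based at \(y_0\). The pull-back \(g_\sharp(\gamma)\) is a loop in \(X\) based at \(g(y_0)\); the basepoint is irrelevant because, for any path \(\delta\) from the chosen basepoint of \(X\) to \(g(y_0)\), the loop \(\delta \cdot g_\sharp(\gamma) \cdot \delta^{-1}\) is \(E\)-coarsely null-homotopic by assumption, and removing the backtracking \(\delta^{-1} \cdot \delta\) is a \(1\)-coarse homotopy. Hence \(g_\sharp(\gamma)\) is \(\max(E,1)\)-coarsely null-homotopic, so the first lemma gives that \(f_\sharp(g_\sharp(\gamma))\), and a fortiori \(\sigma \cdot f_\sharp(g_\sharp(\gamma)) \cdot \sigma^{-1}\), is \(E_1\)-coarsely null-homotopic in \(Y\). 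The second lemma identifies this conjugate with \(\gamma\) up to scale \(E_2\), so \(\gamma\) is coarsely null-homotopic at scale \(\max(E_1, E_2, 1)\). I expect the main technical obstacle to be the second lemma: one must verify carefully that the vertex-by-vertex interpolation, together with the basepoint bookkeeping via the \(\sigma_i\)'s, keeps every symmetric difference within a ball whose diameter depends only on \(A\) and \(B\), despite the fact that each \(\gamma\)-edge must be compared with a geodesic packet in \(Y\) of length up to \(A+B\); the first lemma and the final assembly are then essentially routine.
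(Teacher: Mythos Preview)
Your approach is essentially the paper's: push the loop across, contract it using coarse simple connectedness, push the null-homotopy back, and compare the round-trip with the original loop. The paper carries this out in one pass rather than isolating your two lemmas, but the content is the same.

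One small correction to your Lemma~2 sketch: after peeling off the internal backtracks $\sigma_i^{-1}\sigma_i$ from $\pi_n$, what you obtain is $\sigma\cdot P\cdot\sigma^{-1}$ where $P$ joins consecutive points $f(g(y_i))$ by \emph{single} geodesics in $Y$. This is not $f_\sharp(g_\sharp(\gamma))$, which instead routes through all the images $f(z)$ for $z$ on the $X$-geodesics from $g(y_{i-1})$ to $g(y_i)$. The two paths share the waypoints $f(g(y_i))$, and between consecutive waypoints both segments have length bounded purely in terms of $A,B$ (at most $A(A+B)+B$ for $P$, at most $(A+B)^2$ for $f_\sharp(g_\sharp(\gamma))$), so one further round of segment-by-segment replacements --- each a coarse homotopy of diameter $O((A+B)^2)$ --- closes the gap. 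With that extra step your argument is complete.
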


\begin{proof}
Let $\varphi : X \to Y$ be a quasi-isometry. Fix a quasi-inverse $\bar{\varphi} : Y \to X$ (as defined in Exercise~\ref{exo:QuasiInverse}). Let $A>0$ and $B \geq 0$ be sufficiently large so that $\varphi,\bar{\varphi}$ are $(A,B)$-quasi-isometries and $\varphi \circ \bar{\varphi}, \bar{\varphi} \circ \varphi$ lie at distance $\leq B$ from $\mathrm{Id}_Y, \mathrm{Id}_X$ respectively. Assume that $Y$ is coarsely simply connected, say $E$-coarsely simply connected for some $E \geq 0$. Set $T:=\max ((A+B)E, A(A+B)+3B+1)$. We claim that $X$ is $T$-coarsely simply connected.

\medskip \noindent
Let $\gamma$ be a loop in $X$. We know that $\varphi(\gamma)$ is a sequence of vertices in $Y$ such that any two consecutive vertices lie at distance $\leq A+B$. Let $\gamma'$ denote the loop obtained from $\varphi(\gamma)$ by connecting successive vertices by geodesics. There exists a sequence of paths
$$\gamma_0'=\gamma', \ \gamma_1', \ldots, \ \gamma_{n-1}', \ \gamma_n' = \{ \mathrm{pt}\}$$
such that, for every $0 \leq i \leq n-1$, $\gamma_{i+1}'$ is obtained from $\gamma_i'$ by replacing some subpath $\zeta$ with a path $\xi$ such that $\zeta \cup \xi$ has diameter $\leq E$. For every $0 \leq i \leq n$, $\bar{\varphi}$ sends the loop $\gamma_i'$ to a sequence of vertices successively at distance $\leq A+B$. Let $\gamma_i''$ denote the loop obtained from $\bar{\varphi}(\gamma_i')$ by connecting successive vertices with geodesics. Then
$$\gamma_0'', \ \gamma_1'', \ldots, \ \gamma_{n-1}'', \ \gamma_n''=\{\mathrm{pt}\}$$
yields a sequence of loops such that, for every $0 \leq i \leq n-1$, $\gamma_{i+1}''$ can be obtained from $\gamma_i''$ by replacing some subpath $\zeta$ with some path $\xi$ such that $\zeta \cup \xi$ is a cycle of diameter $\leq (A+B)E$. It follows that $\gamma_0''$ is $T$-coarsely homotopically trivial because $T \geq (A+B)E$.

\begin{center}
\includegraphics[width=0.7\linewidth]{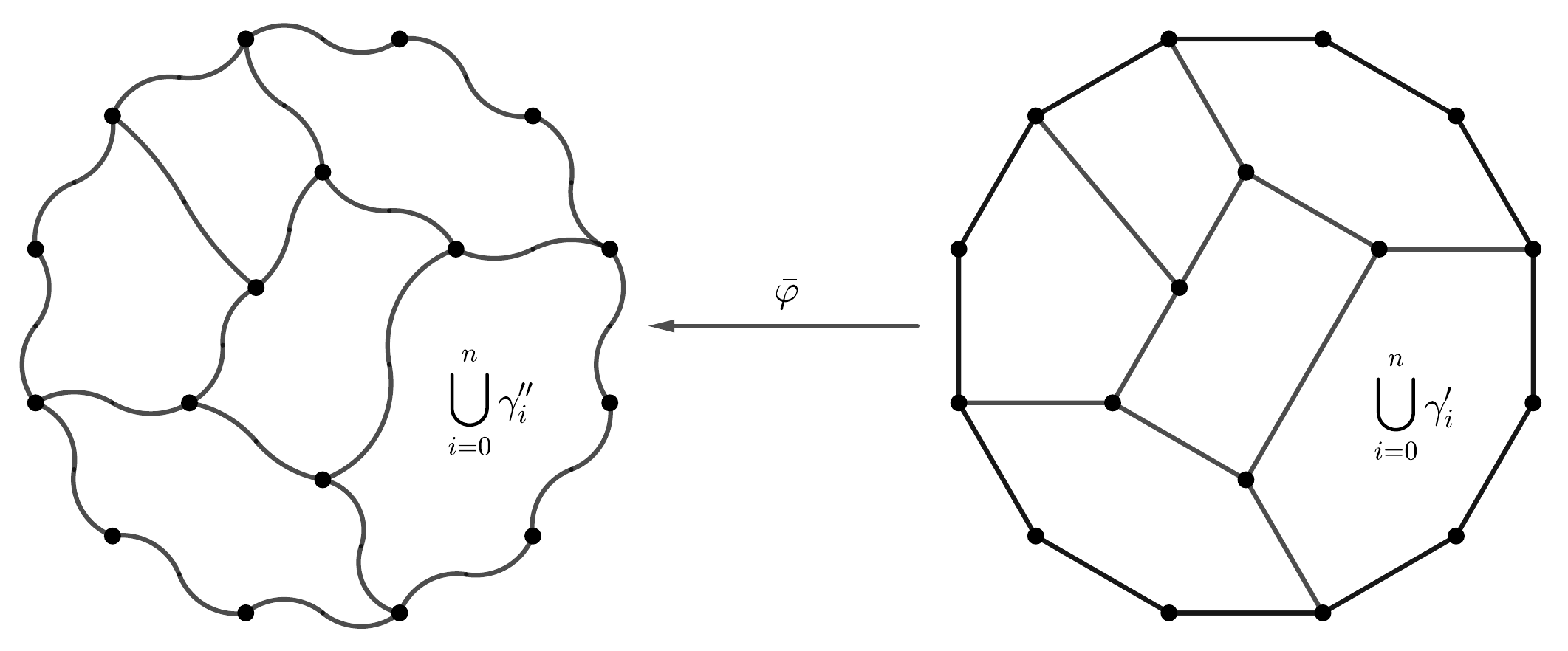}
\end{center}

\noindent
Notice that, by construction, $\gamma_0''$ passes through the vertices of $\bar{\varphi} \circ \varphi(\gamma_0)$. Moreover, the length of a subsegment of $\gamma_0''$ between two such vertices is $\leq A(A+B)+B$. 

\begin{center}
\includegraphics[width=\linewidth]{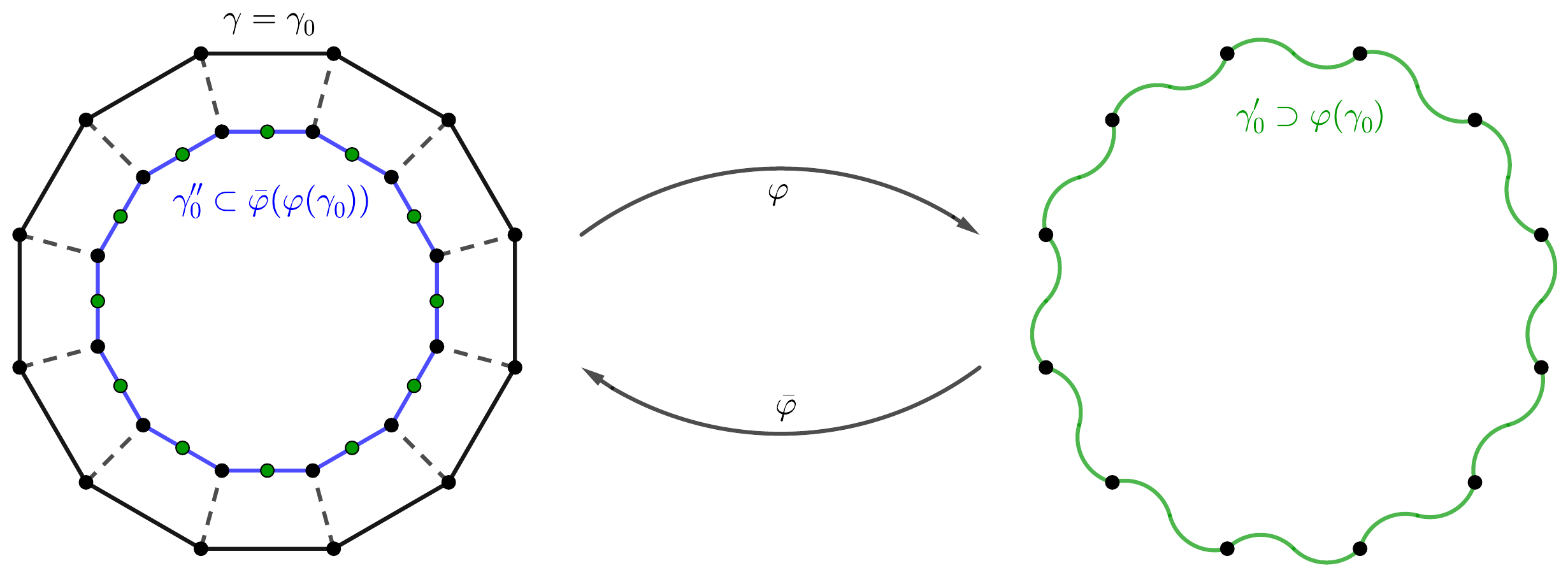}
\end{center}

\noindent
Because $T \geq A(A+B)+3B+1$, it follows that $\gamma_0''$ and $\gamma_0=\gamma$ are $T$-coarsely homotopically equivalent. Since we have already proved that $\gamma_0''$ is $T$-coarsely homotopically trivial, we conclude that $\gamma$ is $T$-coarsely homotopically trivial. 
\end{proof}

\noindent
Interestingly, for (Cayley graphs of) finitely generated groups, being coarsely simply connected can be characterised purely algebraically. 

\begin{prop}\label{prop:WhenFP}
A finitely generated group is coarsely simply connected if and only if it admits a finite presentation.
\end{prop}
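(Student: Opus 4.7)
The plan is to translate between the algebraic notion of a presentation and the geometric notion of coarse simple connectedness via the standard dictionary between words in $F(S)$ and loops in $X = \mathrm{Cayl}(G,S)$ based at $1$. Recall that, given a finite generating set $S$, a subset $R$ of words in $F(S)$ representing $1 \in G$ defines a presentation $\langle S \mid R \rangle$ of $G$ if and only if every word in $F(S)$ representing $1$ lies in the normal closure of $R$. Thus the task reduces to showing that \emph{$E$-coarse null-homotopies of loops correspond to expressing the corresponding word as a product of conjugates of relators of length bounded in terms of $E$ and $S$}, and conversely.

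For the easy direction (finitely presented $\Rightarrow$ coarsely simply connected), I would fix a finite presentation $\langle S \mid R \rangle$ and set $M := \max\{|r| : r \in R\}$, then show that $X$ is $M$-coarsely simply connected. Given a loop $\gamma$ at $1$, it reads a word $w \in F(S)$ representing $1$, which decomposes as $w = \prod_{i=1}^k v_i r_i^{\varepsilon_i} v_i^{-1}$ with $r_i \in R$. I would then perform the $E$-coarse null-homotopy inductively: the elementary operation of inserting or deleting an $ss^{-1}$ pair affects only two edges adjacent to a single vertex (diameter $\leq 1$), and the elementary operation of inserting or deleting a copy of a relator $r_i^{\pm 1}$ at some vertex $v$ affects a loop of length $|r_i| \leq M$ at $v$, hence of diameter $\leq M$. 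Both operations are therefore $M$-elementary modifications, and iterating them reduces $w$ to the empty word, i.e.\ $\gamma$ to the constant loop at $1$.

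For the harder direction (coarsely simply connected $\Rightarrow$ finitely presented), suppose $X$ is $E$-coarsely simply connected. Because $S$ is finite, the Cayley graph $X$ is locally finite, so every ball of diameter $\leq E$ contains boundedly many edges, say $N = N(E,S)$. The key observation is that an elementary $E$-modification $\gamma_i \rightsquigarrow \gamma_{i+1}$ replaces a subpath $\alpha$ between two vertices $u,v$ by a subpath $\beta$ between the same vertices, with $\alpha \cup \beta$ contained in a ball of diameter $\leq E$; in particular $|\alpha|, |\beta| \leq N$ and the loop $\alpha\beta^{-1}$ has length $\leq 2N$. Translating back to words: if $\gamma_i$ reads $\gamma_i = x \cdot \alpha \cdot y$ from $1$, where $x$ is the word leading from $1$ to $u$, then $\gamma_{i+1}$ reads $x \beta y$, and $(x\alpha y)(x\beta y)^{-1}$ is the conjugate $x (\alpha \beta^{-1}) x^{-1}$ of a word $\alpha\beta^{-1}$ of length $\leq 2N$ representing $1$ in $G$. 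Therefore, if I define $R$ to be the (finite) set of all words in $F(S)$ of length $\leq 2N$ that represent $1$ in $G$, then, applying this observation along the sequence of elementary modifications witnessing coarse null-homotopy of an arbitrary loop $\gamma_w$, the word $w$ is expressed as a product of conjugates of elements of $R$. Hence $\langle S \mid R \rangle$ is a (finite) presentation of $G$.

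The main obstacle is making rigorous the correspondence in the forward direction between an elementary $E$-modification (defined through the symmetric difference of paths having small diameter) and the algebraic act of multiplying a word by a conjugate of a short relator. The subtlety is that paths can revisit vertices and edges, so one must argue carefully that within a ball of bounded diameter only boundedly many edges are involved, and that the substitution of one subpath by another (with the same endpoints) is tracked by a conjugate of a word of bounded length in $F(S)$. The backward direction is essentially routine once the dictionary is in place.
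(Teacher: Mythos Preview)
Your approach matches the paper's strategy, and the forward direction is correct. The gap lies precisely where you flagged it: in the backward direction the assertion ``$|\alpha|,|\beta|\leq N$'' does not follow. A ball of diameter $\leq E$ contains only $N$ \emph{distinct} edges, but a subpath staying inside that ball may traverse those edges arbitrarily many times (for instance, winding around a short cycle repeatedly). Hence the word $\alpha\beta^{-1}$ need not have bounded length, and you have not shown it lies in the normal closure of your finite set $R=\{w\in F(S): |w|\leq 2N,\ w=_G 1\}$.

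The missing step is short but genuine. The loop $\alpha\beta^{-1}$ lives in a fixed finite subgraph $B$ (the ball of diameter $\leq E$), say with $V$ vertices. Choosing a spanning tree of $B$ gives a free basis of $\pi_1(B,u)$ consisting of loops each of length at most $2V$. Since homotopy of edge-paths in a graph is generated by backtrack moves, which correspond to $ss^{-1}$ cancellations, the word $\alpha\beta^{-1}$ is \emph{freely equal} in $F(S)$ to a product of these short basis loops. Each basis loop is a word of length $\leq 2V$ representing $1$ in $G$, so $\alpha\beta^{-1}$ lies in the normal closure of the finite set $R'=\{w\in F(S): |w|\leq 2V,\ w=_G 1\}$, and the argument goes through with $R'$ in place of $R$. (The paper's own proof takes $R$ to be \emph{all} words labelling small-diameter loops---an infinite set---so the same spanning-tree observation is implicitly needed there too to conclude finite presentability.)
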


\begin{proof}
Let $G$ be a finitely generated group. First, assume that $G$ admits a finite presentation, say $\langle s_1, \ldots, s_n \mid r_1, \ldots, r_m \rangle$. Setting $M:=\max \{ \|r_i\|_S \mid 1 \leq i \leq m\}$ where $S:= \{s_1, \ldots, s_n\}$, we claim that $\mathrm{Cayl}(G,S)$ is $M$-coarsely simply connected. Indeed, let $\gamma$ be a loop in $\mathrm{Cayl}(G,S)$ based at $1$. Reading the generators labelling the edges of $\gamma$, we get a word $w$ written over $S \cup S^{-1}$. Notice that, if $w$ is not freely reduced (i.e.\ if there exists a subword $ss^{-1}$ or $s^{-1}s$ in $w$), then $\gamma$ contains an edge-backtrack. Since our goal is to prove that $\gamma$ is coarsely homotopically trivial, we can assume that it is no backtrack, and therefore that $w$ is freely reduced. The fact that $\gamma$ is a loop imposes that $w$ represents the trivial element in $G$. Therefore, $w$ coincides with the free reduction of a word of the form 
$$g_1t_1g_1^{-1} g_2t_2g_2^{-1} \cdots g_k t_k g_k^{-1} \text{ for some } t_1, \ldots, t_k \in R \cup R^{-1}$$
where $R:= \{r_1, \ldots, r_m\}$. Let $\alpha$ denote the path in $\mathrm{Cayl}(G,S)$ starting from $1$ and labelled by this word. 
\begin{center}
\includegraphics[width=0.5\linewidth]{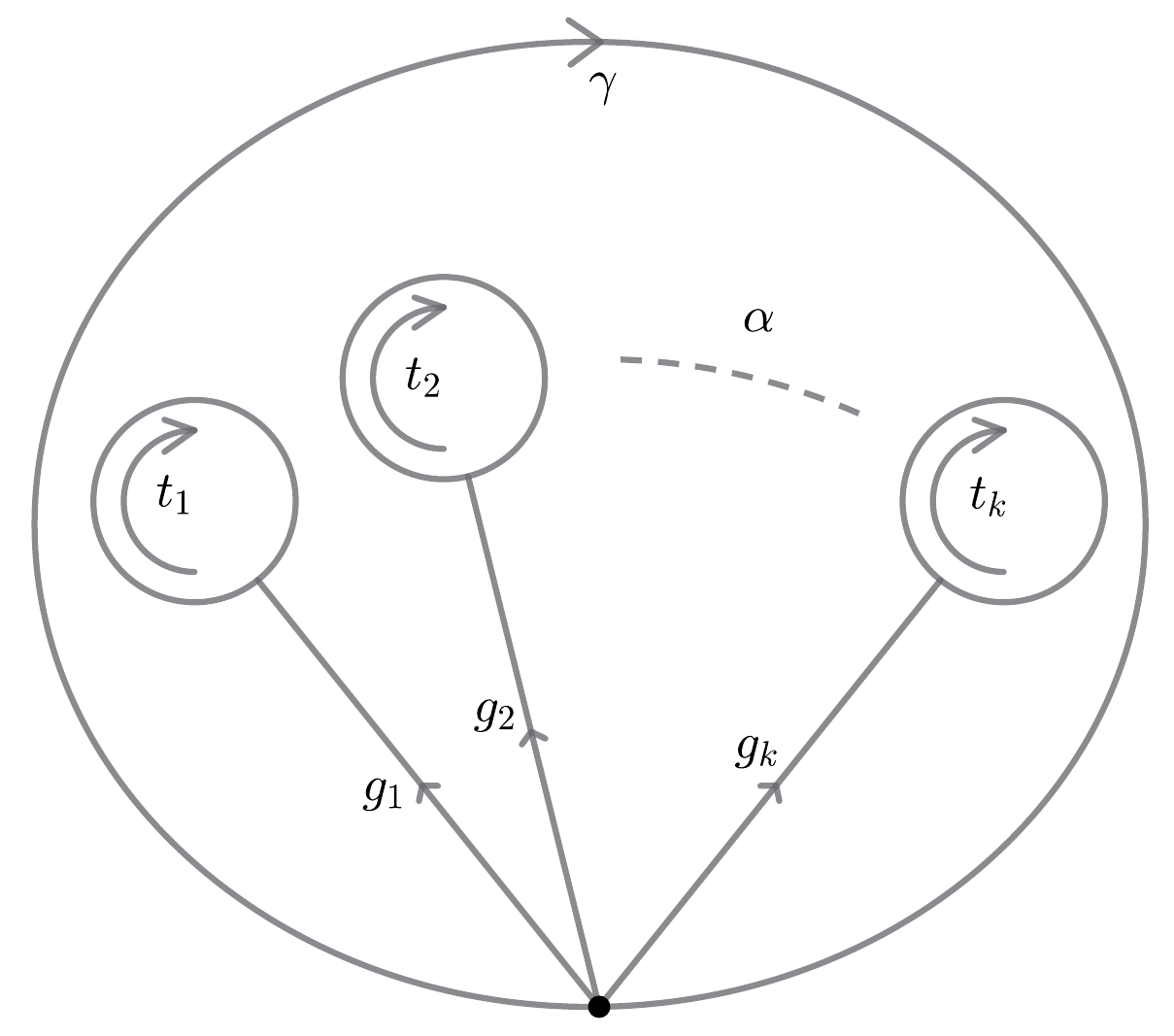}
\end{center}
This is a loop because the word represents a trivial element of $G$. As already mentioned, a subword of the form $ss^{-1}$ or $s^{-1}s$ corresponds to a backtrack in $\alpha$. Thus, reducing our word amounts to removing backtracks from $\alpha$, which of course does not modify the coarse homotopy class of our loop. But $w$ is the word we obtain after reduction, so $\gamma$ can be obtained from $\alpha$ by removing backtrack. Consequently, in order to conclude that $\gamma$ is $M$-coarsely homotopically trivial, it suffices to show that $\alpha$ is $M$-coarsely homotopically trivial. But $\alpha$ can be described as the concatenation of the loops in $\mathrm{Cayl}(G,S)$ based at $1$ labelled by $g_1t_1g^{-1}, \ldots, g_kt_kg_k^{-1}$. For every $1 \leq i \leq k$, the loop associated to $g_it_ig_i^{-1}$ follows a path labelled by $g_i$, next a loop labelled by $t_i$ (which as diameter $\leq \|t_i\|_S \leq M$), and finally backtracks along our previous path labelled by $g_i$. Consequently, this loop is $M$-coarsely homotopically trivial in $\mathrm{Cayl}(G,S)$, as desired.

\medskip \noindent
Conversely, assume that $G$ is coarsely simply connected. Fix a finite generating set $S \subset G$ and let $M \geq 0$ be a constant such that $\mathrm{Cayl}(G,S)$ is $M$-coarsely simply connected. Let $R$ denote the set of the words labelling all the loops of diameters $\leq M$ based at $1$ in $\mathrm{Cayl}(G,S)$. We claim that $\langle S \mid R \rangle$ is a presentation of $G$. In order words, if $w$ is a words written over $S \cup S^{-1}$ representing a trivial element in $G$, we claim that this can be deduced from the relations given by $R$. Because $w$ is trivial in $G$, the path $\gamma$ in $\mathrm{Cayl}(G,S)$ starting from $1$ and labelled by $w$ is a loop. There exists a sequence of loops
$$\gamma_0=\gamma, \ \gamma_1, \ldots, \ \gamma_{n-1}, \ \gamma_n= \{ \mathrm{pt}\}$$
such that, for every $0 \leq i \leq n-1$, $\gamma_{i+1}$ is obtained from $\gamma_i$ by replacing some subpath $\zeta$ with a new subpath $\xi$ satisfying $\mathrm{diam}(\zeta \cup \xi) \leq M$. The labels of these loops yield a sequence of words
$$w_0=w, \ w_1, \ldots, \ w_{n-1}, \ w_n=1$$
written over $S \cup S^{-1}$ such that, for every $0 \leq i \leq n-1$, $w_{i+1}$ is obtained from $w_i$ by replacing a subword $r_1$ with a subword $r_2$ where $r_1r_2^{-1}$ belongs to $R$. Thus, the equality $w=1$ in $G$ can be deduced from the relations given by $R$, as desired.
\end{proof}

\noindent
Thus, being finitely presented, which is a purely algebraic property, only depends on the coarse topology of a finitely generated group. As an immediate consequence:

\begin{cor}\label{cor:FPQI}
Let $G_1,G_2$ be two quasi-isometric finitely generated groups. If $G_1$ admits a finite presentation, then so does $G_2$. 
\end{cor}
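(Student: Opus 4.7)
The plan is to chain together the two preceding propositions. The corollary is essentially formal once Proposition~\ref{prop:WhenFP} and Proposition~\ref{prop:CoarselySimplyConnected} are in hand, so the work is just in stating the composition carefully.

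First I would fix finite generating sets $S_1 \subset G_1$ and $S_2 \subset G_2$. Since $G_1$ is finitely presented, Proposition~\ref{prop:WhenFP} applied to $G_1$ tells us that $\mathrm{Cayl}(G_1, S_1)$ is coarsely simply connected. Since $G_1$ and $G_2$ are quasi-isometric as finitely generated groups, and their word metrics (with respect to any finite generating set) are all biLipschitz equivalent by the proposition from Section~\ref{section:QI}, the Cayley graphs $\mathrm{Cayl}(G_1, S_1)$ and $\mathrm{Cayl}(G_2, S_2)$ are quasi-isometric graphs.

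Next I would invoke Proposition~\ref{prop:CoarselySimplyConnected}: coarse simple connectedness is preserved under quasi-isometries of graphs. Hence $\mathrm{Cayl}(G_2, S_2)$ is coarsely simply connected. Finally, applying the other direction of Proposition~\ref{prop:WhenFP} to $G_2$, we conclude that $G_2$ admits a finite presentation.

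There is no real obstacle here; the only mild subtlety worth flagging is to make sure the notion of ``quasi-isometric'' for finitely generated groups (an intrinsic notion independent of the chosen finite generating sets) is identified with quasi-isometry between particular Cayley graphs, which is exactly what the biLipschitz invariance of the word metric under change of finite generating set gives us. Thus the corollary is a three-line composition of the two preceding propositions.
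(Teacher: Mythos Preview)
Your proof is correct and matches the paper's own argument exactly: the paper simply states that the corollary is an immediate consequence of Propositions~\ref{prop:WhenFP} and~\ref{prop:CoarselySimplyConnected}, which is precisely the chain you spell out.
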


\begin{proof}
This is an immediate consequence of Propositions~\ref{prop:WhenFP} and~\ref{prop:CoarselySimplyConnected}.
\end{proof}

\noindent
Now, let us focus on wreath products of graphs and groups. As justified by our next theorem, wreath products of graphs are essentially never coarsely simply connected. 

\begin{thm}\label{thm:NotCoarselySimplyConnected}
Let $(X,o)$ be a pointed graph and $Y$ a graph. If $X$ contains at least two vertices and if $Y$ is unbounded, then $(X,o) \wr Y$ is not coarsely simply connected. 
\end{thm}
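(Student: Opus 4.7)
The plan is to exhibit, for each scale $E \geq 0$, a loop in $(X,o) \wr Y$ that fails to be $E$-coarsely null-homotopic. Since $X$ is connected with at least two vertices, I first fix an adjacent pair $o \sim o'$ in $X$. Since $Y$ is unbounded, I choose vertices $y, y' \in V(Y)$ with $d_Y(y, y') = n > 2E$ and fix a geodesic joining them. The candidate loop $\gamma$ is the ``commutator'' loop based at $(c_o, y)$ (with $c_o \equiv o$) obtained by the following eight-step concatenation: toggle the lamp at $y$ from $o$ to $o'$; move the arrow from $y$ to $y'$ along the geodesic; toggle the lamp at $y'$ from $o$ to $o'$; return the arrow to $y$; toggle the lamp at $y$ back to $o$; move the arrow to $y'$; toggle the lamp at $y'$ back to $o$; move the arrow back to $y$. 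It has length $4n+4$ and is indeed a loop.

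To detect non-contractibility, I would introduce an integer-valued invariant $\mu$ on parametrized loops. Fix any potential $\psi : V(X) \to \mathbb{Z}$ with $\psi(o) = 0$ and $\psi(o') = 1$ (extending arbitrarily to the rest of $V(X)$). For $z \in \{y, y'\}$, set $\varepsilon_z(e) = \psi(c'(z)) - \psi(c(z))$ if $e$ is a lamp edge at position $z$ taking $(c, z)$ to $(c', z)$, and $\varepsilon_z(e) = 0$ on all other (arrow) edges or on lamp edges at positions distinct from $z$. Define
$$\mu(\alpha) = \sum_{e_1 < e_2 \text{ in } \alpha} \varepsilon_y(e_1) \cdot \varepsilon_{y'}(e_2).$$
For the trivial loop, $\mu = 0$. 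A direct count on $\gamma$ gives $\mu(\gamma) = +1$: the ordered pairs (toggle at $y$, then toggle at $y'$) occur in three configurations with signs $+1, -1, +1$.

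The heart of the argument is to verify $\mu$ is invariant under any $E$-coarse homotopy step. Such a step replaces a subpath $\zeta$ by $\xi$ with $\zeta \cup \xi$ of diameter $\le E$ in $(X,o) \wr Y$; in particular, all arrow positions visited inside $\zeta \cup \xi$ lie in a common $E$-ball of $Y$, which cannot contain both $y$ and $y'$ because $n > 2E$. Consequently, within $\zeta \cup \xi$, all lamp edges whose position lies in $\{y, y'\}$ occur at a single position. Writing $\alpha = \alpha_1 \zeta \alpha_2$ and $\alpha' = \alpha_1 \xi \alpha_2$, the bilinear expansion of $\mu(\alpha) - \mu(\alpha')$ reduces to cross terms of the form
$$A_1^y \bigl(Z^{y'} - X^{y'}\bigr) + \bigl(Z^y - X^y\bigr) A_2^{y'},$$
where $Z^z = \sum_{e \in \zeta} \varepsilon_z(e)$ and $X^z = \sum_{e \in \xi} \varepsilon_z(e)$. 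In each case, one of these factors vanishes automatically (because one of $Z^z, X^z$ is trivially zero by the position constraint above), and the other vanishes by telescoping with $\psi$: since $\zeta$ and $\xi$ share endpoints, $Z^z = \psi(c_\text{end}(z)) - \psi(c_\text{start}(z)) = X^z$. Hence $\mu(\alpha) = \mu(\alpha')$, so $\mu(\gamma) \ne \mu(\{\mathrm{pt}\})$, and $\gamma$ is not $E$-coarsely null-homotopic. The main subtlety is precisely this invariance check, and the introduction of the potential $\psi$ is what makes the argument insensitive to the homotopy type of $X$ (in particular to cycles in $X$ caught inside an $E$-ball), which would otherwise obstruct a naive counting of $\{o, o'\}$-crossings.
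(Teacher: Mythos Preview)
Your proof is correct and takes a genuinely different route from the paper's. The paper constructs a $1$-Lipschitz retraction from $(X,o)\wr Y$ onto a simple subgraph $Z$ (four copies of $Y$ glued cyclically along single edges at two far-apart vertices $p,q\in Y$), invokes a general lemma stating that Lipschitz retracts of coarsely simply connected graphs are again coarsely simply connected, and then observes that $Z$ visibly contains a long cycle that is not $E'$-coarsely null-homotopic for $E'$ comparable to $d_Y(p,q)$. You instead stay inside $(X,o)\wr Y$ and build an explicit integer invariant $\mu$ --- essentially a discrete linking number between the two ``currents'' $\varepsilon_y$ and $\varepsilon_{y'}$. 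This is self-contained and avoids the auxiliary retraction lemma; it also makes transparent exactly which feature obstructs contraction. The paper's approach, by contrast, isolates a reusable Lipschitz-retract lemma. Interestingly, both arguments examine the same commutator loop: yours directly, the paper's after projection to~$Z$.

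One small expository remark. In your invariance computation, the position constraint (that $\zeta\cup\xi$ cannot visit both $y$ and $y'$) is what kills the \emph{internal} term
\[
M(\zeta)-M(\xi)=\sum_{e_1<e_2\text{ in }\zeta}\varepsilon_y(e_1)\varepsilon_{y'}(e_2)\;-\;\sum_{e_1<e_2\text{ in }\xi}\varepsilon_y(e_1)\varepsilon_{y'}(e_2),
\]
which you silently absorbed into ``reduces to cross terms''. The two displayed cross terms themselves vanish purely by telescoping: $Z^z=\psi(c_{\text{end}}(z))-\psi(c_{\text{start}}(z))=X^z$ for \emph{both} $z\in\{y,y'\}$, since $\zeta$ and $\xi$ share endpoints --- no position constraint is needed there. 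The argument is sound either way, but the roles of the two ingredients are slightly swapped in your write-up.
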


\noindent
In order to prove the theorem, the following general observation will be needed:

\begin{lemma}\label{lem:NotCoarselySimplConnected}
Let $X$ be a graph, $Y \leq X$ a subgraph, and $L,R \geq 0$ two constants. If $X$ is $R$-coarsely simply connected and if there exists an $L$-Lipschitz retract $X \to Y$, then $Y$ is $(L+1)R$-coarsely simply connected. 
\end{lemma}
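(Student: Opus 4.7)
The plan is as follows. Let $\gamma$ be a loop in $Y$ based at some $o \in V(Y)$; I want to show $\gamma$ is $(L+1)R$-coarsely homotopically trivial in $Y$. Since $Y \subset X$, I can view $\gamma$ as a loop in $X$ and apply the $R$-coarse simple connectedness of $X$ to get a sequence of loops
$$\gamma_0 = \gamma, \; \gamma_1, \ldots, \gamma_{n-1}, \; \gamma_n = \{o\}$$
in $X$, with $\mathrm{diam}(\gamma_i \triangle \gamma_{i+1}) \leq R$ for every $0 \leq i \leq n-1$. The idea is to push this coarse null-homotopy into $Y$ via the retract $r$.

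First I would construct, for each $i$, a loop $\tilde{\gamma}_i$ in $Y$: traverse the vertices of $\gamma_i$ in order, apply $r$ vertex-by-vertex, and connect consecutive images by geodesics in $Y$. Consecutive images are at distance $\leq L$, because $r$ is $L$-Lipschitz, so each fill has length $\leq L$. I would fix once and for all, for every ordered pair of vertices of $Y$ at distance $\leq L$, a chosen geodesic between them; this ensures that common subpaths of $\gamma_i$ and $\gamma_{i+1}$ produce common subpaths of $\tilde{\gamma}_i$ and $\tilde{\gamma}_{i+1}$, hence the two filled paths agree vertex-by-vertex outside the modified region. Because $r$ restricts to the identity on $Y$, we get $\tilde{\gamma}_0 = \gamma$ (no fills are actually needed), and $\tilde{\gamma}_n = \{r(o)\} = \{o\}$ remains trivial. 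It therefore remains to estimate the diameter of $\tilde{\gamma}_i \triangle \tilde{\gamma}_{i+1}$.

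Write $\gamma_{i+1}$ as being obtained from $\gamma_i$ by replacing a subpath $\zeta$ with a subpath $\xi$ having the same endpoints, with $\mathrm{diam}(\zeta \cup \xi) \leq R$. By construction $\tilde{\gamma}_i$ and $\tilde{\gamma}_{i+1}$ coincide outside the corresponding modified region, so their symmetric difference is contained in the union of the filled $r$-images of $\zeta$ and $\xi$. Since $r$ is $L$-Lipschitz, $r(\zeta \cup \xi)$ has diameter $\leq L R$, and the intermediate vertices created by geodesic filling lie within distance $\leq L$ of $r(\zeta) \cup r(\xi)$. Packaging this carefully (and using the shared endpoints of $\zeta$ and $\xi$ to identify the fills at the boundary), the resulting diameter of $\tilde{\gamma}_i \triangle \tilde{\gamma}_{i+1}$ is at most $(L+1)R$, as needed.

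The main obstacle is precisely this arithmetic: a naive estimate yields $LR + 2L$ rather than $LR + R = (L+1)R$, so I expect the delicate point to be choosing the fills (and possibly subdividing a single move $\gamma_i \to \gamma_{i+1}$ into several moves in $Y$) so that the contribution of the geodesic fills is absorbed by $R$ rather than producing an additive $L$-term. Once this verification is complete, the chain $\tilde{\gamma}_0 = \gamma, \tilde{\gamma}_1, \ldots, \tilde{\gamma}_n = \{o\}$ witnesses that $\gamma$ is $(L+1)R$-coarsely null-homotopic in $Y$, which is what was wanted.
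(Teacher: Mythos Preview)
Your approach is essentially identical to the paper's: view $\gamma$ as a loop in $X$, run the $R$-coarse null-homotopy there, project each intermediate loop back to $Y$ via $r$, and fill in geodesics between successive images. The paper does exactly this, and in fact is slightly less careful than you are --- it does not explicitly fix consistent geodesic choices, whereas you do.

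Regarding your flagged obstacle: the paper's own proof also simply asserts $\mathrm{diam}(\zeta_i' \cup \xi_i') \leq (L+1)R$ without further justification, so you are not missing a trick. A straightforward estimate gives at best $LR + L$ (by routing through the nearer endpoint of each fill segment), and $(L+1)R$ does not obviously follow unless $L \leq R$. The precise constant is immaterial for the application (Theorem~\ref{thm:NotCoarselySimplyConnected} only needs that $Y$ is coarsely simply connected with \emph{some} constant depending on $L$ and $R$), so there is no need to subdivide moves or sharpen the arithmetic --- just state the bound you actually obtain and move on.
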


\begin{proof}
Let $\gamma$ be a loop in $Y$. Since $X$ is $R$-coarsely simply connected, there exists a sequence of loops in $X$
$$\gamma_0=\gamma, \gamma_1, \ldots, \gamma_{n-1}, \gamma_n = \{ \mathrm{pt}\}$$
such that, for every $0 \leq i \leq n-1$, $\gamma_{i+1}$ is obtained from $\gamma_i$ by replacing some subpath $\zeta_i$ with a new path $\xi_i$ satisfying $\mathrm{diam}(\zeta_i \cup  \xi_i) \leq R$. By projecting everything to $Y$ thanks to our $L$-Lipschitz retract $r : X \to Y$, we get a sequence of subgraphs in $Y$
$$r(\gamma_0)=\gamma, r(\gamma_1), \ldots, r(\gamma_{n-1}), r(\gamma_n) = \{ \mathrm{pt}\}$$
where, for every $0 \leq i \leq n-1$, $r(\gamma_{i+1})$ is obtained from $r(\gamma_i)$ by replacing the subgraph $r(\zeta_i)$ with the subgraph $r(\xi_i)$. Notice that $\mathrm{diam}(r(\zeta_i) \cup r(\xi_i)) \leq LR$. Each $r(\gamma_i)$ is a sequence of vertices successively at distance $\leq L$, so we can turn $r(\gamma_i)$ into a genuine loop $\gamma_i'$ by connecting any two successive vertices with some geodesics in $Y$. Then, we find a sequence of loops in $Y$
$$\gamma_0'=\gamma, \gamma_1', \ldots, \gamma_{n-1}', \gamma_n' = \{ \mathrm{pt}\}$$
where, for every $0 \leq i \leq n-1$, $\gamma_{i+1}'$ is obtained from $\gamma_i'$ by replacing some subpath $\zeta_i'$ with a new path $\xi_i'$ satisfying $\mathrm{diam}(\zeta_i' \cup \xi') \leq (L+1)R$. Thus, $\gamma$ is $(L+1)R$-coarsely homotopically trivial. 
\end{proof}

\begin{proof}[Proof of Theorem~\ref{thm:NotCoarselySimplyConnected}.]
Fix a vertex $x \in V(X)$ distinct from $o$ and $p,q \in V(Y)$ two vertices at distance $\geq R$ for some arbitrary $R \geq 0$ that we fix. Consider the subgraph $Z$ of $(X,o) \wr Y$ induced by the vertices
$$\left\{ (c, w) \mid c(y)=o \text{ for every } y \notin \{p,q\} \text{ and } c(p),c(q) \in \{o,x\} \right\}.$$
Now, consider the retraction $\rho : (X,o) \wr Y \to Z$ given by 
$$((c, w) \mapsto \left( y \mapsto \left\{ \begin{array}{cl} o & \text{if } y \notin \{p,q\} \\ o & \text{if } y \in \{p,q\} \text{ but } x_y \notin \{o,x \} \\ c(y) & \text{otherwise} \end{array} \right., \ w \right).$$
In other words, $\rho$ switches off the lamps not in $\{p,q\}$, switches off the lamps in $\{o,x\}$ if they have a colour not in $\{o,x\}$, and does not modify a lamp in $\{p,q\}$ taking $o$ or $x$ as a colour. 

\medskip \noindent
Notice that $\rho$ is $1$-Lipschitz. Indeed, given two vertices $(a, u)$ and $(b, v)$, if we denote by $(a', u)$ and $(b', v)$ as their respective image under $\rho$, then, using Lemma~\ref{lem:DistWreath}, we find that
$$\begin{array}{lcl} d(\rho(a,u), \rho(b,v)) & = & d((a',u), (b',v)) = \mathrm{TS}(u, a' \triangle b', v) \\ & \leq & \mathrm{TS}(u, a \triangle b, v) = d((a,u),(b,v)) \end{array}$$
where the inequality is justified by the inclusion $a' \triangle b' \subset a \triangle b$. 

\medskip \noindent
Then, we deduce from Lemma~\ref{lem:NotCoarselySimplConnected} that, if $(X,o) \wr Y$ is $D$-coarsely simply connected, then $Z$ must be $2D$-coarsely simply connected. But $Z$ is just four copies of $Y$ cyclically connected by edges. 
\begin{center}
\includegraphics[width=0.8\linewidth]{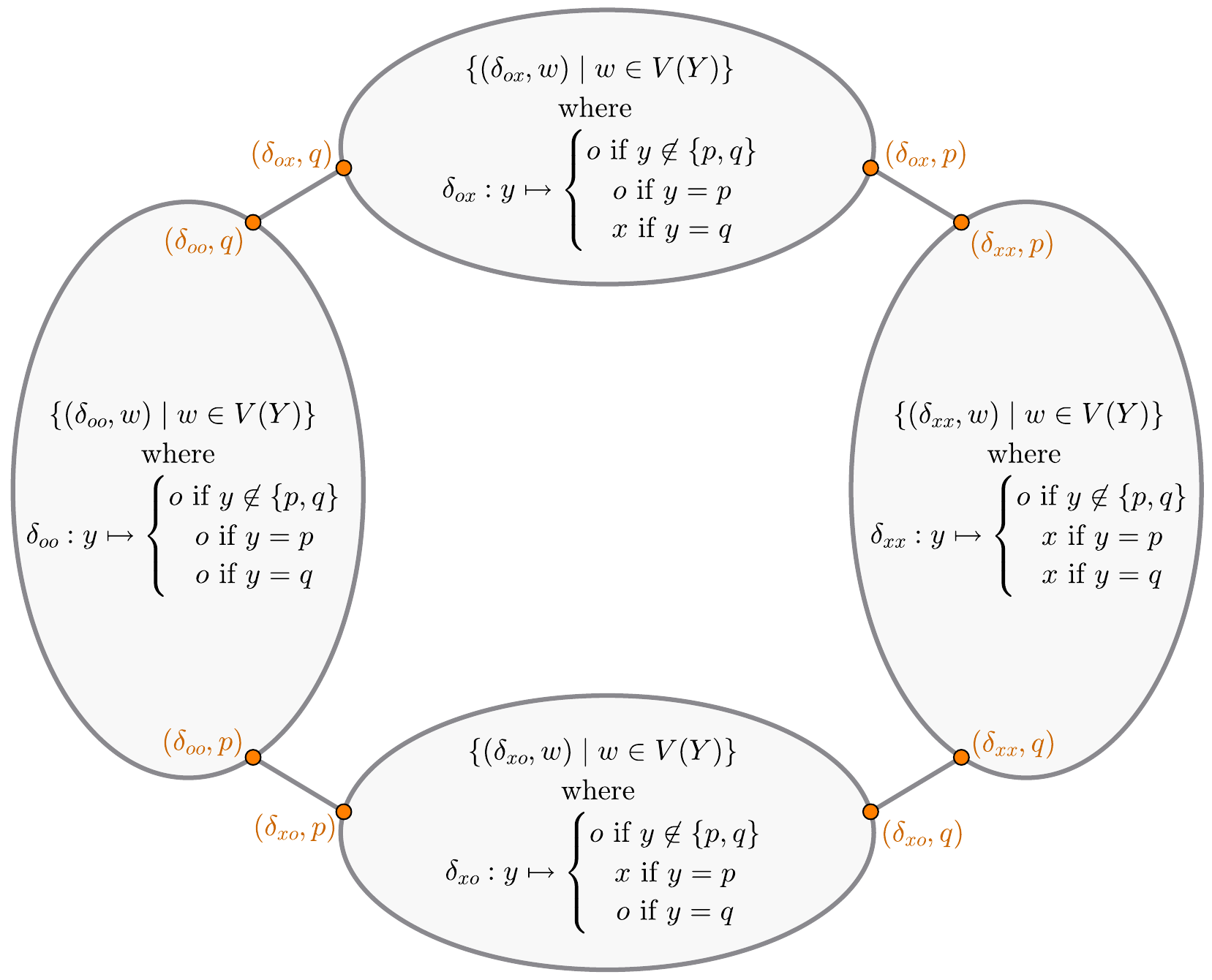}
\end{center}
Clearly, $Z$ contains a loop that is not $(4R+3)$-coarsely homotopically trivial. Therefore, $(X,o) \wr Y$ is not $(4R-3)/2$-coarsely simply connected. Since $R$ can be chosen arbitrarily large, we conclude that $(X,o) \wr Y$ is not coarsely simply connected.
\end{proof}

\noindent
As a consequence, it follows that most wreath products of groups are not finitely presentable. (See Exercise~\ref{exo:WreathFP} for a more precise characterisation.)

\begin{cor}
Let $A$ and $B$ be two finitely generated groups. If $A$ is non-trivial and $B$ infinite, then $A \wr B$ is not finitely presentable. 
\end{cor}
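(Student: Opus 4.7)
The plan is to reduce the statement to the already-established Theorem~\ref{thm:NotCoarselySimplyConnected} via the characterisation of finite presentability as coarse simple connectedness. More precisely, by Proposition~\ref{prop:WhenFP}, it suffices to show that $A \wr B$ is not coarsely simply connected (with respect to any, equivalently some, finite generating set).

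First, I would fix finite generating sets $R \subset A$ and $S \subset B$, so that $R \cup S$ generates $A \wr B$ (as seen in Section~\ref{section:CayleyGraphs}). By Proposition~\ref{prop:CaylWreath}, the Cayley graph $\mathrm{Cayl}(A \wr B, R \cup S)$ is isomorphic to the wreath product of graphs $(\mathrm{Cayl}(A,R), 1) \wr \mathrm{Cayl}(B,S)$. Thus the coarse simple connectedness of $A \wr B$ is equivalent to that of this wreath product of graphs.

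Next, I would verify the two hypotheses of Theorem~\ref{thm:NotCoarselySimplyConnected}. Since $A$ is non-trivial, $\mathrm{Cayl}(A,R)$ contains at least two vertices (namely $1$ and any non-trivial element). Since $B$ is an infinite finitely generated group, its Cayley graph $\mathrm{Cayl}(B,S)$ is unbounded: indeed, the ball of radius $n$ in $\mathrm{Cayl}(B,S)$ has cardinality at most $(|S \cup S^{-1}|+1)^n$, hence is finite, so unboundedness follows from $|B|=\infty$. Theorem~\ref{thm:NotCoarselySimplyConnected} then applies and yields that $(\mathrm{Cayl}(A,R), 1) \wr \mathrm{Cayl}(B,S)$ is not coarsely simply connected, and consequently neither is $A \wr B$.

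Combined with Proposition~\ref{prop:WhenFP}, this shows that $A \wr B$ cannot be finitely presentable, concluding the proof. I do not expect any real obstacle here, since everything has been set up in the preceding pages; the only minor point worth making explicit is that an infinite finitely generated group has unbounded word metric, which is immediate from the finiteness of balls.
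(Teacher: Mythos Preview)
Your proof is correct and follows exactly the same route as the paper: identify a Cayley graph of $A\wr B$ with a wreath product of Cayley graphs via Proposition~\ref{prop:CaylWreath}, apply Theorem~\ref{thm:NotCoarselySimplyConnected}, and conclude with Proposition~\ref{prop:WhenFP}. You simply spell out in more detail the verification of the hypotheses (two vertices, unboundedness), which the paper leaves implicit.
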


\begin{proof}
Since $A \wr B$ has a Cayley graph that is a wreath product of Cayley graphs of $A$ and $B$ (according to Proposition~\ref{prop:CaylWreath}), it follows from Theorem~\ref{thm:NotCoarselySimplyConnected} that $A \wr B$ is not coarsely simply connected. We conclude from Proposition~\ref{prop:WhenFP} that $A \wr B$ is not finitely presentable. 
\end{proof}

\subsection{Coarse separation}\label{section:CoarseSeparation}

\noindent
In topology, it might be useful to understand which subspaces can separate a given space. For instance, one can distinguish $\mathbb{R}$, $\mathbb{S}^1$, and $\mathbb{R}^2$ up to homeomorphism by noticing that $\mathbb{R}$ can be separated by removing a single point, that $\mathbb{S}^1$ can be separated by removing two points but not a single point, and that $\mathbb{R}^2$ cannot be separated by removing only finitely many points. Topological separation can be coarsified as follows:

\begin{definition}
Let $X$ be a graph. A subgraph $Z$ \emph{coarsely separates} $X$ if there exists $F \geq 0$ such that, for every $L \geq 0$, $X \backslash Z^{+L}$ contains at least two connected components with vertices at distance $\geq L$ from $Z$. 
\end{definition}

\noindent
For instance, the line $\mathbb{E}^1$ is coarsely separated by any of its vertices. On the other hand, the infinite grid $\mathbb{E}^2$ cannot be coarsely separated by a single vertex. But it can be coarsely separated by a line. An instructive example is the half-line $\mathbb{N}$. Can it be coarsely separated by a single vertex? On the one hand, removing a vertex from $\mathbb{N}$ always yields an unbounded connected component and bounded connected component. On the other hand, this bounded component can have an arbitrarily large diameter if one chooses carefully the vertex we remove. This property distinguishes $\mathbb{N}$ from both $\mathbb{E}$ and $\mathbb{E}^2$ up to quasi-isometry, but, formally, $\mathbb{N}$ is not coarsely separated by a vertex. Loosely speaking, the vertices of $\mathbb{N}$ do not coarsely separate ``individually'' but they do coarsely separate ``collectively''. This phenomenon motivates the following definition:

\begin{definition}
Let $X$ be a graph. A collection of subgraphs $\mathcal{Z}$ \emph{coarsely separates} $X$ if there exists $F \geq 0$ such that, for every $L \geq 0$, one can find some $Z \in \mathcal{Z}$ for which $X \backslash Z^{+F}$ contains at least two connected components with vertices at distance $\geq L$ from~$Z$. 
\end{definition}

\noindent
For instance, in our previous example, $\mathbb{N}$ is not coarsely separated by $\{v\}$ for every vertex $v$ but it is coarsely separated by the family $\{\{v\} \mid v \text{ vertex}\}$. 

\medskip \noindent
\begin{minipage}{0.35\linewidth}
\includegraphics[width=0.95\linewidth]{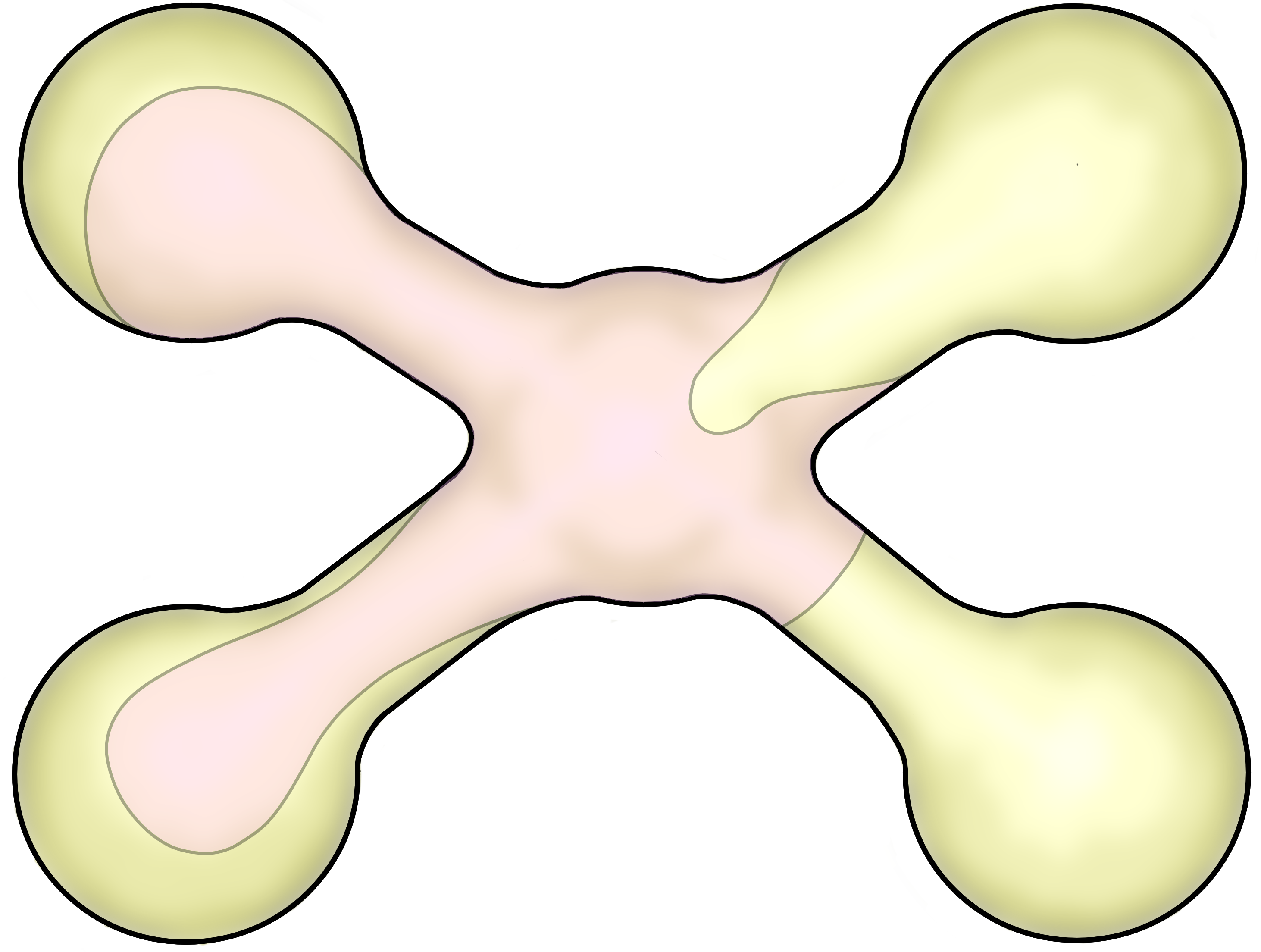}
\end{minipage}
\begin{minipage}{0.61\linewidth}
Defining coarse separation for a family of subgraphs and not only for a single subgraph is also motivated by the study of the coarse geometry of graphs of spaces. In a nutshell, if $\rho : X \to Y$ is a coarse embedding from some graph $X$ to some graph of spaces $Y$, either $\rho(X)$ is contained in a neighbourhood of some vertex-space or it is coarsely separated by $\{ \rho(X) \cap S \mid S \text{ edge-space}\}$. 
\end{minipage}

\medskip \noindent
Let us verify that coarse separation is preserved by quasi-isometries.

\begin{prop}
Let $X,Y$ be two graphs and $\varphi : X \to Y$ a quasi-isometry. If $X$ is coarsely separated by a family of subgraphs $\mathcal{Z}$, then $Y$ is coarsely separated by $\varphi(\mathcal{Z}):= \{ \varphi(Z) \mid Z \in \mathcal{Z}\}$. 
\end{prop}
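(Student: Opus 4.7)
The plan is to push the separation in $X$ forward through $\varphi$ and verify that the new thickening constant $F'$ can be chosen uniformly in terms of the quasi-isometry constants and the old constant $F$. Fix a quasi-inverse $\bar{\varphi}:Y\to X$ as in Exercise~\ref{exo:QuasiInverse} and constants $A\geq 1$, $B\geq 0$ such that $\varphi$ and $\bar{\varphi}$ are $(A,B)$-quasi-isometries and $\varphi\circ\bar{\varphi}$, $\bar{\varphi}\circ\varphi$ are at distance $\leq B$ from the identity. Let $F\geq 0$ be a constant witnessing that $\mathcal{Z}$ coarsely separates $X$, and set
$$F':= A(A+B+F)+3B+1.$$

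Given $L'\geq 0$, I would choose $L:= \max\bigl(A(L'+B),\,F+B+1\bigr)$ and use the coarse separation of $X$ to obtain some $Z\in \mathcal{Z}$ and two distinct connected components $C_1,C_2$ of $X\setminus Z^{+F}$ containing respective vertices $x_1,x_2$ with $d(x_i,Z)\geq L$. Set $Z':=\varphi(Z)\in \varphi(\mathcal{Z})$ and $y_i:=\varphi(x_i)$. The quasi-isometric lower bound immediately gives $d(y_i,Z')\geq L/A - B\geq L'$, so that each $y_i$ is suitably far from $Z'$. It remains to show that $y_1$ and $y_2$ belong to distinct connected components of $Y\setminus (Z')^{+F'}$.

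Assume for contradiction that there is a path $y_1=u_0,u_1,\ldots,u_n=y_2$ in $Y\setminus (Z')^{+F'}$. Pulling back, set $v_i:=\bar{\varphi}(u_i)$; then consecutive $v_i$'s are at distance $\leq A+B$, and connecting them by geodesics in $X$ yields a path in $X$ between $v_0$ and $v_n$. Since $d(v_0,x_1)\leq B$ and $d(v_n,x_2)\leq B$, I can extend by short geodesics to obtain a path $\tilde\gamma$ from $x_1$ to $x_2$. The key verification is that $\tilde\gamma$ avoids $Z^{+F}$: if a vertex $w$ on the sub-geodesic between $v_i$ and $v_{i+1}$ belonged to $Z^{+F}$, then $d(w,v_i)\leq A+B$ would force $d(\varphi(w),u_i)\leq A(A+B)+2B$, while $w\in Z^{+F}$ would force $d(\varphi(w),Z')\leq AF+B$, so that $u_i\in (Z')^{+F'}$ by the triangle inequality and the choice of $F'$, a contradiction. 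The extensions to $x_1$ and $x_2$ stay outside $Z^{+F}$ precisely because $L>F+B$. Thus $\tilde\gamma$ is a path in $X\setminus Z^{+F}$ joining $x_1$ to $x_2$, contradicting $C_1\neq C_2$.

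The computation is entirely routine once the constants are set up; the only mild subtlety, which I would flag as the main point to keep track of, is that $\bar{\varphi}(y_i)$ need not equal $x_i$, so the pulled-back path must be extended by geodesics of length $\leq B$ at both ends, which is the reason for requiring $L>F+B$ rather than merely $L\geq L'$ in the choice of $L$.
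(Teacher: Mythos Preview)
Your proof is correct and follows essentially the same approach as the paper: pull back a putative path in $Y$ through the quasi-inverse $\bar{\varphi}$, interpolate by geodesics in $X$, and invoke the separation of $x_1,x_2$ by $Z^{+F}$. The paper phrases the last step as ``any path $\alpha$ from $\varphi(a)$ to $\varphi(b)$ must hit $\varphi(Z)^{+A(F+A+B)+2B}$'' while you phrase it as its contrapositive, but the bookkeeping is the same and your constants are chosen correctly.
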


\begin{proof}
Let $\bar{\varphi}$ be a quasi-inverse of $\varphi$ and let $A>0$, $B \geq 0$ be constants such that $\varphi, \bar{\varphi}$ are $(A,B)$-quasi-isometries and such that $\varphi \circ \bar{\varphi}, \bar{\varphi} \circ \varphi$ lie at distance $\leq B$ from identities (see Exercise~\ref{exo:ExQI}). Because $\mathcal{Z}$ coarsely separates $X$, there exists some $F \geq 0$ such that, for every $L \geq 0$, one can find a subgraph $Z \in \mathcal{Z}$ and two vertices $a,b \in V(X)$ at distance $\geq A^2(A+F+B) + A(L+3B)+F$ from $Z^{+F}$ such that $Z^{+F}$ separates $a$ and $b$. Notice that
$$\begin{array}{lcl} d(\varphi(a), \varphi(Z)^{+A(F+A+B)+2B}) & \geq & d(\varphi(a),\varphi(Z)) - A(F+A+B)-2B \\ \\ & \geq & \frac{1}{A} d(a,Z) - A(F+A+B) - 3B \\ \\ & \geq & \frac{1}{A} d(a,Z^{+F}) - \frac{F}{A} - A(F+A+B) -3B \geq L. \end{array}$$
Similarly, $d(\varphi(b), \varphi(Z)^{+A(F+A+B)+2B}) \geq L$. Now, we claim that $\varphi(Z)^{+A(F+A+B)+2B}$ separates $\varphi(a)$ and $\varphi(b)$. So let $\alpha$ be a path connecting $\varphi(a)$ and $\varphi(b)$. 
\begin{center}
\includegraphics[width=\linewidth]{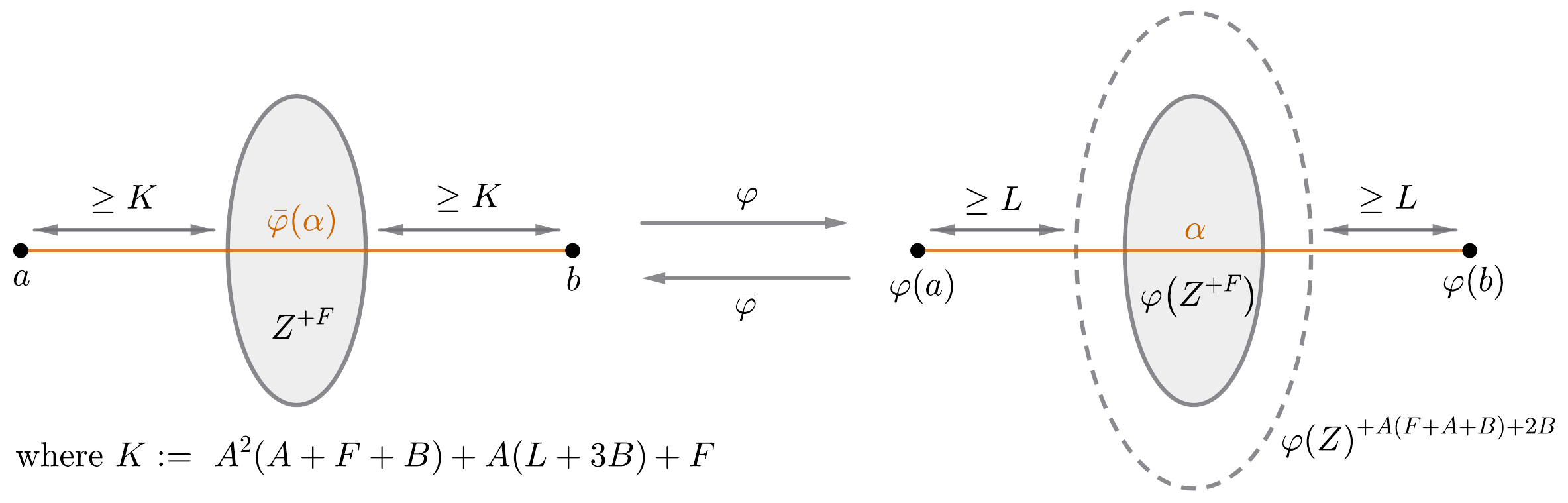}
\end{center}
Since $\bar{\varphi}(\alpha)$ is a sequence of vertices successively at distance $\leq A+B$ connecting $\bar{\varphi}(\varphi(a))$ and $\bar{\varphi}(\varphi(b))$, which lie at distance $\leq B$ from $a$ and $b$ respectively, necessarily $\bar{\varphi}(\alpha)$ intersects $Z^{+F+A+B}$ since $Z^{+F}$ separates $a$ and $b$. It follows that
$$\emptyset \neq \varphi(\bar{\varphi}(\alpha)) \cap \varphi(Z^{+F+A+B}) \subset   \alpha^{+B} \cap \varphi(Z)^{+A(F+A+B)+B},$$
from which we deduce that $\alpha$ intersects $\varphi(Z)^{+A(F+A+B)+2B}$.
\end{proof}

\noindent
We will be especially interested in coarse separation by (families of) vertices. This property is well-known in geometric group theory and it has its own name:

\begin{definition}
An unbounded graph $X$ is \emph{one-ended} if, for every $v \in V(X)$, $X$ is not coarsely separated by $\{v\}$; and it is \emph{uniformly one-ended} if it is not coarsely separated by $\{ \{v\} \mid v \in V(X)\}$. 
\end{definition}

\noindent
In other words, our graph $X$ is uniformly one-ended if, for every $F \geq 0$, there exists some $L \geq 0$ such that removing from $X$ a ball of radius $F$ yields a single unbounded connected component and bounded connected components all of diameters $\leq F$; and it is one-ended if removing a ball always yields a single unbounded component (with no restriction on the remaining bounded components). For instance, $\mathbb{N}$ is one-ended but not uniformly one-ended. For quasi-transitive graphs (such as Cayley graphs), the two notions coincide (see Exercise~\ref{exo:UnifOneEnded}).

\medskip \noindent
As shown by our next proposition, wreath products of graphs are usually uniformly one-ended.

\begin{prop}\label{prop:OneEnded}
Let $(X,o)$ be a pointed graph and $Y$ a graph of bounded. If $X$ contains at least two vertices and if $Y$ is unbounded, then $(X,o) \wr Y$ is uniformly one-ended.
\end{prop}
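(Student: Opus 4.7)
The plan is to show, uniformly in $(c,p)$, that all vertices at distance $\geq L := 10F$ from $(c,p)$ lie in a single connected component of the complement of $B((c,p),F)$. Since $Y$ is unbounded and connected, pick $\hat p \in V(Y)$ at distance $F+1$ from $p$; since $X$ has at least two vertices and is connected, pick a neighbour $x$ of $c(\hat p)$ in $X$ with $x \neq c(\hat p)$. Define the \emph{beacon set}
$$\mathcal{B} := \bigl\{ (b,r) \in V((X,o) \wr Y) \,:\, b(\hat p) = x \bigr\}.$$
Any $(b,r) \in \mathcal{B}$ has $\hat p \in c \triangle b$, so Lemma~\ref{lem:DistWreath} gives $d((b,r),(c,p)) \geq d(\hat p,p)+1 = F+2$, which places $\mathcal{B}$ inside the complement.

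I would first prove that $\mathcal{B}$ is connected within the complement, by joining every $(b,r) \in \mathcal{B}$ to the standard vertex $w^* = (c^*, \hat p)$ (where $c^*$ agrees with $c$ off $\hat p$ and $c^*(\hat p) = x$): move the arrow $r \to \hat p$ without touching the colouring, then walk the arrow through each vertex of $b \triangle c^*$ (which lies in $V(Y) \setminus \{\hat p\}$) and correct $b$ to $c^*$ there. Since we never modify the colour at $\hat p$, every intermediate state still satisfies $b(\hat p) = x$ and hence stays in $\mathcal{B}$, hence in the complement.

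Next, I would show that every $v = (a,q)$ with $d(v,(c,p)) \geq L$ can be connected to $\mathcal{B}$ within the complement, through three cases. \textbf{Case A} ($c \triangle a \ni q_0$ with $d(q_0,p) \geq 2F$): move the arrow from $q$ to $\hat p$ along any path, then modify the colour at $\hat p$ to $x$ via a path in $X$; at every intermediate state $(a',r')$ one still has $q_0 \in c \triangle a'$, so the TSP term is $\geq d(r',q_0)+d(q_0,p) \geq 2F$, keeping us outside $B((c,p),F)$. \textbf{Case B1} ($c \triangle a \subseteq B(p,2F)$ and $d(q,p) \geq 2F$): create a colour difference at $q$ itself by replacing $a(q)$ with a neighbour of $c(q)$ in $X$ if $a(q)=c(q)$ (otherwise nothing is needed), a single edge decreasing distance by at most one; this reduces to Case A with $q_0 = q$. \textbf{Case B2} ($c \triangle a \subseteq B(p,2F)$ and $d(q,p) < 2F$): walk the arrow along a $Y$-geodesic from $q$ to some $q'$ with $d(q',p) = 2F+1$; the geodesic has length $\leq 4F$, and since arrow-moves leave the colouring fixed we have $d((a,r'),(a,q)) = d(r',q)$, so every intermediate state has distance $\geq L - 4F = 6F > F$ from $(c,p)$. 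The endpoint $(a,q')$ is then in the setting of Case B1.

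The main obstacle is Case B2: the arrow and the entire support of $c \triangle a$ are concentrated near $p$, so any escape to the ``far'' region must cross $B(p,F)$ in $Y$ without the protection of a TSP-boosting colour difference. The cushion between the threshold $L = 10F$ and the short excursion length $4F$ is exactly what keeps the state outside the ball during the crossing; once outside, flipping a single lamp at the now-distant arrow position provides the far colour difference required to fire the beacon mechanism of Case A. Together with the connectedness of $\mathcal{B}$, this gives the desired uniform bound, which is what makes $(X,o)\wr Y$ uniformly one-ended even without any one-endedness assumption on $Y$.
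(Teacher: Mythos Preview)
Your argument is correct and takes a genuinely different route from the paper's. The paper connects every distant vertex to a single reference vertex $(\kappa,o)$ whose colouring lights up an entire ball $B(o,R)$; protection against re-entering the removed ball comes from the \emph{colour-sum term} of Lemma~\ref{lem:DistWreath} (at least $|B(\cdot,R)|>R$ lamps differ). You instead rely on a \emph{single} far lamp difference, so your protection comes from the \emph{TSP term}: one point $q_0$ with $d(q_0,p)\geq 2F$ in the symmetric difference already forces $\mathrm{TS}(\cdot,\{q_0\},p)\geq 2F$. The paper locates a far arrow position by a counting argument along a geodesic (if the arrow never leaves $B(o,2R)$ then the total distance is at most $(1+4R)|B(o,2R)|$), and this is exactly where the bounded-degree hypothesis on $Y$ is used; your Case~B2 simply walks the arrow out a bounded distance, so your threshold $L=10F$ is uniform in the centre $(c,p)$ without any appeal to bounded degree. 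One small wrinkle: your claim in Case~A that $q_0\in c\triangle a'$ persists through the colour modification at $\hat p$ needs $q_0\neq\hat p$; since $d(\hat p,p)=F+1$ and $d(q_0,p)\geq 2F$ this is automatic once $F\geq 2$, and for $F\leq 1$ one can instead observe that the arrow sits at $\hat p$ during that modification, which already forces distance $\geq d(\hat p,p)=F+1>F$.
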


\begin{proof}
Fix a vertex $x \in V(X)$ distinct from $o$ and a constant $R \geq 0$. Our goal is to prove that, when we remove a ball of radius $R$ from $(X,o) \wr Y$, there is one unbounded connected components and a bunch of uniformly bounded components. For convenience, we we will assume that our ball is centred at $(c_o,o)$ where $c_o$ denotes the colouring identically equal to $o$. 

\medskip \noindent
Let $\kappa$ denote the colouring $V(Y) \to V(X)$ that is identically equal to $o$ outside $B(o,R)$ and identically equal to $x$ inside $B(o,R)$. We claim that, for every vertex $(c,p) \in (X,o) \wr Y$ satisfying $d((c,p),(c_o,o)) > (1+4R) |B(o,2R)|$, there exists a path connecting $(c,p)$ to $(\kappa,o)$ that stays at distance $>R$ from $(c_o,o)$. This will be sufficient to conclude the proof of the proposition. 

\medskip \noindent
First, notice that, along a geodesic connecting $(c_o,o)$ to $(c,p)$, we can find a vertex $(a,q)$ satisfying $d(q,o)>2R$. Otherwise, it would mean that $(c,p)$ can be obtained from $(c_o,o)$ by moving the arrow and modifying the lamps only in $B(o,2R)$, which would imply that $d((c,p),(c_o,o)) \leq (1+4R) |B(o,2R)|$, contradicting our assumption. 

\begin{figure}
\begin{center}
\begin{tabular}{|c|c|c|c|} \hline
\includegraphics[trim=0 19cm 24cm 0,clip,width=0.22\linewidth]{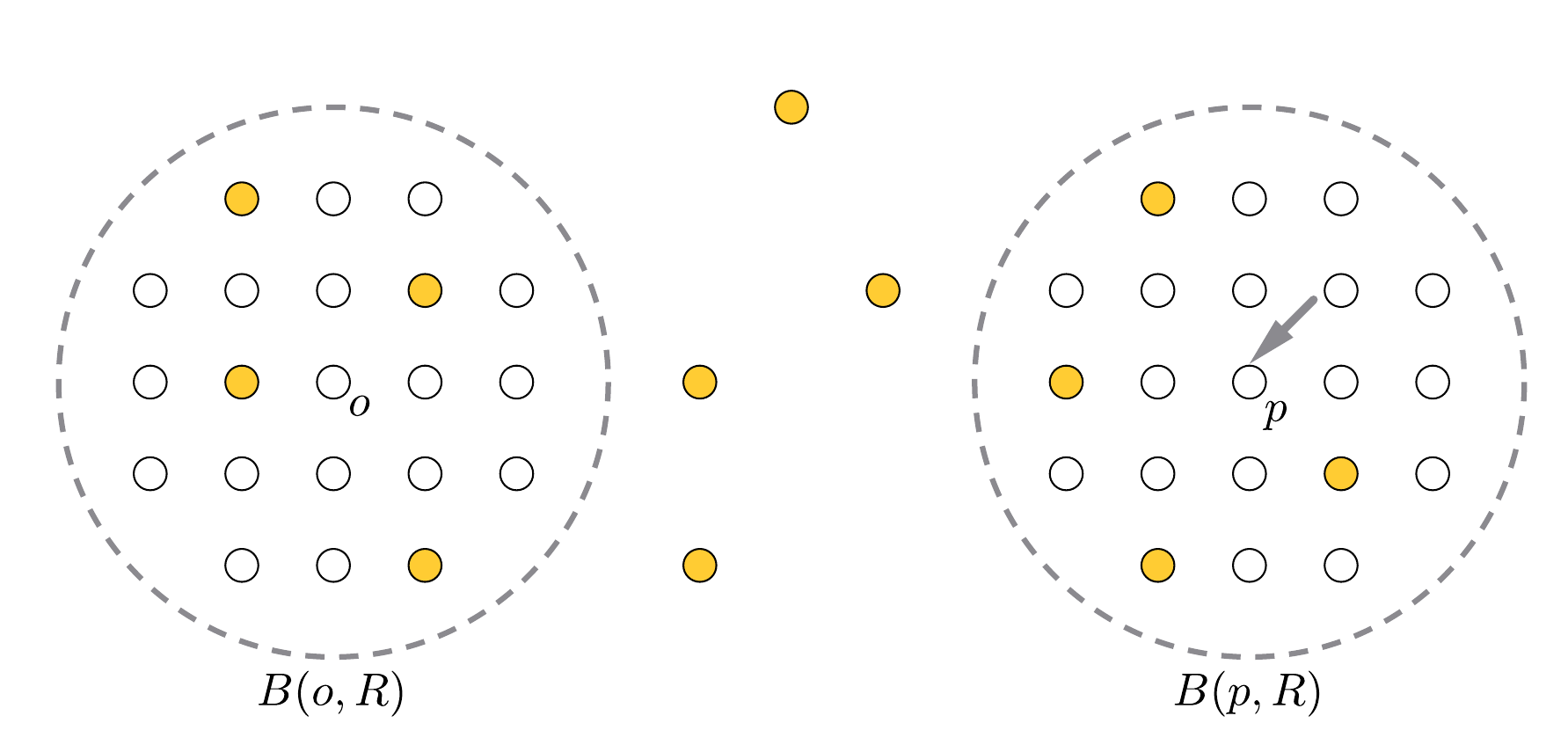} &
\includegraphics[trim=0 19cm 24cm 0,clip,width=0.22\linewidth]{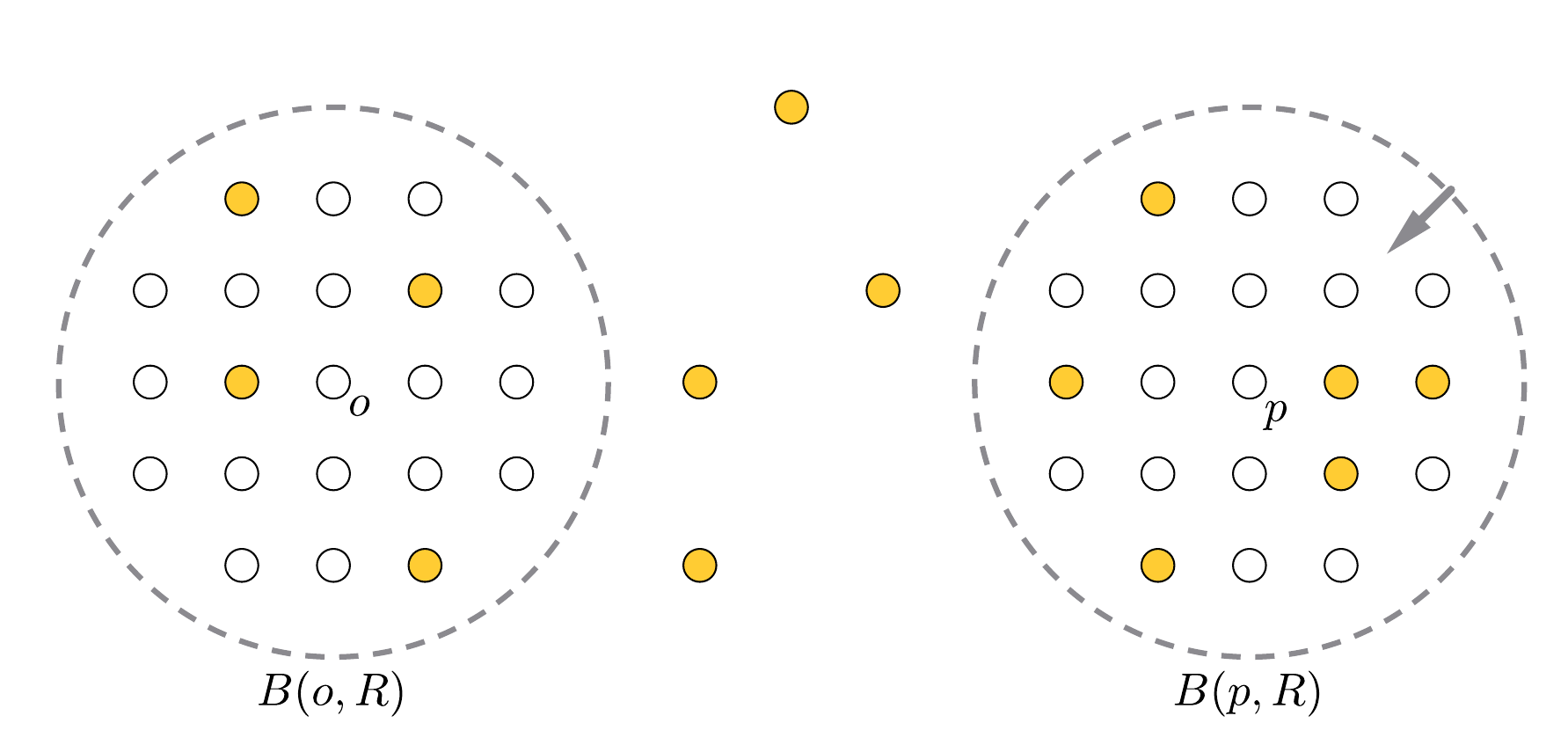} &
\includegraphics[trim=0 19cm 24cm 0,clip,width=0.22\linewidth]{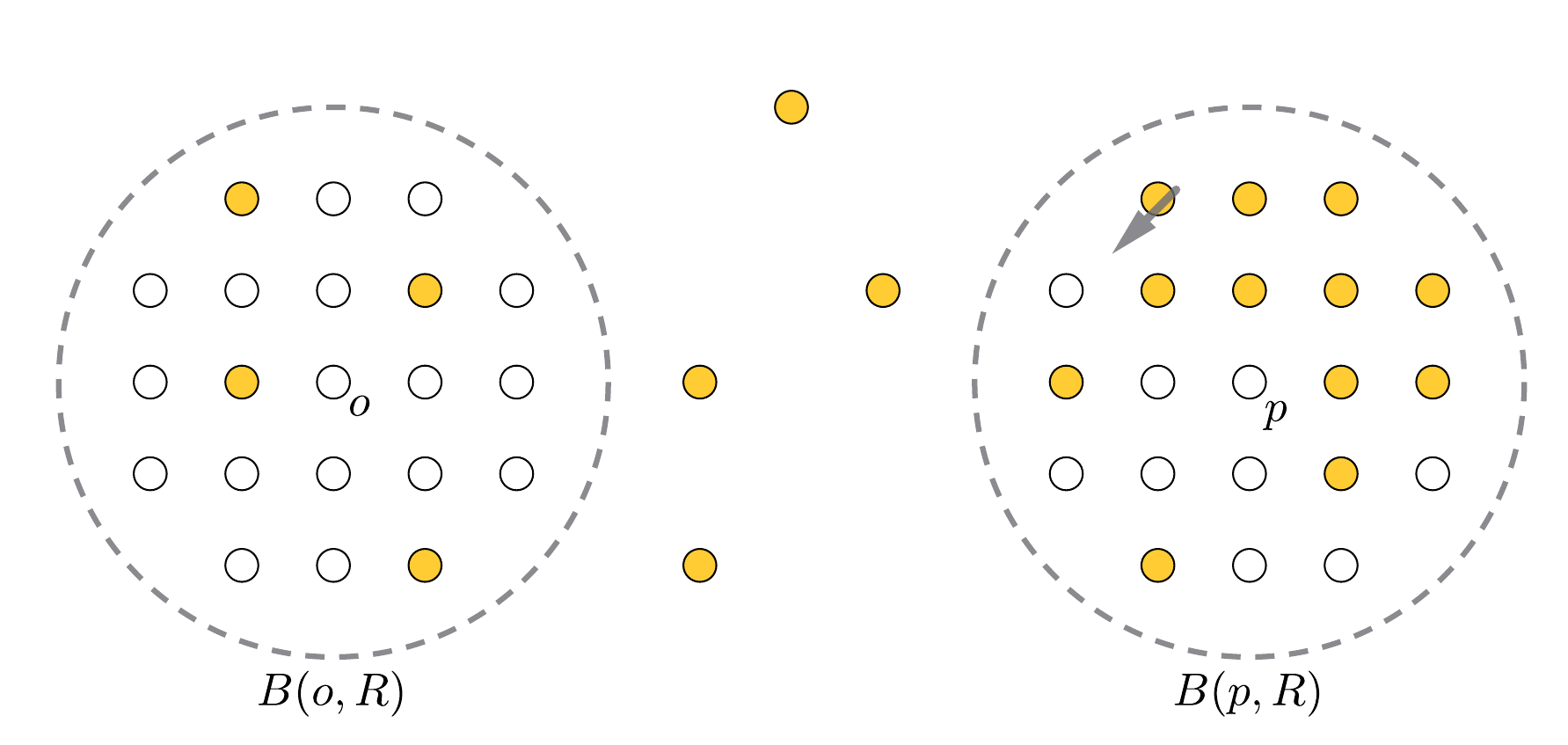} &
\includegraphics[trim=0 19cm 24cm 0,clip,width=0.22\linewidth]{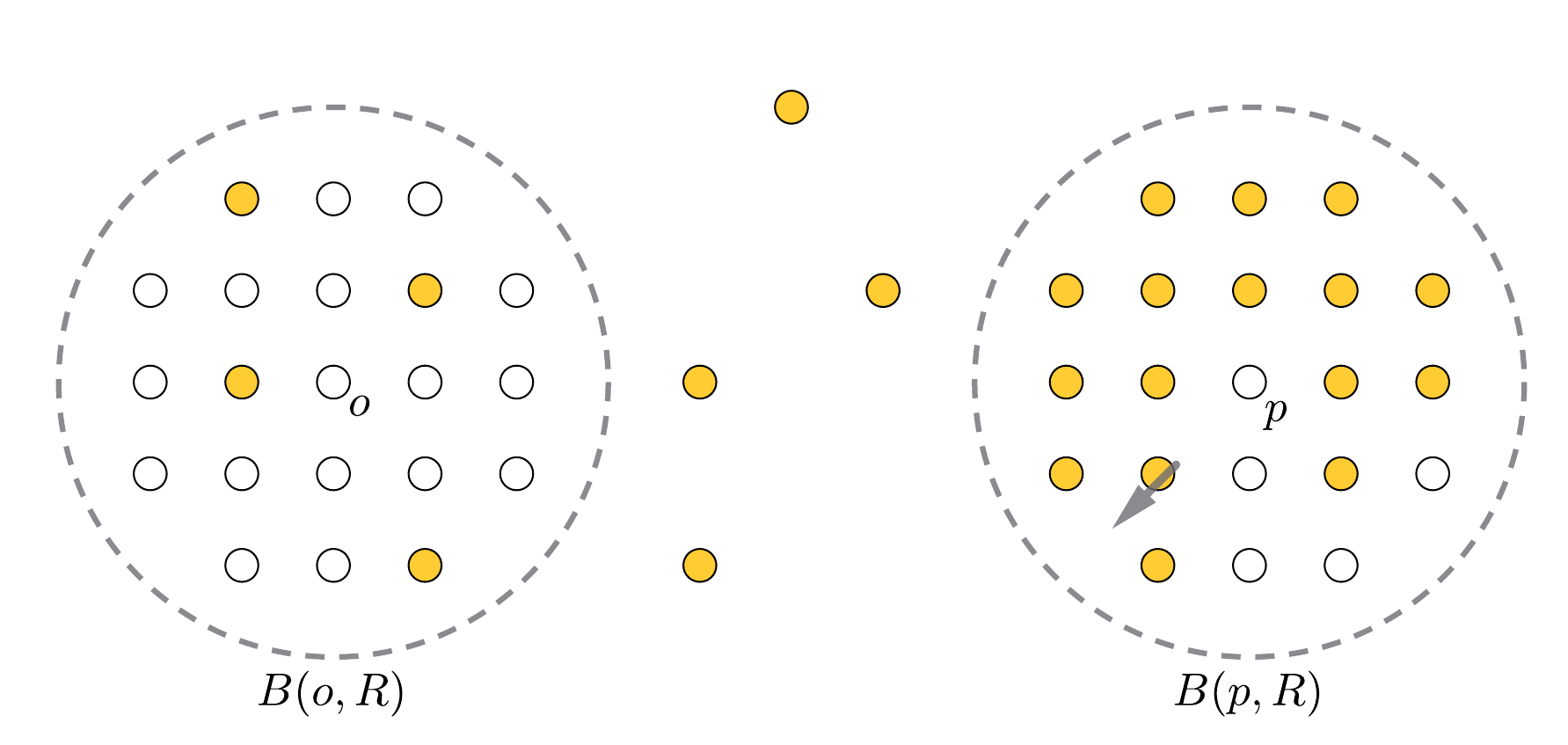} \\ \hline
\includegraphics[trim=0 19cm 24cm 0,clip,width=0.22\linewidth]{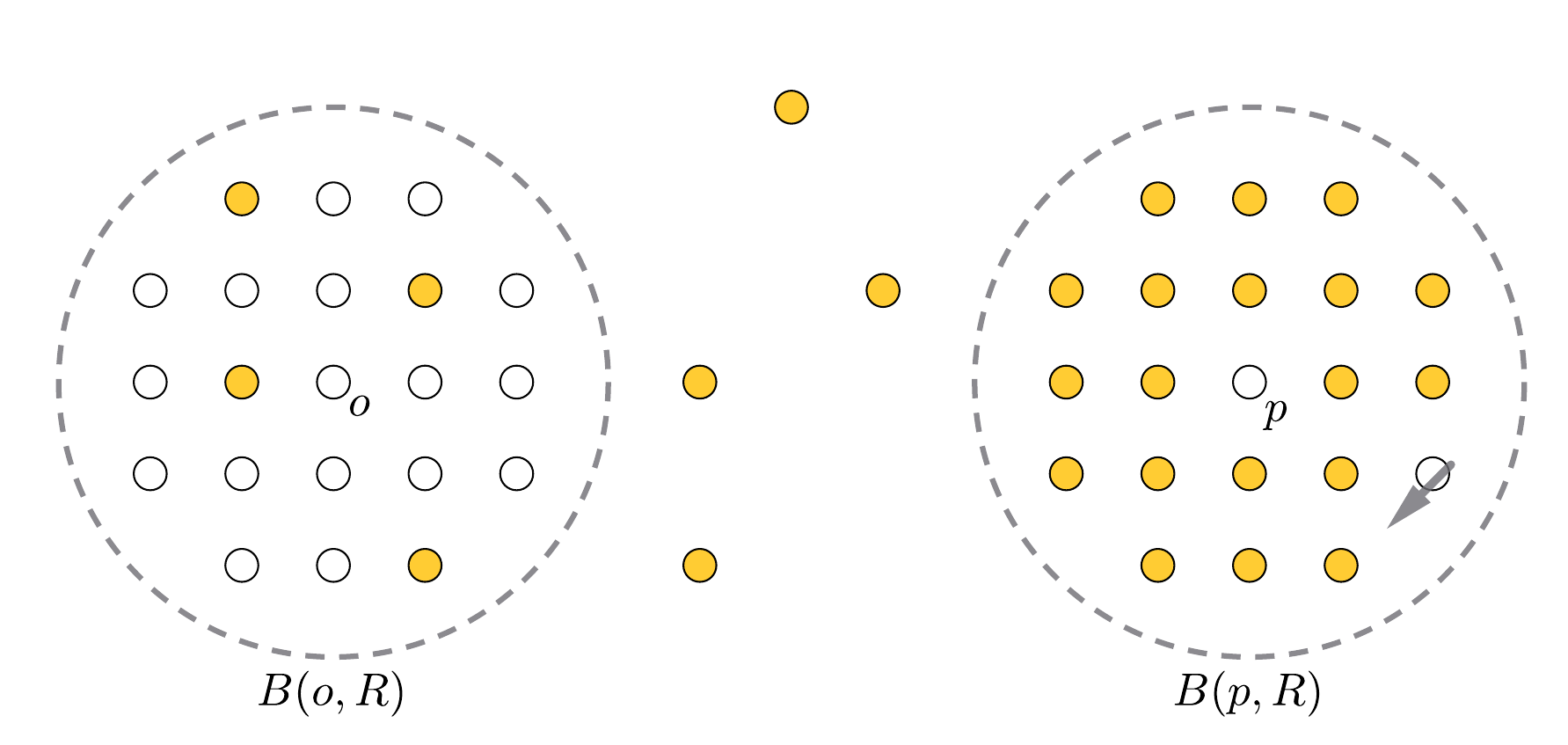} &
\includegraphics[trim=0 19cm 24cm 0,clip,width=0.22\linewidth]{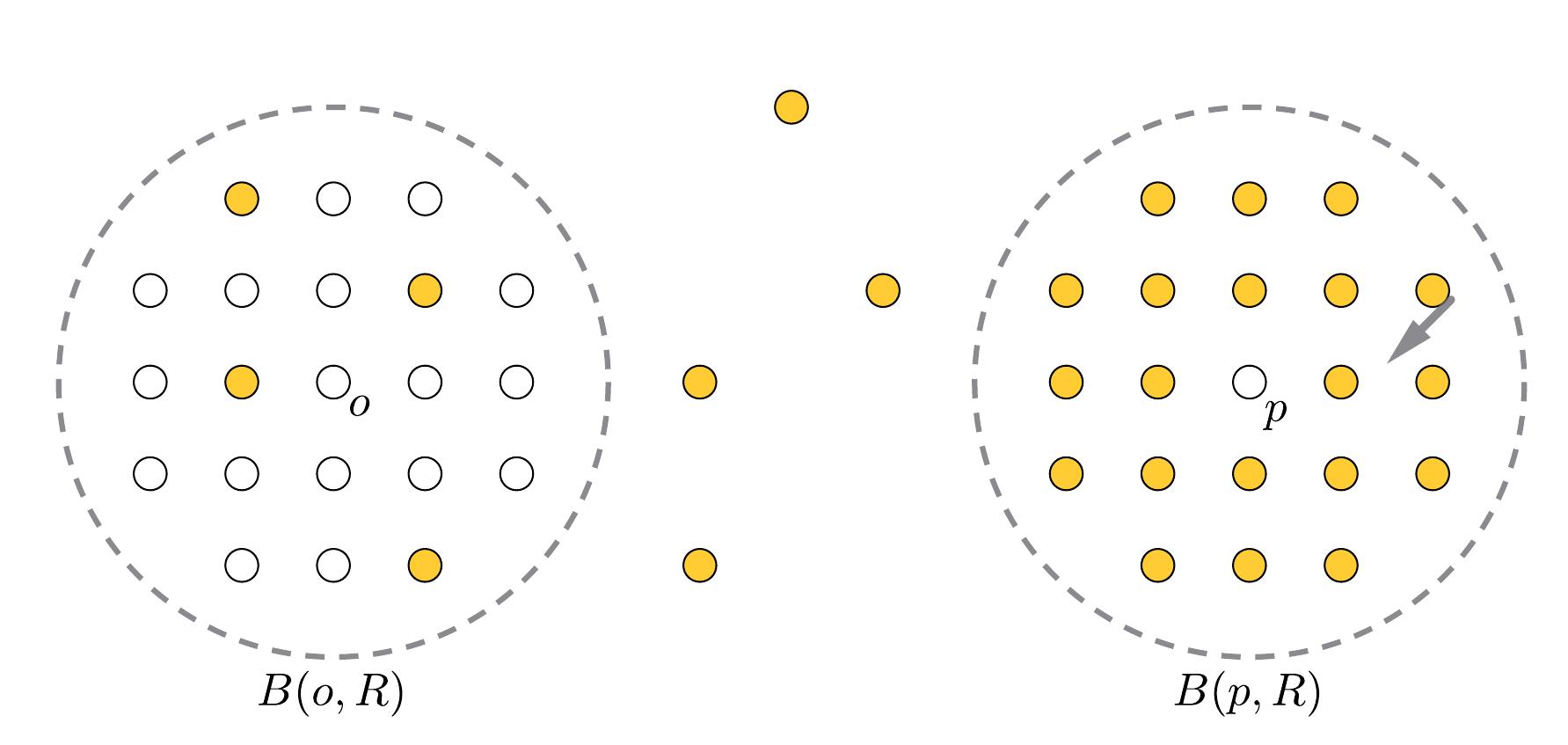} &
\includegraphics[trim=0 19cm 24cm 0,clip,width=0.22\linewidth]{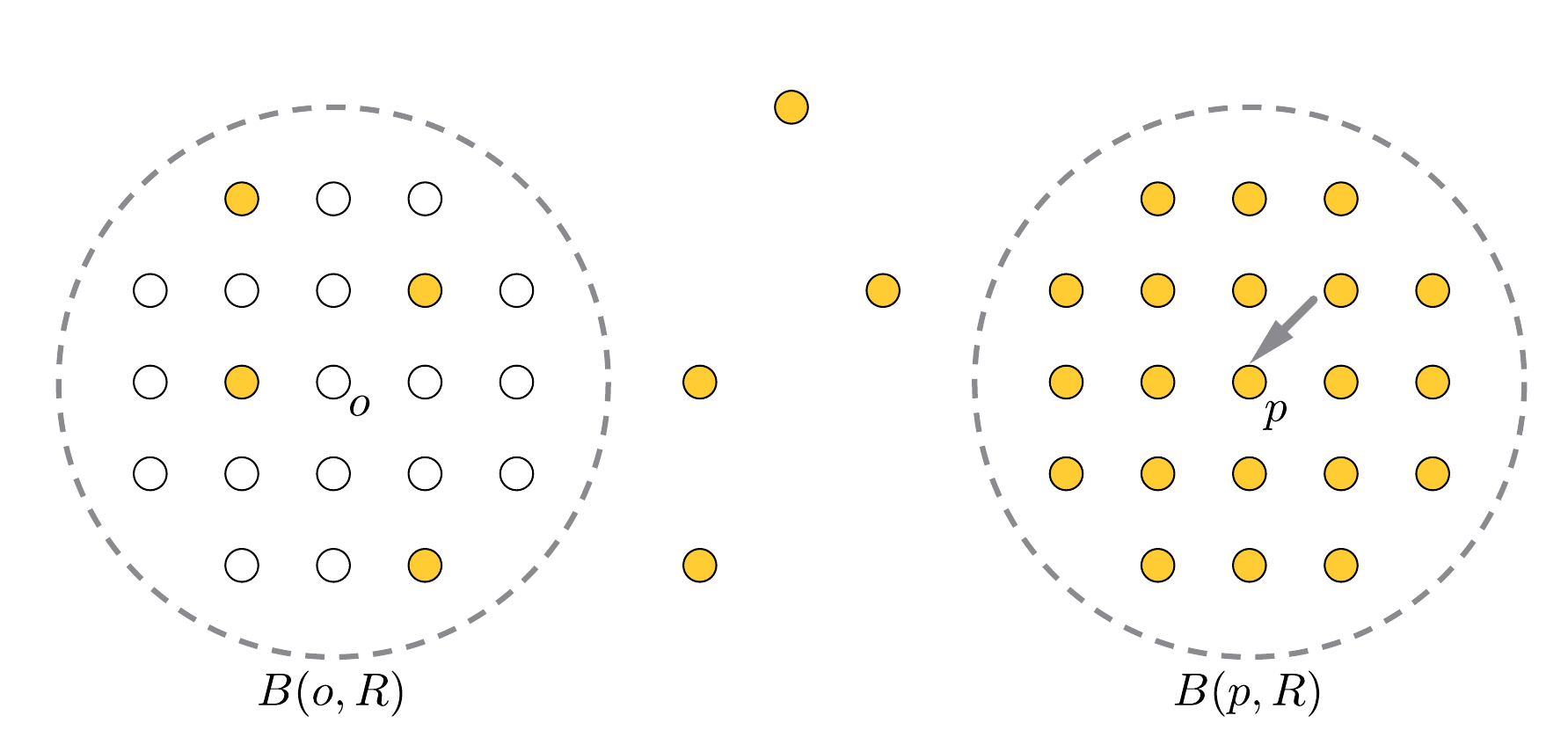} &
\includegraphics[trim=0 19cm 24cm 0,clip,width=0.22\linewidth]{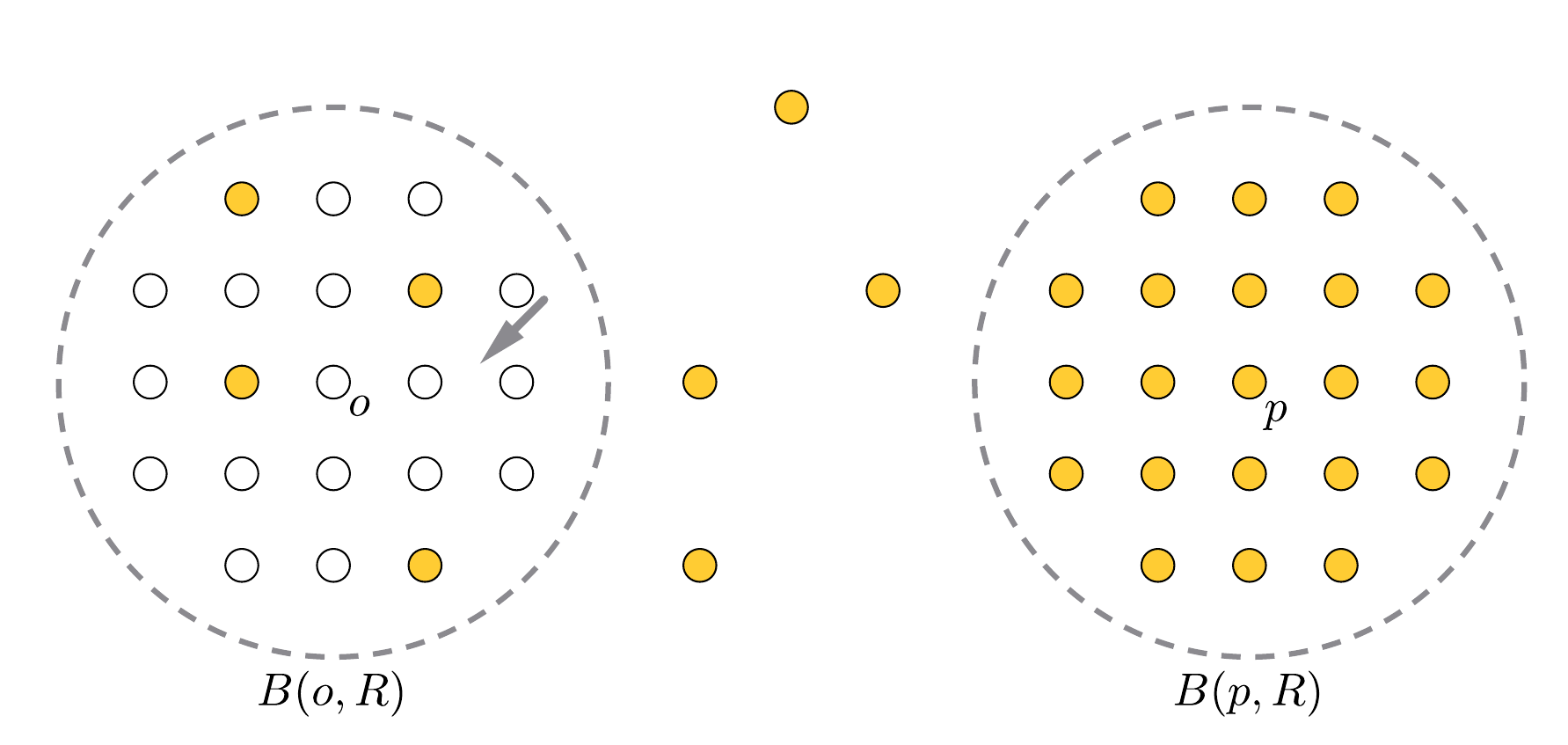} \\ \hline
\includegraphics[trim=0 19cm 24cm 0,clip,width=0.22\linewidth]{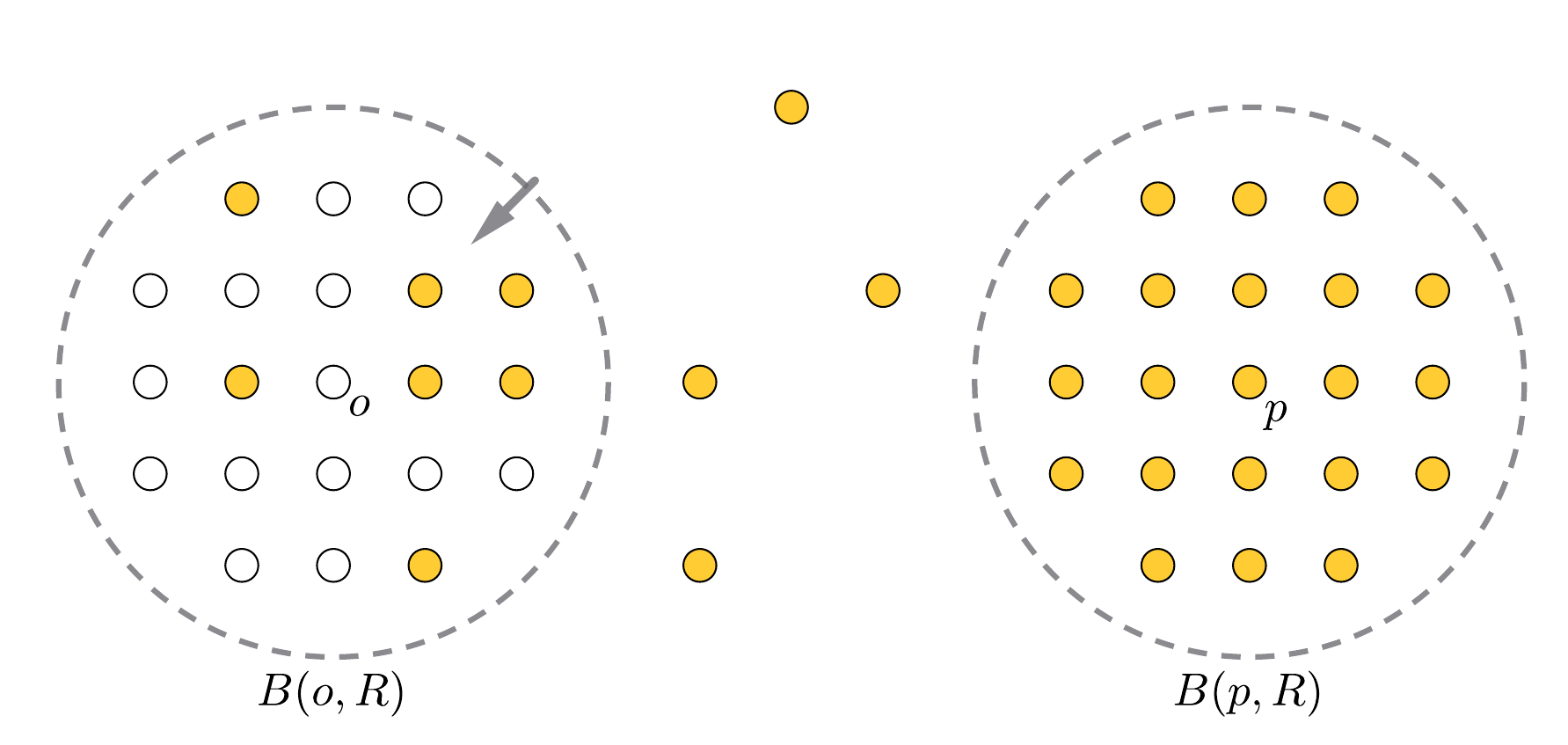} &
\includegraphics[trim=0 19cm 24cm 0,clip,width=0.22\linewidth]{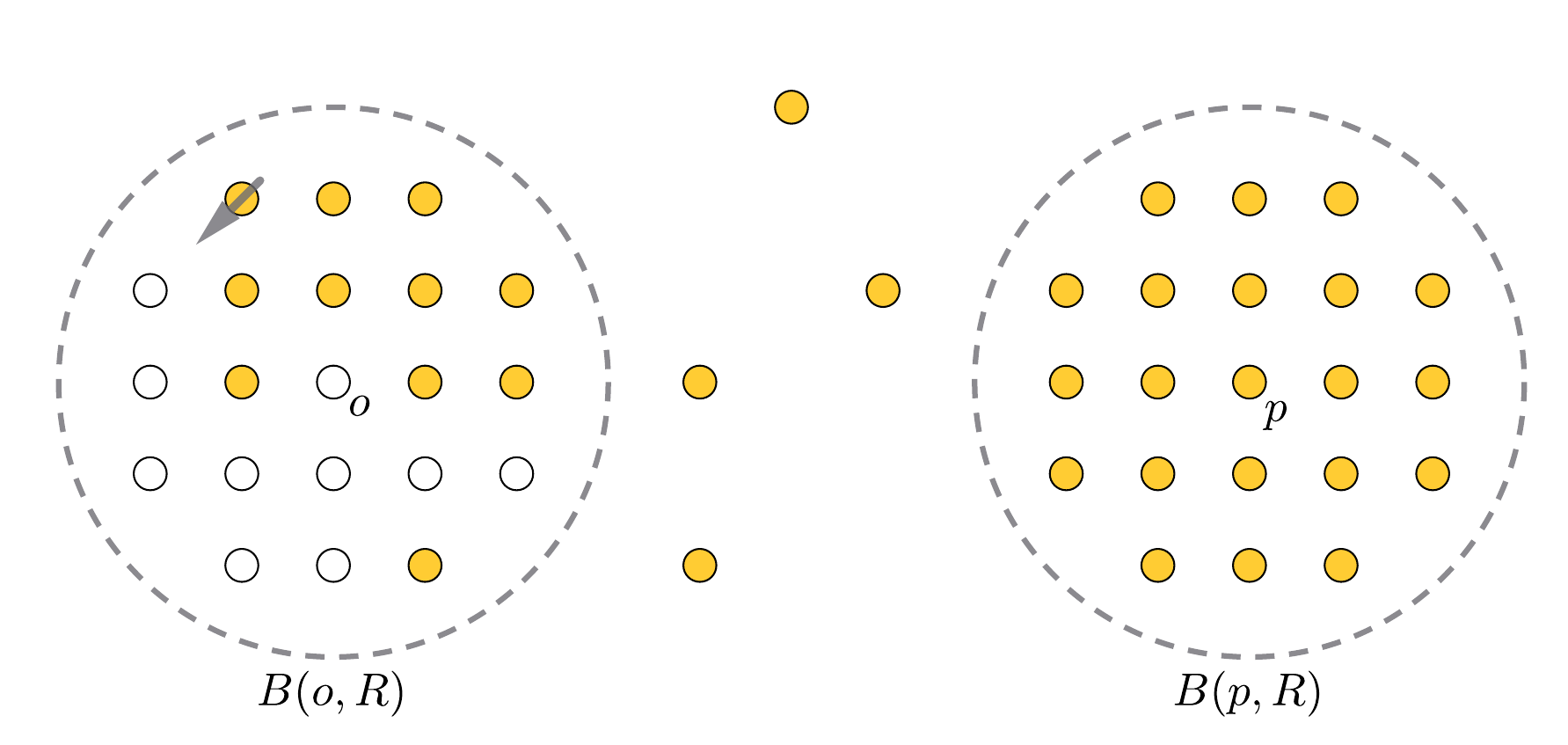} &
\includegraphics[trim=0 19cm 24cm 0,clip,width=0.22\linewidth]{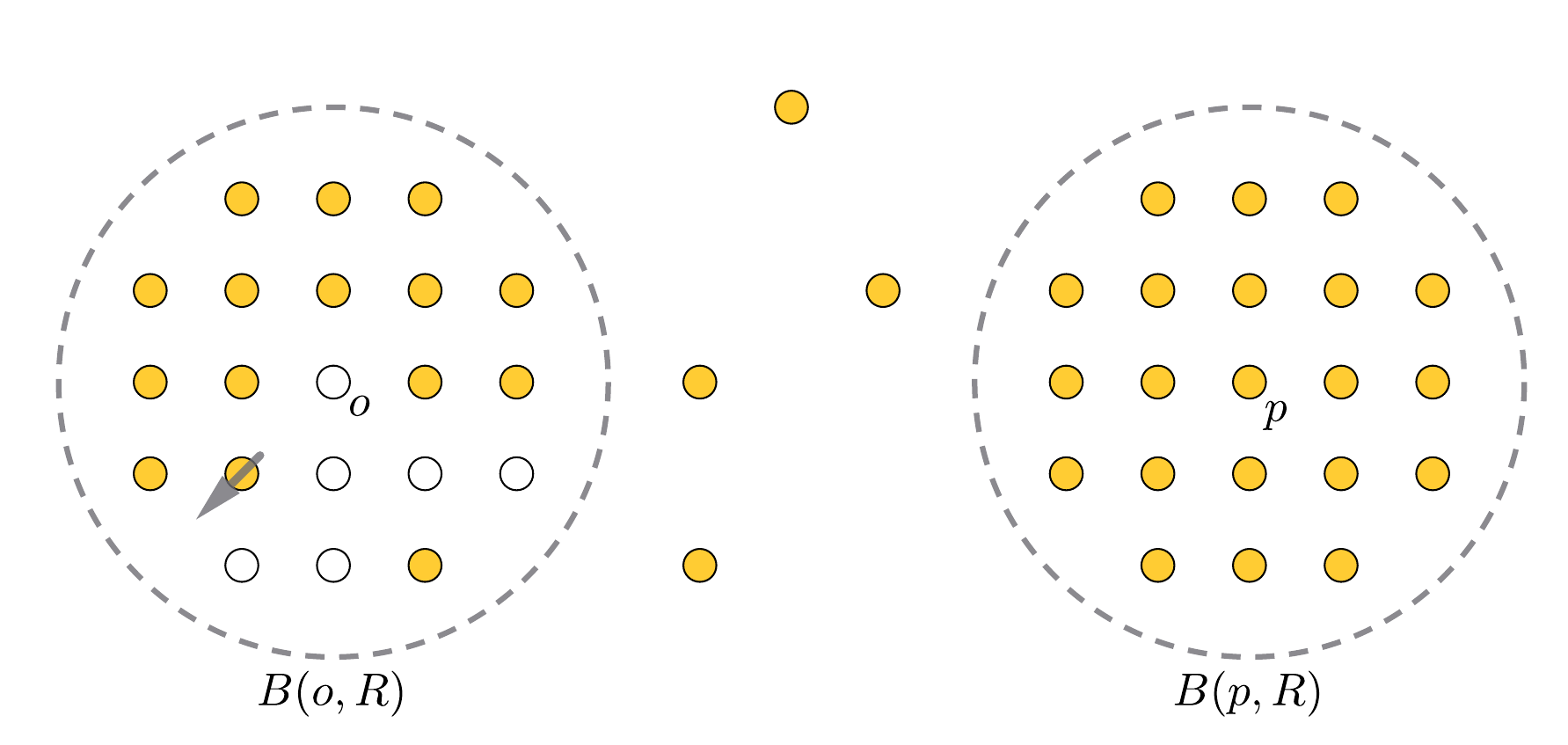} &
\includegraphics[trim=0 19cm 24cm 0,clip,width=0.22\linewidth]{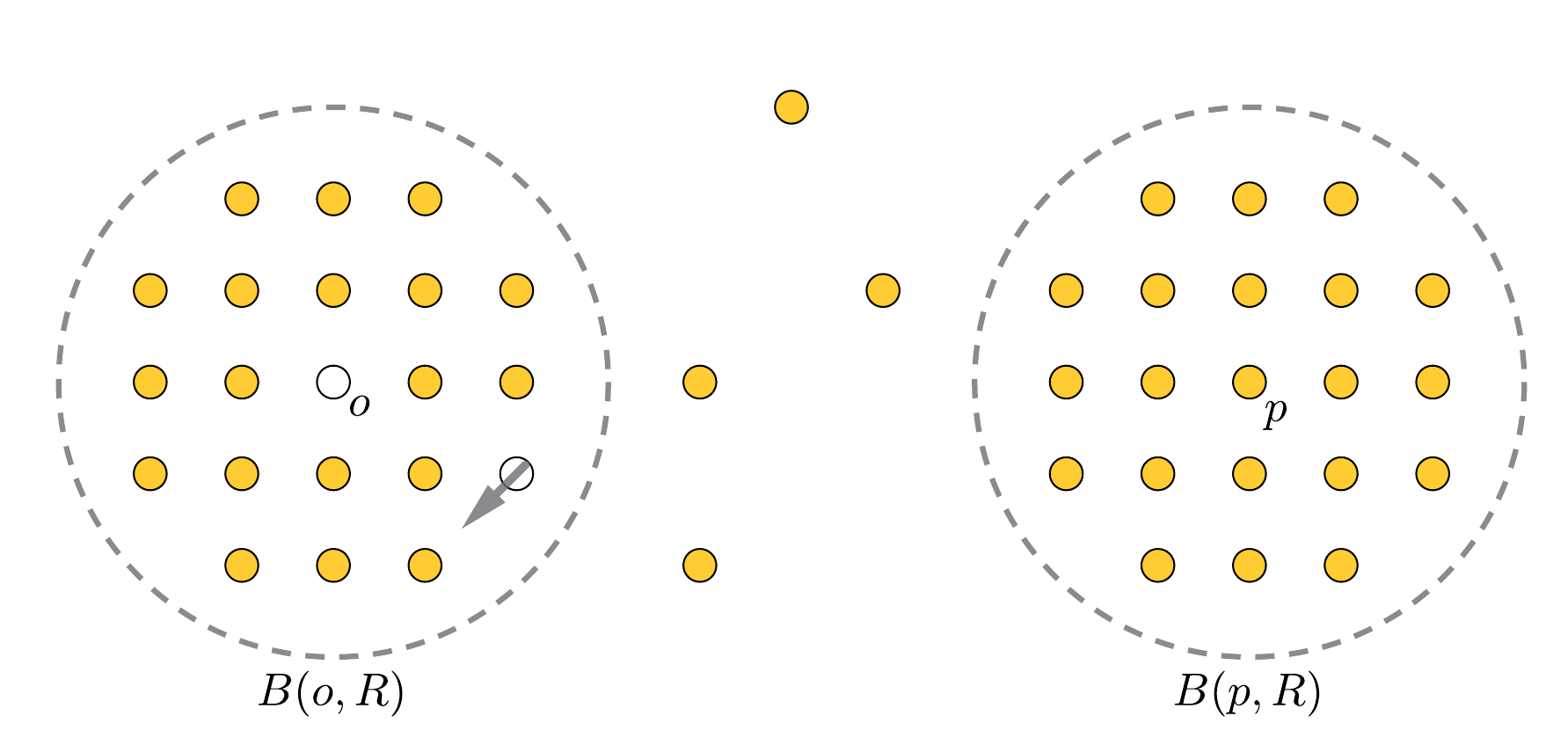} \\ \hline
\includegraphics[trim=0 19cm 24cm 0,clip,width=0.22\linewidth]{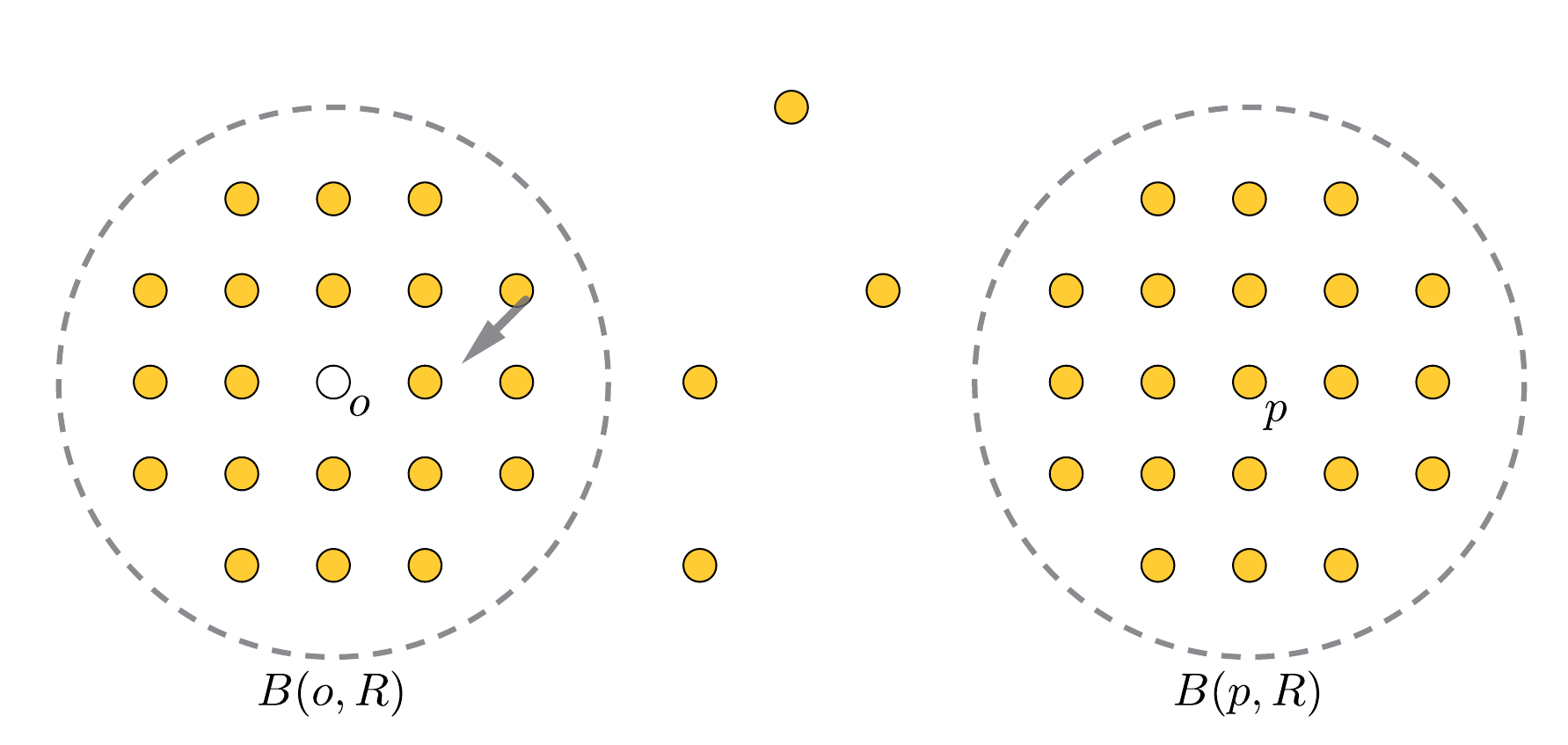} &
\includegraphics[trim=0 19cm 24cm 0,clip,width=0.22\linewidth]{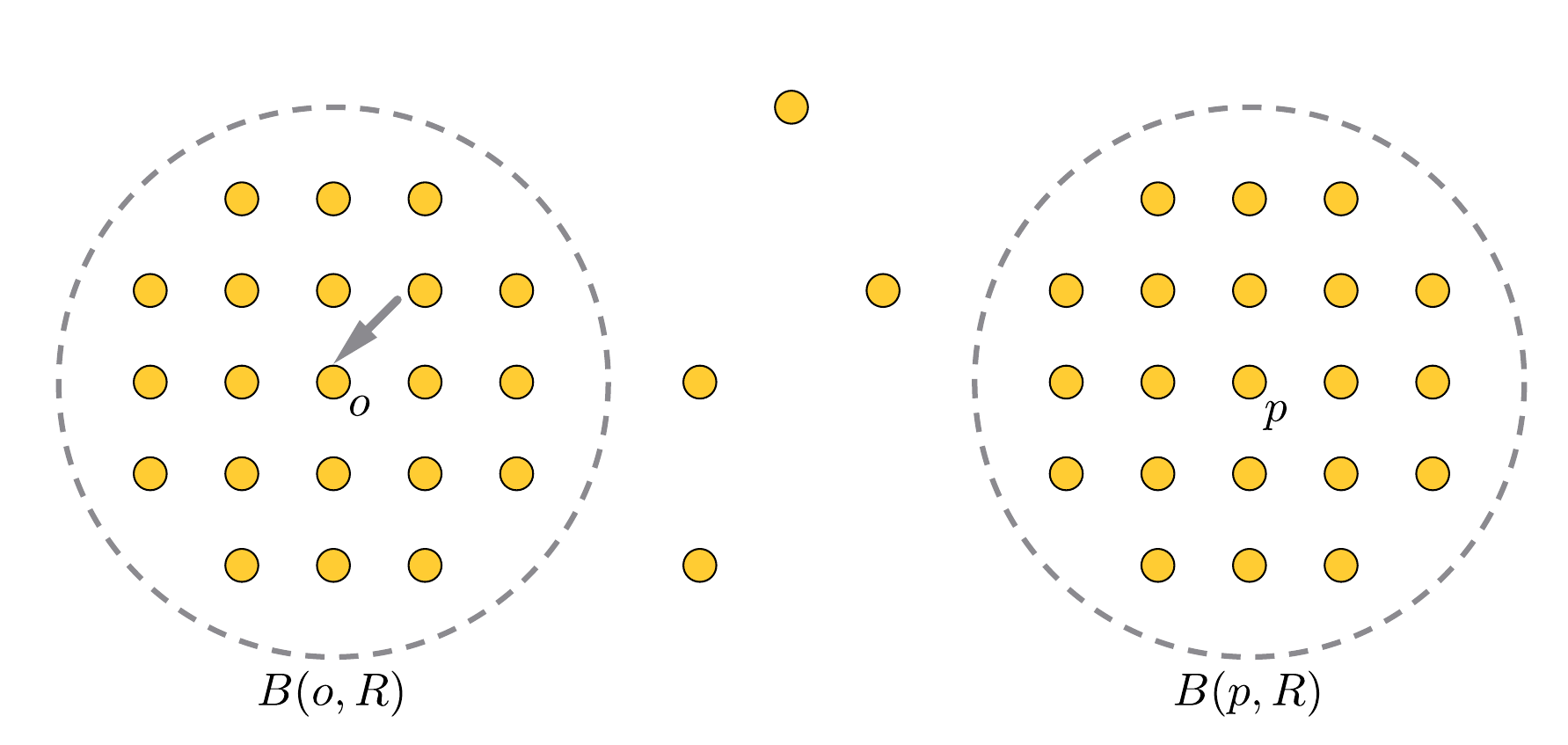} &
\includegraphics[trim=0 19cm 24cm 0,clip,width=0.22\linewidth]{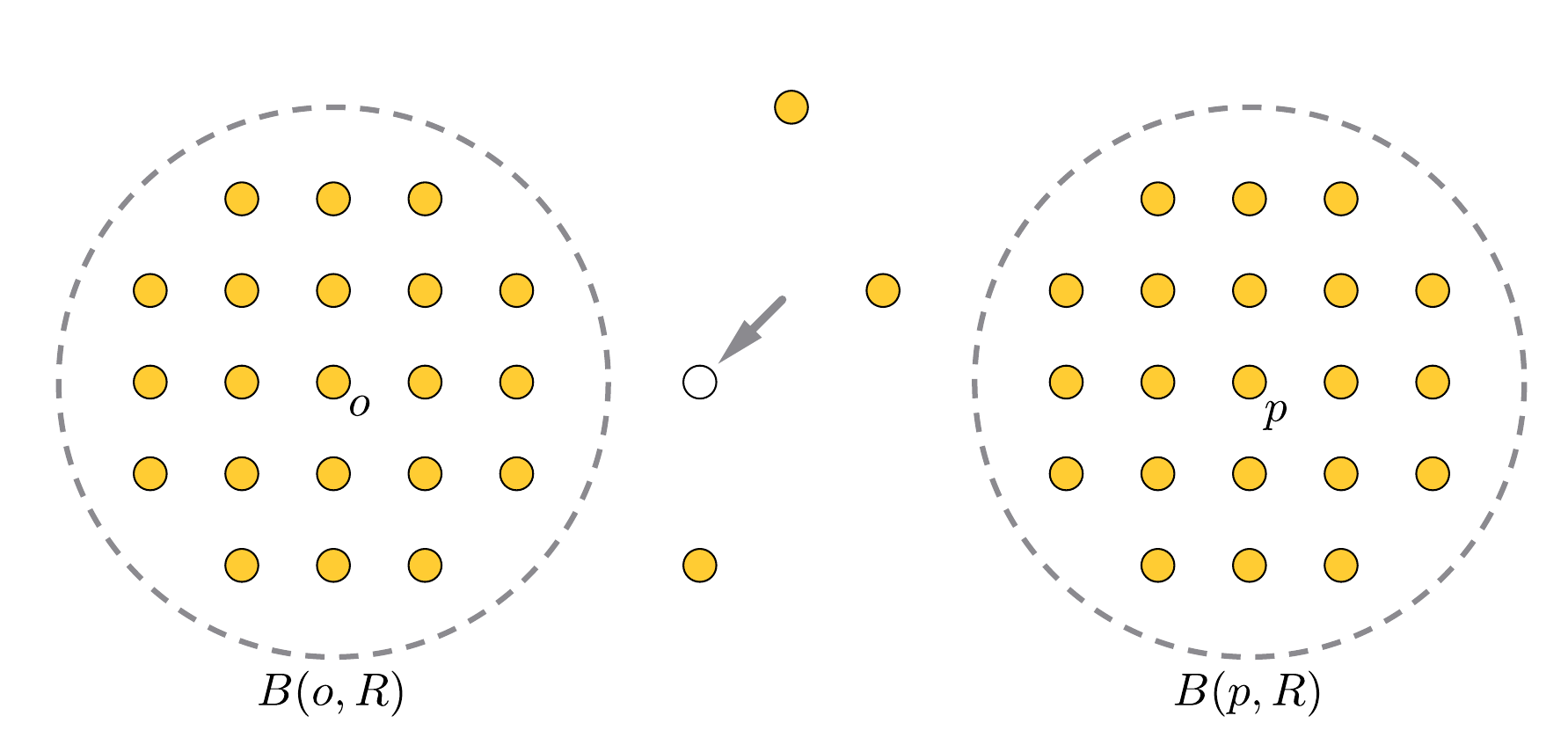} &
\includegraphics[trim=0 19cm 24cm 0,clip,width=0.22\linewidth]{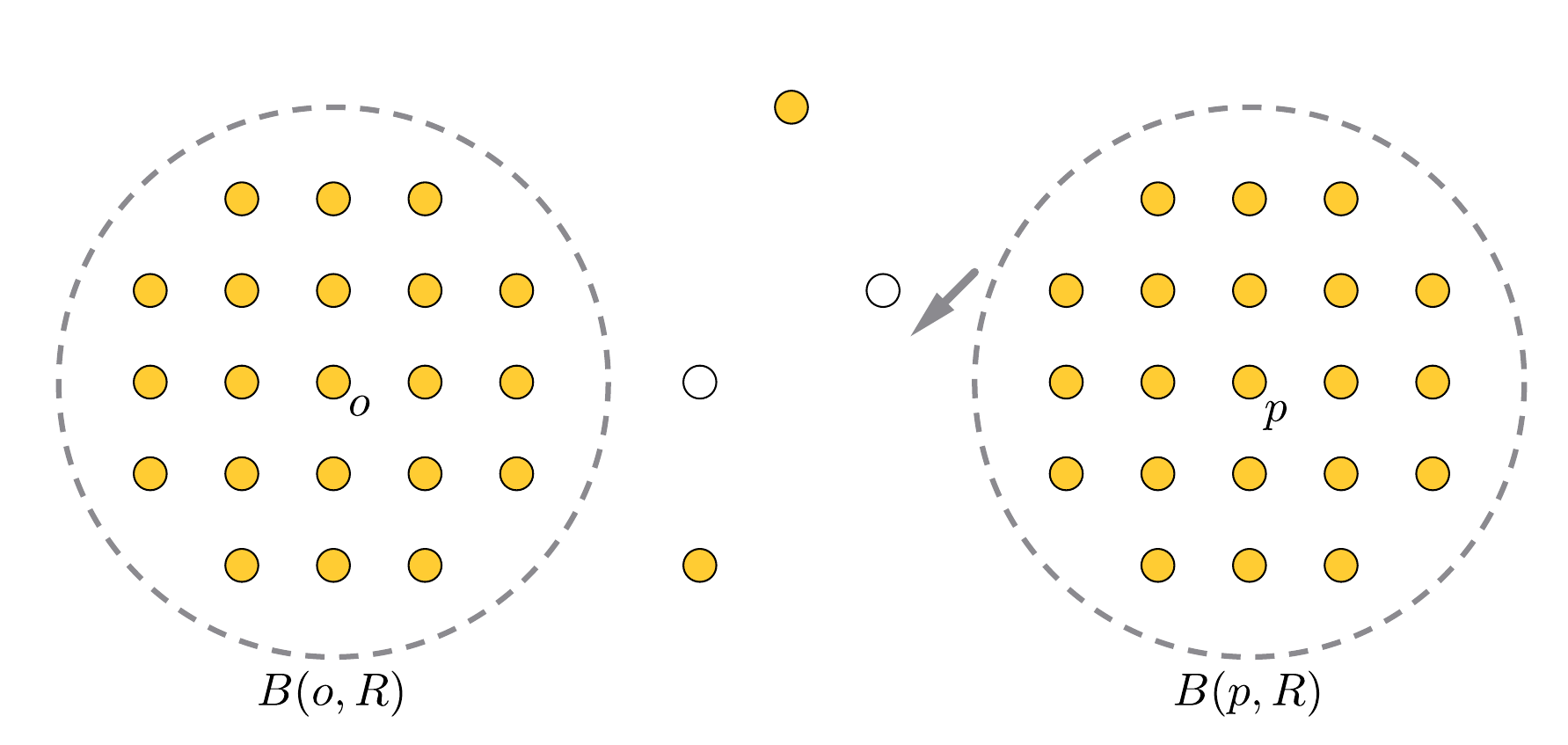} \\ \hline
\includegraphics[trim=0 19cm 24cm 0,clip,width=0.22\linewidth]{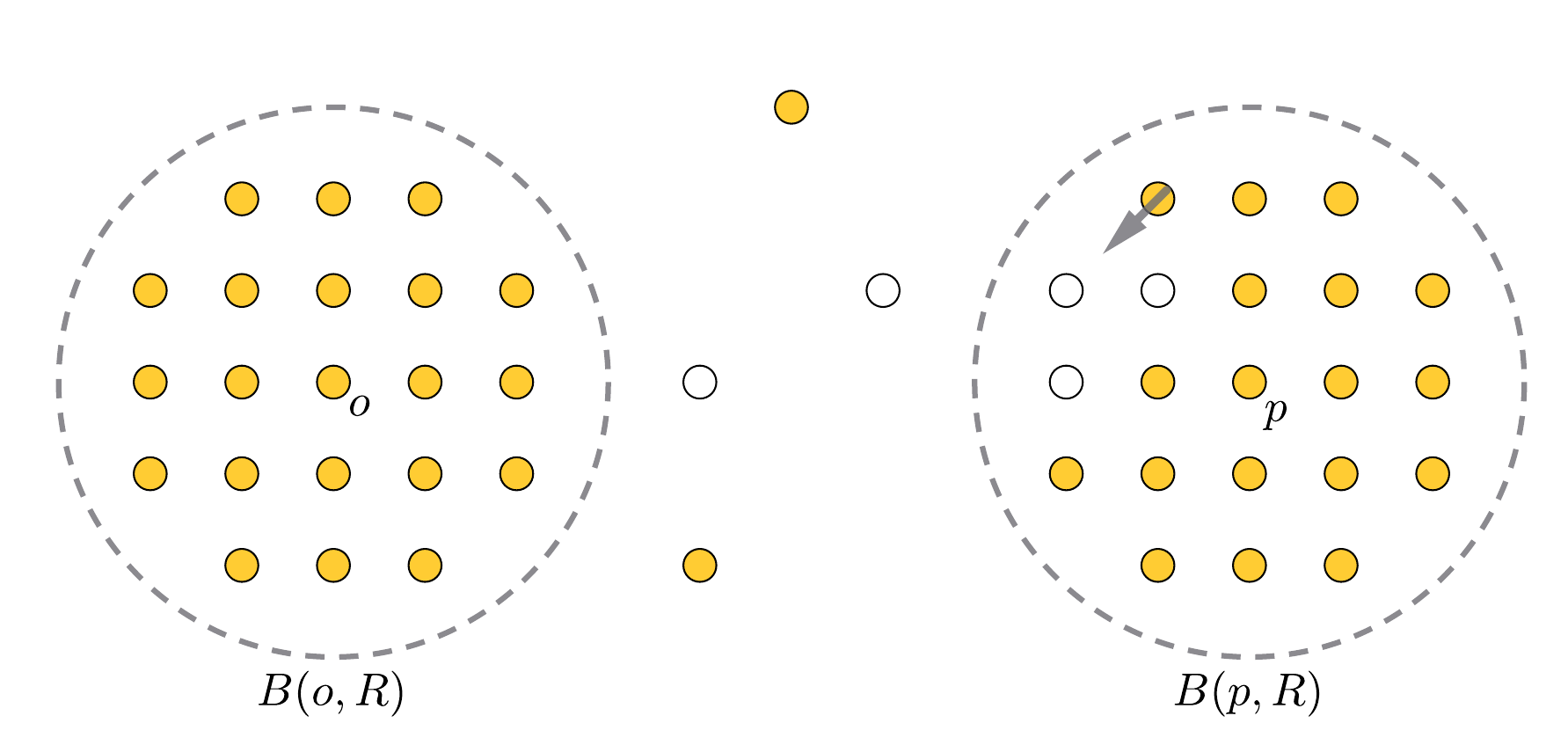} &
\includegraphics[trim=0 19cm 24cm 0,clip,width=0.22\linewidth]{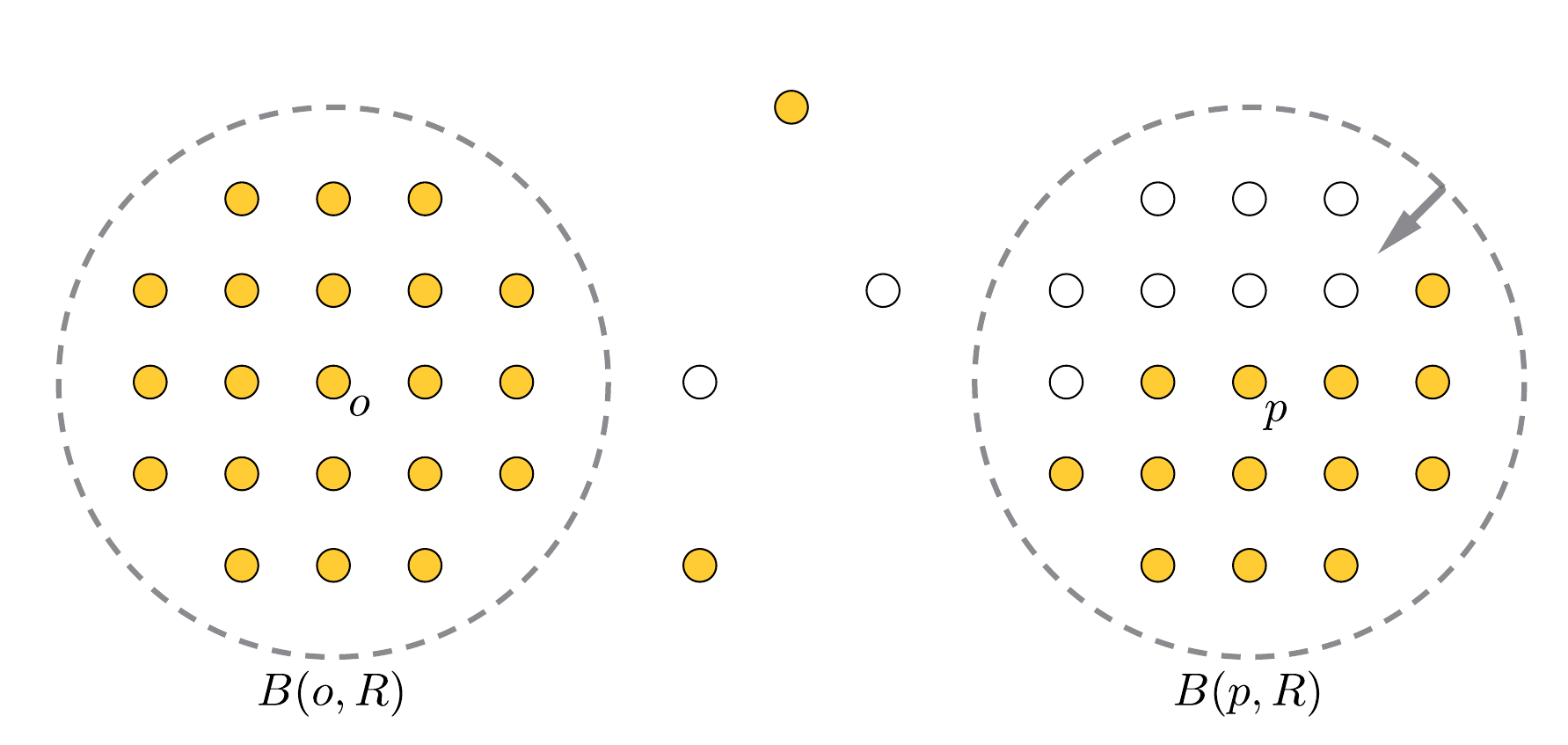} &
\includegraphics[trim=0 19cm 24cm 0,clip,width=0.22\linewidth]{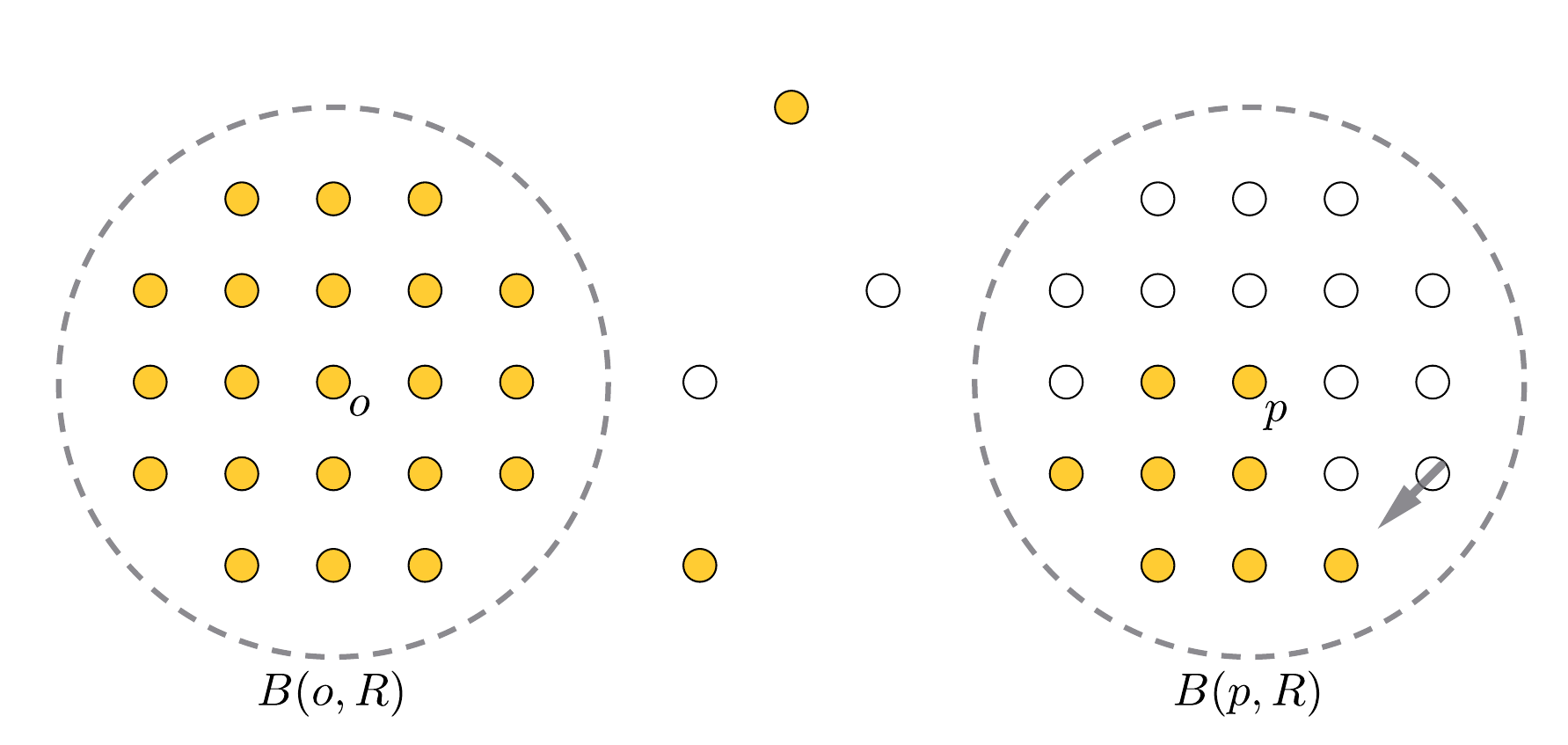} &
\includegraphics[trim=0 19cm 24cm 0,clip,width=0.22\linewidth]{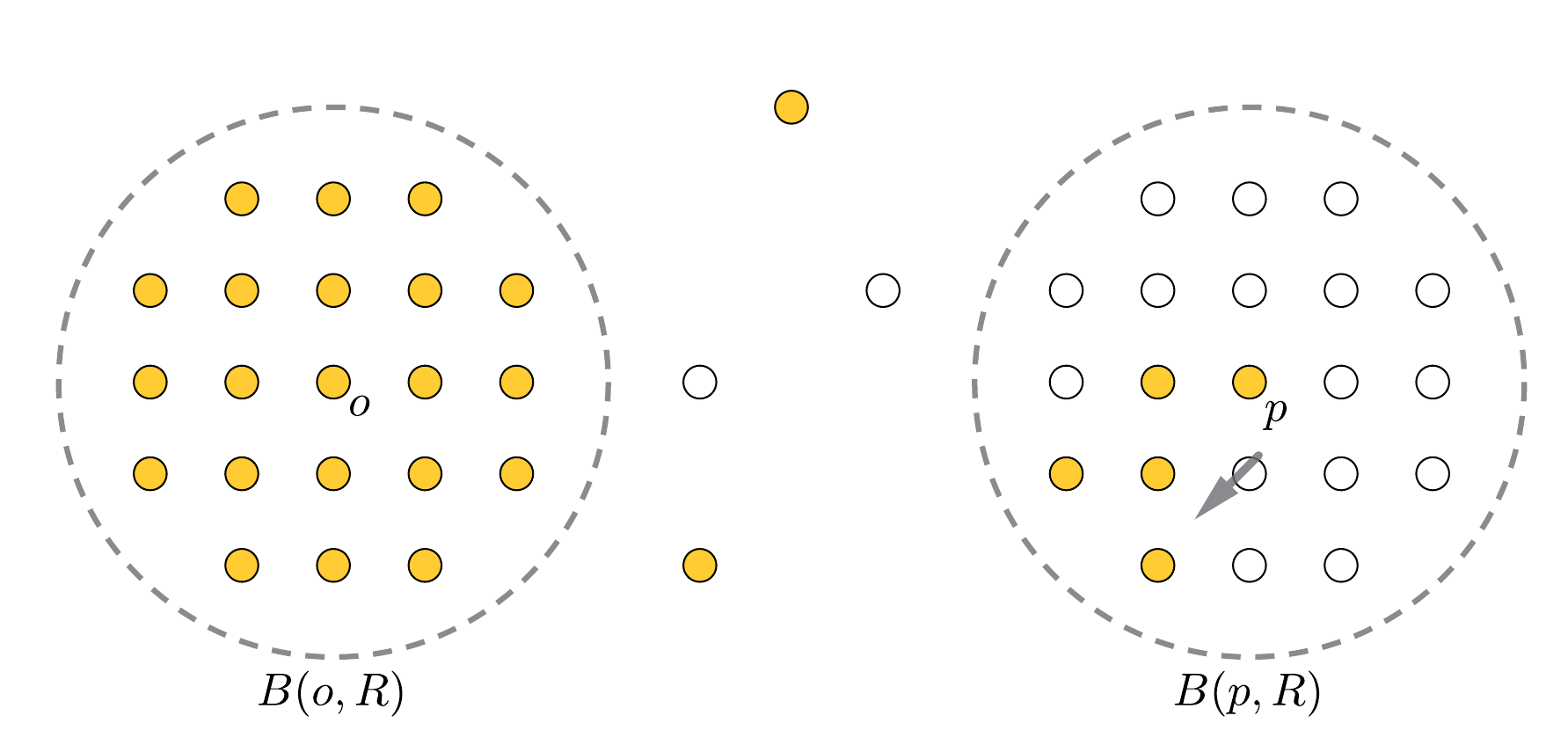} \\ \hline
\includegraphics[trim=0 19cm 24cm 0,clip,width=0.22\linewidth]{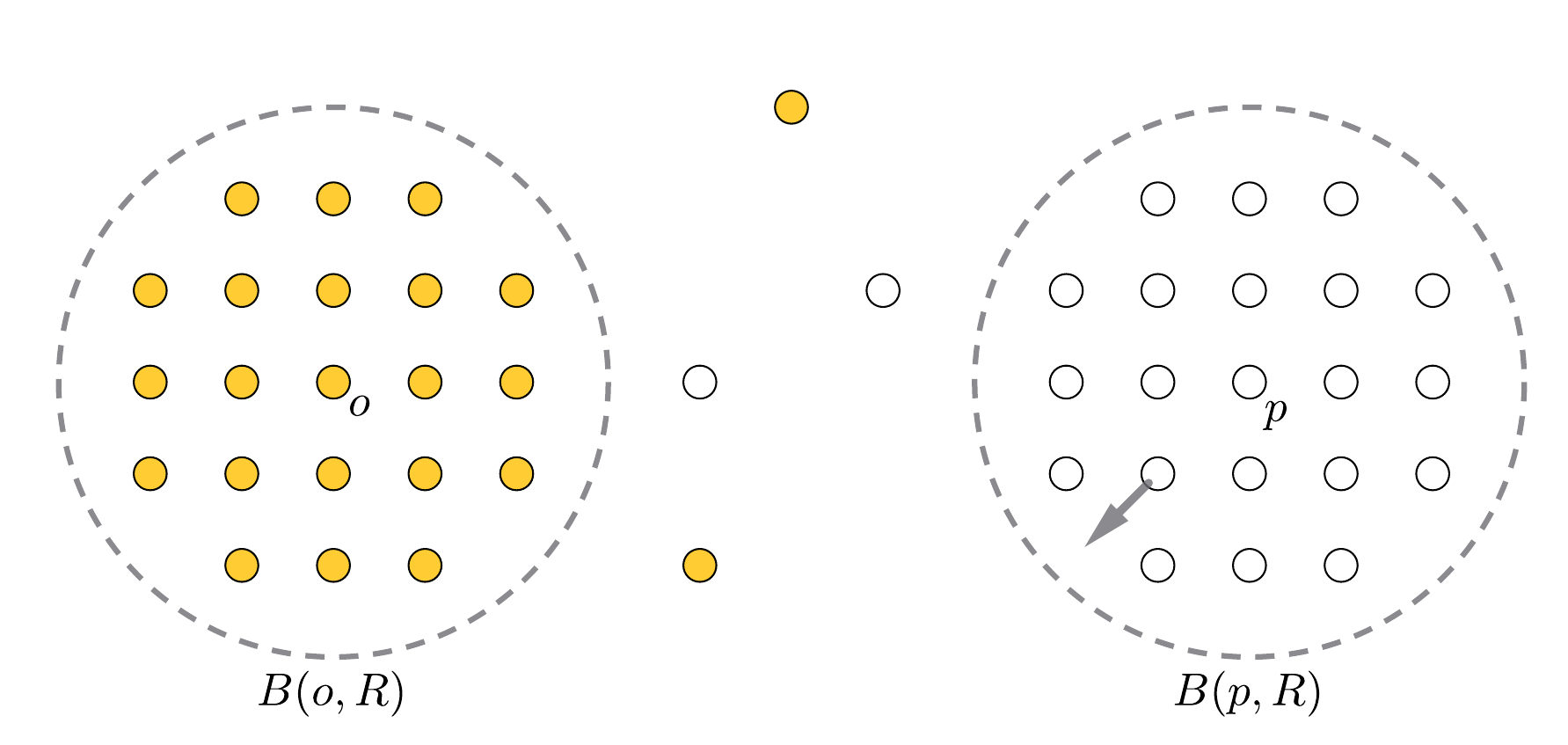} &
\includegraphics[trim=0 19cm 24cm 0,clip,width=0.22\linewidth]{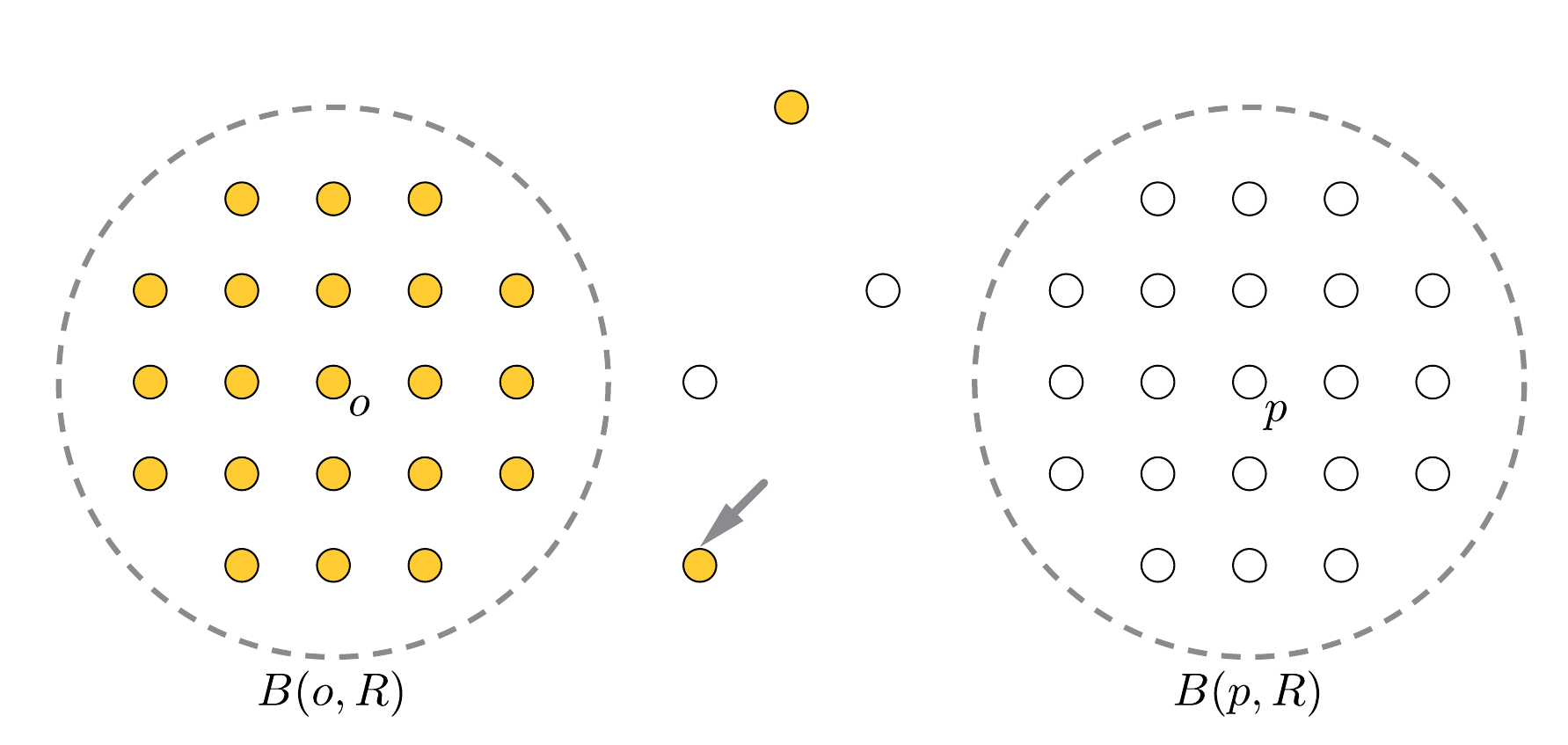} &
\includegraphics[trim=0 19cm 24cm 0,clip,width=0.22\linewidth]{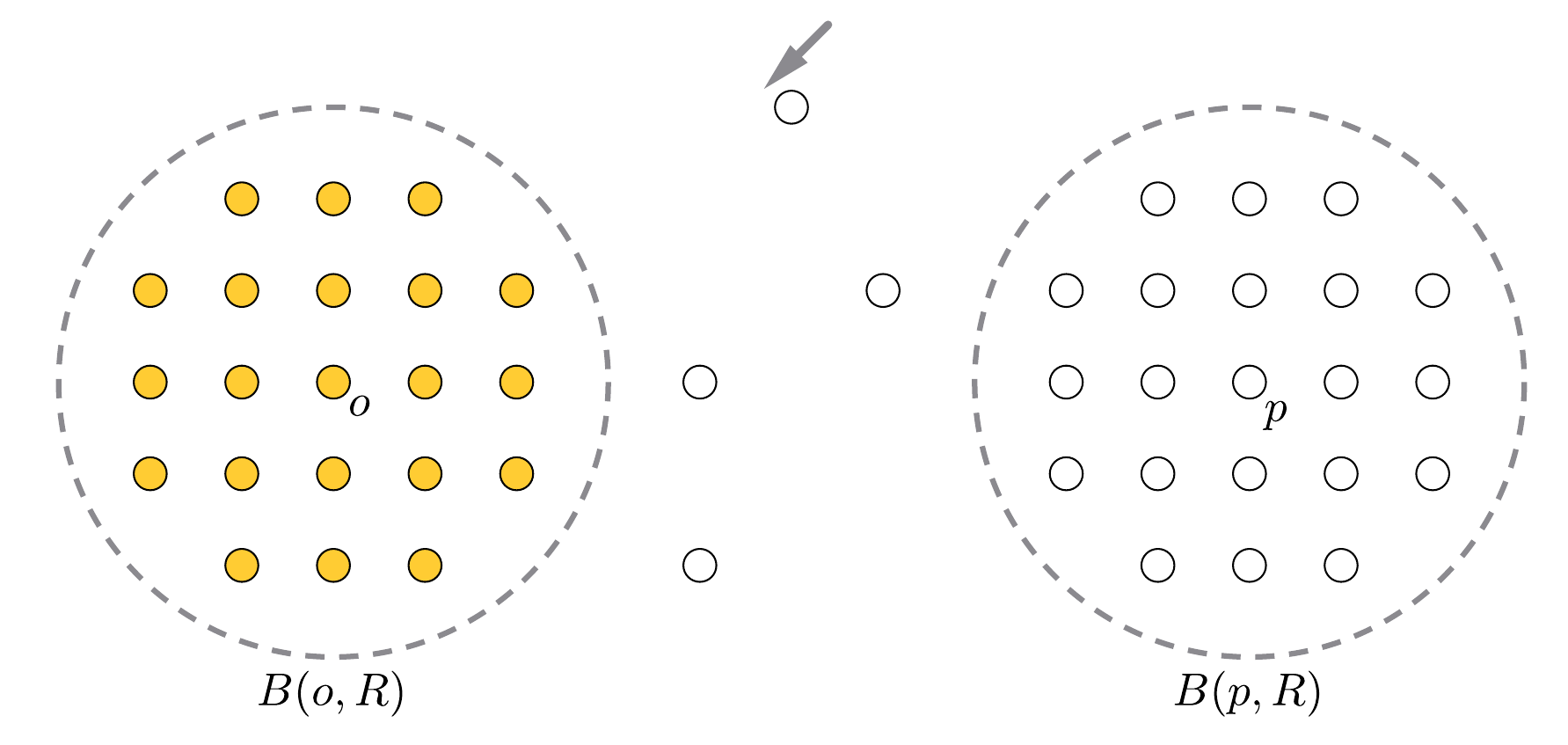} &
\includegraphics[trim=0 19cm 24cm 0,clip,width=0.22\linewidth]{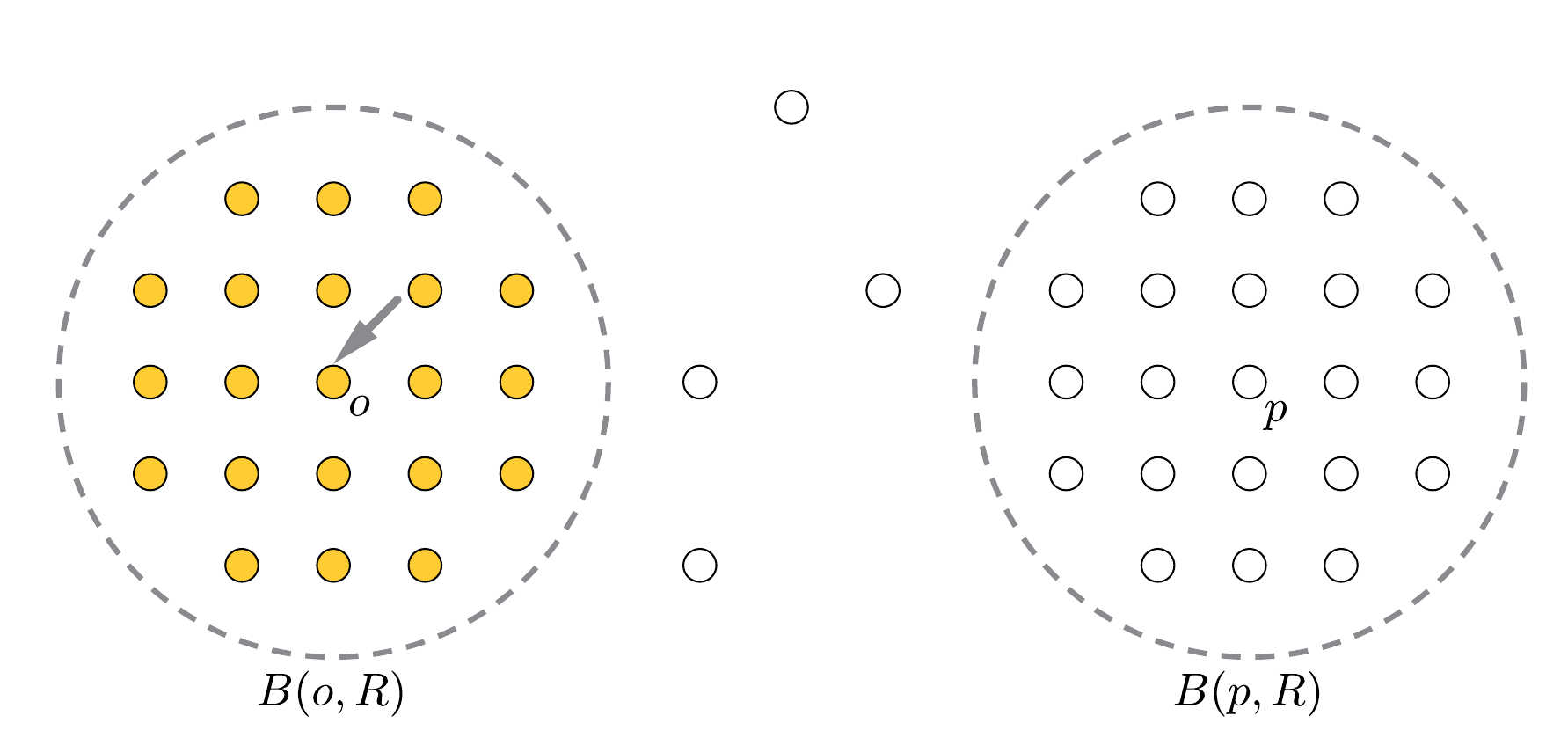} \\ \hline
\end{tabular}
\caption{Path constructed during the proof of Proposition~\ref{prop:OneEnded}.}
\label{Construction}
\end{center}
\end{figure}

\medskip \noindent
Now, we construct a path connecting $(c,p)$ to $(\kappa,o)$ as follows (Figure~\ref{Construction} illustrates our path from $(a,q)$ to $(c_o,o)$):
\begin{itemize}
	\item First, we connect $(c,p)$ to $(a,q)$ through a geodesic. Since $(a,q)$ belongs to a geodesic connecting $(c_o,o)$ to $(c,p)$, our first path stays at distance $d((a,q),(c_o,o)) \geq d(q,o)>R$ from $(c_o,o)$.
	\item Next, we connect $(a,q)$ to $(a',q)$ through a geodesic, where $a'$ denotes the colouring obtained from $a$ by forcing the colour to be $x$ in $B(q,R)$. Notice that, along such a geodesic, the arrow stays in $B(q,R)$, so our path stays at distance $\geq d(q,o)>R$ from $(c_o,o)$.
	\item Then, we connect $(a',q)$ to $(a'',o)$ through a geodesic, where $a''$ is the colouring obtained from $a'$ by forcing the colour to be $x$ in $B(o,R)$. Notice that, along such a geodesic, the colouring always take the colour $x$ in $B(q,R)$, so our path stays at distance $\geq |B(q,R)|>R$ from $(c_o,o)$.
	\item Finally, we connect $(a'',o)$ to $(\kappa,o)$. Notice that, along such a geodesic, the colouring always take the colour $x$ in $B(o,R)$, so our path stays at distance $\geq |B(o,R)|>R$ from $(c_o,o)$.
\end{itemize}
This concludes the proof of our proposition. 
\end{proof}

\subsection{Coarse local separation}\label{section:CoarseLocal}

\noindent
Following the previous section, we would like to coarsify the topological notion of local separation. Recall that, given a topological space $X$, a subspace $Z \subset X$ \emph{locally separates} $X$ whenever there exists some open subset $O \subset X$ that contains $Z$ and that is separated by $Z$. This might sound paradoxical, since coarse topology is supposed to ignore the local structure of our space. However, for nice topological spaces, it turns out to be possible to characterise local separation in a way that we will be able to coarsify. Indeed:

\medskip \noindent
\begin{minipage}{0.35\linewidth}
\begin{center}
\includegraphics[trim=0 19cm 26cm 0,clip,width=0.95\linewidth]{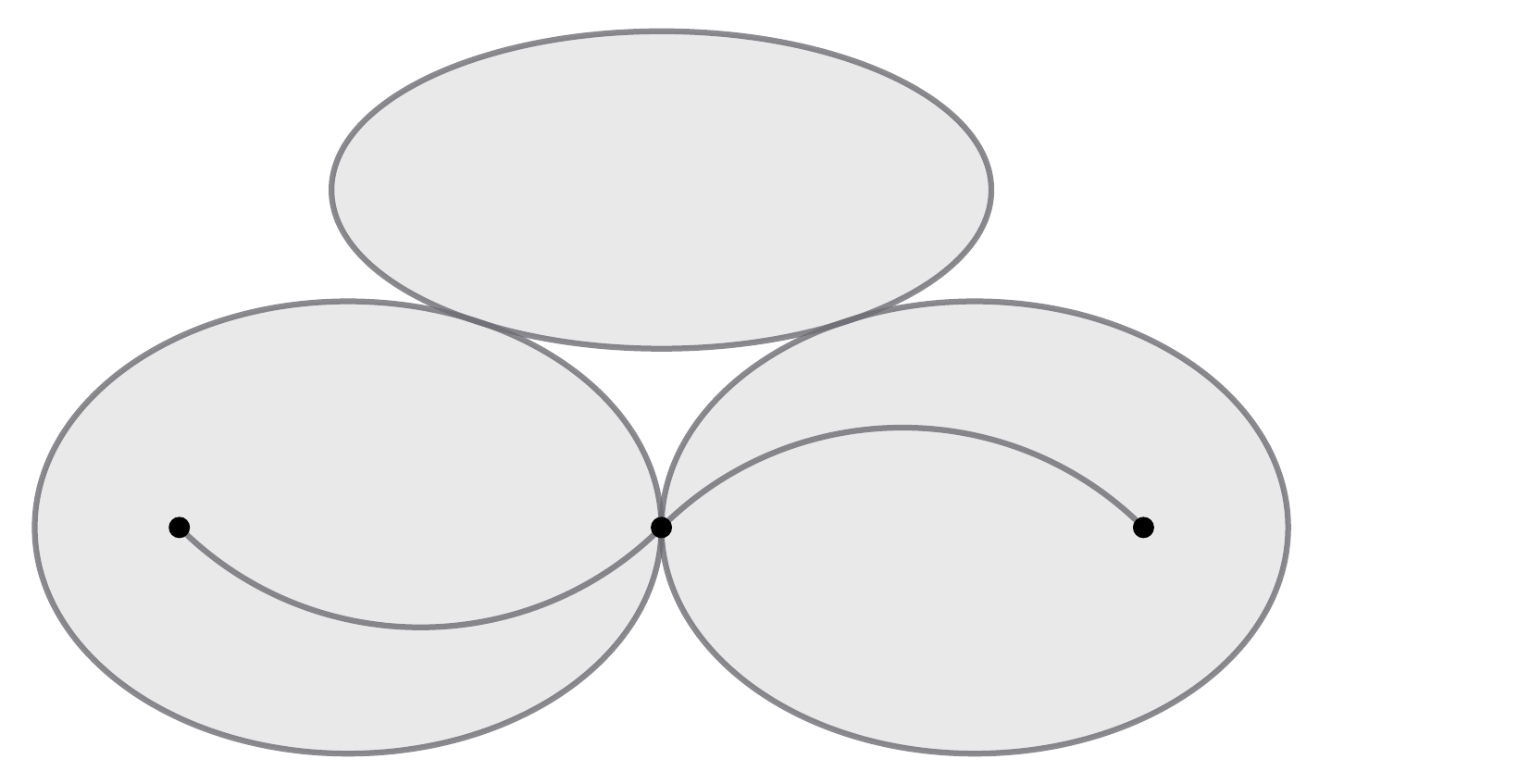}
\end{center}
\end{minipage}
\begin{minipage}{0.63\linewidth}
\begin{fact}\label{fact:LocaSepTopo}
Let $X$ be a CW-complex. A subcomplex $Z \subset X$ locally separates $X$ if and only if there exists two distinct points $x,y \in X$ and a path $\gamma$ connecting $x$ to $y$ such that any path homotopically equivalent to $\gamma$ intersects $Z$. 
\end{fact}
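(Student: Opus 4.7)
My plan is to prove both directions using the CW-structure together with a mod-$2$ intersection number argument.

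For the forward implication, assume $Z$ locally separates $X$: fix an open set $O \supset Z$ with $O \setminus Z$ disconnected, and pick two of its components $A$ and $B$. A routine point-set argument (using that $Z$ is nowhere dense in $X$ and that $A, B$ are open in $X$ with disjoint closures inside $O$) produces a point $z \in Z \cap \overline{A} \cap \overline{B}$. Around $z$ I would take a small contractible cell-neighbourhood $V \subset O$ in which $V \cap Z$ is a codimension-one face with $V \cap A$ and $V \cap B$ lying on its two sides. Pick $x \in V \cap A$ and $y \in V \cap B$, and let $\gamma$ be a short path from $x$ to $y$ in $V$ crossing $V \cap Z$ transversally in exactly one point. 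Then every path $\gamma'$ homotopic to $\gamma$ rel endpoints meets $Z$: given a homotopy $H \colon [0,1]^2 \to X$ from $\gamma$ to $\gamma'$, cellular approximation inside the CW-pair $(X, Z)$ lets us assume $H$ transverse to $Z$, so that $H^{-1}(Z)$ is a proper $1$-submanifold of the square whose boundary lies only on $\{0\} \times [0,1] \sqcup \{1\} \times [0,1]$; parity of boundary points of a $1$-manifold then forces $\#(\gamma' \cap Z)$ to agree with $\#(\gamma \cap Z) = 1$ mod $2$, hence $\gamma' \cap Z \neq \emptyset$.

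For the reverse implication I argue contrapositively: assuming $Z$ does not locally separate $X$, I show that any path $\gamma$ from $x$ to $y$ is homotopic rel endpoints to one in $X \setminus Z$. After cellular approximation, $\gamma$ meets $Z$ transversally in finitely many points $z_1, \ldots, z_n$. The key step is that at each $z_i$ one can choose a small open neighbourhood $W_i$ of $z_i$ with $W_i \setminus Z$ path-connected; granted this, the subpath of $\gamma$ across $W_i$ can be replaced by a detour in $W_i \setminus Z$ connecting its entry and exit points, a homotopy supported in $W_i$ that cancels the intersection at $z_i$. Iterating over $i$ yields a homotopy of $\gamma$ to a path in $X \setminus Z$, contradicting the hypothesis on $\gamma$.

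The main obstacle is the local-to-global step just used: if every small neighbourhood of some $z_i$ were disconnected by $Z$, then one must derive back that $Z$ locally separates $X$ in the sense of the definition, contradicting the standing assumption. This is where the CW-hypothesis is essentially used; one pastes the alleged local disconnection at $z_i$ together with collar/regular neighbourhoods of $Z$ along its other cells (where connectedness is already available) to manufacture a single open set $O \supset Z$ with $O \setminus Z$ disconnected. Making this pasting precise, while keeping track of how the two ``sides'' on the nose of $z_i$ propagate globally along $Z$, is the technical heart of the proof.
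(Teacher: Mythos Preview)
The paper does not prove this fact --- it is explicitly left ``as an exercise to the interested reader'' immediately after the statement --- so there is no proof to compare against. On its own terms, your argument has real problems in both directions.

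For the forward direction, taking $V$ with ``$V\cap Z$ a codimension-one face'' and then running a mod-$2$ transversality argument on the homotopy square presupposes that $X$ is a manifold near $Z$ and that $Z$ has codimension one there. Neither is given: $Z$ is an arbitrary subcomplex of an arbitrary CW-complex (think of $Z$ a single trivalent vertex in a graph), so cellular transversality and the $1$-manifold structure of $H^{-1}(Z)$ are simply not available in general.

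For the reverse direction, the local-to-global step you flag as ``the technical heart'' is not merely technical: it is false, and no pasting can repair it. Take $X$ the M\"obius band and $Z$ its core circle. Every small disk $W$ about a point of $Z$ has $W\setminus Z$ disconnected into two half-disks, yet for every connected open $O\supset Z$ one checks that $O\setminus Z$ is connected --- the two local sides are identified after one trip around $Z$ --- so $Z$ does \emph{not} locally separate $X$, and the open $O$ you want to manufacture cannot exist. Worse, this same example is a counterexample to the equivalence as literally stated: a path $\gamma$ crossing $Z$ transversally once has mod-$2$ intersection number $1$ with $[Z]$, hence every path homotopic to $\gamma$ rel endpoints still meets $Z$ (the same happens for $\mathbb{RP}^1\subset\mathbb{RP}^2$). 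The paper hedges with ``for nice topological spaces'' just before the fact; evidently some two-sidedness hypothesis on the pair $(X,Z)$ is being silently assumed, and any correct argument will have to identify that hypothesis and use it.
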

\end{minipage}

\medskip \noindent
We leave the proof of this assertion as an exercise to the interested reader. 

\medskip \noindent
First, since we saw in Section~\ref{section:CoarseSimplyConnected} how to coarsify the notion of homotopically equivalent paths, we can coarsify the notion of intersections persistent under homotopy equivalence. 

\begin{definition}
Let $X$ be a graph and $Z \leq X$ a subgraph. Given an $E \geq 0$, a path $\gamma$ intersects \emph{$E$-persistently} $Z$ if every path that is $E$-coarsely homotopy equivalent to $\gamma$ intersects $Z$. 
\end{definition}

\noindent
Then, inspired by our definition of coarse separation by families of subgraphs, we naturally obtain the following definition by coarsifying Fact~\ref{fact:LocaSepTopo}:

\begin{definition}
Let $X$ be a graph. A collection of subgraphs $\mathcal{Z}$ \emph{coarsely locally separates} $X$ if, for every $E \geq 0$, there exists $F \geq 0$ such that the following holds: for every $L \geq 0$, there exist a subgraph $Z \in \mathcal{Z}$ and a path $\gamma$ intersecting $Z^{+F}$ such that:
\begin{itemize}
	\item the endpoints of $\gamma$ lie at distance $\geq L$ from $Z$;
	\item the intersection between $\gamma$ and $Z^{+F}$ is $E$-persistent. 
\end{itemize}
\end{definition}

\noindent
Notice that, mimicking the fact that separation and local separation are equivalent in simply connected spaces, coarse separation and coarse local separation are equivalent in coarsely simply connected spaces.

\medskip \noindent
Again, we will be mainly interested in local coarse separation by (families of) vertices, so it makes sense to define:

\begin{definition}
A graph $X$ is \emph{locally one-ended} if, for every $v \in V(X)$, it is not coarsely locally separated by $\{v\}$; and it is \emph{uniformly locally one-ended} if it is not coarsely locally separated by $\{\{v\} \mid v \in V(X)\}$. 
\end{definition}

\noindent
We emphasize that, for quasi-transitive graphs (such as Cayley graphs), being locally one-ended and being uniformly locally one-ended is the same thing.

\medskip \noindent
\begin{minipage}{0.5\linewidth}
\includegraphics[width=0.95\linewidth]{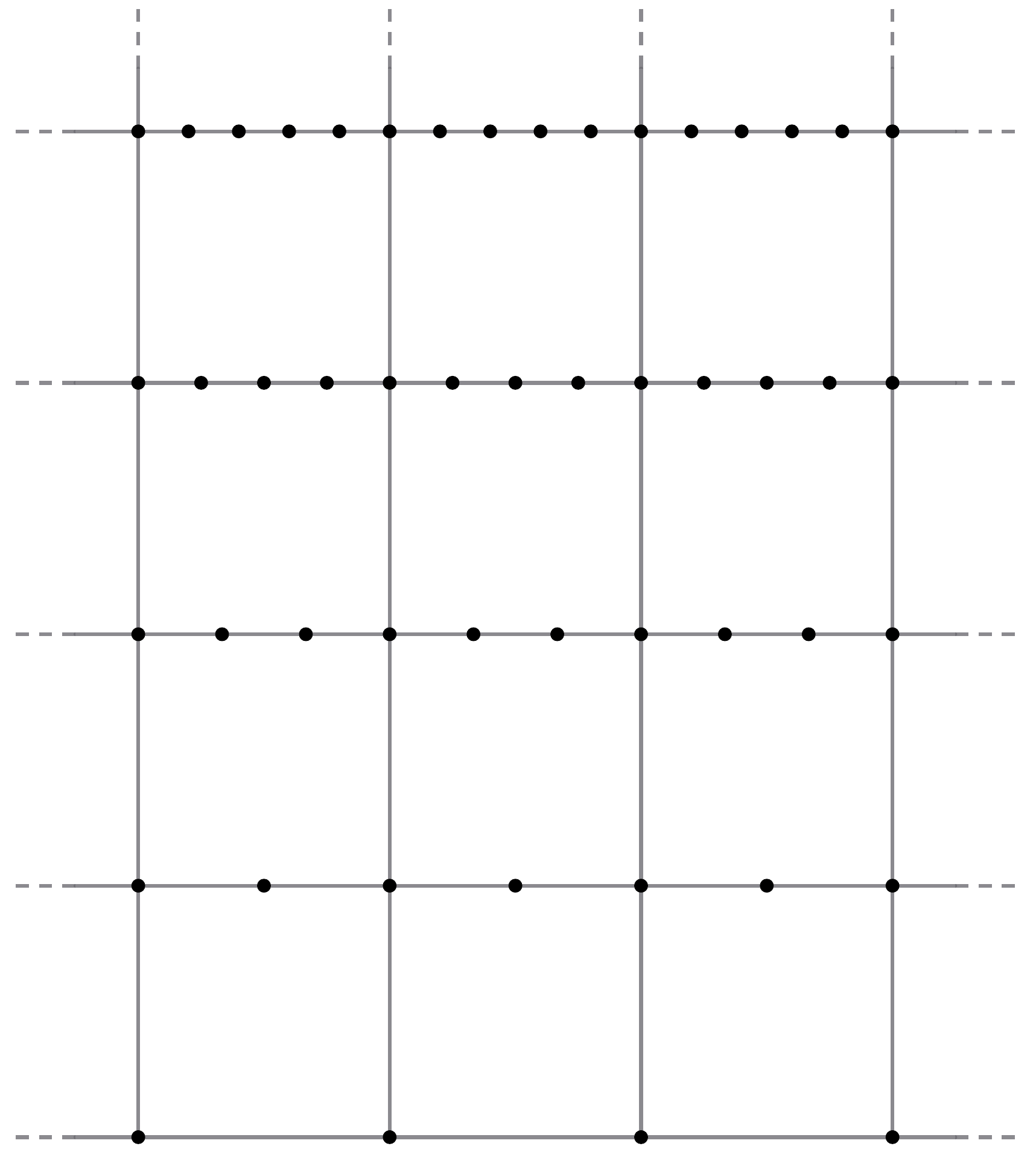}
\end{minipage}
\begin{minipage}{0.48\linewidth}
Let us given an example of a graph this is one-ended but not locally one-ended. Start with a square tiling of the upper halfplane, i.e.\ the graph $\mathbb{Z} \times \mathbb{N}$. Then, for every $k \geq 1$, subdivide $k$ times each horizontal edge in $\mathbb{Z} \times \{k\}$. The figure on the left illustrates the graph $X$ thus obtained. Clearly, $X$ is uniformly one-ended. However, it is coarsely locally separated by any of its vertices, so it is not locally one-ended. For instance, in order to deform a path $[-n,n] \times \{0\}$, with $n$ very large, and avoid a ball of large radius $R$ centred at $(0,0)$, one would need to bypass larger and larger cycles as $R$ grows, which cannot be done up to $E$-coarse homotopy equivalence with $E$ small compared to $R$. 
\end{minipage}

\medskip \noindent
We saw with Proposition~\ref{prop:OneEnded} that most wreath products are uniformly one-ended. But what about local one-endedness? We will see in Section~\ref{section:Embedding} that, actually, most lamplighter graphs are not locally one-ended, and that this is a key observation in order to understand their geometry. Theorem~\ref{thm:LampStringy} will prove much more than some non local one-endedness. For now, we only describe a general method that allows us to show that some infinitely presented groups are not locally one-ended and we apply it to lamplighters over~$\mathbb{Z}$.

\begin{thm}\label{thm:Truncating}
Let $G$ be an infinite group defined by an infinite presentation
$$\langle x_1, \ldots, x_n \mid r_1,r_2, \ldots \rangle.$$
For every $k \geq 1$, let $G_k$ denote the group defined by the truncated presentation
$$\langle x_1, \ldots, x_n \mid r_1, r_2, \ldots, r_k \rangle.$$
If no $G_k$ is one-ended, then $G$ is not locally one-ended.
\end{thm}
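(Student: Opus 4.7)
The plan is to use the truncated Cayley graph $Y_k := \mathrm{Cayl}(G_k, S)$ (with $S := \{x_1, \ldots, x_n\}$) as a finite-scale ``lift'' of $X := \mathrm{Cayl}(G, S)$, exploiting the surjective graph morphism $\pi : Y_k \twoheadrightarrow X$ induced by the quotient $G_k \twoheadrightarrow G$. First I would establish the following lifting lemma: for every $E \geq 0$, one can choose $k = k(E)$ such that every loop in $X$ of diameter $\leq E$ lifts to a loop in $Y_k$. Indeed, any such loop corresponds to one of the finitely many words of length $\leq 2E$ over $S \cup S^{-1}$ representing the identity in $G$, and each such triviality can be derived from finitely many relators $r_i$ (by the same argument as in the first half of the proof of Proposition~\ref{prop:WhenFP}); choosing $k$ large enough to include all of them suffices.

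Fix $E \geq 0$ and let $k = k(E)$. Because $G$ is infinite and surjected onto by $G_k$, the graph $Y_k$ is unbounded, so the hypothesis that $G_k$ is not one-ended yields, by vertex-transitivity, some $F \geq 0$ and two distinct unbounded components $C_+, C_-$ of $Y_k \setminus B_{Y_k}(1, F)$. This $F$ will be the parameter for coarse local separation. For each $L \geq 0$, I would pick $\tilde a \in C_+$ and $\tilde b \in C_-$ with $\pi(\tilde a), \pi(\tilde b) \in X$ at distance $\geq L$ from $1$, take a path $\tilde \gamma$ in $Y_k$ from $\tilde a$ to $\tilde b$ that passes through $1$, and set $\gamma := \pi(\tilde \gamma)$: this is a path in $X$ between two points of $G$-word length $\geq L$ that traverses $B_X(1, F)$.

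Persistence of the intersection then follows from the lifting lemma. Given a path $\gamma'$ in $X$ that is $E$-coarsely homotopy equivalent to $\gamma$ via a sequence $\gamma = \gamma_0, \gamma_1, \ldots, \gamma_m = \gamma'$, I would lift inductively: each step replaces a subpath $\zeta_i$ by $\xi_i$ with $\mathrm{diam}(\zeta_i \cup \xi_i) \leq E$, so that the loop $\zeta_i \xi_i^{-1}$ is of diameter $\leq E$ and, by the lifting lemma, lifts to a loop in $Y_k$; consequently the lift of $\xi_i$ (from the relevant starting vertex in $Y_k$) ends at the same vertex as the lift of $\zeta_i$, so the lifting extends to $\gamma_{i+1}$. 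The resulting path $\tilde \gamma'$ goes from $\tilde a$ to $\tilde b$ in $Y_k$; since these lie in distinct components of $Y_k \setminus B_{Y_k}(1, F)$, it must cross $B_{Y_k}(1, F)$, whence $\gamma'$ crosses $B_X(1, F)$.

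The main obstacle is arranging the choice of $\tilde a, \tilde b$ so that $\pi(\tilde a), \pi(\tilde b)$ are arbitrarily far from the chosen basepoint in $X$: when the kernel $N_k := \ker(G_k \twoheadrightarrow G)$ is infinite, an unbounded component of $Y_k \setminus B_{Y_k}(1, F)$ could in principle collapse under $\pi$ to a bounded subset of $X$. I would circumvent this by invoking the equivalence between locally one-ended and uniformly locally one-ended for Cayley graphs (noted immediately after the definition): it suffices to produce, for each $L$, an $L$-dependent basepoint $v_L \in V(X)$ and suitable lifted endpoints relative to $v_L$. This extra flexibility, combined with translation in $G_k$ and the surjectivity relation $\bigcup_i \pi(C_i) \supseteq X \setminus B_X(1, F)$ (which forces at least one component to project unboundedly), allows the two endpoints to be placed in distinct translated components while both escaping to $X$-distance $\geq L$ from $v_L$.
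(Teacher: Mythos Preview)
Your approach mirrors the paper's almost exactly: the lifting lemma is the content of Claim~\ref{claim:InjRadius} together with Lemma~\ref{lem:LiftQuotient}, and the use of separation in $Y_k$ to produce $E$-persistent intersections in $X$ is precisely the paper's mechanism. (A minor caveat: a loop of diameter $\leq E$ need not have word-length $\leq 2E$, so your justification of the lifting lemma is not quite right; but this is easily patched, and the paper itself is loose on the same point.)

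The genuine gap is your handling of the obstacle you correctly flag in the last paragraph. Varying the basepoint $v_L$ gives you nothing: by vertex-transitivity of the Cayley graph, any configuration at $v_L$ translates back to one at $1$, and translating by $g \in G_k$ turns $C_\pm$ into $gC_\pm$ with $\pi(gC_\pm) = \pi(g)\,\pi(C_\pm)$, which is bounded near $\pi(g)$ exactly when $\pi(C_\pm)$ is bounded near $1$. Your surjectivity observation shows only that \emph{some} component projects unboundedly, whereas you need both $C_+$ and $C_-$ to do so. The paper's resolution is Lemma~\ref{lem:DiameterBallQuotient}: since $G$ is infinite, for any $D$ there exists $K$ such that $\mathrm{diam}(\rho_k(B(x,K))) \geq D$ for every $x \in G_k$. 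Taking $x$ deep enough in $C_+$ that $B(x,K) \subseteq C_+$, and $D > 2(F+L)$, the image $\rho_k(B(x,K))$ cannot fit inside $B_X(1, F+L)$, so some $a \in C_+$ has $d_X(\rho_k(a), 1) > F + L \geq L$; likewise for $C_-$. With this lemma in hand your argument goes through unchanged, at the fixed basepoint $1$.
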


\noindent
In order to prove the theorem, the following notion will be necessary:

\begin{definition}
Let $G$ be a group, $S \subset G$ a generating set, and $\rho : G \to Q$ a morphism to some group $Q$. The \emph{injectivity radius of $\rho$ (relative to $S$)} is
$$\mathrm{inj}_S(\rho):= \min \{ \|g\|_S \mid g \in \mathrm{ker}(\rho) \text{ non-trivial}\}.$$
\end{definition}

\noindent
A key observation is that a surjective morphism always satisfies a weak lifting property for paths with respect to $E$-coarsely homotopy equivalence if $E$ is sufficiently small compared to the injectivity radius.  

\begin{lemma}\label{lem:LiftQuotient}
Let $G$ be a group endowed with a finite generating set $S \subset G$. Let $\rho : G \twoheadrightarrow H$ be a surjective morphism to some group $H$, which we endow with the finite generating set $\rho(S)$. Fix some $E< \mathrm{inj}_S(\rho)/2$. For every path $\alpha$ in $G$, every path $\beta$ in $H$ that is $E$-coarsely homotopy equivalent to $\rho(\alpha)$ admits a $\rho$-lift $\tilde{\beta}$ that is $E$-coarsely homotopy equivalent to $\alpha$. In particular, $\tilde{\beta}$ has the same endpoints as $\alpha$. 
\end{lemma}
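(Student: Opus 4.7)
The plan is induction on the length $n$ of the coarse homotopy sequence $\gamma_0 = \rho(\alpha), \gamma_1, \ldots, \gamma_n = \beta$ in $H$, with the base case $\tilde\beta = \alpha$ being trivial. For the inductive step, I take a lift $\tilde\gamma_{i-1}$ of $\gamma_{i-1}$ that is $E$-coarsely homotopy equivalent to $\alpha$ in $G$, and promote it to a lift $\tilde\gamma_i$ of $\gamma_i$ in the same coarse homotopy class. The transition $\gamma_{i-1} \to \gamma_i$ replaces a subpath $\zeta$ with endpoints $p,q$ by a subpath $\xi$ with the same endpoints, where $\mathrm{diam}_H(\zeta \cup \xi) \leq E$; let $\tilde\zeta$ denote its lift inside $\tilde\gamma_{i-1}$, with endpoints $\tilde p, \tilde q$.

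The central mechanism is a \emph{local graph isomorphism} afforded by the injectivity radius: the hypothesis $E < \mathrm{inj}_S(\rho)/2$ forces $\rho$ to restrict to a bijection
\[
\rho \colon B_G(\tilde v, E) \longrightarrow B_H(\rho(\tilde v), E)
\]
between induced subgraphs, for every $\tilde v \in G$. Injectivity holds because two preimages of one point at mutual $G$-distance $\leq 2E$ would differ by a kernel element of word length $\leq 2E < \mathrm{inj}_S(\rho)$, hence trivial; surjectivity follows by lifting geodesics letter-by-letter from $\rho(\tilde v)$; and adjacencies transfer by the same uniqueness argument applied to the canonical generator-by-generator lift of a single edge.

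Granting the inductive invariant that $\tilde\zeta \subseteq B_G(\tilde p, E)$, the loop $\zeta\bar\xi \subseteq B_H(p,E)$ pulls back through the inverse local isomorphism to a loop in $B_G(\tilde p, E)$ based at $\tilde p$; extracting its $\bar\xi$-half yields a lift $\tilde\xi$ of $\xi$ starting at $\tilde p$ and ending at the unique preimage of $q$ inside this ball, which by the invariant must be $\tilde q$. Substituting $\tilde\xi$ for $\tilde\zeta$ inside $\tilde\gamma_{i-1}$ produces the desired $\tilde\gamma_i$: it is a lift of $\gamma_i$, and since $\tilde\zeta \cup \tilde\xi \subseteq B_G(\tilde p, E)$ the substitution is a bona fide $E$-coarse homotopy step in $G$.

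The principal obstacle is sustaining the invariant $\tilde\zeta \subseteq B_G(\tilde p, E)$ throughout the induction; a priori the subpath $\tilde\zeta$ of $\tilde\gamma_{i-1}$ could be long even though its $\rho$-image has small diameter. I would strengthen the inductive hypothesis to track that every modification of the preceding $\tilde\gamma_j$'s was carried out inside some radius-$E$ ball of $G$, so that near any modification site, the geometry of $\tilde\gamma_{i-1}$ is a faithful replica of the geometry of $\gamma_{i-1}$ via the local isomorphism. The same isomorphism then transports this confinement into each subsequent step and guarantees that the next subpath to be modified again sits inside a small ball, closing the induction.
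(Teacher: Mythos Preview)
Your inductive scheme and the lifting mechanism via the injectivity radius are exactly what the paper does; the paper just writes it out explicitly in generators rather than packaging it as a local graph isomorphism. The substantive difference is in how the elementary homotopy move is phrased. The paper takes the move to replace a subpath of \emph{length} $\ell\le E$, labelled $s_{k+1}\cdots s_{k+\ell}$, by another of the same length labelled $r_1\cdots r_\ell$; then the element $h=s_{k+1}\cdots s_{k+\ell}\,r_\ell^{-1}\cdots r_1^{-1}$ lies in $\ker\rho$ with $\|h\|_S\le 2\ell\le 2E<\mathrm{inj}_S(\rho)$, so $h=1$, and the lifted replacement $\tilde\xi$ ends at the correct vertex of $\tilde\gamma_{i-1}$. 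That is the whole argument.

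Because $\rho$ preserves edge-labels, the lift $\tilde\zeta$ of a subpath $\zeta$ of length $\ell$ also has length $\ell$; in particular $\ell\le E$ forces $\tilde\zeta\subseteq B_G(\tilde p,E)$ automatically. Your invariant is therefore free under the length formulation, and no strengthened induction is needed. Your worry that ``$\tilde\zeta$ could be long even though its $\rho$-image has small diameter'' arises only because you formulated the move via $\mathrm{diam}_H(\zeta\cup\xi)\le E$. Under that reading the concern is genuine, and your proposed fix does not resolve it: already at the very first step $\tilde\gamma_0=\alpha$ is an \emph{arbitrary} path in $G$, with no prior modifications to appeal to, and a subpath of $\alpha$ whose projection has diameter $\le E$ need not itself have diameter $\le E$ (take for instance $G=\mathbb{Z}$, $H=\mathbb{Z}/(2E{+}1)\mathbb{Z}$, and $\alpha$ the path $0,1,\dots,2E{+}1$). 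Working with length rather than diameter, as the paper's proof does, is what makes the argument go through cleanly.
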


\begin{proof}
The successive vertices of our path $\alpha$ can be written as
$$g, gs_1, gs_1s_2, \ldots, gs_1s_2 \cdots s_n$$
for some $g \in G$ and $s_1, \ldots, s_n \in S \cup S^{-1}$. Consequently, the path $\rho(\alpha)$ can be described as
$$\rho(g), \rho(g) \rho(s_1), \rho(g) \rho(s_1) \rho(s_2), \ldots, \rho(g) \rho(s_1) \rho(s_2) \cdots \rho(s_n).$$
For convenience, we set $w_k:= gs_1 \cdots s_k$ for every $0 \leq k \leq n$. Let $\beta$ be a path obtained from $\rho(\alpha)$ by replacing some subpath of length $\leq E$ with some subpath of length $\leq E$. In other words, there exist $\ell \leq E$, $0 \leq k \leq n- \ell$, and $r_1, \ldots, r_\ell \in S \cup S^{-1}$ such that $\beta$ is obtained from $\rho(\alpha)$ by replacing
$$\rho(w_k), \rho(w_k) \rho(s_{k+1}), \ldots, \rho(w_k) \rho(s_{k+1}) \cdots \rho(s_{k+ \ell})$$
with 
$$\rho(w_k), \rho(w_k) \rho(r_1) ,\ldots, \rho(w_k) \rho(r_1) \cdots \rho(r_\ell).$$
Since the terminal vertices of the previous two paths must coincide, the element $h:=s_{k+1} \cdots s_{k+ \ell} r_\ell^{-1} \cdots r_1^{-1}$ necessarily belongs to $\mathrm{ker}(\rho)$. But $\|h\|_S \leq 2 \ell \leq 2E< \mathrm{inj}_S(\rho)$, so $h=1$. Thus, we can define a new path $\tilde{\beta}$ from $\alpha$ by replacing
$$w_k, w_ks_{k+1}, \ldots, w_ks_{k+1} \cdots s_{k+\ell} \text{ with } w_k, w_kr_1, \ldots, w_kr_1 \cdots r_\ell$$
since now we know that the terminal vertices of these two paths coincide. Clearly, $\tilde{\beta}$ is $E$-homotopy equivalent to $\alpha$ and $\rho(\tilde{\beta})= \beta$. 
\end{proof}

\noindent
Before prove Theorem~\ref{thm:Truncating}, we need a last elementary observation:

\begin{lemma}\label{lem:DiameterBallQuotient}
Let $G,H$ be two finitely generated groups and $\rho : G \twoheadrightarrow H$ a surjective morphism. Assume that $H$ is infinite. For every $D \geq 0$, there exists some $K \geq 0$ such that the image under $\rho$ of every ball of radius $K$ has diameter $\geq D$. 
\end{lemma}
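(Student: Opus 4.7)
My plan is to reduce the statement to an easy observation about balls in the infinite group $H$. Fix a finite generating set $S$ of $G$, so that $\rho(S)$ is a finite generating set of $H$ (since $\rho$ is surjective), and endow both groups with the corresponding word metrics. The first step is to use the homogeneity provided by left-multiplication to reduce the question from balls centered at arbitrary points to a ball centered at the identity: for every $g \in G$, one has
\[
\rho(B_G(g,K)) = \rho(g \cdot B_G(1,K)) = \rho(g) \cdot \rho(B_G(1,K)),
\]
and since $H$ acts on itself isometrically by left-multiplication, the diameter of $\rho(B_G(g,K))$ coincides with the diameter of $\rho(B_G(1,K))$.

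The second step is to identify $\rho(B_G(1,K))$ with a ball in $H$. Writing an element of $B_G(1,K)$ as a product $s_1 \cdots s_n$ with $s_i \in S \cup S^{-1}$ and $n \leq K$ and applying $\rho$, we see that $\rho(B_G(1,K))$ consists exactly of the elements of $H$ that can be written as a product of at most $K$ generators from $\rho(S) \cup \rho(S)^{-1}$. In other words, $\rho(B_G(1,K))$ is precisely the ball $B_H(1,K)$ of radius $K$ in the Cayley graph $\mathrm{Cayl}(H,\rho(S))$.

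It now remains to observe that balls around $1$ in the infinite group $H$ have arbitrarily large diameters. Since $H$ is finitely generated, balls in $\mathrm{Cayl}(H,\rho(S))$ are finite; combined with the assumption that $H$ is infinite, this yields elements of $H$ at arbitrarily large distance from $1$. Given $D \geq 0$, pick $h \in H$ with $d_H(1,h) \geq D$ and, along any geodesic from $1$ to $h$, select the vertex $h'$ at distance exactly $D$ from $1$. Then $h' \in B_H(1,D)$ and $d_H(1,h')=D$, so the diameter of $B_H(1,D)$ is at least $D$. Taking $K := D$ and combining the three steps above, we conclude that for every $g \in G$ the set $\rho(B_G(g,K))$ has diameter at least $D$, as desired. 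No step looks like a real obstacle; the only subtle point is recognizing that the argument is essentially the isometric-action reduction, after which the rest is automatic from the finiteness of balls in a finitely generated group.
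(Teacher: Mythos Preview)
Your proof is correct and essentially the same as the paper's: both use left-invariance of the word metric together with the fact that the infinite group $H$ contains elements arbitrarily far from $1$. The only cosmetic difference is that the paper lifts such an $h\in H$ to some $g\in G$ and sets $K=\|g\|_S$, whereas you fix $\rho(S)$ as the generating set of $H$, identify $\rho(B_G(1,K))=B_H(1,K)$, and take $K=D$.
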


\begin{proof}
Because $H$ is infinite, we can find some element $h \in H$ satisfying $\|h\| \geq D$. Fix some $g \in G$ such that $\rho(g)=h$ and set $K:= \|g\|$. For every $x \in G$, we have
$$\mathrm{diam}( \rho(B(x,K))) \geq d(\rho(x), \rho(xg)) = \| \rho(g)\| = \|h\| \geq D,$$
which proves our lemma. 
\end{proof}

\begin{proof}[Proof of Theorem~\ref{thm:Truncating}.]
We endow $G$ and each $G_k$ with the generating sets given by their presentations. For every $k \geq 0$, we denote by $\rho_k$ the natural surjective morphism $G_k \twoheadrightarrow G$. 

\begin{claim}\label{claim:InjRadius}
For every $M \geq 0$, there exists $k \geq 1$ such that $\mathrm{inj}(\rho_k)>M$.
\end{claim}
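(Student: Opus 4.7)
The plan is to show that, for every $M \geq 0$, if we take $k$ large enough then the ball of radius $M$ around the identity in $G_k$ injects into $G$ under $\rho_k$, i.e.\ contains no non-trivial element of $\ker(\rho_k)$.

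First, I would observe that the set $W_M$ of words of length $\leq M$ in the letters $x_1^{\pm 1},\ldots,x_n^{\pm 1}$ is finite (bounded by $(2n)^M \cdot (M+1)$ say). Each element of $G_k$ of word length $\leq M$ is represented by some $w \in W_M$, and such an element lies in $\ker(\rho_k)$ if and only if $w$ represents the identity in $G$. So it suffices to exhibit $k$ such that every $w \in W_M$ with $w =_G 1$ already satisfies $w =_{G_k} 1$.

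Partition $W_M$ into $W_{\mathrm{triv}} = \{w \in W_M : w =_G 1\}$ and its complement. Words in $W_M \setminus W_{\mathrm{triv}}$ project to non-trivial elements of $G$, hence are non-trivial in $G_k$ and do not lie in $\ker(\rho_k)$; so they pose no issue. For each $w \in W_{\mathrm{triv}}$, the fact that $w =_G 1$ means that $w$ can be written in the free group $\mathbb{F}(x_1,\ldots,x_n)$ as a finite product of conjugates of the relators $r_1,r_2,\ldots$. Only finitely many relators appear in this expression, so there is an integer $k_w$ such that $w$ already equals $1$ in $G_{k_w}$.

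Setting $k := \max_{w \in W_{\mathrm{triv}}} k_w$ (a finite max, since $W_{\mathrm{triv}}$ is finite) and using the fact that $G_{k'}$ is a quotient of $G_k$ for all $k' \geq k$ (adding more relators can only kill more elements), every $w \in W_{\mathrm{triv}}$ satisfies $w =_{G_k} 1$. Combined with the previous paragraph, this yields $\mathrm{inj}(\rho_k) > M$, as required. There is no real obstacle here: the argument rests only on the fact that a presentation expresses each trivial word using finitely many relators, together with finiteness of the ball of radius $M$.
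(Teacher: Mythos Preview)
Your proof is correct and follows essentially the same approach as the paper: both arguments use that there are only finitely many words of length $\leq M$, and that each such word which is trivial in $G$ requires only finitely many relators, so a sufficiently large $k$ suffices. You are simply more explicit than the paper about why such a $k$ exists, spelling out the finite maximum over $W_{\mathrm{triv}}$ where the paper just asserts ``fix some $k$ sufficiently large.''
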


\noindent
Fix some $k \geq 1$ sufficiently large such that, for every word $w$ of length $\leq M$ written over $\{x_1, \ldots, x_n\}$, if $w=1$ in $G$ then the equality can be deduced from the relations $r_1, \ldots, r_k$. Now, let $g \in G_k$ be an arbitrary element in $\mathrm{ker}(\rho_k)$ satisfying $\|g\| \leq M$. We can represent $g$ as a word $w$ of length $\leq M$ written over $\{x_1, \ldots, x_n\}$. Saying that $g \in \mathrm{ker}(\rho_k)$ amounts to saying that the equality $w=1$ holds in $G$, which implies that the equality $w=1$ also holds in $G_k$ by definition of $k$. Thus, $g$ is trivial in $G_k$. We conclude that $\mathrm{inj}(\rho_k) > M$, as desired, completing the proof of Claim~\ref{claim:InjRadius}. 

\medskip \noindent
Now, fix an arbitrary $E \geq 0$. According to Claim~\ref{claim:InjRadius}, there exists some $k \geq 1$ such that $\mathrm{inj}(\rho_k)>2E$. We know by assumption that $G_k$ is not one-ended. Since it is also infinite (as it surjects onto the infinite group $G$), there must exist some $F \geq 0$ such that the complement of the ball $B_k(F)$ of radius $F$ centred at $1$ contains at least two unbounded connected components. Consequently, fixing an arbitrary $L \geq 0$, we can find two elements $x,y \in G_k$ separated by $B_k(F)$ and satisfying $d(x,B_k(F)), d(y,B_k(F))>K$, where $K \geq 0$ is the constant given by Lemma~\ref{lem:DiameterBallQuotient} such that the image under $\rho_k$ of every ball of radius $K$ has diameter $\geq 2(F+L)$. As a consequence, $\rho_k$ cannot send the ball $B(x,K)$ inside the $L$-neighbourhood of the ball $B(F)$ of radius $F$ centred at $1$. In other words, there exists $a \in B(x,K)$ such that $d(\rho_k(a),B(F)) \geq L$. Similarly, there exists $b \in B(y,K)$ such that $d(\rho_k(b),B(F)) \geq L$.

\medskip \noindent
Thus, we have found two elements $a,b \in G_k$ such that $d(\rho_k(a),B(F)), d(\rho_k(b),B(F)) \geq L$ and such that $a$ and $b$ are separated by $B_k(F)$. (The latter assertion follows from the fact that $B_k(F)$ separates $x$ and $y$, which both lie at distance $>K$ from $B_k(F)$.) 

\medskip \noindent
Now, let $\alpha$ be a path connecting $a$ and $b$. Since $\rho_k$ is $1$-Lipschitz, necessarily $\rho_k(\alpha)$ intersects $B(F)$ (as $\alpha$ intersects $B_k(F)$). In order to conclude the proof of our theorem, we want to show that this intersection is $E$-persistent. But, if $\beta$ is a path $E$-coarsely homotopy equivalent to $\rho_k(\alpha)$, then we know from Lemma~\ref{lem:LiftQuotient} that $\beta$ admits a $\rho_k$-lift $\tilde{\beta}$ that has the same endpoints as $\alpha$. But $B_k(F)$ separates the endpoints of $\alpha$, so $\tilde{\beta}$ must intersect $B_k(F)$, and consequently $\beta$ must intersect $B(F)$. This concludes the proof of our theorem. 
\end{proof}

\noindent
Now, we are ready to show that some lamplighter groups are not locally one-ended. See Exercise~\ref{exo:ShufflerLocalOneEnded} for another example. 

\begin{cor}\label{cor:LampLocalOneEnded}
For every $n \geq 2$, the lamplighter group $\mathbb{Z}_n \wr \mathbb{Z}$ is one-ended but not locally one-ended.
\end{cor}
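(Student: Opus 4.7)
The first assertion is immediate from what we have already established. By Proposition~\ref{prop:CaylWreath}, a Cayley graph of $\mathbb{Z}_n \wr \mathbb{Z}$ is the lamplighter graph $\mathcal{L}_n(\mathbb{Z}) = (K_n, o) \wr \mathbb{Z}$, and since $K_n$ has at least two vertices while $\mathbb{Z}$ is unbounded, Proposition~\ref{prop:OneEnded} gives (uniform) one-endedness.

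The plan for the second assertion is to apply Theorem~\ref{thm:Truncating} to the standard infinite presentation
$$\mathbb{Z}_n \wr \mathbb{Z} \;=\; \bigl\langle a, t \;\big|\; a^n,\ [a, t^i a t^{-i}]\ (i \geq 1) \bigr\rangle,$$
where $a$ corresponds to the lamp at the origin and $t$ to the shift. Let $G_k$ denote the truncation obtained by keeping only $a^n$ and the commutation relations $[a, t^i a t^{-i}]$ with $1 \leq i \leq k$; it remains to show that no $G_k$ is one-ended.

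The key step is to recognise $G_k$ as a non-trivial HNN extension of a finite group. Setting $a_j := t^j a t^{-j}$, the defining relations of $G_k$ force $a_j^n = 1$ and $[a_i, a_j] = 1$ whenever $|i - j| \leq k$; consequently the subgroup $A := \langle a_0, \ldots, a_k\rangle$ is generated by $k+1$ pairwise commuting elements of order $n$. Eliminating $a_1, \ldots, a_k$ from the natural HNN presentation
$$\bigl\langle a_0, \ldots, a_k, t \;\big|\; a_i^n,\ [a_i, a_j],\ t a_i t^{-1} = a_{i+1}\ (0 \leq i < k) \bigr\rangle$$
recovers precisely the presentation of $G_k$, so $G_k$ is the HNN extension of $A \cong (\mathbb{Z}_n)^{k+1}$ along the shift-induced isomorphism $\langle a_0, \ldots, a_{k-1}\rangle \to \langle a_1, \ldots, a_k\rangle$. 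Since the base is finite and the associated subgroup is a proper subgroup of the base, $G_k$ acts on its Bass--Serre tree with finite vertex and edge stabilisers, no global fixed point, and a single-loop quotient; hence $G_k$ is virtually a non-abelian free group, has infinitely many ends, and is not one-ended.

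The main subtlety I anticipate is in verifying that $A$ genuinely embeds in $G_k$ as $(\mathbb{Z}_n)^{k+1}$ (rather than as a strict quotient), so that the HNN decomposition is non-degenerate; this follows from Britton's lemma once one checks that the two associated subgroups embed as $(\mathbb{Z}_n)^k$ into $(\mathbb{Z}_n)^{k+1}$ via the obvious coordinates. Once the HNN structure is established, every $G_k$ fails to be one-ended and Theorem~\ref{thm:Truncating} yields the desired non-local-one-endedness of $\mathbb{Z}_n \wr \mathbb{Z}$.
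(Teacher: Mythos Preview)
Your proposal is correct and follows essentially the same strategy as the paper: realise each truncation $G_k$ as an HNN extension of a finite abelian $n$-group, conclude it is virtually free (hence not one-ended), and invoke Theorem~\ref{thm:Truncating}. Your version is in fact slightly cleaner than the paper's, since you work directly with the two-generator presentation $\langle a,t\mid a^n,\,[a,t^iat^{-i}]\rangle$ and its truncations, whereas the paper first passes through the infinite-generator presentation $\langle z_i\ (i\in\mathbb{Z}),\,t\mid\cdots\rangle$ before simplifying; your Tietze-elimination argument showing that the HNN presentation on $a_0,\ldots,a_k,t$ collapses exactly to $G_k$ makes this passage transparent.
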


\begin{proof}
We already know from Proposition~\ref{prop:OneEnded} that $\mathbb{Z}_n \wr \mathbb{Z}$ is one-ended. Now, given an integer $k \geq 2$, consider the HNN extension of $\mathbb{Z}_n^{\{-k, \ldots, k\}}$ obtained by conjugating the two isomorphic subgroups $\mathbb{Z}_n^{\{-k+1, \ldots, k\}}$ and $\mathbb{Z}_n^{\{-k, \ldots, k-1\}}$. It admits as a presentation
$$\left\langle z_{-k}, \ldots, z_k, t \mid \begin{array}{l} z_i^2 = 1 \text{ for every } -k \leq i \leq k \\ \left[ z_i, z_j \right]= 1 \text{ for all } -k \leq i,j \leq k \\ tz_it^{-1}=z_{i+1} \text{ for every } -k \leq i \leq k-1 \end{array} \right\rangle.$$
Clearly, this is a truncation of the infinite presentation
$$\left\langle z_i \ (i \in \mathbb{Z}), t \mid \begin{array}{l} z_i^2 = 1 \text{ for every } i \in \mathbb{Z} \\ \left[ z_i, z_j \right]= 1 \text{ for all }  i,j \in \mathbb{Z} \\ tz_it^{-1}=z_{i+1} \text{ for every } i \in \mathbb{Z} \end{array} \right\rangle,$$
which can be simplified into the presentation
$$\langle z,t \mid z^2=1, [z, t^izt^{-i}]=1 \text{ for every } i \in \mathbb{Z} \rangle.$$
This is a presentation of $\mathbb{Z}_n \wr \mathbb{Z}$. Since HNN extensions of finite groups are virtually free, and in particular not one-ended, we can apply Theorem~\ref{thm:Truncating} and conclude that $\mathbb{Z}_n \wr \mathbb{Z}$ is not locally one-ended. 
\end{proof}

\noindent
\begin{minipage}{0.2\linewidth}
\includegraphics[width=0.98\linewidth]{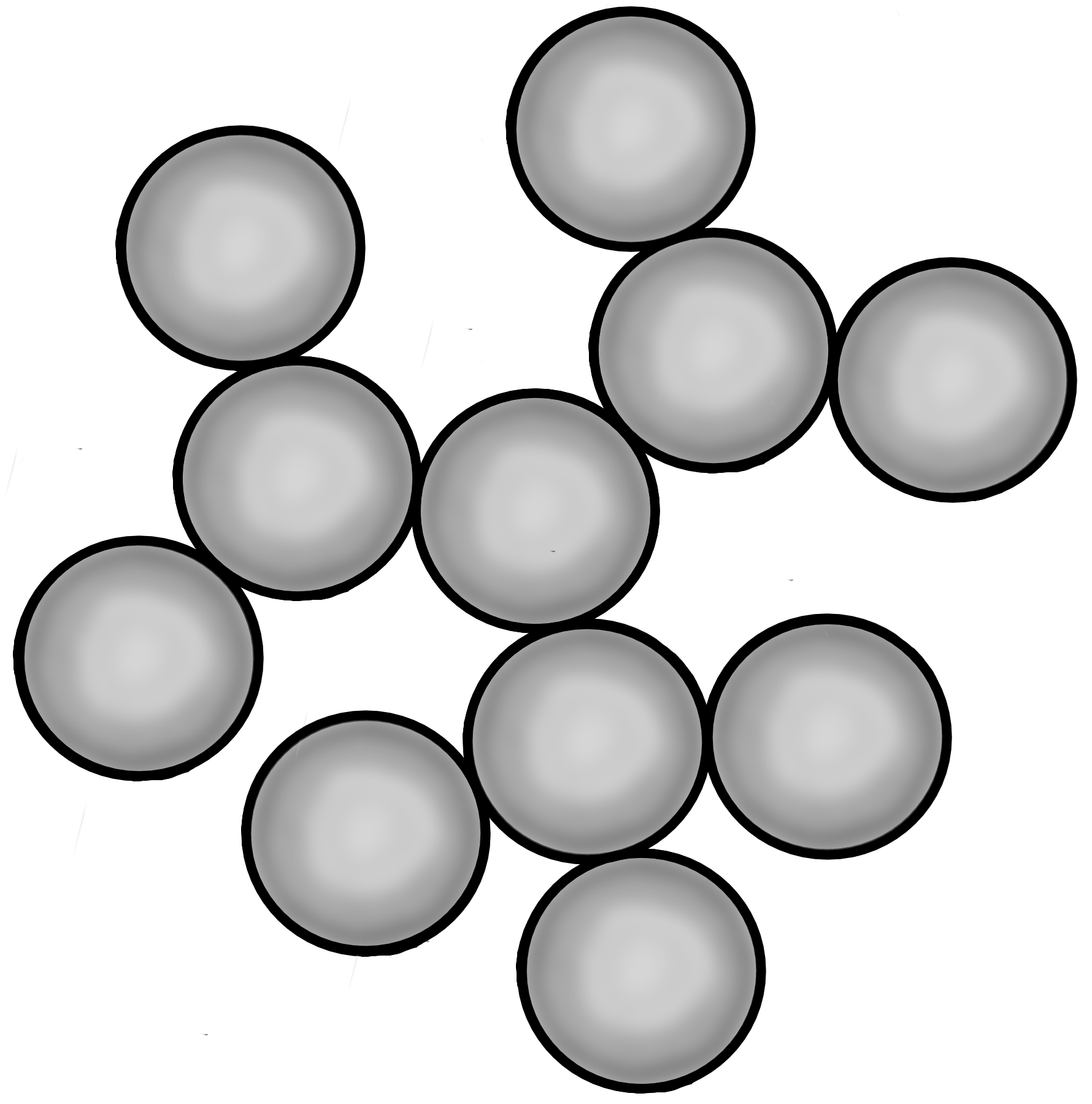} 
\end{minipage}
\begin{minipage}{0.78\linewidth}
Given a (sufficiently nice) topological space with cut points, we may expect to describe it as maximal pieces with no cut point glued together along isolated points in a tree-like way. Here, two points would belong to a common piece whenever they can be connected by two paths that meet only at their endpoints. (Loosely speaking, our two points are connected by a non-degenerate \emph{bigon}.)
\end{minipage}

\medskip \noindent
\begin{minipage}{0.78\linewidth}
For local cut points, something similar is possible, but pieces may have local cut points themselves, as suggested by the figure on the right. Two points would belong to a common piece whenever they can be connected by two paths that meet only at their endpoints and that are homotopy equivalent. (Loosely speaking, our two points are connected by a non-degenerate homotopically trivial bigon.) 
\end{minipage}
\begin{minipage}{0.2\linewidth}
\hspace{0.1\linewidth}\includegraphics[width=0.98\linewidth]{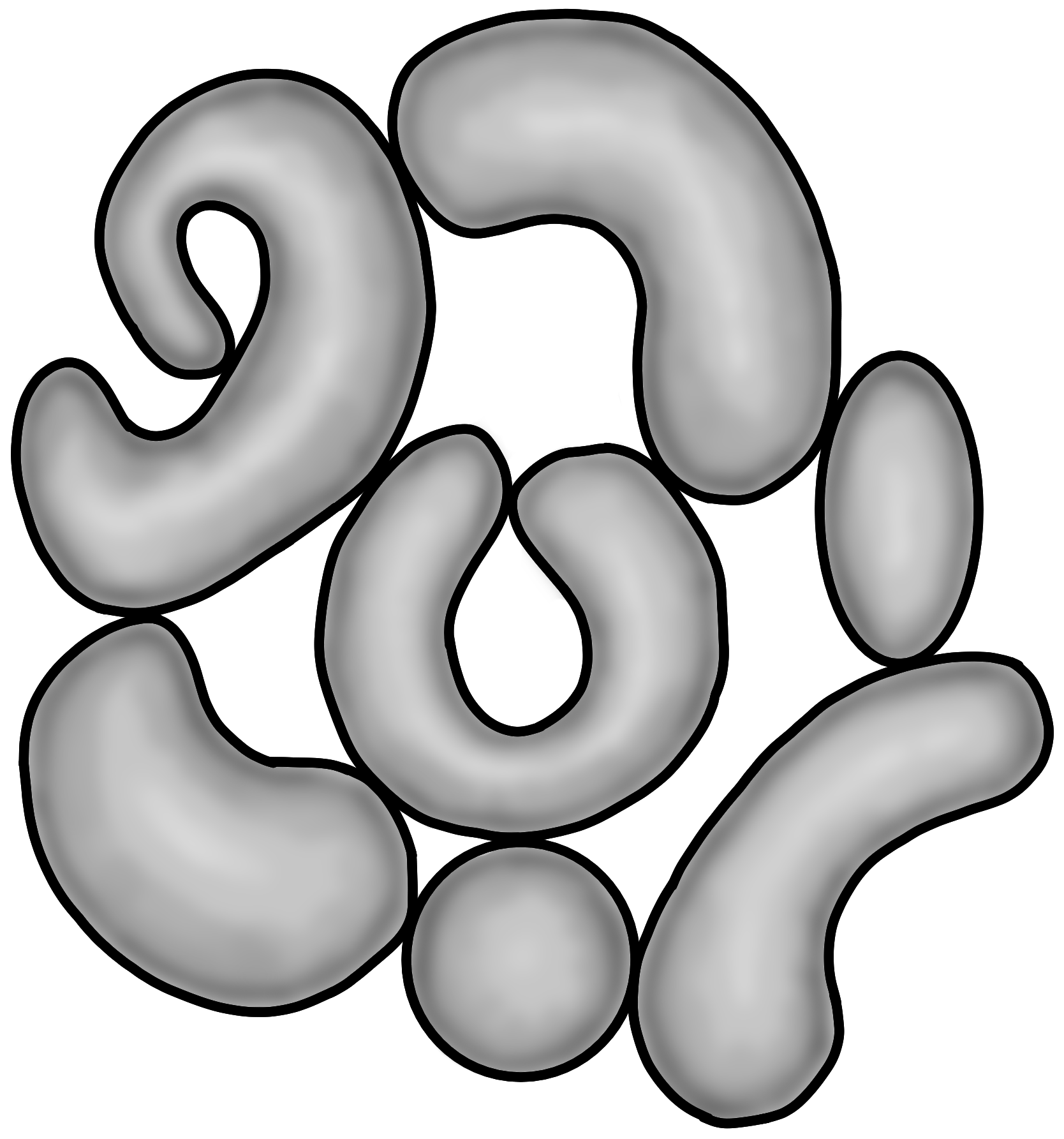} 
\end{minipage}

\medskip \noindent
The following definition capture this idea in the framework of coarse topology. This notion will allow us, in Section~\ref{section:BigEmbedding}, to identify the pieces when decomposing wreath products along their local cut points. 

\begin{definition}\label{def:Pancylindrical}
A graph $X$ is \emph{pancylindrical}\footnote{In \cite{MR4794592}, this property is referred to as the \emph{thick bigon property}.} if there exists $E \geq 0$ such that, for every $F \geq 0$, the following holds for some $L \geq 0$: any two vertices $x,y \in V(X)$ are connected by some path $\gamma$ such that, for every subgraph $B$ of diameter $\leq F$ intersecting $\gamma$, if $d(x,B),d(y,B) \geq L$ then the intersection between $\gamma$ and $B$ is not $E$-persistent. 
\end{definition}

\noindent
Notice that a uniformly locally one ended graph is automatically pancylindrical. Below is an example of a graph that is pancylindrical but not locally one-ended. 
\begin{center}
\includegraphics[width=\linewidth]{NotPan}
\end{center}
It can be proved by standard arguments that being pancylindrical is preserved by quasi-isometries. We leave the details as an exercise to the interested reader. As a consequence, it makes sense to refer to pancylindrical finitely generated groups. Examples of pancylindrical finitely generated groups include: wreath product of finitely generated groups $A \wr B$ with $A$ infinite and $B$ non-trivial \cite[Propositions~3.6 and~3.8]{MR4794592}; torsion-free finitely generated solvable groups that are not cyclic \cite[Corollary~3.13]{MR4794592}; and any finitely generated group containing an infinite finitely generated normal subgroup \cite[Proposition~3.16]{MR4794592}. Here, we restrict ourselves to the following source of examples:

\begin{lemma}
Let $A$ and $B$ be two unbounded graphs. The product $A \times B$ is uniformly locally one-ended, and a fortiori pancylindrical.
\end{lemma}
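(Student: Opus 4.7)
The plan is to establish that $A \times B$ is uniformly locally one-ended with witness $E = 2$; pancylindricality then follows a fortiori, since any subgraph $Z$ of diameter $\leq F$ sits inside $B(v,F)$ for any $v \in Z$, so a coarse homotopy pushing a path off $B(v,F)$ automatically pushes it off $Z$.

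The engine of the argument is the product structure of $A \times B$. Every pair formed by an edge $e_A \in E(A)$ and an edge $e_B \in E(B)$ sharing an endpoint spans a $4$-cycle (a square) in $A \times B$, and replacing one side of such a square along a path by the other three sides is an elementary move whose symmetric difference has diameter $2$. Together with backtrack reductions (diameter $\leq 1$), these are all $E = 2$ coarse homotopies. By running a $4$-cycle swap across each $A$-edge of an $A$-subwalk and collapsing the backtracks that appear at shared vertices, one ``lifts'' the subwalk by one $B$-step; symmetrically, one ``slides'' a $B$-subwalk by one $A$-step.

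Given $F \geq 0$ I would take $L = 2F + 2$. For a vertex $v = (a_0, b_0)$ and a path $\gamma$ with endpoints at distance $\geq L$ from $v$ entering $B(v, F)$, I would chop $\gamma$ into its successive excursions inside $B(v, F+1)$ and handle each separately. Given an excursion $\gamma'$, I would first use $4$-cycle swaps to bring it into a normal form separating its $A$-edges from its $B$-edges, then lift the $A$-portion $F+1$ steps in $B$ away from $b_0$ (possible since $B$ is connected and unbounded) and slide the $B$-portion $F+1$ steps in $A$ away from $a_0$. The displaced inner portion then has its $B$-coordinate at distance $\geq F+1$ from $b_0$ or its $A$-coordinate at distance $\geq F+1$ from $a_0$, so it lies outside $B(v, F)$.

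The main obstacle is ensuring that the connectors appearing at the ends of each displaced excursion---$B$-walks of length $F+1$ joining the displaced inner portion back to the outer pieces of $\gamma$---do not themselves re-enter $B(v, F)$. These connectors must follow walks in $B$ going far from $b_0$ (or in $A$ going far from $a_0$), which requires some care when $B_B(b_0, F)$ disconnects $B$ (or $B_A(a_0, F)$ disconnects $A$), forcing one to combine a $B$-lift on one side of the excursion with an $A$-slide on the other. The choice $L = 2F + 2$ is what makes the construction work: each endpoint of $\gamma'$, being in $B(v, F+1)$, is at distance $\leq F+1$ from $v$, whereas the matching endpoint of the full $\gamma$ is at distance $\geq L \geq 2(F+1)$ from $v$, so one of its two coordinates differs from the corresponding coordinate of $v$ by at least $F+1$. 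This provides a ``safe'' direction along which to route the connector through $(A \setminus B_A(a_0, F)) \times B$ or $A \times (B \setminus B_B(b_0, F))$.
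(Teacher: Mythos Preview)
Your proposal shares the paper's key insight: the scale is $E=2$, and the elementary moves are $4$-cycle swaps in the product. Where you diverge is in strategy. You work excursion-by-excursion inside $B(v,F+1)$, lifting or sliding each excursion away from $v$; the paper instead works globally. It observes that $\gamma$ lies in the sub-grid $[a_1,a_2]\times[b_1,b_2]$ given by the projections of $\gamma$ to the two factors, and that this grid is $2$-coarsely simply connected, so $\gamma$ is $2$-homotopic to \emph{both} L-shaped paths $\alpha=([a_1,a_2]\times\{b_1\})\cup(\{a_2\}\times[b_1,b_2])$ and $\beta=(\{a_1\}\times[b_1,b_2])\cup([a_1,a_2]\times\{b_2\})$. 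One then checks that $\alpha$ and $\beta$, outside the $L$-balls about $x_1,x_2$, lie more than $2F$ apart, so the given ball of radius $F$ (which avoids those $L$-balls by hypothesis) must miss at least one of them. When one factor-distance $d(a_1,a_2)$ or $d(b_1,b_2)$ is small, unboundedness of that factor supplies a detour $\delta$, and the same two-alternatives argument applies to $\alpha$ versus $\delta$. This bypasses connectors entirely.

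Your connector argument, as written, has a genuine gap. You correctly observe that each endpoint $x_i$ of $\gamma$ has a safe coordinate (one of $d_A(a_i,a_0),d_B(b_i,b_0)$ exceeds $F$), but this does not furnish a safe direction for the connector at the excursion endpoint $p$: the connector is attached at $p$, where both coordinates may be $\leq F$ (say $p$ at $A$-distance $\lceil(F{+}1)/2\rceil$ and $B$-distance $\lfloor(F{+}1)/2\rfloor$ from $v$), and a $B$-walk in the fibre $\{a_p\}\times B$ from $b_p$ to a far-away $b'$ may pass through vertices arbitrarily close to $b_0$ (think of $b_p$ sitting on a dead-end spike attached at $b_0$). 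Worse, for excursions that are neither first nor last, the adjacent outer portions of $\gamma$ do not reach $x_1$ or $x_2$ at all, so there is no ``matching endpoint of the full $\gamma$'' to invoke. If you try to repair this by extending the lifted region all the way to $x_1$ and $x_2$, the excursion decomposition becomes irrelevant and you are back to a global replacement---but you still need the paper's two-alternatives trick to handle the case where $x_1$ and $x_2$ have safe coordinates in \emph{different} factors.
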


\begin{proof}
Fix an $F \geq 0$ and set $L:= 1+4F$. We want to prove that, given any two vertices $x_1=(a_1,b_1), x_2= (a_2,b_2) \in A\times B$, any path $\gamma$ connecting $x_1$ to $x_2$, and any ball $B$ of radius $F$ satisfying $d(x_1,B),d(x_2,B) \geq L$, $\gamma$ does not intersect $2$-persistently $B$. 

\medskip \noindent
\begin{minipage}{0.4\linewidth}
\includegraphics[width=0.98\linewidth]{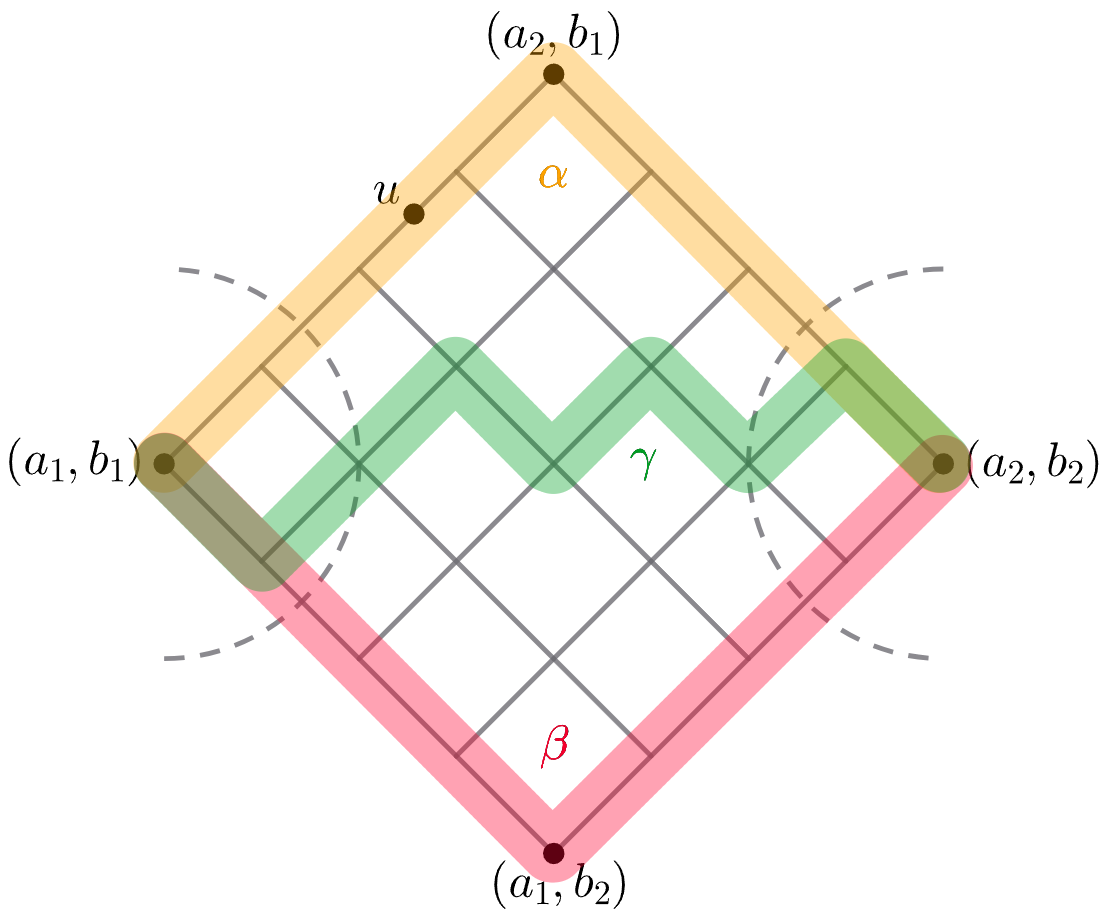}
\end{minipage}
\begin{minipage}{0.58\linewidth}
Projecting $\gamma$ to $A$ (resp.\ $B$), we get a path $[a_1,a_2]$ (resp.\ $[b_1,b_2]$) connecting $a_1$ to $a_2$ (resp.\ $b_1$ to $b_2$). Then, $\gamma$ is contained in the subproduct $[a_1,a_2] \times [b_1, b_2]$, which is a grid and is therefore $2$-coarsely simply connected. Consequently, $\gamma$ is $2$-coarsely homotopy equivalent to both the path $\alpha:= ([a_1,a_2] \times \{b_1\}) \cup( \{a_2\} \times [b_1,b_2])$ and the path $\beta:= (\{a_1\} \times [b_1,b_2]) \cup ([a_1,a_2] \times \{b_2\})$. 
\end{minipage}

\medskip \noindent
We claim that, if $d(a_1,a_2),d(b_1,b_2) \geq L/2$, then $\alpha \backslash (B(x_1,L) \cup B(x_2,L))$ and $\beta \backslash (B(x_1,L) \cup B(x_2,L))$ lie at distance $>2F$. So let $u$ be a vertex of $\alpha \backslash (B(x_1,L) \cup B(x_2,L))$. Two cases may happen: either $u \in [a_1,a_2] \times \{b_1\}$ or $u \in \{a_2\} \times [b_1,b_2]$. The two cases being symmetric, we assume that $u \in [a_1,a_2] \times \{b_1\}$. Also, let $v$ be a vertex of $\beta \backslash (B(x_1,L) \cup B(x_2,L))$. If $v \in [a_1,a_2] \times \{b_2\}$, then it is clear that $d(u,v) \geq d(b_1,b_2) \geq L/2> 2F$. Otherwise, if $v \in \{a_1\} \times [b_1,b_2]$, then we can write $v=(a_1,b)$ for some $b \in [b_1,b_2]$. Notice that, because $d(v,(a_1,b_1)) \geq L$, we must have $d(b,b_1) \geq L$. Consequently, $d(u,v) \geq d(b,b_1) \geq L> 2F$. Our claim is proved.

\medskip \noindent
Thus, if $d(a_1,a_2),d(b_1,b_2) \geq L/2$, then our ball $B$ must be disjoint from either $\alpha$ or $\beta$, proving that the intersection with $\gamma$ is not $2$-persistent. Now, assume that either $d(a_1,a_2)<L/2$ or $d(b_1,b_2)<L/2$. The two cases being symmetric, we assume that $d(a_1,a_2)<L/2$. Because $A$ is unbounded, we can find a vertex $a_3 \in V(A)$ such that $d(a_2,a_3) >2F$. Fix a geodesic $[a_2,a_3]$ connecting $a_2$ to $a_3$ in $A$. Let $\delta$ be the path
$$([a_1,a_2] \times \{b_1\}) \cup ([a_2,a_3] \times \{b_1\}) \cup (\{a_3\} \times [b_1,b_2] ) \cup ( [a_3,a_2] \times \{b_2\}).$$
Notice that, since $[a_2,a_3] \times [b_1,b_2]$ is $2$-coarsely simply connected, $\delta$ is $2$-coarsely homotopy equivalent to $\alpha$, and a fortiori to $\gamma$. We claim that $\alpha$ or $\delta$ is disjoint from $B$, which will prove, as desired, that the intersection between $\gamma$ and $B$ is not $2$-persistent.

\medskip \noindent
\begin{minipage}{0.4\linewidth}
\includegraphics[width=0.98\linewidth]{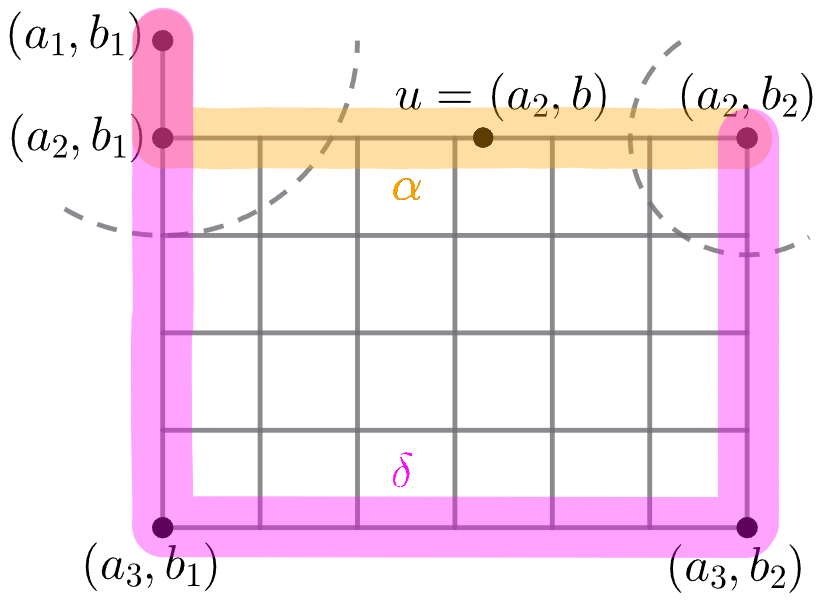}
\end{minipage}
\begin{minipage}{0.58\linewidth}
It suffices to show that $\alpha \backslash (B(x_1,L) \cup B(x_2,L))$ and $\delta \backslash (B(x_1,L) \cup B(x_2,L))$ lie at distance $>2F$ from each other. A vertex $u$ of $\alpha \backslash (B(x_1,L) \cup B(x_2,L))$ can be written as $(a_2,b)$ for some vertex $b \in V(B)$. First, notice that
$$\begin{array}{lcl} d(u, [a_2,a_3] \times \{b_2\}) & \geq & d(b,b_2)= d((a_2,b),(a_2,b_2)) \\ \\ & \geq & L>2F. \end{array}$$
\end{minipage}

\medskip \noindent
Then, notice that
$$d(u, \{a_3\} \times [b_1,b_2]) \geq d(a_2,a_3)> 2F.$$
Finally, notice that
$$\begin{array}{lcl} d(u,  [a_2,a_3] \times \{b_1\}) & \geq & d(b,b_1)= d((a_2,b),(a_2,b_1)) \\ \\ & \geq & d((a_2,b),(a_1,b_1)) - d((a_1,b_1), (a_2,b_1)) \\ \\ & \geq & L -L/2=L/2>2F.\end{array}$$
This concludes our proof.
\end{proof}

\subsection{Bonus: coarse embeddings in Hilbert spaces}\label{section:CoarseHilbert}

\noindent
As a natural way to compare the large-scale geometry of a given finitely generated group with the well-known Euclidean geometry, one can ask whether our group coarsely embeds into a Euclidean space. However, the family of groups admitting such coarse embeddings is rather too restrictive:

\begin{thm}
A finitely generated group coarsely embeds into some Euclidean space if and only if it is virtually nilpotent.
\end{thm}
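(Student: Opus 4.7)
The plan is to establish the two implications independently. The forward direction (coarse embedding into $\mathbb{R}^n$ forces $G$ to be virtually nilpotent) will go through a volume/packing estimate that reduces matters to Gromov's celebrated theorem on groups of polynomial growth. The backward direction (virtually nilpotent implies coarse embedding into some Euclidean space) will go through the Mal'cev embedding of a torsion-free nilpotent group as a uniform lattice in a simply connected nilpotent Lie group, diffeomorphic to $\mathbb{R}^d$.

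For the forward direction, let $\varphi : G \to \mathbb{R}^n$ be a coarse embedding. Composing with the quasi-isometry $\mathbb{R}^n \to \mathbb{Z}^n$ (inverse to the inclusion map considered in Section~\ref{section:QI}), I would reduce to a coarse embedding between graphs, which Exercise~\ref{ex:CoarseQI} forces to be Lipschitz; equivalently, one can take $\rho_+$ to be linear, say $\rho_+(t)=Lt$. Fixing some $k_0$ with $\rho_-(k_0)>0$ and taking a maximal $k_0$-separated subset $S \subset B_G(R)$ (so that $|S| \geq |B_G(R)|/|B_G(2k_0)|$ by a standard Vitali-type argument), the image $\varphi(S)$ consists of points in a Euclidean ball of radius $LR$ pairwise at distance $\geq \rho_-(k_0)>0$. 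A volume comparison in $\mathbb{R}^n$ gives $|S| \leq C R^n$, hence $|B_G(R)| = O(R^n)$, i.e.\ $G$ has polynomial growth. Applying Gromov's theorem as a black box then concludes that $G$ is virtually nilpotent.

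For the backward direction, Lemma~\ref{lem:QIComm}(i) reduces me to the case of a torsion-free nilpotent group $G$. Such a group embeds as a uniform lattice in its Mal'cev completion $\tilde G$, a simply connected nilpotent Lie group diffeomorphic to $\mathbb{R}^d$ for $d$ the Hirsch length of $G$. Choosing Mal'cev coordinates identifying $\tilde G$ with $\mathbb{R}^d$ as a manifold, I would argue that the identity map from $\tilde G$ (endowed with a left-invariant Riemannian metric, itself coarsely equivalent to the word metric coming from the lattice) to $\mathbb{R}^d$ is a coarse embedding: the polynomial form of the group operation in these coordinates yields a polynomial upper bound $\rho_+$ on Euclidean distance in terms of word distance, while the compactness of Euclidean balls combined with the properness of the lattice provides a $\rho_-$ tending to infinity. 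Restricting to $G$ then yields the desired coarse embedding.

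The principal obstacle is, of course, the appeal to Gromov's polynomial growth theorem, which lies far beyond the elementary scope of the course and can only be cited. For the backward direction, the abelian case $G = \mathbb{Z}^d$ is essentially immediate, since $\mathbb{Z}^d \hookrightarrow \mathbb{R}^d$ is already a quasi-isometric embedding; for general nilpotent groups one must accept the polynomial distortion that the Euclidean metric exhibits with respect to the word metric, as is already visible in the Heisenberg group where the vertical direction has quadratic distortion. The Mal'cev theory itself is classical but nontrivial, and I would probably invoke it as an accepted fact from the theory of nilpotent Lie groups rather than spelling it out.
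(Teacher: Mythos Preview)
Your forward direction is correct and matches the paper's: coarse embeddability into $\mathbb{R}^n$ forces polynomial growth via exactly the packing argument you sketch, and Gromov's theorem finishes.

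Your backward direction, however, has a genuine gap. The identity map in Mal'cev coordinates is \emph{not} a coarse embedding from $(\tilde G,d_{\mathrm{word}})$ to $(\mathbb{R}^d,d_{\mathrm{Eucl}})$ once $G$ is non-abelian. Take the integer Heisenberg group with coordinates $(a,b,c)$ and product $(a,b,c)(a',b',c')=(a+a',b+b',c+c'+ab')$. The points $p_n=(n,0,0)$ and $q_n=(n,1,0)$ satisfy $d_{\mathrm{Eucl}}(p_n,q_n)=1$ for every $n$, yet $p_n^{-1}q_n=(0,1,-n)$ has word length $\sim\sqrt{n}$, so $d_{\mathrm{word}}(p_n,q_n)\to\infty$; this kills $\rho_-$. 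Dually, $p_n$ and $q_n'=p_n\cdot(0,1,0)=(n,1,n)$ are at word distance $1$ but Euclidean distance $\sqrt{1+n^2}\to\infty$; this kills $\rho_+$. The underlying issue is that neither your ``polynomial upper bound'' nor your ``compactness'' argument is uniform in the basepoint: left translations are isometries of the word metric but distort the Euclidean metric unboundedly, so bounds valid near the identity do not propagate.

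The paper's route avoids this entirely: a virtually nilpotent group has polynomial growth, hence is \emph{doubling}, and Assouad's embedding theorem then guarantees a coarse embedding into some $\mathbb{R}^N$ (via a biLipschitz embedding of a snowflake $(X,d^\epsilon)$). There is no elementary bypass through Mal'cev coordinates; Assouad's theorem, cited as a black box, is the standard tool here.
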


\begin{proof}
A finitely generated group that coarsely embeds into some Euclidean space must have polynomial growth. Conversely, it is clear that a finitely generated group of polynomial growth is \emph{doubling} (i.e.\ there exists some $C \geq 0$ such that, for every $R$, the size of a ball of radius $2R$ is at most $C$ times the size of a ball of radius $R$), and consequently coarsely embed into some Euclidean space according to Assouad's embedding theorem (see for instance \cite[Chapter~12]{MR1800917}). Then, the desired conclusion follows from Gromov's characterisation of finitely generated groups of polynomial growth as virtually nilpotent groups. 
\end{proof}

\noindent
As a consequence, if $A,B$ are two finitely generated groups with $A$ non-trivial and $B$ infinite, then $A \wr B$ does not coarsely embed into a Euclidean space. In order to enlarge the family of accessible groups, one solution is to replace our Euclidean spaces with an infinite-dimensional model: when does a finitely generated groups coarsely embed into a Hilbert space? Then, one can show that wreath products preserve some Hilbertian geometry:

\begin{thm}[\cite{MR4449680}]\label{thm:CoarselyWreath}
Let $X,Y$ be two graphs of bounded degree and $o \in V(X)$ a basepoint. The wreath product $(X,o) \wr Y$ coarsely embeds into some Hilbert space if and only if so do $X$ and $Y$. 
\end{thm}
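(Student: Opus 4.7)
The plan is to prove each direction separately, with the bulk of the work going into the $(\Leftarrow)$ direction.

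For the forward direction, I will exhibit isometric embeddings of $X$ and $Y$ into $(X,o) \wr Y$, from which a coarse embedding into a Hilbert space passes to the subspaces by restriction. Fix any vertex $q_0 \in V(Y)$. The map $\iota_Y : y \mapsto (c_o, y)$, with $c_o \equiv o$, is an isometric embedding: in Lemma~\ref{lem:DistWreath} the colour-sum vanishes and $\mathrm{TS}(p_1, \emptyset, p_2) = d_Y(p_1,p_2)$. Likewise $\iota_X : x \mapsto (\delta_x, q_0)$, where $\delta_x$ equals $x$ at $q_0$ and $o$ elsewhere, is isometric since the travelling-salesman term collapses and the sum reduces to $d_X(x_1,x_2)$. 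Composing a coarse embedding of $(X,o) \wr Y$ with $\iota_X$ and $\iota_Y$ gives coarse embeddings of $X$ and $Y$.

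For the converse, fix coarse embeddings $f: Y \to H_Y$ and $g: X \to H_X$ with $g(o)=0$. I will build $\Phi:(X,o)\wr Y \to H$ for a Hilbert space $H$ assembled as a direct sum of three components, each designed to control one summand of the distance formula in Lemma~\ref{lem:DistWreath}. The \emph{pointer component} $\Phi_1(c,p) := f(p) \in H_Y$ captures the term $d_Y(p_1,p_2) \leq \mathrm{TS}(p_1, c_1 \triangle c_2, p_2)$. The \emph{lamp component} $\Phi_2(c,p) := (g(c(q)))_{q \in V(Y)} \in \ell^2(V(Y),H_X)$ is well-defined because $c$ has finite support and $g(o)=0$; its squared-norm increment equals $\sum_q \|g(c_1(q)) - g(c_2(q))\|^2$, and a routine case split (either one summand is large, or many small summands accumulate) shows this tends to infinity whenever $\sum_{q \in c_1 \triangle c_2} d_X(c_1(q), c_2(q)) \to \infty$.

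The hard part, which is the real content of the theorem, is constructing a third component $\Phi_3$ that captures the travelling-salesman cost $\mathrm{TS}(p_1, c_1 \triangle c_2, p_2)$. My plan is to pass to the dictionary between coarse embeddings into Hilbert space and proper conditionally negative-definite (c.n.d.) kernels, and then to a measured-walls presentation of the c.n.d. kernel on $Y$ provided by $f$. Given a measure space $(\Omega, \mu)$ with a measurable family of halfspaces $(A_p^Y)_{p \in V(Y)}$ such that $\mu(A^Y_{p_1} \triangle A^Y_{p_2})$ is a proper c.n.d. kernel on $Y$, I would build halfspaces $\tilde A_{(c,p)} \subset \Omega$ that declare $(c,p)$ to lie on a given side of a wall $W$ if $p$ and the full support of $c$ do, and otherwise count the wall with a multiplicity reflecting how many times a minimum tour through $c_1 \triangle c_2$ is forced to cross $W$. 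The resulting c.n.d. kernel on $(X,o)\wr Y$ is comparable to $\mathrm{TS}(p_1, c_1 \triangle c_2, p_2)$, and the GNS construction then yields $\Phi_3$ with values in a Hilbert space.

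Assembling $\Phi := \Phi_1 \oplus \Phi_2 \oplus \Phi_3$, the verification is routine: boundedness on bounded sets follows summand by summand using the upper bounds in Lemma~\ref{lem:DistWreath} together with the bounded-degree assumption (which is where that hypothesis enters, to keep the TS-kernel from blowing up locally); properness follows from a three-way case split on which of $d_Y(p_1,p_2)$, $\sum_q d_X(c_1(q), c_2(q))$, or $\mathrm{TS}(p_1, c_1 \triangle c_2, p_2)$ tends to infinity. I expect the main obstacle to be the design of $\Phi_3$: encoding the nonlinear Steiner-type quantity $\mathrm{TS}$ into Hilbert geometry in a way that is simultaneously proper (in the TS direction) and bounded on bounded sets of $(X,o)\wr Y$. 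The measured-walls viewpoint is what I expect to make this feasible, by turning the combinatorial TS cost into a linear expression counted against a measure.
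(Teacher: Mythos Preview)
The paper does not prove this theorem; it cites \cite{MR4449680} and sketches the approach, which is \emph{different} from yours. Your forward direction and your components $\Phi_1,\Phi_2$ are fine. The gap is $\Phi_3$, which you rightly flag as the crux, but your sketch is not a working construction. You propose halfspaces $\tilde A_{(c,p)}$ for each wall $W$ of $Y$, with a ``multiplicity reflecting how many times a minimum tour through $c_1\triangle c_2$ is forced to cross $W$.'' But that multiplicity is a function of the \emph{pair} $((c_1,p_1),(c_2,p_2))$, not of a single point and a wall; it does not define a halfspace assignment, and there is no reason the resulting kernel is conditionally negative-definite. Even in the tree case, the per-wall crossing count takes values in $\{0,1,2\}$ depending on how $p_1,p_2$ and $c_1\triangle c_2$ sit relative to the wall, and this does not factor as a symmetric difference of point-dependent halfspaces. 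So as written, $\Phi_3$ is a wish, not a construction.

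The approach the paper describes avoids trying to encode the travelling-salesman term directly in Hilbert geometry. One first passes from Hilbert to $L^1$ (equivalently, median) targets. Given coarse embeddings $X\hookrightarrow M$ and $Y\hookrightarrow N$ into median spaces, one builds the \emph{diadem product} $(M,m)\circledast N$: this is a median space whose points are pairs $(c,P)$ with $P$ a \emph{polytope} (convex hull of finitely many vertices) in $N$, rather than a single arrow. The wreath product $(X,o)\wr Y$ coarsely embeds into $(M,m)\circledast N$, and median spaces isometrically embed into $L^1$, hence coarsely into Hilbert. The point is that replacing the arrow by a polytope linearises the tour cost: in a median space the convex hull already knows which walls are crossed, and growing or shrinking the polytope is a median operation. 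Your measured-walls instinct is pointing at the same circle of ideas (walls $\leftrightarrow$ median geometry), but the actual mechanism is to change the target space rather than to manufacture a c.n.d.\ kernel for $\mathrm{TS}$ by hand.
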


\noindent
However, most groups turn out to coarsely embed into Hilbert spaces, so this less informative than one may expect. In order to extract more information from such coarse embeddings, a possibility is to quantify to which extent one needs to deform the geometry of our group in order to embed it into a Hilbert space. This idea is captured as follows:

\begin{definition}
The \emph{compression} of a map $\varphi : X \to Y$ between two metric spaces is
$$\mathrm{comp}(\varphi):= \sup \{ \alpha>0 \mid \exists C >0 , \forall a,b \in X, C \cdot d(x,y)^\alpha \leq d(\varphi(x),\varphi(y)) \}.$$
The \emph{Hilbert space compression} of a metric space $X$ is
$$\alpha_2(X):= \sup \{ \mathrm{comp}(\varphi) \mid \text{for every Lipschitz } \varphi : X \to \text{Hilbert space}  \}.$$
\end{definition}

\noindent
Roughly speaking, the Hilbert space compression is a real number in $[0,1]$ that quantifies the compatibility between the large-scale geometry of our space (e.g.\ our finitely generated group) and the geometry of Hilbert spaces. The compatibility is greater as the compression approaches $1$. For instance, free abelian groups $\mathbb{Z}^n$ clearly have Hilbert space compression $1$, since they biLipschitz embed into (finite-dimensional) Hilbert spaces. Interestingly, $n$-regular trees for $n \geq 2$ also have Hilbert space compression $1$, despite the fact that they do not biLipschitz embed into Hilbert spaces \cite{MR880292}. 

\medskip \noindent
Computing Hilbert space compressions is often difficult, especially when the value lies strictly between $0$ and $1$. In full generality, Hilbert space compressions of wreath products are not well understood. Among the few cases when the compression is known is exactly, we can mention the following notable result:

\begin{thm}[\cite{MR2783928}]
Let $H$ be a group of polynomial growth. The Hilbert space compression of $\mathbb{Z} \wr H$ is $1/2$ if $H$ is not virtually $\mathbb{Z}$ and $2/3$ otherwise. 
\end{thm}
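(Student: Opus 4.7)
The plan is to prove matching upper and lower bounds on $\alpha_2(\mathbb{Z} \wr H)$. By Proposition~\ref{prop:BiLipWreathGroups} together with Gromov's polynomial-growth theorem, one can reduce to the case $H = \mathbb{Z}^n$ for some $n \geq 1$, since biLipschitz equivalences preserve compression.

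For the \emph{lower bound}, the approach is to build explicit coarse embeddings into Hilbert spaces, guided by the distance formula of Lemma~\ref{lem:DistWreath}: any distance in $\mathbb{Z} \wr \mathbb{Z}^n$ decomposes as a ``colouring cost'' $\sum_{q \in c_1 \triangle c_2} |c_1(q)-c_2(q)|$ plus a ``moving cost'' $\mathrm{TS}(p_1, c_1 \triangle c_2, p_2)$. The colouring part can be handled via the negative-type embedding $\mathbb{Z} \hookrightarrow L^2(\mathbb{R})$ sending $k$ to $\mathbf{1}_{[0,k]}$, assembled into an $\ell^2$-direct sum over lamps: this produces $\|\Psi(c_1)-\Psi(c_2)\|^2 = \|c_1-c_2\|_1$, hence compression $1/2$ on that piece. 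Combining this with a compatible embedding of $\mathbb{Z}^n$ that also absorbs the TS term yields compression $1/2$ for every $n \geq 1$. To upgrade to $2/3$ when $n=1$, one exploits a crucial simplification: the travelling-salesman cost on $\mathbb{Z}$ collapses to twice the diameter of $\{p_1,p_2\} \cup (c_1 \triangle c_2)$, and the total order on $\mathbb{Z}$ can be leveraged via cumulative colour profiles to strengthen the embedding on that component.

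For the \emph{upper bound}, apply a Markov-type inequality to the standard lazy random walk $(W_t)$ on $\mathbb{Z} \wr \mathbb{Z}^n$. Since Hilbert space has Markov type $2$, any coarse embedding $\varphi$ with compression exponent $\alpha$ satisfies $\mathbb{E}[\|\varphi(W_0)-\varphi(W_t)\|^2] \lesssim t$, and Jensen's inequality then gives $\mathbb{E}[d(e,W_t)] \lesssim t^{1/(2\alpha)}$. A direct computation decomposes the displacement in terms of the range and local times $(L_t(q))_q$ of the projected walk on $\mathbb{Z}^n$: up to lower-order terms, $\mathbb{E}[d(e,W_t)] \asymp \sum_q \sqrt{L_t(q)}$. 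Cauchy--Schwarz combined with the classical laws $\sum_q L_t(q) = t$ and $|\mathrm{range}_t| \asymp \sqrt{t}, \, t/\log t, \, t$ for $n=1,2,\geq 3$ respectively produces sharp exponents: displacement $\asymp t^{3/4}$ when $n=1$ and $\asymp t^{1-o(1)}$ when $n \geq 2$. Matching against the Markov bound forces $\alpha \leq 2/3$ for $n=1$ and $\alpha \leq 1/2$ for $n \geq 2$.

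The main obstacle will be the lower-bound construction in the $n=1$ case: pushing past the natural $1/2$ barrier coming from the $\ell^1 \hookrightarrow L^2$ embedding to reach the sharp value $2/3$ requires carefully exploiting the linear order on $\mathbb{Z}$ and delicately balancing the colouring and moving contributions. The upper bound when $n=2$ also needs care, as one must verify that the logarithmic correction to the displacement $t/\sqrt{\log t}$ is subpolynomial enough not to weaken the final bound $\alpha \leq 1/2$.
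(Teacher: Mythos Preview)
The paper does not prove this theorem; it is simply quoted from \cite{MR2783928} before the discussion turns to the easier Proposition~\ref{prop:CompressionLamp}. Your sketch is essentially an outline of the original Naor--Peres argument, and the overall architecture (explicit embeddings for the lower bounds, Markov type~$2$ plus random-walk speed for the upper bounds) is correct.

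There is, however, a genuine gap in your reduction step. Gromov's theorem tells you that a group of polynomial growth is virtually \emph{nilpotent}, not virtually abelian; and a non-abelian nilpotent group such as the discrete Heisenberg group is \emph{not} biLipschitz equivalent to any $\mathbb{Z}^n$ (its asymptotic cone is a non-abelian Carnot group, cf.\ the remarks after Proposition~\ref{prop:AsConeLip}). So Proposition~\ref{prop:BiLipWreathGroups} does not let you replace a general $H$ by $\mathbb{Z}^n$, and the Heisenberg case is simply not covered by your sketch. Nor can you salvage this by passing to a subgroup: the Heisenberg group contains no quasi-isometrically embedded copy of $\mathbb{Z}^2$ (every abelian subgroup meets the quadratically distorted centre).

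The fix is that the random-walk argument must be run directly on $H$: what drives the computation of $\mathbb{E}[d(e,W_t)]$ is the range and local-time profile of the projected walk on $H$, and these are governed by the volume growth of $H$ (degree $\geq 3$ gives transience and range $\asymp t$; degree $2$ gives range $\asymp t/\log t$), not by $H$ being literally $\mathbb{Z}^n$. Likewise the lower bound $\alpha_2 \geq 1/2$ needs an embedding valid for general nilpotent $H$; the lamp component you describe is fine, but the position-and-TS component must use that $H$ itself has compression $1$, which is a separate input. The virtually~$\mathbb{Z}$ case is unaffected, since there your reduction to $H=\mathbb{Z}$ is legitimate.
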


\noindent
Because its proof involves interesting constructions, we will spend some time to justify the following statement:

\begin{prop}\label{prop:CompressionLamp}
For every finite group $F$, the lamplighter group $F \wr \mathbb{Z}$ has Hilbert space compression $1$.
\end{prop}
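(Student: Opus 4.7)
The plan is to identify a Cayley graph of $F \wr \mathbb{Z}$ with a subgraph of a product of two regular trees, and to exploit the fact that regular trees have Hilbert space compression $1$. First, by Proposition~\ref{prop:BiLipWreathGroups}, $F \wr \mathbb{Z}$ is biLipschitz equivalent to $\mathbb{Z}_n \wr \mathbb{Z}$ with $n := |F|$, since any two finite groups of the same cardinality are trivially biLipschitz equivalent as bounded metric spaces. Because Hilbert space compression is a biLipschitz invariant, I may replace $F$ with $\mathbb{Z}_n$.

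Next, I will invoke the Diestel-Leader graph $\mathrm{DL}(n)$, a particular Cayley graph of $\mathbb{Z}_n \wr \mathbb{Z}$. By construction, $\mathrm{DL}(n)$ is the horocyclic product of two $(n+1)$-regular trees $T$: the subgraph of $T \times T$ consisting of pairs $(x_1, x_2)$ satisfying $h(x_1) + h(x_2) = 0$ for suitable Busemann functions, with edges $(x_1, x_2) \sim (y_1, y_2)$ whenever $x_1 \sim y_1$ and $x_2 \sim y_2$ in $T$. Each $\mathrm{DL}$-edge moves one tree-coordinate up and the other down, so a path of length $k$ in $\mathrm{DL}(n)$ uses exactly $k$ tree-edges in each factor. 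A case analysis on the horolevels of the endpoints then yields
$$\tfrac{1}{2}\, d_{T \times T}(x,y) \leq d_{\mathrm{DL}(n)}(x,y) \leq d_{T \times T}(x,y) \qquad \text{for all } x,y \in \mathrm{DL}(n),$$
when $T \times T$ is endowed with the $\ell^1$ product metric. Hence $\mathrm{DL}(n)$ biLipschitz embeds into $T \times T$, and it suffices to prove $\alpha_2(T \times T) = 1$.

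Given Lipschitz maps $\varphi_i : T \to \mathcal{H}_i$ of compression $\geq \alpha$ for $i = 1, 2$, the product map $\varphi_1 \oplus \varphi_2 : T \times T \to \mathcal{H}_1 \oplus \mathcal{H}_2$ is clearly Lipschitz, and the elementary inequality $a^{2\alpha} + b^{2\alpha} \geq c_\alpha (a+b)^{2\alpha}$ for some $c_\alpha > 0$ (which follows from subadditivity of $t \mapsto t^{2\alpha}$ when $2\alpha \leq 1$, and from $a^{2\alpha} + b^{2\alpha} \geq \max(a,b)^{2\alpha} \geq 2^{-2\alpha}(a+b)^{2\alpha}$ when $2\alpha > 1$) implies that $\varphi_1 \oplus \varphi_2$ has compression $\geq \alpha$. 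Consequently $\alpha_2(T \times T) \geq \alpha_2(T)$, and it is a classical fact that $\alpha_2(T) = 1$: for every $\alpha < 1$, an explicit Lipschitz map $T \to \ell^2$ of compression $\geq \alpha$ can be constructed by sending a vertex $v$ to an appropriately weighted indicator of the geodesic $[o, v]$ from a fixed root.

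The main obstacle is the biLipschitz identification of $\mathrm{DL}(n)$ inside $T \times T$, which requires careful bookkeeping of geodesics in horocyclic products through different horolevels; the compression-$1$ property of trees is standard and invoked here.
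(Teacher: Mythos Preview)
Your proposal is correct and follows essentially the same route as the paper: identify $F\wr\mathbb{Z}$ (after reducing to $\mathbb{Z}_n\wr\mathbb{Z}$) with the Diestel--Leader graph, embed it quasi-isometrically into a product of two regular trees, and invoke $\alpha_2(T)=1$. You supply a bit more detail than the paper's sketch (the explicit biLipschitz constants and the product-map compression inequality), but the strategy is identical; note that your description of $\mathrm{DL}(n)$ as ``the subgraph of $T\times T$'' with diagonal edges is really the strong product $T\boxtimes T$ as in the paper, though this does not affect the metric comparison with the $\ell^1$ product.
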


\noindent
We focus on the case $F= \mathbb{Z}_2$ for simplicity. 

\medskip \noindent
\begin{minipage}{0.45\linewidth}
\includegraphics[width=0.98\linewidth]{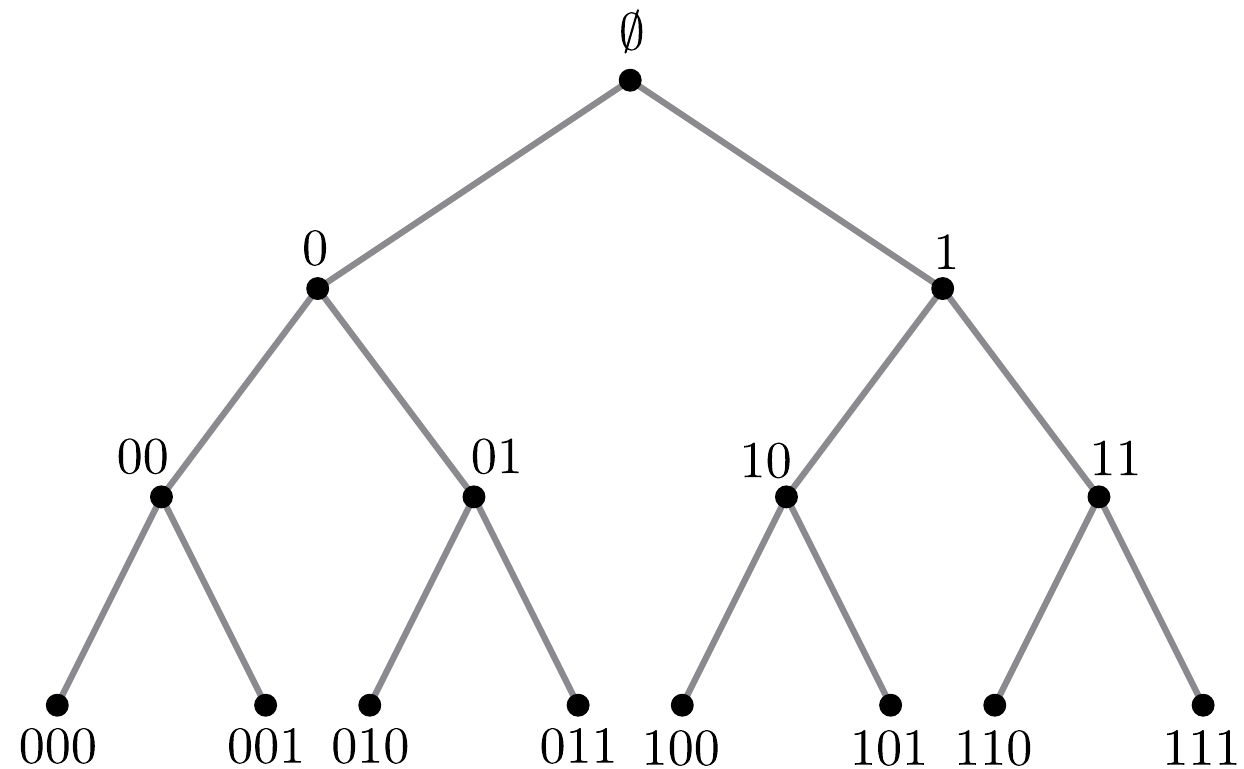}
\end{minipage}
\begin{minipage}{0.53\linewidth}
A well-known construction of a rooted $2$-regular tree is to start from an alphabet with two letters, say $\{0,1\}$ and to identify the vertices of our rooted tree with words of finitely many letters. Two words represent two adjacent vertices whenever one can be obtained from the other by adding a last letter. See the figure on the left. 
\end{minipage}

\medskip \noindent
Notice that the set of our words is just $\bigsqcup\limits_{n \geq 0} \mathbb{Z}_2^{[1,n]}.$

\medskip \noindent
By ``pushing the root to infinity'', one gets a similar construction of the (unrooted) $3$-regular tree $T$: its vertices are the left-infinite words in 
$$\mathscr{W}:= \bigsqcup\limits_{n \in \mathbb{Z}} \mathbb{Z}_2^{((- \infty,n])}$$
all but finitely many of whose letters are $0$, and two words represent two adjacent vertices whenever one can be obtained from the other by adding a last letter. The trick now is to notice that, given an element $(c,p) \in \mathbb{Z}_2 \wr \mathbb{Z}$, one has a left-infinite word by considering the colouring $c$ restricted to the left of the arrow $p$ and a right-infinite word by considering the colouring $c$ restricted to the right of the arrow $p$. Of course, the right-infinite word can be turned into a left-infinite word, so what we get is a pair of vertices in $T$. More formally,
$$\Psi : \left\{ \begin{array}{ccc} \mathbb{Z}_2 \wr \mathbb{Z} & \to & V(T \times T) \\ (c,p) & \mapsto & \left( c_{|(- \infty,p-1]}, c_{|[p,+ \infty)} (- \ast ) \right) \end{array} \right..$$
The map can be illustrated by Figure~\ref{Horo}.

\begin{figure}
\begin{center}
\begin{tabular}{|c|c|c|c|} \hline
\includegraphics[width=0.22\linewidth]{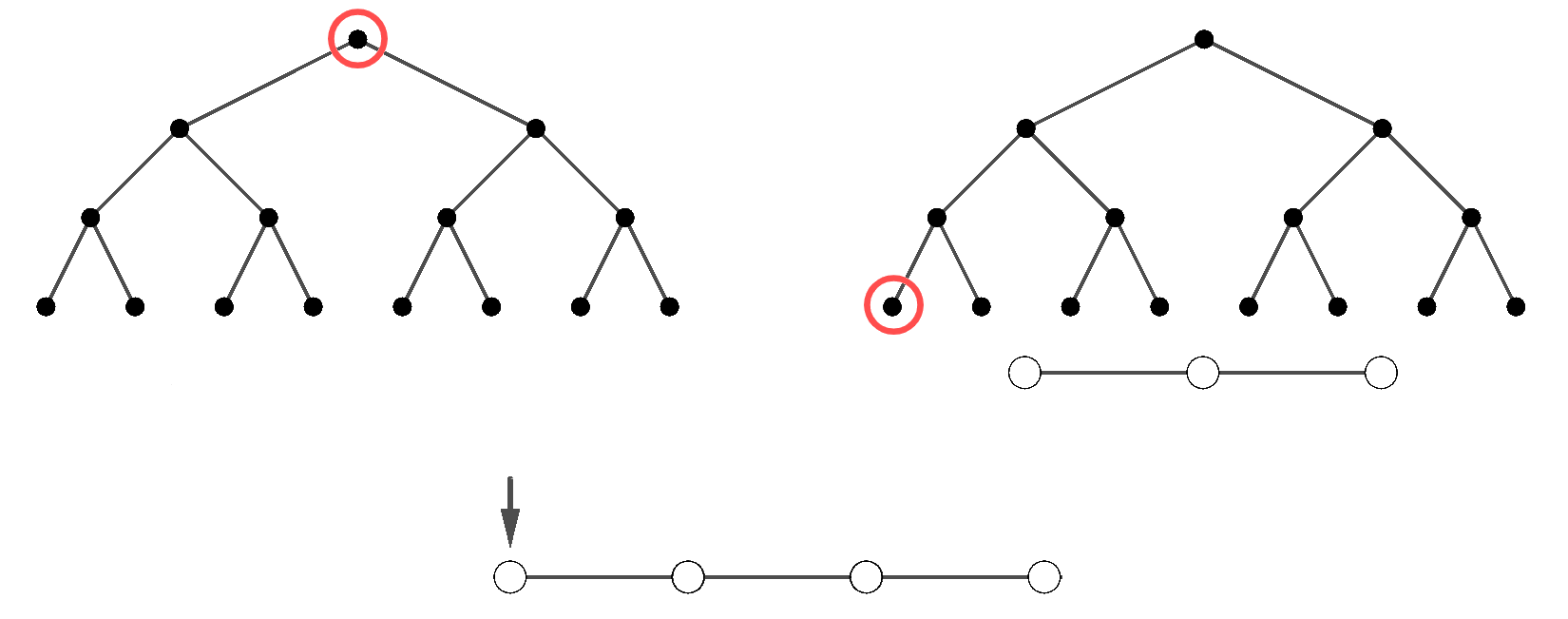} &
\includegraphics[width=0.22\linewidth]{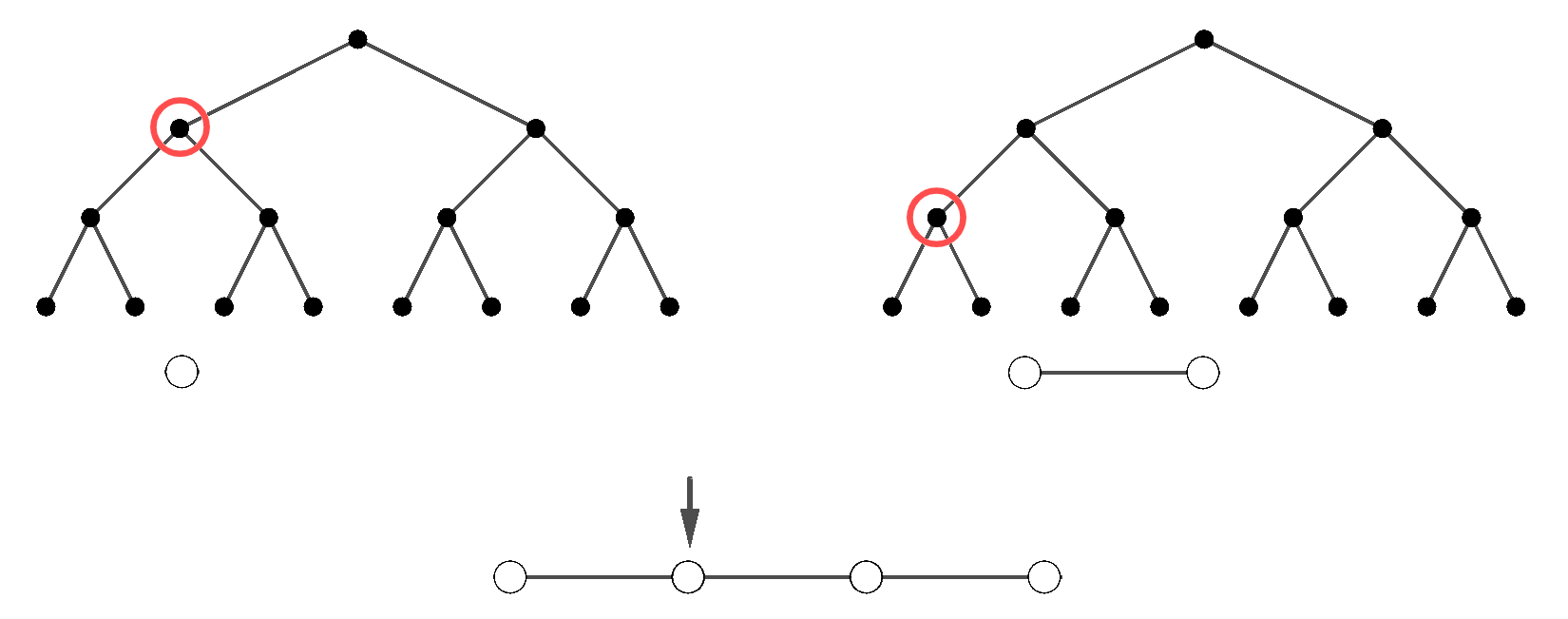} &
\includegraphics[width=0.22\linewidth]{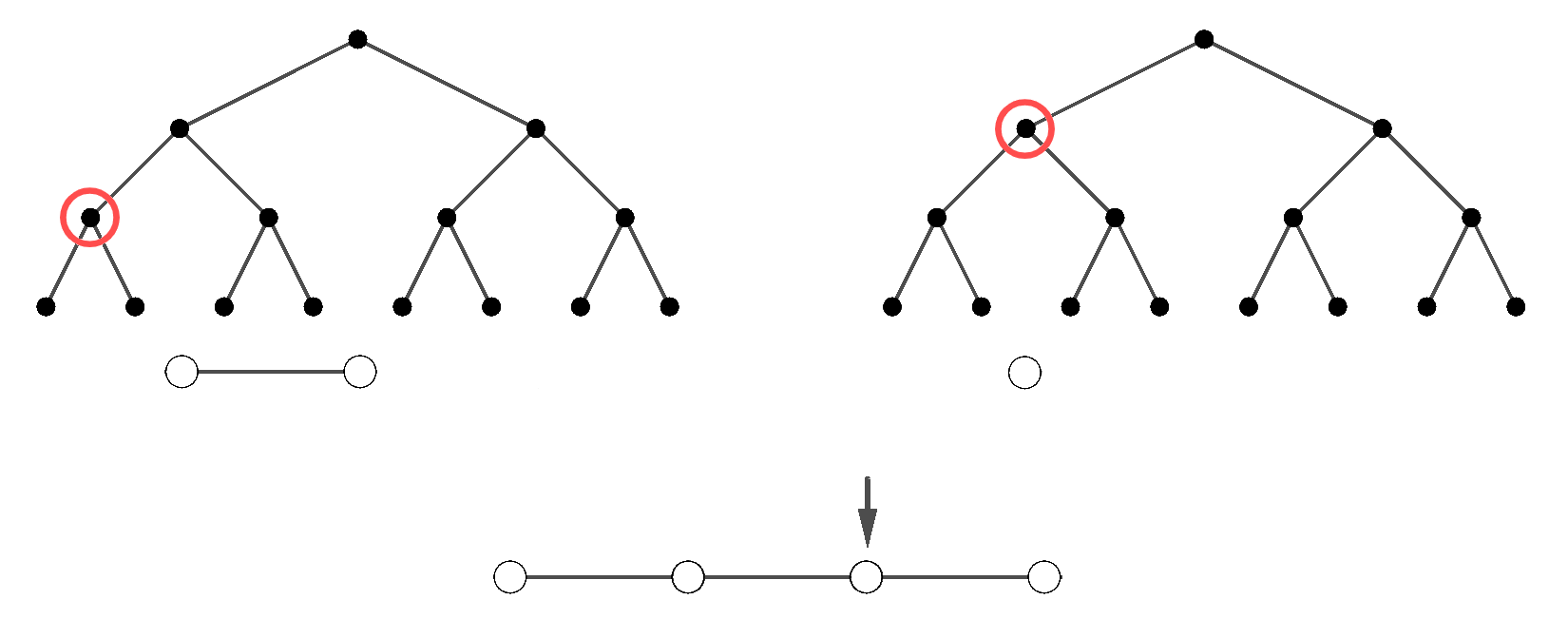} &
\includegraphics[width=0.22\linewidth]{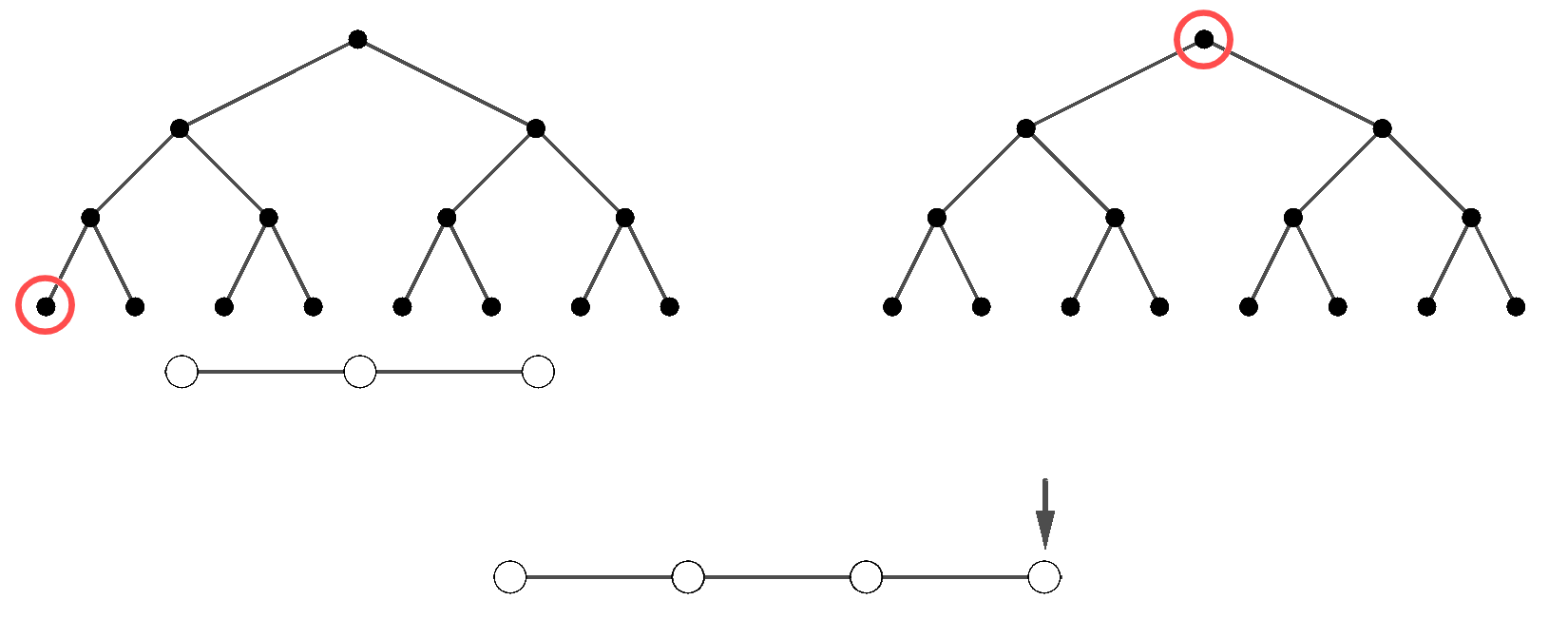} \\ \hline
\includegraphics[width=0.22\linewidth]{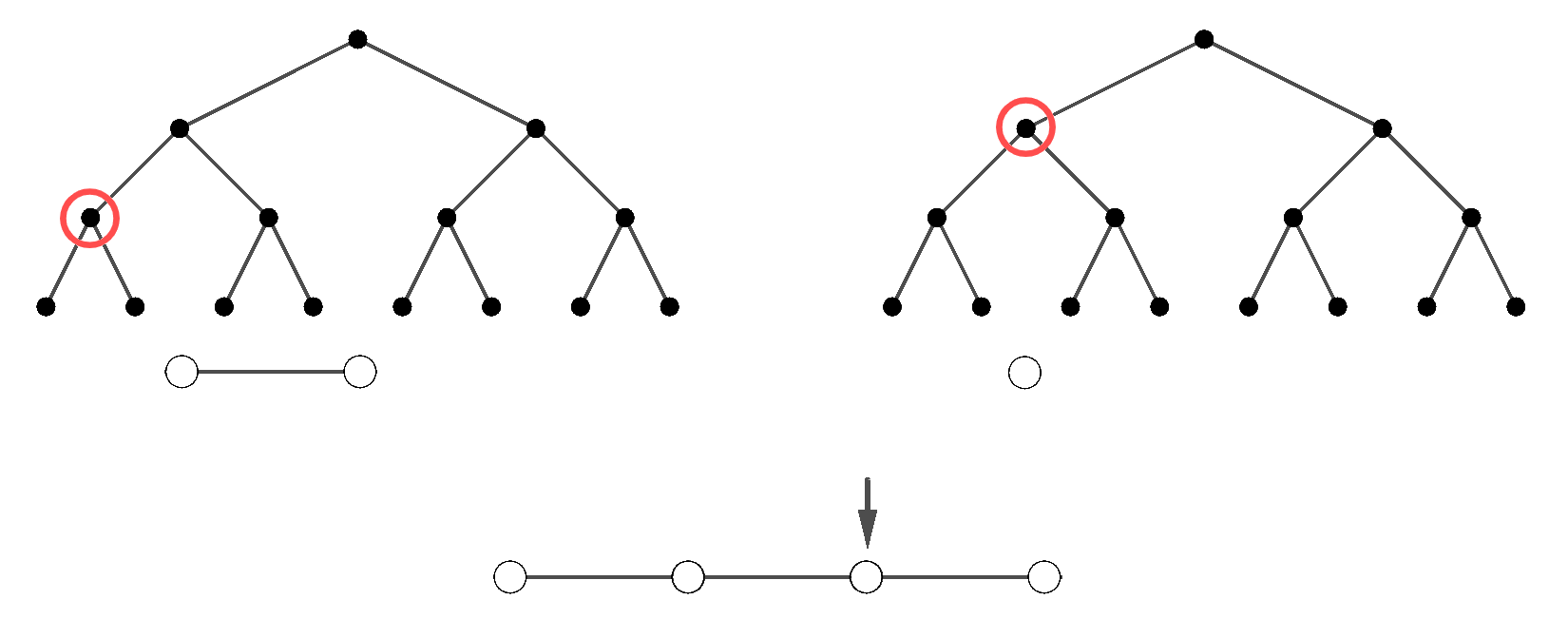} &
\includegraphics[width=0.22\linewidth]{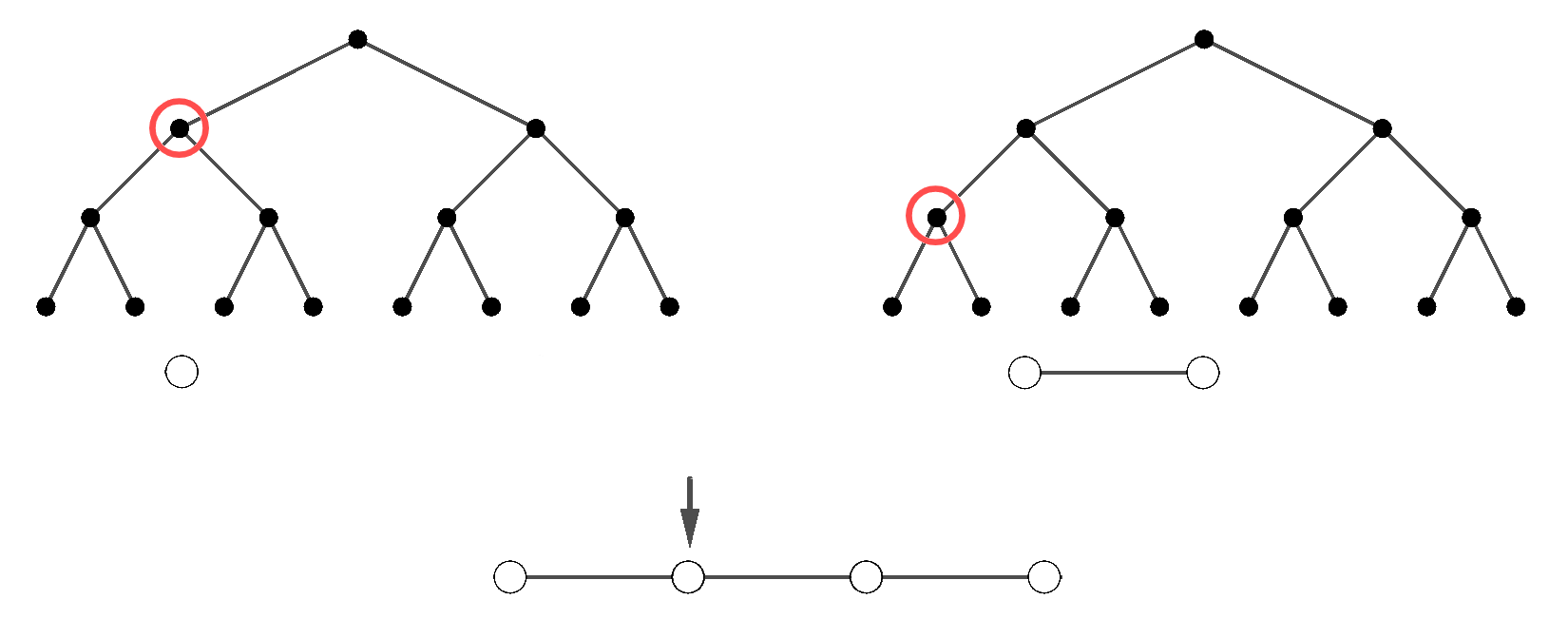} &
\includegraphics[width=0.22\linewidth]{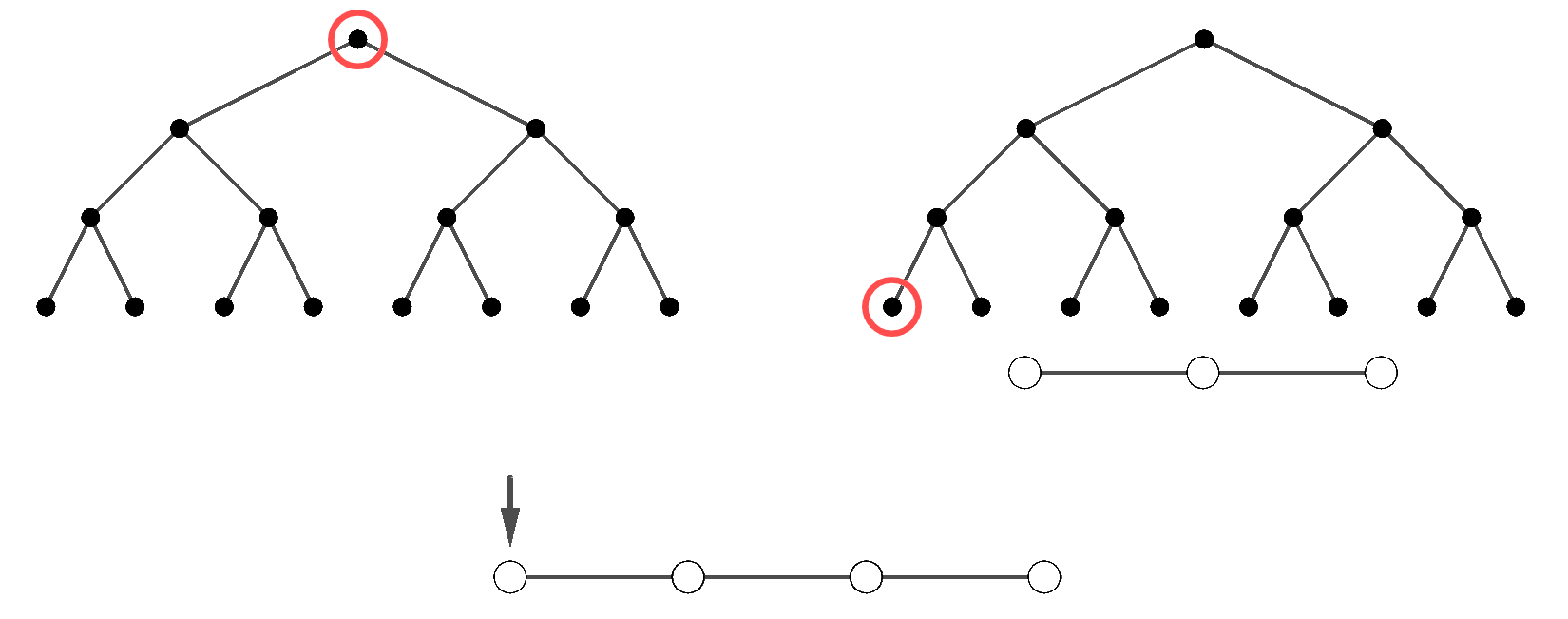} &
\includegraphics[width=0.22\linewidth]{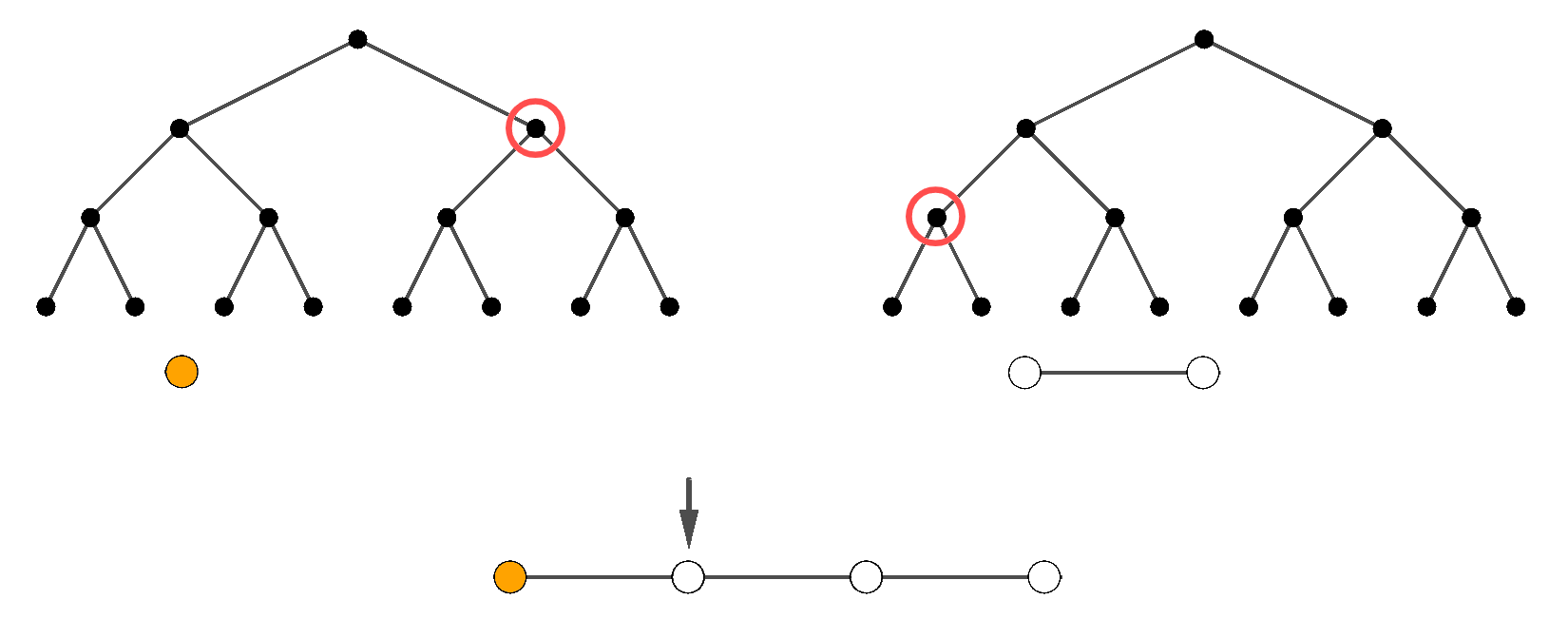} \\ \hline
\includegraphics[width=0.22\linewidth]{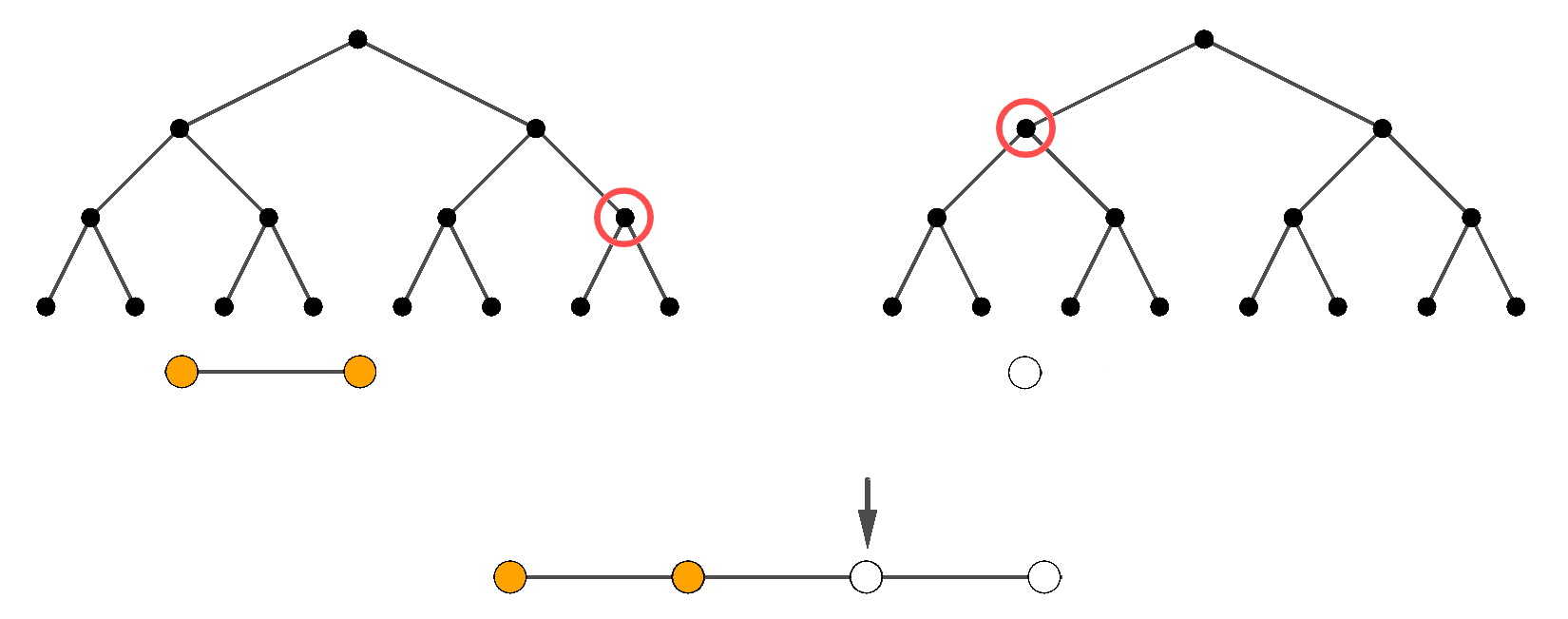} &
\includegraphics[width=0.22\linewidth]{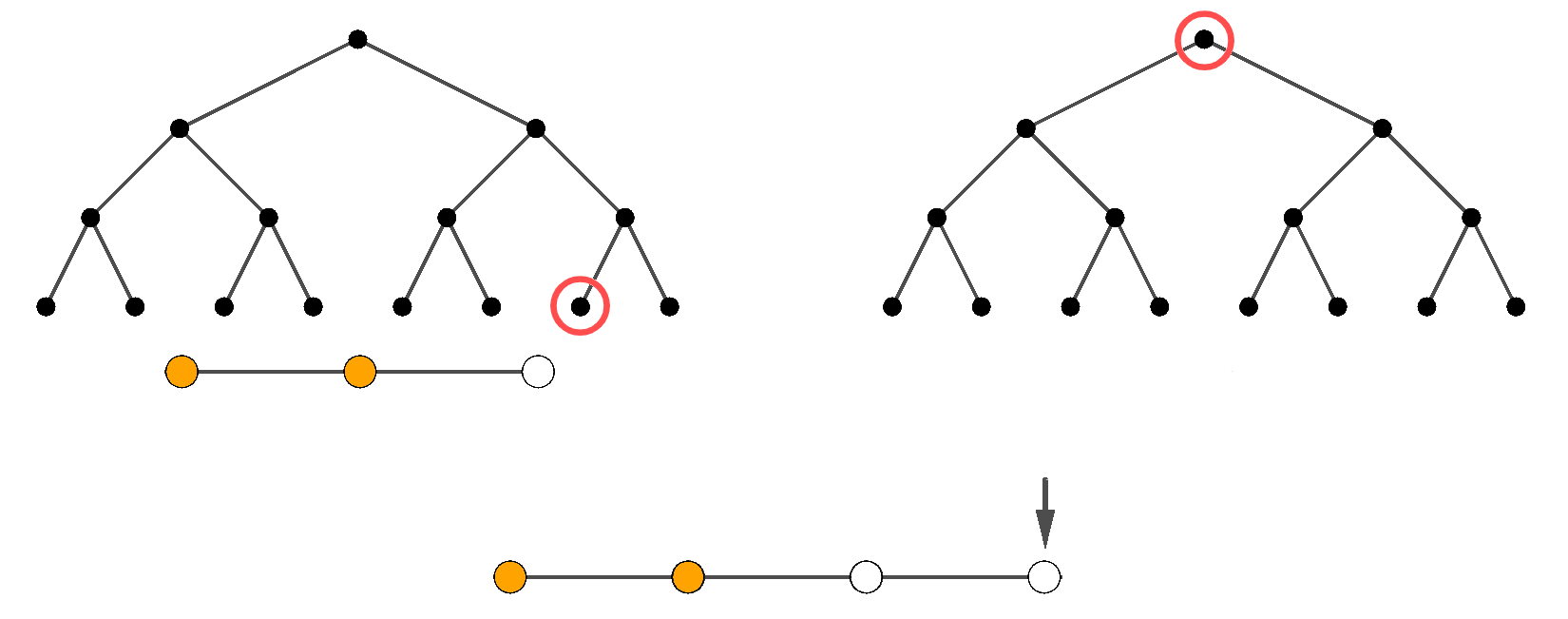} &
\includegraphics[width=0.22\linewidth]{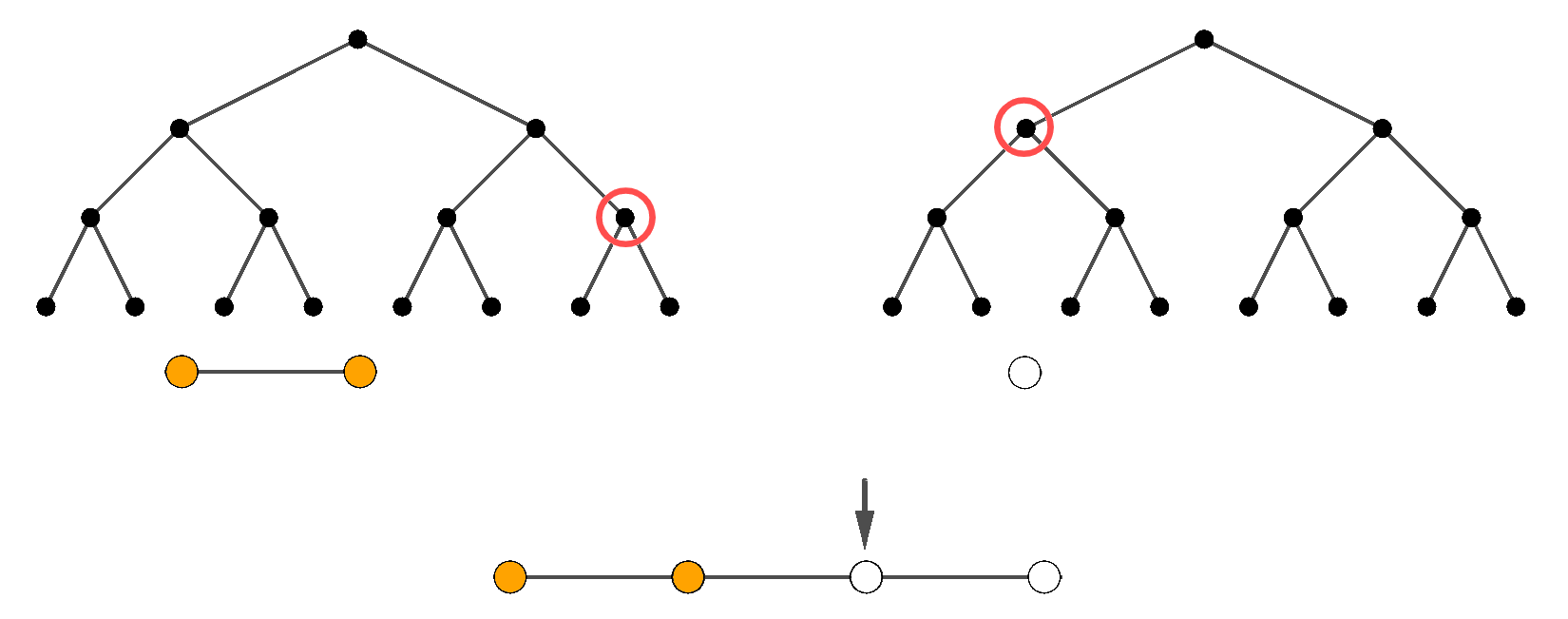} &
\includegraphics[width=0.22\linewidth]{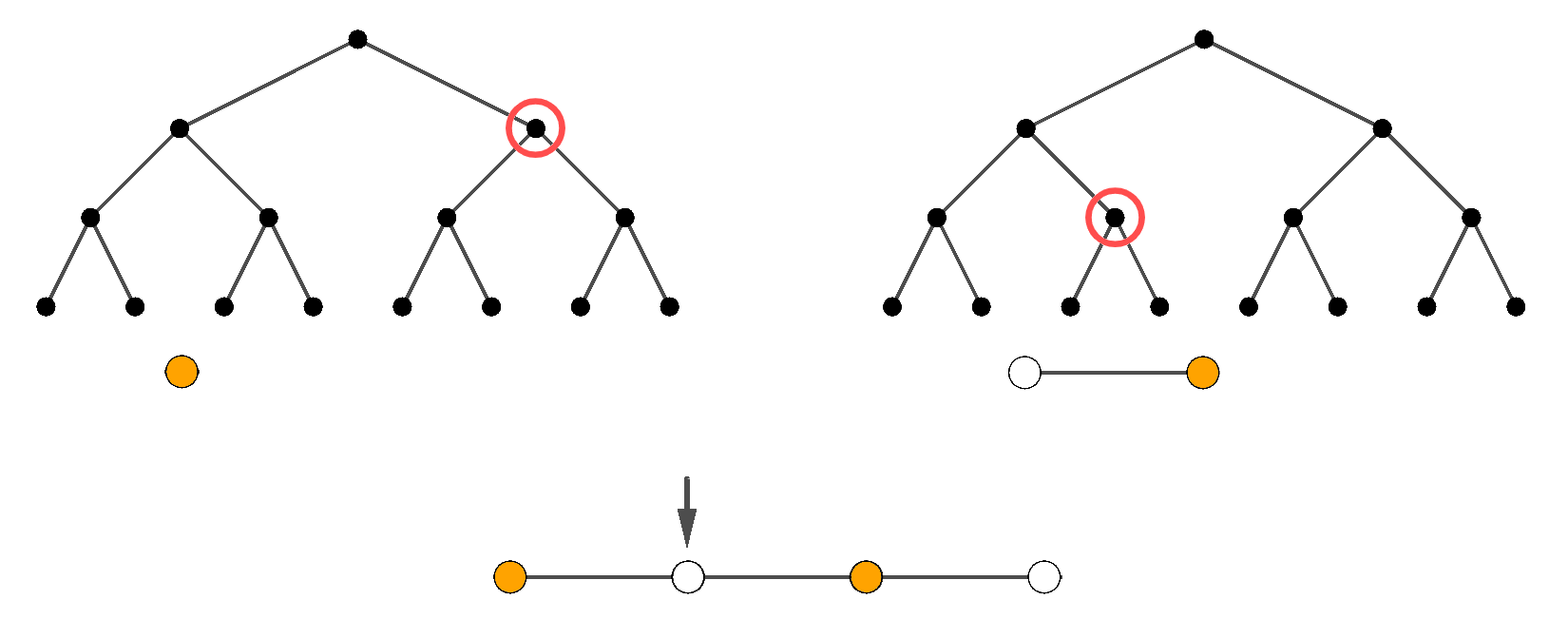} \\ \hline
\includegraphics[width=0.22\linewidth]{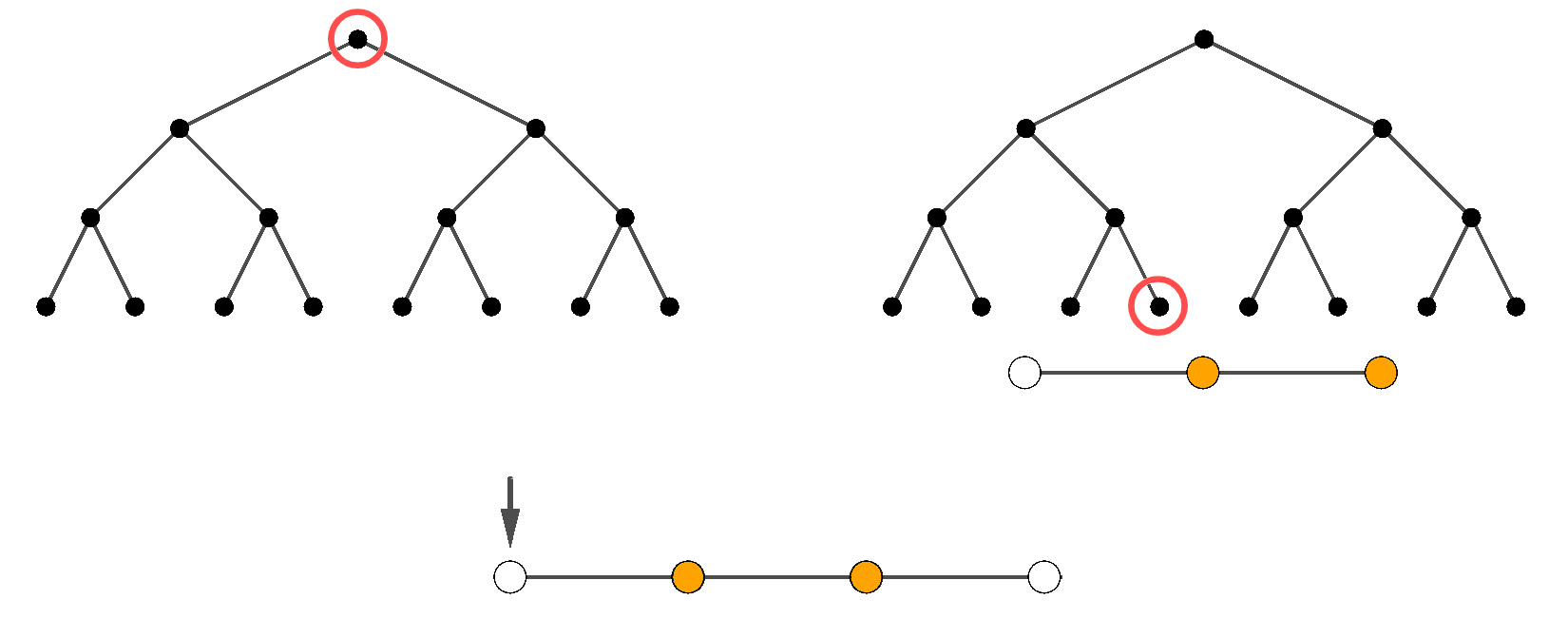} &
\includegraphics[width=0.22\linewidth]{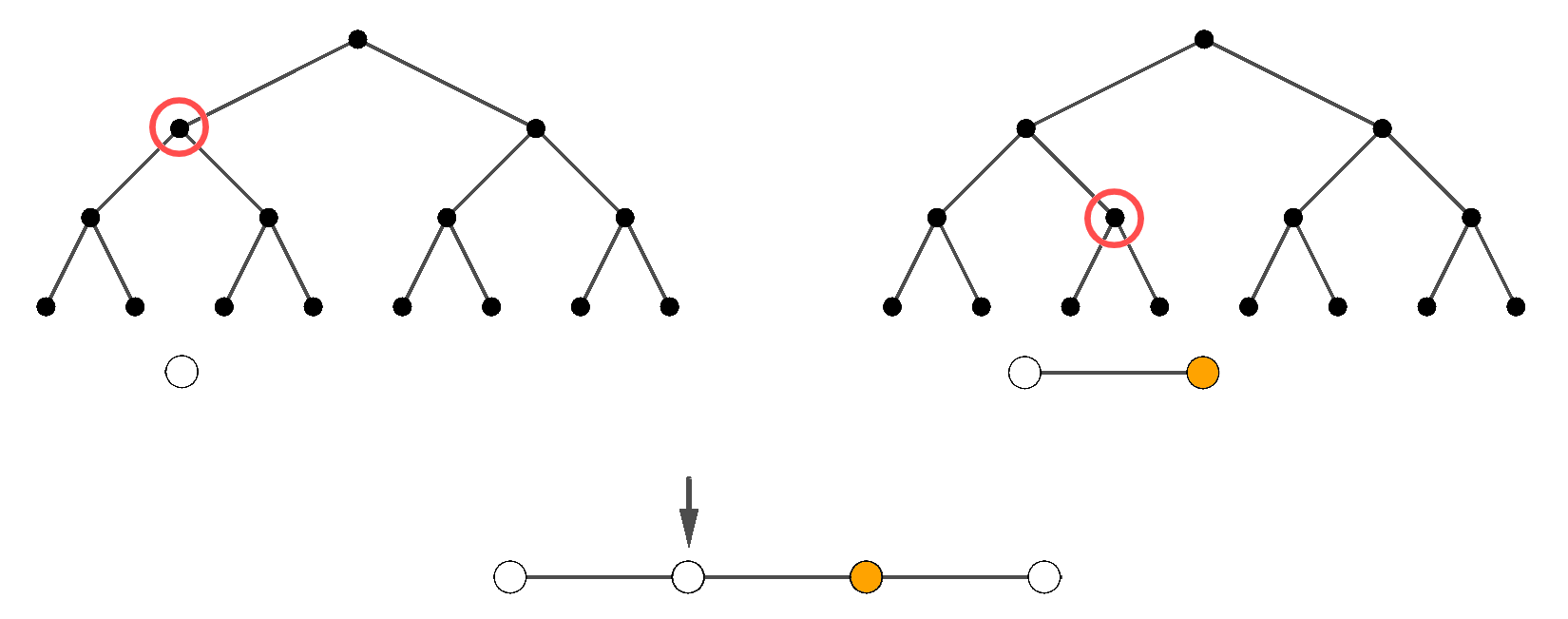} &
\includegraphics[width=0.22\linewidth]{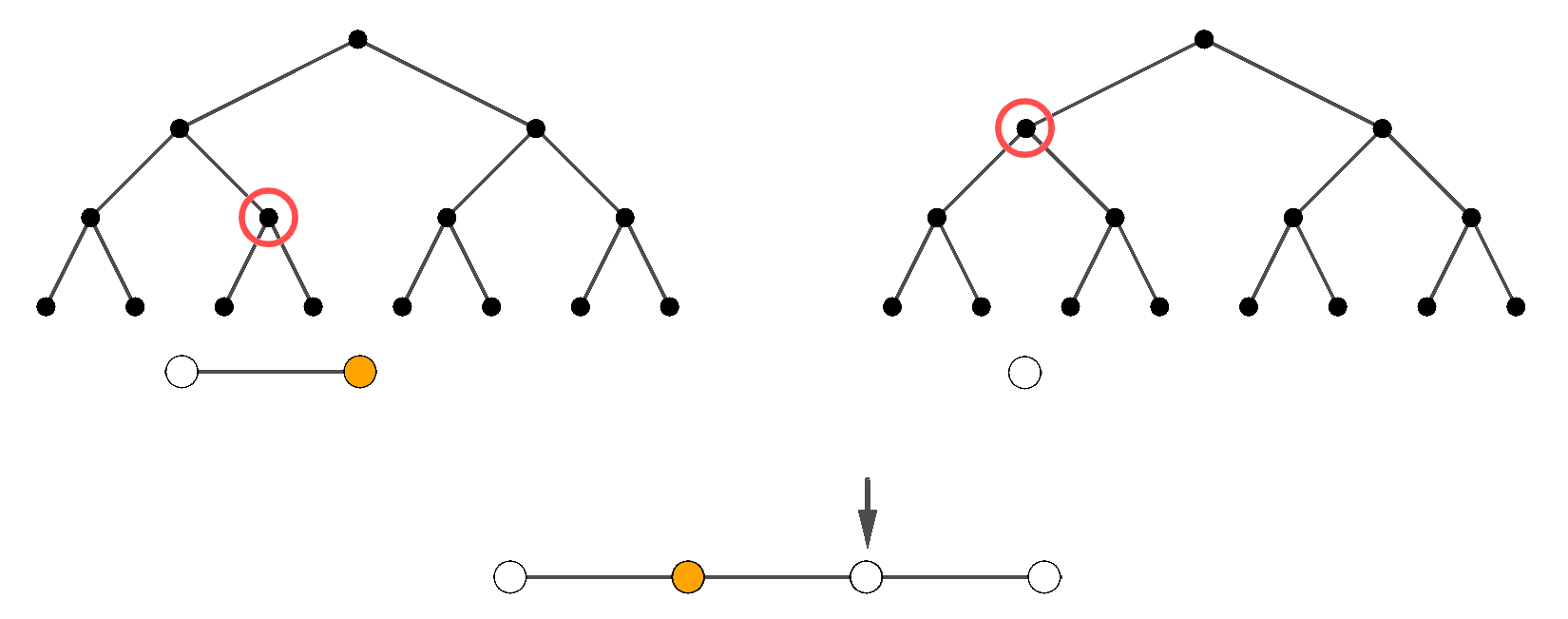} &
\includegraphics[width=0.22\linewidth]{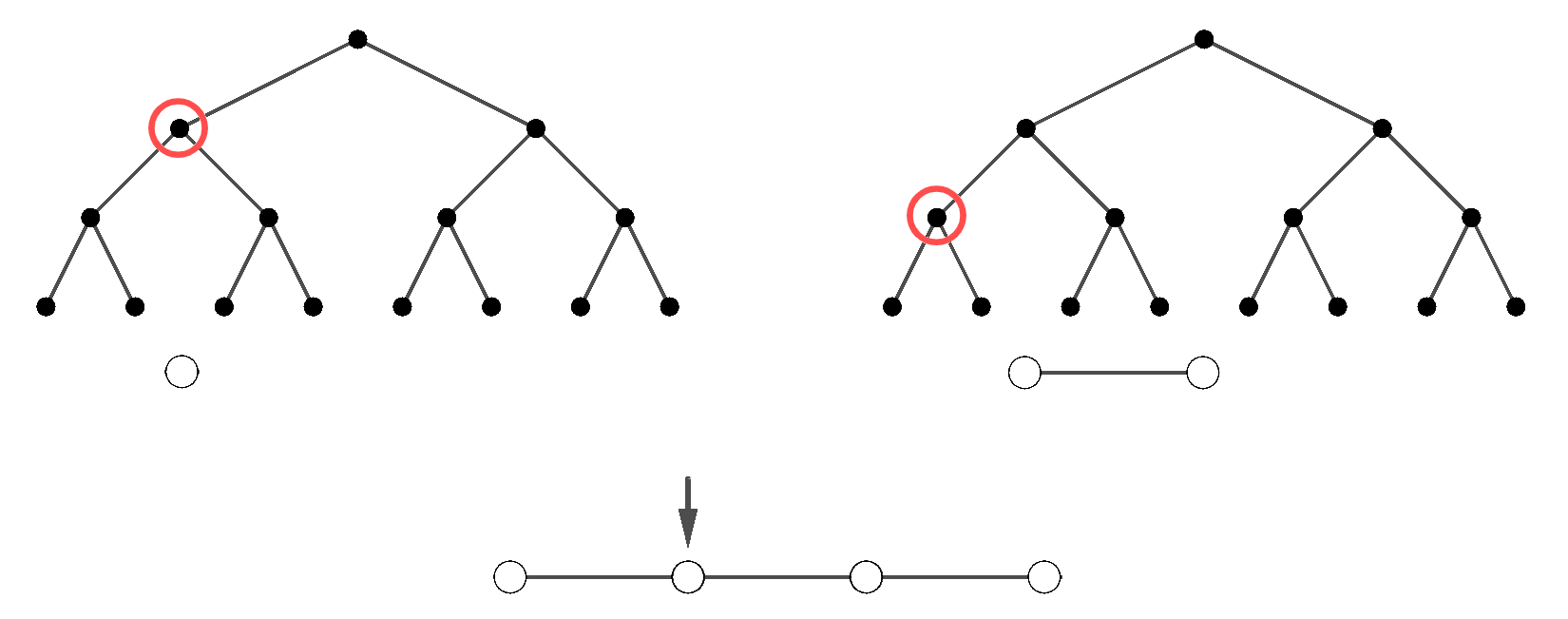} \\ \hline
\end{tabular}
\end{center}
\caption{The lamplighter as a horocyclic product.}
\label{Horo}
\end{figure}

\medskip \noindent
We can turn this set embedding into a graph embedding by choosing carefully a Cayley graph of $\mathbb{Z}_2 \wr \mathbb{Z}$. Let $a$ (resp.\ $t$) be a generator of the $\mathbb{Z}_2$-factor (resp.\ $\mathbb{Z}$-factor) of $\mathbb{Z}_2 \wr \mathbb{Z}$. Then, one can verify that $\Psi$ induces a graph embedding
$$\mathrm{Cayl}(\mathbb{Z}_2 \wr \mathbb{Z}, \{t, at \} ) \to T \boxtimes T,$$
where $\boxtimes$ denotes the strong product of graphs (i.e.\ the vertex set of $A \boxtimes B$ is $V(A) \times V(B)$ and two vertices $(a_1,b_1),(a_2,b_2)$ are connected by an edge if either $a_1=a_2$ and $b_1,b_2$ are adjacent, or $a_1,a_2$ are adjacent and $b_1=b_2$, or $a_1,a_2$ are adjacent and $b_1,b_2$ are adjacent). One can verify that this embedding is quasi-isometric, proving that $\mathbb{Z}_2 \wr \mathbb{Z}$ quasi-isometrically embeds into a product of two trees. Proposition~\ref{prop:CompressionLamp} follows as trees have Hilbert space compression $1$. 

\medskip \noindent
The Cayley graph $\mathrm{Cayl}(\mathbb{Z}_2 \wr \mathbb{Z}, \{t, at \})$, or equivalently its image in $T \boxtimes T$ under $\Psi$, is referred to as the \emph{Diestel-Leader graph} $\mathrm{DL}(n)$ or as the \emph{horocyclic product} $T \bowtie T$. It can be described as follows. First, notice that there is a natural map $\beta : V(T) \to \mathbb{Z}$ that sends, for every $n \in \mathbb{Z}$, every left-infinite word from $\mathbb{Z}_2^{((- \infty,n])}$ to $n$. (It is known as the \emph{Buseman function} of the point at infinity given by the limit of the $0$-words, and can be defined purely metrically.) Then, $\mathrm{DL}(n)$ is the subgraph of $T \boxtimes T$ induced by the set of vertices $\{ (u,v) \mid V(T) \times V(T) \mid \beta(u)+ \beta(v)=0 \}$. 

\medskip \noindent
This description of $\mathbb{Z}_2\wr \mathbb{Z}$ as a horocyclic product of two trees, which can be generalised to $\mathbb{Z}_n \wr \mathbb{Z}$ by replacing $T$ with en $(n+1)$-regular tree, is very specific to lamplighter groups over $\mathbb{Z}$ and is central in the proof of Theorem~\ref{thm:EFW}. 

\medskip \noindent
Of course, as a natural generalisation of our discussion, one could replace Hilbert spaces with other Banach spaces, such as $L^p$-spaces. We will only say a few words about $L^1$-spaces. First of all, it may be notice that:

\begin{thm}[\cite{MR2214573}]
A separable metric space coarsely embeds into a Hilbert space if and only if it coarsely embeds into an $L^1$-space.
\end{thm}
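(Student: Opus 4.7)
The plan is to prove both implications by constructing explicit linear embeddings between Hilbert spaces and $L^1$-spaces, and composing them with the given coarse embedding. In fact, we will use the stronger classical facts that a Hilbert space embeds \emph{isometrically} into $L^1$ (up to a multiplicative constant), and that the \emph{square root} of an $L^1$-metric embeds isometrically into a Hilbert space.

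For the direction (Hilbert $\Rightarrow L^1$), fix a coarse embedding $X \to H$ into a separable Hilbert space. Choose an orthonormal basis $(e_i)_{i \geq 1}$ of $H$ and a sequence $(g_i)_{i \geq 1}$ of independent standard Gaussian random variables on a probability space $(\Omega, \mathbb{P})$, and consider the linear map
$$\Phi : H \to L^1(\Omega, \mathbb{P}), \quad x \mapsto \sum_{i \geq 1} \langle x, e_i \rangle g_i,$$
where the series converges in $L^2(\Omega)$. By the stability of the Gaussian distribution, $\Phi(x) - \Phi(y) = \Phi(x-y)$ is a centered Gaussian with variance $\|x-y\|_H^2$, so
$$\|\Phi(x) - \Phi(y)\|_{L^1(\Omega)} = \mathbb{E}|g_1| \cdot \|x-y\|_H.$$
Composing the coarse embedding $X \to H$ with $\Phi$ produces a coarse embedding $X \to L^1(\Omega, \mathbb{P})$.

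For the direction ($L^1 \Rightarrow$ Hilbert), the key ingredient is a Schoenberg-type theorem (Bretagnolle--Dacunha-Castelle--Krivine): there exists a map $\Psi : L^1 \to L^2$ satisfying $\|\Psi(f) - \Psi(g)\|_{L^2}^2 = \|f - g\|_{L^1}$ for all $f,g \in L^1$. Concretely, for nonnegative $f \in L^1(Z, \mu)$ one defines $\tilde{f}(z,t) := \mathds{1}_{[0,f(z)]}(t)$ in $L^2(Z \times \mathbb{R}_{\geq 0}, \mu \otimes dt)$ and computes
$$\|\tilde{f} - \tilde{g}\|_{L^2}^2 = \int \bigl|\mathds{1}_{[0,f(z)]}(t) - \mathds{1}_{[0,g(z)]}(t)\bigr| \, dt \, d\mu(z) = \int |f(z)-g(z)| \, d\mu(z) = \|f-g\|_{L^1},$$
using $|\mathds{1}_A - \mathds{1}_B|^2 = |\mathds{1}_A - \mathds{1}_B|$. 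Extending to signed functions via positive and negative parts yields the desired map $\Psi$. Now if $\varphi : X \to L^1$ is a coarse embedding with controls $\rho_-(d(x,y)) \leq \|\varphi(x)-\varphi(y)\|_{L^1} \leq \rho_+(d(x,y))$, then $\Psi \circ \varphi : X \to L^2$ satisfies
$$\sqrt{\rho_-(d(x,y))} \leq \|\Psi \varphi(x) - \Psi \varphi(y)\|_{L^2} \leq \sqrt{\rho_+(d(x,y))},$$
and since $\sqrt{\rho_\pm}$ still tends to $+\infty$ at infinity, this is a coarse embedding into the Hilbert space $L^2$.

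The main obstacle lies in the construction and verification of $\Psi$, which rests on the negative-type structure of the $L^1$-metric; once this classical ingredient is granted, the conclusion reduces to composition and the elementary fact that square roots preserve divergence at infinity. Separability of $X$ ensures that in both directions only a separable subspace of the ambient $L^1$ or Hilbert space is ever used, so the target spaces may be taken fixed and separable.
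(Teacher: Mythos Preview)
The paper does not prove this theorem at all: it is stated with a citation to \cite{MR2214573} and used as a black box, so there is no ``paper's own proof'' to compare against. Your argument is essentially the standard one found in the literature (Gaussian embedding for Hilbert $\Rightarrow L^1$; the indicator-function/negative-type construction for $L^1 \Rightarrow$ Hilbert), and it is correct in outline.

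One small point deserves care. In the $L^1 \Rightarrow$ Hilbert direction, the sentence ``extending to signed functions via positive and negative parts'' hides a trap: if you send $f$ to the pair $(\widetilde{f^+},\widetilde{f^-})$ in a direct sum, you do \emph{not} get $\|\Psi(f)-\Psi(g)\|_2^2 = \|f-g\|_1$, because $\|f-g\|_1 \neq \|f^+-g^+\|_1 + \|f^- - g^-\|_1$ in general. The clean fix is to work on $Z\times\mathbb{R}$ (not $Z\times\mathbb{R}_{\geq 0}$), fix a basepoint $f_0$, and set $\Psi(f)(z,t) := \mathds{1}_{\{t < f(z)\}} - \mathds{1}_{\{t < f_0(z)\}}$; this lands in $L^2(Z\times\mathbb{R})$ and the same one-line computation gives $\|\Psi(f)-\Psi(g)\|_2^2 = \|f-g\|_1$ for all $f,g$. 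With that adjustment your proof is complete.
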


\noindent
Therefore, Theorem~\ref{thm:CoarselyWreath} shows that a wreath product of graphs that coarsely embed into $L^1$-spaces also coarsely embeds into an $L^1$-space. Actually, Theorem~\ref{thm:CoarselyWreath} is proved by constructing coarse embeddings into $L^1$-spaces, exploiting the strong connection that exists between $L^1$-spaces and median spaces.  

\begin{definition}
A metric space $(X,d)$ is \emph{median} if, for all $x_1,x_2,x_3 \in X$, there exists a unique point $m \in X$ satisfying
$$d(x_i,x_j)= d(x_i,m)+d(m,x_j) \text{ for all } i \neq j.$$
\end{definition}

\noindent
Basic examples of median spaces are $L^1$-spaces, but, conversely, every median space isometrically embeds into an $L^1$-space. Other typical examples of median spaces include $(\mathbb{R}^n, \| \cdot\|_1)$ and (simplicial or real) trees, as well as $\ell^1$-products of such spaces. Wreath products do not preserve median geometry. For instance, wreath products of trees are not median. In fact, median graphs are always coarsely simply connected, so we know from Theorem~\ref{thm:NotCoarselySimplyConnected} that most wreath products of graphs are not median. However, \cite{MR4449680} constructs \emph{diadem products} of median spaces, and shows that, given two graphs $X,Y$ endowed with coarse embeddings $X \hookrightarrow M$, $Y \hookrightarrow N$ into median spaces, a wreath product $(X,o) \wr Y$ coarsely embeds into a diadem product $(M,m) \circledast N$, which is median. 

\medskip \noindent
We refer to \cite{MR4449680} for a general definition of diadem products of median spaces, but we can at least give the definition of diadem products of median graphs:

\begin{definition}
Let $X,Y$ be two median graphs and $o \in V(X)$ a basepoint. A \emph{polytope} in $Y$ is the convex hull of finitely many vertices. Let $\mathscr{P}(Y)$ denote the poset of non-empty polytopes of $Y$ ordered by inclusion. The diadem product $(X,o) \circledast Y$ is the graph
\begin{itemize}
	\item whose vertices are the pairs $(c,P)$ where $P \in \mathscr{P}(Y)$ is a polytope and where $c : V(Y) \to V(X)$ is a colouring such that $c(y)=o$ for all but finitely many vertices $y \in V(Y)$;
	\item whose edges connect $(c_1,P_1)$ and $(c_2,P_2)$ if either
	\begin{itemize}
		\item $c_1,c_2$ differ only in $P_1=P_2$, where they take adjacent values at each vertex;
		\item $c_1=c_2$ and $P_2$ \emph{covers} $P_1$, i.e.\ $P_1 \subsetneq P_2$ but every polytope $P$ satisfying $P_1 \subset P \subset P_2$ must coincide with $P_1$ or $P_2$.
	\end{itemize}
\end{itemize}
\end{definition}

\noindent
\begin{minipage}{0.53\linewidth}
\begin{tabular}{|c|c|c|} \hline
\includegraphics[trim=0 22cm 43cm 0,clip,width=0.27\linewidth]{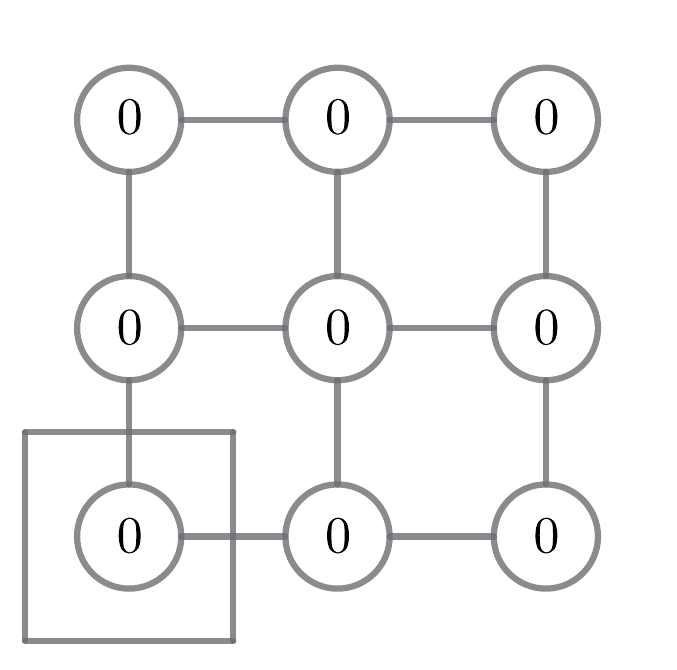} &
\includegraphics[trim=0 22cm 43cm 0,clip,width=0.27\linewidth]{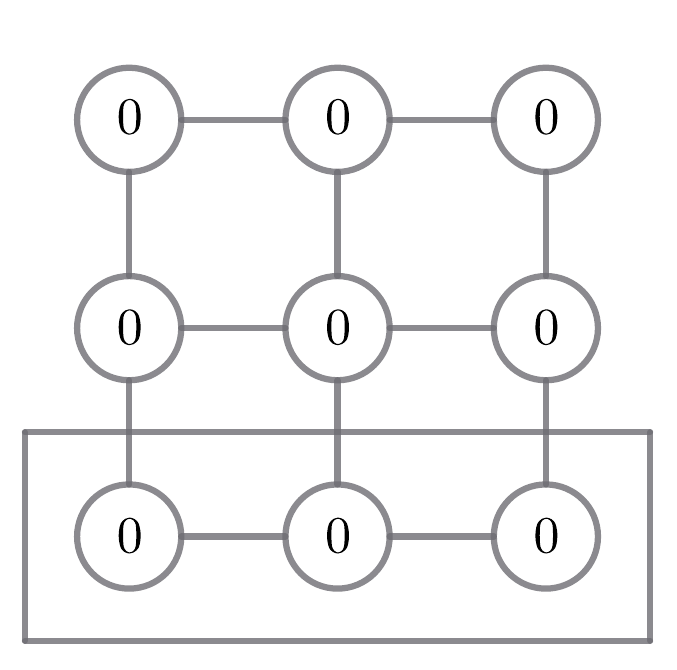} &
\includegraphics[trim=0 22cm 43cm 0,clip,width=0.27\linewidth]{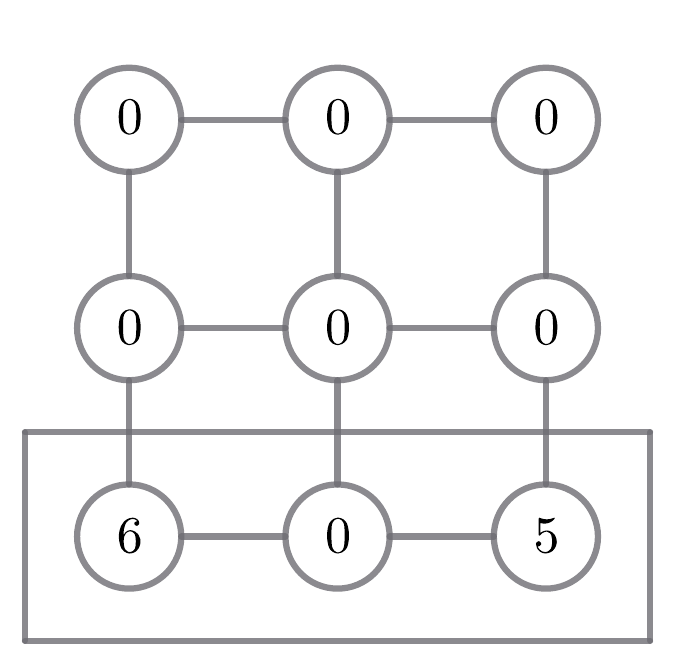} \\ \hline
\includegraphics[trim=0 22cm 43cm 0,clip,width=0.27\linewidth]{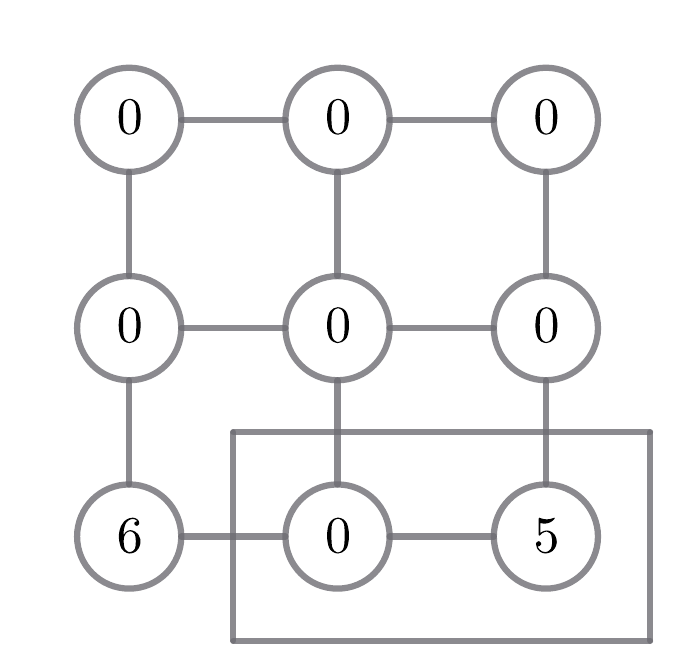} &
\includegraphics[trim=0 22cm 43cm 0,clip,width=0.27\linewidth]{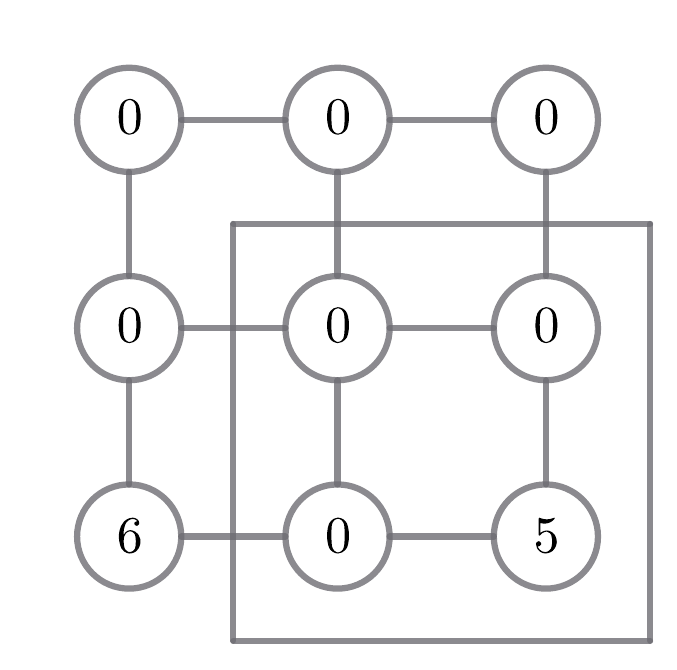} &
\includegraphics[trim=0 22cm 43cm 0,clip,width=0.27\linewidth]{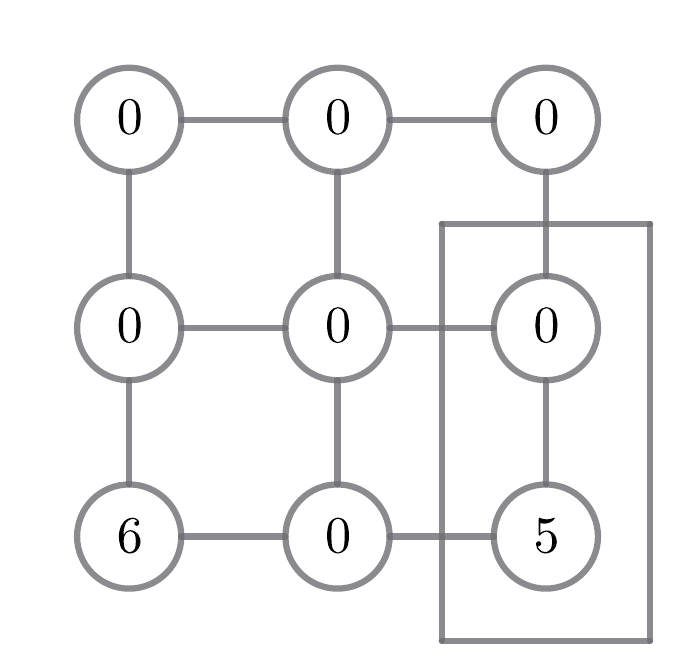} \\ \hline
\includegraphics[trim=0 22cm 43cm 0,clip,width=0.27\linewidth]{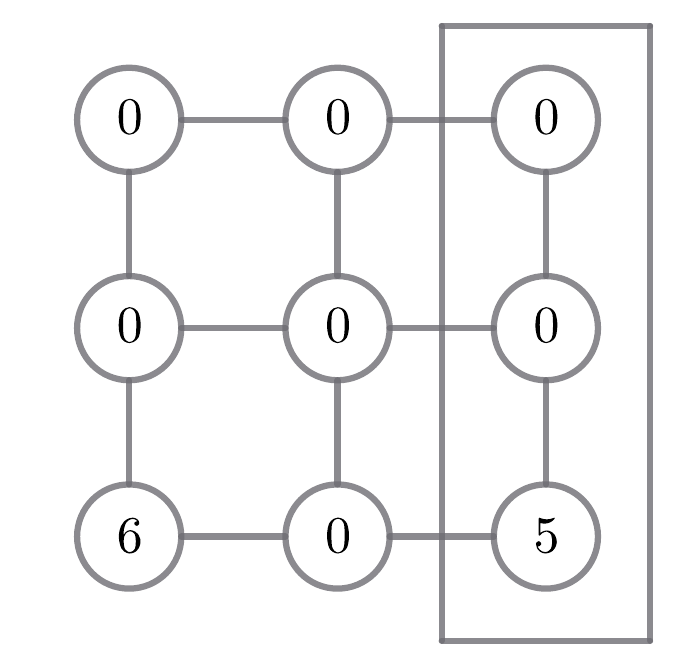} &
\includegraphics[trim=0 22cm 43cm 0,clip,width=0.27\linewidth]{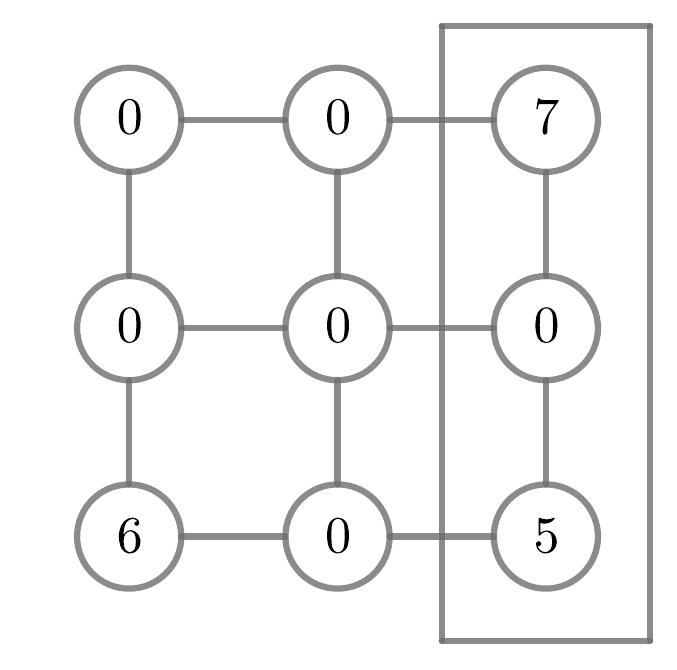} &
\includegraphics[trim=0 22cm 43cm 0,clip,width=0.27\linewidth]{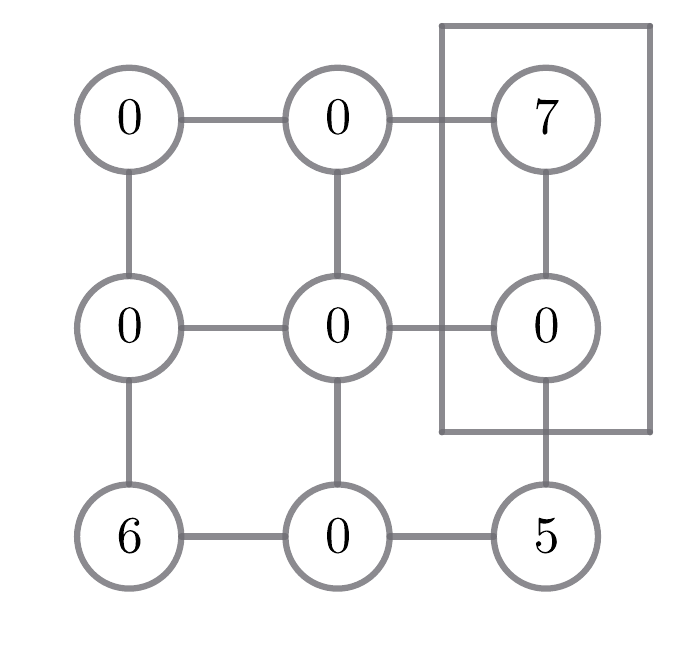} \\ \hline
\end{tabular}
\end{minipage}
\begin{minipage}{0.45\linewidth}
Loosely speaking, we start from the definition of wreath products of graphs and we replace the arrow, which points at a single vertex, with a polytope, which may encompass arbitrarily many vertices. Then, the polytope can modify the colour of every lamp it contains. 
\end{minipage}

\subsection{Bonus: coarse dimensions}\label{section:CoarseDim}

\noindent
In topology, the notion of dimension is delicate. Several definitions coexist and capture different features of topological spaces. From the perspective of coarse topology, topological dimension (also known as Lebesgue covering dimension) is worth mentioning as it naturally leads to a notion of coarse dimension. First, let us recall the definition of topological dimension:

\begin{definition}
The \emph{topological dimension} of a topological space is the smallest integer $d \geq 0$ for which every finite open cover has an open refinement of order $d+1$.
\end{definition}

\noindent
Recall that the \emph{order} of an open cover $\mathcal{O}$ is the smallest integer $n$ for which every point belongs to $\leq n$ open subsets of $\mathcal{O}$. Also, a \emph{refinement} of $\mathcal{O}$ is an open cover $\mathcal{O}'$ such that every $O' \in \mathcal{O}'$ is contained in some $O \in \mathcal{O}$. 

\medskip \noindent
Inspired by this definition of topological dimension:

\begin{definition}
A metric space $X$ has \emph{asymptotic dimension} $\leq d$ if, for every $R \geq 1$, there exists a uniformly bounded cover $\mathcal{O}$ of $X$ such that every closed $R$-balls in $X$ intersects at most $d+1$ subsets from $\mathcal{O}$.  A metric space has \emph{asymptotic dimension} $d$ if it has asymptotic dimension $\leq d$ but not asymptotic dimension $\leq d-1$. 
\end{definition}

\noindent
One can verify that the asymptotic dimension is preserved by quasi-isometries. (See Exercise~\ref{exo:AsDimCoarse}.) Clearly, bounded metric spaces have asymptotic dimension $0$; and, by covering the bi-infinite line with intervals of a given arbitrarily large width, one easily sees that $\mathbb{Z}$ has asymptotic dimension $1$. 

\medskip \noindent
\begin{minipage}{0.46\linewidth}
\includegraphics[width=\linewidth]{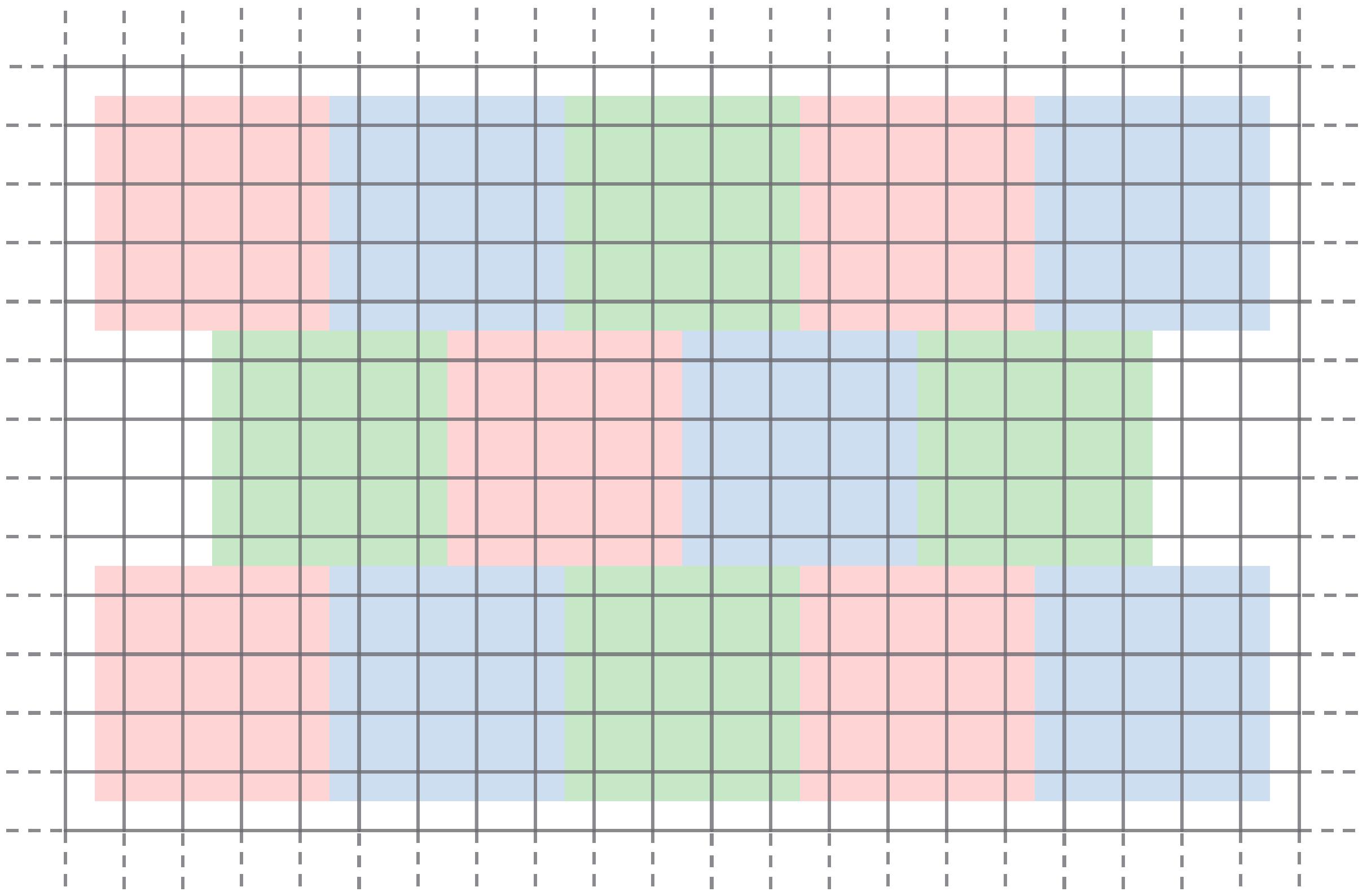}
\end{minipage}
\begin{minipage}{0.53\linewidth}
A more interesting example is $\mathbb{Z}^2$. The covering illustrated by the figure on the left shows that $\mathbb{Z}^2$ has asymptotic dimension $\leq 2$. One can show that $\mathbb{Z}^2$ has asymptotic dimension exactly $2$, but this requires a better understanding of asymptotic dimension. 
\end{minipage}

\medskip \noindent
We refer the reader to the survey \cite{MR2423966} and references therein for more information about asymptotic dimension. Here, we only record the following observation about wreath products:

\begin{prop}
Let $A$ and $B$ be two finitely generated groups, with $A$ non-trivial and $B$ infinite.
\begin{itemize}
	\item If $A$ is finite, then $\mathrm{asdim}(A \wr B) = \mathrm{asdim}(B)$.
	\item If $A$ is infinite, then $\mathrm{asdim}(A \wr B)= \infty$. 
\end{itemize}
\end{prop}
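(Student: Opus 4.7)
The plan is to treat the two cases separately, using the distance formula for wreath products (Lemma~\ref{lem:DistWreath}) and the monotonicity of asymptotic dimension under coarse embeddings (a standard strengthening of the quasi-isometry-invariance of Exercise~\ref{exo:AsDimCoarse}).

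For case (i), I will prove both inequalities. The bound $\mathrm{asdim}(B) \leq \mathrm{asdim}(A \wr B)$ is immediate: $B$ sits as a subgroup of $A \wr B$, so its inclusion is a coarse embedding (by the earlier lemma on subgroups), and asymptotic dimension is monotone under coarse embeddings. For the opposite bound, set $d := \mathrm{asdim}(B)$, fix a scale $R \geq 1$, and take a uniformly bounded cover $\mathcal{U}$ of $B$ such that every $R$-ball meets at most $d+1$ members of $\mathcal{U}$. For each $U \in \mathcal{U}$ and each finitely supported colouring $\kappa : B \setminus U^{+2R} \to A$, I form
$$V_{U,\kappa} := \{ (c,p) \in A \wr B \mid p \in U \text{ and } c|_{B \setminus U^{+2R}} = \kappa \}.$$
These sets cover $A \wr B$. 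Each is uniformly bounded by Lemma~\ref{lem:DistWreath}, since its diameter is controlled by a travelling-salesman tour within $U^{+2R}$ plus at most $|A|-1$ lamp switches per vertex, and these quantities are uniform in $U$ and $\kappa$ because $B$ is locally finite. For the multiplicity, if $(c,p)$ belongs to $B((c_0,p_0),R) \cap V_{U,\kappa}$, Lemma~\ref{lem:DistWreath} forces $c \triangle c_0 \subset B(p_0,R)$, and the inequality $d(p,p_0) \leq R$ with $p \in U$ forces $B(p_0,R) \subset U^{+2R}$. Hence $c$ and $c_0$ agree on $B \setminus U^{+2R}$, which pins down $\kappa = c_0|_{B \setminus U^{+2R}}$. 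So each $U \in \mathcal{U}$ meeting $B(p_0,R)$ contributes a single piece, and the ball meets at most $d+1$ members of our cover.

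For case (ii), it suffices to coarsely embed $\mathbb{N}^n$ into $A \wr B$ for every $n \geq 1$, since $\mathrm{asdim}(\mathbb{N}^n) = n$. Because $A$ is infinite and locally finite, K\"onig's lemma supplies an isometrically embedded geodesic ray $\gamma : \mathbb{N} \hookrightarrow A$ starting at $1_A$; picking $n$ distinct vertices $b_1, \ldots, b_n \in B$, I set
$$\Phi(k_1, \ldots, k_n) := (c_{\vec{k}}, 1_B), \qquad c_{\vec{k}}(b_i) := \gamma(k_i), \quad c_{\vec{k}}(b) := 1_A \text{ otherwise}.$$
Lemma~\ref{lem:DistWreath} then expresses $d(\Phi(\vec{k}), \Phi(\vec{k}'))$ as a travelling-salesman cost bounded by a constant depending only on $b_1, \ldots, b_n$, plus a lamp cost equal to $\sum_i |k_i - k_i'|$. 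This yields a quasi-isometric embedding of $(\mathbb{N}^n, \ell^1)$ into $A \wr B$, and hence the desired lower bound $\mathrm{asdim}(A \wr B) \geq n$ for every $n$.

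The main obstacle is calibrating the thickening $U^+$ in case (i): it must be large enough that the external colouring $\kappa$ is pinned down by the centre of any $R$-ball, yet small enough that each $V_{U,\kappa}$ remains uniformly bounded. The choice $U^{+2R}$ balances these requirements, because any closed $R$-ball centred at a point of $U$ is automatically contained in $U^{+2R}$, while local finiteness of $B$ keeps $|U^{+2R}|$ and $\mathrm{diam}(U^{+2R})$ uniform once $R$ is fixed.
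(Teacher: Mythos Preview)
Your proof is correct. For the case $A$ infinite, your argument is essentially the paper's: both embed $\mathbb{N}^n$ (or $\mathbb{Z}^n$) quasi-isometrically into $A\wr B$ by placing independent copies of a ray in $A$ at $n$ distinct lamps, and invoke $\mathrm{asdim}(\mathbb{Z}^n)=n$.

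For the case $A$ finite, however, your route is genuinely different from the paper's. The paper does not build covers at all: it appeals to the short exact sequence $1\to\bigoplus_B A\to A\wr B\to B\to 1$ and to the Hurewicz-type inequality $\mathrm{asdim}(A\wr B)\le\mathrm{asdim}(\bigoplus_B A)+\mathrm{asdim}(B)$ from \cite[Theorem~63]{MR2423966}, combined with the fact that locally finite groups have asymptotic dimension~$0$ (Exercise~\ref{exo:AsDimCoarse}). Your argument is more hands-on: you lift a $(d{+}1)$-multiplicity cover $\mathcal{U}$ of $B$ to the cover $\{V_{U,\kappa}\}$ of $A\wr B$ by freezing the colouring outside $U^{+2R}$, and verify multiplicity directly via the distance formula. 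This buys you a self-contained proof that does not invoke the extension theorem as a black box, and makes the mechanism (the external colouring is determined by the centre of any $R$-ball) completely transparent; the paper's approach, by contrast, is shorter and places the result within a general framework that applies to any extension with locally finite kernel.
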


\begin{proof}
First, if $A$ is infinite, then there exists a quasi-isometric embedding $\mathbb{Z} \to A$. Since $B$ is infinite, $A \wr B$ contains a quasi-isometrically embedded copy of $A^d$, and a fortiori of $\mathbb{Z}^d$, for every $d \geq 1$. Since $\mathbb{Z}^d$ has asymptotic dimension $d$, necessarily $A \wr B$ has asymptotic dimension $\geq d$ (see Exercise~\ref{exo:AsDimCoarse}). Thus, $A \wr B$ has infinite asymptotic dimension. From now on, we assume that $B$ is infinite. Our wreath product $A \wr B$ satisfies a short exact sequence
$$1 \to \bigoplus\limits_B A \to A \wr B \to B \to 1,$$
hence $\mathrm{asdim}(A \wr B) \leq \mathrm{asdim}( \bigoplus_B A) + \mathrm{asdim}(B)$ according to \cite[Theorem~63]{MR2423966}. Here, $\bigoplus_BA$ is not finitely generated. However, as shown in Exercise~\ref{exo:AsDimCoarse}, every countable group can be endowed with a canonical metric up to coarse equivalence. Since asymptotic dimension is preserved by coarse equivalence, it makes sense to refer to the asymptotic dimension of a countable group. Since locally finite groups have asymptotic dimension $0$ (again, see Exercise~\ref{exo:AsDimCoarse}), it follows that $A \wr B$ has asymptotic dimension $\leq \mathrm{asdim}(B)$. The reverse equality follows from the fact that $B$ is a subgroup of $A \wr B$. 
\end{proof}

\noindent
Another interesting notion of dimension that is preserved by quasi-isometries is based on \emph{asymptotic cones}. Loosely speaking, given a metric space $(X,d)$, an asymptotic cone is the ``limit'' of the sequence of metric spaces $(X, d/n)$ as $n \to + \infty$, which would correspond to the picture one gets of our metric space when ``zooming out to infinity''. For instance, as illustrated by the figure below, one expects the asymptotic cone of the infinite grid $\mathbb{E}^2$ to be the Euclidean plane $\mathbb{R}^2$ (endowed with the $\ell^1$-metric). 

\begin{center}
\begin{tabular}{|c|c|c|c|} \hline
\includegraphics[trim=3cm 5cm 13cm 4cm,clip,width=0.22\linewidth]{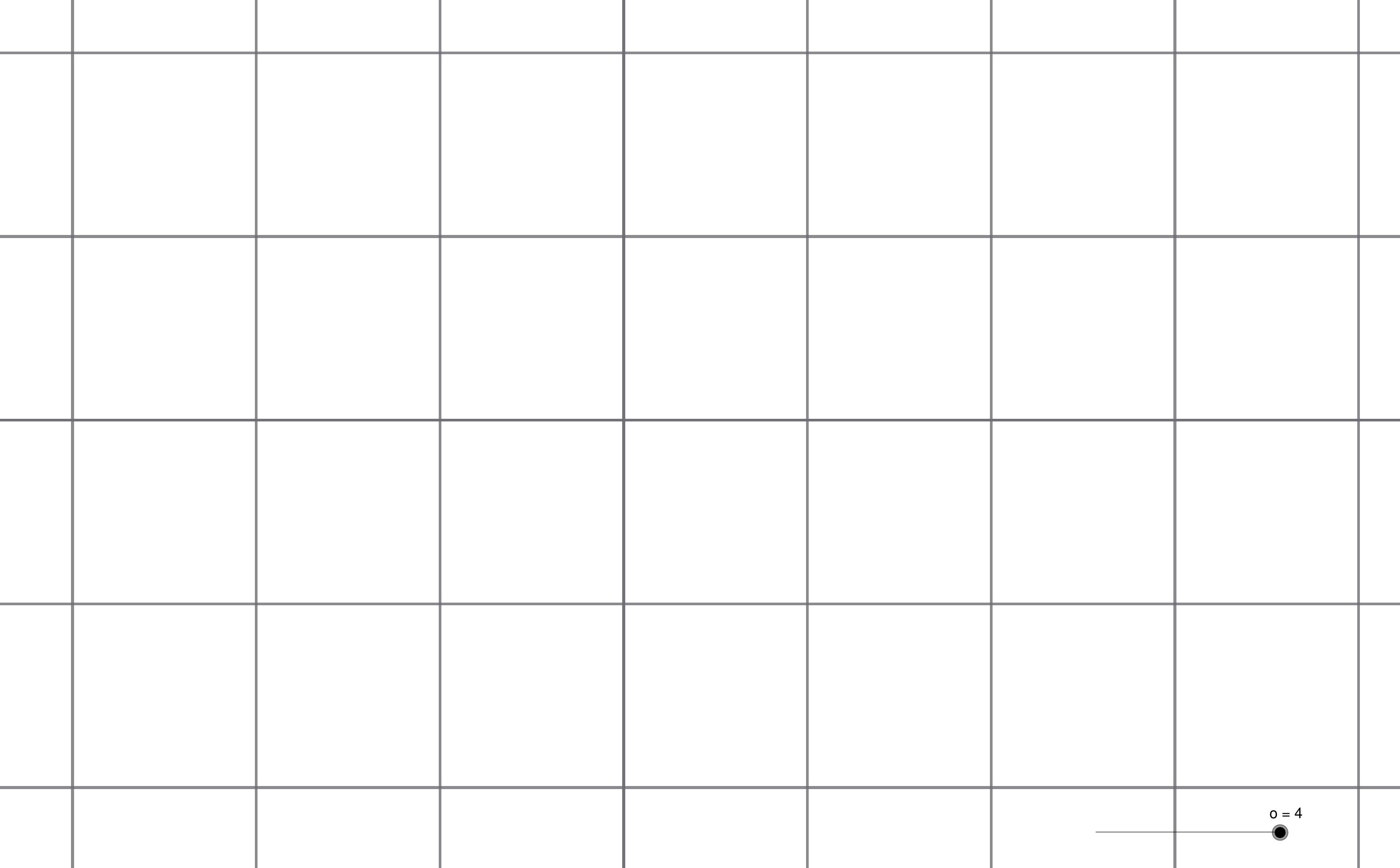} &
\includegraphics[trim=3cm 5cm 13cm 4cm,clip,width=0.22\linewidth]{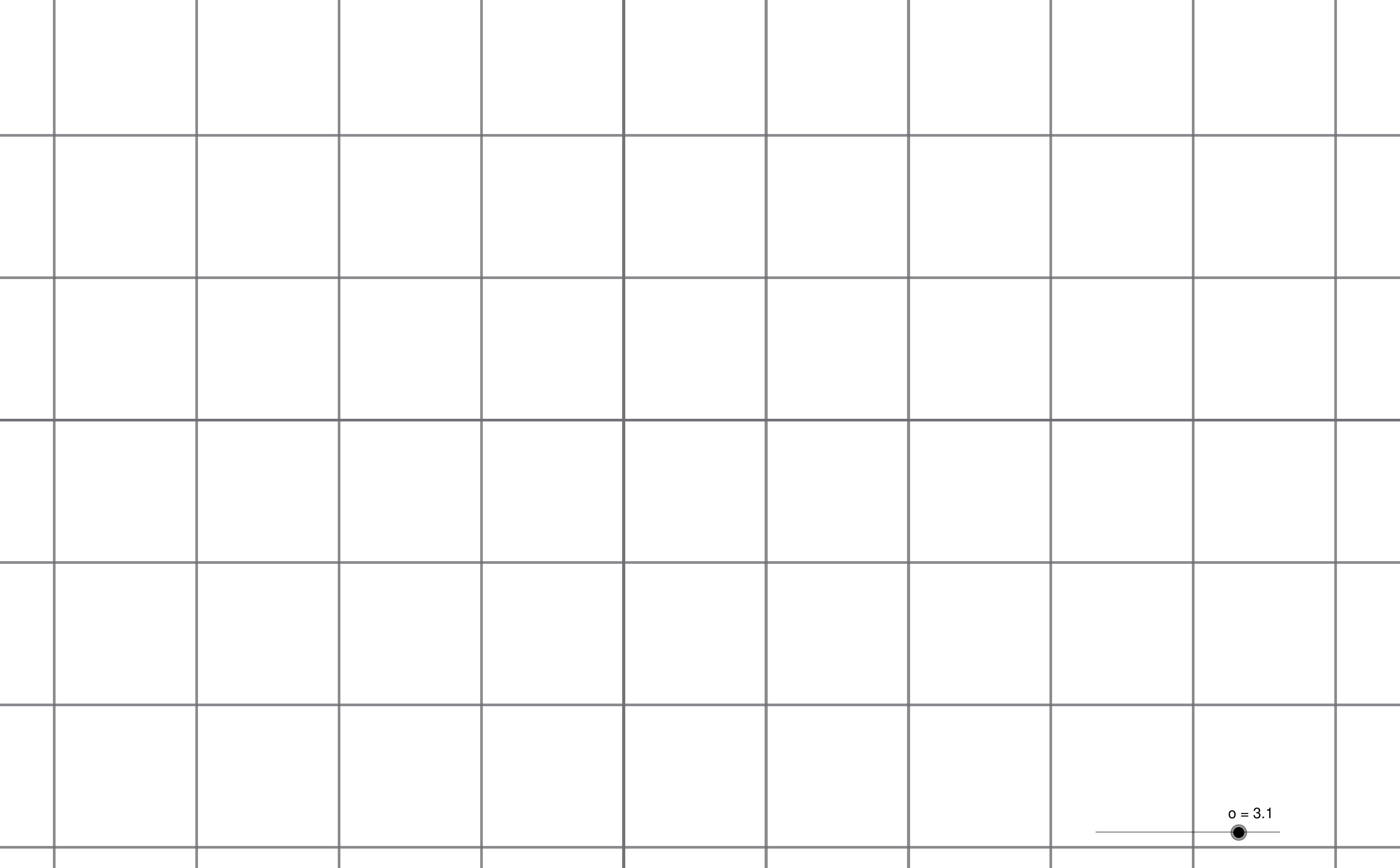} &
\includegraphics[trim=3cm 5cm 13cm 4cm,clip,width=0.22\linewidth]{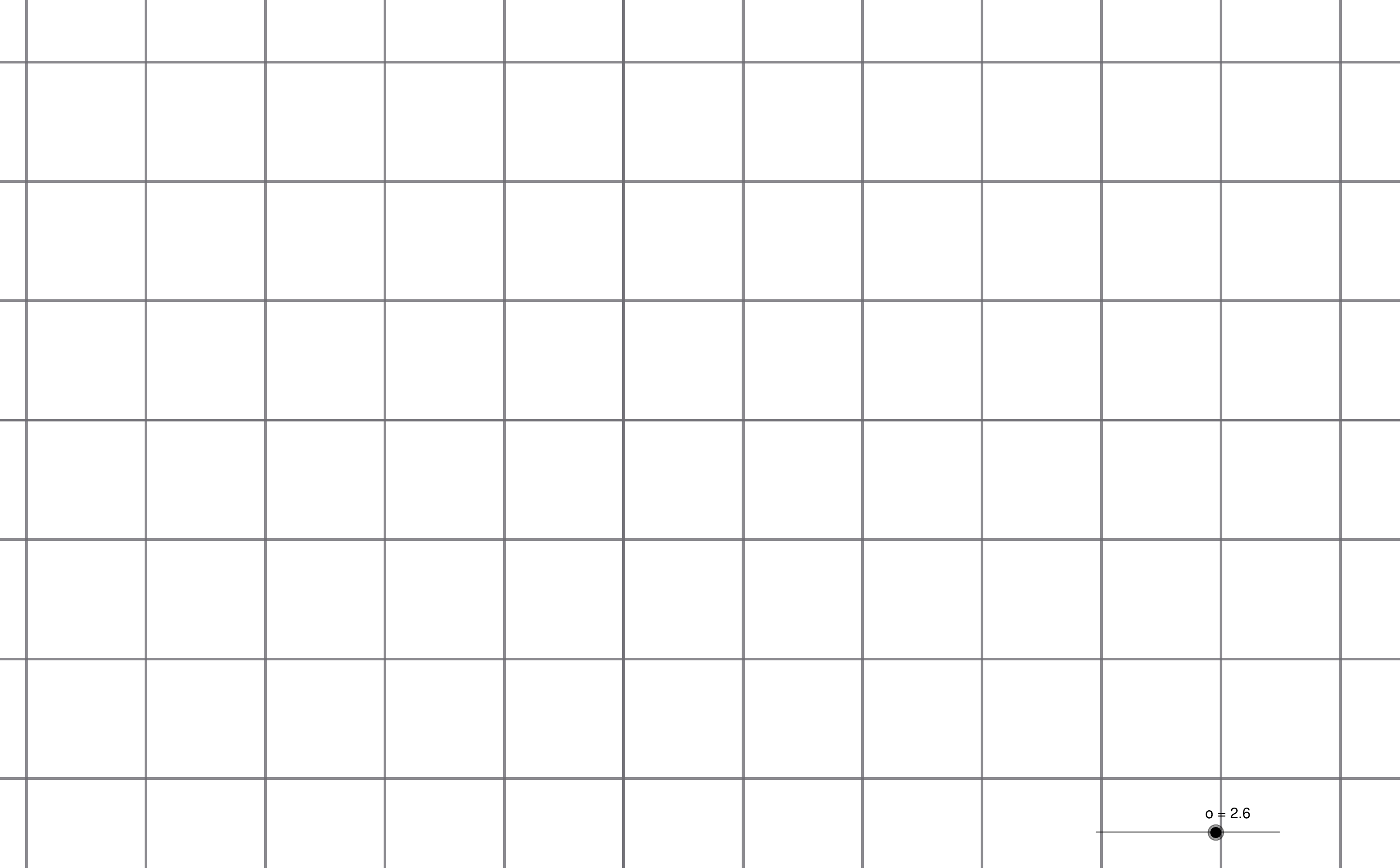} &
\includegraphics[trim=3cm 5cm 13cm 4cm,clip,width=0.22\linewidth]{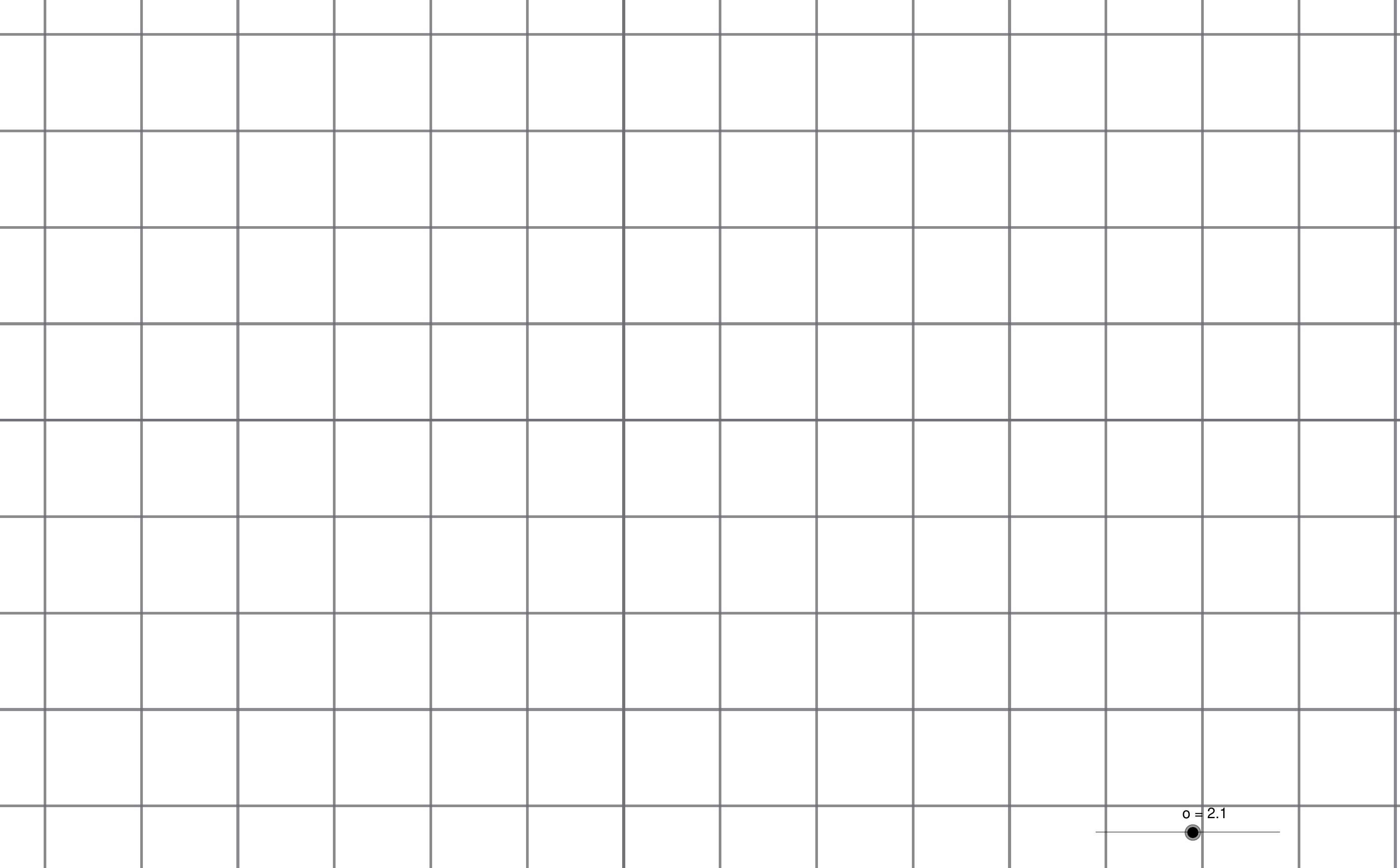} \\ \hline
\includegraphics[trim=3cm 5cm 13cm 4cm,clip,width=0.22\linewidth]{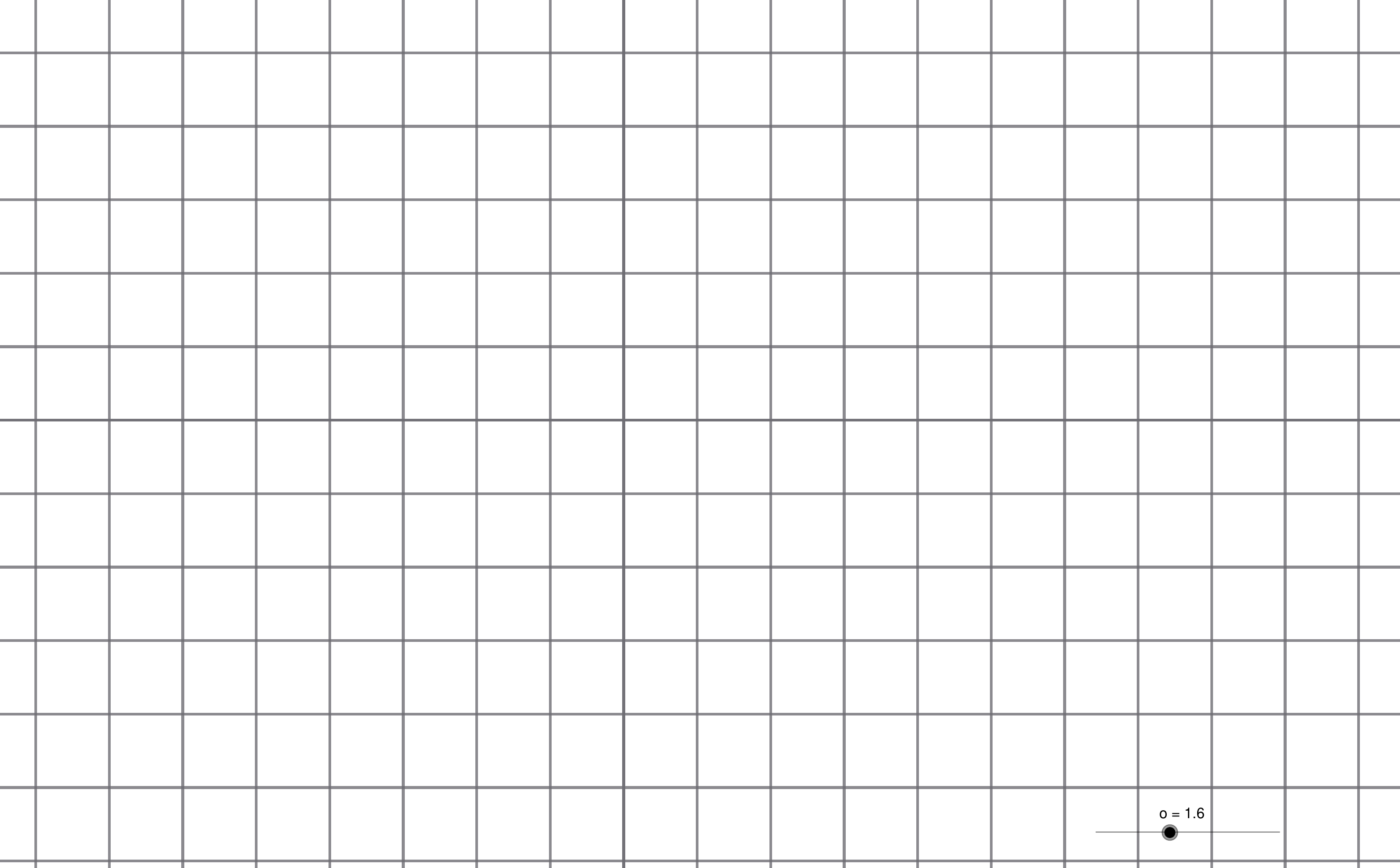} &
\includegraphics[trim=3cm 5cm 13cm 4cm,clip,width=0.22\linewidth]{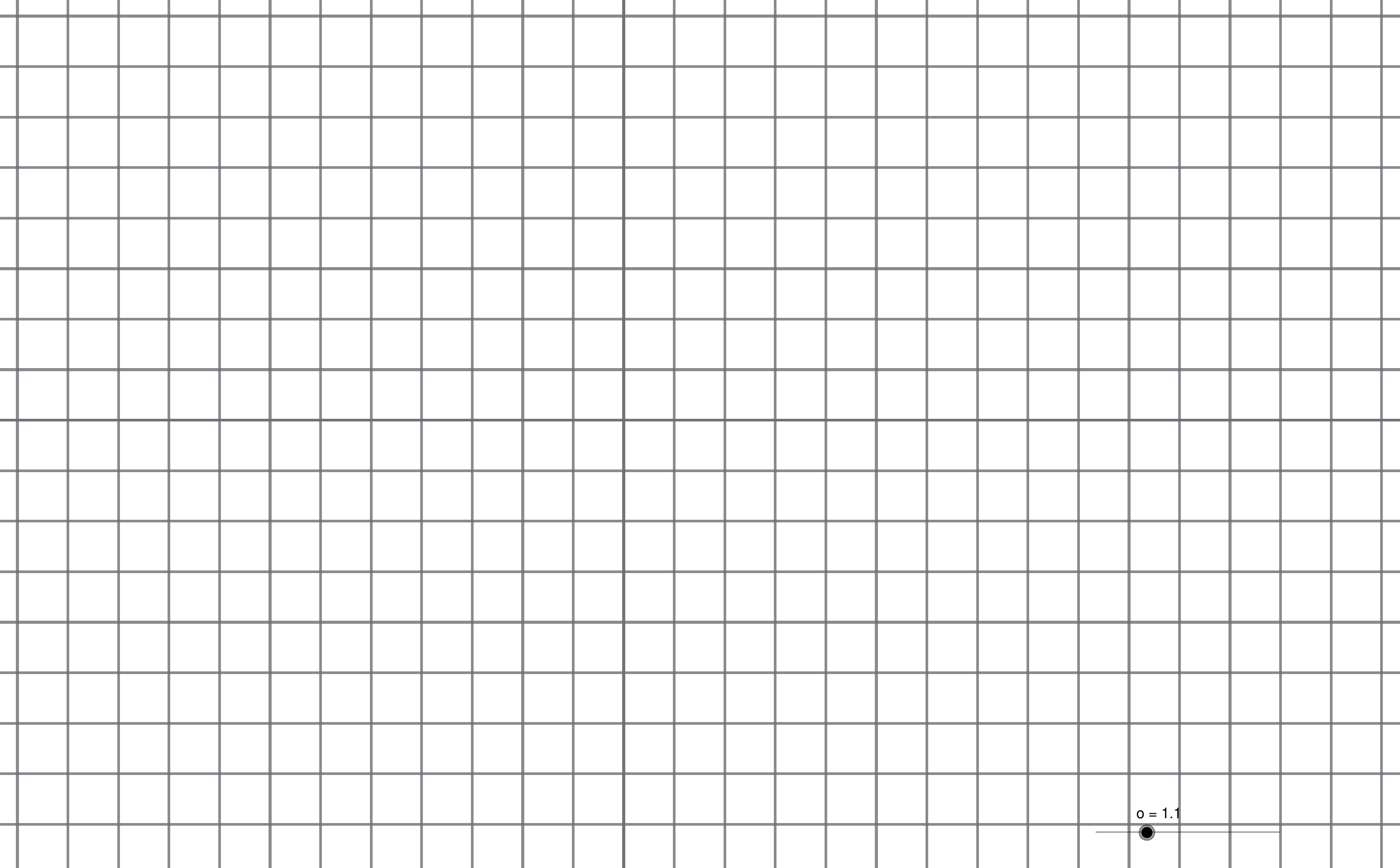} &
\includegraphics[trim=3cm 5cm 13cm 4cm,clip,width=0.22\linewidth]{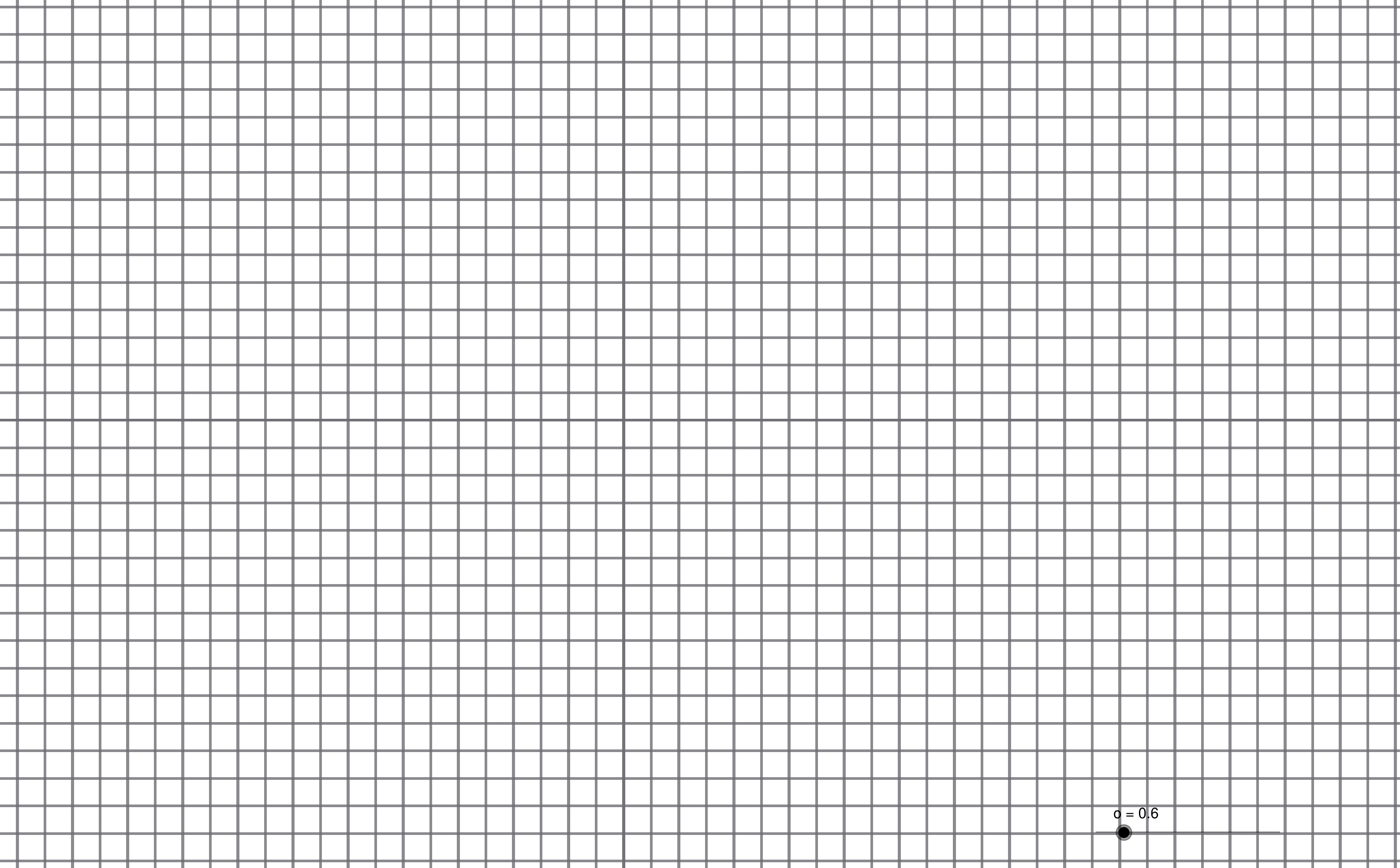} &
\includegraphics[trim=3cm 5cm 13cm 4cm,clip,width=0.22\linewidth]{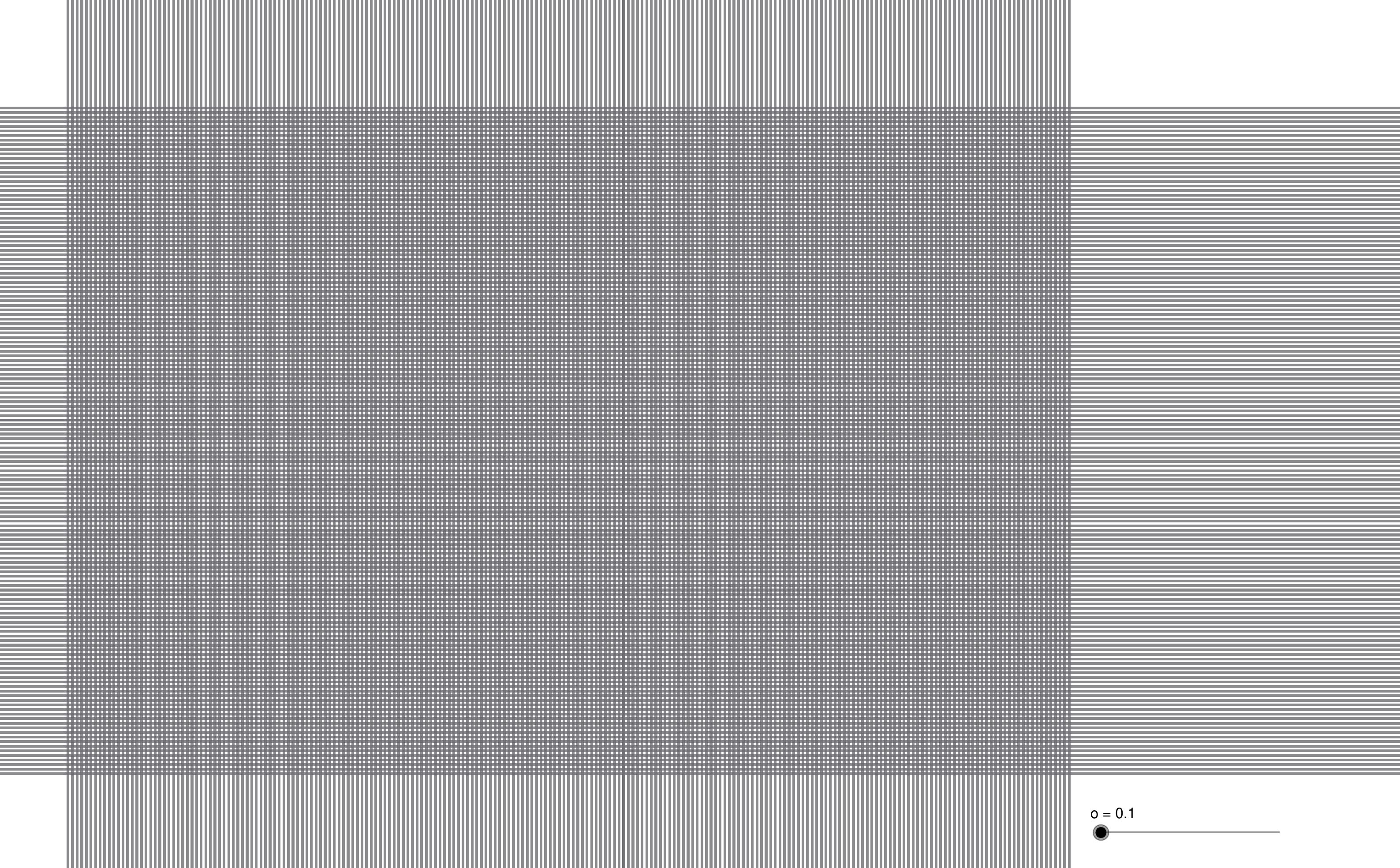} \\ \hline
\end{tabular}
\end{center}

\noindent
Unfortunately, formally the limit of the $(X,d/n)$ may not exist in a reasonable sense, and we need some formalism to define something meaningful. 

\medskip \noindent
An \emph{ultrafilter} over a set $S$ is a collection of subsets $\omega \subset \mathfrak{P}(S)$ satisfying the following conditions:
\begin{itemize}
	\item $\emptyset \notin \omega$ but $S \in \omega$;
	\item $A \cap B \in  \omega$ for all $A,B \in \omega$;
	\item for every $A \subset S$, either $A \in \omega$ or $S \backslash A \in \omega$.
\end{itemize}
The idea to keep in mind is that $\omega$ discriminates the subsets of $S$ as ``small'' (if not in $\omega$) and ``large'' (if in $\omega$). Thus, the axioms above amount to saying that: $\emptyset$ is small and $S$ is large; the intersection of two large subsets is again large; the complement of a small (resp.\ large) subset is large (resp.\ small). For instance, given an element $s \in S$, $\omega_s:= \{ R \subset S \mid s \in R\}$ is an ultrafilter, said \emph{principal}. Non-principal ultrafilters exist, which can be proved thanks to Zorn's lemma, but (a  weak form of) the axiom of choice is needed. In the sequel, we will assume implicitly that our ultrafilters are not principal.

\medskip \noindent
Given an ultrafilter $\omega$ over $\mathbb{N}$, we can naturally modify various definitions related to real sequences. The two variations that will interest us are the following: 
\begin{itemize}
	\item a sequence $(r_n)_n \in \mathbb{R}^\mathbb{N}$ is \emph{$\omega$-bounded} if there exists some $B \geq 0$ such that $$\{n \in \mathbb{N} \mid |r_n| \leq B \} \in \omega;$$
	\item a sequence $(r_n)_n \in \mathbb{R}^\mathbb{N}$ \emph{$\omega$-converges} to $r \in \mathbb{R}$ if, for every $\epsilon>0$, 
$$\{ n \in \mathbb{N} \mid |r_n-r| \leq \epsilon \} \in \omega.$$
\end{itemize}
The classical argument proving that a bounded sequence admits a converging subsequence shows similarly that every $\omega$-bounded sequence $\omega$-converges. We leave the details as an exercise. 

\medskip \noindent
Now, given an arbitrary sequence of pointed metric spaces $(X_n,d_n,o_n)_{n \geq 0}$, one can define its \emph{$\omega$-ultralimit}. First, we consider the set of sequences
$$B:= \{ (x_n)_n \in \prod\limits_{n \geq 0} X_n \mid (d(o_n,x_n))_n \text{ is $\omega$-bounded}\}.$$
In other words, we focus our attention to points at linear distance from the basepoints. Notice that, for any two $(x_n)_n,(y_n)_n \in B$, the sequence $(d(x_n,y_n))_n$ is $\omega$-bounded since $d(x_n,y_n) \leq d(x_n,o_n)+d(o_n,y_n)$ for every $n \geq 0$. Consequently, we can define a pseudo-metric on $B$ as
$$d ( (x_n)_n, (y_n)_n) := \lim\limits_{\omega} d(x_n,y_n) \text{ for all } (x_n)_n,(y_n)_n \in B.$$
Next, we obtain our ultralimit by quotienting $B$ in order to make this pseudo-metric a genuine metric, i.e.\ we define the $\omega$-ultralimit of $(X_n,d_n,o_n)_{n \geq 0}$ as 
$$B / \sim \text{ where } (x_n)_n \sim (y_n)_n \text{ if } d((x_n)_n,(y_n)_n)=0.$$
As a particular case, given a metric space $X$ and a sequence of points $o=(o_n)_{n \geq 0}$, we define the \emph{asymptotic cone} $\mathrm{Cone}_\omega(X,o)$ as the $\omega$-ultralimit of $(X, d/n, o_n)_{n \geq 0}$. 

\medskip \noindent
The main advantage of asymptotic cones is that they may turn questions about quasi-isometries into genuine questions of topology, as motivated by our next statement:

\begin{prop}\label{prop:AsConeLip}
Let $\varphi : X \to Y$ be a quasi-isometric embedding between two graphs. Given an ultrafilter $\omega$ over $\mathbb{N}$ and a sequence of basepoints $o=(o_n)_n$ in $X$, 
$$\bar{\varphi} : \left\{ \begin{array}{ccc} \mathrm{Cone}_\omega(X,o) & \to & \mathrm{Cone}_\omega(Y, \varphi(o)) \\ \left[ (x_n)_n \right] & \mapsto & \left[ (\varphi(x_n))_n \right] \end{array} \right.$$
is a continuous Lipschitz map. Consequently, if $\varphi$ is a quasi-isometry, then $\bar{\varphi}$ is a biLipschitz homeomorphism.
\end{prop}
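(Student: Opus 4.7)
The plan is to split the argument into checking well-definedness, Lipschitzness, and (under the quasi-isometry hypothesis) biLipschitz surjectivity. Throughout, I fix constants $A \geq 1$ and $B \geq 0$ such that $\varphi$ is an $(A,B)$-quasi-isometric embedding, and I rely on the elementary fact (mentioned in the excerpt, needed anyway to parse $\mathrm{Cone}_\omega$) that every $\omega$-bounded sequence of reals $\omega$-converges.

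First I would verify that $\bar{\varphi}$ is well defined. Using the upper bound $d(\varphi(u),\varphi(v)) \leq A \cdot d(u,v) + B$, if $(d(x_n,o_n)/n)_n$ is $\omega$-bounded, then so is $(d(\varphi(x_n),\varphi(o_n))/n)_n$, so $[(\varphi(x_n))_n]$ genuinely defines a point of $\mathrm{Cone}_\omega(Y,\varphi(o))$. The same inequality shows that if $(x_n)_n$ and $(y_n)_n$ represent the same point in $\mathrm{Cone}_\omega(X,o)$, i.e. $\lim_\omega d(x_n,y_n)/n = 0$, then $\lim_\omega d(\varphi(x_n),\varphi(y_n))/n = 0$, so the map does not depend on the chosen representative.

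Lipschitzness is an immediate consequence of the same computation: for any two points $[(x_n)_n], [(y_n)_n] \in \mathrm{Cone}_\omega(X,o)$, one has
$$d\bigl(\bar{\varphi}[(x_n)_n], \bar{\varphi}[(y_n)_n]\bigr) = \lim_\omega \frac{d(\varphi(x_n),\varphi(y_n))}{n} \leq \lim_\omega \frac{A\, d(x_n,y_n) + B}{n} = A \cdot d\bigl([(x_n)_n],[(y_n)_n]\bigr),$$
and continuity follows automatically. When $\varphi$ is moreover a quasi-isometry, the lower estimate $d(\varphi(u),\varphi(v)) \geq d(u,v)/A - B$ yields the symmetric inequality
$$d\bigl(\bar{\varphi}[(x_n)_n], \bar{\varphi}[(y_n)_n]\bigr) \geq \frac{1}{A} \cdot d\bigl([(x_n)_n],[(y_n)_n]\bigr),$$
so $\bar{\varphi}$ is automatically injective and biLipschitz onto its image.

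The only remaining step, which I expect to be the most delicate to phrase carefully, is surjectivity. Given a point $[(y_n)_n] \in \mathrm{Cone}_\omega(Y,\varphi(o))$, the hypothesis that $\varphi$ is a quasi-isometry provides $x_n \in X$ with $d(\varphi(x_n), y_n) \leq B$ for every $n$; then $[(\varphi(x_n))_n] = [(y_n)_n]$ since $B/n \to 0$, and applying the lower bound to $\varphi(x_n)$ and $\varphi(o_n)$ guarantees that $(d(x_n,o_n)/n)_n$ is $\omega$-bounded, so that $(x_n)_n$ really does define a preimage in $\mathrm{Cone}_\omega(X,o)$. Combined with the two-sided linear estimates, this upgrades $\bar{\varphi}$ to a biLipschitz bijection, hence to a biLipschitz homeomorphism.
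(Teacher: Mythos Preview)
Your proposal is correct and follows exactly the straightforward verification the paper has in mind; indeed, the paper does not spell out a proof at all, merely noting that it ``is rather straightforward, and is left to the interested reader as an exercise.'' Your decomposition into well-definedness, the Lipschitz estimate from the upper quasi-isometry bound, the reverse estimate from the lower bound, and surjectivity via coarse surjectivity is the expected argument, and each step is carried out cleanly.
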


\noindent
The proof is rather straightforward, and is left to the interested reader as an exercise. 

\medskip \noindent
As an illustration of the topological perspective offered by asymptotic cones, one can show that $\mathbb{Z}^n$ and $\mathbb{Z}^m$ are quasi-isometric if and only if $n=m$. Indeed, as suggested by the animation above, asymptotic cones of $\mathbb{Z}^n$ are homeomorphic to $\mathbb{R}^n$. So, if $\mathbb{Z}^n$ and $\mathbb{Z}^m$ are quasi-isometric, then their asymptotic cones $\mathbb{R}^n$ and $\mathbb{R}^m$ must be homeomorphic, which happens only when $n=m$. (Notice, however, that there exist more elementary proofs; see Exercise~\ref{exo:AbelianQI}.) 

\medskip \noindent
\hspace{2cm}
\begin{minipage}{0.83\linewidth}
\emph{[Cayley graphs] may appear boring and uneventful to a geometer's eye since it is discrete and the traditional local (e.g.\ topological and infinitesimal) machinery does not run in $\Gamma$. To regain the geometric perspective one has to change one's position and move the observation point far away from $\Gamma$. Then the metric in $\Gamma$ seen from the distance $d$ becomes the original distance divided by $d$ and  for $d \to \infty$ the points in $\Gamma$ coalesce into a connected continuous solid unity which occupies the visual horizon without any gaps or holes and fills our geometer's heart with joy.\\}
\cite[page~1]{MR1253544}
\end{minipage}

\medskip \noindent
This is an optimistic point of view on asymptotic cones. And, indeed, one meets useful and elegant asymptotic cones in the literature: asymptotic cones of nilpotent groups are Carnot groups \cite{MR741395}, asymptotic cones of hyperbolic groups are real trees, and more generally asymptotic cones of relatively hyperbolic groups are tree-graded spaces \cite{MR2153979}, and asymptotic cones of symmetric spaces are Euclidean buildings \cite{MR1608566}. Unfortunately, typically asymptotic cones are very difficult to describe. And, when they are, they may have some exotic topology. For instance, asymptotic cones of the lamplighter group $\mathbb{Z}_2 \wr \mathbb{Z}$ contain Hawaiian earrings (see Exercise~\ref{exo:Hawaiian}). As a consequence, they are not locally simply connected and the machinery of algebraic topology does not apply very well. Moreover, asymptotic cones loose a lot of information. As previously mentioned, hyperbolic groups have real trees as asymptotic cones. In fact, this is always the same: any two non-elementary hyperbolic groups have the same asymptotic cones (up to biLipschitz homeomorphism). Finally, it is wort mentioning that, due to the necessity of choosing an ultrafilter, some considerations from set theory may be necessary. As an illustration of this phenomenon, it is shown in \cite{MR2132762} that there exists a finitely presented group that has a unique asymptotic cone up to homeomorphism if the continuum hypothesis holds, and $2^{2^{\aleph_0}}$ many otherwise. 

\medskip \noindent
Anyway, asymptotic cones allow us to define a notion of dimension that is preserved by quasi-isometries: two quasi-isometric graphs have asymptotic cones with the same topological dimension. Notice that, in full generality, there is no connection between asymptotic dimension and topological dimension of asymptotic cones. Indeed, hyperbolic groups have one-dimensional asymptotic cones but may have arbitrarily large asymptotic dimension; and, as shown below, some lamplighter groups have finite asymptotic dimension but their asymptotic cones are infinite-dimensional. 

\begin{prop}[\cite{Halo}]\label{prop:AsConeDimLamp}
Let $F$ be a non-trivial finite group and $H$ an infinite finitely generated group. The wreath product $F \wr H$ has finite-dimensional asymptotic cones if and only if $H$ is virtually $\mathbb{Z}$. If so, its asymptotic cones are one-dimensional.
\end{prop}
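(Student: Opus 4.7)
The plan is to prove the two implications separately, combining the distance formula of Lemma~\ref{lem:DistWreath} with the bilipschitz stability of wreath products (Proposition~\ref{prop:BiLipWreathGroups}) and the functoriality of asymptotic cones (Proposition~\ref{prop:AsConeLip}). Fix an ultrafilter $\omega$ throughout.

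For the \emph{``if'' direction}, assume $H$ is virtually $\mathbb{Z}$. Then Proposition~\ref{prop:BiLipWreathGroups} (after invoking Lemma~\ref{lem:QIComm} to replace $H$ by a $\mathbb{Z}$-subgroup of finite index) shows that $F \wr H$ is bilipschitz equivalent to $F \wr \mathbb{Z}$, so it is enough to analyse the latter. I would then use the horocyclic-product description recalled in Section~\ref{section:CoarseHilbert}: $F \wr \mathbb{Z}$ is quasi-isometric to $\mathrm{DL}(|F|) \subset T \boxtimes T$, where $T$ is the $(|F|{+}1)$-regular tree. Passing to cones, $\mathrm{Cone}_\omega(F \wr \mathbb{Z})$ identifies with the locus in $\tilde T \times \tilde T$ (where $\tilde T := \mathrm{Cone}_\omega(T)$ is a homogeneous real tree) cut out by the equation $\beta_1 + \beta_2 = 0$ for the extended Busemann functions $\beta_i$. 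A point of this locus is specified by a pair of geodesic rays in $\tilde T$ meeting at a common Busemann level, so writing the cone as a countable union of such arcs gives an upper bound of $1$ on its topological dimension; the matching lower bound follows from the quasi-isometric embedding $\mathbb{Z} \hookrightarrow F \wr \mathbb{Z}$, whose induced map of cones is a bilipschitz copy of $\mathbb{R}$ inside $\mathrm{Cone}_\omega(F \wr \mathbb{Z})$.

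For the \emph{``only if'' direction}, assume $H$ is infinite but not virtually $\mathbb{Z}$. The key step is to show that $\hat H := \mathrm{Cone}_\omega(H)$ carries \emph{arbitrarily much branching}: for every $n$ one can find pairwise disjoint non-degenerate arcs $\alpha_1, \ldots, \alpha_n : [0,1] \to \hat H$ sharing a single initial point $p_0$. I would obtain this by a case split. If $H$ is virtually nilpotent, Gromov's theorem identifies $\hat H$ with a Carnot group of topological dimension $\geq 2$, which admits uncountably many pairwise disjoint arcs emanating from every point. If $H$ has $\geq 2$ ends, Stallings' theorem combined with homogeneity of $\hat H$ shows that $\hat H$ is tree-graded with infinite branching at every cut point. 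The remaining one-ended, exponential-growth case is handled by observing that a homogeneous geodesic metric space with no branch point of order $> 2$ must be homeomorphic to $\mathbb{R}$, contradicting the growth hypothesis via standard large-scale invariants. Given such arcs, I define
$$\Phi_n : [0,1]^n \longrightarrow \mathrm{Cone}_\omega(F \wr H), \qquad (t_1, \ldots, t_n) \longmapsto \bigl[\,(c^{(k)}_t, p^{(k)}_0)_{k \geq 0}\,\bigr],$$
where at stage $k$ the colouring $c^{(k)}_t$ switches on a fixed non-trivial lamp at each vertex of a $k$-scaled discrete approximant of $\alpha_i([0, t_i])$, for $i = 1, \ldots, n$, and $p^{(k)}_0$ is a lift of $p_0$. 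Lemma~\ref{lem:DistWreath} expresses the distance between any two such configurations as a travelling-salesman cost in $H$ plus bounded colour-swap contributions, and the pairwise disjointness of the arcs ensures that in the cone this cost becomes asymptotic to $2 \sum_i |t_i - t'_i|$; hence $\Phi_n$ is bilipschitz. It follows that $\mathrm{Cone}_\omega(F \wr H)$ contains a bilipschitz copy of $[0,1]^n$ for every $n$, so its topological dimension is infinite.

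The principal obstacle lies in the branching claim at the start of the ``only if'' direction, especially for one-ended exponential-growth groups where no tractable description of $\hat H$ is available and branching must be extracted purely from homogeneity together with the topological characterisation of $\mathbb{R}$. A subsidiary difficulty is ensuring that the travelling-salesman lower bound survives passage to the ultralimit: one needs the $n$ arcs to be genuinely disjoint outside a neighbourhood of $p_0$ at every scale (not merely in the cone), which is why it is essential to choose them as the cone-limits of collections of uniformly far-apart geodesics in $H$, supplied by the branching step.
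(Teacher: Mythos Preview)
Your ``if'' direction matches the paper's (reduce to the horocyclic product of trees and pass to real trees in the cone). The ``only if'' direction, however, has a genuine gap: your map $\Phi_n$ is not even continuous. Take a single arc and compare $t=1/2$ with $t'=1/2+\epsilon$. At scale $k$ the symmetric difference $c_t^{(k)}\triangle c_{t'}^{(k)}$ consists of $\sim k\epsilon$ lamps on the segment $\alpha_1([1/2,1/2+\epsilon])$, which lies at distance $\sim k/2$ from $p_0$. By Lemma~\ref{lem:DistWreath} the lamplighter distance is $\mathrm{TS}(p_0,\Delta,p_0)+|\Delta|$, and the travelling-salesman term forces the arrow to travel out to that far segment and back, costing $\sim k$ no matter how small $\epsilon$ is. After dividing by $k$ you get $d(\Phi_1(t),\Phi_1(t'))\to 1$ as $\epsilon\to 0^+$. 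The same occurs coordinatewise for $n>1$, and disjointness of the arcs only makes things worse (the arrow must pass back through $p_0$ between arcs). Your claimed asymptotic $2\sum_i|t_i-t'_i|$ simply omits the TS term.

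The paper's mechanism is entirely different and does not involve $\hat H$ at all: it uses the \emph{superlinear growth} of $H$ (any infinite finitely generated group that is not virtually $\mathbb{Z}$ has at least quadratic growth). For $H=\mathbb{Z}^2$, one packs the lamps into $d$ well-separated $\sqrt n\times\sqrt n$ squares, threads a Hamiltonian snake through each, and lights the first $i_\ell$ lamps along the $\ell$th snake. The symmetric difference between two such colourings has $\sum_\ell|i_\ell-i'_\ell|$ lamps, and crucially they all sit in a region of diameter $O(\sqrt n)$, so the TS overhead beyond visiting each lamp once is only $O(\sqrt n)$. This gives $d=2\sum_\ell|i_\ell-i'_\ell|+O(\sqrt n)$, a sublinear additive error that disappears in the cone and leaves a bilipschitz copy of $[0,1]^d$; see also Exercise~\ref{exo:AsConeDim}.
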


\begin{proof}[Sketch of proof.] 
If $H$ is virtually $\mathbb{Z}$, then we can show that $H$ is biLipschitz equivalent to $\mathbb{Z}$, so Proposition~\ref{prop:BiLipWreathGroups} implies that $F \wr H$ is quasi-isometric to $\mathbb{Z}_n \wr \mathbb{Z}$ where $n:=|F|$. Consequently, it suffices to describe the asymptotic cones of $\mathbb{Z}_n \wr \mathbb{Z}$. We saw in Section~\ref{section:CoarseHilbert} how to describe $\mathbb{Z}_n \wr \mathbb{Z}$ as the horocyclic product of two $(n+1)$-regular trees. This decomposition turns out to be compatible with asymptotic cones, i.e.\ the asymptotic cones of $\mathbb{Z}_n \wr \mathbb{Z}$ are horocyclic products of real trees (in which ever point has degree $2^{\alpha_0}$). The proof we gave of the correspondence between $\mathbb{Z}_n \wr \mathbb{Z}$ and the horocyclic product of two $(n+1)$-regular trees extends to the continuous setting and shows that the horocyclic product of two $2^{\alpha_0}$-regular real trees can be identified with a lamplighter over $\mathbb{R}$ for which each lamp has $2^{\aleph_0}$ colours. This interpretation can be used in order to deduce that our asymptotic cone has topological dimension $1$. 

\medskip \noindent
Now, assume that $H$ is not virtually $\mathbb{Z}$. We want to prove that the asymptotic cones of $F \wr H$ are infinite-dimensional. For simplicity, we assume that $F = \mathbb{Z}_2$ and $H= \mathbb{Z}^2$. 

\medskip \noindent
\begin{minipage}{0.55\linewidth}
\includegraphics[width=0.99\linewidth]{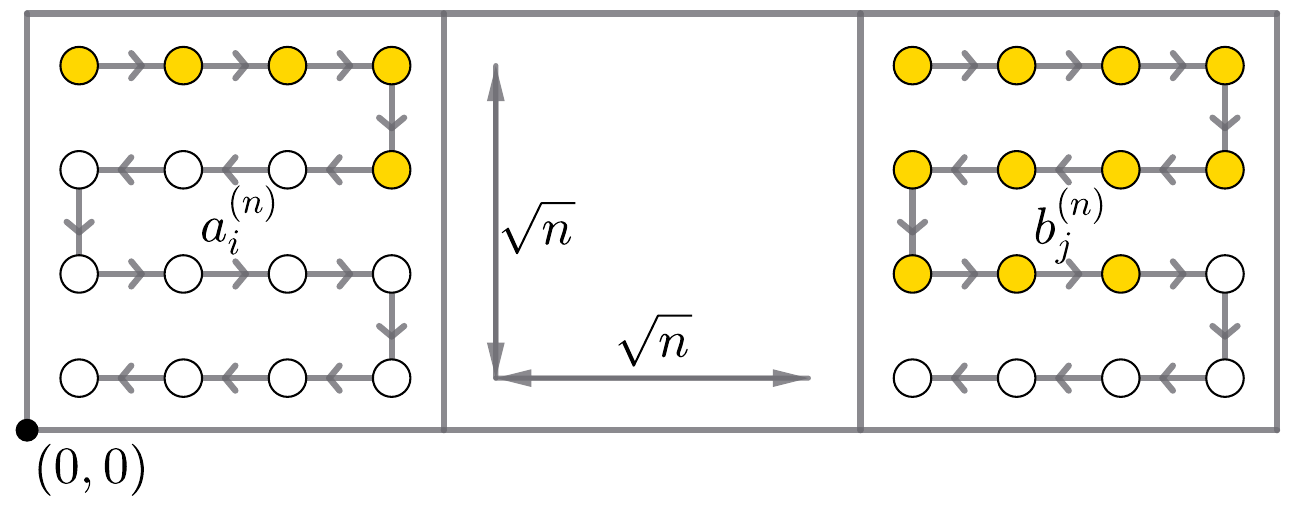}
\end{minipage}
\begin{minipage}{0.44\linewidth}
Given an $n \geq 2$, consider three adjacent squares of size $\sqrt{n}$ in $\mathbb{Z}^2$, with $(0,0)$ as the bottom left corner of the leftmost square. Order the vertices of the leftmost (resp.\ rightmost) square as shown by the figure on the left and define $a_i^{(n)}$ (resp.\ $b_i^{(n)}$) as the colouring that only colours the first $i$ lamps of this square. 
\end{minipage}
\medskip \noindent
Then, consider 
$$\varphi_n : \left\{ \begin{array}{ccc} [0,n]\times [0,n] & \to & \mathbb{Z}_2 \wr \mathbb{Z}^2 \\ (i,j) & \mapsto & (a_i^{(n)}+c_j^{(n)}, 0) \end{array} \right.$$
Our goal is to show that $\varphi_n$ converges as $n \to + \infty$ to a continuous embedding of $[0,1]^2$ into the asymptotic cones of $\mathbb{Z}_2 \wr \mathbb{Z}^2$. The key observation is that
$$2 \left( |i-r| + |j-s| \right) + \sqrt{n} \leq d(\varphi_n(i,j), \varphi_n(r,s)) \leq 2 \left( |i-r| + |j-s| \right) + 7 \sqrt{n}$$
for all $(i,j),(r,s) \in \mathbb{Z}^2$. Thus, $\varphi_n$ is not quite a quasi-isometric embedding, but almost: instead of a constant additive error, we have a sublinear additive error ($\sim \sqrt{n}$). But, when taking an asymptotic cone, we divide distance by $n$ and let $n$ grows to infinity. Therefore, the sublinear additive error disappears, and we find that $\varphi_n$ converges to a biLipschitz embedding $\varphi_\infty$ of $[0,1]^2$ in the asymptotic cones. This proves that the asymptotic cones of $\mathbb{Z}_2 \wr \mathbb{Z}^2$ are at least two-dimensional. But the same argument can be done with an arbitrary number of squares of size $\sqrt{n}$, producing biLispchitz embeddings of $[0,1]^d$ into the asymptotic cones for arbitrary dimensions $d$. 
\end{proof}

\noindent
It is worth mentioning that Proposition~\ref{prop:AsConeDimLamp} can be used in order to distinguish some groups up to quasi-isometry. For instance, a lamplighter over $\mathbb{Z}$ cannot be quasi-isometric to the lampshuffler $\mathrm{FSym}(\mathbb{Z}) \rtimes \mathbb{Z}$ because their asymptotic cones do not have the same dimension. See Exercise~\ref{exo:LighterShuffler}.

\subsection{Exercises}

\begin{exo}\label{exo:Rips}
Given a metric space $(X,d)$ and a scale $R \geq 0$, the \emph{Rips complex} $\mathrm{Rips}_R(X)$ is the simplicial complex whose vertex-set is $X$ and whose simplices are collections of points pairwise at distance $\leq R$.  Prove that a graph is coarsely simply connected if and only if it admits a simply connected Rips complex. 
\end{exo}

\begin{exo}\label{exo:WreathFP}
Let $A$ and $B$ be two groups.
\begin{enumerate}
	\item If $A \wr B$ is finitely generated, show that it admits a finite generating set of the form $\{a_1, \ldots, a_n, b_1, \ldots, b_m \}$ where $a_1, \ldots, a_n \in A^{(B)}$ and $b_1, \ldots, b _m \in B$.
	\item Fix an $A$-factor in $A^{(B)}$ and let $\pi : A^{(B)} \to A$ denote the corresponding projection. For every $a \in A^{(B)}$, verify that $\pi(bab^{-1}) = 1$ for all but finitely many $b \in B$.
	\item Prove that, if $A \wr B$ is finitely generated, then $A$ must be finitely generated as well.
	\item Deduce that $A \wr B$ is finitely generated if and only if $A$ and $B$ are both finitely generated.
	\item Prove that $A \wr B$ is finitely presented if and only if either $A$ is trivial and $B$ is finitely presented or $A$ is finitely presented and $B$ finite. 
\end{enumerate}
\end{exo}

\begin{exo}\label{exo:UnifOneEnded}
Let $X$ be a graph. Prove that, if $X$ is quasi-transitive, then $X$ is one-ended if and only if it is uniformly one-ended. 
\end{exo}

\begin{exo}
Prove Fact~\ref{fact:LocaSepTopo}, i.e.\ a subcomplex $Z$ in some CW-complex $X$ locally separates if and only if there exists a path $\gamma$ connecting two distinct points such that any path homotopy equivalent to $\gamma$ intersects $Z$. 
\end{exo}

\begin{exo}
Let $X$ and $Y$ be two graphs. Prove that $X$ is pancylindrical if and only if $Y$ is pancylindrical. 
\end{exo}

\begin{exo}\label{exo:ShufflerLocalOneEnded}
Given a group $H$, the \emph{lampshuffler over $H$} is the semidirect product
$$\mathrm{FSym}(H) \rtimes H$$
where $\mathrm{FSym}(H)$ denotes the group of finitely supported bijections $H \to H$. Following the proof of Corollary~\ref{cor:LampLocalOneEnded}, show that the lampshuffler over $\mathbb{Z}$ is not locally one-ended. 
\end{exo}

\begin{exo}\label{exo:LighterShuffler}
Inspired by the argument following Proposition~\ref{prop:AsConeDimLamp}, show that the asymptotic cones of the lampshuffler $\mathrm{FSym}(\mathbb{Z}) \rtimes \mathbb{Z}$ are infinite-dimensional. Conclude that lampshuffler and lamplighter over $\mathbb{Z}$ are not quasi-isometric. 
\end{exo}

\begin{exo}
Let $X$ and $Y$ be two graphs. If there exists a coarse embedding $X \hookrightarrow Y$, prove that $\mathrm{asdim}(X) \leq \mathrm{asdim}(Y)$. 
\end{exo}

\begin{exo}\label{exo:AsDimCoarse}
Let $G$ be a countable group. 
\begin{enumerate}
	\item Let $S \subset G$ be a generating set satisfying $S=S^{-1}$. Fix a \emph{weight function} $w : S \to \mathbb{N}$ such that $w(s)=w(s^{-1})$ for every $s \in S$ and such that $w^{-1}(n)$ is finite for every $n \in \mathbb{N}$. Define $$\|g\|_w := \min \left\{ \sum\limits_{i=1}^r w(s_i) \mid \exists s_1, \ldots s_r \in S, \ g=s_1 \cdots s_r \right\}$$ for every $g \in G$. Prove that $(g,h) \mapsto \|g^{-1}h\|_w$ defines a metric on $A$ that is left-invariant and proper (i.e.\ balls are finite).
	\item Let $d_1,d_2$ be two left-invariant proper metrics on $G$. Set $$\mu(t):= \min \{ d_1(1,g) \mid g \in G \backslash B_{d_2}(1,t)\}$$ and $$\nu(t):= \max \{ d_1(1,g) \mid g \in B_{d_2}(1,t) \}.$$ Prove that $\mu(t),\nu(t) \to + \infty$ as $t \to + \infty$. Conclude that $(A,d_1)$ and $(A,d_2)$ are coarsely equivalent.
	\item It follows that every countable group admits a well-defined geometry up to coarse equivalence. Justify that the coarse equivalence between countable groups coincides with the quasi-isometry between finitely generated groups when restricted to finitely generated groups. 
	\item Justify that it makes sense to refer to the asymptotic dimension of a countable group. Show that $\mathrm{asdim}(H) \leq \mathrm{asdim}(G)$ for every subgroup $H \leq G$. 
	\item Prove that $\mathrm{asdim}(G)= \mathrm{sup} \{ \mathrm{asdim}(H) \mid H \leq G \text{ finitely generated}\}$. Deduce that $G$ has asymptotic dimension $0$ if and only if it is locally finite. 
\end{enumerate}
\end{exo}

\begin{exo}
Using the definition of asymptotic dimension, 
\begin{enumerate}
	\item show that, for every $n \geq 1$, $\mathbb{Z}^n$ has asymptotic dimension $\leq n$;
	\item and that $\mathbb{Z}_2 \wr \mathbb{Z}$ has asymptotic dimension $1$. 
\end{enumerate}
\end{exo}

\begin{exo}
Let $\omega$ be an ultrafilter over $\mathbb{N}$.
\begin{enumerate}
	\item Prove that an $\omega$-bounded sequence of real numbers $\omega$-converges. (\emph{Hint: If our sequence lies in $[a,b]$ $\omega$-almost surely, then it lies either in $[a, (a+b)/2]$ or in $[(a+b)/2,b]$ $\omega$-almost surely.})
	\item If $\omega$ is a principal ultrafilter, when does a real sequence $\omega$-converge?
\end{enumerate}
\end{exo}

\begin{exo}
Prove Proposition~\ref{prop:AsConeLip}.
\end{exo}

\begin{exo}\label{exo:AbelianQI}
Given a graph $X$ of bounded degree, its \emph{growth function} is
$$\gamma_X : R \mapsto \max \left\{ \# B(x,R) \mid x \in V(X) \right\}.$$
We compare growth functions of graphs up to the following order relation: given two maps $f,g : \mathbb{N} \to \mathbb{N}$, write $f \prec g$ if there exists $C>0$ such that $f(n) \leq C g(Cn)$ for every $n \geq 0$; and write $f \sim g$ if $f \prec g$ and $g \prec f$. 
\begin{enumerate}
	\item Verify that, for all $a,b>0$, the maps $n \mapsto a^n$ and $n \mapsto b^n$ are $\sim$-equivalent.
	\item Given two integers $r,s \geq 1$, verify that the maps $n \mapsto n^r$ and $n \mapsto n^s$ are $\sim$-equivalent if and only if $r=s$. 
	\item Show that, if there exists a coarse embedding $X \to Y$ between two graphs of bounded degree, then $\gamma_X \prec \gamma_Y$. 
	\item Prove that non-abelian free groups have exponential growth.
	\item For every $d \geq 1$, show that $\mathbb{Z}^d$ has polynomial growth of degree $d$.
	\item Deduce that, given two integers $n,m \geq 1$, there exists a coarse embedding $\mathbb{Z}^n \hookrightarrow \mathbb{Z}^m$ if and only if $n \leq m$. 
\end{enumerate}
\end{exo}

\begin{exo}\label{exo:AsConeDim}
Let $X$ be a graph of bounded degree and $n \geq 2$ an integer. Show that, if $X$ has superlinear growth, then the asymptotic cones of $\mathcal{L}_n(X)$ are infinite-dimensional.
\end{exo}

\begin{exo}\label{exo:Hawaiian}
Our goal is to construct a copy of the Hawaiian earrings in every asymptotic cone of a lamplighter graph $\mathcal{L}_n(X)$ where $X$ is an unbounded graph of bounded degree. 
\begin{enumerate}
	\item Given an $\ell \geq 1$, construct a sequence of colourings $a_n^\ell \in \mathbb{Z}_2^{(0,\sqrt{n}]}$ such that, for all $\ell_1 \neq \ell_2$, one can find some $N \geq 0$ such that $a_n^{\ell_1} \neq a_n^{\ell_2}$ for  every $n \geq N$. Also, construct a sequence of integers $p_n^\ell$ such that $p_n^\ell \to 1/4\ell$ as $n \to + \infty$. 
	\item Let $C_n(\ell)$ denote the loop in $\mathcal{L}_2(\mathbb{N})$ defined as follows. Start from $(a_n^\ell,0)$, switch on the lamp at $0$, move arrow to $p_n^\ell$, switch on the lamp at $p_n^\ell$, move the arrow back at $0$, switch off the lamp at $0$, move the arrow back to $p_n^\ell$, switch off the lamp at $p_n^\ell$, and finally move the arrow back again at $0$. Prove that, in an asymptotic cone of $\mathcal{L}_2(\mathbb{N})$, $C_n(\ell)$ converges to a homotopically non-trivial loop $C(\ell)$ of length $1/\ell$. 
	\item Verify that, for all $\ell_1 \neq \ell_2$, the intersection between $C(\ell_1)$ and $C(\ell_2)$ is reduced to a single point. 
	\item If $X$ is an unbounded graph of bounded degree, show that, for every $n \geq 2$, the lamplighter graph $\mathcal{L}_n(X)$ contains a quasi-isometrically embedded copy of $\mathcal{L}_2(\mathbb{N})$.
	\item Conclude that $\mathcal{L}_n(X)$ contains a copy of the Hawaiian earrings. 
\end{enumerate}
\end{exo}

\begin{exo}
Let $G$ be a group admitting $\langle s_1, \ldots, s_n \mid r_1,r_2 , \ldots \rangle$ as an infinite presentation. For every $k \geq 1$, let $G_k$ denote the group given by the presentation $\langle s_1, \ldots, s_n \mid r_1, \ldots, r_k \rangle$. Show that, given a finitely presented group $H$, every morphism $H \to G$ factors through $G_k$ for every $k$ large enough. Deduce that every finitely presented group that surjects onto $\mathbb{Z}_n \wr \mathbb{Z}$ for some $n \geq 2$ is large (i.e.\ it contains a finite-index subgroup that surjects onto a non-abelian free group). \\
(For more information on this result, see \cite{MorphWreath} and references therein.) 
\end{exo}

\section{The Embedding Theorem and applications}\label{section:BigEmbedding}

\noindent
As announced in Section~\ref{section:CoarseLocal}, we will prove that there is some coarse local separation in lamplighter graphs $\mathcal{L}_n(X)$. In fact, even more precisely, we will characterise the maximal pancylindrical pieces in such graphs. They turn out to be contained in the natural copies of $X$ in $\mathcal{L}_n(X)$, namely:

\begin{definition}
Let $X$ be a graph and $n \geq 2$ an integer. An \emph{$X$-leaf} in the lamplighter graph $\mathcal{L}_n(X)$ is the subgraph induced by
$$\{ (c,p) \in \mathcal{L}_n(X) \mid p \in V(X) \}$$
for some colouring $c \in \mathbb{Z}_n^{(X)}$. 
\end{definition}

\noindent
Most of this section is dedicated to the proof of the following statement, which essentially motivates the idea that a lamplighter graph $\mathcal{L}_n(X)$ is obtained by gluing copies of $X$ along coarse cut points. 

\begin{thm}\label{thm:Embedding}
Let $X$ be a graph of bounded degree and $n \geq 2$ an integer. For every unbounded pancylindrical graph $Y$ and every coarse embedding $\eta : Y \to X$, the image of $\eta$ is contained in a neighbourhood of some $X$-leaf.
\end{thm}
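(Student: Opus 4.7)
I argue by contradiction, supposing $\eta(Y)$ lies in no bounded neighbourhood of any $X$-leaf. Write $E$ for a pancylindricality scale of $Y$, let $(A, B)$ be coarse embedding constants of $\eta$ with lower control $\rho_-$, and set $E' := AE + B$, so that any $E$-coarse modification in $Y$ lifts, after geodesic completion, to an $E'$-coarse modification in $\mathcal{L}_n(X)$.

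The first step is an extraction of witnesses: I claim that for arbitrarily large $L$, one can find $y_1, y_2 \in Y$ with $d_Y(y_1, y_2) \geq L$, images $\eta(y_i) = (c_i, p_i)$, and a vertex $v \in V(X)$ satisfying $c_1(v) \neq c_2(v)$ together with $d(v, p_1), d(v, p_2) \geq L$. Heuristically, if such witnesses failed to exist at some $L$, every colour discrepancy between points of $\eta(Y)$ would have to concentrate within distance $L$ of at least one of the two arrows; aggregating these constraints, anchored at a fixed base-point of $\eta(Y)$ and controlled via the TSP distance formula of Lemma~\ref{lem:DistWreath}, would force $\eta(Y)$ into a bounded neighbourhood of one specific $X$-leaf $L_{c_0}$, contrary to assumption. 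This ``defect-concentration'' argument is the first technical point.

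Given the witnesses, use pancylindricality of $Y$ to pick a path $\gamma$ from $y_1$ to $y_2$ that avoids, up to $E$-coarse homotopy, any subgraph of $Y$ of small diameter far from $y_1, y_2$. Let $\tilde\gamma$ be the path in $\mathcal{L}_n(X)$ obtained from $\eta(\gamma)$ by joining successive image vertices by $\mathcal{L}_n(X)$-geodesics. The projection $\pi \colon \mathcal{L}_n(X) \to X$, $(c,p) \mapsto p$, is $1$-Lipschitz, and since $c_1(v) \neq c_2(v)$ the path $\tilde\gamma$ must cross the fibre $Z_v := \pi^{-1}(v)$ at some vertex $(c^\ast, v)$. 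The technical heart is to prove that, for a radius $R$ depending only on $E'$ and the local geometry of $X$ near $v$, the ball $B^\ast := B((c^\ast, v), R)$ is met $E'$-persistently by $\tilde\gamma$: any path $E'$-coarsely homotopic to $\tilde\gamma$ still meets $B^\ast$. The key observation is that an elementary $E'$-modification replaces a subpath by another with identical endpoints whose union has $\mathcal{L}_n(X)$-diameter $\leq E'$; by Lemma~\ref{lem:DistWreath}, such a modification can cross $Z_v$ only when its arrow-base is within $E'$ of $v$ in $X$, and can only alter lamp values inside $B(v, E')$. Together with the separation $d(v, p_i) \geq L$ from Step~1, this pins the set of possible $Z_v$-crossings of every $E'$-coarsely homotopic path to a bounded $\mathcal{L}_n(X)$-neighbourhood of $(c^\ast, v)$.

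To conclude, the preimage $B_Y := \eta^{-1}(B^\ast)$ has $Y$-diameter at most $\rho_-^{-1}(\mathrm{diam}(B^\ast))$, and the lower bound $d_{\mathcal{L}_n(X)}((c^\ast, v), \eta(y_i)) \geq d(v, p_i) \geq L$ (another application of Lemma~\ref{lem:DistWreath}) combined with $\rho_-$ places $B_Y$ at $Y$-distance $\gtrsim \rho_-^{-1}(L - R)$ from each $y_i$. Any $E$-coarse homotopy of $\gamma$ lifts to an $E'$-coarse homotopy of $\tilde\gamma$, which by the persistence above still meets $B^\ast$, hence $\gamma$ meets $B_Y$ $E$-persistently: a small ball of $Y$ far from both endpoints, contradicting pancylindricality. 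The main obstacle is the persistence statement in Step~3: naively, per-step drift of the $Z_v$-crossing of size $\leq E'$ accumulates over a long homotopy, but the wreath-product structure via Lemma~\ref{lem:DistWreath} constrains drifts to occur only through modifications whose arrow-projection actually reaches $v$, and the condition $d(v, p_i) \geq L$ forces such modifications to occur only inside a bounded-in-$X$ window of $\tilde\gamma$, giving the required quantitative confinement.
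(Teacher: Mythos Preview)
Your architecture parallels the paper's: both extract two image points whose colourings differ at a lamp $v$ far from both arrows (your Step~1 is essentially Lemma~\ref{lem:NotNeighLeaf}), then argue that any path joining them $E'$-persistently meets a subgraph of bounded diameter, and pull this back through $\eta$ to contradict pancylindricality. The gap is in your Step~3. Your argument says: an $E'$-modification either avoids $Z_v$ (leaving the tracked crossing intact) or has its arrow within $E'$ of $v$ and alters lamps only in $B(v,E')$, so the crossing moves by at most $E'$. This is correct for one step, but does not control accumulation, because a modification near $v$ can \emph{delete} the tracked crossing outright whenever $\widetilde\gamma$ already carries another crossing of $Z_v$ elsewhere---and that other crossing can have a colouring far from $c^\ast$ outside $B(v,E')$. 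Concretely: with $X=\mathbb Z^2$ and $v=(0,0)$, let $\widetilde\gamma$ first pass through $v$ with all lamps off, walk out to $(0,M)$ for $M\gg E'$, flip there, return to $v$, flip at $v$, return to $(0,M)$ and unflip, then proceed to $p_2$. Its first crossing is $(0,v)$, its second is $(\delta_{(0,M)},v)$. Since the leaf $X(0)\cong\mathbb Z^2$ is $2$-coarsely simply connected, one may $E'$-homotope the outbound leg inside $X(0)$ to avoid $v$ entirely; the resulting path meets $Z_v$ only at $(\delta_{(0,M)},v)$, which sits at distance $\sim 2M$ from $(0,v)$ in $\mathcal L_n(X)$. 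So no ball of fixed radius about an \emph{arbitrarily chosen} crossing of $\widetilde\gamma$ is persistently met. Your ``bounded-in-$X$ window'' correctly confines the arrow-coordinate of any crossing, but not the colouring away from $v$, which is precisely the data that drifts.

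What \emph{is} persistent is the set $\mathcal I(d)=\{(c,q):q\in B(v,2E'),\ c|_{X\setminus B(v,3E')}=d\}$ for one specific exterior colouring $d$---in the example above, $d=\delta_{(0,M)}$, not $0$. Singling out the correct $d$ is exactly what the paper's nerve-complex machinery (Proposition~\ref{prop:CreatingPersist} applied to the bipartite covering $\{\mathcal I(\cdot)\}\cup\{\mathcal O(\cdot)\}$ built in the proof of Theorem~\ref{thm:LampStringy}) is for: because the nerve is a graph, the projected path has a unique reduced word, and every $E'$-homotopic path projects to the same reduction, forcing it through the same $\mathcal I(d)$. Your argument would be repaired by supplying this mechanism (or an equivalent invariant pinning down the exterior colouring at the crossing), but the drift heuristic as written does not do it.
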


\noindent
We emphasize that such a phenomenon is specific to lamplighter graphs, since wreath products with infinite lamp-groups are usually pancylindrical \cite[Propositions~3.6 and~3.8]{MR4794592}. (However, in this case, there is a similar statement but with a condition of coarse local separation that depends on the lamp-groups; see \cite{CoarseSep}.)

\subsection{Relatively stringy graphs}

\noindent
In this section, we prove an embedding result in the spirit of Theorem~\ref{thm:Embedding} but in the abstract framework of \emph{relatively stringy} graphs, which we now define:

\begin{definition}\label{def:Stringy}
A graph $X$ is \emph{stringy relative to} $\mathcal{P}$, a collection of connected subgraphs, if the following conditions are satisfied:
\begin{itemize}
	\item for every $K \geq 0$, there exists $M \geq 0$ such that $\mathrm{diam}(P^{+K} \cap Q^{+K}) \leq M$ for all distinct $P,Q \in \mathcal{P}$;
	\item for every $A_1 \geq 0$, there exists $F \geq 0$ such that the following holds for every $A_2 \geq 0$: there exists $K \geq 0$ such that, if $x,y \in V(X)$ are two vertices not contained in the $K$-neighbourhood of some subgraph of $\mathcal{P}$, then every path connecting $x$ and $y$ intersects $A_1$-persistently some subgraph $B$ of diameter $\leq F$ satisfying $d(x,B),d(y,B) \geq A_2$.
\end{itemize}
\end{definition}

\noindent
We will see in the next section that lamplighter graphs $\mathcal{L}_n(X)$ are stringy relative to their $X$-leaves. For now, we prove the following result:

\begin{prop}\label{prop:EmbeddingStringy}
Let $X$ be a graph of bounded degree that is stringy relative to some collection $\mathcal{P}$. For every unbounded pancylindrical graph $Y$ and every coarse embedding $\eta : Y \to X$, there exists $P \in \mathcal{P}$ such that the image of $Y$ under $\eta$ is contained in a neighbourhood of $P$. 
\end{prop}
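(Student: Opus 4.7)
The plan is to argue by contradiction, combining the second stringy condition of Definition~\ref{def:Stringy} (which forces persistent coarse bottlenecks in $X$) with the pancylindricality of $Y$ (which forbids them in $Y$). Suppose $\eta(Y)$ is contained in no bounded neighbourhood of any $P \in \mathcal{P}$. By Exercise~\ref{ex:CoarseQI}, $\eta$ is $L$-Lipschitz for some $L \geq 1$, and we denote by $\rho_-$ a lower control function with $\rho_-(t) \to \infty$. Let $E$ be the scale furnished by pancylindricality of $Y$. Set $A_1 := LE + 2L + 1$: a direct check shows that an $E$-coarse homotopy of paths in $Y$ pushes forward through $\eta$ (interpolating successive vertex-images by geodesics of length $\leq L$) to an $A_1$-coarse homotopy of paths in $X$. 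The first stringy condition applied to $A_1$ yields $F = F(A_1)$. Set $F_Y := \lceil \rho_-^{-1}(F + 2L) \rceil$ and let $L_Y$ be the constant supplied by pancylindricality of $Y$ for $F_Y$. Finally, set $A_2 := \rho_-(L_Y) + L + F$, and let the second stringy condition supply $K = K(A_1, A_2)$.

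The crucial step is producing two vertices $y_1, y_2 \in V(Y)$ with $d(\eta(y_i), P) > K$ for every $P \in \mathcal{P}$. Consider the $L$-Lipschitz function $g \colon Y \to \mathbb{R}$ defined by $g(y) := d\bigl(\eta(y), \bigcup_{P \in \mathcal{P}} P\bigr)$. If $g$ is unbounded on $V(Y)$, then the sublevel complement $\{g > K\}$ must itself be unbounded in $Y$: otherwise, were it trapped in a ball $B(y_0, r)$, the Lipschitz estimate $g(y) \leq K + L(r+1)$ would bound $g$ globally. Hence two far-apart vertices $y_1, y_2$ with $g(y_i) > K$ exist. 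If on the contrary $g$ is bounded by some $C$, then $\eta(Y) \subset \bigcup_{P \in \mathcal{P}} P^{+C}$, and the first stringy condition forces the pull-backs $U_P := \eta^{-1}(P^{+C})$ to have bounded pairwise $Y$-intersections. Picking $y_1$ deep inside $U_{P_0}$ and $y_2$ deep inside $U_{P_1}$ for two distinct $P_0, P_1$ (possible by our standing hypothesis), every $Y$-path between them must cross a bounded overlap region; pancylindricality of $Y$ allows us to isolate one such overlap as a persistent bottleneck, contradicting the defining property of pancylindricality. Thus $g$ must be unbounded, and the desired $y_1, y_2$ exist.

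With such $y_1, y_2$ in hand, apply pancylindricality of $Y$ to obtain a path $\gamma_Y$ from $y_1$ to $y_2$ such that any subgraph $B_Y \subset Y$ with $\mathrm{diam}(B_Y) \leq F_Y$ and $d(y_i, B_Y) \geq L_Y$ is not $E$-persistently intersected by $\gamma_Y$. Let $\gamma_X$ denote the interpolation of $\eta(\gamma_Y)$ in $X$. Applying the second stringy condition to $\eta(y_1), \eta(y_2)$ (both at distance $> K$ from every $P \in \mathcal{P}$) yields a subgraph $B \subset X$ with $\mathrm{diam}(B) \leq F$ and $d(\eta(y_i), B) \geq A_2$ such that $\gamma_X$ intersects $B$ $A_1$-persistently. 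Set $B_Y := \eta^{-1}(B^{+L})$: the lower control gives $\mathrm{diam}_Y(B_Y) \leq F_Y$ and $d(y_i, B_Y) \geq \rho_-^{-1}(A_2 - L) \geq L_Y$. Any path in $Y$ disjoint from $B_Y$ pushes through $\eta$ (and geodesic interpolation) to a path in $X$ disjoint from $B$; combined with the fact that $E$-coarse homotopies in $Y$ push to $A_1$-coarse homotopies in $X$, this means $A_1$-persistence of $\gamma_X \cap B$ implies $E$-persistence of $\gamma_Y \cap B_Y$, the sought contradiction.

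The main obstacle is the second paragraph, and specifically the ``$g$ bounded'' sub-case: to rule it out one must genuinely exploit pancylindricality of $Y$—not merely its connectedness—to prevent $Y$ from being coarsely partitioned by the family $\{U_P\}$ whose pairwise overlaps are bounded. All remaining steps amount to careful tracking of the coarse-embedding constants through the two definitions.
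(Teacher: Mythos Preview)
You have misread the second stringy condition in Definition~\ref{def:Stringy}: ``two vertices not contained in the $K$-neighbourhood of some subgraph of $\mathcal{P}$'' means that the \emph{pair} $\{x,y\}$ fails to lie in any single $P^{+K}$, not that each of $x$ and $y$ is at distance $>K$ from every $P\in\mathcal{P}$. In the main application (Theorem~\ref{thm:LampStringy}) the $X$-leaves cover $\mathcal{L}_n(X)$, so your function $g$ is identically zero; the ``$g$ unbounded'' branch is vacuous and your third paragraph never fires. This forces all the work into the ``$g$ bounded'' case, where what you have written is not a proof: the sentence ``pancylindricality of $Y$ allows us to isolate one such overlap as a persistent bottleneck, contradicting the defining property of pancylindricality'' is circular---pancylindricality is precisely the assertion that bounded regions are \emph{not} persistent bottlenecks---and nothing you have said explains why the covering $\{U_P\}$ forces one.

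With the correct reading, the argument of your third paragraph (after repairing the constants: you need $\rho_+$ or the Lipschitz constant, not $\rho_-$, to lower-bound $d(y_i,B_Y)$) does establish something useful: every \emph{pair} $y_1,y_2\in V(Y)$ has images lying in some common $P^{+K}$. This is exactly Claim~\ref{claim:PairBoundedDist} in the paper. But the negation of the proposition is \emph{not} ``some pair fails to lie in a common $P^{+K}$''; one can perfectly well have every pair in a common $P^{+K}$ while no single $P$ works for all of $\eta(Y)$. Passing from pairs to the full image is the genuine content of the proof, and it requires a separate argument---the paper does it via a triples claim (Claim~\ref{claim:TripleBoundedDist}) that relies essentially on the bounded-degree hypothesis on $X$ through Claim~\ref{claim:BoundedDegree}, an ingredient entirely absent from your sketch.
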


\begin{proof}
For convenience, we will assume that $\eta$ is $1$-Lipschitz. Notice that there is no loss of generality, since it is always possible to subdivide $Y$ sufficiently and to extend $\eta$ by sending every subdivided edge to a path connecting the images of its endpoints. In particular, this implies that $\eta$ sends paths to paths.

\medskip \noindent
We start our proof of the proposition by showing the following observation:

\begin{claim}\label{claim:BoundedDegree}
For every $K \geq 0$, there exists $N \geq 0$ such that at most $N$ subgraphs of $\mathcal{P}$ intersects a given ball of radius $K$. 
\end{claim}

\noindent
Fix a ball $B(K)$ of radius $K$. Because $X$ is stringy relative to $\mathcal{P}$, we know that there exists some $M \geq 0$ such that $\mathrm{diam}(P \cap Q) \leq M$ for all distinct $P,Q \in \mathcal{P}$. Set $K':= K+M+1$. Let $B(K')$ denote the ball of radius $K'$ with the same centre as $B(K)$. 

\medskip \noindent
Let $N_1$ denote the number of $P \in \mathcal{P}$ such that $P \cap B(K) \neq \emptyset$ and $P \subset B(K')$. Clearly,
$$N_1 \leq N_0:= 2^{\mathrm{deg}(X)^{K'}}.$$
Let $N_2$ denote the number of $P \in \mathcal{P}$ such that $P \cap B(K) \neq \emptyset$ but $P \nsubseteq B(K')$. If $N_2>N_0$, then we can find two distinct subgraphs $P_1, P_2 \in \mathcal{P}$ such that $P_1 \cap B(K), P_2 \cap B(K) \neq \emptyset$, $P_1,P_2 \nsubseteq B(K')$, and $P_1 \cap B(K')= P_2 \cap B(K')$. Notice that, since $P_1$ is contained and that it contains a vertex in $B(K)$ and a vertex outside $B(K')$, necessarily $P_1$ intersects the sphere $S(K')$ of radius $K'$ with the same centre as $B(K)$. Thus, we can find two vertices $a \in B(K) \cap P_1 \cap P_2$ and $b \in S(K') \cap P_1 \cap P_2$. It follows that
$$\mathrm{diam}(P_1 \cap P_2) \geq d(a,b) \geq K'-K >M,$$
hence $P_1=P_2$ by definition of $M$, a contradiction. This concludes the proof of Claim~\ref{claim:BoundedDegree}. 

\medskip \noindent
Let $E$ denote the constant given by Definition~\ref{def:Pancylindrical} when applied to $Y$. Then, let $F$ denote the constant given by Definition~\ref{def:Stringy} when applied to $X$, $\mathcal{P}$, and $A_1:=E$. We denote by $F' \geq 0$ the smallest constant such that the pre-image under $\eta$ of a subgraph of diameter $\leq F$ has diameter $\leq F'$. By definition of $E$, we know that there exists $L \geq 0$ such that any two vertices of $Y$ can be connected by some path that does not intersect $E$-persistently any subgraph of diameter $\leq F'$ lying at distance $\geq L$ from its endpoints. And, by definition of $F$, we know that there exists $K \geq 0$ such that, if $x,y \in V(X)$ are two vertices not contained in the $K$-neighbourhood of some subgraph of $\mathcal{P}$, then every path connecting $x$ and $y$ intersects $E$-persistently some subgraph $B$ of diameter $\leq F$ satisfying $d(x,B),d(y,B) \geq L$. 

\begin{claim}\label{claim:PairBoundedDist}
For all $y_1,y_2 \in V(Y)$, there exists $P \in \mathcal{P}$ such that $\eta(y_1),\eta(y_2) \in P^{+K}$.
\end{claim}

\noindent
Suppose to the contrary that there exist $y_1,y_2 \in V(Y)$ satisfying $\eta(y_1),\eta(y_2) \notin P^{+K}$ for every $P \in \mathcal{P}$. We know that there exists some path $\alpha$ connecting $y_1$ and $y_2$ that does not intersect $E$-persistently any subgraph of diameter $\leq F'$ lying at distance $\geq L$ from its endpoints. We also know that there exists some subgraph $B$ of diameter $\leq F$ such that $\eta(\alpha)$ intersects $E$-persistently $B$ and such that $d(x,B),d(y,B) \geq L$. Because $\eta$ is $1$-Lipschitz, $\alpha$ intersects $E$-persistently $\eta^{-1}(B)$, which has diameter $\leq F'$, and
$$d(y_i, \eta^{-1}(B)) \geq d(\eta(y_i) , \eta(\eta^{-1}(B))) \geq d(\eta(y_i), B) \geq L$$
for $i=1,2$. We get a contradiction, proving Claim~\ref{claim:PairBoundedDist}.

\medskip \noindent
Because $X$ is stringy relative to $\mathcal{P}$, we know that there exists some $M \geq 0$ such that $\mathrm{diam}(P^{+K} \cap Q^{+K}) \leq M$ for all distinct $P,Q \in \mathcal{P}$. Setting the constant
$$C:= \max \left( K, N(1+ \mathrm{deg}(X)^{M+1} ) \right),$$
let us verify that:

\begin{claim}\label{claim:TripleBoundedDist}
For all vertices $y_1,y_2,y_2 \in V(Y)$, there exists a subgraph $P \in \mathcal{P}$ such that $\eta(y_1),\eta(y_2),\eta(y_3) \in P^{+C}$. 
\end{claim}

\noindent
Suppose to the contrary that there exist $y_1,y_2,y_3 \in V(Y)$ such that $\eta(y_1),\eta(y_2),\eta(y_3) \notin P^{+C}$ for every $P \in \mathcal{P}$. We know from Claim~\ref{claim:PairBoundedDist} that there exist $P_1,P_2,P_3 \in \mathcal{P}$ such that $\eta(y_1),\eta(y_2) \in P_1^{+K}$, $\eta(y_2),\eta(y_3) \in P_2^{+K}$, and $\eta(y_3),\eta(y_1) \in P_3^{+K}$. 
\begin{center}
\includegraphics[width=0.6\linewidth]{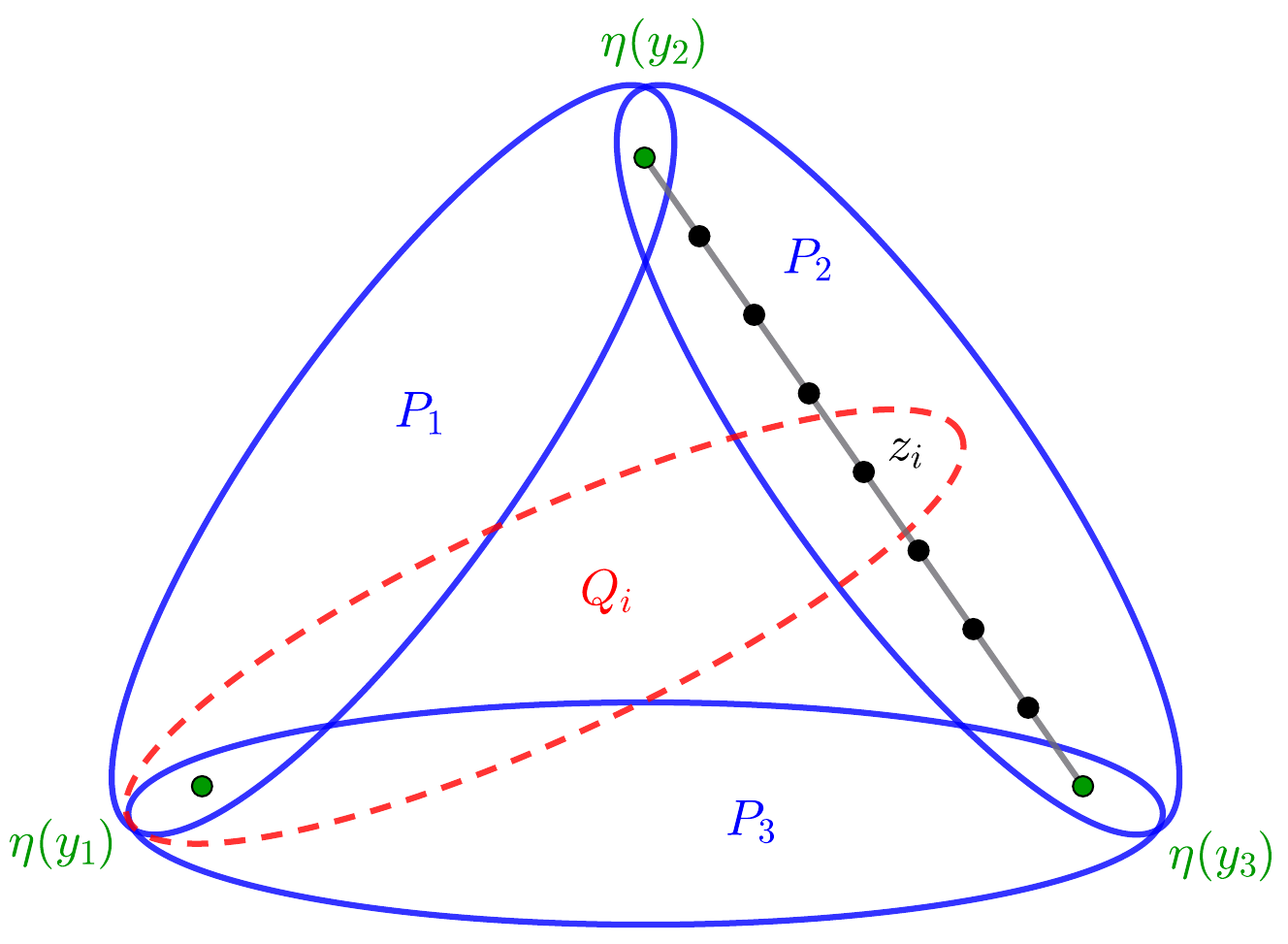}
\end{center}
Because $P_2$ is connected, we can fix a vertex-path $z_1, \ldots, z_n$ in $P_2$ connecting $\eta(y_2)$ and $\eta(y_3)$. Notice that, because $\eta(y_2) \notin P_3^{+C}$ since $P_3^{+C}$ already contains $\eta(y_1)$ and $\eta(y_3)$, we must have $d(\eta(y_2), \eta(y_3)) \geq C$, hence $n \geq C$. Again according to Claim~\ref{claim:PairBoundedDist}, we know that, for every $1 \leq i \leq n$, there exists $Q_i \in \mathcal{P}$ such that $\eta(y_1),z_i \in Q_i^{+K}$. Since the $Q_i$ all contain $\eta(y_1)$ in their $K$-neighbourhood, it follows from Claim~\ref{claim:BoundedDegree} that at least $\lfloor n/N \rfloor \geq \lfloor C/N \rfloor$ of the $Q_i$ must be identical, say to some $Q \in \mathcal{P}$. Since $Q$ contains at least $\lfloor C/N \rfloor$ of the $z_i$ in its $K$-neighbourhood, we have
$$\mathrm{diam}(Q^{+K} \cap P_2^{+K}) \geq \log_{\mathrm{deg}(X)} |Q^{+K} \cap P_2^{+K}|  \geq \log_{\mathrm{deg}(X)} \lfloor C/N \rfloor > M,$$
where the first inequality is justified by the fact that a ball in $X$ of radius $r$ has cardinality $\leq \mathrm{deg}(X)^r$. We deduce from the definition of $M$ that $Q$ and $P_2$ must coincide, hence
$$\eta(y_1) \in Q^{+K} = P_2^{+K} \subset P_2^{+C},$$
a contradiction since $P_2^{+C}$ already contains $\eta(y_2)$ and $\eta(y_3)$. This proves Claim~\ref{claim:TripleBoundedDist}. 

\medskip \noindent
We are now ready to conclude the proof of our proposition. Because $X$ is stringy relative to $\mathcal{P}$, we know that there exists some $M' \geq 0$ such that $\mathrm{diam}(P^{+C} \cap Q^{+C}) \leq M'$ for all distinct $P,Q \in \mathcal{P}$. Since $Y$ is unbounded, we can fix two vertices $a,b \in V(Y)$ such that $d(\eta(a),\eta(b)) > M'$. According to Claim~\ref{claim:TripleBoundedDist}, for every vertex $y \in V(Y)$, there exists $P(y) \in \mathcal{P}$ such that $\eta(a),\eta(b),\eta(y) \in P(y)^{+C}$. Notice that, for all $y_1,y_2 \in V(Y)$, 
$$\mathrm{diam}(P(y_1)^{+C} \cap P(y_2)^{+C}) \geq d(\eta(a),\eta(b)) > M',$$
hence $P(y_1)=P(y_2)$ by the definition of $M'$. Thus, if $P \in \mathcal{P}$ denotes this common subgraph, we have proved that $\eta(y) \in P^{+C}$ for every $y \in V(Y)$, which concludes the proof of our proposition. 
\end{proof}

\begin{remark}\label{remark:UniformConstant}
It follows from the proof above that the size of the neighbourhood from Proposition~\ref{prop:EmbeddingStringy} only depends on the constants given by the fact that $X$ is stringy relative to $\mathcal{P}$, on the constants given by the fact that $Y$ is pancylindrical, on the degree of $X$, and on the parameters of the coarse embedding $\eta$. 
\end{remark}

\subsection{Proof of the Embedding Theorem}\label{section:Embedding}

\noindent
As a consequence of Proposition~\ref{prop:EmbeddingStringy}, our embedding theorem for lamplighter graphs (Theorem~\ref{thm:Embedding}) will be a consequence of the following observation:

\begin{thm}\label{thm:LampStringy}
Let $X$ be a graph of bounded degree and $n \geq 2$ an integer. The lamplighter graph $\mathcal{L}_n(X)$ is stringy relative to its $X$-leaves. 
\end{thm}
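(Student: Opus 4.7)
The plan is to verify both conditions of Definition~\ref{def:Stringy} for $\mathcal{L}_n(X)$, taking $\mathcal{P}$ to be the collection of $X$-leaves. For the first condition, given distinct leaves $P, Q$ with colourings $c_P \neq c_Q$, Lemma~\ref{lem:DistWreath} tells us that any $(c,p) \in P^{+K} \cap Q^{+K}$ satisfies $|c \triangle c_P|, |c \triangle c_Q| \leq K$ with both $c \triangle c_P$ and $c \triangle c_Q$ contained in $B_X(p,K)$. Since $c_P \triangle c_Q \subseteq (c \triangle c_P) \cup (c \triangle c_Q) \subseteq B_X(p,K)$ is non-empty, the arrow position $p$ is pinned to the $K$-neighbourhood in $X$ of any fixed vertex of $c_P \triangle c_Q$, a finite set by bounded degree. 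For each such $p$, only finitely many colourings $c$ are compatible, so $|P^{+K} \cap Q^{+K}|$, and hence its diameter, is uniformly bounded.

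For the second condition, fix $A_1 \geq 0$. Suppose $x = (c_1,p_1)$ and $y = (c_2,p_2)$ are not jointly in any leaf's $K$-neighbourhood, which forces $|c_1 \triangle c_2|$ to be at least of order $K$. Given a path $\gamma$ from $x$ to $y$, I would extract a coarse midpoint via the integer-valued function $\psi(c,p) := |c \triangle c_1| - |c \triangle c_2|$. This starts at $-|c_1 \triangle c_2|$ and ends at $+|c_1 \triangle c_2|$, and at each edge either remains constant (arrow-moves, and lamp-switches at positions outside $c_1 \triangle c_2$) or shifts by $\pm 2$ (lamp-switches at positions in $c_1 \triangle c_2$). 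Intermediate value then delivers a vertex $v_* = (c^*,p^*)$ on $\gamma$ with $|\psi(v_*)| \leq 2$, so $|c^* \triangle c_i| \geq K/2 - 1$ for $i=1,2$. The candidate bottleneck is a ball $B = B_{\mathcal{L}_n(X)}(v_*, F)$, where $F$ is to be chosen in terms of $A_1$ and $\deg(X)$; taking $K$ much larger than $F + A_2$ then ensures $d(x,B), d(y,B) \geq A_2$.

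The main obstacle is the $A_1$-persistence of $\gamma \cap B$: every path $A_1$-coarsely homotopic to $\gamma$ must also meet $B$. I plan to exploit that an elementary $A_1$-move on $\gamma$ replaces a subpath of length $\leq A_1$ by another of length $\leq A_1$ with the same endpoints, so the arrow's trajectory along either subpath has diameter $\leq A_1$ in $X$ and only lamps in a bounded neighbourhood of that zone can be altered. Each elementary move therefore shifts the $\psi$-midpoint's position by at most $O(A_1 \cdot \deg(X)^{A_1})$ in the lamplighter metric. The subtle task is to rule out cumulative drift across a long chain of elementary moves; the anticipated resolution is that the $\psi$-midpoint's macroscopic location is controlled by which lamps in $c_1 \triangle c_2$ have already been flipped, a partition that can only be \emph{locally} rearranged by $A_1$-homotopy — commuting two flips via an $A_1$-small cycle requires those two lamp positions to lie within $X$-distance $O(A_1)$ of one another. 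Choosing $F$ sufficiently large in terms of $A_1$ and $\deg(X)$ should absorb the total drift and yield the required persistence, completing the verification of the stringy property.
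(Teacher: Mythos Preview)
Your argument for the first condition is correct and follows the same lines as the paper's Claim~\ref{claim:InterLeaves}.

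For the second condition there is a genuine error at the outset. The assertion that ``$x,y$ not jointly in any leaf's $K$-neighbourhood forces $|c_1 \triangle c_2|$ to be at least of order $K$'' is false. Take $c_1 \triangle c_2 = \{a\}$, a single differing lamp, with $d_X(a,p_1), d_X(a,p_2) > K$. Any leaf $X(c)$ whose $K$-neighbourhood contains both $x$ and $y$ would have $c$ agreeing with $c_1$ outside $B_X(p_1,K)$ and with $c_2$ outside $B_X(p_2,K)$; since $a$ lies in neither ball this forces $c(a)=c_1(a)=c_2(a)$, contradicting $a\in c_1\triangle c_2$. So the hypothesis is satisfied with $|c_1\triangle c_2|=1$, and then $|\psi|\leq 1$ everywhere, so the condition $|\psi(v_*)|\leq 2$ is vacuous and your inequality $|c^*\triangle c_i|\geq K/2-1$ collapses. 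The correct consequence of the hypothesis is Lemma~\ref{lem:NotNeighLeaf}: some lamp $p\in c_1\triangle c_2$ lies far in $X$ from both arrow positions --- a statement about \emph{location}, not cardinality.

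Even granting a large symmetric difference, the persistence argument is still a gap. You correctly flag cumulative drift as the obstacle, and your intuition that an $A_1$-homotopy can only locally rearrange the flip order is sound; for lamps strung along a path in $X$ one can indeed show that the relative order of two flips at $X$-distance $\gg A_1$ is an $A_1$-homotopy invariant, which pins the midpoint to a window of width $O(A_1)$. But carrying this through for arbitrary configurations of $c_1\triangle c_2$ in $X$, and in the presence of spurious flip--unflip pairs that an elementary move may insert, requires a structural analysis of $A_1$-homotopy classes in $\mathcal{L}_n(X)$ that the sketch does not supply. The sentence ``choosing $F$ sufficiently large \ldots\ should absorb the total drift'' is precisely where a proof is needed.

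The paper's route avoids both difficulties. Having located via Lemma~\ref{lem:NotNeighLeaf} a lamp $p\in c_1\triangle c_2$ far from both arrows, it covers $\mathcal{L}_n(X)$ by ``inner'' pieces $\mathcal{I}(d)$ (arrow within $B_X(p,2A_1)$, colouring equal to $d$ outside $B_X(p,3A_1)$) and ``outer'' pieces $\mathcal{O}(d)$ (arrow outside $B_X(p,2A_1)$, colouring equal to $d$ on $B_X(p,A_1)$). Every set of diameter $\leq A_1$ lies in some piece, and the nerve is a bipartite graph; since $c_1(p)\neq c_2(p)$ the endpoints lie in distinct $\mathcal{O}$-pieces. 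Proposition~\ref{prop:CreatingPersist} then produces an $\mathcal{I}(d)$ that $\gamma$ meets $A_1$-persistently, and each $\mathcal{I}(d)$ has diameter bounded in terms of $A_1$ and $\deg(X)$ alone. No midpoint, no drift to control.
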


\noindent
First of all, we need a tool in order to show that some intersections between paths and subgraphs are persistent. This is the purpose of our next proposition. The picture to keep in mind is that, given a path $\gamma$ in a graph, there exists a representative $\gamma'$ of $\gamma$ up to homotopy such that every path homotopy equivalent to $\gamma$ covers $\gamma'$ entirely. 

\begin{prop}\label{prop:CreatingPersist}
Let $X$ be a graph and $\mathcal{O}$ a covering of its vertices. Assume that:
\begin{itemize}
	\item there exists $E \geq 1$ such that every subset $S \subset V(X)$ of diameter $\leq E$ is contained in some $O \in \mathcal{O}$;
	\item the nerve complex $\mathcal{N}(\mathcal{O})$ of $\mathcal{O}$ is a graph.
\end{itemize}
Then, for any two distinct $A,B \in \mathcal{O}$ and for every path $\gamma$ connecting $A$ to $B$, there exists a path
$$C_1=A, \ C_2, \  \ldots, \ C_{n-1}, \ C_n=B \text{ in } \mathcal{N}(\mathcal{O})$$
such that $\gamma$ intersects $E$-persistently each $C_i$. 
\end{prop}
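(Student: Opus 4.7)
The plan is to pass to the geometric realization of the nerve $|\mathcal{N}(\mathcal{O})|$, which is a $1$-complex by hypothesis, and identify $(C_1,\ldots,C_n)$ with the unique reduced walk representing the path-homotopy class of a continuous path associated to $\gamma$; persistence will then follow from the invariance of this homotopy class under $E$-coarse homotopies of $\gamma$.

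First I would note that the absence of $2$-simplices in $\mathcal{N}(\mathcal{O})$ forces every vertex $v \in V(X)$ to lie in at most two elements of $\mathcal{O}$ (three of them would form a $2$-simplex on $v$). This allows the definition of a canonical continuous map $\tilde{\psi} : |X| \to |\mathcal{N}(\mathcal{O})|$ sending a vertex $v$ to the nerve vertex $O$ if $v$ lies in a unique $O$, and to the midpoint of the edge $\{O_1, O_2\}$ if $v \in O_1 \cap O_2$ with $O_1 \neq O_2$; $\tilde{\psi}$ is extended across each edge $\{v,w\}$ of $X$ by a path in the closed star of some $O \in \mathcal{O}$ containing the edge (whose existence is guaranteed by $E \geq 1$). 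A small case analysis using $|N(v)| \leq 2$ shows the extension is unambiguous: if two distinct elements both contain the edge, then $N(v) = N(w) = \{O,O'\}$, which forces $\tilde{\psi}(v) = \tilde{\psi}(w)$ and hence a trivial extension. Applying $\tilde{\psi}$ to $\gamma$ and pre/appending the short arcs joining $A$ to $\tilde{\psi}(v_0)$ and $\tilde{\psi}(v_k)$ to $B$ in the stars of $A$ and $B$ yields a continuous path $\alpha$ in $|\mathcal{N}(\mathcal{O})|$ from the vertex $A$ to the vertex $B$. I would then take $(C_1,\ldots,C_n)$ to be the unique reduced walk in the path-homotopy class of $\alpha$, a standard invariant in $1$-complexes, which by construction starts at $A$ and ends at $B$.

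The key step is the implication: if $\gamma'$ is $E$-coarsely homotopy equivalent to $\gamma$, then the corresponding path $\alpha'$ is path-homotopic to $\alpha$ in $|\mathcal{N}(\mathcal{O})|$. It suffices to handle a single elementary move that replaces a subpath $\zeta$ by $\xi$ with $\mathrm{diam}(\zeta \cup \xi) \leq E$. The first hypothesis yields some $O \in \mathcal{O}$ containing $\zeta \cup \xi$. Every vertex of either subpath lies in $O$ and, by the $2$-simplex-free condition, in at most one other element, which is then necessarily adjacent to $O$ in $\mathcal{N}(\mathcal{O})$. Consequently both $\tilde{\psi}(\zeta)$ and $\tilde{\psi}(\xi)$ are confined to the closed star of $O$, which is a tree (star graph) and therefore simply connected; two continuous paths in a tree with common endpoints are path-homotopic, giving $\alpha \sim \alpha'$ globally. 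Iterating over the elementary moves realizing $\gamma \sim \gamma'$ completes this step.

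To conclude, any representative of the path-homotopy class of a reduced walk $(C_1,\ldots,C_n)$ in a $1$-complex passes through every $C_m$, because backtrack reduction only removes visited vertices. Hence $\alpha'$ passes through every $C_m$; unwinding the construction of $\tilde{\psi}$, the path $\alpha'$ reaches a nerve vertex $C_m$ only when either some vertex of $\gamma'$ lies solely in $C_m$, or some edge of $\gamma'$ is entirely contained in $C_m$ (in which case $C_m$ is the center of the star traversed by the extension). In both cases $\gamma'$ has a vertex in $C_m$, yielding the desired $E$-persistence. The main subtlety I anticipate is the clean setup of the canonical map $\tilde{\psi}$ and the rigorous translation of an elementary coarse move into a star-local path-homotopy in $|\mathcal{N}(\mathcal{O})|$, where the $1$-dimensionality of $\mathcal{N}(\mathcal{O})$ is used decisively through the bound $|N(v)| \leq 2$ and the simple-connectedness of stars.
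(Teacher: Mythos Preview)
Your proof is correct and takes a genuinely different route from the paper's. The paper defines a combinatorial map $\Pi$ sending each path in $X$ to an explicit walk in $\mathcal{N}(\mathcal{O})$ by greedily cutting the path into maximal subsegments lying in successive cover elements, and then carries out a six-case analysis to show that a single elementary $E$-move changes $\Pi(\gamma)$ by at most one edge-backtrack. You instead build a continuous map $\tilde\psi:|X|\to|\mathcal{N}(\mathcal{O})|$ and observe that an elementary move is supported in a single $O$, hence its image lies in the closed star of $O$, which is a tree because the nerve is triangle-free; simple-connectedness of that star replaces the entire case analysis in one stroke. Both approaches then conclude identically: the reduced walk in the homotopy class is the invariant, and any continuous representative must hit every vertex on it (the paper phrases this combinatorially, you implicitly via the universal cover). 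Your argument is more conceptual and shorter; the paper's has the minor advantage of staying purely combinatorial, which makes the bound on how $\Pi$ changes (``at most one backtrack'') completely explicit. One small point worth tightening: when you extend $\tilde\psi$ over an edge, you should say you take the geodesic in the star (not just ``a path''), so that the extension never touches any nerve vertex other than the center $O$; otherwise your final unwinding step (``$\alpha'$ reaches $C_m$ only when\ldots'') would not go through.
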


\begin{proof}
Fix two distinct $A,B \in \mathcal{O}$. For every path $\alpha$ connecting $A$ to $B$, we want to define some path $\Pi(\alpha)$ in $\mathcal{N}(\mathcal{O})$. For this, we need the following observation:

\begin{claim}\label{claim:UniqueO}
Let $O \in \mathcal{O}$ and let $\{a,b\} \in E(X)$ be an edge with $a \in O$ but $b \notin O$. There exists a unique $O' \in \mathcal{O}$ such that $\{a,b\} \subset O'$. 
\end{claim}

\noindent
Because $\{a,b\}$ has diameter $\leq 1 \leq E$, we know that there exists at least one $O' \in \mathcal{O}$ such that $\{a,b\} \subset O'$. Let $O'' \in \mathcal{O}$ be another such element. Notice that $O,O',O''$ pairwise intersect. (Indeed, they all contain $a$.) But $\mathcal{N}(\mathcal{O})$ is a graph, so $\mathcal{O}$ cannot contain three distinct pairwise intersecting elements. Since $O \notin \{O',O''\}$ as $b \notin O$, necessarily $O'=O''$. This concludes the proof of Claim~\ref{claim:UniqueO}. 

\medskip \noindent
Now, we are ready to define our path $\Pi(\alpha)$. We decompose $\alpha$ as a concatenation of successive subsegments $\alpha_0 \cdot \alpha_1 \cdots \alpha_n$ such that:
\begin{itemize}
	\item $\alpha_0$ is the maximal initial segment of $\alpha$ contained in $A$;
	\item for every $i \geq 1$, $\alpha_i$ is the maximal subsegment of $\alpha$ starting from the terminal point of $\alpha_{i-1}$ that is contained in the unique $O_i \in \mathcal{O}$ to which belongs the edge following $\alpha_{i-1}$.
\end{itemize}
Then, we define $\Pi(\alpha)$ as the path
$$A, \ O_1, \ \ldots, \ O_n, \ B$$
in $\mathcal{N}(\mathcal{O})$. The key observation is that, when we modify $\alpha$ up to $E$-coarse homotopy equivalence, then $\Pi(\alpha)$ is modified up to homotopy equivalence. This is a consequence of our next claim.

\begin{claim}\label{claim:UpToBacktrack}
Decompose $\alpha$ as $\mu \cdot \zeta \cdot \nu$ for three successive subsegments $\mu,\zeta,\nu \subset \alpha$, and let $\xi$ be a path with the same endpoints as $\zeta$ such that $\zeta \cup \xi$ has diameter $\leq E$. Then $\Pi(\alpha)= \Pi(\mu \zeta \nu)$ and $\Pi(\mu \xi \nu)$ differ by at most one edge-backtrack. 
\end{claim}

\noindent
We keep the previous decomposition $\alpha = \alpha_0 \cdot \alpha_1 \cdots \alpha_n$ and the description of $\Pi(\alpha)$ as $A,O_1, \ldots, O_n,B$. In order to describe $\Pi(\mu \xi \nu)$, we distinguish different cases, depending on how the two decompositions $\mu \zeta \nu$ and $\alpha_0 \alpha_1 \cdots \alpha_n$ interact. We denote by $p$ and $q$ the common endpoints of $\zeta$ and $\xi$, and we fix some $O \in \mathcal{O}$ containing $\zeta \cup \xi$. We also denote by $i$ the index such that $i=0$ if $p$ is the initial point of $\alpha$ and such that $\alpha_i$ contains the edge preceding $p$ otherwise. 

\medskip \noindent
Case 1: Assume that $p \in \alpha_i \cap \alpha_{i+1}$. Because $O_{i+1}$ is the unique element of $\mathcal{O}$ containing the edge following $\alpha_i$, we know that $O=O_{i+1}$. Moreover, since $\alpha_{i+1}$ is the maximal subsegment of $\alpha$ starting from the terminal point of $\alpha_i$ that is contained in $O_{i+1}$, we deduce from $\zeta \subset O$ that $q \in \alpha_{i+1}$. Thus, the configuration is the following:
\begin{center}
\includegraphics[width=0.5\linewidth]{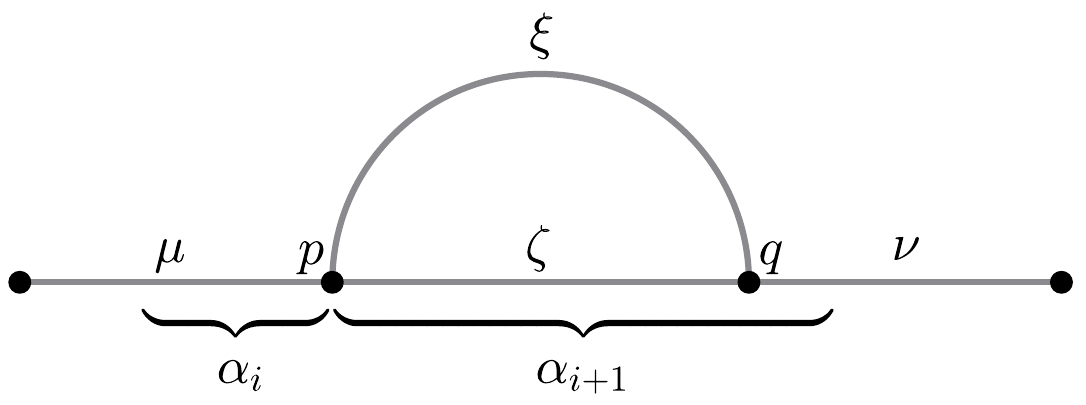}
\end{center}
It follows that $\Pi(\mu \xi \nu)$ coincides with $\Pi(\mu \zeta \nu) = \Pi(\alpha)$. 

\medskip \noindent
Case 2: Assume that $p \notin \alpha_{i+1}$, $q \notin \alpha_i$, and $O_i$ contains $\xi$, $\alpha_{i+1} \cap \nu$, and the first edge of $\alpha_{i+2}$. The configuration is the following:
\begin{center}
\includegraphics[width=0.5\linewidth]{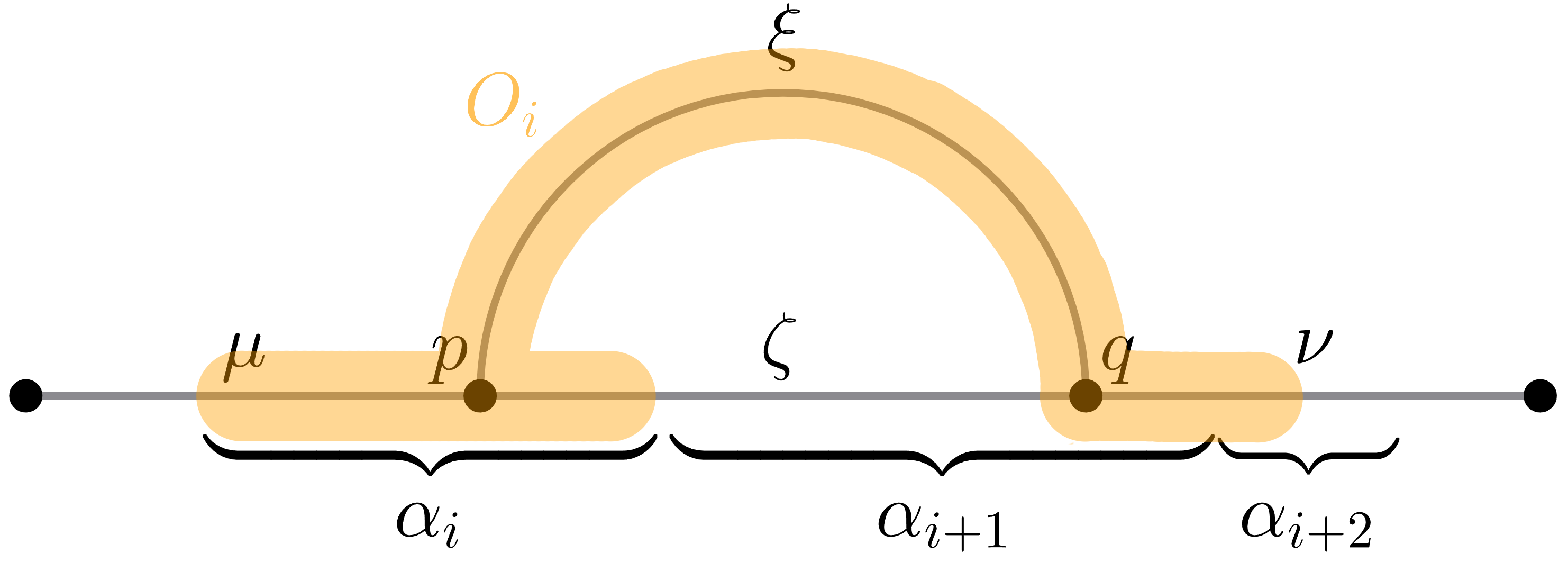}
\end{center}
By just applying the definition of $\Pi(\cdot)$, we find that $\Pi(\mu \xi \nu)$ is 
$$A, O_1, \ldots, O_{i-1}, O_i, O_{i+2}, O_{i+3}, \ldots, O_n, B.$$
Notice that, because $O_{i+2}$ is the unique element of $\mathcal{O}$ that contains the first edge of $\alpha_{i+2}$, necessarily $O_{i+2}=O_i$. Thus, $\Pi(\mu \xi \nu)$ can be obtained from $\Pi(\mu \zeta \nu)= \Pi(\alpha)$ by removing an edge-backtrack.

\medskip \noindent
Case 3: Assume that $p \notin \alpha_{i+1}$, $q \notin \alpha_i$, and $O_i$ does not contain the union of $\xi$, $\alpha_{i+1} \cap \nu$, and the first edge of $\alpha_{i+2}$. Our configuration is illustrated by one of the two following pictures:
\begin{center}
\includegraphics[width=0.45\linewidth]{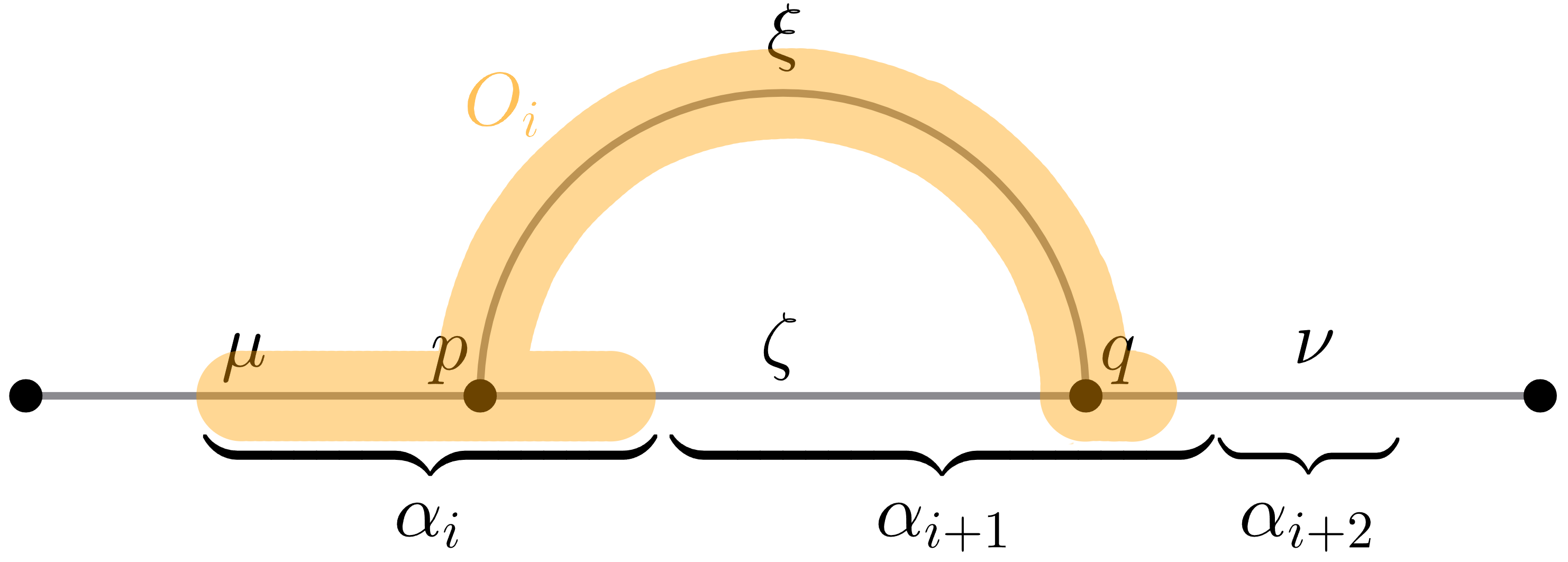}
\includegraphics[width=0.45\linewidth]{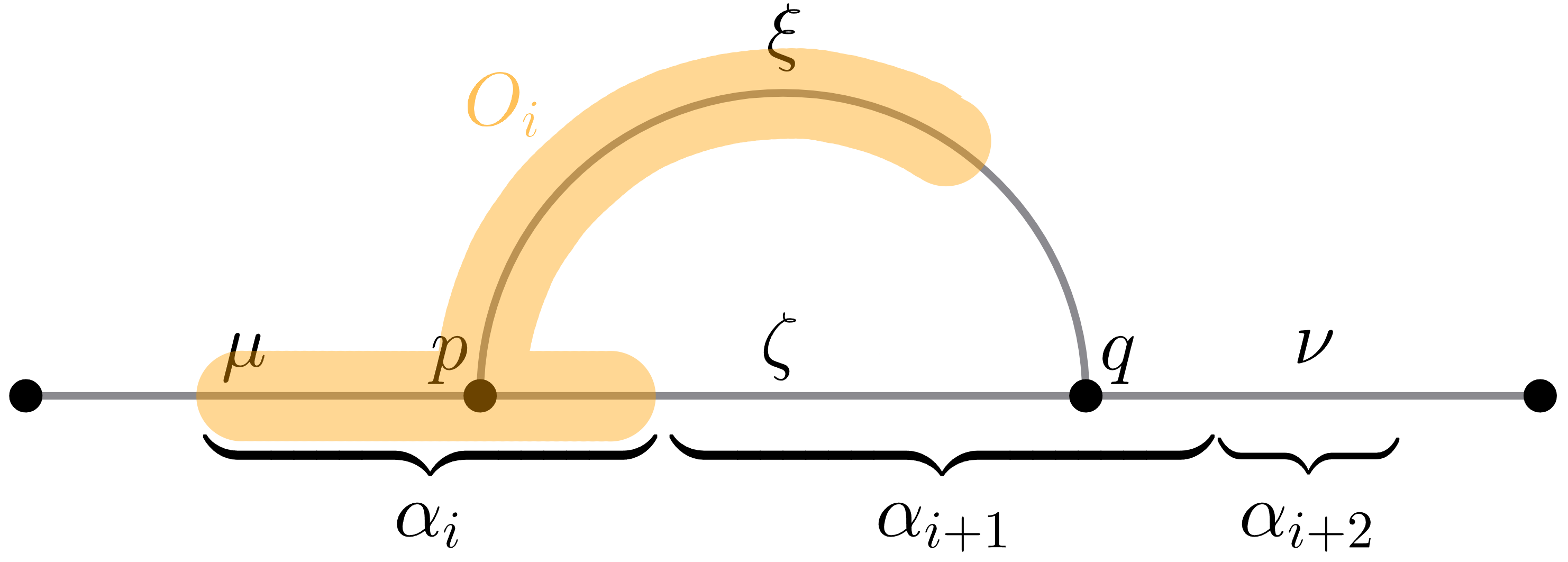}
\end{center}
Notice that, because $O_{i+1}$ is the unique element of $\mathcal{O}$ that contains the first edge of $\alpha_{i+1}$, necessarily $O_{i+1}=O$. By just applying the definition of $\Pi(\cdot)$, we find that $\Pi(\mu \xi \nu)$ is
$$A, O_1, \ldots, O_{i-1}, O_i, O=O_{i+1}, O_{i+2} , \ldots, O_n, B.$$
Thus, $\Pi(\mu \xi \nu)$ coincides with $\Pi(\mu \zeta \nu)= \Pi(\alpha)$.

\medskip \noindent
Case 4: Assume that $q \in \alpha_i$ and $\xi \subset O_i$. The configuration is the following:
\begin{center}
\includegraphics[width=0.5\linewidth]{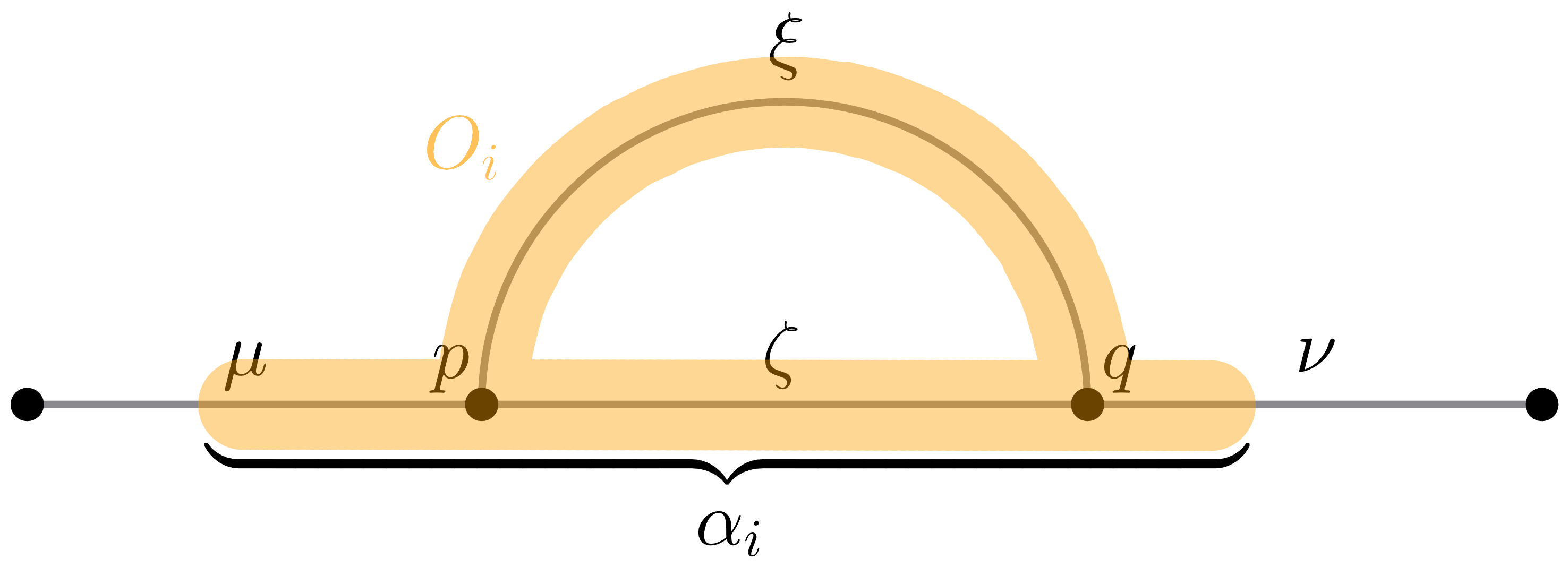}
\end{center}
Just applying the definition of $\Pi(\cdot)$, we find that $\Pi(\mu \xi \nu)$ coincides with $\Pi(\mu \zeta \nu)= \Pi(\alpha)$. 

\medskip \noindent
Case 5: Assume that $q \in \alpha_i$, $\xi \nsubseteq O_i$, and the first edge of $\alpha_{i+1}$ is contained in $O$. The configuration is the following:
\begin{center}
\includegraphics[width=0.5\linewidth]{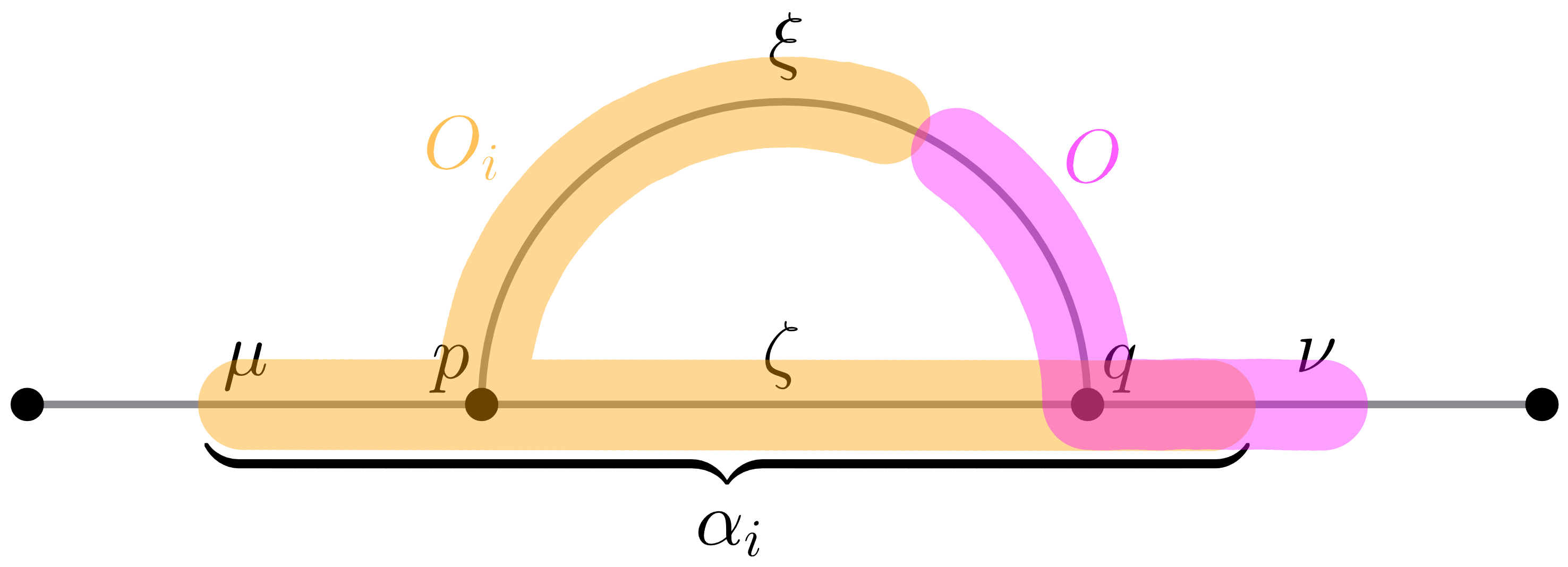}
\end{center}
Notice that, since $O_{i+1}$ is the unique element of $\mathcal{O}$ that contains the first edge of $\alpha_{i+1}$, necessarily $O_{i+1}=O$. By applying the definition of $\Pi(\cdot)$, we find that $\Pi(\mu \xi \nu)$ is
$$A, O_1, \ldots, O_{i-1}, O_i, O=O_{i+1}, O_{i+2}, \ldots, O_n, B.$$
Thus, $\Pi(\mu \xi \nu)$ coincides with $\Pi(\mu \zeta \nu)= \Pi(\alpha)$.

\medskip \noindent
Case 6: Assume that $q \in \alpha_i$, $\xi \nsubseteq O_i$, and the first edge of $\alpha_{i+1}$ is not contained in $O$. Our configuration is illustrated by one of the two following pictures, depending on whether the last edge of $\alpha_i$ belongs to $O$ or not.
\begin{center}
\includegraphics[width=0.45\linewidth]{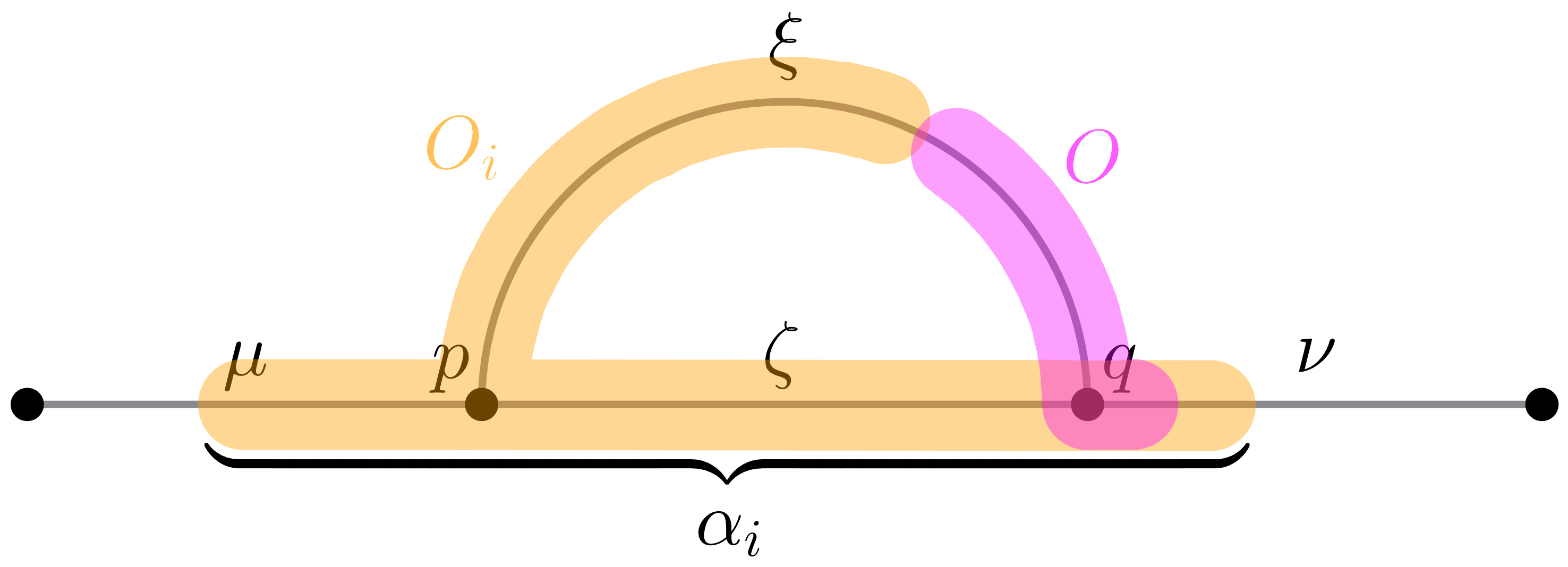}
\includegraphics[width=0.45\linewidth]{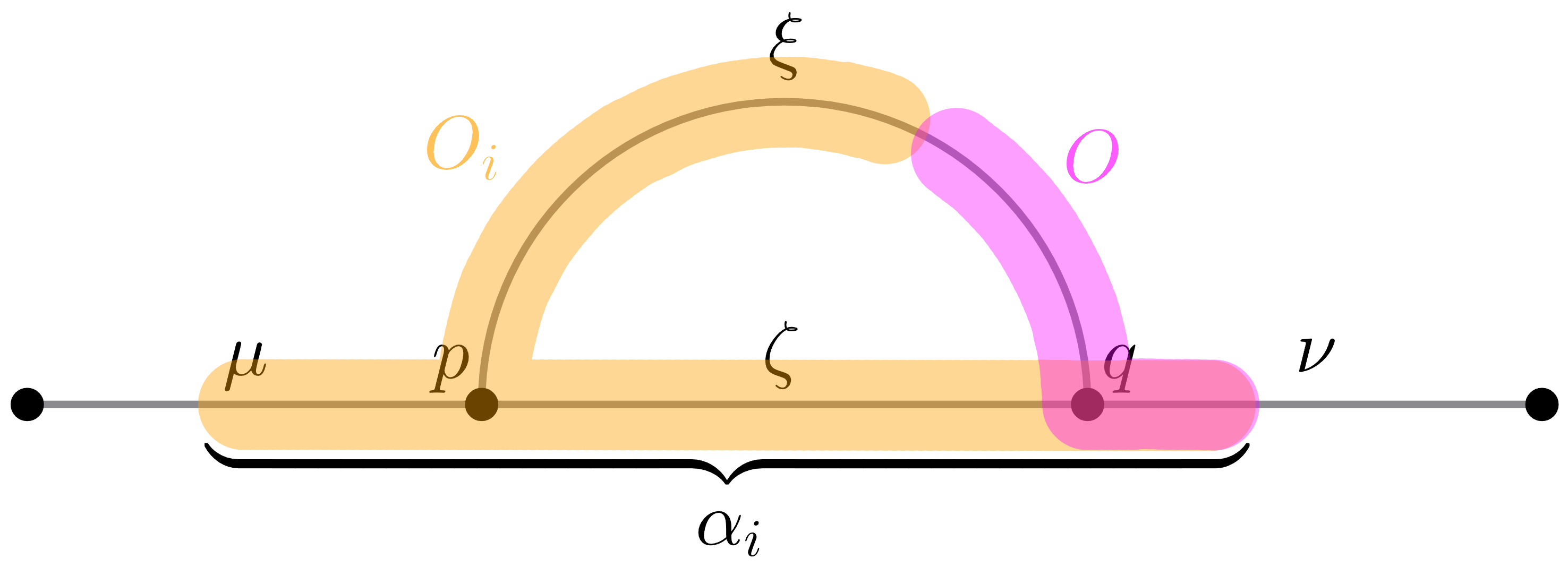}
\end{center}
In the first case, we find that $\Pi(\mu \xi \nu)$ is
$$A, O_1, \ldots, O_{i-1}, O_i, O, O_i, O_{i+1}, \ldots, O_n, B.$$
Thus, $\Pi(\mu \xi \nu)$ is obtained from $\Pi(\mu \zeta \nu)= \Pi(\alpha)$ by adding an edge-backtrack. And, in the second case, we find that $\Pi(\mu \xi \nu)$ is
$$A, O_1, \ldots, O_{i-1}, O_i, O, O_{i+1}, \ldots, O_n, B.$$
Notice that $O$ intersects both $O_i$ and $O_{i+1}$, hence $O \in \{O_i, O_{i+1}\}$ since $\mathcal{N}(\mathcal{O})$ is triangle-free. Thus, $\Pi(\mu \xi \nu)$ coincides with $\Pi(\mu \zeta \nu) = \Pi(\alpha)$. The proof of Claim~\ref{claim:UpToBacktrack} is complete.

\medskip \noindent
Now, let $\gamma$ be an arbitrary path in $X$ connecting $A$ to $B$. In the graph $\mathcal{N}(\mathcal{O})$, the path $\Pi(\gamma)$ is homotopy equivalent to a unique path
$$C_1=A, \ C_2, \ \ldots, \ C_{n-1}, \ C_n=B$$
of minimal length. Moreover, every path in $\mathcal{N}(\mathcal{O})$ homotopy equivalent to $\Pi(\gamma)$ has to pass through each $C_i$. If $\gamma'$ is a path in $X$ that is $E$-coarsely homotopy equivalent to $\gamma$, then it follows from Claim~\ref{claim:UpToBacktrack} that $\Pi(\gamma')$ is homotopy equivalent to $\Pi(\gamma)$ in $\mathcal{N}(\mathcal{O})$. Consequently, $\gamma'$ has to intersect each $C_i$. 
\end{proof}

\noindent
Before turning to the proof of Theorem~\ref{thm:LampStringy}, we need a last preliminary lemma:

\begin{lemma}\label{lem:NotNeighLeaf}
Let $X$ be a graph of bounded degree and $n \geq 2$ an integer. For every $D \geq 0$, there exists some $K \geq 0$ such that the following holds. If two vertices $(c_1,p_1), (c_2,p_2) \in \mathcal{L}_n(X)$ are not contained in the $K$-neighbourhood of some $X$-leaf, then there exists $p \in c_1 \triangle c_2$ such that $d(p,p_1),d(p,p_2)>D$. 
\end{lemma}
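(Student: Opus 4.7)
The plan is to prove the contrapositive: if every vertex where $c_1$ and $c_2$ differ lies within distance $D$ of either $p_1$ or $p_2$, then both $(c_1,p_1)$ and $(c_2,p_2)$ lie in a common $K$-neighbourhood of some $X$-leaf, with $K$ depending only on $D$ and $\deg(X)$. So let $S := c_1 \triangle c_2$ and assume $S \subseteq B(p_1,D) \cup B(p_2,D)$.

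I would then partition $S = S_1 \sqcup S_2$ by setting $S_1 := S \cap B(p_1,D)$ and $S_2 := S \setminus S_1 \subseteq B(p_2,D)$, and construct an interpolating colouring
\[
c(q) := \begin{cases} c_2(q) & \text{if } q \in S_1 \\ c_1(q) & \text{otherwise.} \end{cases}
\]
Since $c$ agrees with $c_1$ off a finite set, it has finite support and therefore defines an $X$-leaf $L_c$. A direct check, using that $c_1$ and $c_2$ coincide outside $S$, gives the key identities $c_1 \triangle c = S_1$ and $c_2 \triangle c = S_2$. Notice that this works uniformly regardless of whether $p_1$ and $p_2$ are close or far apart, so no case distinction is needed.

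The last step is to bound the distance $d((c_i,p_i), L_c)$ using the wreath-product distance formula of Lemma~\ref{lem:DistWreath}. Writing $N := \deg(X)^D$ for the uniform bound on the size of a ball of radius $D$ in $X$ (available because $X$ has bounded degree), the set $c_1 \triangle c = S_1$ has cardinality at most $N$ and is contained in $B(p_1,D)$. The arrow can therefore be routed from $p_1$ through all vertices of $S_1$ by a naive back-and-forth tour of length at most $2D|S_1| \leq 2DN$, and each of the at most $N$ lamp modifications costs $1$ in $K_n$. This yields
\[
d((c_1,p_1), L_c) \leq N(2D+1),
\]
and the symmetric estimate for $(c_2,p_2)$ follows from $S_2 \subseteq B(p_2,D)$. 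Taking $K := N(2D+1)$ then closes the contrapositive.

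The only potential obstacle is the distance estimate itself, but it reduces to a direct application of Lemma~\ref{lem:DistWreath} together with the elementary observation that, when all vertices of a finite set lie within radius $D$ of a given point, the travelling-salesman cost starting from that point grows at most linearly in both the cardinality of the set and $D$. Nothing deeper is needed.
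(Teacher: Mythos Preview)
Your proof is correct and follows essentially the same route as the paper: both argue the contrapositive by constructing an interpolating colouring $c$ with $c_i \triangle c \subset B(p_i,D)$ and then bounding the distance to the leaf $X(c)$ via the ball's size and diameter, arriving at the same constant $K=(2D+1)\deg(X)^D$. Your partition $S_1 \sqcup S_2$ is in fact slightly cleaner than the paper's description of $c$, since it handles the case where $B(p_1,D)$ and $B(p_2,D)$ overlap without ambiguity.
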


\begin{proof}
Let $(c_1,p_1),(c_2,p_2) \in \mathcal{L}_n(X)$ be two vertices and $D \geq 0$ a constant. Assume that $c_1 \triangle c_2 \subset B(p_1,D) \cup B(p_2,D)$. Let $c$ denote the colouring that agrees with both $c_1$ and $c_2$ outside $B(p_1,D) \cup B(p_2,D)$, that agrees with $c_1$ on $B(p_2, D)$, and that agrees with $c_2$ on $B(p_1,D)$. By construction, for $i=1,2$, we have $c_i \triangle c \subset B(p_i,D)$. Therefore, if $X(c)$ denotes the $X$-leaf associated to $c$, we have
$$d( (c_i,p_i), X(c)) \leq |B(p_i,D)| \mathrm{diam}(B(p_i,D)) + |B(p_i,D)| \leq (1+ 2D) \mathrm{deg}(X)^{D}=:K.$$
Thus, we have found the constant $K$ we were looking for. 
\end{proof}

\begin{proof}[Proof of Theorem~\ref{thm:LampStringy}.]
We start by verifying that coarse intersections of $X$-leaves are bounded.

\begin{claim}\label{claim:InterLeaves}
For every $K \geq 0$, there exist $M \geq 0$ such that $\mathrm{diam}(P^{+K} \cap Q^{+K}) \leq M$ for all distinct $X$-leaves $P$ and $Q$.
\end{claim}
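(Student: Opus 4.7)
The plan is to exploit the explicit distance formula for lamplighter graphs (Lemma~\ref{lem:DistWreath}), which in this setting reads
$$d((c_1,p_1),(c_2,p_2)) = \mathrm{TS}(p_1, c_1 \triangle c_2, p_2) + |c_1 \triangle c_2|,$$
since consecutive colours in $K_n$ are at distance $1$. Let $P, Q$ be two distinct $X$-leaves, associated to colourings $c_P, c_Q \in \mathbb{Z}_n^{(V(X))}$ respectively.

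First I would observe that membership in $P^{+K}$ has a very restrictive shape: if $(c,p) \in P^{+K}$, i.e.\ there exists $p'$ such that $d((c,p),(c_P,p')) \leq K$, then the $\mathrm{TS}$ term in the distance formula forces every vertex of $c \triangle c_P$ to be visited by a walk of length $\leq K$ starting from $p$, so
$$c \triangle c_P \subseteq B(p,K) \quad \text{and} \quad |c \triangle c_P| \leq K.$$
Applying this to $P$ and to $Q$ simultaneously: if $(c,p) \in P^{+K} \cap Q^{+K}$, both $c_P$ and $c_Q$ agree with $c$ outside $B(p,K)$, hence $c_P \triangle c_Q \subseteq B(p,K)$.

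The key step is the following pinning observation. Because $P \neq Q$, the finite set $c_P \triangle c_Q$ is non-empty and is \emph{fixed} (it does not depend on the chosen element of $P^{+K}\cap Q^{+K}$). Fix once and for all a vertex $v \in c_P \triangle c_Q$; the previous paragraph then forces $d(p,v) \leq K$ for every $(c,p) \in P^{+K} \cap Q^{+K}$. Consequently, for any two elements $(c_1,p_1),(c_2,p_2) \in P^{+K} \cap Q^{+K}$, their arrows satisfy $d(p_1,p_2) \leq 2K$.

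It remains to bound the distance $d((c_1,p_1),(c_2,p_2))$. From $c_i \triangle c_P \subseteq B(p_i,K)$ I get $c_1 \triangle c_2 \subseteq B(p_1,K) \cup B(p_2,K) \subseteq B(v,2K)$, hence
$$|c_1 \triangle c_2| \leq 2|B(K)| \leq 2 \deg(X)^K,$$
and all vertices of $c_1 \triangle c_2$ lie at distance $\leq 3K$ from $p_1$. Plugging these bounds back in Lemma~\ref{lem:DistWreath} yields a bound $M = M(K,\deg(X))$ independent of $P,Q$ and of the choice of elements, which is exactly the required control on $\mathrm{diam}(P^{+K} \cap Q^{+K})$. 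The only step requiring a bit of care is the identification of the "common focal point" $v \in c_P \triangle c_Q$; once this is in place, the bounded degree hypothesis turns the rest into a routine computation with the distance formula.
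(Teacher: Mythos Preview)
Your proof is correct and follows essentially the same approach as the paper: both arguments establish that $c_P \triangle c_Q \subseteq B(p,K)$ for every $(c,p)$ in the intersection, deduce that the arrows $p$ are pinned near the fixed non-empty set $c_P \triangle c_Q$, and then bound the distance using the distance formula. Your use of a single focal point $v \in c_P \triangle c_Q$ is a slightly cleaner way to phrase the pinning step than the paper's formulation $p_i \in (c_P \triangle c_Q)^{+K}$, but the content is identical.
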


\noindent
Let $c_1$ and $c_2$ be two distinct colourings, and let $X(c_1),X(c_2)$ denote the corresponding $X$-leaves. If $X(c_1)^{+K} \cap X(c_2)^{+K}$ is empty, then there is nothing to prove. So, from now on, we assume that the intersection is non-empty. Let $(a_1,p_1), (a_2,p_2) \in X(c_1)^{+K} \cap X(c_2)^{+K}$ be two vertices. For $i,j=1,2$, we must have $a_i \triangle c_j \subset B(p_i,K)$. First, this implies that
$$c_1 \triangle c_2 = (c_1 \triangle a_i) \triangle (a_i \triangle c_2) \subset B(p_i,K).$$
Consequently, $c_1 \triangle c_2$ has diameter $\leq 2K$ and $p_i \in (c_1 \triangle c_2)^{+K}$. Second, this implies that
$$a_1 \triangle a_2 = (a_1 \triangle c_1) \triangle (c_1 \triangle a_2) \subset B(p_1,K) \subset (c_1 \triangle c_2)^{+2K},$$
where the inclusion is justified by the fact $p_1 \in (c_1 \triangle c_2)^{+K}$, as previously noticed. Since $p_1,p_2 \in (c_1 \triangle c_2)^{+K}$ and $a_1 \triangle a_2 \subset (c_1 \triangle c_2)^{+2K}$, we must have
$$\begin{array}{lcl} d((a_1,p_1),(a_2,p_2)) & \leq & \displaystyle \mathrm{diam}\left( (c_1 \triangle c_2)^{+2K} \right) \left| (c_1 \triangle c_2)^{+2K} \right| + \left| (c_1 \triangle c_2)^{+2K} \right| \\ \\ & \leq & \displaystyle (1+4K) \mathrm{deg}(X)^{6K}, \end{array}$$
where the last inequality is justified by the fact that $c_1 \triangle c_2$ has diameter $\leq 2K$, as previously noticed. This concludes the proof of Claim~\ref{claim:InterLeaves}. 

\medskip \noindent
Now, we want to verify the second item from Definition~\ref{def:Stringy}. So we fix some $A_1\geq 0$ and we define 
$$F:= (1+4A_1) \mathrm{deg}(X)^{2A_1}. $$
 Given some $A_2 \geq 0$, let $K \geq 0$ denote the constant given by Lemma~\ref{lem:NotNeighLeaf} for $D:=2A_1+A_2$.  

\medskip \noindent
Fix two vertices $(c_1,p_1),(c_2,p_2) \in \mathcal{L}_n(X)$ not contained in the $K$-neighbourhood of an $X$-leaf, and fix a path $\alpha$ connecting them. Our goal is to apply Proposition~\ref{prop:CreatingPersist} in order to show that $\alpha$ intersect persistently some bounded subgraph. We know from the definition of $K$ that there exists $p \in c_1 \triangle c_2$ such that $d(p,p_1),d(p,p_2)\geq 2A_1 +A_2$. Define
$$\mathcal{I}(d):= \{ (c,q) \in \mathcal{L}_n(X) \mid q \in B(p,2A_1), c_{|X \backslash B(p,3A_1)} = d\} \text{ for every } d \in \mathbb{Z}_n^{(X \backslash B(p,3A_1))}$$
and
$$\mathcal{O}(d):= \{ (c,q) \in \mathcal{L}_n(X) \mid q \notin B(p,2A_1), \ c_{|B(p,A_1)} = d\} \text{ for every } d \in \mathbb{Z}_n^{(B(p,A_1))}.$$
Notice that every set of vertices $S$ of diameter $\leq A_1$ is contained in some $\mathcal{I}(d)$ or some $\mathcal{O}(d)$. Indeed, if there exists some $(c,q) \in S$ such that $q \in B(p,2A_1)$, then, for every $(c',q') \in S$, the colourings $c$ and $c'$ must coincide outside $B(p,3A_1)$ since the arrow cannot leave $B(p,3A_1)$ along a geodesic connecting $(c,q)$ to $(c',q')$. Thus, $S \subset \mathcal{I}(c_{|X\backslash B(p,3A_1)})$. Otherwise, if $q \notin B(p,2A_1)$ for every $(c,q) \in S$, then, for all $(c,q),(c',q') \in S$, the colourings $c$ and $c'$ must coincide on $B(p,A_1)$ since the arrow must stay outside $B(p,A_1)$ along a geodesic connecting $(c,q)$ and $(c',q')$. Thus, $S \subset \mathcal{O}(c_{|B(p,A_1)})$ for any $(c,q) \in S$. 

\medskip \noindent
Notice that the nerve complex $\mathcal{N}$ of the covering given by the $\mathcal{I}(d)$ and the $\mathcal{O}(d)$ is clearly a graph since no two distinct $\mathcal{I}(d)$ (resp.\ $\mathcal{O}(d)$) intersect. Thus, $\mathcal{N}$ is a bipartite graph with vertices of type $\mathcal{I}$ and vertices of type $\mathcal{O}$.

\medskip \noindent
Our vertices $(c_1,p_1)$ and $(c_2,p_2)$ respectively belong to $\mathcal{O}(c_{1 | B(p,A_1)})$ and $\mathcal{O}(c_{2 | B(p,A_1)})$, which represent two distinct vertices of $\mathcal{N}$ since $c_1(p) \neq c_2(p)$. Thus, it follows from Proposition~\ref{prop:CreatingPersist} that there exists some $\mathcal{I}(d)$ that our path $\alpha$ intersects $A_1$-persistently. Notice that 
$$d((c_i,p_i), \mathcal{I}(d)) \geq d(p_i,p) - 2A_1 \geq A_2$$
for every $i=1,2$; and, finally, that $\mathcal{I}(d)$ has diameter $\leq F$, since
$$\mathrm{diam}(\mathcal{I}(d)) \leq \mathrm{diam}(B(p,A_1)) \cdot |B(p,A_1)| + |B(p,A_1)|.$$
This concludes the proof of our theorem. 
\end{proof}

\begin{proof}[Proof of Theorem~\ref{thm:Embedding}.]
The theorem follows directly from Theorem~\ref{thm:LampStringy} and Proposition~\ref{prop:EmbeddingStringy}. 
\end{proof}

\subsection{Applications}

\noindent
Let us mention a few easy but interesting applications of Theorem~\ref{thm:Embedding}. We refer to Exercises~\ref{exo:PancyclicSubgroups}, ~\ref{exo:QIbase}, and~\ref{exo:IteratedLamp} for some information. 

\begin{cor}\label{cor:PancyclicSubgroups}
Let $F$ be a finite group and $H$ a finitely generated group. Every pancylindrical subgroup $K \leq F \wr H$ is conjugate into $H$. 
\end{cor}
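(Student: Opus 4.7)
The plan is to apply Theorem~\ref{thm:Embedding} and then rigidify the structure of $K$ via a short cocycle-style argument (presented for abelian $F$; the general case is analogous by working in multiplicative notation). Throughout I assume $K$ is unbounded (equivalently infinite), which is needed both to invoke the embedding theorem and because the conclusion would fail for some bounded subgroups (e.g.\ a non-trivial copy of $F$ inside a torsion-free $H$). By Proposition~\ref{prop:CaylWreath}, a Cayley graph of $F \wr H$ is the lamplighter graph $\mathcal{L}_{|F|}(\mathrm{Cayl}(H,S))$, whose $X$-leaves are exactly the left cosets $(c_0, 1)H$ with $c_0 \in F^{(H)}$ of finite support. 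Since the inclusion $K \hookrightarrow F \wr H$ is a coarse embedding, Theorem~\ref{thm:Embedding} produces $c_0 \in F^{(H)}$ and $D \geq 0$ with $K \subseteq ((c_0, 1)H)^{+D}$. Setting $g := (c_0, 1)$ and computing $g^{-1}(c, s) g = (c - c_0 + s \cdot c_0, s)$ reduces the statement to proving that $c = c_0 - s \cdot c_0$ for every $(c, s) \in K$.

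Next I verify that the projection $\pi : F \wr H \twoheadrightarrow H$ is injective on $K$. Let $N := K \cap F^{(H)}$. For any $(c, 1) \in N$, Lemma~\ref{lem:DistWreath} applied to the bound $d((c, 1), (c_0, h)) \leq D$ forces both $|c \triangle c_0| \leq D$ and $c \triangle c_0 \subseteq B(1, D)$, so $N$ is finite. The conjugation identity $(c, s)(c_*, 1)(c, s)^{-1} = (s \cdot c_*, 1)$ shows that $\pi(K)$ acts on the finite set $N$ by left-translation of colourings. A non-trivial $c_* \in N$ would have $H$-stabiliser acting faithfully on the finite set $\mathrm{supp}(c_*)$ (because $H$ acts freely on itself), hence be finite; consequently its $\pi(K)$-orbit would be infinite (as $\pi(K) \cong K/N$ is infinite), contradicting its inclusion in the finite set $N$. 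Hence $N = \{1\}$, $\pi|_K$ is injective, and $K = \{(c_s, s) : s \in \pi(K)\}$ for a uniquely determined family satisfying the cocycle identity $c_{st} = c_s + s \cdot c_t$.

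Define the ``defect'' $f(s) := c_s - (c_0 - s \cdot c_0)$; since it is the difference of two cocycles, it satisfies $f(st) = f(s) + s \cdot f(t)$. Lemma~\ref{lem:DistWreath} combined with the distance bound yields $\mathrm{supp}(c_s - c_0) \subseteq B(s, D)$, while $c_0 - s \cdot c_0$ is supported in $B(1, R) \cup B(s, R)$ for any $R$ with $\mathrm{supp}(c_0) \subseteq B(1, R)$. Putting $R' := \max(R, D)$, one obtains $\mathrm{supp}(f(s)) \subseteq B(1, R) \cup B(s, R')$ for all $s$, with the $B(1, R)$-part empty whenever $\|s\|$ exceeds $R + R'$ (checked by a direct inspection of the three terms). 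To conclude $f \equiv 0$, fix $s \in \pi(K)$; since $\pi(K)$ is unbounded in $H$ (being an infinite subgroup of a finitely generated group), I may choose $t \in \pi(K)$ with $\|t\|$ much larger than $\|s\| + R'$, so that both $\|t\|$ and $\|st\|$ are large. Then $\mathrm{supp}(f(t)) \subseteq B(t, R')$ and $\mathrm{supp}(f(st)) \subseteq B(st, R')$, and the cocycle identity rewritten as $f(s) = f(st) - s \cdot f(t)$ forces $\mathrm{supp}(f(s)) \subseteq B(st, R')$. For $\|t\|$ large this ball is disjoint from $B(1, R) \cup B(s, R')$, so $f(s) = 0$. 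Consequently $g^{-1} K g \subseteq H$, as desired.

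The main obstacle, as suggested above, is the cohomological bookkeeping of supports: one must verify that the defect $f$ is confined to bounded regions around $s$, together with a ``parasitic'' region near the origin that can then be evicted through the cocycle relation applied with a large $t \in \pi(K)$. The preliminary triviality of $N$ is the other slightly delicate ingredient, but it follows cleanly from the free action of $H$ on itself combined with the infiniteness of $\pi(K)$.
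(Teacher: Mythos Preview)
Your argument is correct, but it takes a considerably longer route than the paper intends. The paper's approach (outlined in Exercise~\ref{exo:PancyclicSubgroups}) bypasses the cocycle analysis entirely by exploiting the \emph{almost malnormality} of $H$ in $F \wr H$, which is essentially Claim~\ref{claim:InterLeaves}: distinct leaves have uniformly bounded coarse intersection. Once the Embedding Theorem gives $K \subseteq (gH)^{+D}$, one observes that for each $k \in K$ left translation yields $K = k^{-1}K \subseteq (k^{-1}gH)^{+D}$; since $K$ is infinite, the two leaves $gH$ and $k^{-1}gH$ have unbounded coarse intersection and must therefore coincide, giving $g^{-1}kg \in H$ directly. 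This is a two-line conclusion after the Embedding Theorem.

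Your approach, by contrast, first kills the kernel $N = K \cap F^{(H)}$, then parametrises $K$ by a cocycle $s \mapsto c_s$, and shows explicitly that this cocycle is the coboundary of $c_0$. This is more laborious but has the merit of being completely explicit: you identify the conjugator $g=(c_0,1)$ from the outset and verify by hand that it works, whereas the malnormality argument is non-constructive in flavour (though it yields the same $g$). Your support bookkeeping can also be slightly tightened: since $f(s) = (c_s - c_0) + s\cdot c_0$ with both summands supported in balls around $s$, one gets $\mathrm{supp}(f(s)) \subseteq B(s,R')$ immediately, with no parasitic $B(1,R)$ term to evict. Finally, your observation that the corollary tacitly requires $K$ to be infinite (so that Theorem~\ref{thm:Embedding} applies) is well taken.
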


\begin{cor}\label{cor:QIbase}
Let $F_1,F_2$ be two finite groups and $H_1,H_2$ two pancylindrical finitely generated groups. If $F_1 \wr H_1$ and $F_2 \wr H_2$ are quasi-isometric, then $H_1$ and $H_2$ must be quasi-isometric.
\end{cor}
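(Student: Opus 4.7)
My plan is to reduce to lamplighter graphs via Proposition~\ref{prop:CaylWreath}, apply the Embedding Theorem (Theorem~\ref{thm:Embedding}) in both directions, and transfer the quasi-isometry to leaves. Choose finite generating sets $S_i$ of $H_i$ and set $X_i := \mathrm{Cayl}(H_i, S_i)$ and $n_i := |F_i|$, so that $F_i \wr H_i$ admits a Cayley graph isomorphic to $\mathcal{L}_{n_i}(X_i)$. Since pancylindricity is a quasi-isometry invariant, each $X_i$ is pancylindrical. If either $H_i$ is finite, both wreath products are, and the statement is trivial; so assume both $H_i$ are infinite. Fix a quasi-isometry $\varphi : \mathcal{L}_{n_1}(X_1) \to \mathcal{L}_{n_2}(X_2)$ with quasi-inverse $\bar{\varphi}$, and fix the $X_1$-leaf $L_1$ corresponding to the trivial colouring; it is isomorphic to $X_1$, hence pancylindrical and unbounded.

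Applying Theorem~\ref{thm:Embedding} to $\varphi|_{L_1}$ produces an $X_2$-leaf $L_2 \subseteq \mathcal{L}_{n_2}(X_2)$ and a constant $R \geq 0$ with $\varphi(L_1) \subseteq L_2^{+R}$. In turn, $L_2$ is isomorphic to $X_2$ and thus pancylindrical, so Theorem~\ref{thm:Embedding} applied to $\bar{\varphi}|_{L_2}$ yields an $X_1$-leaf $L_1'$ and $R' \geq 0$ with $\bar{\varphi}(L_2) \subseteq (L_1')^{+R'}$. Since $\bar{\varphi} \circ \varphi$ is within bounded distance of the identity and $\bar{\varphi}$ is Lipschitz up to an additive constant, every vertex of $L_1$ lies in $(L_1')^{+K}$ for some uniform $K$. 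As $L_1$ is unbounded, the first claim in the proof of Theorem~\ref{thm:LampStringy} (bounded coarse intersection of distinct $X_1$-leaves) forces $L_1 = L_1'$.

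To transfer this to a quasi-isometry between leaves, define retractions $\pi_i : \mathcal{L}_{n_i}(X_i) \to L_i$ by $(c, p) \mapsto (c_i, p)$, where $c_i$ is the defining colouring of $L_i$. A direct application of Lemma~\ref{lem:DistWreath} shows that $\pi_i$ is $1$-Lipschitz and displaces each point of $L_i^{+K}$ by at most $2K$. Therefore the maps
$$\Phi := \pi_2 \circ \varphi|_{L_1} : L_1 \to L_2, \qquad \Psi := \pi_1 \circ \bar{\varphi}|_{L_2} : L_2 \to L_1$$
lie at bounded distance from $\varphi|_{L_1}$ and $\bar{\varphi}|_{L_2}$ respectively, so they are quasi-isometric embeddings. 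Using $L_1 = L_1'$, the compositions $\Psi \circ \Phi$ and $\Phi \circ \Psi$ are at bounded distance from the respective identities, so $\Phi$ is a quasi-isometry $L_1 \to L_2$. Composing with the isomorphisms $L_i \cong X_i$ and the quasi-isometries $X_i \simeq H_i$ yields a quasi-isometry $H_1 \to H_2$.

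The main obstacle is the equality $L_1 = L_1'$: one must combine the bounded coarse intersection of distinct $X_1$-leaves with the uniform control of neighbourhood constants supplied by Remark~\ref{remark:UniformConstant}, in order to compare the $X_1$-leaf initially chosen with the one produced by applying Theorem~\ref{thm:Embedding} to $\bar{\varphi}|_{L_2}$. Everything else is routine bookkeeping with retractions, quasi-inverses, and the distance formula in wreath products of graphs.
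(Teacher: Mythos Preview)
Your proof is correct and follows exactly the approach the paper intends (Exercise~\ref{exo:QIbase}): apply the Embedding Theorem to $\varphi|_{L_1}$ and then to $\bar\varphi|_{L_2}$, use Claim~\ref{claim:InterLeaves} to force $L_1=L_1'$, and retract onto leaves to extract a quasi-isometry $X_1\to X_2$. One small point: your final paragraph overstates the role of Remark~\ref{remark:UniformConstant}---since you apply Theorem~\ref{thm:Embedding} to the single leaf $L_2$ rather than to a family, no uniformity in the neighbourhood constant is needed, and Claim~\ref{claim:InterLeaves} alone (together with $L_1$ being unbounded) already gives $L_1=L_1'$.
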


\begin{cor}\label{cor:IteratedLamp}
Let $F$ be a non-trivial finite group, $I$ an infinite finitely generated group, and $H$ a non-trivial finitely generated group. The wreath products $F\wr (I \wr H)$ and $I \wr (F \wr H)$ are not quasi-isometric.
\end{cor}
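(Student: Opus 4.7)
The plan is to derive a contradiction from the following asymmetry: $F \wr (I \wr H)$ has a Cayley graph that is a lamplighter graph $\mathcal{L}_{|F|}(X)$ with $X$ a (bounded-degree) Cayley graph of $I \wr H$, whereas $I \wr (F \wr H)$ is a wreath product with an \emph{infinite} lamp group and is therefore pancylindrical by \cite[Propositions~3.6 and~3.8]{MR4794592} (cited in Section~\ref{section:CoarseLocal}). The Embedding Theorem (Theorem~\ref{thm:Embedding}) will then force any coarse embedding of the pancylindrical side into the lamplighter side to land in a bounded neighbourhood of a single $X$-leaf, which is incompatible with the image of a quasi-isometry being coarsely dense.

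Concretely, I would assume a quasi-isometry $\varphi : I \wr (F \wr H) \to F \wr (I \wr H)$ exists and proceed as follows. First, invoke Proposition~\ref{prop:CaylWreath} to realise $F \wr (I \wr H)$ as the lamplighter graph $\mathcal{L}_{|F|}(X)$ for $X = \mathrm{Cayl}(I \wr H, S)$ with a suitable finite generating set $S$; since $I$ and $H$ are finitely generated, $X$ has bounded degree. Second, observe that $I \wr (F \wr H)$ is unbounded and pancylindrical, because $I$ is infinite and $F \wr H$ is non-trivial. Third, read $\varphi$ as a coarse embedding of an unbounded pancylindrical graph into $\mathcal{L}_{|F|}(X)$ and apply Theorem~\ref{thm:Embedding}: there exist an $X$-leaf $L$ and a constant $R \geq 0$ such that $\varphi\bigl(I \wr (F \wr H)\bigr) \subset L^{+R}$. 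Since $\varphi$ is a quasi-isometry, its image is $C$-dense in $\mathcal{L}_{|F|}(X)$ for some $C \geq 0$, and therefore $L$ would have to be $(R+C)$-dense in $\mathcal{L}_{|F|}(X)$.

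The remaining step, and essentially the only place where concrete geometry enters, is to verify that no $X$-leaf is coarsely dense in $\mathcal{L}_{|F|}(X)$; this falls out immediately from the distance formula of Lemma~\ref{lem:DistWreath}. Writing $L = \{(c_0, p) \mid p \in V(X)\}$, fix any non-identity element $f \in F$; since $X$ is infinite, one can choose, for each $k \geq 1$, distinct vertices $x_1, \ldots, x_k \in V(X)$ and define $c_k$ to differ from $c_0$ precisely at $x_1, \ldots, x_k$ (taking the value $f \cdot c_0(x_i)$ there). Lemma~\ref{lem:DistWreath} then yields
\[
d\bigl((c_k, x_1), L\bigr) \;\geq\; \sum_{q \in c_k \triangle c_0} d\bigl(c_k(q), c_0(q)\bigr) \;\geq\; k,
\]
so the distance from $L$ is unbounded, contradicting $(R+C)$-density. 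The only real obstacle in this plan is that one must rely on the black-box pancylindricity of $I \wr (F \wr H)$ imported from \cite{MR4794592}; granted that, the Embedding Theorem together with the very soft distance estimate above closes the argument cleanly.
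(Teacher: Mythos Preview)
Your proof is correct and follows precisely the approach the paper intends: the exercise hint (\emph{``Remember that a wreath product $A \wr B$ with $A$ infinite and $B$ non-trivial is pancylindrical''}) points to exactly your argument, namely applying the Embedding Theorem to a putative quasi-isometry from the pancylindrical group $I \wr (F \wr H)$ into the lamplighter graph $\mathcal{L}_{|F|}(\mathrm{Cayl}(I \wr H))$ and then observing that a single leaf cannot be coarsely dense. One cosmetic point: in the lamplighter graph $\mathcal{L}_{|F|}(X)$ the colours live in $\mathbb{Z}_{|F|}$ rather than $F$, so writing ``$f \cdot c_0(x_i)$'' is slightly off notationally, but the content (that $c_k$ differs from $c_0$ at exactly $k$ points, each contributing at least $1$ to the distance by Lemma~\ref{lem:DistWreath}) is unaffected.
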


\noindent
Interestingly, Theorem~\ref{thm:Embedding} also provides some non-trivial information about arbitrary finitely generated groups quasi-isometric to lamplighter groups. Most notably:

\begin{thm}[{\cite[Theorem~4.23]{Halo}}]
Let $F$ be a finite group and $H$ a pancylindrical group. Every finitely generated group $G$ quasi-isometric to $F \wr H$ contains a finite collection of subgroups $\mathcal{H}$ such that:
\begin{itemize}
	\item all the groups of $\mathcal{H}$ are quasi-isometric to $H$;
	\item $\mathcal{H}$ is almost malnormal, i.e.\ for all $g \in G$ and $H_1,H_2 \in \mathcal{H}$, if $H_1 \cap gH_2g^{-1}$ is infinite then $H_1=H_2$ and $g \in H_1$;
	\item every pancylindrical subgroup of $G$ is conjugate into some subgroup in $\mathcal{H}$.
\end{itemize}
\end{thm}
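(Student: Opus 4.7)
The plan is to apply the Embedding Theorem to the quasi-action of $G$ on $F \wr H$ in order to identify a natural family of leaf-stabiliser subgroups that will constitute $\mathcal{H}$.

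Fix a quasi-isometry $\varphi : G \to F \wr H$ with quasi-inverse $\bar\varphi$, and identify $F \wr H$ with the lamplighter graph $\mathcal{L}_n(X)$ where $n := |F|$ and $X := \mathrm{Cayl}(H, S)$ for some finite generating set of $H$. Write $\mathcal{L}$ for the set of $X$-leaves. Conjugating left multiplication by $\varphi$ yields a proper cobounded quasi-action $g \mapsto q_g$ of $G$ on $F \wr H$ by quasi-isometries with uniform constants. Each leaf $L \simeq X \simeq \mathrm{Cayl}(H)$ is pancylindrical by hypothesis on $H$, so the restriction $q_g|_L$ is a coarse embedding of a pancylindrical graph into $\mathcal{L}_n(X)$. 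By Theorem~\ref{thm:Embedding} together with the uniformity observed in Remark~\ref{remark:UniformConstant}, $q_g(L)$ is contained in a uniform bounded neighbourhood of a leaf; the first clause of Theorem~\ref{thm:LampStringy} forces this leaf to be unique, and the assignment $g \cdot L := L'$ defines a genuine $G$-action on $\mathcal{L}$.

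I would then define, for each $L \in \mathcal{L}$, the stabiliser $G_L := \{g \in G : g \cdot L = L\}$. The restricted quasi-action $G_L \curvearrowright L$ is proper (inherited from the global quasi-action) and cobounded: given $v, w \in L$, coboundedness of $G \curvearrowright F \wr H$ provides $g \in G$ with $q_g(v) \approx w$, and if $g \cdot L \neq L$ then $w$ would lie in the bounded coarse intersection $L^{+R} \cap (g \cdot L)^{+R}$ of Theorem~\ref{thm:LampStringy}, so for $w$ chosen outside this region $g$ must belong to $G_L$. The quasi-action version of the Milnor--Schwarz lemma then yields that $G_L$ is finitely generated and quasi-isometric to $L \simeq H$. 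Almost malnormality follows from the same stringy principle: if the intersection $G_{L_1} \cap g G_{L_2} g^{-1}$ is infinite, it preserves both $L_1$ and $g \cdot L_2$, hence their coarse intersection; if $L_1 \neq g \cdot L_2$, this intersection is bounded, and properness of the quasi-action forces a subgroup preserving a bounded set to be finite---a contradiction. Thus $L_1 = g \cdot L_2$, the stabilisers coincide, and $g \in G_{L_1}$ whenever $G_{L_1}$ and $G_{L_2}$ are drawn from the same $G$-orbit of leaves.

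For the conjugate-into-$\mathcal{H}$ axiom, let $K \leq G$ be an unbounded pancylindrical subgroup (the bounded case is trivial): the inclusion $K \hookrightarrow G$ is a coarse embedding by the lemma to that effect in Section~\ref{section:QI}, so $\varphi(K)$ is a coarsely embedded pancylindrical subset of $F \wr H$, and Theorem~\ref{thm:Embedding} places it in a bounded neighbourhood of a unique leaf $L$. For each $k \in K$, the identity $kK = K$ means $q_k$ preserves $\varphi(K)$ up to bounded error, so $\varphi(K) \subset (k \cdot L)^{+R'}$; if $k \cdot L \neq L$, then $\varphi(K)$ would be confined to the bounded coarse intersection of $L$ and $k \cdot L$, contradicting the unboundedness of $K$. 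Hence $K \leq G_L$, and after picking one representative $G_L$ per $G$-orbit of $\mathcal{L}$, $K$ is conjugate into some element of $\mathcal{H}$.

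The main obstacle is proving the finiteness of $\mathcal{H}$, equivalently of $\mathcal{L}/G$. In the base case $G = F \wr H$, the subgroup $F^{(H)} \leq G$ acts transitively on $\mathcal{L}$, so there is a single orbit. In general, coboundedness alone is insufficient, since infinitely many leaves pass through any bounded region of $F \wr H$. The argument I would pursue is to pair coboundedness with the bounded-geometry Claim~\ref{claim:BoundedDegree} to control, for a fixed $L_0$ and a bounded fundamental domain, the set of $g \in G$ for which $g \cdot L_0$ meets a given leaf $L$ in a diameter exceeding the stringy threshold; the finiteness of $F$ then forces the transverse combinatorics of leaves intersecting a fundamental domain to reduce to finitely many $G$-equivalence classes. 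Turning this sketch into a rigorous count is the hardest step, and will likely require the aptolic quasi-isometry machinery developed in Section~\ref{section:Aptolic}.
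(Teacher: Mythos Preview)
The paper does not prove this theorem: it is stated with attribution to \cite[Theorem~4.23]{Halo} and no argument is supplied beyond the remark that Theorem~\ref{thm:Embedding} is the key ingredient. So there is no in-paper proof to compare against.

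Your framework is the standard and correct one: conjugate left multiplication through $\varphi$ to a uniform quasi-action of $G$ on $\mathcal{L}_n(X)$, use Theorem~\ref{thm:Embedding} with Remark~\ref{remark:UniformConstant} and Claim~\ref{claim:InterLeaves} to promote this to a genuine $G$-action on the set of leaves, and take $\mathcal{H}$ to be a transversal of leaf-stabilisers. Your arguments for almost malnormality and for the pancylindrical-subgroup axiom are essentially right.

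Two corrections are in order. First, you have located the difficulty in the wrong place: finiteness of $\mathcal{L}/G$ is easy. Balls in the locally finite graph $\mathcal{L}_n(X)$ are \emph{finite} and each vertex lies in exactly one leaf, so a ball of radius $R$ meets at most $|B(R)|$ leaves. Coboundedness of the quasi-action then gives, for every leaf $L$, some $g$ with $g^{-1}\cdot L$ meeting a fixed ball; hence finitely many orbits. No aptolic machinery is needed.

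Second, your argument for coboundedness of $G_L\curvearrowright L$ has a genuine gap. You write: ``if $g\cdot L\neq L$ then $w$ would lie in the bounded coarse intersection $L^{+R}\cap(g\cdot L)^{+R}$ \ldots\ so for $w$ chosen outside this region $g$ must belong to $G_L$.'' But that region depends on $g\cdot L$, and $g$ was produced \emph{from} $w$; as $w$ varies over $L$ you get different $g$'s and hence different bounded regions, whose union may exhaust $L$. Knowing that $q_g$ carries a single point of $L$ near $L$ does not force $g\cdot L=L$. The honest statement to isolate is that $\varphi^{-1}(L^{+K})$ lies at bounded Hausdorff distance from a single left $G_L$-coset, and establishing this requires combining the bounded coarse intersections with the finiteness of orbits more carefully than a one-line argument allows; this, not orbit-finiteness, is the step that needs work.
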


\noindent
With some additional information about the groups involved, it is possible to be more precise:

\begin{cor}[{\cite[Theorem~4.27]{Halo}}]
Let $F$ be a finite group and $H$ a finitely generated nilpotent group. If an elementary amenable group $G$ is quasi-isometric to $F \wr H$, then $G$ virtually splits as $L \rtimes \bar{H}$ where $\bar{H}$ is quasi-isometric to $H$, where $L$ is locally finite, and where $\bar{H}$ acts by conjugation on $L \backslash \{1\}$ with finite stabilisers. 
\end{cor}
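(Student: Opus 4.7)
The plan is to combine the preceding theorem with the structure theory of finitely generated elementary amenable groups. A finitely generated nilpotent group of Hirsch length at least $2$ is virtually a torsion-free non-cyclic nilpotent group, hence pancylindrical by the examples listed after Definition~\ref{def:Pancylindrical}; assuming $H$ falls in this regime (the virtually cyclic case being handled separately via Theorem~\ref{thm:EFW}), the preceding theorem provides a finite, almost malnormal collection $\mathcal{H}$ of subgroups of $G$, each quasi-isometric to $H$. By Gromov's polynomial growth theorem, each $\bar{H}\in\mathcal{H}$ is itself virtually nilpotent. The goal is to pick one such $\bar{H}$, realise it as a genuine complement of the locally finite radical of $G$ after passing to a finite-index subgroup, and use almost malnormality to force finite stabilisers.

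First I would produce the locally finite radical. Since $G$ is elementary amenable and quasi-isometric to $F\wr H$, it has finite Hirsch length equal to $\mathrm{hl}(H)$. Invoking structure theorems for finitely generated elementary amenable groups of finite Hirsch length (\`a la Hillman--Linnell and Kropholler), $G$ admits a unique maximal locally finite normal subgroup $L$, and $Q:=G/L$ has finite Hirsch length equal to $\mathrm{hl}(H)$ as well. Since $\bar{H}$ is virtually nilpotent with finite torsion subgroup, $\bar{H}\cap L$ is finite; hence $\bar{H}L/L\cong\bar{H}/(\bar{H}\cap L)$ is virtually nilpotent of Hirsch length $\mathrm{hl}(H)$ inside $Q$, so it has finite index in $Q$. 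In particular $Q$ is itself virtually nilpotent and quasi-isometric to $H$. Replacing $\bar{H}$ by a torsion-free nilpotent finite-index subgroup $\bar{H}_0$ yields $\bar{H}_0\cap L=\{1\}$, and $\bar{H}_0 L$ is a finite-index subgroup of $G$ admitting the semi-direct decomposition $\bar{H}_0 L=L\rtimes\bar{H}_0$, where $\bar{H}_0$ is quasi-isometric to $H$.

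To establish the finite-stabiliser condition, I would replace $G$ by $L\rtimes\bar{H}_0$, fix $\ell\in L\setminus\{1\}$, and suppose for contradiction that $C:=C_{\bar{H}_0}(\ell)$ is infinite. Because $\ell$ and $C$ commute and $\langle\ell\rangle\cap\bar{H}_0=\{1\}$, the subgroup $K:=\langle\ell\rangle\times C$ of $G$ is infinite, and it contains the infinite finitely generated normal subgroup $C$ (finitely generated because $\bar{H}_0$ is nilpotent, hence Noetherian). By the third pancylindricality criterion recorded after Definition~\ref{def:Pancylindrical}, $K$ is pancylindrical. The preceding theorem then yields $g\in G$ and $\bar{H}'\in\mathcal{H}$ with $gKg^{-1}\leq\bar{H}'$. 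Consequently $gCg^{-1}\subset\bar{H}'\cap g\bar{H}_0 g^{-1}$ is infinite, and almost malnormality of $\mathcal{H}$ forces $\bar{H}'=g\bar{H}_0 g^{-1}$ and $g\in\bar{H}'$. Conjugating back, $\ell\in\bar{H}_0$, contradicting $\bar{H}_0\cap L=\{1\}$.

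The main obstacle will be step two: extracting the locally finite radical with $G/L$ virtually nilpotent of the correct Hirsch length. This depends on non-trivial structure theory for elementary amenable groups of finite Hirsch length, together with the fact that Hirsch length is a quasi-isometry invariant within this class. A secondary technicality is the case $\mathrm{hl}(H)\leq 1$: since $\mathbb{Z}$ is not pancylindrical, the preceding theorem does not apply directly, and one must instead call upon the horocyclic-product description of lamplighters over $\mathbb{Z}$ sketched in Section~\ref{section:CoarseHilbert} together with Theorem~\ref{thm:EFW}.
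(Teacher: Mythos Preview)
The paper does not prove this corollary; it is quoted from \cite[Theorem~4.27]{Halo} as an illustration, so there is no in-text argument to compare against. Your overall strategy---extract the almost malnormal family $\mathcal{H}$ from the preceding theorem, locate the locally finite radical $L$ via the structure theory of elementary amenable groups of finite Hirsch length, split off a torsion-free complement $\bar H_0$ inside some $\bar H\in\mathcal H$, then derive finite stabilisers from almost malnormality---is the natural one.

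There is, however, a genuine gap in your final step. Your pancylindricality claim for $K=\langle\ell\rangle\times C$ misreads the third criterion listed after Definition~\ref{def:Pancylindrical}: taken literally that criterion would make $\mathbb{Z}$ pancylindrical, so the actual hypothesis in \cite[Proposition~3.16]{MR4794592} requires the normal subgroup to have infinite index; since $\langle\ell\rangle$ is finite, $C$ has finite index in $K$, and when $C$ is virtually cyclic $K$ is not pancylindrical. This detour is unnecessary anyway: one has directly $C_{\bar H_0}(\ell)\subset\bar H\cap\ell\bar H\ell^{-1}$, and almost malnormality already forces $\ell\in\bar H$ whenever the centraliser is infinite. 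But this exposes the real problem: your sentence ``almost malnormality of $\mathcal H$ forces $\bar H'=g\bar H_0 g^{-1}$'' is illegitimate because $\bar H_0\notin\mathcal H$. The correct conclusion is only $\bar H'=\bar H$ and $g\in\bar H$, hence $\ell\in\bar H\cap L$. Now $\bar H\cap L$ is a finite \emph{normal} subgroup of $\bar H$ (normality since $L\trianglelefteq G$), and every nontrivial element of it is centralised by a finite-index subgroup of $\bar H_0$; so these finitely many exceptional elements genuinely have infinite stabiliser in $\bar H_0$, and your contradiction does not close. Disposing of $\bar H\cap L$ requires an additional argument---either showing that the members of $\mathcal H$ can be taken to meet $L$ trivially, or a further passage to finite index compatible with the semidirect decomposition---and this is where the proof in \cite{Halo} must do more work than your sketch indicates.
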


\noindent
However, there is no known description of all the finitely generated groups quasi-isometric to a given lamplighter over a one-ended finitely presented group, even for the basic example $\mathbb{Z}_2 \wr \mathbb{Z}^2$.

\subsection{Exercises}

\begin{exo}
Let $(X,x),(Y,y)$ be two pointed graphs. Show that the pointed sum $(X,x) \vee (Y,y)$, i.e.\ the graph obtained from two disjoint copies of $X$ and $Y$ by identifying the vertices $x$ and $y$, is stringy relative to $X$ and $Y$. 
\end{exo}

\begin{exo}
A graph $X$ is \emph{stringy} if, for every $A_1 \geq 0$, there exists $F \geq 0$ such that the following holds for every $A_2 \geq 0$: there exists $K \geq 0$ such that, if $x,y \in V(X)$ are two vertices satisfying $d(x,y) \geq K$, then every path connecting $x$ and $y$ intersects $A_1$-persistently some ball $B$ of radius $\leq F$ satisfying $d(x,B),d(y,B) \geq A_2$. Prove that lamplighter graphs over trees are stringy. 
\end{exo}

\begin{exo}\label{exo:PancyclicSubgroups}
Let $A$ and $B$ be two groups.
\begin{enumerate}
	\item Show that $B$ is an almost malnormal subgroup of $A \wr B$, i.e.\ if $B \cap gBg^{-1}$ is infinite for some $g \in A \wr B$, then necessarily $g \in B$. 
	\item Prove Corollary~\ref{cor:PancyclicSubgroups}. 
\end{enumerate}
\end{exo}

\begin{exo}\label{exo:QIbase}
Prove Corollary~\ref{cor:QIbase}. (\emph{Hint: Apply the Embedding Theorem twice, first for a quasi-isometry $F_1 \wr H_1 \to F_2 \wr H_2$ and then for a quasi-inverse.})
\end{exo}

\begin{exo}\label{exo:IteratedLamp}
Prove Corollary~\ref{cor:IteratedLamp}. (\emph{Hint: Remember that a wreath product $A \wr B$ with $A$ infinite and $B$ non-trivial is pancylindrical.})
\end{exo}

\section{Aptolic quasi-isometries}\label{section:Aptolic}

\noindent
As a consequence of our Embedding Theorem (Theorem~\ref{thm:Embedding}), it can be shown that, given two pancylindrical graphs $X,Y$ and two integers $n,m \geq 2$, a quasi-isometry $\varphi : \mathcal{L}_n(X) \to \mathcal{L}_m(Y)$ can always be modified up to finite distance so that it sends an $X$-leaf quasi-isometrically onto a $Y$-leaf, inducing a bijection from the set of $X$-leaves to the set of $Y$-leaves. In other words, there exists a bijection $\alpha : \mathbb{Z}_n^{(X)} \to \mathbb{Z}_m^{(Y)}$ such that $\varphi$ induces a quasi-isometry $X(c) \to Y(\alpha(c))$ for every $c \in \mathbb{Z}_n^{(X)}$. Consequently, we can describe $\varphi$ as a map $(c,p) \mapsto (\alpha(c), \beta_c(p))$ for some collection of quasi-isometries $\{\beta_c : X \to Y \mid c \in \mathbb{Z}_n^{(X)}\}$. Our first goal in this section is to show that all these quasi-isometries actually agree up to finite distance, providing a strong restriction on the possible quasi-isometries between lamplighters over pancylindrical graphs. 

\noindent
More formally, define:

\begin{definition}
A quasi-isometry $\varphi : \mathcal{L}_n(X) \to \mathcal{L}_m(Y)$ between two lamplighter graphs is \emph{aptolic}\footnote{This adjective comes from the contraction of the two Greek words $\alpha\pi\tau\omega$ (to light) and $\lambda\upsilon\chi\nu o\varsigma$ (lamp). It refers to a map that preserves the lamplighter structure.} if there exists a bijection $\alpha : \mathbb{Z}_n^{(X)} \to \mathbb{Z}_m^{(Y)}$ and a quasi-isometry $\beta : X \to Y$ such that $\varphi : (c,p) \mapsto (\alpha(c),\beta(p))$. 
\end{definition}

\noindent
Then, as described above, our goal is to prove that:

\begin{thm}\label{thm:AptolicQI}
Let $X,Y$ be two unbounded pancylindrical graphs of bounded degree and $n,m \geq 2$ two integers. Every quasi-isometry $\mathcal{L}_n(X) \to \mathcal{L}_m(Y)$ lies at finite distance from an aptolic quasi-isometry.
\end{thm}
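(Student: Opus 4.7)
The plan is to use the Embedding Theorem~\ref{thm:Embedding} to first show that any quasi-isometry $\varphi:\mathcal{L}_n(X)\to\mathcal{L}_m(Y)$ sends $X$-leaves to $Y$-leaves (leaf by leaf), and then to argue that the resulting restrictions to individual leaves are all uniformly close to one common quasi-isometry $\beta:X\to Y$, which in turn yields an aptolic approximation of $\varphi$.

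Every $X$-leaf is isometric to the unbounded pancylindrical graph $X$, so Theorem~\ref{thm:Embedding} provides, for each colouring $c\in\mathbb{Z}_n^{(X)}$, a $Y$-leaf $Y(\alpha(c))$ in whose $K$-neighbourhood $\varphi(X(c))$ sits; Remark~\ref{remark:UniformConstant} ensures that $K$ is uniform in $c$. Running the same argument with a quasi-inverse $\bar\varphi$, and using that two distinct $Y$-leaves have uniformly bounded coarse intersection (the first condition in Definition~\ref{def:Stringy} combined with Theorem~\ref{thm:LampStringy}), I would check that $c\mapsto\alpha(c)$ is a bijection $\mathbb{Z}_n^{(X)}\to\mathbb{Z}_m^{(Y)}$. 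After modifying $\varphi$ by a uniformly bounded amount, we may assume $\varphi(X(c))\subseteq Y(\alpha(c))$, so that $\varphi$ takes the form $(c,p)\mapsto(\alpha(c),\beta_c(p))$ where each $\beta_c:X\to Y$ is a quasi-isometry with constants independent of~$c$.

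Next, I would analyse the interaction between $\alpha$ and the family $\{\beta_c\}$. If two colourings $c,c'$ differ only at a lamp $p$, then $(c,p)$ and $(c',p)$ are adjacent in $\mathcal{L}_n(X)$, so $\varphi(c,p)$ and $\varphi(c',p)$ are at uniformly bounded distance. The distance formula in Lemma~\ref{lem:DistWreath} applied on the target side then forces $\alpha(c)\triangle\alpha(c')$ to be uniformly bounded in diameter and cardinality and to sit in a uniformly bounded neighbourhood of both $\beta_c(p)$ and $\beta_{c'}(p)$, which in particular lie within bounded distance of each other. Iterating over sequences of single-lamp flips, this extends to arbitrary pairs of colourings: $\alpha(c)\triangle\alpha(c')$ is contained in a uniform neighbourhood of $\beta_c(\mathrm{supp}(c\triangle c'))$, and $\beta_c,\beta_{c'}$ agree up to bounded error at every vertex close to $\mathrm{supp}(c\triangle c')$.

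The main obstacle, and the heart of the argument, is to upgrade this local agreement into a global one: to produce a constant $D$ with $d_Y(\beta_c(q),\beta_{\mathbf{0}}(q))\leq D$ for every colouring $c$ and every vertex $q\in V(X)$, including $q$ far from $\mathrm{supp}(c)$. For such $q$, the direct comparison of $(c,q)$ and $(\mathbf{0},q)$ through Lemma~\ref{lem:DistWreath} only yields $d_Y(\beta_c(q),\beta_{\mathbf{0}}(q))\leq O(d_X(q,\mathrm{supp}(c)))$ and does not exclude ``mirrored'' configurations where $\beta_c(q)$ and $\beta_{\mathbf{0}}(q)$ end up on opposite sides of $\beta_{\mathbf{0}}(\mathrm{supp}(c))$. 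The plan to rule these out is to probe $q$ against several independent auxiliary colourings $c'$ whose supports are placed in controlled, pairwise separated regions of $X$: the travelling-salesman lower bounds supplied simultaneously by these comparisons collectively pin $\beta_c(q)$ inside a bounded neighbourhood of $\beta_{\mathbf{0}}(q)$. Pancylindricality of $X$ is precisely what guarantees that enough such independent directions exist near every vertex; it fails for instance for $\mathbb{Z}$, which is correspondingly excluded from the statement. Once this uniform bound is in hand, setting $\beta:=\beta_{\mathbf{0}}$, the aptolic map $(c,p)\mapsto(\alpha(c),\beta(p))$ lies at bounded distance from $\varphi$ and is therefore itself a quasi-isometry.
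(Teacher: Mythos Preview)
Your overall architecture matches the paper's: apply Theorem~\ref{thm:Embedding} leafwise to get $\varphi(X(c))\subset Y(\alpha(c))^{+K}$, straighten to $\varphi(c,p)=(\alpha(c),\beta_c(p))$, and then argue that all the $\beta_c$ agree up to a uniform constant. The difference, and the gap, is in how you handle that last step.

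The paper does \emph{not} use a triangulation argument based on ``enough independent directions'' in $X$, and in particular pancylindricality plays no role beyond the Embedding Theorem. The device the paper introduces is the \emph{ladder of leaves} (Lemma~\ref{lem:SquareLeaf} and Proposition~\ref{prop:LadderLeaf}): a square of leaves $X(c),X(c+a),X(c+a+b),X(c+b)$ with $a,b$ supported on small, far-apart balls is rigid, and chaining such squares transports arrow positions. To compare $\beta_c(p)$ with $\beta_0(p)$, the paper flips an auxiliary lamp $\pi$ \emph{at $p$ itself}, decomposes $c$ into pieces supported on vertices that are pairwise $\geq L$ apart (via Lemma~\ref{lem:Partition}), and builds a ladder whose rungs are the successive partial sums and whose rails are their $\pi$-translates. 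Proposition~\ref{prop:LadderLeaf} applied to the image ladder in $\mathcal{L}_m(Y)$ then gives $d(\beta_{a_i}(p),\beta_{a_{i+1}}(p))\leq 6\rho_+(1)$ at each step. This uses only that $X$ is unbounded and of bounded degree.

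Your proposed mechanism---probing with several auxiliary colourings and invoking travelling-salesman lower bounds to ``pin'' $\beta_c(q)$---is not specific enough to carry the argument, and the diagnosis that pancylindricality supplies the missing directions is incorrect: the obstruction you describe (mirror configurations) is resolved by the ladder trick, not by any branching property of $X$. You should replace the final paragraph with the square/ladder machinery; once that is in place, the deduction of Theorem~\ref{thm:AptolicQI} from the coarse-embedding version (Theorem~\ref{thm:AptolicCoarse}) proceeds exactly as you outline, by applying the result to both $\varphi$ and a quasi-inverse.
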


\noindent
Actually, we will prove an even more general statement, restricting the possible coarse embeddings between lamplighters over pancylindrical graphs. See Theorem~\ref{thm:AptolicCoarse}. 

\medskip \noindent
The rest of the section is mainly dedicated to the deduction from Theorem~\ref{thm:AptolicQI} of the classification provided by Theorem~\ref{thm:QIclassification}, which we generalise for lamplighters over pancylindrical graphs:

\begin{thm}\label{thm:AptolicQIstrong}
Let $X,Y$ be two unbounded pancylindrical graphs of bounded degree and $n,m \geq 2$ two integers. The lamplighter graphs $\mathcal{L}_n(X)$ and $\mathcal{L}_m(Y)$ are quasi-isometric if and only if one of the following two conditions occurs:
\begin{itemize}
	\item $X,Y$ are non-amenable and quasi-isometric, and $n,m$ have the same prime divisors;
	\item $X,Y$ are amenable, $n,m$ are powers of a common number, say $n=q^a$ and $m=q^b$, and there exists a quasi-$\frac{b}{a}$-to-one quasi-isometry $X \to Y$. 
\end{itemize}
\end{thm}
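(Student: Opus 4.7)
The strategy is to leverage the aptolic reduction Theorem~\ref{thm:AptolicQI}. Given a quasi-isometry $\varphi : \mathcal{L}_n(X) \to \mathcal{L}_m(Y)$, Theorem~\ref{thm:AptolicQI} allows us to assume, up to finite distance, that $\varphi(c, p) = (\alpha(c), \beta(p))$ for some bijection $\alpha : \mathbb{Z}_n^{(X)} \to \mathbb{Z}_m^{(Y)}$ and some quasi-isometry $\beta : X \to Y$; in particular $X$ and $Y$ are quasi-isometric, hence simultaneously amenable or non-amenable. Normalising so that $\alpha$ sends the zero colouring to the zero colouring, Lemma~\ref{lem:DistWreath} applied to pairs $((c_o, p), (c, p))$, combined with the quasi-isometry bound on $\varphi$, produces a constant $K$ such that $\mathrm{supp}(\alpha(c)) \subset \beta(\mathrm{supp}(c))^{+K}$ for every colouring $c$, and symmetrically from a quasi-inverse.

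For every finite $S \subset V(X)$, $\alpha$ then restricts, up to an $O(1)$ ambiguity, to an injection $\mathbb{Z}_n^S \hookrightarrow \mathbb{Z}_m^{\beta(S)^{+K}}$. Comparing cardinalities and using the symmetric inequality from a quasi-inverse yields the key identity
\[
|S|\,\log n \;=\; |\beta(S)|\,\log m + O(1) \qquad \text{for all finite } S \subset V(X).
\]

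The analysis now splits according to amenability. If $X$ is amenable, fix a F\o lner sequence $(F_k)$; the identity together with the fact that $\beta$ is a quasi-isometry makes $|\beta(F_k)|/|F_k|$ converge to $\log n / \log m$, and the measure-scaling rigidity available for quasi-isometries between amenable graphs recognises this limit as a rational $b/a$ in lowest terms, forcing $n^b = m^a$: hence $n$ and $m$ are powers of a common integer $q$ with $n = q^a$ and $m = q^b$, and the F\o lner convergence is precisely the definition of $\beta$ being quasi-$\tfrac{b}{a}$-to-one. If $X$ is non-amenable, no such rigidity is available and the scaling is not pinned down; the surviving arithmetic constraint that $n$ and $m$ share the same prime divisors is then extracted through an auxiliary coarse invariant sensitive to the $p$-primary structure of the colouring groups, for instance by comparing quasi-isometric embeddings of sublamplighters $\mathcal{L}_p(\cdot) \hookrightarrow \mathcal{L}_n(X)$ for each prime $p \mid n$ against their counterparts in $\mathcal{L}_m(Y)$.

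Conversely, sufficiency is constructive. In the amenable case, a given quasi-$\tfrac{b}{a}$-to-one quasi-isometry $\beta : X \to Y$ is enhanced to an aptolic quasi-isometry by packaging $a$ copies of $\mathbb{Z}_q$-lamps over $X$ against $b$ copies over $Y$ via the natural isomorphisms $\mathbb{Z}_q^a \cong \mathbb{Z}_n$ and $\mathbb{Z}_q^b \cong \mathbb{Z}_m$, coherently with the fibers of $\beta$. In the non-amenable case, the paradoxical decomposition of $X$ guaranteed by non-amenability allows a Hall-type matching argument to produce the bijection $\alpha$ whenever the prime divisors of $n$ and $m$ coincide. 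The principal obstacle will be the non-amenable side on both directions: extracting the prime-divisor constraint in the necessity direction without F\o lner tools, and building a sufficiently uniform matching $\alpha$ in the sufficiency direction so that the resulting bijection of vertex sets is a genuine quasi-isometry rather than merely a biLipschitz equivalence.
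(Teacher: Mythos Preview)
Your overall architecture is right---reduce to an aptolic quasi-isometry via Theorem~\ref{thm:AptolicQI}, then extract arithmetic constraints from $(\alpha,\beta)$---but the central arithmetic step has a genuine gap. From the support inclusion you only obtain an \emph{injection} $\mathbb{Z}_n^S \hookrightarrow \mathbb{Z}_m^{\beta(S)^{+K}}$, hence only $n^{|S|} \leq m^{|\beta(S)^{+K}|}$. Combined with the symmetric inequality from a quasi-inverse this gives $|S|\log n = |\beta(S)^{+K}|\log m + O(|\partial S|)$ (not $O(1)$: the difference $|\bar\beta(\beta(S)^{+K})^{+K}|-|S|$ is controlled by $|\partial S|$, not by a constant). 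Neither the prime-divisor constraint nor the rationality of $\log n/\log m$ follow from such inequalities alone; for instance $2^3\leq 3^2$ says nothing about shared primes, and there is no general ``measure-scaling rigidity'' forcing the scaling factor of a quasi-isometry between amenable graphs to be rational.

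The missing idea is a Lagrange-type divisibility argument. The inclusion $\alpha(c+\mathbb{Z}_n^S)\subset \alpha(c)+\mathbb{Z}_m^{\beta(S)^{+K}}$ (Lemma~\ref{lem:InclusionAlpha}) implies that $E:=\alpha^{-1}\bigl(\alpha(0)+\mathbb{Z}_m^{\beta(S)^{+K}}\bigr)$ is stable under translation by $\mathbb{Z}_n^S$, so $n^{|S|}$ \emph{divides} $m^{|\beta(S)^{+K}|}$. This immediately forces $n,m$ to share prime divisors in \emph{both} the amenable and non-amenable cases---no auxiliary invariant is needed. In the amenable case it moreover gives, for each prime $p\mid n$, the sandwich $|S|\,\mathrm{val}_p(n)\leq |\beta(S)^{+K}|\,\mathrm{val}_p(m)\leq (|S|+C|\partial S|)\,\mathrm{val}_p(n)$; passing to a F\o lner sequence shows the ratio $\mathrm{val}_p(n)/\mathrm{val}_p(m)$ is independent of $p$, which is exactly the statement that $n,m$ are powers of a common base. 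For sufficiency your sketches are in the right spirit, but note that $\mathbb{Z}_{q^a}\not\cong\mathbb{Z}_q^a$ as groups (only a bijection $\mathbb{Z}_n^b\to\mathbb{Z}_m^a$ fixing $0$ is needed, via Proposition~\ref{prop:kappa}), and in the non-amenable case the paper avoids Hall matchings entirely by using an $n$-to-$1$ self-map of $X$ at bounded distance from the identity (available precisely by non-amenability) to redistribute $\mathbb{Z}_p$-factors among neighbouring lamps.
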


\noindent
Our proof requires the concept of \emph{quasi-$\kappa$-to-one quasi-isometries}, which we introduce in Section~\ref{section:MeasureScaling}, and which will be fundamental in the understanding of the coarse geometry of lamplighters over amenable graphs.

\subsection{Ladders of leaves}

\noindent
A vertex $(c,p)$ of a lamplighter graph $\mathcal{L}_n(X)$ is the data of a colouring $c \in \mathbb{Z}_n^{(X)}$ and an arrow $p \in V(X)$. The colouring $c$ determines a leaf $X(c)$, and we know from the Embedding Theorem that leaves are preserved by quasi-isometries. The challenge now is to recognise the position of the arrow $p$ geometrically. It will be possible to do it thanks to specific configurations of leaves, which we now define.

\begin{definition}
Let $X$ be a graph, $n \geq 2$ an integer, and $\epsilon, L \geq 0$ two constants. An \emph{$(\epsilon,L)$-square of leaves} in $\mathcal{L}_n(X)$ is the data of four distinct $X$-leaves $\{P_i \mid i \in \mathbb{Z}_4 \}$ such that
\begin{itemize}
	\item $d(P_i,P_{i+1}) \leq \epsilon$ for every $i \in \mathbb{Z}_4$;
	\item $d(P_i,P_{i-1}^{+\epsilon}), d(P_i,P_{i+1}^{+\epsilon}) \geq L$ for every $i \in \mathbb{Z}_4$.
\end{itemize}
An \emph{$(\epsilon, L)$-ladder of leaves} is the data of two sequences of $X$-leaves $P_1, Q_1, \ldots, P_k,Q_k$ such that $P_i,P_{i+1}, Q_{i+1}, Q_i$ defines an $(\epsilon,L)$-square for every $1 \leq i \leq k-1$.
\end{definition}

\noindent
\begin{minipage}{0.35\linewidth}
\includegraphics[width=\linewidth]{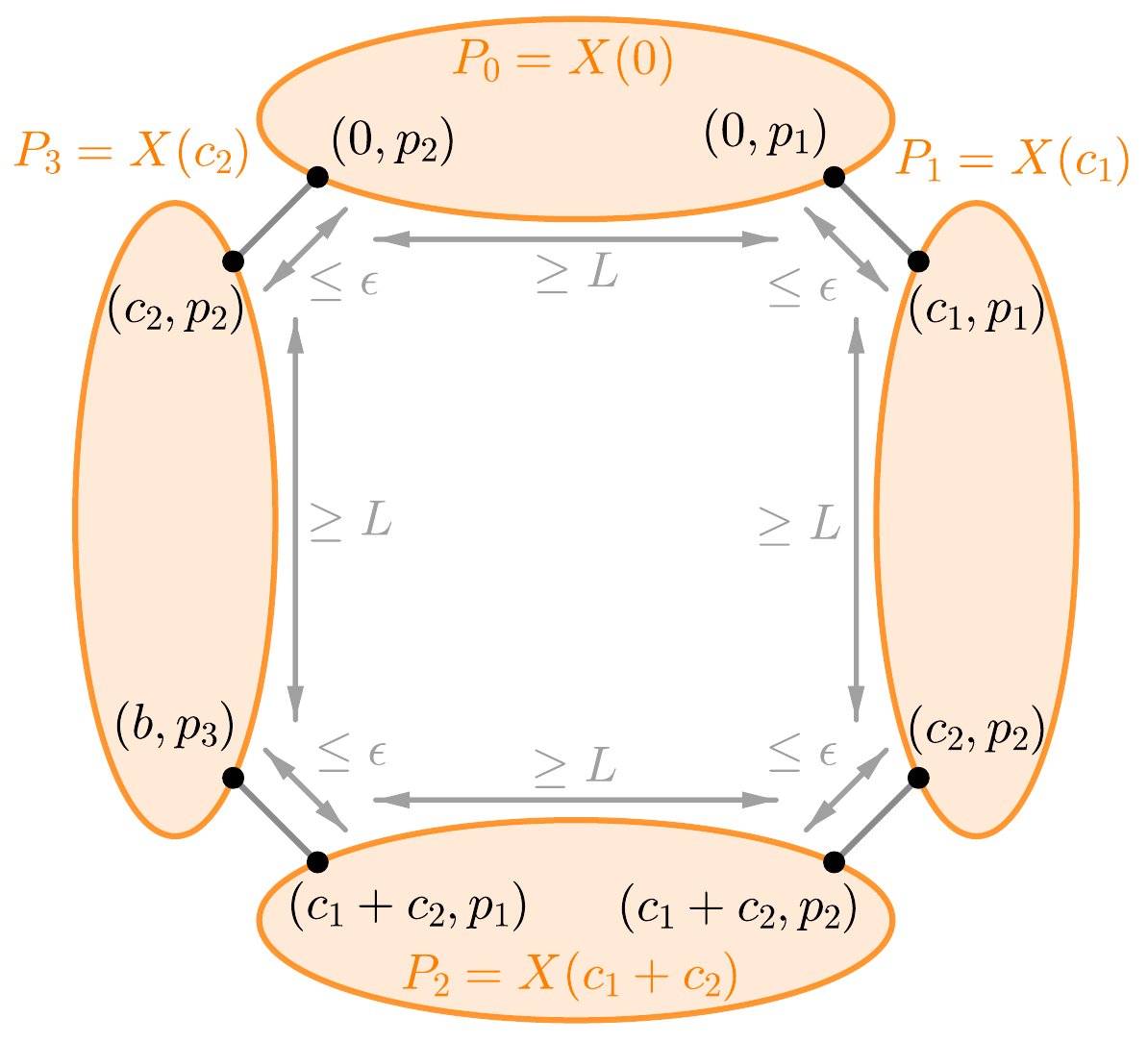}
\end{minipage}
\begin{minipage}{0.63\linewidth}
The typical example of a square of leaves to keep in mind is the following. Fix two small balls $B(p_1,\epsilon_1)$ and $B(p_2,\epsilon_2)$ in $X$ at large distance $\geq L$ apart. Let $c_1$ (resp.\ $c_2$) be a colouring supported in $B_1$ (resp.\ $B_2$). Then, the leaves $P_0:= X(0)$, $P_1:= X(c_1)$, $P_2:=X(c_1+c_2)$, $P_3:=X(c_2)$ define an $(\epsilon,L)$-square of leaves for some small $\epsilon$. 
\end{minipage}

\medskip \noindent
Our next lemma shows that such squares of leaves turn out to be, under some reasonable assumptions, the only squares of leaves.

\begin{lemma}\label{lem:SquareLeaf}
Let $X$ be a graph and $n \geq 2$ an integer. If $\{P_i \mid i \in \mathbb{Z}_4\}$ is an $(\epsilon,L)$-square of leaves for some $\epsilon \geq 0$ and $L>3\epsilon$, then 
$$P_0= X(c), \ P_1=X(c+a), \ P_4=X(c+b), \text{ and } P_2= X(c+a+b)$$
for some colouring $c \in \mathbb{Z}_n^{(X)}$ and two colourings $a,b \in \mathbb{Z}_n^{(X)}$ both supported in balls of radii $\leq \epsilon$ at distance $\geq L- 2\epsilon$ from each other. 
\end{lemma}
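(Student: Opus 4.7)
Write $P_i = X(c_i)$ for colourings $c_i \in \mathbb{Z}_n^{(X)}$, and set $a_i := c_{i+1} - c_i$ with indices mod $4$, so that the cycle identity $a_0 + a_1 + a_2 + a_3 = 0$ holds. The goal is to show that this system degenerates: $a_0 = -a_2$ and $a_1 = -a_3$, with the two ``independent'' modifications $a = a_0$ and $b = -a_3$ being supported far apart. The whole argument is just a careful reading of Lemma~\ref{lem:DistWreath} along the four sides (and two diagonals) of the square.

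First, I localise each transition. Applying Lemma~\ref{lem:DistWreath} to $d(P_i, P_{i+1}) \leq \epsilon$ produces arrow positions $p_i, p_i' \in V(X)$ with
\[
\mathrm{TS}(p_i, \mathrm{supp}(a_i), p_i') + \sum_{q \in \mathrm{supp}(a_i)} d(c_i(q), c_{i+1}(q)) \leq \epsilon,
\]
so the set $\mathrm{supp}(a_i) \cup \{p_i, p_i'\}$ sits inside a ball $B_i$ of radius $\leq \epsilon$, and the total colour cost of $a_i$ is itself $\leq \epsilon$. Next, concatenating the witnesses of Step~1 along $P_0 \to P_1 \to P_2$ yields an explicit path in $\mathcal{L}_n(X)$ from $(c_0, p_0)$ to $(c_2, p_1')$ of length at most $\epsilon + d(p_0', p_1) + \epsilon$, so the diagonal hypothesis $d(P_0, P_2^{+\epsilon}) \geq L$ forces $d(p_0', p_1) \geq L - 2\epsilon$; combined with the fact that both balls $B_0, B_1$ have diameter $\leq \epsilon$, the assumption $L > 3\epsilon$ rules out $B_0 \cap B_1 \neq \emptyset$. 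Running the analogous construction through $P_3$ and along the other diagonal, I get $B_i \cap B_j = \emptyset$ whenever $i,j$ have opposite parities in $\mathbb{Z}_4$, i.e.\ $(B_0 \cup B_2) \cap (B_1 \cup B_3) = \emptyset$.

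The cycle identity now reads $a_0 + a_2 = -(a_1 + a_3)$: the left-hand side is supported in $\mathrm{supp}(a_0) \cup \mathrm{supp}(a_2) \subseteq B_0 \cup B_2$, the right-hand side in $B_1 \cup B_3$, and since these two sets are disjoint, both sides must vanish pointwise. Hence $a_0 = -a_2$ and $a_1 = -a_3$, which is precisely the parallelogram relation $c_0 + c_2 = c_1 + c_3$. Setting $c := c_0$, $a := a_0$, and $b := -a_3$, one reads off $P_0 = X(c)$, $P_1 = X(c+a)$, $P_3 = X(c+b)$, and (from $c_2 = c_3 + a$) $P_2 = X(c+a+b)$, with $\mathrm{supp}(a) \subseteq B_0$ and $\mathrm{supp}(b) \subseteq B_3$ both of radius $\leq \epsilon$. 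Finally, for the separation bound: given $u \in \mathrm{supp}(a)$ and $v \in \mathrm{supp}(b)$, concatenating the Step~1 witnesses along $P_1 \to P_0 \to P_3$ produces an explicit path in $\mathcal{L}_n(X)$ of total length at most $d(u,v) + O(\epsilon)$ between vertices of $P_1$ and $P_3$ sitting near $u$ and $v$, so the other diagonal hypothesis $d(P_1, P_3^{+\epsilon}) \geq L$ yields $d(u,v) \geq L - 2\epsilon$, as required.

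The main obstacle I expect is Step~2: extracting from the hypothesis $L > 3\epsilon$ the disjointness $(B_0 \cup B_2) \cap (B_1 \cup B_3) = \emptyset$ requires a tight TSP bookkeeping in Lemma~\ref{lem:DistWreath}, exploiting that arrow positions and supports are jointly packed into sets of diameter $\leq \epsilon$. Once this disjointness is secured, Step~3 is purely algebraic and Steps~4--5 are immediate path constructions.
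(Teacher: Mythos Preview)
Your overall strategy---localise each transition $a_i := c_{i+1}-c_i$ in a small ball $B_i$, use the separation hypothesis to force $(B_0 \cup B_2) \cap (B_1 \cup B_3) = \emptyset$, then read off $a_0 = -a_2$ and $a_1 = -a_3$ from the cycle identity---is correct and is essentially the paper's argument, phrased a bit more symmetrically.

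However, there is a genuine gap in Step~2 and in your final separation bound: you invoke a \emph{diagonal hypothesis} $d(P_0, P_2^{+\epsilon}) \geq L$ (and $d(P_1, P_3^{+\epsilon}) \geq L$) that is \emph{not} part of the definition of an $(\epsilon,L)$-square. The second condition in the definition is not about opposite leaves; as the paper's own proof (and the later use in Claim~\ref{claim:ForAlphaInjective}) makes clear, the intended meaning is the \emph{within-leaf} separation
\[
d\bigl(P_i \cap P_{i-1}^{+\epsilon},\; P_i \cap P_{i+1}^{+\epsilon}\bigr) \;\geq\; L,
\]
i.e.\ the points of $P_i$ that are $\epsilon$-close to $P_{i-1}$ are far from those $\epsilon$-close to $P_{i+1}$. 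With your diagonal reading, the path $P_0 \to P_1 \to P_2$ only yields $d(p_0', p_1) \geq L - 2\epsilon$, and chasing the ball radii shows you would actually need $L > 5\epsilon$ for disjointness, not the stated $L > 3\epsilon$.

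The fix is simple and makes your bookkeeping cleaner: apply the within-leaf hypothesis directly to $P_1$. The witness $(c_1, p_0') \in P_1 \cap P_0^{+\epsilon}$ and the witness $(c_1, p_1) \in P_1 \cap P_2^{+\epsilon}$ give $d(p_0', p_1) \geq L$ immediately, with no loss of $2\epsilon$. Then $\mathrm{supp}(a_0) \subset B(p_0, \epsilon)$ and $\mathrm{supp}(a_1) \subset B(p_1, \epsilon)$ with $d(p_0, p_1) \geq L - \epsilon > 2\epsilon$, so the balls are disjoint. The same correction, applied at $P_0$, also gives the final bound $d(\mathrm{supp}(a), \mathrm{supp}(b)) \geq L - 2\epsilon$ exactly as in the paper.
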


\noindent
The intuition is roughly the following. Since $P_0$ and $P_1$ (resp.\ $P_3$) are close, we can obtain the colouring of $P_1$ (resp.\ $P_3$) from the colouring of $P_0$ by a modification in some small ball $B(p_0, \eta)$ (resp.\ $B(q_0,\eta)$). Say $P_0=X(c)$ and $P_1=X(c+a)$ (resp.\ $P_3= X(c+b)$) with $a$ (resp.\ $b$) supported in $B(p_0,\eta)$ (resp.\ $B(q_0,\eta)$). Notice that the balls $B(p_0,\eta)$ and $B(q_0,\eta)$ must be at distance $\gtrsim L$ apart. Similarly, the colouring $d$ associated to $P_3$ can be obtained from $c+a$ by a modification in some small ball $B(q_1,\eta)$ far away from $p_0$, but also from $c+b$ by a modification in some small ball $B(p_3,\eta)$ far away from $q_0$. The only possibility, then, is that $d=c+a+b$. 

\begin{proof}[Proof of Lemma~\ref{lem:SquareLeaf}.]
Let $c\in \mathbb{Z}_n^{(X)}$ be such that $P_0=X(c)$. In order to shorten the notation, we will assume for convenience that $c \equiv 0$. 

\medskip \noindent
\begin{minipage}{0.35\linewidth}
\includegraphics[width=\linewidth]{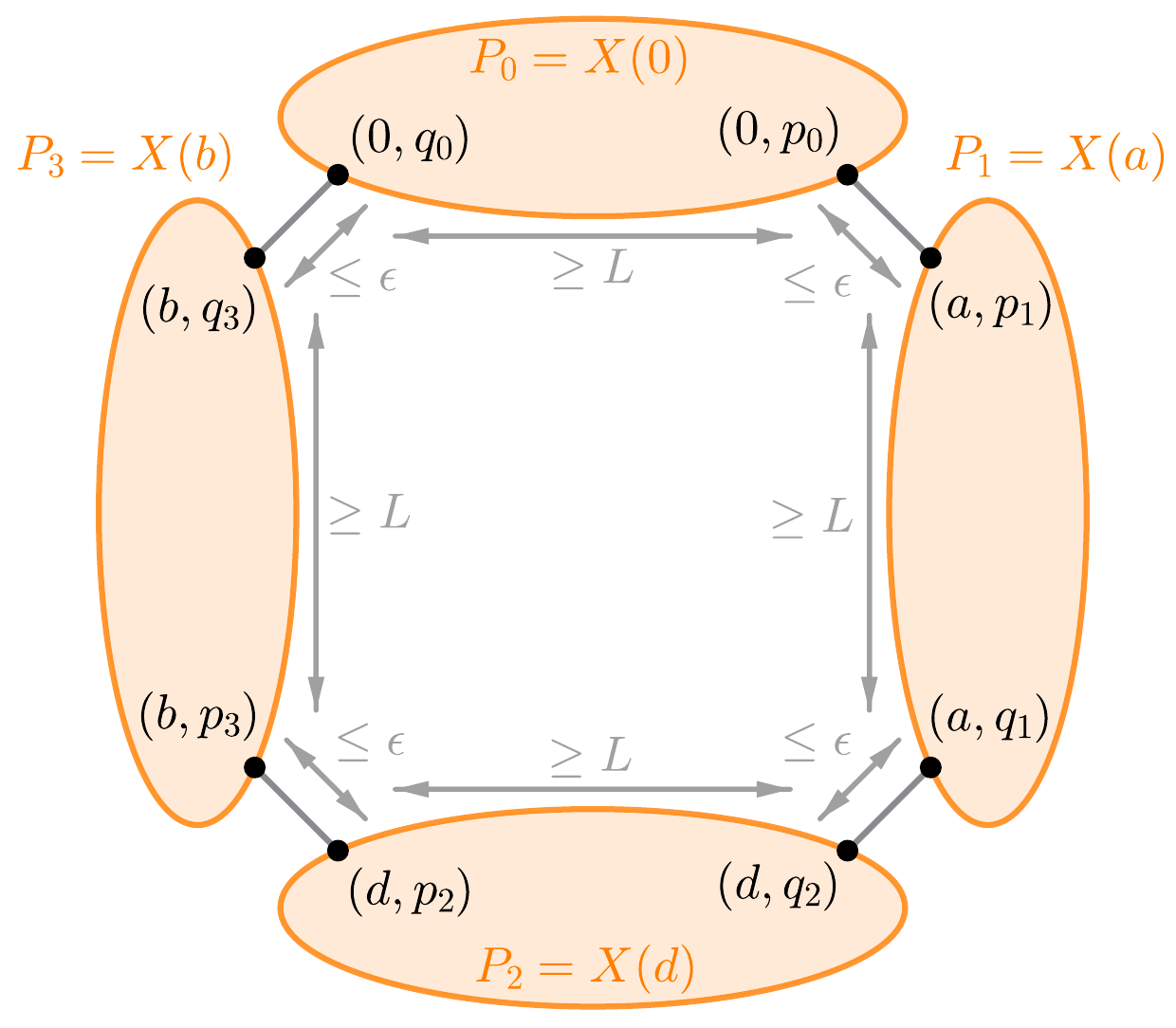}
\end{minipage}
\begin{minipage}{0.63\linewidth}
Let $a,b \in \mathbb{Z}_n^{(X)}$ be such that $P_1=X(a)$ and $P_3=X(b)$. Fix two vertices $(0,p_0) \in P_0$ and $(a,p_1) \in P_1$ at distance $\leq \epsilon$. Necessarily, $a$ has support in $B(p_0,\epsilon)$. Symmetrically, fix two vertices $(0,q_0) \in P_0$ and $(b,q_3) \in P_3$ at distance $\leq \epsilon$. Then, $b$ has support in $B(q_0,\epsilon)$. Notice that
$$\begin{array}{lcl} d(B(p_0,\epsilon), B(q_0,\epsilon)) & \geq & d(p_0,q_0)- 2 \epsilon  \\ \\ & \geq & d((0,p_0),(0,q_0)) -2 \epsilon \\ \\ & \geq & L- 2 \epsilon. \end{array}$$
\end{minipage}

\medskip \noindent
Thus, it only remains to show that $P_2= X(a+b)$. Let $d \in \mathbb{Z}_n^{(X)}$ be such that $P_2= X(d)$.

\medskip \noindent
 Fix two vertices $(a,q_1) \in P_1$ and $(d,q_2) \in P_2$ at distance $\leq \epsilon$. Necessarily, $d=a+d'$ for some colouring $d'$ with support in $B(q_1, \epsilon)$. Notice that
$$d(p_0,q_1) \geq d(p_0,q_0) - d(q_0,q_1) \geq d((0,p_0),(0,q_0)) - d((0,q_0),(a,q_1)) \geq L- \epsilon>2 \epsilon,$$
so the support of $a$, which is contained in $B(p_0,\epsilon)$, must be disjoint from the support of $d'$, which is contained in $B(q_1,\epsilon)$. Symmetrically, we can fix two vertices $(b,p_3) \in P_3$ and $(d,p_2) \in P_2$ at distances $\leq \epsilon$, and we can decompose $d$ as $b+d''$ for some colouring $d''$ with support in $B(p_3,\epsilon)$ that is disjoint from the support of $b$. 

\medskip \noindent
From the equality $a+d'=b+d''$ and from the restrictions on the supports of $a,b,d',d''$ we have, it follows that we must have $d'=b$ and $d''=a$. Thus, $d=a+b$ as desired. 
\end{proof}

\noindent
Given a vertex of a lamplighter graph, the recognition of the position of the arrow can now be done thanks to ladders of leaves as just defined, thanks to our next proposition:

\begin{prop}\label{prop:LadderLeaf}
Let $X$ be a graph and $n \geq 2$ an integer. Let $P_1,Q_1, \ldots, P_k,Q_k$ be an $(\epsilon,L)$-ladder of leaves for some $\epsilon\geq 0$ and $L> 3 \epsilon$. If $(a,p)$ and $(b,q)$ are two vertices satisfying $(a,p) \in P_1^{+\eta} \cap Q_1^{+\eta}$ and $(b,q) \in P_k^{+\eta} \cap Q_k^{+\eta}$ for some $\eta \geq 0$, then $d(p,q) \leq 6\eta$. 
\end{prop}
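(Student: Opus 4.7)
The plan is to extract from the ladder a single ``vertical shift'' colouring $\mathfrak{b} \in \mathbb{Z}_n^{(X)}$ such that, for every $i$, the colouring of the leaf $Q_i$ equals that of $P_i$ plus $\mathfrak{b}$. Once such a $\mathfrak{b}$ is produced, membership of a vertex in $P_i^{+\eta} \cap Q_i^{+\eta}$ will force its arrow to lie close in $X$ to the support of $\mathfrak{b}$, and the bound will then follow by applying this at both ends of the ladder together with a triangle inequality.

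To build the shift, for every $1 \leq i \leq k-1$ I would apply Lemma~\ref{lem:SquareLeaf} (available because $L>3\epsilon$) to the square $P_i, P_{i+1}, Q_{i+1}, Q_i$. This yields colourings $c_i, \alpha_i, \beta_i$ with $P_i = X(c_i)$, $P_{i+1} = X(c_i + \alpha_i)$, $Q_i = X(c_i + \beta_i)$ and $Q_{i+1} = X(c_i + \alpha_i + \beta_i)$. The vertical colouring difference between $Q_i$ and $P_i$ is then $\beta_i$, and that between $Q_{i+1}$ and $P_{i+1}$ is $(c_i + \alpha_i + \beta_i) - (c_i + \alpha_i) = \beta_i$ as well. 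Reading off the same vertical difference from the $(i+1)$-th square forces $\beta_{i+1} = \beta_i$, so by induction all the $\beta_i$ coincide with a single colouring $\mathfrak{b}$. Distinctness of the four leaves of any square forces $\mathfrak{b} \neq 0$, hence $\mathrm{supp}(\mathfrak{b})$ is non-empty.

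To finish, fix any vertex $v \in \mathrm{supp}(\mathfrak{b})$, and choose $(c_1, \pi) \in P_1$ and $(c_1 + \mathfrak{b}, \pi') \in Q_1$ realising the $\eta$-distances from $(a, p)$. Lemma~\ref{lem:DistWreath} combined with the triangle inequality gives
\[
\mathrm{TS}(\pi, \mathrm{supp}(\mathfrak{b}), \pi') \leq d((c_1, \pi), (c_1 + \mathfrak{b}, \pi')) \leq 2\eta.
\]
Since the travelling-salesman path realising this bound starts at $\pi$, has total length at most $2\eta$, and visits $v$, we obtain $d(\pi, v) \leq 2\eta$; combined with $d(p, \pi) \leq \eta$ (a direct consequence of Lemma~\ref{lem:DistWreath}) this yields $d(p, v) \leq 3\eta$. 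The identical argument applied at the other end of the ladder, with the same $v$ and the same colouring $\mathfrak{b}$, gives $d(q, v) \leq 3\eta$, and a final triangle inequality produces $d(p, q) \leq 6\eta$. The main point to verify carefully is that the vertical shift is genuinely invariant along the ladder, but once two adjacent squares are seen to share the pair $(P_{i+1}, Q_{i+1})$ this is purely formal.
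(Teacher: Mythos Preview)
Your proof is correct and follows essentially the same route as the paper: both first use Lemma~\ref{lem:SquareLeaf} inductively along the ladder to show that the colouring difference between $Q_i$ and $P_i$ is a single fixed nonzero colouring $\mathfrak{b}$, and then use the distance formula (Lemma~\ref{lem:DistWreath}) at each end to conclude that the arrows $p$ and $q$ both lie within $3\eta$ of $\mathrm{supp}(\mathfrak{b})$. The only cosmetic difference is that the paper phrases the last step as ``$\mathrm{supp}(\mathfrak{b})\subset B(p_1,2\eta)\cap B(q_k,2\eta)$, hence $d(p_1,q_k)\le 4\eta$'', while you fix a single $v\in\mathrm{supp}(\mathfrak{b})$ and bound $d(p,v)$ and $d(q,v)$ separately; these are the same argument.
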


\begin{proof}
Let $a_1,b_1, \ldots, a_k,b_k \in \mathbb{Z}_n^{(X)}$ be colourings such that $P_i=X(a_i)$ and $Q_i=X(b_i)$ for every $1 \leq i \leq k$. By applying Lemma~\ref{lem:SquareLeaf} inductively to the $(\epsilon,L)$-squares of leaves $P_i,P_{i+1},Q_{i+1},Q_i$ for $1 \leq i \leq k-1$, we find that $b_i-a_i = b_j-a_j$ for all $1 \leq i ,j \leq k$. 

\medskip \noindent
Fix two vertices $(a_1,p_1) \in P_1$ and $(b_1,q_1) \in Q_1$ at distance $\leq \eta$ from $(a,p)$. Since $d((a_1,p_1),(b_1,q_1)) \leq 2 \eta$, necessarily the support of $b_1-a_1$ must be contained in the ball $B(p_1,2 \eta)$. On the other hand, fixing two vertices $(a_k,p_k) \in P_k$ and $(b_k,q_k) \in Q_k$ at distance $\leq \eta$ from $(b,q)$, we find similarly that $b_k-a_k$ has its support contained in $B(q_k,2\eta)$. Thus, the ball $B(p_1,2\eta)$ and $B(q_k,2\eta)$ intersect, since they both contain the support of $b_1-a_1=b_k-a_k$. Consequently, $d(p_1,q_k) \leq 4 \eta$. We conclude that
$$d(p,q) \leq d(p,p_1) + d(p_1,q_k)+d(q_k,q) \leq d((a,p),(a_1,p_1)) + 4 \eta + d((b_k,q_k),(b,q)) \leq 6\eta,$$
as desired.
\end{proof}

\noindent
Now, we have all the tools required to prove Theorem~\ref{thm:AptolicQI}. In fact, we are going to prove a stronger statement, extracting information not only about quasi-isometries but also about coarse embeddings. 

\begin{definition}
A coarse embedding $\varphi : \mathcal{L}_n(X) \to \mathcal{L}_m(Y)$ between two lamplighter graphs is \emph{aptolic} if there exists an injective map $\alpha : \mathbb{Z}_n^{(X)} \to \mathbb{Z}_m^{(Y)}$ and a coarse embedding $\beta : X \to Y$ such that $\varphi : (c,p) \mapsto (\alpha(c),\beta(p))$. 
\end{definition}

\noindent
Our main result in this section is then:

\begin{thm}\label{thm:AptolicCoarse}
Let $X,Y$ be two graphs and $n,m \geq 2$ two integers. If $X$ is an unbounded pancylindrical graph of bounded degree, then every coarse embedding $\mathcal{L}_n(X) \to \mathcal{L}_m(Y)$ either has its image contained in a neighbourhood of some leaf or it lies at finite distance from an aptolic coarse embedding.
\end{thm}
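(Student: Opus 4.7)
The plan is to start by applying Theorem~\ref{thm:Embedding} to each $X$-leaf. Since every $X$-leaf in $\mathcal{L}_n(X)$ is isometric to $X$, and hence unbounded and pancylindrical, while $\mathcal{L}_m(Y)$ is stringy relative to its $Y$-leaves by Theorem~\ref{thm:LampStringy}, Proposition~\ref{prop:EmbeddingStringy} yields a map $\alpha \colon \mathbb{Z}_n^{(X)} \to \mathbb{Z}_m^{(Y)}$ such that $\varphi(X(c)) \subseteq Y(\alpha(c))^{+D}$ for every $c$. By Remark~\ref{remark:UniformConstant} the parameter $D$ can be chosen independently of $c$, depending only on the parameters of $\varphi$ and on the degrees. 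If all $\alpha(c)$ coincide, then $\varphi(\mathcal{L}_n(X))$ is contained in a neighbourhood of a single $Y$-leaf, giving the first alternative; otherwise, I will build an aptolic coarse embedding $\bar\varphi$ at bounded distance from $\varphi$.

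Fix a base colouring $c_0 \in \mathbb{Z}_n^{(X)}$ and define $\beta \colon V(X) \to V(Y)$ by choosing, for each $p$, a vertex $(\alpha(c_0), \beta(p)) \in Y(\alpha(c_0))$ at distance $\leq D$ from $\varphi(c_0, p)$; set $\bar\varphi(c,p) := (\alpha(c), \beta(p))$. The colouring coordinate of $\varphi(c,p)$ is already within $D$ of $\alpha(c)$, so the heart of the proof is to show that the arrow of $\varphi(c,p)$ lies at uniformly bounded distance from $\beta(p)$, independently of $c$. I handle this by splitting $c - c_0$ into a ``local'' piece supported in a ball $B(p, R_0)$ and a ``global'' piece supported outside this ball. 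For the local piece, the distance between $(c_0, p)$ and $(c_0 + \text{local}, p)$ is bounded in terms of $R_0$ and $n$, so the $\varphi$-images have arrows within bounded distance by the Lipschitz control. For the global piece, decompose it as $a_1 + \cdots + a_{k-1}$ with each $a_i$ supported in a small ball at distance $\geq L_0$ from $p$, and pick a non-trivial colouring $e$ of small support near $p$; the leaves $P_i := X(c_0 + \text{local} + a_1 + \cdots + a_{i-1})$ and $Q_i := X(P_i + e)$ form an $(\epsilon_0, L_0)$-ladder of leaves in $\mathcal{L}_n(X)$, with $(c_0 + \text{local}, p)$ lying in $P_1^{+\eta} \cap Q_1^{+\eta}$ and $(c, p)$ in $P_k^{+\eta} \cap Q_k^{+\eta}$ for small $\eta$. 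Applying $\varphi$ and invoking the leaf correspondence $\alpha$ transforms this into an $(\epsilon_0', L_0')$-ladder of $Y$-leaves in $\mathcal{L}_m(Y)$, whose constants depend only on $\epsilon_0, L_0, D$ and the distortion functions of $\varphi$; a sufficiently large choice of $L_0$ ensures $L_0' > 3\epsilon_0'$, so Proposition~\ref{prop:LadderLeaf} applies and bounds the arrow discrepancy between $\varphi(c_0 + \text{local}, p)$ and $\varphi(c, p)$ by $6\eta'$, crucially independent of $k$ and of the ambient colourings. Chaining the two estimates yields the desired uniform bound.

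It remains to verify that $\bar\varphi$ is aptolic. Since it lies at uniformly bounded distance from the coarse embedding $\varphi$, it is itself a coarse embedding, and $\beta$ is a coarse embedding as the restriction of $\bar\varphi$ to $\{c_0\} \times V(X) \cong X$. Injectivity of $\alpha$ follows from the coarse embedding property of $\bar\varphi$: if $\alpha(c_1) = \alpha(c_2)$ with $c_1 \neq c_2$, choosing $p$ at arbitrarily large distance from $\mathrm{supp}(c_1 - c_2)$ makes $d((c_1, p), (c_2, p))$ grow without bound (since any geodesic must visit this support), while $d(\bar\varphi(c_1, p), \bar\varphi(c_2, p)) = 0$, contradicting the lower distortion bound. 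The main obstacle in the plan is the ladder step: one must carefully orchestrate the scales $R_0, \epsilon_0, L_0, |e|$ so that the $\varphi$-image of the constructed ladder in $\mathcal{L}_n(X)$ still satisfies the $L > 3\epsilon$ hypothesis of Proposition~\ref{prop:LadderLeaf}, with every constant controlled uniformly in $c, p$ and the ladder length $k$.
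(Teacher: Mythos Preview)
Your overall architecture mirrors the paper's: invoke the Embedding Theorem leaf by leaf to get $\alpha$, then use ladders of leaves and Proposition~\ref{prop:LadderLeaf} to pin down the arrow. However, there is a genuine circularity in your treatment of the injectivity of $\alpha$.

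The ladder step hinges on the assertion that ``applying $\varphi$ \ldots\ transforms this into an $(\epsilon_0',L_0')$-ladder of $Y$-leaves''. But a ladder of leaves is, by definition, a chain of \emph{squares of four distinct leaves}. For the $\varphi$-image of your ladder to be a ladder, the $Y$-leaves $Y(\alpha(P_i))$, $Y(\alpha(Q_i))$, $Y(\alpha(P_{i+1}))$, $Y(\alpha(Q_{i+1}))$ must be pairwise distinct for every $i$. This requires $\alpha$ to be injective on the colourings appearing in the ladder---which you have not yet established at that point. Your dichotomy is only ``$\alpha$ constant'' versus ``$\alpha$ not constant'', and in the second branch $\alpha$ could still collapse some of the rungs, killing the hypothesis of Proposition~\ref{prop:LadderLeaf}. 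Your later argument for injectivity (``$\bar\varphi$ is a coarse embedding, so $\alpha(c_1)=\alpha(c_2)$ forces a contradiction'') is valid in itself, but it presupposes that $\bar\varphi$ is at bounded distance from $\varphi$, which is exactly what the ladder step was supposed to prove. So the logic is circular.

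The paper breaks this circle with an intermediate step you are missing (its Claim~\ref{claim:ForAlphaInjective}): for any $(\epsilon,L)$-square of leaves with $L$ large enough, the four $\alpha$-images are either pairwise distinct or all equal. This is proved directly from the coarse-embedding inequalities and Claim~\ref{claim:InterLeaves}, \emph{before} any ladder is built. One then chains squares to show that if $\alpha$ identifies \emph{any} two colourings it must be constant; only after this dichotomy (constant versus injective) is established does the ladder argument run, now safely, to compare the $\beta_c$'s. You need to insert this step.

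A secondary point: your decomposition of the global part as $a_1+\cdots+a_{k-1}$ ``each supported in a small ball at distance $\geq L_0$ from $p$'' is not enough to make the $X$-side ladder valid---consecutive $a_i$'s must also have supports far from \emph{each other}, not just from $p$. The paper arranges this via Lemma~\ref{lem:Partition}, grouping the support into $N$ colour classes with pairwise separation $\geq L$; this also guarantees that the number of ladders (hence the final additive constant) is bounded independently of $c$.
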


\noindent
Before turning the proof of the theorem, let us record for future use the following elementary observation of graph theory:

\begin{lemma}\label{lem:Partition}
Let $X$ be a graph of bounded degree. For every $L \geq 0$, there exists a partition $V(X)=V_1 \sqcup \cdots \sqcup V_N$ such that $d(a,b) \geq L$ for all $1 \leq i< j \leq N$, $a \in V_i$, and $b \in V_j$. 
\end{lemma}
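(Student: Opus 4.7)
The plan is to recognise Lemma~\ref{lem:Partition} as a distance-$L$ proper colouring problem. Consider the auxiliary graph $H$ with vertex set $V(X)$ and with an edge between any two distinct vertices $u,v$ satisfying $d_X(u,v)<L$. A partition of $V(X)$ with the required separation property on each part corresponds precisely to a proper vertex colouring of $H$; the $V_i$'s are then the colour classes.

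To bound the number of colours needed, let $\Delta$ denote the maximum degree of $X$. Since every ball of radius $L-1$ in $X$ contains at most $1+\Delta+\Delta^2+\cdots+\Delta^{L-1}$ vertices, the degree of each vertex in $H$ is bounded by an explicit constant $C=C(\Delta,L)$. Hence $H$ itself has bounded degree.

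A standard greedy argument now concludes. Because $X$ is connected of bounded degree, $V(X)$ is countable; enumerate it as $v_1,v_2,\ldots$ and, processing the vertices in order, assign to $v_k$ the smallest positive integer not already used by any earlier neighbour of $v_k$ in $H$. Since $v_k$ has at most $C$ neighbours in $H$, a colour from $\{1,\dots,C+1\}$ is always available. Setting $N:=C+1$ and taking $V_i$ to be the set of vertices receiving colour $i$ produces the desired partition.

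There is no substantial obstacle: the argument reduces to bounding the degree of $H$ by a function of $\Delta$ and $L$, after which greedy colouring does all the work. The only mild subtlety is handling the possibly infinite vertex set, which is taken care of by the explicit enumeration above, using only that $X$ is connected and of bounded degree.
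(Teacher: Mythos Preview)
Your proof is correct and essentially identical to the paper's: both form the auxiliary graph on $V(X)$ with edges between vertices at distance $\leq L$ (you use $<L$, which is immaterial), observe that it has bounded degree, and then apply greedy colouring to obtain the colour classes $V_i$. The paper is slightly terser, omitting the explicit ball-size bound and the countability discussion, but there is no substantive difference.
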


\begin{proof}
Let $\Xi$ be the graph whose vertex-set is $V(X)$ and such that two elements in $V(X)$ are linked by an edge if and only if they are at distance $\leq L$ in $X$. Because $X$ has bounded degree, so does $\Xi$, i.e. there exists some $N \geq 1$ such that every vertex of $\Xi$ has degree $\leq N -1$. Then, one can colour the vertices of $\Xi$ with $N$ colours so that any two adjacent vertices have different colours. The partition of $V(X)$ induced by this colouring is the partition we are looking for. 
\end{proof}

\begin{proof}[Proof of Theorem~\ref{thm:AptolicCoarse}.]
Let $\varphi : \mathcal{L}_n(X) \to \mathcal{L}_m(Y)$ be a coarse embedding. Fix two non-decreasing maps $\rho_-,\rho_+ : [0, + \infty) \to [0,+ \infty)$ satisfying $\rho_{\pm}(t) \to + \infty$ as $t \to + \infty$ such that
$$\rho_-(d(x,y)) \leq d( \varphi(x), \varphi(y)) \leq \rho_+( d(x,y)) \text{ for all } x,y \in \mathcal{L}_n(X).$$
Because $X$ is pancylindrical, it follows from Theorem~\ref{thm:Embedding} (and Remark~\ref{remark:UniformConstant}) that there exists a constant $K \geq 0$ such that, for every colouring $c \in \mathcal{Z}_n^{(X)}$, $\varphi$ sends the $X$-leaf $X(c)$ in the $K$-neighbourhood of the $Y$-leaf $Y(\alpha(c))$. We are interested in the injectivity of $\alpha$. More precisely, we want to prove that either $\alpha$ is injective or it is constant. We start by proving the following observation:

\begin{claim}\label{claim:ForAlphaInjective}
For every $\epsilon>0$, there exists $L_0 \geq 0$ such that the following holds for every $L \geq L_0$. Given an $(\epsilon,L)$-square of leaves $\{ X(c_i) \mid i \in \mathbb{Z}_4 \}$ in $\mathcal{L}_n(X)$, the $\alpha(c_i)$ are either pairwise distinct or all identical.
\end{claim}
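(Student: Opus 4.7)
The plan is to normalise the situation by Lemma~\ref{lem:SquareLeaf} on the source side: after relabelling, I may assume $c_0 = c$, $c_1 = c + a$, $c_2 = c + a + b$, $c_3 = c + b$, where $a, b \in \mathbb{Z}_n^{(X)}$ are supported in disjoint balls $B(p_0, \epsilon), B(q_0, \epsilon) \subset X$ with $d(p_0, q_0) \geq L - 2 \epsilon$. This concrete parametrisation gives canonical representatives $(c_i, p) \in X(c_i)$ for every $p \in V(X)$, on which distances in $\mathcal{L}_n(X)$ can be computed explicitly via Lemma~\ref{lem:DistWreath}.

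Next, I would control what $\alpha$ does on consecutive pairs. For each $i \in \mathbb{Z}_4$ there exist points $(c_i, p_i) \in X(c_i)$ and $(c_{i+1}, p_{i+1}) \in X(c_{i+1})$ at distance $\leq \epsilon$ (typically with $p_i = p_{i+1}$ near the support of $c_{i+1} - c_i$). Applying the upper modulus gives $d(\varphi(c_i, p_i), \varphi(c_{i+1}, p_{i+1})) \leq \rho_+(\epsilon)$, and since each image lies in the $K$-neighbourhood of a $Y$-leaf, this forces $d(Y(\alpha(c_i)), Y(\alpha(c_{i+1}))) \leq \rho_+(\epsilon) + 2K =: M$. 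Arguing as in Lemma~\ref{lem:SquareLeaf} but on the target side, I conclude that either $\alpha(c_i) = \alpha(c_{i+1})$ or their difference is a colouring of $Y$ supported in a ball of radius $\leq M$; so the image quadruple $(\alpha(c_i))_{i \in \mathbb{Z}_4}$ has all its successive differences localised in balls of uniformly bounded radius.

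Having this local control, I would proceed by contradiction, assuming the pattern of equalities among $\{\alpha(c_i)\}_{i \in \mathbb{Z}_4}$ is neither constant nor fully injective. Up to the $\mathbb{Z}_4$-symmetry of the square, the forbidden patterns reduce to a diagonal coincidence $\alpha(c_0) = \alpha(c_2)$ (or $\alpha(c_1) = \alpha(c_3)$) and an adjacent coincidence $\alpha(c_0) = \alpha(c_1)$ with at least one of $\alpha(c_2), \alpha(c_3)$ different from $\alpha(c_0)$. For each case, the strategy is to select two vertices $u \in X(c_i), v \in X(c_j)$ with $\alpha(c_i) = \alpha(c_j)$ whose $\mathcal{L}_n(X)$-distance is a linear function of $L$ --- here the separation $d(p_0, q_0) \geq L - 2\epsilon$ enters essentially. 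Using the bigon structure of the square (there are always two independent routes from one corner to any non-adjacent corner, one through each of the two intermediate leaves), together with the local support control from the previous step, I would confine the images $\varphi(u), \varphi(v)$ into a region of $\mathcal{L}_m(Y)$ whose diameter is bounded independently of $L$. Choosing $L_0$ so that $\rho_-(L_0 - 2\epsilon)$ exceeds this bound yields the contradiction.

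The main obstacle is precisely this last step: two points lying in the same $K$-neighbourhood of a $Y$-leaf may a priori be arbitrarily far apart, since $Y$ is unbounded, so one cannot derive a contradiction merely from $\alpha(c_i) = \alpha(c_j)$. The required closeness in $\mathcal{L}_m(Y)$ must come from the bigon construction, which exploits both the geometric square structure of Lemma~\ref{lem:SquareLeaf} and the support information on the differences $\alpha(c_{i+1}) - \alpha(c_i)$ obtained in the consecutive-pairs analysis, so as to route the two $X$-side branches through sufficiently constrained regions of the target.
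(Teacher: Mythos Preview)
Your setup is right: normalising via Lemma~\ref{lem:SquareLeaf} and establishing that each consecutive difference $\alpha(c_{i+1})-\alpha(c_i)$ is supported in a ball of radius $M=\rho_+(\epsilon)+2K$ is exactly where the paper begins. But the ``bigon confinement'' step is where your argument stays vague, and the paper takes a more direct route that avoids your stated obstacle entirely.

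The precise tool you are reaching for --- but do not name --- is Claim~\ref{claim:InterLeaves}: for any two \emph{distinct} $Y$-leaves $P,Q$, the intersection $P^{+\rho_+(\epsilon)+2K}\cap Q^{+\rho_+(\epsilon)+2K}$ has diameter $\leq D$. This replaces your heuristic confinement. In the diagonal case $\alpha(c_0)=\alpha(c_2)$, the paper does not try to force two images to be close; instead it exploits that $X(c_1)$ contains two points (one $\epsilon$-near $X(c_0)$, one $\epsilon$-near $X(c_2)$) at mutual distance $\geq L$. Their images both lie in $Y(\alpha(c_1))^{+\rho_+(\epsilon)+2K}\cap Y(\alpha(c_0))^{+\rho_+(\epsilon)+2K}$ at distance $\geq\rho_-(L)-2K>D$, which forces $\alpha(c_1)=\alpha(c_0)$; symmetrically $\alpha(c_3)=\alpha(c_0)$. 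So a diagonal coincidence \emph{propagates} to all four --- no contradiction is sought in this case. Your confinement idea would also work once you invoke this intersection bound, but you do not need two bigon branches: one intermediate leaf with $\alpha$-value distinct from $\alpha(c_0)$ already pins the relevant images to a region of diameter $\leq D$.

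In the adjacent case $\alpha(c_0)=\alpha(c_1)$ with $\alpha(c_2),\alpha(c_3)$ distinct from $\alpha(c_0)$ and from each other, the paper's argument is closer to your sketch: the supports of $\alpha(c_2)-\alpha(c_0)$ and $\alpha(c_3)-\alpha(c_0)$ lie in small balls whose centres are separated by $\geq\rho_-(L)-2K$ (this separation is the delicate point --- it comes from the two attachment regions along the collapsed edge $Y(\alpha(c_0))=Y(\alpha(c_1))$ being far apart). Hence $\alpha(c_2)\triangle\alpha(c_3)$ has large diameter; but $d(Y(\alpha(c_2)),Y(\alpha(c_3)))\leq\rho_+(\epsilon)+2K$ forces it to be supported in a small ball, a contradiction. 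So: replace the bigon heuristic with Claim~\ref{claim:InterLeaves} explicitly, and restructure the diagonal case as propagation rather than confinement-versus-distance.
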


\noindent
According to Claim~\ref{claim:InterLeaves}, there exists a constant $D \geq 0$ such that $P^{+\rho_+(\epsilon)+2K} \cap Q^{+\rho_+(\epsilon)+2K}$ has diameter $\leq D$ for any two distinct $Y$-leaves $P$ and $Q$. Let $L_0 \geq 0$ be sufficiently large so that $\rho_-(L_0) > \max \left( D+2K, 4 \rho_+(\epsilon) + 10K\right)$. Now, set some $L \geq L_0$. If the $\alpha(c_i)$ are pairwise distinct, we are done, so we assume that at least two of these colourings coincide. 

\medskip \noindent
First, assume that two opposite leaves of our square are sent in a neighbourhood of some common $Y$-leaf, say $\alpha(c_0)= \alpha(c_2)$. Because $X(c_1)$ contains two points in $X(c_0)^{+\epsilon}$ and $X(c_2)^{+\epsilon}$ that lie at distance $\geq L$ from each other, it follows that $Y(\alpha(c_1))$ contains two points in $Y(\alpha(c_0))^{+\rho_+(\epsilon)+2K} = Y(\alpha(c_2))^{+\rho_+(\epsilon)+2K}$ that lie at distance $\geq \rho_-(L) -2K >D$. We deduce from the definition of $D$ that $\alpha(c_0)= \alpha(c_1)$. Similarly, we obtain that $\alpha(c_0)= \alpha(c_3)$. Thus, we have proved that our $X$-leaves are all sent in a neighbourhood of a common $Y$-leaf as desired.

\medskip \noindent
Next, assume that two consecutive leaves of our square are sent in a neighbourhood of some common $Y$-leaf, say $\alpha(c_0)= \alpha(c_1)$. If $\alpha(c_2)$ (resp.\ $\alpha(c_3)$) coincide with $\alpha(c_0)$ (resp.\ $\alpha(c_1)$), then the previous case applies. So we can assume that $\alpha(c_2), \alpha(c_3) \neq \alpha(c_0)= \alpha(c_1)$. If $\alpha(c_2)= \alpha(c_3)$, then $Y(\alpha(c_2))= Y(\alpha(c_3))$ and $Y(\alpha(c_0)) = Y(\alpha(c_1))$ are two leaves containing two points in their $(\rho_+(\epsilon)+2K)$-neighbourhoods that lie at distance $\geq \rho_-(L)-2K$ from each other, and we conclude as before that the $\alpha(c_i)$ all agree. So we can assume that $\alpha(c_2) \neq \alpha(c_3)$. 

\medskip \noindent
\begin{minipage}{0.46\linewidth}
\includegraphics[width=\linewidth]{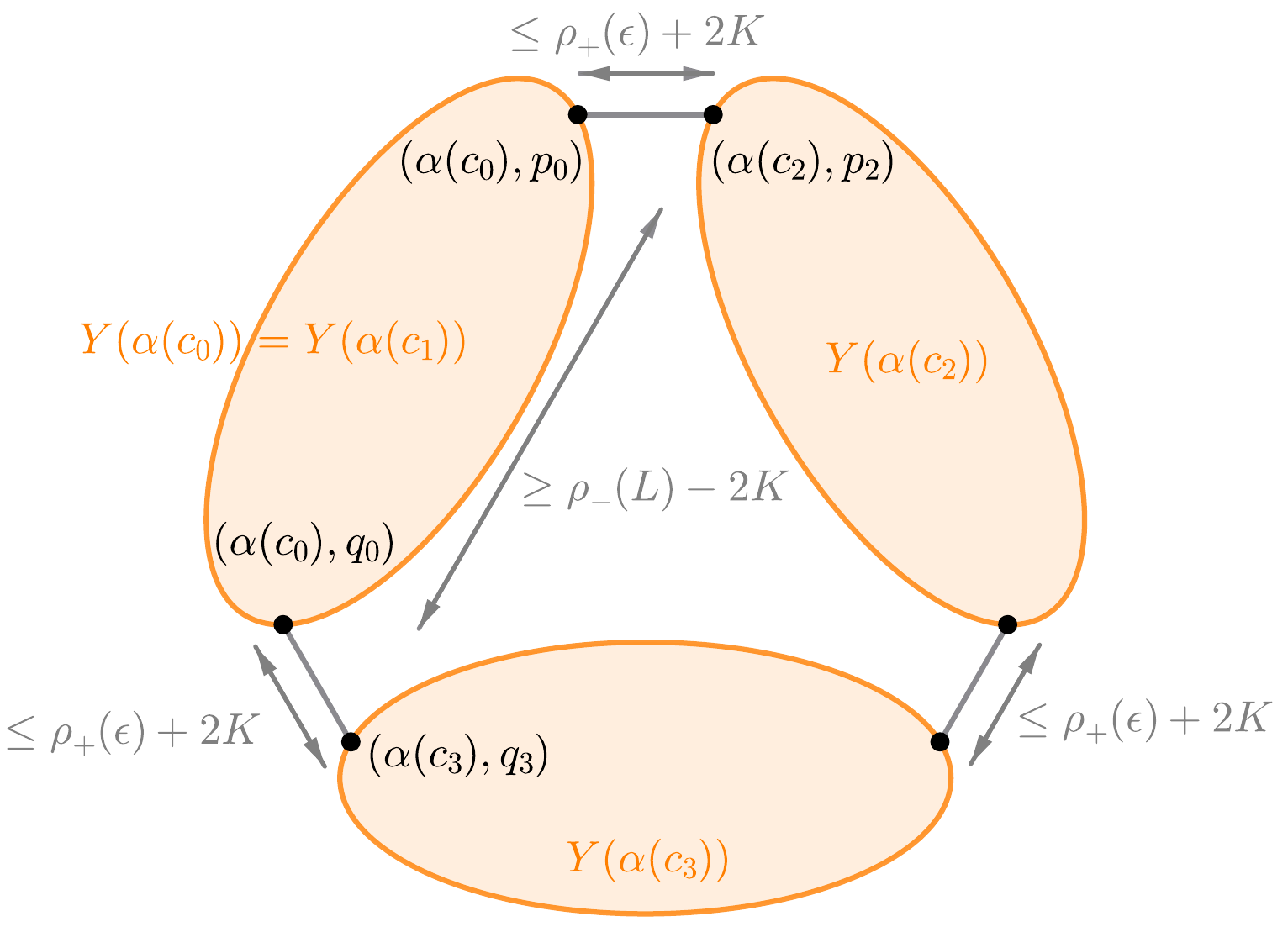}
\end{minipage}
\begin{minipage}{0.53\linewidth}
In other words, we obtain a triangle of three $Y$-leaves $Y(\alpha(c_0))= Y(\alpha(c_1))$, $Y(\alpha(c_2))$, and $Y(\alpha(c_3))$. See the figure on the left. We deduce from our configuration that
$$\alpha(c_2) \in \alpha(c_0) + \mathbb{Z}_m^{B(p_0, \rho_+(\epsilon)+2K)}$$
and 
$$\alpha(c_3) \in \alpha(c_0) + \mathbb{Z}_m^{B(q_0, \rho_+(\epsilon)+2K)}$$
where $d(p_0,q_0) \geq \rho_-(L)-2K$. 
\end{minipage}

\medskip \noindent
Consequently, since
$$\alpha(c_2) \triangle \alpha(c_3) = \left( \alpha(c_2) \triangle \alpha(c_0) \right) \triangle \left( \alpha(c_0) \triangle \alpha(c_3) \right)$$
intersects both $B(p_0, \rho_+(\epsilon)+2K)$ and $B(q_0, \rho_+(\epsilon)+2K)$, it follows that $\alpha(c_2) \triangle \alpha(c_3)$ has diameter $\geq \rho_-(L)-6K -2 \rho_+(\epsilon)$. But, on the other hand, since $Y(\alpha(c_2))$ and $Y(\alpha(c_3))$ contain points at distance $\leq \rho_+(\epsilon)+2K$, $\alpha(c_2) \triangle \alpha(c_3)$ must be contained in a ball of radius $\rho_+(\epsilon)+2K$, and consequently must have diameter $\leq 2 \rho_+(\epsilon)+4K$. Since we chose $L$ sufficiently large, we get a contradiction, concluding the proof of Claim~\ref{claim:ForAlphaInjective}. 

\medskip \noindent
Now, we are ready to show that, if $\alpha$ is not injective, then it must be constant, which amounts to saying that our coarse embeddings $\varphi$ sends $\mathcal{L}_n(X)$ inside the a neighbourhood of a single $Y$-leaf of $\mathcal{L}_m(Y)$. So let $a,b \in \mathbb{Z}_n^{(X)}$ be two distinct colourings satisfying $\alpha(a)= \alpha(b)$. Given an arbitrary colouring $c \in \mathbb{Z}_n^{(X)}$, we want to prove that $\alpha(c)= \alpha(a)= \alpha(b)$. 

\medskip \noindent
\begin{minipage}{0.42\linewidth}
\includegraphics[width=\linewidth]{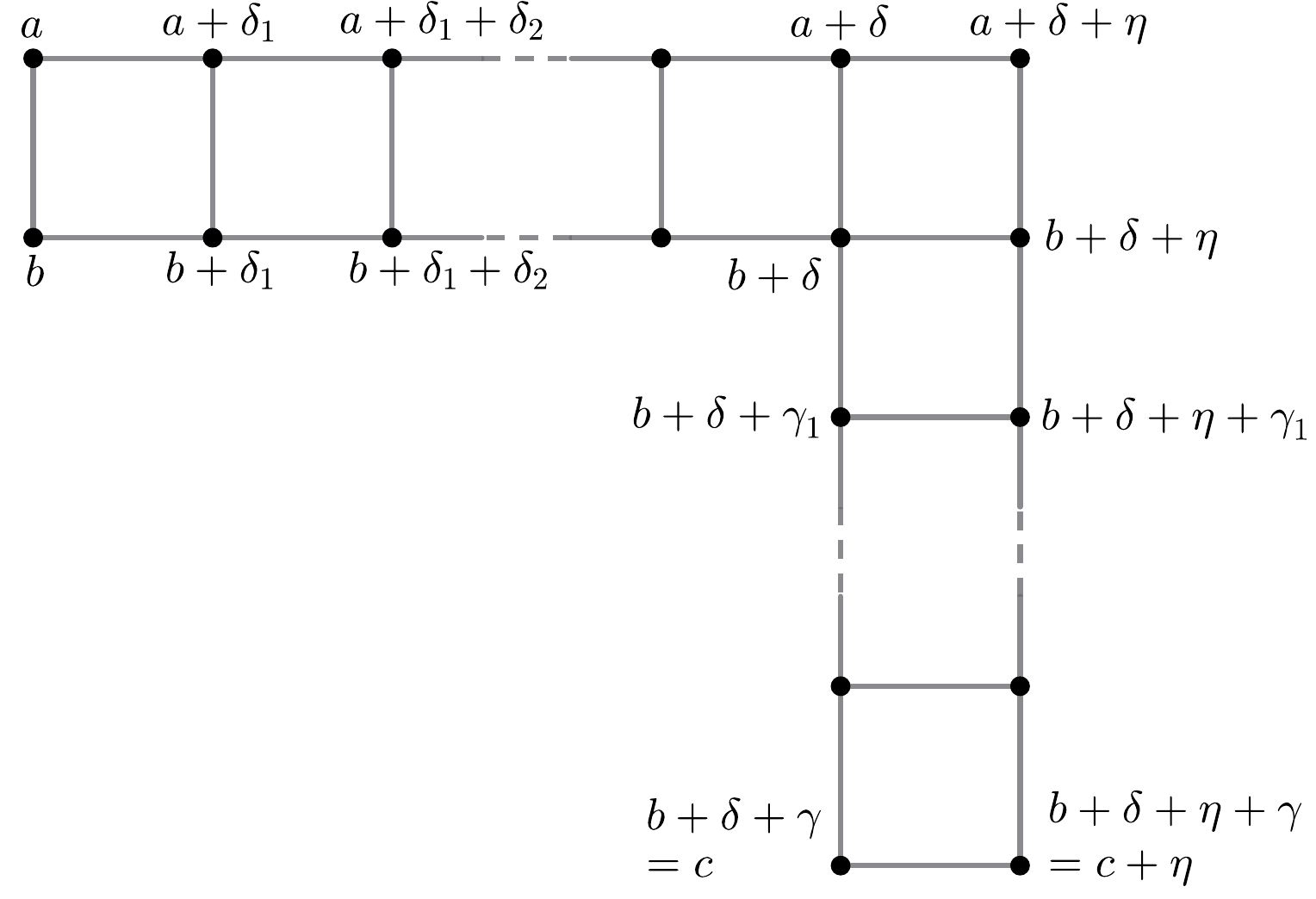}
\end{minipage}
\begin{minipage}{0.56\linewidth}
Fix an $\epsilon>0$ such that the distance between the leaves $X(a)$ and $X(b)$ is $ \leq \epsilon$, and let $L \geq 0$ be a sufficiently large constant as given by Claim~\ref{claim:ForAlphaInjective}. Because $a$ and $b$ agree outside some ball $B(o,\epsilon)$, there exists a colouring $\delta$ supported outside $B(o,\epsilon+ L)$ such that $a+ \delta$ and $b+ \delta$ both agree with $c$ outside $B(\epsilon+L)$. We can decompose $\delta$ as a sum $\delta_1+ \cdots+ \delta_r$ of colourings supported on single vertices. We can use such a data to construct an $(\epsilon,L)$-ladder of leaves as shown by the figure on the left. 
\end{minipage}

\medskip \noindent
Next, fix an arbitrary vertex $q$ not in the ball $B(o,\epsilon+2L)$ and a non-trivial colouring $\eta$ whose support is $\{q\}$. Because $b+ \delta$ coincides with $c$ outside $B(o,\epsilon+L)$, we can find a colouring $\gamma$ supported in $B(o,\epsilon+L)$ such that $b+\delta+ \gamma=c$. We can decompose $\delta$ as a sum $\gamma_1+ \cdots + \gamma_s$ of colourings supported on single vertices. Then, we can use such a data to construct an $(\epsilon,L$-ladder of leaves as shown by the figure above. 

\medskip \noindent
By applying Claim~\ref{claim:ForAlphaInjective} iteratively to the squares of our two ladders, we deduce that all the corresponding colourings have the same image under $\alpha$. In particular, $\alpha(c)= \alpha(a)= \alpha(b)$, as desired. Thus, we have proved that, if $\alpha$ is not injective, then our coarse embedding $\varphi : \mathcal{L}_n(X) \to \mathcal{L}_m(Y)$ has its image entirely contained in a neighbourhood of a single leaf. 

\medskip \noindent
Now, our goal is to prove that, under the assumption that $\alpha$ is injective, then $\varphi$ must be aptolic up to finite distance. We know that $\varphi(X(c)) \subset Y(\alpha(c))^{+K}$ for every $c \in \mathbb{Z}_n^{(X)}$. Up to modifying $\varphi$ up to finite distance, we can assume that $\varphi(X(c)) \subset Y(\alpha(c))$ for every $c \in \mathbb{Z}_n^{(X)}$. In other words, we have a collection of coarse embeddings $\{ \beta_c : X \to Y \mid c \in \mathbb{Z}_n^{(X)} \}$ such that
$$\varphi(c,p) = (\alpha(c), \beta_c(p)) \text{ for every } (c,p) \in \mathcal{L}_n(X).$$
In order to conclude the proof of our theorem, we want to show that each $\beta_c$ lies at uniform finite distance from $\beta_0$, which will imply that our coarse embedding $\varphi$ lies at finite distance from the aptolic coarse embedding $(c,p) \mapsto (\alpha(c), \beta_0(p))$. More precisely, we fix a vertex $p \in V(X)$ and a constant $L \geq 0$ such that $\rho_-(L) > 3\rho_+(1)$, and our goal is to prove that $d(\beta_c(p),\beta_0(p)) \leq 6 \rho_+(1)$. 

\medskip \noindent
Fix a partition $V(X)= V_1 \sqcup \cdots \sqcup V_N$ as given by Lemma~\ref{lem:Partition}. We decompose our colouring $c$ as a sum $c_0 +c_1+ \cdots +c_N$ such that $c_0$ is the restriction of $c$ to the ball $B(p,L)$ and each $c_i$ for $1 \leq i \leq N$ is supported on $V_i \backslash B(p,L)$. For convenience, we set $a_i:= c_1+ \cdots + c_i$ for every $0 \leq i \leq N$. (By convention, $a_0:=0$.) We have
\begin{equation}\label{equation:First}
d(\beta_0(p), \beta_c(p)) \leq \sum\limits_{i=0}^{N-1} d( \beta_{a_i}(p), \beta_{a_{i+1}}(p))  + d(\beta_{a_N}(p), \beta_c(p)).
\end{equation}
First, notice that
$$\begin{array}{lcl} d(\beta_{a_N}(p), \beta_c(p)) & \leq & d( (\alpha(a_N),\beta_{a_N}(p)), (\alpha(c),\beta_c(p))) \\ \\ & \leq & d( \varphi(a_N,p), \varphi(c,p)) \leq \rho_+( d( (a_N,p), (c,p))). \end{array}$$
But $c$ and $a_N$ may only disagree on $B(p,L)$, so
$$d( (a_N,p), (c,p)) \leq \mathrm{diam}(B(p,L)) |B(p,L)| + |B(p,L)| \leq (1+ 2L) \mathrm{deg}(X)^L.$$
Hence
\begin{equation}\label{equation:Second}
d(\beta_{a_N}(p), \beta_c(p)) \leq \rho_+\left( (1+2L) \mathrm{deg}(X)^L \right).
\end{equation}

\medskip \noindent
\begin{minipage}{0.45\linewidth}
\begin{center}
\includegraphics[width=\linewidth]{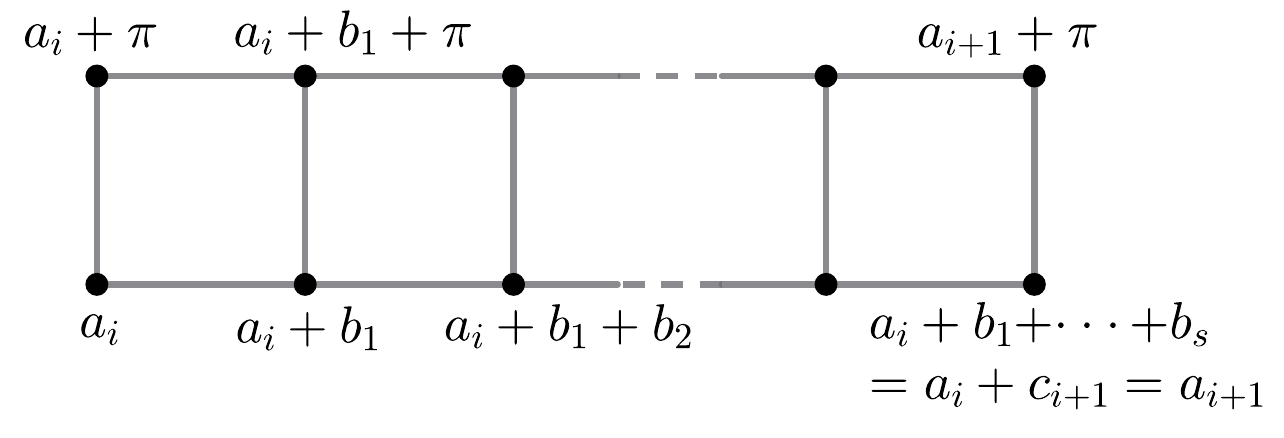}
\end{center}
\end{minipage}
\begin{minipage}{0.54\linewidth}
Next, given an index $0 \leq i \leq N-1$, decompose $c_{i+1}$ as a sum $b_1+ \cdots + b_s$ of colourings supported on single vertices (which belong to the support of $c_{i+1}$, and a fortiori to $V_{i+1} \backslash B(p,L)$). Then, we have a $(1, L)$-ladder of leaves as illustrated by the figure on the left, where $\pi$ is a non-trivial colouring whose support is $\{p\}$.
\end{minipage}

\medskip \noindent
Since $\alpha$ is injective, $\varphi$ sends this ladder to a $(\rho_+(1), \rho_-(L))$-ladder of leaves in $\mathcal{L}_m(Y)$. Notice that $\rho_+(1) >3 \rho_-(L)$ by definition of $L$. Thus, since we know that $(a_i,p) \in X(a_i) \cap X(a_i+ \pi)^{+1}$ and $(a_{i+1},p) \in X(a_{i+1}) \cap X(a_{i+1}+\pi)^{+1}$, which implies that $(\alpha(a_i),\beta_{a_i}(p)) \in Y(\alpha(a_i)) \cap Y(\alpha(a_i+ \pi))^{+1}$ and $(\alpha(a_{i+1}), \beta_{a_{i+1}}(p)) \in Y( \alpha(a_{i+1})) \cap Y(\alpha(a_{i+1}+ \pi))^{+1}$, we can apply Proposition~\ref{prop:LadderLeaf} and deduce that $d(\beta_{a_i}(p), \beta_{a_{i+1}}(p)) \leq 6 \rho_+(1)$. Hence
\begin{equation}\label{equation:Third}
\sum\limits_{i=0}^{N-1} d \left( \beta_{a_i}(p), \beta_{a_{i+1}} (p) \right) \leq 6N \rho_+(1).
\end{equation}
By combining Equations~\ref{equation:Second} and~\ref{equation:Third} with Equation~\ref{equation:First}, we conclude that
$$d(\beta_0(p), \beta_c(p)) \leq 6N \rho_+(1) + \rho_+\left( (1+2L) \mathrm{deg}(X)^L \right),$$
which completes the proof of our theorem.
\end{proof}

\begin{proof}[Proof of Theorem~\ref{thm:AptolicQI}.]
Let $\varphi : \mathcal{L}_n(X) \to \mathcal{L}_m(Y)$ be a quasi-isometry and $\bar{\varphi} : \mathcal{L}_m(Y) \to \mathcal{L}_n(X)$ a quasi-inverse. According to Theorem~\ref{thm:AptolicCoarse}, up to replacing $\varphi$ and $\bar{\varphi}$ with maps at finite distance, we can assume that $\varphi$ and $\bar{\varphi}$ are aptolic coarse embeddings. Consequently, there exist two injective maps $\alpha : \mathbb{Z}_n^{(X)} \to \mathbb{Z}_m^{(Y)}$ and $\bar{\alpha} : \mathbb{Z}_m^{(Y)} \to \mathbb{Z}_n^{(X)}$, as well as two coarse embeddings $\beta : X \to Y$ and $\bar{\beta} : Y \to X$, such that $\varphi : (c,p) \mapsto (\alpha(c), \beta(p))$ and $\bar{\varphi} : (c,p) \mapsto (\bar{\alpha}(c), \bar{\beta}(p))$. Since $\varphi \circ \bar{\varphi}$ lies at finite distance, say $K$, from the identity, we know that
$$d((\alpha \circ \bar{\alpha}(c), \beta \circ \bar{\beta}(p)), (c,p)) \leq K \text{ for every } (c,p) \in V(\mathcal{L}_n(X)).$$
This amounts to saying that 
$$d(\beta \circ \bar{\beta}(p),p) \leq K \text{ for every } p \in V(X),$$
and
$$d( (\alpha \circ \bar{\alpha}(c),p), (c,p)) \leq 2K \text{ for every } (c,p) \in V(\mathcal{L}_n(X)).$$
Notice that, given a colouring $c$, we must have $\alpha \circ \bar{\alpha}(c)=c$ since the distance between $(\alpha \circ \bar{\alpha}(c),p)$ and $(c,p)$ can be bounded below by $d(p, c \triangle \alpha \circ \bar{\alpha}(c))$ if $c$ and $\alpha \circ \bar{\alpha}(c)$ disagree. Thus, $\beta \circ \bar{\beta}$ lies at distance $\leq K$ from the identity and $\alpha \circ \bar{\alpha}$ is the identity. Symmetrically, we obtain that $\bar{\beta} \circ \beta$ lies at finite distance from the identity and that $\bar{\alpha} \circ \alpha$ is the identity. Therefore, $\alpha$ is bijective and $\beta$ is a quasi-isometry, proving that $\varphi$ is aptolic. 
\end{proof}

\subsection{A few words about amenability}

\noindent
As shown by Theorem~\ref{thm:AptolicQIstrong}, the coarse geometry of lamplighter graphs depends on whether our original graph is \emph{amenable}, a property that we know define.

\begin{definition}
A locally finite graph $X$ is \emph{amenable} if it admits a \emph{F{\o}lner sequence}, i.e.\ a sequence of finite subsets of vertices $(F_n)_{n \geq 0}$ satisfying
$$\frac{ |\partial F_n| }{|F_n|} \to 0 \text{ as } n \to + \infty.$$
\end{definition}

\noindent
Here, the \emph{boundary} $\partial S$ of a set of vertices $S$ refers to the collection of all the vertices that do not belong to $S$ but that are adjacent to some vertices of $S$. Thus, a graph is amenable if one can find sets of vertices for which the boundary is arbitrarily small compared to the size of the set itself. In the opposite direction, a graph is non-amenable whenever it satisfies a linear isoperimetric inequality: there exists some $\epsilon>0$ such that $|\partial S| \geq \epsilon |S|$ for every finite set $S$ of vertices. 

\medskip \noindent
It is worth noticing that amenability is preserved by quasi-isometries (see Exercise~\ref{exo:AmenableQI}), so it makes sense to say that a given finitely generated group is or is not amenable. Let us consider a few basic examples of amenable and non-amenable graphs and groups. We first show the amenability of (Cayley graphs of) $\mathbb{Z}^n$. 

\begin{lemma}
For every $d \geq 1$, the graph $\mathbb{E}^d$ is amenable.
\end{lemma}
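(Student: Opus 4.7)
The plan is to exhibit an explicit Følner sequence in $\mathbb{E}^d$. The natural candidates are the cubes $F_n := \{-n, \ldots, n\}^d \subset \mathbb{Z}^d$, and the whole proof amounts to estimating $|F_n|$ and $|\partial F_n|$ and showing that the ratio tends to~$0$.

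First I would compute $|F_n| = (2n+1)^d$ directly. For the boundary, I would observe that $\partial F_n \subset \{-n-1, \ldots, n+1\}^d \setminus \{-n, \ldots, n\}^d$, since a vertex adjacent to $F_n$ in $\mathbb{E}^d$ lies at $\ell^\infty$-distance $\leq 1$ from $F_n$. Hence
\[
|\partial F_n| \leq (2n+3)^d - (2n+1)^d.
\]
Expanding the binomial (or using the elementary mean value bound $(x+2)^d - x^d \leq 2d(x+2)^{d-1}$ for $x \geq 0$), one gets $|\partial F_n| = O(n^{d-1})$, with a constant depending only on~$d$.

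Combining these estimates yields
\[
\frac{|\partial F_n|}{|F_n|} \leq \frac{(2n+3)^d - (2n+1)^d}{(2n+1)^d} \xrightarrow[n \to + \infty]{} 0,
\]
since the numerator is $O(n^{d-1})$ and the denominator is $\Theta(n^d)$. This proves that $(F_n)_{n \geq 0}$ is a Følner sequence, and therefore that $\mathbb{E}^d$ is amenable.

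There is essentially no obstacle: the argument is a direct volume-versus-surface computation on the cube, and the dimension~$d$ enters only through the constants. The only minor point worth being explicit about is the inclusion $\partial F_n \subset \{-n-1,\ldots,n+1\}^d \setminus \{-n,\ldots,n\}^d$, which reflects the fact that adjacency in $\mathbb{E}^d$ is given by the standard basis of $\mathbb{Z}^d$.
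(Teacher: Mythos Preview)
Your proof is correct and takes essentially the same approach as the paper: exhibit cubes as a F{\o}lner sequence and compare volume $\Theta(n^d)$ against surface $O(n^{d-1})$. The only cosmetic difference is that the paper uses $[n]^d$ and computes $|\partial F_n| = 2dn^{d-1}$ exactly, whereas you use centered cubes and an upper bound via the containing shell; both lead immediately to the conclusion.
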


\begin{proof}
For every $n \geq 1$, set $F_n:= [n]^d$ where $[n]:= \{ 1, \ldots, n\}$. Of course, $|F_n|= n^d$. Then, since
$$\partial F_n = \bigsqcup\limits_{i=1}^d \left\{ (x_i)_{1 \leq i \leq d} \in [n]^d \mid x_i \in \{0,n+1\} \text{ and } x_j \in [n] \text{ for every } j \neq i\right\},$$
we have
$$|\partial F_n| = \sum\limits_{i=1}^d 2n^{d-1}= 2dn^{d-1}.$$
Therefore,
$$\frac{|\partial F_n|}{|F_n|} = \frac{2dn^{d-1}}{n^d} = \frac{2d}{n} \to 0 \text{ as } n\to + \infty.$$
Thus, $(F_n)_{n \geq 1}$ defines a F{\o}lner sequence in $\mathbb{E}^d$, proving that $\mathbb{E}^d$ is amenable. 
\end{proof}

\noindent
In the opposite direction, free groups of finite rank $\geq 2$, or more generally regular trees of degree $\geq2$, are not amenable:

\begin{lemma}\label{lem:TreeNotAmenable}
For every $d \geq 3$, the $d$-regular tree $T_d$ is not amenable.
\end{lemma}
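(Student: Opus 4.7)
The plan is to establish a linear isoperimetric inequality in $T_d$: for every finite set of vertices $S$, one has $|\partial S| \geq \epsilon |S|$ for some explicit $\epsilon = \epsilon(d) > 0$. This immediately rules out any F{\o}lner sequence, since along a F{\o}lner sequence the ratio $|\partial F_n|/|F_n|$ would have to tend to $0$.

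The key observation is that $T_d$ has no cycles, so the subgraph of $T_d$ induced by $S$ is a forest, and therefore has at most $|S| - 1$ edges. I will count the edges going out of $S$ in two different ways. On the one hand, summing degrees, every vertex of $S$ contributes $d$ half-edges, and each edge inside $S$ absorbs two of them while each edge from $S$ to $\partial S$ absorbs exactly one, so
\[
d|S| = 2 \cdot |\{\text{edges in } S\}| + |\{\text{edges from } S \text{ to } \partial S\}|.
\]
Combined with the forest bound, this gives $|\{\text{edges from } S \text{ to } \partial S\}| \geq d|S| - 2(|S|-1) = (d-2)|S| + 2$.

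On the other hand, each vertex of $\partial S$ is an endpoint of at most $d$ edges going to $S$ (it has total degree $d$ in $T_d$), so $|\partial S| \geq \frac{1}{d} \cdot |\{\text{edges from } S \text{ to } \partial S\}|$. Putting the two inequalities together yields
\[
|\partial S| \geq \frac{(d-2)|S| + 2}{d} \geq \frac{d-2}{d} |S|,
\]
which is a linear isoperimetric inequality with $\epsilon = (d-2)/d > 0$ as soon as $d \geq 3$. No step here is a real obstacle: the argument is essentially a one-line double counting once one exploits the absence of cycles; the only thing to be careful about is the passage from an edge-boundary bound to a vertex-boundary bound, which is where the factor $1/d$ appears and where the degree-regularity of $T_d$ is used.
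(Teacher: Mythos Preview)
Your proof is correct and takes a genuinely different route from the paper's. The paper first invokes two preliminary facts (Facts~\ref{fact:FolnerNeigh} and~\ref{fact:FolnerConnected}) to reduce to F{\o}lner sets that are $1$-neighbourhoods of finite subtrees, and then proves the isoperimetric bound $|\partial V(T)| \geq (d-2)|V(T)|+2$ for such special subtrees by induction, peeling off leaves. By contrast, your double-counting argument (degree sum versus forest edge bound) gives a linear isoperimetric inequality $|\partial S| \geq \tfrac{d-2}{d}|S|$ directly for \emph{every} finite subset $S$, with no reduction step and no induction. Your constant is weaker ($\tfrac{d-2}{d}$ rather than $d-2$), which is the price paid at the passage from edge-boundary to vertex-boundary; but this is irrelevant for non-amenability, and your argument is shorter and avoids the preliminary machinery entirely.
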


\noindent
Before proving our lemma, we show a couple of elementary observations regarding F{\o}lner sets. 

\begin{fact}\label{fact:FolnerNeigh}
Let $X$ be a graph of bounded degree. If $(F_n)_{n \geq 0}$ is a F{\o}lner sequence, then, for every $R \geq 0$, $(F_n^{+R})_{n \geq 0}$ is a F{\o}lner sequence as well.
\end{fact}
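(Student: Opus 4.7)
The plan is to show two things about $F_n^{+R}$: its cardinality is at least that of $F_n$, and its boundary is controlled by $\partial F_n$ up to a multiplicative constant depending only on $R$ and the degree bound of $X$. Combining these, the ratio $|\partial F_n^{+R}| / |F_n^{+R}|$ will be dominated by a constant multiple of $|\partial F_n|/|F_n|$, which tends to $0$ by hypothesis.

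First I would establish the key containment $\partial(F_n^{+R}) \subset (\partial F_n)^{+R}$. Let $v \in \partial(F_n^{+R})$. By definition, $v \notin F_n^{+R}$, so $d(v,F_n) > R$, while $v$ is adjacent to some vertex of $F_n^{+R}$, so $d(v,F_n) \leq R+1$. Hence $d(v,F_n) = R+1$. Fix a geodesic $v = v_0, v_1, \ldots, v_{R+1}$ from $v$ to $F_n$ and let $j$ be the smallest index with $v_j \in F_n$; then $v_{j-1}$ is outside $F_n$ but adjacent to $v_j \in F_n$, so $v_{j-1} \in \partial F_n$, and $d(v,v_{j-1}) = j-1 \leq R$. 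This places $v$ inside $(\partial F_n)^{+R}$.

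Next, since $X$ has bounded degree, every ball of radius $R$ contains at most $C_R$ vertices for some constant $C_R$ depending only on $R$ and the degree bound, so $|(\partial F_n)^{+R}| \leq C_R |\partial F_n|$. Combined with the trivial bound $|F_n^{+R}| \geq |F_n|$, this gives
\[
\frac{|\partial(F_n^{+R})|}{|F_n^{+R}|} \leq \frac{C_R |\partial F_n|}{|F_n|} \xrightarrow[n\to+\infty]{} 0,
\]
so $(F_n^{+R})_{n \geq 0}$ is a F{\o}lner sequence. No step is particularly delicate here; the only subtlety is the geodesic-chasing argument pinning a boundary vertex of $F_n^{+R}$ onto a nearby boundary vertex of $F_n$, and this is straightforward in a graph.
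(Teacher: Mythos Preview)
Your proof is correct and follows essentially the same approach as the paper's: both establish that $\partial(F_n^{+R}) \subset (\partial F_n)^{+R}$ and then use the bounded-degree hypothesis to bound $|(\partial F_n)^{+R}|$ by a constant times $|\partial F_n|$. The only difference is cosmetic: the paper asserts the containment in one line via $|\partial F_n^{+R}| \leq \sum_{x \in \partial F_n} |B(x,R)|$, whereas you spell out the geodesic-chasing argument explicitly.
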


\begin{proof}
It suffices to notice that
$$|\partial F_n^{+R}| \leq \sum\limits_{x \in \partial F_n} |B(x,R)| \leq \mathrm{deg}(X)^R |\partial F_n|$$
for every $n \geq 0$, and to deduce that
$$\frac{|\partial F_n^{+R}|}{|F_n^{+R}|} \leq \mathrm{deg}(X)^R \frac{|\partial F_n|}{|F_n|} \underset{n \to + \infty}{\longrightarrow} 0.$$
\end{proof}

\begin{fact}\label{fact:FolnerConnected}
Let $X$ be a graph of bounded degree. If $X$ is amenable, then there exists a F{\o}lner sequence $(F_n)_{n \geq 0}$ such that $F_n$ induces a connected subgraph for every $n \geq 0$.
\end{fact}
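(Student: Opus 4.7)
The plan is to start from an arbitrary Følner sequence $(F_n)_{n \geq 0}$ in $X$ and, for each $n$, replace $F_n$ with one of the connected components of the subgraph it induces, chosen so that the Følner ratio remains small. Let $\Delta := \sup_{v \in V(X)} \deg(v)$, which is finite by assumption, and for a finite $S \subset V(X)$ write $\partial S$ for its vertex boundary in $X$, i.e.\ the set of vertices of $V(X) \setminus S$ adjacent in $X$ to some vertex of $S$.

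For each $n$, decompose $F_n$ as the disjoint union $C^n_1 \sqcup \cdots \sqcup C^n_{k_n}$ of the vertex sets of the connected components of the subgraph of $X$ induced by $F_n$. Since each $C^n_i$ is a \emph{maximal} connected subset of $F_n$, any vertex of $F_n$ that is adjacent in $X$ to $C^n_i$ must itself lie in $C^n_i$. Consequently $\partial C^n_i \subset \partial F_n$ for every $i$. Moreover, a given vertex $v \in \partial F_n$ contributes to $|\partial C^n_i|$ only if some neighbor of $v$ in $X$ lies in $C^n_i$, and $v$ has at most $\Delta$ neighbors in total, so
$$\sum_{i=1}^{k_n} |\partial C^n_i| \leq \Delta \cdot |\partial F_n|.$$

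Averaging now yields the key observation: there must exist an index $i(n)$ such that
$$\frac{|\partial C^n_{i(n)}|}{|C^n_{i(n)}|} \leq \Delta \cdot \frac{|\partial F_n|}{|F_n|}.$$
Indeed, if every component satisfied the strict opposite inequality, then summing $|\partial C^n_i| > \Delta \cdot \frac{|\partial F_n|}{|F_n|} \cdot |C^n_i|$ over $i$ would yield $\sum_i |\partial C^n_i| > \Delta |\partial F_n|$, contradicting the display above. Setting $F'_n := C^n_{i(n)}$ then produces a sequence of vertex sets each inducing a connected subgraph of $X$, whose Følner ratios satisfy $|\partial F'_n|/|F'_n| \leq \Delta \cdot |\partial F_n|/|F_n| \to 0$. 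Hence $(F'_n)_{n \geq 0}$ is the desired Følner sequence consisting of connected subsets.

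There is no substantial obstacle in the argument; the only point requiring attention is the distinction between the boundary computed in $X$ and any boundary relative to the induced subgraph on $F_n$, since the inclusion $\partial C^n_i \subset \partial F_n$ and the bounded-multiplicity estimate $\sum_i |\partial C^n_i| \leq \Delta |\partial F_n|$ are both statements about the boundary in $X$.
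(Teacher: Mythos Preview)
Your proof is correct and takes a slightly different (and arguably more direct) route than the paper. The paper first passes to \emph{$2$-coarsely connected} components of $F_n$, which has the advantage that their boundaries in $X$ are pairwise disjoint, so that $\partial F_n = \bigsqcup_i \partial F_n^{(i)}$ exactly; an averaging argument then produces a $2$-coarsely connected piece with small ratio, and a final application of Fact~\ref{fact:FolnerNeigh} (thickening by $1$ preserves the F{\o}lner property) upgrades $2$-coarse connectedness to genuine connectedness. You instead decompose into honest connected components from the start, accept that the boundaries may overlap, and control the overlap via the degree bound $\sum_i |\partial C^n_i| \leq \Delta\,|\partial F_n|$. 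This trades the paper's extra thickening step for a harmless constant $\Delta$ in the averaging inequality, and delivers the connected F{\o}lner sequence in one stroke rather than two; both arguments ultimately invoke bounded degree, just at different moments.
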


\begin{proof}
Given some $R \geq 1$, we say that a set of vertices $S$ is $R$-coarsely connected if, for any two vertices $a,b \in S$, there exists a sequence
$$x_1=a, x_2, \ldots, x_{n-1},x_n=b$$
such that $d(x_i,x_{i+1}) \leq R$ for every $1 \leq i \leq n-1$. Our goal is to show that there exists a F{\o}lner sequence $(F_n)_{n \geq 0}$ of $2$-coarsely connected sets of vertices. Then, thanks to Fact~\ref{fact:FolnerNeigh}, we can conclude that $(F_n^{+1})_{n \geq 0}$ yields a F{\o}lner sequence where each $F_n^{+1}$ is $1$-coarsely connected, which amounts to saying that each $F_n^{+1}$ induces a connected subgraph. 

\medskip \noindent
Let us start with an arbitrary F{\o}lner sequence $(F_n)_{n \geq 0}$. For every $n \geq 0$, let
$$F_n= F_n^{(1)} \sqcup F_n^{(2)} \sqcup \cdots \sqcup F_n^{k_n}$$
denote the decomposition of $F_n$ into $2$-coarsely connected components. Notice that
$$\partial F_n= \partial F_n^{(1)} \sqcup \partial F_n^{(2)} \sqcup \cdots \sqcup \partial F_n^{k_n}.$$
We claim that, for every $\epsilon>0$, there exist $n \geq 0$ and $1 \leq i \leq k_n$ such that $|\partial F_n^{(i)}| \leq \epsilon |F_n^{(i)}|$. Otherwise, there exists $\epsilon>0$ such that $|\partial F_n^{(i)}| > \epsilon |F_n^{(i)}|$ for every $n \geq 0$ and $1 \leq i \leq k_n$. Then, 
$$|\partial F_n| = \sum\limits_{i=1}^{k_n} |\partial F_n^{(i)}| > \epsilon \sum\limits_{i=1}^{k_n} |F_n^{(i)}| = \epsilon |F_n|$$
for every $n \geq 0$, which contradicts the fact that $(F_n)_{n \geq 0}$ is a F{\o}lner sequence. Therefore, for every $j \geq 1$, we can find $n(j) \geq 0$ and $1 \leq i(j) \leq k_{n(j)}$ such that $|\partial F_{n(j)}^{i(j)}| \leq |F_{n(j)}^{i(j)}| / j$. Thus, $(F_{n(j)}^{i(j)} )_{j \geq 1}$ yields a F{\o}lner sequence of $2$-coarsely connected sets of vertices. 
\end{proof}

\begin{proof}[Proof of Lemma~\ref{lem:TreeNotAmenable}.]
If $T_d$ is amenable, then we know from Fact~\ref{fact:FolnerConnected} that it admits a F{\o}lner sequence $(F_n)_{n \geq 0}$ such that each $F_n$ induces a connected subgraph, i.e.\ a subtree. Then, thanks to Fact~\ref{fact:FolnerNeigh}, we get a F{\o}lner sequence $(F_n^{+1})_{n \geq 0}$ such that each $F_n^{+1}$ is the vertex-set of a finite $d$-regular tree. Thus, in order to show that $T_d$ is not amenable, it suffices to notice that:

\begin{claim}
For every finite $d$-regular tree $T$,  
$$|\partial V(T)| \geq (d-2) |V(T)| + 2.$$
\end{claim}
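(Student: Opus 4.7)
The plan is to double-count the edges incident to $V(T)$ in the ambient tree $T_d$. Every vertex of $V(T)$ has degree $d$ in $T_d$, so the total degree sum of $V(T)$ (measured in $T_d$) equals $d\,|V(T)|$. These edges split into two types: edges internal to $T$, and edges going from $V(T)$ to $V(T_d) \setminus V(T)$.

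For the first type, I would use that $T$, being a connected subgraph of the tree $T_d$, is itself a tree, hence has exactly $|V(T)|-1$ edges contributing $2(|V(T)|-1)$ to the degree sum. Subtracting, the number of edges between $V(T)$ and $V(T_d) \setminus V(T)$ equals exactly
\[
d\,|V(T)| - 2(|V(T)|-1) = (d-2)|V(T)| + 2.
\]

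The key observation, which drives the inequality into an equality, is that since $T_d$ is a tree, no vertex $v \in V(T_d) \setminus V(T)$ can have two distinct neighbours in $V(T)$: two such neighbours would be joined inside $T$ by a path, creating a cycle through $v$. Therefore each element of $\partial V(T)$ accounts for exactly one crossing edge, which yields $|\partial V(T)| = (d-2)|V(T)| + 2$, implying the claim.

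I do not anticipate a real obstacle here; the only subtle point is the interpretation of the statement, namely that $F_n^{+1}$ should be viewed as a finite subtree of $T_d$ (connectedness of $F_n$ and the fact that $T_d$ is a tree force this), and that ``boundary'' in this context is the external vertex boundary in $T_d$.
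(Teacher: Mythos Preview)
Your proof is correct and takes a genuinely different route from the paper's. The paper argues by induction on $|V(T)|$: at the inductive step it locates an internal vertex $v$ all but one of whose neighbours are leaves, strips off those $d-1$ leaves to obtain a smaller ``$d$-regular'' subtree $T^{-}$, and tracks how both sides of the inequality change (the boundary loses the $d-1$ former leaves and gains $(d-1)^2$ new boundary vertices, while $|V(T)|$ drops by $d-1$). Your double-counting argument is more direct: it produces the exact value $|\partial V(T)| = (d-2)|V(T)|+2$ in one stroke, and in fact works for \emph{any} finite connected subtree of $T_d$, not just those whose interior vertices have full degree $d$. The inductive proof has the mild advantage of being self-contained at the level of the subtree, whereas yours uses the global structure of $T_d$ (that it is a tree, hence crossing edges biject with boundary vertices) --- but in context that structure is exactly what is available, and your argument is the cleaner one.
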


\noindent
We argue by induction over the number of vertices of $T$. If $T$ is reduced to a single vertex, then $|\partial V(T) | = d$.  

\medskip \noindent
\begin{minipage}{0.2\linewidth}
\includegraphics[width=0.98\linewidth]{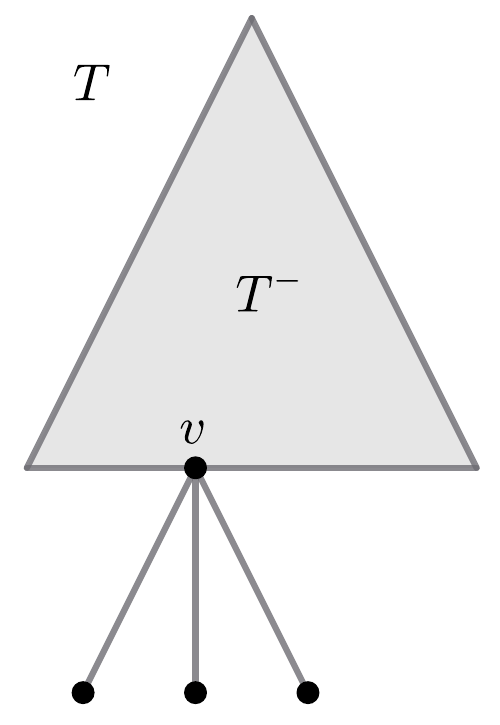}
\end{minipage}
\begin{minipage}{0.78\linewidth}
From now on, we assume that $T$ has at least two vertices. Fix a vertex $v \in V(T)$ adjacent to some leaf and let $T^-$ denote the subtree obtained by removing the leaves adjacent to $v$. We have 
$$|\partial V(T)| = |\partial V(T^-)| - (d-1) + (d-1)^2,$$
hence
$$|\partial V(T)| \geq (d-2) |V(T^-)|  +2 - (d-1) + (d-1)^2$$
\end{minipage}

\medskip \noindent
as $T^-$ is a $d$-regular tree smaller than $T$. Since $|V(T)| = |V(T^-)| + (d-1)$, we conclude that
$$|\partial V(T)| \geq (d-2) |V(T)|   +2 - (d-2)(d-1)  -(d-1) + (d-1)^2 = (d-2) |V(T)| +2, $$
which concludes the proof of our claim. 
\end{proof}

\noindent
Proving that some graphs or groups are not amenable may be tricky, because one has to consider boundaries of all finite subsets. For finitely generated groups, a convenient strategy is to find a simpler subgroup for which we already know that it is not amenable. Indeed:

\begin{lemma}\label{lem:SubAmenable}
Let $G$ be a finitely generated group. If $G$ is amenable, then all its finitely generated subgroups are amenable.
\end{lemma}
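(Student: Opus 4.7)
The plan is to start from a Følner sequence of $G$ and slice it along left cosets of $H$ in order to extract a Følner sequence of $H$. First, I would fix a finite generating set $T$ of $H$ and enlarge it (if necessary) to a finite generating set $S$ of $G$ with $T \subseteq S$; this is harmless since amenability does not depend on the choice of finite generating set, and it ensures that the $T$-edges of the Cayley graph of $H$ appear inside $\mathrm{Cayl}(G,S)$. The key geometric observation is that, with this convention, for every $g_0 \in G$, the subgraph of $\mathrm{Cayl}(G,S)$ induced on the left coset $g_0 H$, when restricted to edges labelled by $T$, is isometric to $\mathrm{Cayl}(H,T)$ via $h \mapsto g_0 h$ (edges $g_0 h \to g_0 h t$ for $t \in T$ never leave $g_0 H$).

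Now let $(F_n)_{n \geq 0}$ be a Følner sequence for $G$ with respect to $S$. For each $n$, I would choose a set of left coset representatives $R_n$ and decompose
$$F_n = \bigsqcup_{g \in R_n} g A_{n,g}, \quad \text{where } A_{n,g} \subset H.$$
For each coset, if $h \in \partial_T A_{n,g}$ (i.e.\ $h \notin A_{n,g}$ but $h = at$ for some $a \in A_{n,g}$ and $t \in T$), then $gh \in g H$ satisfies $gh \notin F_n$ while its neighbour $ga \in F_n$, so $gh \in \partial_S F_n$. Since the sets $g \cdot \partial_T A_{n,g}$ live in disjoint cosets, summing gives
$$\sum_{g \in R_n} |\partial_T A_{n,g}| \leq |\partial_S F_n|, \quad \sum_{g \in R_n} |A_{n,g}| = |F_n|.$$

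By a simple averaging (pigeonhole) argument, there must exist $g_n \in R_n$ with $A_{n,g_n}$ non-empty and
$$\frac{|\partial_T A_{n,g_n}|}{|A_{n,g_n}|} \leq \frac{|\partial_S F_n|}{|F_n|},$$
for otherwise summing strict inequalities would contradict the two displayed identities. Since the right-hand side tends to $0$ as $n \to +\infty$, the sequence $(A_{n,g_n})_{n \geq 0}$ is a Følner sequence for $H$ with respect to $T$, proving that $H$ is amenable. The main point requiring care is the geometric lemma in the first paragraph — that $T$-edges do not escape left cosets — which is precisely why one insists on $T \subseteq S$ and on decomposing along \emph{left} cosets; everything else is a bookkeeping argument with boundaries and an averaging step, neither of which presents a real obstacle.
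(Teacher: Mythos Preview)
Your proof is correct and follows essentially the same strategy as the paper: decompose each F{\o}lner set along $H$-cosets, observe that the coset-wise $T$-boundaries embed disjointly into the global $S$-boundary, and apply an averaging argument to extract a good piece. The only cosmetic differences are that you are more explicit about arranging $T\subseteq S$ and about translating coset pieces back into $H$, and you pick the best piece for each $n$ directly rather than via an $\epsilon$-contradiction; neither changes the substance.
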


\noindent
We emphasize that the result is really algebraic: the pointed sum of an amenable graph with a non-amenable graph is amenable, despite the fact that it contains a(n isometrically embedded) non-amenable subgraph. Even worth, many amenable groups (e.g.\ $\mathbb{Z}_2 \wr \mathbb{Z}$) contains a (quasi-isometrically embedded) free subsemigroups, despite the fact that they cannot contain non-abelian free subgroups. (See Exercise~\ref{exo:Subsemigroups}.) 

\begin{proof}[Proof of Lemma~\ref{lem:SubAmenable}.]
Let $H \leq G$ be a finitely generated subgroup and let $(F_n)_{n \geq 0}$ be a F{\o}lner sequence in (a Cayley graph of) $G$. For every $n \geq 0$, we have the decomposition
$$F_n = F_n^{(1)} \sqcup F_n^{(2)} \sqcup \cdots \sqcup F_n^{k_n}$$
where the $F_n^{(i)}$ denote the non-empty intersections of $F_n$ with the cosets of $H$. We claim that, for every $\epsilon>0$, there exist $n \geq 0$ and $1 \leq i \leq k_n$ such that $|\partial_H F_n^{(i)}| / |F_n^{(i)}| \leq \epsilon$, where $\partial_H$ denotes the boundary in the corresponding $H$-coset. Otherwise, there exists $\epsilon>0$ such that $|\partial_H F_n^{(i)}| / |F_n^{(i)}| > \epsilon$ for all $n \geq 0$ and $1 \leq i \leq k_n$. If so,
$$\frac{|\partial F_n|}{|F_n|} \geq \frac{1}{|F_n|} \sum\limits_{i=1}^{k_n} |\partial_H F_n^{(i)}| > \epsilon,$$
contradicting the fact that $|\partial F_n| / |F_n| \to 0$ as $n \to + \infty$. This proves our claim. As a consequence, we can find two sequences of indices $(r(n))_{n \geq 0}$ and $(i(n))_{n \geq 0}$ such that 
$$\frac{|\partial_H F_{r(n)}^{i(n)} | }{|F_{r(n)}^{i(n)}|} \to 0 \text{ as } n \to + \infty.$$
Thus, thinking of each $F_{r(n)}^{i(n)}$ as a subset of $H$ (formally, this is a subset of an $H$-coset), we conclude that $(F_{r(n)}^{(i(n))})_{n \geq 0}$ defines a F{\o}lner sequence in $H$, proving that $H$ is indeed amenable. 
\end{proof}

\noindent
As an immediate consequence of our discussion:

\begin{cor}\label{cor:NotAmenable}
If a finitely generated group contains a non-abelian free subgroup, then it is not amenable.
\end{cor}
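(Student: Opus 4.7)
The plan is to contrapositive-reduce to the already-handled case of the free group of rank two and invoke the two lemmas displayed immediately before the corollary.

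Assume for contradiction that $G$ is a finitely generated amenable group containing a non-abelian free subgroup $F$. Since $F$ is non-abelian free, it contains a free subgroup of rank $2$, call it $F_2$, and $F_2$ is itself finitely generated (by two free generators). By Lemma~\ref{lem:SubAmenable} applied to the inclusion $F_2 \leq G$, the group $F_2$ must be amenable as well. I will then use the standard free generating set $\{a,b\}$ for $F_2$: the corresponding Cayley graph is precisely the $4$-regular tree $T_4$, because in $F_2$ every reduced word in $a, b, a^{-1}, b^{-1}$ represents a distinct element, so there are no relations and hence no cycles. Amenability of the group then translates directly into amenability of this Cayley graph in the sense of the definition used in the excerpt (F{\o}lner sequence of finite vertex sets).

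However, Lemma~\ref{lem:TreeNotAmenable}, applied with $d=4 \geq 3$, asserts that $T_4$ admits no F{\o}lner sequence. This contradicts the amenability of $F_2$ just established, and so no such non-abelian free subgroup $F$ can sit inside an amenable $G$. Taking the contrapositive yields the statement of the corollary.

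The only step that needs any care is the compatibility between the two uses of the word \emph{amenable} (once for a group, once for a graph): this is exactly what the discussion preceding Lemma~\ref{lem:SubAmenable} handles via the remark that amenability is preserved by quasi-isometries (Exercise~\ref{exo:AmenableQI}), so it is a well-defined property of a finitely generated group and coincides with the graph-theoretic amenability of any of its Cayley graphs with respect to a finite generating set. There is no real obstacle here beyond making this translation explicit; the proof is essentially a one-line combination of the two lemmas.
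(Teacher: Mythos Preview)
Your proof is correct and follows exactly the approach the paper takes: the paper's own proof is the single line ``Combine Lemmas~\ref{lem:SubAmenable} and~\ref{lem:TreeNotAmenable}.'' You have simply unpacked that combination in detail, including the passage from an arbitrary non-abelian free subgroup to a rank-two one and the identification of its Cayley graph with $T_4$.
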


\noindent
This elementary obstruction to amenability turns out to be very useful. In fact, it was thought for a while that every non-amenable finitely generated group must contain a non-abelian free subgroup. Known as the von Neumann conjecture, this was finally disproved. See for instance \cite{MR586204, MR682486, MR1985031, MR3047655} for counterexamples. 

\begin{proof}[Proof of Corollary~\ref{cor:NotAmenable}.]
Combine Lemmas~\ref{lem:SubAmenable} and~\ref{lem:TreeNotAmenable}. 
\end{proof}

\noindent
We conclude this section by showing that amenability is preserved by wreath products. 

\begin{lemma}
Let $(X,o)$ be a pointed graph and $Y$ a graph. If $X$ and $Y$ are both amenable, then $(X,o) \wr Y$ is amenable as well.
\end{lemma}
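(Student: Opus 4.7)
The plan is to build a F{\o}lner sequence in $(X,o)\wr Y$ from F{\o}lner sequences in $X$ and $Y$, by exploiting the fact that edges of $(X,o)\wr Y$ come in exactly two flavours (arrow moves in $Y$ and lamp changes in $X$), so the boundary splits cleanly into two manageable contributions.

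First I would fix F{\o}lner sequences $(A_n)_{n\geq 0}$ in $X$ and $(B_n)_{n\geq 0}$ in $Y$. Up to replacing $A_n$ by $A_n\cup\{o\}$, which changes cardinalities by at most $1$ and thus preserves the F{\o}lner property, I may assume $o\in A_n$ for every $n$. Then I would define the candidate sets
$$S_n := \bigl\{ (\varphi,p) \in (X,o)\wr Y \ \big|\ p\in B_n,\ \varphi(B_n)\subset A_n,\ \varphi(y)=o \text{ for all }y\notin B_n \bigr\},$$
so that $|S_n|=|B_n|\cdot |A_n|^{|B_n|}$.

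Next I would estimate $|\partial S_n|$ by analysing separately the two types of edges leaving $S_n$. A type-1 neighbour of $(\varphi,p)\in S_n$ has the form $(\varphi,q)$ with $q$ adjacent to $p$ in $Y$; it lies outside $S_n$ precisely when $q\in\partial B_n$, contributing at most $|\partial B_n|\cdot|A_n|^{|B_n|}$ vertices (one colouring of $B_n$ in $A_n$, arrow in $\partial B_n$, rest sent to $o$). A type-2 neighbour has the form $(\psi,p)$ where $\psi$ differs from $\varphi$ only at $p$ with $\psi(p)$ adjacent to $\varphi(p)$ in $X$; since $o\in A_n$, the resulting vertex leaves $S_n$ only if $\psi(p)\in\partial A_n$, giving at most $|B_n|\cdot|A_n|^{|B_n|-1}\cdot|\partial A_n|$ such boundary vertices. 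Combining,
$$\frac{|\partial S_n|}{|S_n|}\ \leq\ \frac{|\partial B_n|}{|B_n|}+\frac{|\partial A_n|}{|A_n|}\ \xrightarrow[n\to\infty]{}\ 0,$$
so $(S_n)$ is a F{\o}lner sequence in $(X,o)\wr Y$, proving amenability.

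The only subtlety I expect is bookkeeping: checking that the condition $\varphi(y)=o$ for $y\notin B_n$ is preserved under both edge types (it is, trivially for type-2 since only $\varphi(p)$ with $p\in B_n$ changes, and trivially for type-1 since $\varphi$ is unchanged), and making sure the $o\in A_n$ assumption really lets one rule out the case $\psi(p)\in A_n$ when counting type-2 boundary. There is no serious geometric difficulty; the proof is a direct combinatorial estimate made possible by the neat product structure of the vertex set of $(X,o)\wr Y$.
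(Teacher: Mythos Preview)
Your proof is correct and follows essentially the same route as the paper: the candidate sets $S_n$ are exactly the paper's $F_n$, the boundary is split into the same two pieces (arrow moves landing in $\partial B_n$ and lamp changes landing in $\partial A_n$), and the resulting bound $\frac{|\partial S_n|}{|S_n|}\le \frac{|\partial B_n|}{|B_n|}+\frac{|\partial A_n|}{|A_n|}$ is identical. Your extra care about assuming $o\in A_n$ is harmless but in fact not needed for any step of the argument.
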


\begin{proof}
Let $(A_n)_{n \geq 0}$ (resp.\ $(B_n)_{n \geq 0}$) be a F{\o}lner sequence in $X$ (resp.\ $Y$). For every $n \geq 0$, we set
$$F_n:= \{ (c,p) \in (X,o) \wr Y \mid c(q)=o \ \forall q \notin B_n, c(q) \in A_n \ \forall q \in B_n, p \in B_n\}.$$
Loosely speaking, $F_n$ is the ``wreath product $A_n \wr B_n$'' of the F{\o}lner subsets. Clearly, $|F_n| = |B_n| \cdot |A_n|^{|B_n|}$. Also, we have
$$\begin{array}{lcl} \partial F_n & = & \displaystyle \bigsqcup\limits_{p \in B_n} \{ (c,p) \mid c(q)=o \ \forall q \notin B_n, c(q) \in A_n \ \forall q \in B_n\backslash \{p\}, c(p) \in \partial A_n \} \\ \\ & & \sqcup \{ (c,p) \mid c(q)=o \ \forall q \notin B_n, c(q) \in A_n \ \forall q \in B_n, p \in \partial B_n\}, \end{array}$$
hence $|\partial F_n| = |B_n| \cdot |\partial A_n| + |\partial B_n|$. Since
$$\frac{|\partial F_n|}{|F_n|} = \frac{|\partial A_n|}{|A_n|^{|B_n|}} + \frac{|\partial B_n|}{|B_n| \cdot |A_n|^{|B_n|}} \leq \frac{|\partial A_n|}{|A_n|} + \frac{|\partial B_n|}{|B_n|} \to 0 \text{ as } n\to + \infty,$$
we conclude that $(F_n)_{n \geq 0}$ defines a F{\o}lner sequence in $(X,o) \wr Y$, and finally, as desired, that $(X,o) \wr Y$ is amenable. 
\end{proof}

\subsection{Measure-scaling quasi-isometries}\label{section:MeasureScaling}

\noindent
A natural question, which came early in the history of geometric group theory, is: are two quasi-isometric finitely generated groups necessarily biLipschitz equivalent? Or, in other words: is a quasi-isometry between two finitely generated groups always at finite distance from a bijection? For non-amenable groups, this question turns out to have a positive answer:

\begin{thm}[\cite{MR1700742}]\label{thm:BijNonQI}
Let $X,Y$ be two graphs of bounded degree. If $X$ and $Y$ are non-amenable, then every quasi-isometry $X \to Y$ lies at finite distance from a bijection. 
\end{thm}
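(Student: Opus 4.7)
The strategy is to use (the infinite version of) Hall's marriage theorem, applied to a natural bipartite graph built from $\varphi$. First I would adjust $\varphi$ up to bounded distance so that it becomes a map $f \colon V(X) \to V(Y)$ with two good features: $f(V(X))$ is $B$-dense in $V(Y)$, and all the fibers $f^{-1}(y)$ have size at most some constant $K$ (both of which follow from $\varphi$ being an $(A,B)$-quasi-isometry combined with the bounded-degree assumption, using the fact that points at bounded distance in $X$ have bounded $f$-image). Given a parameter $R$ to be chosen later, I would then form the bipartite graph $\mathcal{B}_R$ whose vertex classes are $V(X)$ and $V(Y)$, and whose edges connect $x \in V(X)$ to $y \in V(Y)$ whenever $d_Y(f(x), y) \leq R$. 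Because degrees in $X$ and $Y$ are bounded and $f$ has bounded fibers, the graph $\mathcal{B}_R$ has uniformly bounded degree on both sides.

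The plan is then to produce a perfect matching (saturating both sides) in $\mathcal{B}_R$. By the Hall--Rado theorem for bipartite graphs of bounded degree, such a matching exists provided that the two-sided Hall condition is met: $|N_{\mathcal{B}_R}(S)| \geq |S|$ for every finite $S \subset V(X)$ and every finite $S \subset V(Y)$. A matching $M$ would yield a bijection $g \colon V(X) \to V(Y)$ with $d(g(x), f(x)) \leq R$ for every $x$, hence $d(g(x), \varphi(x))$ is uniformly bounded, concluding the proof.

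The verification of the two Hall conditions is where non-amenability enters, and it exploits the expansion estimate $|S^{+r}| \geq (1+\eta)^r |S|$ valid in any non-amenable graph of bounded degree (which follows by iterating the linear isoperimetric inequality, as in the argument following Fact~\ref{fact:FolnerConnected}, and applies to both $X$ and $Y$ since amenability is a quasi-isometry invariant). For a finite $S \subset V(X)$, the neighborhood $N_{\mathcal{B}_R}(S)$ equals $f(S)^{+R}$, and $|f(S)| \geq |S|/K$, so non-amenability of $Y$ gives
\[
|N_{\mathcal{B}_R}(S)| \;\geq\; (1+\eta_Y)^R \, |f(S)| \;\geq\; \frac{(1+\eta_Y)^R}{K}\,|S|,
\]
which exceeds $|S|$ once $R$ is chosen large enough. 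For a finite $T \subset V(Y)$, the neighborhood $N_{\mathcal{B}_R}(T)$ equals $f^{-1}(T^{+R})$; using a quasi-inverse $\bar{f} \colon V(Y) \to V(X)$ of $\varphi$ (with bounded fibers of size at most some $K'$), every $y \in T^{+R-B}$ satisfies $f(\bar{f}(y)) \in y^{+B} \subset T^{+R}$, so $\bar{f}(T^{+R-B}) \subset f^{-1}(T^{+R})$, yielding
\[
|N_{\mathcal{B}_R}(T)| \;\geq\; \frac{|T^{+R-B}|}{K'} \;\geq\; \frac{(1+\eta_Y)^{R-B}}{K'}\,|T|,
\]
which again exceeds $|T|$ for $R$ sufficiently large.

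The main obstacle is the bookkeeping in the reverse Hall condition: one must ensure that the constants $K$, $K'$, $B$, and the expansion rate $\eta_Y$ can be chosen independently of the finite set considered, and that a single $R$ works for both sides simultaneously. Once this is done, the technical content of the proof is entirely contained in the isoperimetric estimate versus the fiber-size bound, a clean competition that non-amenability wins. A minor additional point to address is the passage from the abstract matching provided by Hall--Rado (which requires at most the axiom of dependent choice given the countable bounded-degree setting) to an actual bijection $g$; this is immediate from the definition of edges in $\mathcal{B}_R$.
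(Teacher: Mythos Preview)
The paper does not give its own proof of this theorem; it is simply cited from Whyte's work and used as a black box. Your argument is the standard one (essentially Whyte's): build the bipartite graph $\mathcal{B}_R$, use exponential expansion $|S^{+r}| \geq (1+\eta)^r|S|$ from non-amenability to verify the two-sided Hall condition for $R$ large, and extract a perfect matching via Hall--Rado. The sketch is correct, and the bookkeeping you flag (uniform $K,K',B,\eta$ and a single $R$) goes through exactly as you describe. One small quibble: your pointer to Fact~\ref{fact:FolnerConnected} is misplaced --- that fact is about connected F{\o}lner sets, not about iterating the isoperimetric inequality --- but the iteration $|S^{+1}| = |S| + |\partial S| \geq (1+\epsilon)|S|$ is immediate from the definition of non-amenability and needs no citation.
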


\noindent
For amenable groups, the situation is different, and understanding when a quasi-isometry lies at finite distance from a quasi-isometry leads to the introduction of a concept that will be fundamental for us:

\begin{definition}\label{def:QuasiToOne}
Let $X,Y$ be two graphs of bounded degree and $\kappa>0$ a real number. A quasi-isometry $\varphi : X \to Y$ is \emph{quasi-$\kappa$-to-one} if there exists a constant $C >0$ such that
$$\left| |\varphi^{-1}(S)| - \kappa |S| \right| \leq C \cdot | \partial S|$$
for every finite subset $S \subset V(Y)$. 
\end{definition}

\noindent
It is worth noticing that a quasi-isometry between two non-amenable graphs of bounded degree is quasi-$\kappa$-to-one for every $\kappa>0$ (see Exercise~\ref{exo:QuasiToOne}), so our definition is relevant only for amenable graphs. Also, notice that, given an integer $n \geq 1$, a quasi-isometry that is $n$-to-one is quasi-$n$-to-one. In particular, a bijection is quasi-one-to-one. In fact, it turns out that a quasi-isometry lies at finite distance from a bijection if and only if it is quasi-one-to-one. More generally, the following statement provides a more intuitive interpretation of quasi-$\kappa$-to-one quasi-isometries when $\kappa$ is a rational number.

\begin{prop}[\cite{MR4419103}]\label{prop:kappa}
Let $X,Y$ be two graphs of bounded degree and $m,n \geq 1$ two integers. A quasi-isometry $f:X\to Y$ is quasi-$(m/n)$-to-one if and only if there exist a partition $\mathcal{P}_X$ (resp. $\mathcal{P}_Y$) of $X$ (resp. of $Y$) with uniformly bounded pieces of size $m$ (resp. $n$) and a bijection $\psi:\mathcal{P}_X\to \mathcal{P}_Y$ such that $f$ is at bounded distance from a map $g : X \to Y$ satisfying $g(P) \subset \psi(P)$ for every $P \in \mathcal{P}_X$.
\end{prop}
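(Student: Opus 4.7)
The plan is to prove the two implications separately. The implication from the existence of partitions to quasi-$(m/n)$-to-oneness is a direct counting argument, while the converse requires a Hall-type marriage argument.

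\emph{Sufficiency.} Suppose $\mathcal{P}_X, \mathcal{P}_Y, \psi, g$ are given, with pieces of diameter $\leq D$ and $d(f, g) \leq M$. For a finite $S \subset V(Y)$, I would set $\mathcal{A}(S) := \{P \in \mathcal{P}_X \mid \psi(P) \subset S\}$ and $\mathcal{B}(S) := \{P \in \mathcal{P}_X \mid \psi(P) \cap S \neq \emptyset\}$. Since $g(P) \subset \psi(P)$, the chain $\bigcup \mathcal{A}(S) \subset g^{-1}(S) \subset \bigcup \mathcal{B}(S)$ yields $m|\mathcal{A}(S)| \leq |g^{-1}(S)| \leq m|\mathcal{B}(S)|$, and decomposing $|S| = \sum_P |\psi(P) \cap S|$ similarly gives $n|\mathcal{A}(S)| \leq |S| \leq n|\mathcal{B}(S)|$. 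Thus $|g^{-1}(S)|$ and $(m/n)|S|$ both lie in $[m|\mathcal{A}(S)|, m|\mathcal{B}(S)|]$, and so differ by at most $m(|\mathcal{B}(S)| - |\mathcal{A}(S)|)$. Any piece in $\mathcal{B}(S) \setminus \mathcal{A}(S)$ meets $\partial S$ within distance $D$, so the number of such pieces is $O(|\partial S|)$ by bounded degree. A similar boundary bound controls $\bigl||f^{-1}(S)| - |g^{-1}(S)|\bigr|$, once one notes that both $f$ and $g$ have fibers of uniformly bounded size (trivial for $g$ by construction, and for $f$ because it lies at bounded distance from $g$). Combining gives $\bigl||f^{-1}(S)| - (m/n)|S|\bigr| = O(|\partial S|)$.

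\emph{Necessity.} Assume $f$ is quasi-$(m/n)$-to-one with defect constant $C$. The first step is to partition $V(X)$ and $V(Y)$ into uniformly bounded pieces $\mathcal{P}_X, \mathcal{P}_Y$ of sizes exactly $m$ and $n$; this is possible in bounded-degree graphs by extracting a net of a suitable scale, grouping non-net points into Voronoi cells, and then locally rebalancing sizes. Next I would form the bipartite graph $\mathcal{H}$ on $\mathcal{P}_X \sqcup \mathcal{P}_Y$ in which $P \sim Q$ iff $f(P) \cap Q^{+R} \neq \emptyset$, for a parameter $R$ to be chosen large. Applying the quasi-$(m/n)$-to-one hypothesis to $S := \bigcup N_{\mathcal{H}}(\mathcal{A})$ for a finite $\mathcal{A} \subset \mathcal{P}_X$ gives $m|\mathcal{A}| \leq |f^{-1}(S)| \leq (m/n)|S| + C|\partial S| = m|N_{\mathcal{H}}(\mathcal{A})| + C|\partial S|$, a defective version of Hall's condition with an error controlled by $|\partial S|$, together with its symmetric counterpart on the $\mathcal{P}_Y$-side. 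Once a perfect matching $\psi : \mathcal{P}_X \to \mathcal{P}_Y$ is produced whose edges lie in $\mathcal{H}$, defining $g(x)$ to be any fixed vertex of $\psi(P_x)$ will give a map at distance $\leq R + O(1)$ from $f$, completing the construction.

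\emph{Main obstacle.} The critical and most delicate step is passing from the defective Hall condition to an exact one: the boundary term $C|\partial S|$ can dominate when $\mathcal{A}$ is small or ``thin'', so $|N_{\mathcal{H}}(\mathcal{A})| \geq |\mathcal{A}|$ is not automatic from the above estimate. The idea is to choose the piece diameter and the parameter $R$ large compared to $C$ so that the neighbourhood pieces around $\mathcal{A}$ in $\mathcal{H}$ absorb the defect, with a possible refinement of the partitions near boundaries; one can then apply Hall's theorem to a finite exhaustion of $\mathcal{H}$ and extract a limiting perfect matching by a compactness/K\"onig diagonal argument, permissible since $\mathcal{H}$ has bounded degree.
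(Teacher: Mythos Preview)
The paper does not supply its own proof of this proposition; it is quoted from \cite{MR4419103} and used as a black box (notably in the proof of Proposition~\ref{prop:AptoQI}). So there is no in-paper argument to compare your proposal against.

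On your proposal itself: the sufficiency direction is correct and is the standard counting argument.

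For the necessity direction, your framework---a Hall-type matching on a bipartite graph of pieces---is indeed the right one, and you correctly isolate the crux: upgrading the defective inequality $|\mathcal{A}| \leq |N_{\mathcal{H}}(\mathcal{A})| + (C/m)|\partial S|$ to the exact Hall condition. However, what you write does not resolve it. Enlarging $R$ inflates $|N_{\mathcal{H}}(\mathcal{A})|$, but it also inflates $S = \bigcup N_{\mathcal{H}}(\mathcal{A})$ and hence $|\partial S|$; nothing in your sketch shows the gain outpaces the loss, and for thin sets $\mathcal{A}$ it need not. A second soft spot is the preliminary step of partitioning an arbitrary bounded-degree graph into uniformly bounded pieces of size \emph{exactly} $m$: ``Voronoi cells then locally rebalance'' hides a genuine argument, since Voronoi cells have no reason to have sizes divisible by $m$, and rebalancing across cell boundaries while keeping diameters uniformly bounded needs justification. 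In the literature one typically sidesteps the two-step structure by working with a bipartite graph on $V(X)\times\{1,\ldots,n\}$ versus $V(Y)\times\{1,\ldots,m\}$ (reducing to the quasi-one-to-one case), and then handles the boundary defect by exploiting \emph{both} inequalities in Definition~\ref{def:QuasiToOne} together with a thickening trick that makes $|\partial S|/|S|$ small without destroying a putative Hall violation. Your outline would become a proof once these two points are made precise; as written, the ``main obstacle'' you flag remains a gap.
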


\noindent
A difficulty with Definition~\ref{def:QuasiToOne} is that it involves every finite subset of a given graph. Our next lemma shows that, actually, it suffices to verify the definition only for thick subsets. Recall that, given an $R \geq 0$, a set of vertices is \emph{$R$-thick} if it can be written as a union of balls of radii $R$. 

\begin{lemma}\label{lem:QuasiThick}
Let $X,Y$ be two graphs of bounded degree and $\kappa>0$ a real number. A quasi-isometry $\varphi : X \to Y$ is quasi-$\kappa$-to-one if and only if there exist $C>0$ and $R \geq 0$ such that 
$$\left| |\varphi^{-1}(S)| - \kappa  |S| \right| \leq C \cdot | \partial S|$$ 
for every finite $R$-thick subset $S \subset V(Y)$.
\end{lemma}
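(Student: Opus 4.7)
The forward direction is immediate, since any finite subset includes the $R$-thick ones, so the same constant $C$ and the exponent $\kappa$ witness the $R$-thick condition. The substantive content is the converse, and my plan is to approximate an arbitrary finite $S \subset V(Y)$ by its $R$-neighborhood $S^{+R}$, which is automatically $R$-thick as the union of the balls $B(s,R)$ for $s \in S$. The goal is then to bound each of the three discrepancies $|S^{+R}|-|S|$, $|\varphi^{-1}(S^{+R})|-|\varphi^{-1}(S)|$, and $|\partial S^{+R}|$ linearly in $|\partial S|$; combining these with the $R$-thick hypothesis applied to $S^{+R}$ and a triangle inequality will give the desired bound for $S$ itself.

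The three bounds rest on two geometric observations about $Y$ and one about the quasi-isometry $\varphi$. First, any vertex of $S^{+R}\setminus S$ lies on a geodesic path from $S$ to a point outside $S$ of length $\leq R$, so following that geodesic back to $S$ one crosses $\partial S$, placing the vertex in $(\partial S)^{+(R-1)}$; thus $|S^{+R}\setminus S| \leq \deg(Y)^{R-1}\,|\partial S|$. Second, a vertex of $\partial(S^{+R})$ lies at distance exactly $R+1$ from $S$, hence at distance $R$ from $\partial S$, giving $|\partial S^{+R}| \leq \deg(Y)^{R}\,|\partial S|$. Third, since $\varphi$ is an $(A,B)$-quasi-isometry, any two preimages of a single $y \in V(Y)$ lie at distance $\leq AB$ in $X$; because $X$ has bounded degree, $|\varphi^{-1}(\{y\})|$ is bounded by some uniform constant $M$. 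Combining the first and third observations, $|\varphi^{-1}(S^{+R})|-|\varphi^{-1}(S)| \leq M\deg(Y)^{R-1}\,|\partial S|$.

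Putting everything together, the triangle inequality gives
\[
\bigl||\varphi^{-1}(S)| - \kappa|S|\bigr| \leq \bigl||\varphi^{-1}(S)| - |\varphi^{-1}(S^{+R})|\bigr| + \bigl||\varphi^{-1}(S^{+R})| - \kappa|S^{+R}|\bigr| + \kappa\bigl||S^{+R}| - |S|\bigr|.
\]
The first and third terms are controlled by constants times $|\partial S|$ via the estimates above, and the middle term is bounded by $C\,|\partial S^{+R}| \leq C\deg(Y)^{R}\,|\partial S|$ using the $R$-thick hypothesis on $S^{+R}$. Summing the three contributions yields a single constant $C'$, depending only on $C$, $R$, $\kappa$, $\deg(Y)$, and the parameters of $\varphi$, such that $\bigl||\varphi^{-1}(S)| - \kappa|S|\bigr| \leq C'\,|\partial S|$ for every finite $S$.

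There is no real obstacle here: the only point requiring slight care is verifying that $\varphi^{-1}$ has uniformly bounded fibers, but this follows at once from the quasi-isometry inequality combined with the bounded-degree hypothesis on $X$. The argument is purely a matter of showing that the discrepancies introduced by thickening a set are absorbed in the boundary term, and both the volumes $|S^{+R}\setminus S|$ and the outer boundary $|\partial S^{+R}|$ are of the right order thanks to bounded geometry.
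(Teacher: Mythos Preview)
Your proof is correct and follows essentially the same approach as the paper: thicken an arbitrary $S$ to the $R$-thick set $S^{+R}$, apply the hypothesis there, and control the three discrepancy terms via the boundary of $S$ using bounded degree and bounded fibers of $\varphi$. The only difference is cosmetic: the paper packages the estimate $|S^{+R}\setminus S|\leq C|\partial S|$ as a separate Fact (left as an exercise), whereas you prove the needed inclusions $S^{+R}\setminus S\subset(\partial S)^{+(R-1)}$ and $\partial(S^{+R})\subset(\partial S)^{+R}$ explicitly.
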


\noindent
During the proof, the following elementary observation will be needed:

\begin{fact}\label{fact:Boundary}
Let $X$ be a graph of bounded degree. For every $R \geq 0$, there exists $C>0$ such that $|S^{+R}\backslash S| \leq C \cdot |\partial S|$ for every finite $S \subset V(X)$. 
\end{fact}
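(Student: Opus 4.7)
The plan is to reduce the fact to the observation that the ``shell'' $S^{+R}\setminus S$ is contained in the $(R-1)$-neighbourhood of the boundary $\partial S$, and then use the bounded degree hypothesis to bound cardinalities of balls.

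First I would prove the containment $S^{+R}\setminus S\subset (\partial S)^{+(R-1)}$ when $R\geq 1$ (the case $R=0$ is trivial, since then $S^{+R}\setminus S=\emptyset$). Given $x\in S^{+R}\setminus S$, pick a geodesic $x=z_0,z_1,\ldots,z_k$ with $z_k\in S$ and $k=d(x,S)$, so $1\leq k\leq R$. By minimality of this geodesic, none of $z_0,\ldots,z_{k-1}$ lies in $S$, while $z_{k-1}$ is adjacent to $z_k\in S$; hence $z_{k-1}\in \partial S$. Since $d(x,z_{k-1})=k-1\leq R-1$, this shows $x\in(\partial S)^{+(R-1)}$.

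Next I would bound $|(\partial S)^{+(R-1)}|$. Because $X$ has bounded degree, there is a constant $N=N(\deg(X),R)$ such that $|B(y,R-1)|\leq N$ for every $y\in V(X)$ (for instance $N:=1+\deg(X)^{R-1}\cdot\deg(X)$ works as a crude bound). Then
$$|S^{+R}\setminus S|\;\leq\;|(\partial S)^{+(R-1)}|\;\leq\;\sum_{y\in\partial S}|B(y,R-1)|\;\leq\;N\cdot|\partial S|,$$
so the constant $C:=N$ does the job.

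There is no real obstacle here; the content of the fact is that although $S^{+R}\setminus S$ may look much larger than $\partial S$, every vertex in the shell must sit near $\partial S$ because any minimal path from it into $S$ must cross $\partial S$ at its penultimate vertex. The bounded degree hypothesis is used exactly once, to turn this metric containment into a cardinality bound.
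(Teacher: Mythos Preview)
Your argument is correct and is exactly the natural one: the key containment $S^{+R}\setminus S\subset(\partial S)^{+(R-1)}$ via the penultimate vertex of a shortest path into $S$, followed by the bounded-degree ball estimate. The paper does not supply a proof of this fact (it is left as an exercise), so there is nothing further to compare.
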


\noindent
The proof is left as an exercise. 

\begin{proof}[Proof of Lemma~\ref{lem:QuasiThick}.]
Assume that there exist $C>0$ and $R \geq 0$ such that
$$\left| |\varphi^{-1}(S)| - \kappa |S| \right| \leq C \cdot | \partial S|$$
for every finite subset $S \subset V(Y)$. Given an arbitrary subset $S \subset V(Y)$, clearly $S^{+R}$ is $R$-thick. We have
$$\begin{array}{lcl}\left| |\varphi^{-1}(S)| - \kappa |S| \right| & \leq & \left| | \varphi^{-1}(S)|- |\varphi^{-1}(S^{+R})| \right| + \left| |\varphi^{-1}(S^{+R})| - \kappa |S^{+R}| \right| + \kappa \left| |S^{+R}| - |S| \right| \\ \\ & \leq & \left| \varphi^{-1} \left( S^{+R} \backslash S \right) \right| + C \cdot |\partial S^{+R}| + \kappa \left| S^{+R} \backslash S \right|.\end{array}$$
Notice that, since $\varphi$ is a quasi-isometry, there exists some $N \geq 0$ such that the preimage of a single vertex under $\varphi$ always has size $\leq N$, which implies that $|\varphi^{-1}(S^{+R} \backslash S)| \leq N \cdot |S^{+R} \backslash S|$. It is also clear that $\partial S^{+R} \subset S^{+R+1} \backslash S$, hence
$$\left| |\varphi^{-1}(S)| - \kappa |S| \right|  \leq (N+ \kappa) \cdot \left| S^{+R} \backslash S \right| + C \cdot \left| S^{+R+1} \backslash S \right| \leq C' \cdot |\partial S|,$$
where the constant $C' >0$ is given by Fact~\ref{fact:Boundary}. We conclude that $\varphi$ is quasi-$\kappa$-to-one, as desired. 
\end{proof}

\noindent
We conclude this subsection with the following technical criterion, which we record for future use:

\begin{cor}\label{cor:QuasiCriterion}
Let $X,Y$ be two graphs of bounded degree, $\kappa>0$ a real number, and $\varphi : X \to Y$ a quasi-isometry. If there exist $C >0$ and $K_0 \geq 0$ such that
$$\left| |\varphi(S)^{+K}| - \kappa |S| \right| \leq C \cdot |\partial S|$$
for every $K \geq K_0$ and every finite subset $S \subset V(X)$, then $\varphi$ is quasi-$\kappa^{-1}$-to-one.
\end{cor}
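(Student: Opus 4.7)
The plan is to reduce to checking quasi-$\kappa^{-1}$-to-oneness only on thick subsets via Lemma~\ref{lem:QuasiThick}, and then exploit the hypothesis by choosing a well-chosen thickening of $T$ inside the image of $\varphi$. Fix a thick subset $T \subset V(Y)$ (say $R$-thick for some $R$ to be tuned later), set $S := \varphi^{-1}(T)$, and observe that $|S| = |\varphi^{-1}(T)|$, so the quantity $||\varphi^{-1}(T)| - \kappa^{-1}|T||$ is $\kappa^{-1} \cdot |\kappa |S| - |T||$. It therefore suffices to control $|\kappa|S| - |T||$ by a multiple of $|\partial T|$.

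First, I will sandwich $|T|$ between $|\varphi(S)|$ and $|\varphi(S)^{+K}|$ for a suitably large $K \geq K_0$. Since $\varphi$ is a quasi-isometry, there exist constants $A,B$ so that every vertex of $Y$ lies within $B$ of $\varphi(X)$. If $R \geq B$ and $y \in T$, then $y$ sits in a ball $B(y_0,R) \subset T$; choosing $x \in X$ with $d(\varphi(x),y_0) \leq B \leq R$ places $\varphi(x) \in T$ (so $x \in S$) while $d(\varphi(x),y) \leq R+B$. Hence $T \subset \varphi(S)^{+K}$ for $K:=\max(K_0,R+B)$. Conversely $\varphi(S) \subset T$ gives $\varphi(S)^{+K} \subset T^{+K}$, so
\[
0 \leq |\varphi(S)^{+K}| - |T| \leq |T^{+K} \setminus T| \leq C_1 \cdot |\partial T|,
\]
where the last inequality is Fact~\ref{fact:Boundary}. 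Combining this with the hypothesis $\bigl||\varphi(S)^{+K}| - \kappa|S|\bigr| \leq C \cdot |\partial S|$ yields
\[
\bigl| \kappa|S| - |T| \bigr| \leq C \cdot |\partial S| + C_1 \cdot |\partial T|.
\]

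The remaining task — and really the only nonroutine step — is to bound $|\partial S|$ by a multiple of $|\partial T|$. If $x \in \partial S$, then $\varphi(x) \notin T$ but some neighbour $x'$ of $x$ lies in $S$, so $\varphi(x') \in T$; since $\varphi$ is $(A,B)$-Lipschitz, $d(\varphi(x),T) \leq A+B$. Thus $\varphi(\partial S) \subset T^{+A+B} \setminus T$, which has size $\leq C_2 \cdot |\partial T|$ by Fact~\ref{fact:Boundary}. Because $\varphi$ is a quasi-isometry on a bounded-degree graph, fibres of $\varphi$ have size bounded by some uniform $N$, giving $|\partial S| \leq N \cdot C_2 \cdot |\partial T|$. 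Plugging this back yields
\[
\bigl| |\varphi^{-1}(T)| - \kappa^{-1}|T| \bigr| \leq \kappa^{-1}\bigl(CNC_2 + C_1\bigr) \cdot |\partial T|
\]
for every $R$-thick finite $T \subset V(Y)$, and Lemma~\ref{lem:QuasiThick} finishes the job. The main obstacle, as anticipated, is the fibre-size bound needed to convert boundary estimates on $S$ into boundary estimates on $T$; this is the only place where the bounded-degree and quasi-isometry hypotheses on $\varphi$ are used in an essential way beyond the initial geometric sandwich.
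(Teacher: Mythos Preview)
Your proof is correct and follows essentially the same route as the paper: set $S=\varphi^{-1}(T)$ for an $R$-thick $T$, sandwich $T \subset \varphi(S)^{+K} \subset T^{+K}$ to compare $|T|$ with $|\varphi(S)^{+K}|$, bound $|\partial S|$ by a multiple of $|\partial T|$ via the fibre-size bound and Fact~\ref{fact:Boundary}, and conclude through Lemma~\ref{lem:QuasiThick}. The only cosmetic differences are your choice $K=\max(K_0,R+B)$ versus the paper's $K\geq\max(K_0,2R)$, and your slightly more explicit tracking of constants.
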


\begin{proof}
Fix a constant $R \geq 0$ such that every ball of radius $R$ in $Y$ intersects the image of $\varphi$, and a constant $K \geq K_0, 2R$. For every $R$-thick subset $T \subset V(Y)$, we have
\begin{equation}\label{eq:ForQuasiKappa}
\left| \varphi( \varphi^{-1}(T))^{+K} - \kappa | \varphi^{-1}(T)| \right| \leq C \cdot | \partial \varphi^{-1}(T)|.
\end{equation}
We will deduce from this inequality that $\varphi$ is quasi-$\kappa^{-1}$-to-one. For this, we need two preliminary observations.

\begin{claim}\label{claim:ForQuasiKappaOne}
There exists a constant $C_1>0$ such that $|\partial \varphi^{-1}(T)| \leq C_1 \cdot |\partial T|$.
\end{claim}

\noindent
A vertex belongs to $\partial \varphi^{-1}(T)$ if precisely when it is not sent in $T$ by $\varphi$ but it is adjacent to some vertex that is sent in $T$. Consequently, such a vertex must be sent at distance $\leq A$ from $T$, for some constant $A >0$ depending only on the parameters of $\varphi$, but  not in $T$. In other words, $\partial \varphi^{-1}(T) \subset \varphi^{-1}(T^{+A}\backslash T)$. Since $\varphi$ is a quasi-isometry and that $X$ has bounded degree, there exists a constant $B \geq 0$ such that the pre-image of a vertex un $\varphi$ always has size $\leq B$. Hence
$$|\partial \varphi^{-1}(T)| \leq |\varphi^{-1}(T^{+A} \backslash T)| \leq B \cdot | T^{+A}\backslash A|.$$
The desired conclusion the follows from Fact~\ref{fact:Boundary}.

\begin{claim}\label{claim:ForQuasiKappaTwo}
There exists $C_2 \geq 0$ such that $|T| \leq |\varphi(\varphi^{-1}(T))^{+K}| \leq |T| + C_2 \cdot |\partial T|$.
\end{claim}

\noindent
First, notice that $\varphi(\varphi^{-1}(T))^{+K}$ contains $T$, which justifies our first inequality. Indeed, since $T$ is $R$-thick, we can write $T=\bigcup_{t \in T_0} B(t,R)$ for some $T_0 \subset T$. Fix a $t \in T_0$. Because we chose $R$ sufficiently large, $B(t,R)$ intersects $\mathrm{Im}(\varphi)$. It follows that $B(t,R)$ is contained in $\varphi(\varphi^{-1}(T))^{+K} = (T \cap \mathrm{Im}(\varphi))^{+K}$ as $K \geq 2R$. Hence $T \subset \varphi(\varphi^{-1}(T))^{+K}$, as desired.

\medskip \noindent
Next, because $\varphi (\varphi^{-1}(T)) \subset T \subset \varphi(\varphi^{-1}(T))^{+K} \subset T^K$, we have
$$\left| |\varphi(\varphi^{-1}(T))^{+K}| - |T| \right| \leq \left| \varphi(\varphi^{-1}(T)) \backslash T \right| \leq \left| T^{+K} \backslash T \right| \leq C_2 \cdot |\partial T|$$
for some constant $C_2>0$, as justified by Fact~\ref{fact:Boundary}. This concludes the proof of Claim~\ref{claim:ForQuasiKappaTwo}.

\medskip \noindent
Thanks to Claims~\ref{claim:ForQuasiKappaOne} and~\ref{claim:ForQuasiKappaTwo}, we deduce from Equations~\ref{eq:ForQuasiKappa} that
$$\begin{array}{lcl}\left| \kappa |\varphi^{-1}(T)| -  |T| \right| & \leq &  \left|\varphi(\varphi^{-1}(T))^{+K}-  \kappa |\varphi^{-1}(T)| \right| + \left| \varphi(\varphi^{-1}(T))^{+K} - |T| \right| \\ \\ & \leq & C \cdot |\partial \varphi^{-1}(T)| + C_2 \cdot |\partial T| \leq (C_1+C_2) |\partial T|. \end{array}$$
We conclude from Lemma~\ref{lem:QuasiThick} that $\varphi$ is quasi-$\kappa^{-1}$-to-one, as desired. 
\end{proof}

\subsection{Lamplighters over amenable graphs}

\noindent
So far, we know that quasi-isometries between the lamplighter graphs we are interested in are aptolic (Theorem~\ref{thm:AptolicQI}). Now, we want to exploit the aptolic structure of our quasi-isometries in order to find restrictions on quasi-isometric lamplighter graphs. Our main result in this direction is the following statement:

\begin{prop}\label{prop:AptolicQILamp}
Let $X,Y$ be two graphs of bounded degree and $n,m \geq 2$ two integers. If there exists an aptolic quasi-isometry $(\alpha,\beta) : \mathcal{L}_n(X) \to \mathcal{L}_m(Y)$, then $n$ and $m$ have the same prime divisors. Moreover, if $X$ and $Y$ are amenable, then $n$ and $m$ are powers of a common number, say $n=q^a$ and $m=q^b$, and $\beta$ is quasi-$\frac{b}{a}$-to-one. 
\end{prop}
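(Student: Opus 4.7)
The plan is to upgrade the naive cardinality inequality $n^{|S|}\leq m^{|\beta(S)^{+K}|}$ (which would follow from any bijection of colourings compatible with the quasi-isometry) into a genuine \emph{divisibility} $n^{|S|} \mid m^{|\beta(S)^{+K}|}$, and then to read off the prime-divisor constraint and, in the amenable case, the scaling factor of $\beta$ from this divisibility.

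First I would establish a locality property for $\alpha$: applying Lemma~\ref{lem:DistWreath} together with the $(A,B)$-quasi-isometry bound for $(\alpha,\beta)$ to the adjacent vertices $(c,q)$ and $(c+k\mathbf{1}_q,q)$ shows that there is a constant $K$, depending only on $A$ and $B$, for which $\alpha(c) \triangle \alpha(c') \subset B_Y(\beta(q),K)$ whenever $c,c'$ differ only at $q$. Iterating flip-by-flip, for every finite $S \subset V(X)$ and every $c \in c_0 + \mathbb{Z}_n^S$, one obtains $\alpha(c) \triangle \alpha(c_0) \subset \beta(S)^{+K}$, so $\alpha$ sends each coset of the subgroup $\mathbb{Z}_n^S \leq \mathbb{Z}_n^{(X)}$ into a single coset of $\mathbb{Z}_m^{\beta(S)^{+K}} \leq \mathbb{Z}_m^{(Y)}$. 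Because $\alpha$ is a bijection, the images of these cosets partition $\mathbb{Z}_m^{(Y)}$ into pieces of size $n^{|S|}$, and each coset of $\mathbb{Z}_m^{\beta(S)^{+K}}$ (of size $m^{|\beta(S)^{+K}|}$) is a disjoint union of such pieces. Hence $n^{|S|} \mid m^{|\beta(S)^{+K}|}$; specialising to $S=\{q\}$ gives $n \mid m^{|B_Y(\beta(q),K)|}$, so every prime of $n$ divides $m$, and running the symmetric argument on the aptolic quasi-inverse $(\alpha^{-1},\bar\beta)$ yields the reverse, proving the first assertion.

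Now assume $X$ and $Y$ are amenable and write $n = \prod_i p_i^{a_i}$, $m = \prod_i p_i^{b_i}$ with common prime support. The divisibility $n^{|S|} \mid m^{|\beta(S)^{+K}|}$ translates prime-by-prime into the sharp lower bound $|\beta(S)^{+K}|/|S| \geq a_i/b_i$. Applying the symmetric divisibility to $T = \beta(S)^{+K}$, and using that $\bar\beta \circ \beta$ lies at finite distance from the identity (so $\bar\beta(\beta(S)^{+K})^{+K'} \subset S^{+K''}$ for constants $K', K''$), yields the complementary upper bound $|\beta(S)^{+K}| \leq (a_i/b_i)\,|S^{+K''}|$. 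Evaluating along a F\o lner sequence $(S_n)$ in $X$ and invoking Fact~\ref{fact:Boundary}, one has $|S_n^{+K''}|/|S_n| \to 1$, so the two bounds squeeze $|\beta(S_n)^{+K}|/|S_n|$ to a limit that must equal $a_i/b_i$ for every $i$. This forces the ratios $a_i/b_i$ to coincide; writing the common value as $a/b$ in lowest terms yields $n = q^a$ and $m = q^b$ with $q = \prod_i p_i^{a_i/a}$. The same sandwich now reads $\bigl||\beta(S)^{+K}|-(a/b)|S|\bigr| \leq C|\partial S|$ for some constant $C$ and every finite $S$, so Corollary~\ref{cor:QuasiCriterion} with $\kappa = a/b$ concludes that $\beta$ is quasi-$(b/a)$-to-one.

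The main obstacles I anticipate are bookkeeping: making the constant $K$ in the locality step uniform (it depends only on $A$, $B$, and the degree of $Y$, and must propagate correctly through a vertex-by-vertex interpolation between two colourings), and arranging that the quasi-inverse of $(\alpha,\beta)$ is again aptolic in the precise form $(\alpha^{-1},\bar\beta)$, which follows from the identification carried out at the end of the proof of Theorem~\ref{thm:AptolicQI}.
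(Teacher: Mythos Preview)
Your proposal is correct and follows essentially the same route as the paper: the locality step is precisely Lemma~\ref{lem:InclusionAlpha}, your partition argument for the divisibility $n^{|S|}\mid m^{|\beta(S)^{+K}|}$ is a rephrasing of Claim~\ref{claim:Arithmetic} (the paper pulls back a single $\mathbb{Z}_m^{\beta(S)^{+K}}$-coset and shows it is $\mathbb{Z}_n^S$-invariant, which is the same content), and the amenable conclusion via the F{\o}lner squeeze and Corollary~\ref{cor:QuasiCriterion} matches the paper's argument exactly. The bookkeeping concerns you flag are handled just as you anticipate, with the aptolic quasi-inverse $(\alpha^{-1},\bar\beta)$ supplied by the proof of Theorem~\ref{thm:AptolicQI}.
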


\noindent
The proof of our proposition will be based on the following observation:

\begin{lemma}\label{lem:InclusionAlpha}
Let $X,Y$ be two graphs, $n,m \geq 2$ two integers, and $(\alpha,\beta) : \mathcal{L}_n(X) \to \mathcal{L}_m(Y)$  an aptolic quasi-isometry. There exists a constant $K \geq 0$ such that
$$\alpha \left( c + \mathbb{Z}_n^{S} \right) \subset \alpha(c) + \mathbb{Z}_m^{\beta(S)^{+K}}$$
for all $c \in \mathbb{Z}_n^{(X)}$ and $S \subset V(X)$ finite.
\end{lemma}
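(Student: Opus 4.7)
The plan is to reduce to the case of a single-lamp modification and then iterate. Fix constants $A \geq 1$ and $B \geq 0$ such that the aptolic quasi-isometry $\varphi=(\alpha,\beta)$ is an $(A,B)$-quasi-isometry, and set
$$K := \lceil (A+B)/2 \rceil.$$

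\textbf{Single-vertex step.} Fix an arbitrary vertex $x \in V(X)$, a colouring $c \in \mathbb{Z}_n^{(X)}$, and a colouring $e_x \in \mathbb{Z}_n^{(X)}$ supported at $\{x\}$. The vertices $(c,x)$ and $(c+e_x,x)$ are adjacent in $\mathcal{L}_n(X)$, since they differ only by the lamp at $x$ where the arrow sits. Applying $\varphi$, we obtain
$$d_{\mathcal{L}_m(Y)}\bigl((\alpha(c),\beta(x)),(\alpha(c+e_x),\beta(x))\bigr) \leq A+B.$$
By Lemma~\ref{lem:DistWreath}, this distance equals
$$\mathrm{TS}\bigl(\beta(x),\alpha(c)\triangle \alpha(c+e_x),\beta(x)\bigr)+|\alpha(c)\triangle \alpha(c+e_x)|.$$
Since both terms are non-negative and their sum is at most $A+B$, every vertex of $\alpha(c)\triangle \alpha(c+e_x)$ lies at distance $\leq K$ from $\beta(x)$. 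In other words, $\alpha(c+e_x)-\alpha(c) \in \mathbb{Z}_m^{B(\beta(x),K)}$.

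\textbf{Iteration step.} Given $c \in \mathbb{Z}_n^{(X)}$ and $d \in \mathbb{Z}_n^{S}$, enumerate the support of $d$ as $\{x_1,\dots,x_r\} \subset S$ and decompose $d = e_{x_1} + \cdots + e_{x_r}$ where each $e_{x_i}$ is supported at $\{x_i\}$. Setting $c_0 := c$ and $c_i := c + e_{x_1}+\cdots+e_{x_i}$ for $1 \leq i \leq r$, we telescope:
$$\alpha(c+d)-\alpha(c) = \sum_{i=1}^{r} \bigl(\alpha(c_i)-\alpha(c_{i-1})\bigr).$$
The single-vertex step applied to $c_{i-1}$ and $e_{x_i}$ shows that each summand is supported in $B(\beta(x_i),K) \subset \beta(S)^{+K}$. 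The sum is therefore supported in $\beta(S)^{+K}$, which gives $\alpha(c+d) \in \alpha(c) + \mathbb{Z}_m^{\beta(S)^{+K}}$, as desired.

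The only subtlety to verify is that the constant $K$ in the single-vertex step is uniform in $x$ and $c$; but this is immediate because the quasi-isometry constants $A,B$ of $\varphi$ are themselves uniform, so no genuine obstacle arises.
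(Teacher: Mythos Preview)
Your proof is correct and follows essentially the same approach as the paper: establish the single-vertex case using the quasi-isometry bound together with the distance formula in $\mathcal{L}_m(Y)$, then extend to arbitrary finite $S$ by telescoping/induction over the vertices of the support. Your use of Lemma~\ref{lem:DistWreath} even yields a slightly sharper constant $K=\lceil (A+B)/2\rceil$ than the paper's more informal choice, but the argument is otherwise the same.
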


\begin{proof}
We start by verifying the statement when $S$ is reduced to a single vertex, the general case following by induction over the cardinality of $S$. So let $v \in V(X)$ be a vertex and $c \in \mathbb{Z}_n^{(X)}$ a colouring. Notice that 
$$\{ (f,v) \in \mathcal{L}_n(X) \mid f \in c+ \mathbb{Z}_n^{\{v\}} \}$$
is contained in the ball centred at $(c,v)$ of radius $1$, so it has to be sent by $(\alpha,\beta)$ in some ball centred at $(\alpha(c), \beta(v))$ of radius $K$, where $K$ only depends on the parameters of our quasi-isometry. Notice that, if $(f,p)$ belongs to such a ball, then $f$ cannot differ from $\alpha(c)$ outside the ball $\{\beta(v)\}^{+K}$. The desired inclusion
$$\alpha \left( c + \mathbb{Z}_n^{\{v\}} \right) \subset \alpha(c) + \mathbb{Z}_m^{\{\beta(v)\}^{+K}}$$
follows. Then, if $S \subset V(X)$ is an arbitrary finite subset, which we can assume to have cardinality $\geq 2$, we fix a vertex $s \in S$. We have
$$\begin{array}{lcl} \alpha \left(  c+ \mathbb{Z}_n^S \right) & = & \alpha \left( c+ \mathbb{Z}_n^{\{s\}} + \mathbb{Z}_n^{S \backslash\{s\}} \right) \subset \alpha \left( c+ \mathbb{Z}_n^{\{s\}} \right) + \mathbb{Z}_m^{\beta(S \backslash \{s\})^{+K}} \\ \\ & \subset & \alpha (c) +  \mathbb{Z}_n^{\{\beta(s)\}^{+K}}  + \mathbb{Z}_m^{\beta(S \backslash \{s\})^{+K}} = \alpha(c) + \mathbb{Z}_n^{\beta(S)^{+K}}\end{array}$$
where the first inclusion is given by induction and second induction by the case already treated above. 
\end{proof}

\begin{proof}[Proof of Proposition~\ref{prop:AptolicQILamp}.]
Fix a quasi-inverse $\bar{\beta} : Y \to X$ of $\beta$ and let $K_1 \geq 0$ (resp.\ $K_2 \geq 0$) denote the constant given by Lemma~\ref{lem:InclusionAlpha} for the aptolic quasi-isometry $(\alpha,\beta)$ (resp.\ $(\alpha^{-1}, \bar{\beta})$). 

\begin{claim}\label{claim:Arithmetic}
For all finite $S \subset V(X)$ and $K \geq K_1$, $n^{|S|}$ divides $m^{|\beta(S)^{+K}|}$. Similarly, $m^{|T|}$ divides $n^{|\bar{\beta}(T)^{+K_2}|}$ for all finite $T \subset V(Y)$ and $K \geq K_2$.
\end{claim}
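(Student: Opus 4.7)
The plan is to exploit the inclusion provided by Lemma~\ref{lem:InclusionAlpha} together with the fact that, since $(\alpha,\beta)$ is an aptolic \emph{quasi-isometry} (hence bijective on the nose after adjustment by a quasi-inverse), the map $\alpha:\mathbb{Z}_n^{(X)}\to\mathbb{Z}_m^{(Y)}$ is itself a bijection.

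Fix a finite $S\subset V(X)$ and some $K\ge K_1$. I would view both sides of $\alpha$ as being partitioned by cosets: the source $\mathbb{Z}_n^{(X)}$ by cosets of the subgroup $\mathbb{Z}_n^S$ (each of cardinality $n^{|S|}$), and the target $\mathbb{Z}_m^{(Y)}$ by cosets of $\mathbb{Z}_m^{\beta(S)^{+K}}$ (each of cardinality $m^{|\beta(S)^{+K}|}$). Lemma~\ref{lem:InclusionAlpha} says precisely that $\alpha$ sends each $\mathbb{Z}_n^S$-coset into a single $\mathbb{Z}_m^{\beta(S)^{+K}}$-coset. Equivalently, the preimage under $\alpha$ of any single $\mathbb{Z}_m^{\beta(S)^{+K}}$-coset $D$ is saturated with respect to the equivalence relation ``$c\sim c'$ iff $c-c'\in\mathbb{Z}_n^S$'', and therefore is a (possibly empty) disjoint union of $\mathbb{Z}_n^S$-cosets.

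Since $\alpha$ is a bijection, we have $|\alpha^{-1}(D)|=|D|=m^{|\beta(S)^{+K}|}$. Writing $\alpha^{-1}(D)$ as a disjoint union of $r$ cosets of $\mathbb{Z}_n^S$, each of size $n^{|S|}$, gives
\[
m^{|\beta(S)^{+K}|} \;=\; r\cdot n^{|S|},
\]
so $n^{|S|}$ divides $m^{|\beta(S)^{+K}|}$, as desired. The second divisibility statement follows by the exact same argument applied to the aptolic quasi-isometry $(\alpha^{-1},\bar\beta):\mathcal{L}_m(Y)\to\mathcal{L}_n(X)$, whose associated $\alpha^{-1}$ is again a bijection and for which Lemma~\ref{lem:InclusionAlpha} provides the analogous coset inclusion with constant $K_2$.

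The argument is essentially a counting/partition argument, and there is no real obstacle: the only subtlety worth emphasising is verifying that, in the setup of Theorem~\ref{thm:AptolicQI}, the bijection $\alpha$ that accompanies an aptolic \emph{quasi-isometry} really is a genuine bijection of colourings (not merely an injection), which was already established in the proof of Theorem~\ref{thm:AptolicQI} using a quasi-inverse.
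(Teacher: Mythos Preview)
Your argument is correct and is essentially the same as the paper's. The paper picks the specific coset $D=\alpha(0)+\mathbb{Z}_m^{\beta(S)^{+K}}$, sets $E=\alpha^{-1}(D)$, uses Lemma~\ref{lem:InclusionAlpha} to show $E+\mathbb{Z}_n^S=E$, and then counts; your version with an arbitrary coset $D$ and the saturation language is a clean rephrasing of the same idea.
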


\noindent
The two assertions being symmetric, we only prove the first one. So let $S \subset V(X)$ be a finite subset and $K \geq K_1$. Set
$$E:= \alpha^{-1} \left( \alpha(0) + \mathbb{Z}_m^{\beta(S)^{+K} }\right).$$
It follows from Lemma~\ref{lem:InclusionAlpha} that
$$\alpha \left( E + \mathbb{Z}_n^S \right) \subset \alpha(E) + \mathbb{Z}_m^{\beta(S)^{K}} = \alpha(0) + \mathbb{Z}_m^{\beta(S)^{+K}} + \mathbb{Z}_m^{\beta(S)^{K}} = \alpha(E).$$
hence $E = E + \mathbb{Z}_n^S$. In other words, $E$ is $\mathbb{Z}_n^S$-invariant. By decomposing $E$ as a disjoint unions of orbits under $\mathbb{Z}_n^S$, we deduce that $|\mathbb{Z}_n^S|=n^{|S|}$ divides $|E|= |\mathbb{Z}_m^{\beta(S)^{+K}}| = m^{|\beta(S)^{+K_1}|}$, concluding the proof of Claim~\ref{claim:Arithmetic}.

\medskip \noindent
Notice that Claim~\ref{claim:Arithmetic} immediately implies the first assertion in our corollary, namely $n$ and $m$ have the same prime divisors. From now on, assume that $X$ and $Y$ are amenable. 

\medskip \noindent
Given an arbitrary finite subset $S \subset V(X)$ and a $K \geq K_1,K_2$ sufficiently large compared to the parameters of $\beta$, we know from Claim~\ref{claim:Arithmetic} that 
$$n^{|S|} \text{ divides } m^{|\beta(S)^{+K}|}, \text{ which divides } n^{|\bar{\beta}(\beta(S)^{+K})^{+K}|},$$
or equivalently
$$|S| \cdot \mathrm{val}_p(n) \leq |\beta(S)^{+K}| \cdot \mathrm{val}_p(m) \leq |\bar{\beta}( \beta(S)^{+K})^{+K}| \cdot \mathrm{val}_p(n)$$
for every prime divisor $p$ of $n$ and $m$, where $\mathrm{val}_p(\cdot)$ denotes the $p$-adic value of an integer (i.e.\ the largest $k$ for which $p^k$ divides the integer under consideration). Notice that, because we chose $K$ sufficiently large compared to the parameters of $\beta$, we have:

\begin{claim}
There exists a constant $R \geq 0$ that does not depend on $S$ such that
$$S \subset \bar{\beta}(\beta(S)^{+K})^{+K} \subset S^{+R}.$$
\end{claim}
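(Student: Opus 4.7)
The plan is to prove each inclusion directly using the standard quasi-isometry bookkeeping for $\beta$ and $\bar{\beta}$. Fix constants $A \geq 1$ and $B \geq 0$ such that both $\beta$ and $\bar{\beta}$ are $(A,B)$-quasi-isometries and such that both $\bar{\beta} \circ \beta$ and $\beta \circ \bar{\beta}$ lie at distance $\leq B$ from the respective identity maps (see Exercise~\ref{exo:QuasiInverse}). Recall that ``$K$ sufficiently large compared to the parameters of $\beta$'' in particular allows us to assume $K \geq B$.

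For the first inclusion $S \subset \bar{\beta}(\beta(S)^{+K})^{+K}$, I would argue pointwise: for any $s \in S$, the image $\bar{\beta}(\beta(s))$ lies in $\bar{\beta}(\beta(S)^{+K})$, and $d(s, \bar{\beta}(\beta(s))) \leq B \leq K$, hence $s \in \bar{\beta}(\beta(s))^{+K}$.

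For the second inclusion, pick an arbitrary $x \in \bar{\beta}(\beta(S)^{+K})^{+K}$. Unpacking the definition produces $s \in S$ and $y \in V(Y)$ with $d(y, \beta(s)) \leq K$ and $d(x, \bar{\beta}(y)) \leq K$. The $(A,B)$-quasi-isometry property of $\bar{\beta}$ gives $d(\bar{\beta}(y), \bar{\beta}(\beta(s))) \leq AK + B$, and $d(\bar{\beta}(\beta(s)), s) \leq B$. Combining by the triangle inequality yields $d(x, s) \leq K + AK + 2B$, so taking $R := K + AK + 2B$ (which depends only on $\beta$ and $K$, not on $S$) finishes the second inclusion.

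There is no real obstacle here: the claim is a routine triangle-inequality bookkeeping exercise whose whole content is that the ``thickening by $K$ then $\bar{\beta}$ then $K$'' operation coarsely inverts $\beta$ on subsets. The only thing worth double-checking is that $K$ has indeed been taken large enough so that $B \leq K$, which is precisely the hypothesis ``$K$ sufficiently large compared to the parameters of $\beta$'' already invoked in the surrounding text.
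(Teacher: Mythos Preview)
Your proof is correct. The paper itself does not prove this claim, explicitly calling it ``straightforward'' and leaving it as an exercise; your triangle-inequality bookkeeping is exactly the routine verification the paper has in mind.
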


\noindent
The claim is straightforward, and its proof is left to the interested reader as an exercise. As consequence, it follows from Fact~\ref{fact:Boundary} that there exists a constant $C>0$ that does not depend on $S$ such that
$$|\bar{\beta}( \beta(S)^{+K})^{+K}| \leq |S| + C \cdot |\partial S|.$$
Therefore, our previous inequality gives
$$|S| \cdot \mathrm{val}_p(n) \leq |\beta(S)^{+K}| \cdot \mathrm{val}_p(m) \leq (|S| + C \cdot |\partial S|) \cdot \mathrm{val}_p(n),$$
which we can rewrite as
\begin{equation}\label{equation:Valuation}
\frac{\mathrm{val}_p(n)}{\mathrm{val}_p(m)}  \leq \frac{|\beta(S)^{+K}|}{|S|} \leq \left( 1 + C \cdot \frac{|\partial S|}{|S|} \right) \cdot \frac{\mathrm{val}_p(n)}{\mathrm{val}_p(m)}.
\end{equation}
Now, taking for $S$ a F{\o}lner sequence $(F_n)_{n \geq 0}$ in $X$, we deduce from these inequalities that the sequence $(|\beta(F_n)^{+K_1}|/|F_n|)_{n \geq 0}$ converges to $\mathrm{val}_p(n)/ \mathrm{val}_p(m)$. As a consequence, the limit $\mathrm{val}_p(n)/ \mathrm{val}_p(m)$ does not depend on $p$, which happens precisely when $n$ and $m$ are powers of a common number, say $n=q^a$ and $m=q^b$. 

\medskip \noindent
The fact that $\beta$ is quasi-$\frac{b}{a}$-to-one follows immediately from Equation~\ref{equation:Valuation} and Corollary~\ref{cor:QuasiCriterion}. 
\end{proof}

\noindent
Proposition~\ref{prop:AptolicQILamp} provides necessary conditions for two lamplighter graphs to be quasi-isometric. We conclude this section by proving that, in the amenable case, these conditions are also sufficient. More precisely:

\begin{prop}\label{prop:AptoQI}
Let $X,Y$ be two amenable graphs and $q^a, q^b$ two powers of a common number. If there exists a quasi-$\frac{b}{a}$-to-one quasi-isometry $X \to Y$, then there exists an aptolic quasi-isometry $\mathcal{L}_{q^a}(X) \to \mathcal{L}_{q^b}(Y)$. 
\end{prop}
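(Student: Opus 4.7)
The plan is to construct an aptolic quasi-isometry $\varphi(c,p) = (\alpha(c), \beta(p))$ from a ``block-wise'' recipe, using the structural description of quasi-$\frac{b}{a}$-to-one quasi-isometries supplied by Proposition~\ref{prop:kappa} together with the arithmetic identity $(q^a)^b = q^{ab} = (q^b)^a$.

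Concretely, I would apply Proposition~\ref{prop:kappa} to the hypothesised quasi-$\frac{b}{a}$-to-one quasi-isometry to obtain uniformly bounded partitions $\mathcal{P}_X$ of $V(X)$ into pieces of size $b$, $\mathcal{P}_Y$ of $V(Y)$ into pieces of size $a$, a bijection $\psi : \mathcal{P}_X \to \mathcal{P}_Y$, and a map $\beta : X \to Y$ at bounded distance from the given one (hence still a quasi-isometry) satisfying $\beta(P) \subset \psi(P)$ for every $P \in \mathcal{P}_X$. For each piece, $|\mathbb{Z}_{q^a}^P| = (q^a)^b = q^{ab} = (q^b)^a = |\mathbb{Z}_{q^b}^{\psi(P)}|$, so I would fix a bijection $\alpha_P : \mathbb{Z}_{q^a}^P \to \mathbb{Z}_{q^b}^{\psi(P)}$ sending $0$ to $0$; since pieces come in only finitely many sizes, this can be done uniformly. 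Gluing these data yields a bijection $\alpha : \mathbb{Z}_{q^a}^{(X)} \to \mathbb{Z}_{q^b}^{(Y)}$ determined by $\alpha(c)|_{\psi(P)} = \alpha_P(c|_P)$, which preserves finiteness of support because $\alpha_P(0) = 0$, and the aptolic candidate is $\varphi(c,p) := (\alpha(c), \beta(p))$.

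To show $\varphi$ is a quasi-isometry I would use Lemma~\ref{lem:DistWreath} to split each lamplighter distance into an arrow cost (a travelling salesman length) and a lamp cost (the cardinality of a symmetric difference). Writing $S_X := c_1 \triangle c_2$, $S_Y := \alpha(c_1) \triangle \alpha(c_2)$, and letting $k$ denote the number of pieces of $\mathcal{P}_X$ meeting $S_X$, the piecewise injectivity of each $\alpha_P$ immediately yields the four-way control
\[ |S_X|/b \leq k \leq |S_X|, \qquad k \leq |S_Y| \leq ak. \]
The upper bound $d_Y \leq K_1 d_X + L_1$ follows by pushing a TS-tour in $X$ forward through $\beta$ and making a short detour inside each piece of $\psi(\mathcal{P}^*)$ to visit the $S_Y$-lamps; the detours cost $O(|S_Y|)$ because the $Y$-pieces have uniformly bounded diameter.

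The main obstacle is the lower bound. Pulling a TS-tour of $S_Y$ back through a quasi-inverse of $\beta$ produces a path passing within bounded distance of each piece in $\mathcal{P}^*$, which after short detours visits all of $S_X$; this gives an estimate of the form $\mathrm{TS}_X \leq A' \mathrm{TS}_Y + C_0 k + B'$. The additive term $C_0 k$ would be fatal if there were no compensating cost on the $Y$-side, but here the key trick is that $k \leq |S_Y|$: the error is dominated by the lamp cost already paid in $\mathcal{L}_{q^b}(Y)$. Combined with $|S_X| \leq bk \leq b |S_Y|$, this yields
\[ d_X = \mathrm{TS}_X + |S_X| \leq A' \mathrm{TS}_Y + (C_0 + b) |S_Y| + B' \leq \max(A', C_0 + b) \cdot d_Y + B'. \]
Coarse density of $\varphi$ follows from the bijectivity of $\alpha$ and the coarse density of $\beta$, completing the proof that $\varphi$ is an aptolic quasi-isometry.
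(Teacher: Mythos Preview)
Your construction is identical to the paper's: both apply Proposition~\ref{prop:kappa} to obtain the partitions $\mathcal{P}_X, \mathcal{P}_Y$ and the bijection $\psi$, then define $\alpha$ piece-wise via a fixed bijection $\mathbb{Z}_{q^a}^{b} \to \mathbb{Z}_{q^b}^{a}$ sending $0$ to $0$, and set $\varphi(c,p) = (\alpha(c),\beta(p))$. The difference lies only in the verification. The paper isolates a clean criterion (Lemma~\ref{lem:AptoQI}): an aptolic map $(\alpha,\beta)$ is a quasi-isometry provided $\alpha$ is a bijection, $\beta$ is a quasi-isometry, and the Hausdorff distance between $\beta(\mathrm{supp}(c_1-c_2))$ and $\mathrm{supp}(\alpha(c_1)-\alpha(c_2))$ is uniformly bounded. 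It then checks this Hausdorff condition in two lines using the uniform bound on the pieces. Your direct TS-and-lamp-cost estimate is correct and is essentially what the proof of Lemma~\ref{lem:AptoQI} does internally; your ``key trick'' $k \leq |S_Y|$ to absorb the detour cost into the lamp cost is exactly the mechanism hidden in that lemma. The paper's packaging is slightly cleaner because the Hausdorff criterion is reusable and symmetric (it immediately gives the lower bound by applying the same argument to $(\alpha^{-1},\bar\beta)$), whereas you have to argue the two directions separately.
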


\noindent
Our proof will be based on the following characterisation of aptolic quasi-isometries:

\begin{lemma}\label{lem:AptoQI}
Let $n,m \geq 2$ be two integers, $X,Y$ two unbounded graphs with bounded degree, $\alpha : \mathbb{Z}_n^{(X)} \to \mathbb{Z}_m^{(Y)}$ and $\beta : X \to Y$ two maps. Then
$$q : \left\{ \begin{array}{ccc} \mathcal{L}_n(X) & \to & \mathcal{L}_m(Y) \\ (c,p) & \mapsto & (\alpha(c),\beta(p)) \end{array} \right.$$
is an aptolic quasi-isometry if the following conditions hold:
\begin{itemize}
	\item[(i)] $\alpha$ is a bijection;
	\item[(ii)] $\beta$ is a quasi-isometry;
	\item[(iii)] there exists $Q \geq 0$ such that, for all colourings $c_1,c_2 \in \mathbb{Z}_n^{(X)}$, the Hausdorff distance between $\beta(\mathrm{supp}(c_1-c_2))$ and $\mathrm{supp}(\alpha(c_1)-\alpha(c_2))$ is at most $Q$. 
\end{itemize}
\end{lemma}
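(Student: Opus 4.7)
The plan is to verify directly the two-sided affine bound defining a quasi-isometric embedding and the net condition, using the explicit distance formula of Lemma~\ref{lem:DistWreath}. Since $K_n$ has diameter $1$, the formula simplifies in $\mathcal{L}_n(X)$ to
\[
d((c_1,p_1),(c_2,p_2)) = \mathrm{TS}_X(p_1, S, p_2) + |S|, \qquad S := \mathrm{supp}(c_1 - c_2),
\]
and analogously on the $Y$-side with $T := \mathrm{supp}(\alpha(c_1) - \alpha(c_2))$. It thus suffices to compare the sizes $|S|, |T|$ and the travelling-salesman values $\mathrm{TS}_X(p_1, S, p_2), \mathrm{TS}_Y(\beta(p_1), T, \beta(p_2))$ up to affine distortion.

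The size comparison is immediate from (iii) and bounded degree: since $T \subset \beta(S)^{+Q}$, one gets $|T| \leq \mathrm{deg}(Y)^Q \cdot |\beta(S)| \leq \mathrm{deg}(Y)^Q \cdot |S|$, and the same reasoning applied with a quasi-inverse $\bar{\beta}$ of $\beta$ (together with the symmetric Hausdorff bound $\beta(S) \subset T^{+Q}$) yields $|S| \leq C \cdot |T|$ for a constant $C$ depending only on $\beta$ and $Q$.

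For the TS terms, I will push forward an optimal $X$-tour $p_1, s_1, \dots, s_n, p_2$ by $\beta$ and connect successive images by $Y$-geodesics, producing a $\beta(p_1) \to \beta(p_2)$ path of length at most $A \cdot \mathrm{TS}_X(p_1,S,p_2) + B(|S|+1)$ visiting $\beta(S)$, where $(A,B)$ are the parameters of $\beta$. Inserting a length-$\leq 2Q$ detour at the nearest $\beta(s_j)$ for each $t \in T$ (using (iii)) turns this into a $Y$-tour covering $T$, giving $\mathrm{TS}_Y(\beta(p_1),T,\beta(p_2)) \leq A \cdot \mathrm{TS}_X(p_1,S,p_2) + O(|S|+1)$. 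The reverse bound is obtained symmetrically via $\bar{\beta}$. Combined with the size comparison and the distance formula, this yields $d_Y \leq A' d_X + B'$ and the reverse inequality.

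To finish, I must check that the image of $q$ is a net. Condition (i) ensures every colouring of $\mathcal{L}_m(Y)$ equals some $\alpha(c)$, and (ii) ensures $\beta(V(X))$ is a net in $V(Y)$. Given any $(c', q') \in \mathcal{L}_m(Y)$, writing $c' = \alpha(c)$ and choosing $p \in V(X)$ with $d_Y(\beta(p), q') \leq B$, the distance formula gives $d(q(c,p), (c', q')) = d_Y(\beta(p), q') \leq B$ since the colourings agree. Hence $q$ is an aptolic quasi-isometry. The main obstacle is purely bookkeeping: tracking how the constants $A, B, Q$, the parameters of $\bar{\beta}$, and the degree of $Y$ propagate through the two chains of TS and size inequalities, so that the final affine constants are finite and uniform; conceptually, the argument is forced by the distance formula.
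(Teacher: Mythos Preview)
Your proof is correct and follows the same strategy as the paper's: push an optimal travelling-salesman tour forward via $\beta$, insert bounded-length detours using (iii) to cover the target support, and handle the reverse inequality via a quasi-inverse $\bar{\beta}$. The paper organises the last step slightly differently---verifying that $\bar{q} = (\alpha^{-1}, \bar{\beta})$ again satisfies (i)--(iii) and then deducing the lower bound for $q$ from the upper bound for $\bar{q}$ via $\bar{q} \circ q \approx \mathrm{id}$---but this is only a cosmetic difference.
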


\begin{proof}
Let $C,K \geq 0$ be such that $\beta$ is a $(C,K)$-quasi-isometry and such that there exists a $(C,K)$-quasi-isometry $\bar{\beta}$ with $\beta \circ \bar{\beta}$, $\bar{\beta} \circ \beta$ within $K$ from identities. Set
$$\bar{q} : \left\{ \begin{array}{ccc} \mathcal{L}_m(Y) & \to & \mathcal{L}_n(X) \\ (c,p) & \mapsto & \left( \alpha^{-1}(c) , \bar{\beta}(p) \right) \end{array} \right.,$$
and observe that 
$$q \circ \bar{q} : (c,p) \mapsto (c, \beta \circ \bar{\beta}(p)) \text{ and } \bar{q} \circ q : (c,p) \mapsto (c, \bar{\beta}\circ \beta(p))$$
are at distance $\leq K$ from identities. 

\medskip \noindent
Since $Y$ had bounded degree, then so does $\mathcal{L}_m(Y)$. This implies that there exists $L\geq 1$ such that there exists a path of length at most $L$ starting and ending at any vertex $x$, and visiting all vertices of $B(x,Q)$.

\medskip \noindent
Let $(c_1,p_1),(c_2,p_2) \in \mathcal{L}_n(X)$ be two points. Fix a path $\zeta$ of minimal length that starts from $p_1$, visits all the points in $\mathrm{supp}(c_1-c_2)$, and ends at $p_2$. Let $\xi \subset \mathcal{L}_m(Y)$ denote a concatenation of geodesics connecting any two consecutive points along $\beta(\zeta)$. Notice that $\xi$ has length $\leq (C+K) \mathrm{length}(\zeta)$ according to $(ii)$. By construction, $\xi$ starts from $\beta(p_1)$, visits all the points in $\beta(\mathrm{supp}(c_1-c_2))$, and ends at $\beta(p_2)$. Note that there exists a path $\eta$ of length $\leq L \cdot \mathrm{length}(\xi)$ that starts from $\beta(p_1)$, visits all the points in the $Q$-neighbourhood of $\beta(\mathrm{supp}(c_1-c_2))$, and ends at $\beta(p_2)$. Because $\mathrm{supp}(\alpha(c_1)-\alpha(c_2))$ lies in the $Q$-neighbourhood of $\beta(\mathrm{supp}(c_1-c_2))$ according to $(iii)$, it follows that
$$\begin{array}{lcl} d(q(c_1,p_1),q(c_2,p_2)) & = & d((\alpha(c_1),\beta(p_1)), (\alpha(c_2),\beta(p_2))) \leq \mathrm{length}(\eta) \leq L \cdot \mathrm{length}(\xi) \\ \\ & \leq & L(C+K) \cdot \mathrm{length}(\zeta) = L(C+K) \cdot d((c_1,p_1),(c_2,p_2)). \end{array}$$
Observe that $\bar{q}$ also satisfies $(i)-(iii)$. For $(i)$ and $(ii)$, it is clear. For $(iii)$, we know that, for all colourings $c_1,c_2 \in \mathbb{Z}_m^{(Y)}$, the Hausdorff distance between $\beta(\mathrm{supp}(\alpha^{-1}(c_1)-\alpha^{-1}(c_2)))$ and $\mathrm{supp}(c_1-c_2)$ is at most $Q$. So the Hausdorff distance between $\bar{\beta} \circ \beta(\mathrm{supp}(\alpha^{-1}(c_1)-\alpha^{-1}(c_2)))$ and $\bar{\beta}(\mathrm{supp}(c_1-c_2))$ is at most $(C+K)Q$. But the Hausdorff distance between $\bar{\beta} \circ \beta(\mathrm{supp}(\alpha^{-1}(c_1)-\alpha^{-1}(c_2)))$ and $\mathrm{supp}(\alpha^{-1}(c_1)-\alpha^{-1}(c_2))$ is at most $K$, so we conclude that the Hausdorff distance between  $\bar{\beta}(\mathrm{supp}(c_1-c_2))$ and $\mathrm{supp}(\alpha^{-1}(c_1)-\alpha^{-1}(c_2))$ is at most $(C+K)Q+K$, as desired. Therefore, by reproducing the previous argument, we show that 
$$d(\bar{q}(c_1,p_1),\bar{q}(c_2,p_2)) \leq M (C+K) \cdot d((c_1,p_1),(c_2,p_2))$$
for all $(c_1,p_1),(c_2,p_2) \in \mathcal{L}_m(Y)$, where $M$ denotes the length of one path that visits all the points in a ball of radius $(C+K)Q+K$ in $\mathcal{L}_n(X)$ and that both starts and ends at the centre. We deduce from the previous two centred inequalities that
$$\begin{array}{lcl} d(q(c_1,p_1),q(c_2,p_2)) & \geq & \displaystyle \frac{1}{M(C+K)} d( \bar{q} \circ q (c_1,p_1), \bar{q}\circ q(c_2,p_2)) \\ \\ & \geq & \displaystyle \frac{1}{M(C+K)} d((c_1,p_1),(c_2,p_2)) - \frac{2K}{M(C+K)} \end{array}$$
for all $(c_1,p_1),(c_2,p_2) \in \mathcal{L}_n(X)$; and that
$$\begin{array}{lcl} d(\bar{q}(c_1,p_1), \bar{q}(c_2,p_2)) & \geq & \displaystyle \frac{1}{L(C+K)} d(q \circ \bar{q} (c_1,p_1), q\circ \bar{q}(c_2,p_2)) \\ \\ & \geq & \displaystyle \frac{1}{L(C+K)} d((c_1,p_1),(c_2,p_2)) - \frac{2K}{L(C+K)}. \end{array}$$
Thus, $q$ is a quasi-isometry with $\bar{q}$ as a quasi-inverse, proving that $q$ is an aptolic quasi-isometry.
\end{proof}

\begin{proof}[Proof of Proposition~\ref{prop:AptoQI}.]
According to Proposition~\ref{prop:kappa}, there exist a partition $\mathcal{P}$ (resp. $\mathcal{Q}$) of $X$ (resp. of $Y$) with uniformly bounded pieces of size $b$ (resp. $a$), a bijection $\psi:\mathcal{P}\to \mathcal{Q}$, and a quasi-isometry $\beta : X \to Y$ satisfying $\beta(P) \subset \psi(P)$ for every $P \in \mathcal{P}$. Fix a bijection $\sigma : \mathbb{Z}_n^{b} \to \mathbb{Z}_m^{a}$ satisfying $\sigma(0)=0$, and define a bijection $\alpha : \mathbb{Z}_n^{(X)} \to \mathbb{Z}_m^{(Y)}$ in such a way that $\alpha$ sends $\mathcal{L}(P)$ to $\mathcal{L}(\psi(P))$ through $\sigma$ for every $P \in \mathcal{P}$. We claim that
$$q : (c,p) \mapsto (\alpha(c), \beta(p)), \ (c,p) \in \mathcal{L}_n(X)$$
is the quasi-isometry we are looking for. Let $(c_1,p_1),(c_2,p_2) \in \mathcal{L}_n(X)$ be two points. Let $P_1, \ldots, P_\ell$ denote the pieces of $\mathcal{P}$ containing points in $\mathrm{supp}(c_1-c_2)$. By construction, $\psi(P_1),\ldots, \psi(P_\ell)$ are the pieces of $\mathcal{Q}$ containing points in $\mathrm{supp}(\alpha(c_1)-\alpha(c_2))$. Because the pieces of $\mathcal{Q}$ are uniformly bounded, the Hausdorff distance between $\mathrm{supp}(\alpha(c_1)-\alpha(c_2))$ and $\psi(P_1) \cup \cdots \cup \psi(P_\ell)$ is finite. We also know by construction that $\beta(\mathrm{supp}(c_1-c_2))$ lies in $\psi(P_1) \cup \cdots \cup \psi(P_\ell)$ and has a point in each $\psi(P_1),\ldots, \psi(P_\ell)$. Once again because the pieces of $\mathcal{Q}$ are uniformly bounded, we deduce that the Hausdorff dimension between $\mathrm{supp}(\alpha(c_1)-\alpha(c_2))$ and $\beta(\mathrm{supp}(c_1-c_2))$ is bounded (by a bound that does not depend on $c_1,c_2$ but only on $\mathcal{P},\mathcal{Q}$). We conclude from Lemma~\ref{prop:AptoQI} that $q$ is an aptolic quasi-isometry, as desired. 
\end{proof}

\subsection{Lamplighters over non-amenable graphs}\label{section:LampNonAmenable}

\noindent
We know from Proposition~\ref{prop:AptolicQILamp} that, if two lamplighter graphs $\mathcal{L}_n(X)$ and $\mathcal{L}_m(Y)$ are quasi-isometric, then $X,Y$ must be quasi-isometric and $n,m$ must have the same prime divisors. In this section, our goal is to show that, for non-amenable graphs, these conditions actually characterise whether two lamplighter graphs are quasi-isometric. More precisely, we want prove that:

\begin{prop}[\cite{MR4794592}]\label{prop:QInonamenable}
Let $X$ be a graph of bounded degree and $n,m \geq 2$ two integers. If $X$ is non-amenable and if $n,m$ have the same prime divisors, then there exists an aptolic quasi-isometry between $\mathcal{L}_n(X)$ and $\mathcal{L}_m(X)$.
\end{prop}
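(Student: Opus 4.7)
The plan is to build the aptolic quasi-isometry directly via Lemma~\ref{lem:AptoQI}, taking $\beta := \mathrm{id}_X$ and constructing the lamp-bijection $\alpha$ as a product of prime-by-prime bijections. The key point is that condition (iii) of Lemma~\ref{lem:AptoQI} only involves supports, and supports behave well under products, so a Chinese-remainder decomposition reduces everything to the single-prime case.

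More precisely, let $p_1, \ldots, p_k$ denote the common prime divisors of $n$ and $m$, and write $n = \prod_{i=1}^k p_i^{a_i}$ and $m = \prod_{i=1}^k p_i^{b_i}$. The Chinese remainder theorem yields group isomorphisms $\mathbb{Z}_n^{(X)} \simeq \prod_i \mathbb{Z}_{p_i^{a_i}}^{(X)}$ and $\mathbb{Z}_m^{(X)} \simeq \prod_i \mathbb{Z}_{p_i^{b_i}}^{(X)}$, and under these identifications the support of a tuple of colourings is the union of the supports of its components. So it suffices to construct, for each $i$, a bijection $\alpha_i : \mathbb{Z}_{p_i^{a_i}}^{(X)} \to \mathbb{Z}_{p_i^{b_i}}^{(X)}$ such that, for all $c_1, c_2$ in the domain, the Hausdorff distance between $\mathrm{supp}(c_1-c_2)$ and $\mathrm{supp}(\alpha_i(c_1)-\alpha_i(c_2))$ is uniformly bounded. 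Taking $\alpha := \prod_i \alpha_i$ then satisfies the analogous bound, since Hausdorff distance passes through finite unions (if $d_H(A_j, B_j) \leq Q$ for each $j$, then $d_H(\bigcup_j A_j, \bigcup_j B_j) \leq Q$), and Lemma~\ref{lem:AptoQI} applies.

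For the single-prime step, the construction mimics the proof of Proposition~\ref{prop:AptoQI}. The only input needed is a quasi-$(b_i/a_i)$-to-one quasi-isometry $X \to X$, and this is precisely where non-amenability enters: if $X$ has Cheeger constant $\varepsilon > 0$, then for any $\kappa > 0$ and any finite $S \subset V(X)$,
$$\bigl| |\mathrm{id}_X^{-1}(S)| - \kappa|S| \bigr| = |1-\kappa| \cdot |S| \leq \frac{|1-\kappa|}{\varepsilon} \cdot |\partial S|,$$
so $\mathrm{id}_X$ is quasi-$\kappa$-to-one. Applying Proposition~\ref{prop:kappa} to $\mathrm{id}_X$ with $\kappa = b_i/a_i$ provides partitions $\mathcal{P}_i, \mathcal{Q}_i$ of $X$ with uniformly bounded pieces of sizes $b_i, a_i$ respectively, and a bijection $\psi_i : \mathcal{P}_i \to \mathcal{Q}_i$ with $\psi_i(P)$ staying within uniformly bounded distance of $P$. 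For each $P \in \mathcal{P}_i$, the cardinality equality $p_i^{a_i b_i} = |\mathbb{Z}_{p_i^{a_i}}^P| = |\mathbb{Z}_{p_i^{b_i}}^{\psi_i(P)}|$ lets me pick an arbitrary bijection $\sigma_{P,i}$ between these two finite sets sending $0$ to $0$, and these assemble into $\alpha_i$. A vertex where $\alpha_i(c_1)$ and $\alpha_i(c_2)$ disagree must lie in some $\psi_i(P)$ on which $c_1|_P \neq c_2|_P$, and conversely each such $\psi_i(P)$ must contain a point of disagreement, which yields the required Hausdorff bound via boundedness of pieces and of the matching distance.

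The main conceptual obstacle is that when $n$ and $m$ are not powers of a common integer, $\log m / \log n$ is irrational and no single partition of $X$ can simultaneously organise $\mathbb{Z}_n^{(X)}$ and $\mathbb{Z}_m^{(X)}$ into pieces of matching total cardinality, so the direct strategy of Proposition~\ref{prop:AptoQI} fails. The prime-by-prime decomposition is precisely what circumvents this, at the cost of using $k$ different partitions, each tailored to one prime factor. Non-amenability is invoked exactly once, to guarantee that $\mathrm{id}_X$ is simultaneously quasi-$(b_i/a_i)$-to-one for every $i$; this would fail in the amenable setting except in the case where all ratios $b_i/a_i$ are equal, which is precisely the "powers of a common integer" case already treated by Proposition~\ref{prop:AptoQI}.
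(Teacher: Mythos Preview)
Your proof is correct, but it follows a different route from the paper's. The paper reduces to showing $\mathcal{L}_{mp^n}(X)\simeq \mathcal{L}_{mp}(X)$ for each prime $p$ separately, and for this it invokes Whyte's theorem (Theorem~\ref{thm:BijNonQI}) to produce an explicit $n$-to-$1$ map $f:X\to X$ at bounded distance from the identity; the lamp-bijection is then given concretely by gathering the $\mathbb{Z}_p$-values at the $n$ preimages $f^{-1}(x)$ into a single $\mathbb{Z}_p^n$-value at $x$, with a direct verification that this is a quasi-isometry. You instead feed the Cheeger inequality into Proposition~\ref{prop:kappa} to obtain block partitions with matching cardinalities for each prime, assemble the lamp-bijection block-by-block, and glue the primes together via the Chinese remainder theorem before applying Lemma~\ref{lem:AptoQI}. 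Your route is more structural and recycles the machinery of Proposition~\ref{prop:AptoQI} almost verbatim, which is economical; the paper's route is more hands-on and makes the non-amenability input (the $n$-to-$1$ self-map) very explicit. Both are ultimately the same phenomenon --- non-amenability makes $\mathrm{id}_X$ quasi-$\kappa$-to-one for every $\kappa$ --- expressed through two different black boxes (Theorem~\ref{thm:BijNonQI} versus Proposition~\ref{prop:kappa}).
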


\noindent
We emphasize that, in this statement, we do not assume that $X$ is pancylindrical. For instance, $X$ may be a (bushy) tree. 

\medskip \noindent
First, let us illustrate the construction we use in order to prove Proposition \ref{prop:QInonamenable} by explaining why the lamplighter groups $\mathbb{Z}_6 \wr \mathbb{F}_2$ and $\mathbb{Z}_{24} \wr \mathbb{F}_2$ are quasi-isometric, where the free group $\mathbb{F}_2$ will be thought of as the $4$-regular tree. The first trick is to replace $\mathbb{Z}_6 \wr \mathbb{F}_2$ (resp. $\mathbb{Z}_{24} \wr \mathbb{F}_2$) with $(\mathbb{Z}_3 \oplus \mathbb{Z}_2) \wr \mathbb{F}_2$ (resp. $(\mathbb{Z}_{3} \oplus \mathbb{Z}_2^3) \wr \mathbb{F}_2$). Loosely speaking, we split each lamp into two half-lamps. Formally, each colouring $c : \mathbb{F}_2 \to \mathbb{Z}_6$ (resp. $c : \mathbb{F}_2 \to \mathbb{Z}_{24}$) becomes the sum $c_1 \oplus c_2$ of two colourings $c_1 : \mathbb{F}_2 \to \mathbb{Z}_3$ and $c_2 : \mathbb{F}_2 \to \mathbb{Z}_2$ (resp. $c_2 : \mathbb{F}_2 \to \mathbb{Z}_2^3$). The second trick is to notice that, given a point at infinity $\xi \in \partial \mathbb{F}_2$, one can associate a colouring $\bar{c} : \mathbb{F}_2 \to \mathbb{Z}_2^3$ to any colouring $c : \mathbb{F}_2 \to \mathbb{Z}_2$ in the following way: for every point $p \in \mathbb{F}_2$, we define $\bar{c}(p) \in \mathbb{Z}_2^3$ thanks to the three digits in $\mathbb{Z}_2$ provided by the values taken by $c$ at the three points $p_1,p_2,p_3 \in \mathbb{F}_2$ that are separated from $\xi$ by $p$, i.e. $\bar{c}(p):= (c(p_1),c(p_2),c(p_3))$. 
\begin{center}
\begin{tabular}{|c|c|} \hline
\includegraphics[trim={0 0 15cm 0},clip,width=0.45\linewidth]{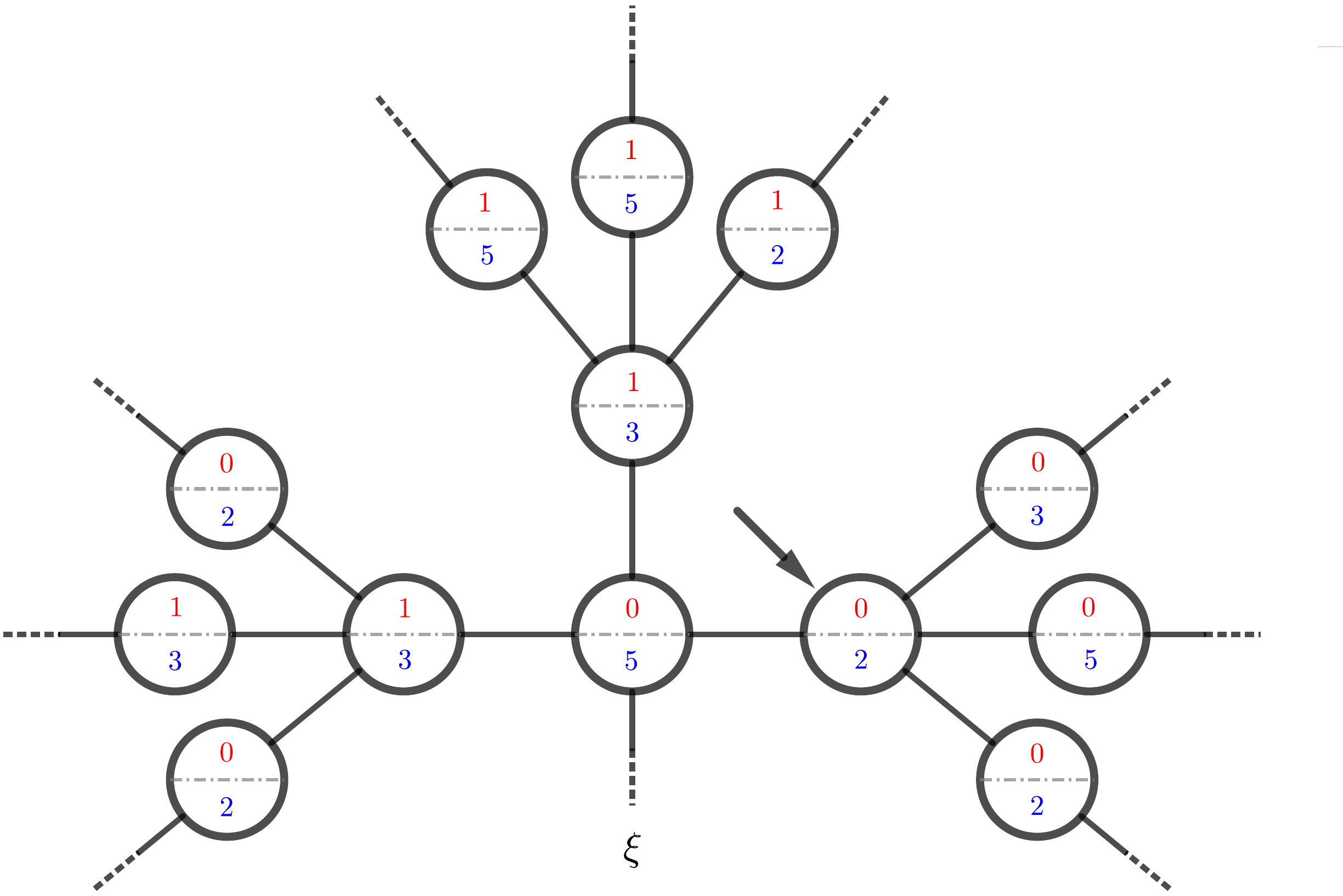} &
\includegraphics[trim={0 0 15cm 0},clip,width=0.45\linewidth]{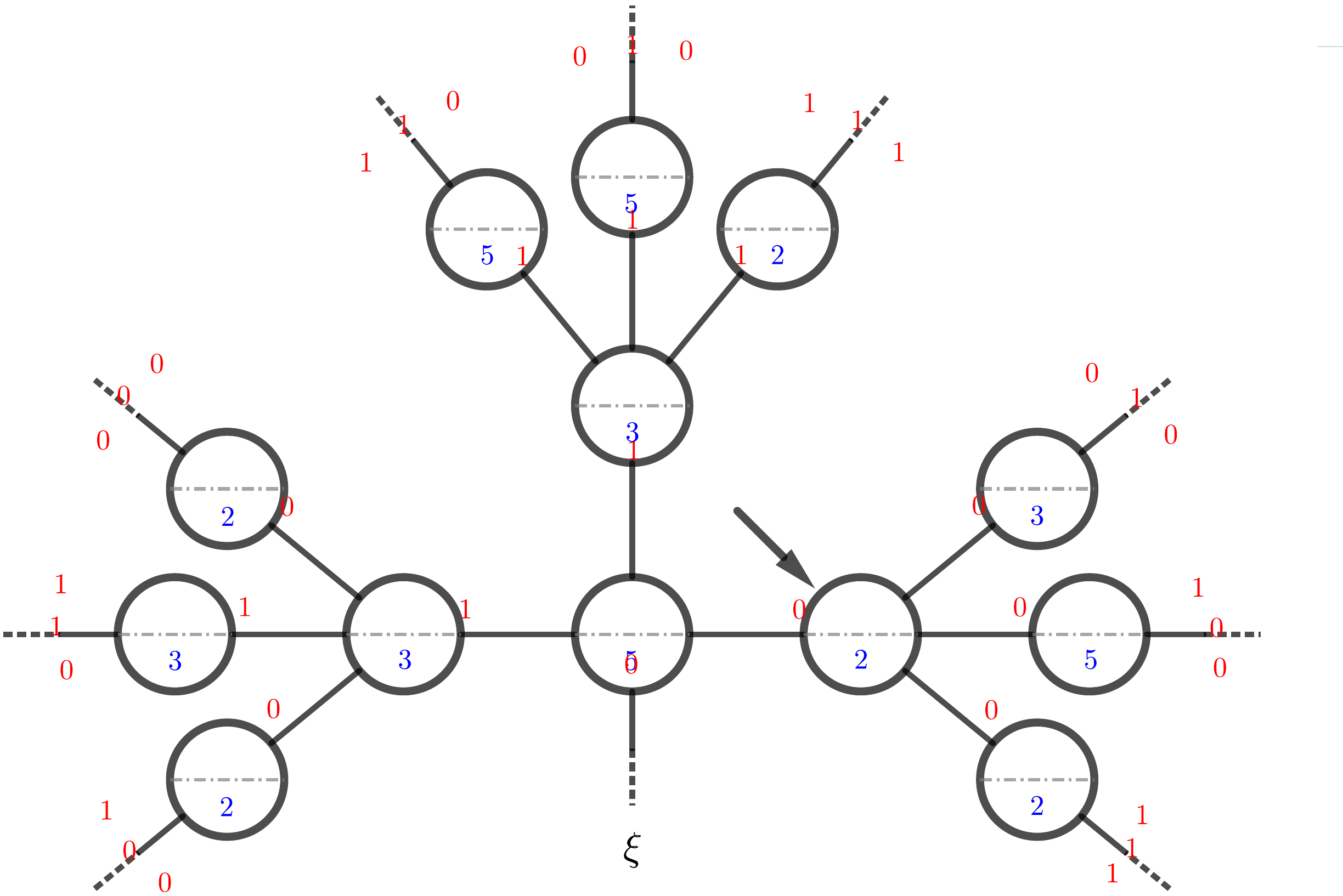} \\ \hline
\includegraphics[trim={0 0 15cm 0},clip,width=0.45\linewidth]{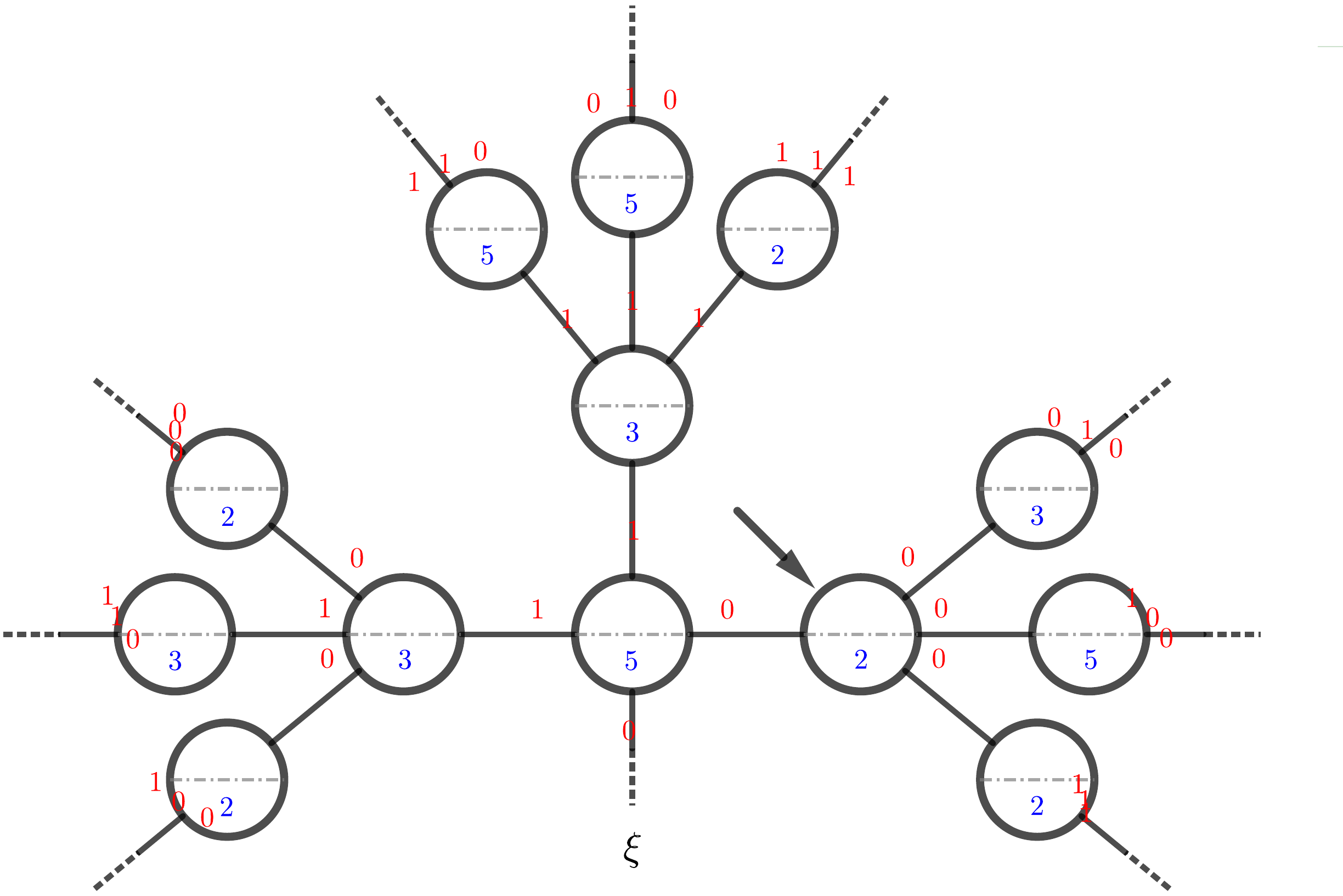} &
\includegraphics[trim={0 0 15cm 0},clip,width=0.45\linewidth]{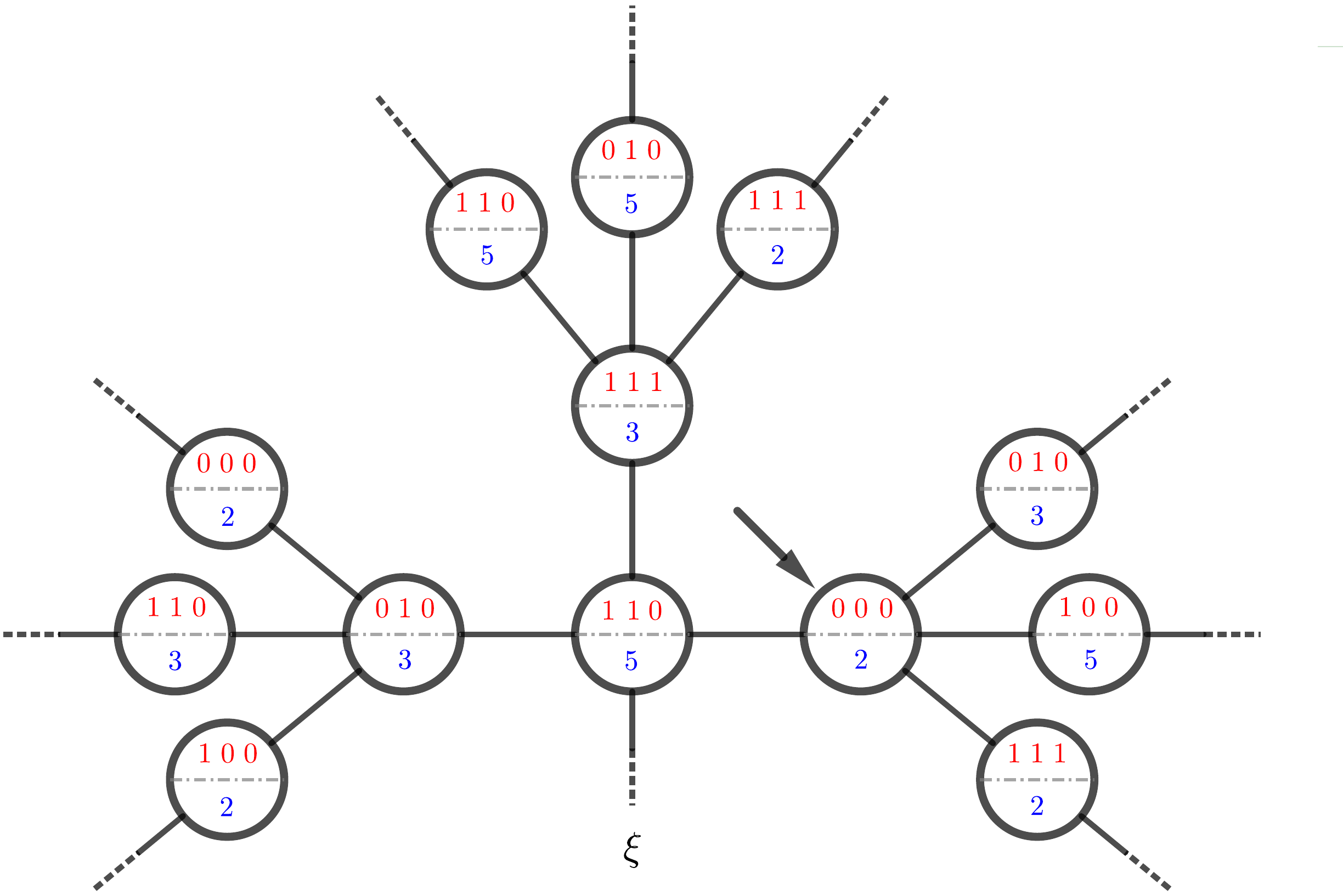} \\ \hline
\end{tabular}
\end{center}
Now, we define a map $(\mathbb{Z}_3 \oplus \mathbb{Z}_6) \wr \mathbb{F}_2 \to (\mathbb{Z}_{3} \oplus \mathbb{Z}_2^3) \wr \mathbb{F}_2$ by modifying the second halves of the lamps thanks to the previous operation and by leaving the arrow and the first halves as they were, i.e. $(c_1 \oplus c_2, p) \mapsto (c_1 \oplus \overline{c_2},p)$. This map turns out to define a quasi-isometry because the modifications on the colourings are local. 

\medskip \noindent
The key point in the previous construction is that there exists a $3$-to-$1$ map $\mathbb{F}_2 \to \mathbb{F}_2$ that lies at finite distance from the identity, namely the map that sends every vertex to its neighbour towards $\xi$. It is worth mentioning that such maps cannot exist for amenable graphs (see Exercise~\ref{exo:AmenableNoNtoOne}). We generalise our construction for arbitrary non-amenable graphs. 

\begin{proof}[Proof of Proposition \ref{prop:QInonamenable}.]
Given two integers $m \geq 1$, $n \geq 2$, and a prime $p$, we prove that $\mathcal{L}_{mp^n}(X)$ and $\mathcal{L}_{mp}(X)$ are quasi-isometric (through an aptolic quasi-isometry). This is sufficient to deduce our proposition.

\medskip \noindent
First of all, observe that there exists an $n$-to-$1$ map $f : X \to X$ at finite distance from the identity, i.e. there exists some $C \geq 0$ such that $d(f(x),x) \leq C$ for every $x \in X$. Indeed, as a consequence of Theorem~\ref{thm:BijNonQI}, the embedding $\iota : X \hookrightarrow X \oplus \mathbb{Z}_n$ is at finite distance, say $C$, from a bijection $g : X \to X \oplus \mathbb{Z}_n$. If $p : X \oplus \mathbb{Z}_n \to X$ denotes the canonical projection, then $f:=p \circ g$ is $n$-to-$1$. Moreover,
$$d(f(x),x)=d(p(g(x)),p(\iota(x))) \leq d(g(x),\iota(x)) \leq C$$
for every $x \in X$. This proves our observation.

\medskip \noindent
From now on, we fix an enumeration of $X$ and we identify $\mathbb{Z}_{mp}$ (resp. $\mathbb{Z}_{mp^n}$) with $\mathbb{Z}_m \oplus \mathbb{Z}_p$ (resp. $\mathbb{Z}_m \oplus \mathbb{Z}_p^n$). Given a finitely supported colouring $c : X \to \mathbb{Z}_{m} \oplus \mathbb{Z}_{p}$, we construct a new finitely supported colouring $\bar{c} : X \to \mathbb{Z}_m \oplus \mathbb{Z}_p^n$ as follows. For convenience, we denote by $\pi_1$ and $\pi_2$ the projections on the first and second coordinates in both $\mathbb{Z}_m \oplus \mathbb{Z}_p$ and $\mathbb{Z}_m \oplus \mathbb{Z}_p^n$. Given an $x \in X$,
\begin{itemize}
	\item set $\pi_1(\bar{c}(x)):= \pi_1(c(x))$;
	\item enumerate $f^{-1}(x)$ as $\{x_1, \ldots, x_k\}$ by following the order of induced by our enumeration of $X$, and set $\pi_2(\bar{c}(x))=( \pi_2(c(x_1)), \ldots, \pi_2(c(x_k)))$.
\end{itemize}
We claim that
$$\Phi : \left\{ \begin{array}{ccc} \mathcal{L}_{mp}(X) & \to \mathcal{L}_{mp^n}(X) \\ (c,h) & \mapsto (\bar{c},h) \end{array} \right.$$
is a quasi-isometry. In the rest of the proof, our lamplighter graphs are endowed with diligent metrics. So fix two finitely supported colourings $c_1,c_2 : X \to \mathbb{Z}_m \oplus \mathbb{Z}_p$ and two points $k_1,k_2 \in X$. 

\medskip \noindent
Notice that, if $\bar{c}_1(x) \neq \bar{c}_2(x)$ for some $x \in X$, then either $\pi_1(c_1(x)) \neq \pi_1(c_2(x))$ (hence $x \in \mathrm{supp}(c_1-c_2)$) or $\pi_2(c_1(x')) \neq \pi_2(c_2(x'))$ (hence $x' \in \mathrm{supp}(c_1-c_2)$) for some $x' \in f^{-1}(x)$. Consequently, 
\begin{equation}\label{First}
\mathrm{supp}(\bar{c}_1-\bar{c}_2) \subset \mathrm{supp}(c_1-c_2) \cup f( \mathrm{supp}(c_1-c_2)).
\end{equation}
Next, if $c_1(x) \neq c_2(x)$ for some $x \in X$, then either $\pi_1(c_1(x)) \neq \pi_1(c_2(x))$, hence $\bar{c}_1(x) \neq \bar{c}_2(x)$; or $\pi_2(c_1(x)) \neq \pi_2(c_2(x))$, hence $\bar{c}_1(f(x)) \neq \bar{c}_2(f(x))$. Consequently,
\begin{equation}\label{Second}
\mathrm{supp}(c_1-c_2) \subset \mathrm{supp}(\bar{c}_1-\bar{c}_2) \cup f^{-1}(\mathrm{supp}(\bar{c}_1-\bar{c}_2) ).
\end{equation}
Now, fix a path $\alpha$ in $X$ that starts from $k_1$, that visits all the points in $\mathrm{supp}(c_1-c_2)$, that ends at $k_2$, and such that the length of $\alpha$ coincides with the distance between $(c_1,k_1)$ and $(c_2,k_2)$ in $\mathcal{L}_{mp}(X)$. For every point of $x \in \mathrm{supp}(c_1-c_2)$, we add to $\alpha$ a loop of length $\leq 2C$ based at $x$ and passing through $f(x)$. Thus, we obtain a new path $\alpha'$ that visits all the points in $\mathrm{supp}(c_1-c_2) \cup f( \mathrm{supp}(c_1-c_2))$ and whose length is at most 
$$\mathrm{length}(\alpha)+2C |\mathrm{supp}(c_1-c_2)| \leq (2C+1) \mathrm{length}(\alpha).$$
It follows from the inclusion (\ref{First}) that
$$d((\bar{c}_1,k_1),(\bar{c}_2,k_2)) \leq \mathrm{length}(\alpha') \leq (2C+1) d((c_1,k_1),(c_2,k_2)).$$
Next, fix a path $\beta$ in $X$ that starts from $k_1$, that visits all the points in $\mathrm{supp}(\bar{c}_1-\bar{c}_2)$, that ends at $k_2$, and such that the length of $\beta$ coincides with the distance between $(\bar{c}_1,k_1)$ and $(\bar{c}_2,k_2)$ in $\mathcal{L}_{mp^n}(X)$. For every point $x \in \mathrm{supp}(\bar{c}_1-\bar{c}_2)$ and every $x' \in f^{-1}(x)$, we add to $\beta$ a loop of length $\leq 2C$ based at $x$ and passing through $x'$. Thus, we obtain a new path $\beta'$ that visits all the points in $\mathrm{supp}(\bar{c}_1-\bar{c}_2) \cup f^{-1}(\mathrm{supp}(\bar{c}_1-\bar{c}_2) )$ and whose length is at most
$$\mathrm{length}(\beta) + 2nC |\mathrm{supp}(\bar{c}_1-\bar{c}_2)| \leq (2nC+1) \mathrm{length}(\beta).$$
It follows from the inclusion (\ref{Second}) that
$$d((c_1,k_1),(c_2,k_2)) \leq \mathrm{length}(\beta') \leq (2nC+1) d((\bar{c}_1,k_1),(\bar{c}_2,k_2)).$$
This concludes the proof that $\Phi$ is a quasi-isometry.
\end{proof}

\noindent
Thus, we have essentially proved all the cases of the quasi-isometric classification provided by Theorem~\ref{thm:AptolicQIstrong}, which we now summarise:

\begin{proof}[Proof of Theorem~\ref{thm:QIclassification}.]
Let $X,Y$ be two unbounded pancylindrical graphs of bounded degree and $n,m \geq 2$ two integers. If the lamplighter graphs $\mathcal{L}_n(X)$ and $\mathcal{L}_m(Y)$ are quasi-isometric, then there exists an aptolic quasi-isometry $\mathcal{L}_n(X) \to \mathcal{L}_m(Y)$ according to Theorem~\ref{thm:AptolicQI}. In particular, $X$ and $Y$ are quasi-isometric, so they are either both amenable or both non-amenable.

\medskip \noindent
If $X$ and $Y$ are both amenable, then we know from Proposition~\ref{prop:AptolicQILamp} that $n$ and $m$ are powers of a common number, say $n=q^a$ and $m=q^b$, and that there exists a quasi-$\frac{b}{a}$-to-one quasi-isometry $X \to Y$. Conversely, if these conditions are satisfied, we know from Proposition~\ref{prop:AptolicQILamp} that the lamplighter graphs $\mathcal{L}_n(X)$ and $\mathcal{L}_m(Y)$ must be quasi-isometric.

\medskip \noindent
If $X$ and $Y$ are both non-amenable, then we know from Proposition~\ref{prop:AptolicQILamp} that $n,m$ have the same prime divisors and that $X,Y$ are quasi-isometric. Conversely, if these conditions are satisfied, then we know from Proposition~\ref{prop:QInonamenable} that the lamplighter graphs $\mathcal{L}_n(X)$ and $\mathcal{L}_m(Y)$ must be quasi-isometric.
\end{proof}

\subsection{Applications}\label{section:ApplicationsTwo}

\noindent 
As a conclusion of our study of lamplighter graphs up to quasi-isometry, let us mention a few notable consequences of our classification. A first remark is that lamplighters behave differently on amenable and amenable graphs. Exploiting this dissymmetry, it is possible to characterise amenability through quasi-isometric lamplighter graphs:

\begin{prop}
An unbounded pancylindrical graph $X$ of bounded degree is amenable if and only if the lamplighter graphs $\mathcal{L}_6(X)$ and $\mathcal{L}_{12}(X)$ are quasi-isometric.
\end{prop}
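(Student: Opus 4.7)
The plan is to read the proposition directly off Theorem~\ref{thm:AptolicQIstrong}, specialised to $(n, m) = (6, 12)$ with $Y = X$. The substantive content is a short arithmetic check on the pair $(6, 12)$, after which both directions of the biconditional follow from the classification without any further geometric work.

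The arithmetic input has two parts. First, $6 = 2 \cdot 3$ and $12 = 2^2 \cdot 3$ share the same set of prime divisors $\{2, 3\}$, so the prime-divisor condition appearing in the non-amenable clause of Theorem~\ref{thm:AptolicQIstrong} is met. Second, there is no integer $q \geq 2$ for which both $6$ and $12$ are powers of $q$: any such $q$ would divide $\gcd(6, 12) = 6$, restricting it to $\{2, 3, 6\}$, and a direct case-check rules each out (the numbers $6$ and $12$ are not powers of $2$ or of $3$, and $12$ is not an integer power of $6$). Consequently, the arithmetic premise of the amenable clause of Theorem~\ref{thm:AptolicQIstrong} is excluded for the pair $(6, 12)$, irrespective of what quasi-to-one behaviour a putative quasi-isometry $X \to X$ might exhibit.

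Plugging these two facts into the classification finishes the argument by case analysis. When $X$ is non-amenable, the non-amenable clause is automatically satisfied — $X$ is trivially quasi-isometric to itself and the prime-divisor condition holds — and Proposition~\ref{prop:QInonamenable} actually constructs an explicit aptolic quasi-isometry $\mathcal{L}_6(X) \to \mathcal{L}_{12}(X)$. When $X$ is amenable, Theorem~\ref{thm:AptolicQI} reduces any hypothetical quasi-isometry $\mathcal{L}_6(X) \to \mathcal{L}_{12}(X)$ to an aptolic one up to finite distance, and Proposition~\ref{prop:AptolicQILamp} then forces $6$ and $12$ to be powers of a common integer, which the arithmetic prohibits; hence no such quasi-isometry exists. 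Combining the two cases yields the characterisation of amenability claimed by the proposition.

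The only real step is the arithmetic verification, and I expect no genuine obstacle beyond the careful bookkeeping of which clause of Theorem~\ref{thm:AptolicQIstrong} is triggered in each case. The choice of the pair $(6, 12)$ is exactly what makes the argument work: these are the smallest integers with identical prime support yet no common integer base, and this dichotomy is precisely what lets one clause of the classification fire in the non-amenable regime while the other is arithmetically blocked in the amenable regime.
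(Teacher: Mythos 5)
Your proof is correct and is essentially the paper's proof: the paper likewise reads the statement off the classification theorem, citing only the observation that $6$ and $12$ have the same prime divisors but are not powers of a common number. Your more detailed unpacking of which clause fires in each case matches the intended argument exactly.
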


\begin{proof}
This follows direct from Theorem~\ref{thm:QIclassification} and from the observation that $6$ and $12$ have the same prime divisors but are not powers of a common number. 
\end{proof}

\noindent
We mentioned earlier (Theorem~\ref{thm:Dymarz}) that some lamplighters over $\mathbb{Z}$ are quasi-isometric but not bijectively quasi-isometric (i.e.\ not biLipschitz equivalent). It is possible to deduce from our work other such examples, as shown by our next proposition. For instance, $\mathbb{Z}_n \wr \mathbb{Z}^2$ and $\mathbb{Z}_m \wr \mathbb{Z}^2$ are quasi-isometric if and only if $n,m$ are powers of a common number but biLipschitz equivalent if and only $n=m$. 

\begin{prop}
Let $F_1,F_2$ be two finite groups and $H$ a pancylindrical amenable group. The wreath products $F_1 \wr H$ and $F_2 \wr H$ are biLipschitz equivalent if and only if $|F_1|=|F_2|$.
\end{prop}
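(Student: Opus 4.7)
The easy direction is already noted in the remarks preceding Proposition~\ref{prop:BiLipWreathGroups}: if $|F_1|=|F_2|=n$, then Proposition~\ref{prop:CaylWreath} identifies the Cayley graphs of $F_1\wr H$ and $F_2\wr H$ (with respect to generating sets of the form $F_i\cup S$) with the common lamplighter graph $\mathcal{L}_n(\mathrm{Cayl}(H,S))$, so the two groups share a Cayley graph and are biLipschitz equivalent.

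For the converse, suppose $\varphi\colon F_1\wr H\to F_2\wr H$ is a biLipschitz equivalence, which is in particular a bijection. Fix a finite generating set $S\subset H$ and identify $\varphi$ with a biLipschitz equivalence $\mathcal{L}_n(X)\to\mathcal{L}_m(X)$, where $n:=|F_1|$, $m:=|F_2|$ and $X:=\mathrm{Cayl}(H,S)$. Since $X$ is pancylindrical and amenable, the plan is first to apply Theorem~\ref{thm:AptolicQI} to extract an aptolic quasi-isometry $\Phi=(\alpha,\beta)$ at bounded distance $D$ from $\varphi$, and then to invoke Proposition~\ref{prop:AptolicQILamp} to obtain a factorisation $n=q^a$, $m=q^b$ together with a quasi-$\tfrac{b}{a}$-to-one quasi-isometry $\beta\colon X\to X$. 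The argument then reduces to showing that bijectivity of $\varphi$ forces the additional constraint $a=b$.

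The key step is to prove that $\beta$ is \emph{also} quasi-$1$-to-one. For each finite $Q\subset V(X)$, I would consider the ``saturated lamp cube''
$$W:=\{(d,q)\in V(\mathcal{L}_m(X)):\mathrm{supp}(d)\subset Q,\ q\in Q\}$$
and its $\alpha$-preimage $\mathcal{C}:=\alpha^{-1}(\{d\in\mathbb{Z}_m^{(X)}:\mathrm{supp}(d)\subset Q\})$. This choice ensures three things. First, $|\mathcal{C}|=m^{|Q|}$ (since $\alpha$ is a bijection) and $\Phi^{-1}(W)=\mathcal{C}\times\beta^{-1}(Q)$, so $|\Phi^{-1}(W)|=m^{|Q|}\cdot|\beta^{-1}(Q)|$ while $|W|=m^{|Q|}\cdot|Q|$. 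Second, modifying a single lamp at a vertex $q\in Q$ preserves the property ``$\mathrm{supp}\subset Q$'', so no lamp-type edge leaves $W$; the boundary $\partial W$ is built solely from arrow-edges and thus satisfies $|\partial W|\leq\mathrm{deg}(X)\cdot m^{|Q|}\cdot|\partial Q|$. Third, bijectivity of $\varphi$ gives $|\varphi^{-1}(W)|=|W|$, while $\Phi$ lying within $D$ of $\varphi$ combined with Fact~\ref{fact:Boundary} applied in the bounded-degree graph $\mathcal{L}_m(X)$ yields $\bigl||\Phi^{-1}(W)|-|\varphi^{-1}(W)|\bigr|=O(|\partial W|)$. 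Dividing by $m^{|Q|}$ delivers $|\beta^{-1}(Q)|=|Q|+O(|\partial Q|)$, i.e.\ $\beta$ is indeed quasi-$1$-to-one.

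Having $\beta$ simultaneously quasi-$1$-to-one and quasi-$\tfrac{b}{a}$-to-one, I would apply both estimates to a F\o{}lner sequence $(F_k)_k$ in the amenable graph $X$ to obtain $|b/a-1|\cdot|F_k|=O(|\partial F_k|)$, whence $b/a\to 1$, i.e.\ $a=b$, and therefore $|F_1|=|F_2|$. The main obstacle is the boundary bound $|\partial W|\leq\mathrm{deg}(X)\cdot m^{|Q|}\cdot|\partial Q|$: the whole scheme collapses if lamp-type edges are allowed to escape $W$, so one must genuinely engineer $W$ as a cube fully saturated under lamp modifications at vertices of $Q$, which is exactly what the definition above achieves.
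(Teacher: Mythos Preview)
Your proof is correct and follows the same overall strategy as the paper: pass to an aptolic $(\alpha,\beta)$ at bounded distance from the biLipschitz equivalence via Theorem~\ref{thm:AptolicQI}, invoke Proposition~\ref{prop:AptolicQILamp} to get $n=q^a$, $m=q^b$ with $\beta$ quasi-$\tfrac{b}{a}$-to-one, and then use amenability to force $a=b$.

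The only difference is where the uniqueness argument is run. The paper works at the lamplighter level: $(\alpha,\beta)$ is quasi-$1$-to-one because it lies at finite distance from a bijection (Exercise~\ref{exo:QuasiToOne}), while Exercise~\ref{exo:AptolicQuasiToOne} upgrades ``$\beta$ quasi-$\tfrac{b}{a}$-to-one'' to ``$(\alpha,\beta)$ quasi-$\tfrac{b}{a}$-to-one''; uniqueness of the scaling constant on the amenable graph $\mathcal{L}_m(X)$ then yields $a=b$. You instead drop to the base level: your saturated-cube $W$ is engineered so that lamp edges never cross $\partial W$, making the colouring factor $m^{|Q|}$ cancel cleanly and exposing $\beta$ itself as quasi-$1$-to-one; uniqueness is then applied on $X$. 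In effect, your cube argument proves the \emph{converse} of Exercise~\ref{exo:AptolicQuasiToOne} in the special case $\kappa=1$. Your route is more self-contained; the paper's is shorter because it simply quotes the exercises.
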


\begin{proof}
If $|F_1|=|F_2|$, then $F_1 \wr H$ and $F_2 \wr H$ are biLipschitz equivalent according to Proposition~\ref{prop:BiLipWreathGroups}. Conversely, assume that there exists a biLipschitz equivalence $\varphi : F_1 \wr H \to F_2 \wr H$. We know from Theorem~\ref{thm:AptolicQI} that $\varphi$ lies at finite distance from an aptolic quasi-isometry $(\alpha,\beta)$, which must be quasi-one-to-one (see Exercise~\ref{exo:QuasiToOne}). Then, we deduce from Proposition~\ref{prop:AptolicQILamp} that $|F_1|=|F_2|$. 
\end{proof}

\noindent
Finally, let us mention the following interesting rigidity phenomenon: in a wreath product $F \wr H$ with $F$ finite and $H$ pancylindrical amenable, any two isomorphic finite-index subgroups must have the same index. More generally:

\begin{prop}
Let $F$ be a non-trivial finite group and $H$ a pancylindrical amenable group. If two finite-index subgroups $K_1,K_2 \leq F \wr H$ are biLipschitz equivalent (e.g.\ isomorphic), then they have the same index in $F \wr H$. 
\end{prop}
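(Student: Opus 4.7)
The plan is to promote the biLipschitz equivalence $\phi : K_1 \to K_2$ to an auxiliary quasi-isometry $\Phi : G \to G$ whose scaling factor encodes $n_1/n_2$ (where $n_i := [G:K_i]$), and then to read this scaling factor via the aptolic classification. Set $G := F \wr H$ and $q := |F|$. Up to biLipschitz equivalence (Proposition~\ref{prop:BiLipWreathGroups}) we may identify $G$ with the lamplighter graph $\mathcal{L}_q(H)$, and $G$ is amenable since $H$ is.

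I would choose coset representatives $h_1, \ldots, h_{n_1}$ of $K_1$ in $G$ of bounded word length and define the section $\sigma : G \to K_1$ by $\sigma(k h_i) := k$. Since $d(\sigma(g), g) \leq \max_i \|h_i\|$, the map $\sigma$ is a quasi-isometry quasi-inverse to the inclusion $\iota_1 : K_1 \hookrightarrow G$, and by construction it is exactly $n_1$-to-one. Setting $\Phi := \iota_2 \circ \phi \circ \sigma$ with $\iota_2 : K_2 \hookrightarrow G$ yields a quasi-isometry $\Phi : G \to G$ satisfying $\Phi^{-1}(S) = \sigma^{-1}(\phi^{-1}(S \cap K_2))$ for every finite $S \subset G$. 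Bijectivity of $\phi$ and $n_1$-to-oneness of $\sigma$ then give $|\Phi^{-1}(S)| = n_1 |S \cap K_2|$. An equidistribution argument over the cosets of $K_2$---based on the right-translations $x \mapsto x g_j$, each of which moves points by at most a fixed constant, combined with Fact~\ref{fact:Boundary}---shows that $\bigl| |S \cap K_2| - |S|/n_2 \bigr| = O(|\partial S|)$. Hence $\Phi$ is quasi-$(n_1/n_2)$-to-one.

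By Theorem~\ref{thm:AptolicQI}, $\Phi$ lies at finite distance from an aptolic quasi-isometry $(\alpha, \beta) : \mathcal{L}_q(H) \to \mathcal{L}_q(H)$, which must itself be quasi-$(n_1/n_2)$-to-one (the property being stable under bounded perturbations, again by Fact~\ref{fact:Boundary}). Testing this on `product' subsets of the form $S = \mathbb{Z}_q^B \times T$, where $T$ is a thick subset of $H$ contained in a ball $B$, and using the bijectivity of $\alpha$, one obtains $|(\alpha, \beta)^{-1}(S)| = q^{|B|} |\beta^{-1}(T)|$, $|S| = q^{|B|} |T|$, and $|\partial S| \leq q^{|B|} |\partial_H T|$; cancelling $q^{|B|}$ shows that $\beta$ itself is quasi-$(n_1/n_2)$-to-one. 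On the other hand, Proposition~\ref{prop:AptolicQILamp} applied in the amenable case with $n = m = q$ forces $q = q^a = q^b$, hence $a = b$, and yields $\beta$ quasi-$(b/a)$-to-one, that is, quasi-$1$-to-one. Comparing the two expressions for the scaling factor forces $n_1/n_2 = 1$, i.e.\ $n_1 = n_2$, as desired.

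The main obstacle I anticipate is the careful bookkeeping of boundary-error terms throughout, notably in establishing the equidistribution estimate $|S \cap K_2| = |S|/n_2 + O(|\partial S|)$ and in transferring the quasi-$\kappa$-to-one property from the aptolic pair $(\alpha, \beta)$ to the map $\beta$ alone---a step for which Lemma~\ref{lem:QuasiThick} together with Fact~\ref{fact:Boundary} should provide the required tools.
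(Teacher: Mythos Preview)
Your proposal is correct and follows essentially the same strategy as the paper: build a self-quasi-isometry of $G=F\wr H$ by composing the inclusion $\iota_2$, the biLipschitz equivalence, and a quasi-inverse of $\iota_1$; observe that this composite is quasi-$(n_1/n_2)$-to-one; then use Theorem~\ref{thm:AptolicQI} and Proposition~\ref{thm:AptolicQILamp} to force the scaling factor to be $1$. The only organisational difference is that the paper packages the auxiliary facts (inclusions of finite-index subgroups are quasi-$[G:K]^{-1}$-to-one, stability under composition and bounded perturbation, uniqueness of $\kappa$ in the amenable case, and the transfer between $\beta$ and $(\alpha,\beta)$) into Exercises~\ref{exo:QuasiToOne} and~\ref{exo:AptolicQuasiToOne}, whereas you spell them out via the explicit section $\sigma$, the coset-equidistribution estimate, and the product-set test.
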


\begin{proof}
Let $n_1$ (resp.\ $n_2$) denote the index of $K_1$ (resp.\ $K_2$) in $F \wr H$. The inclusion map $\iota_1 : K_1 \hookrightarrow F \wr H$ (resp.\ $\iota_2 : K_2 \hookrightarrow F \wr H$) induces a quasi-$n_1^{-1}$-to-one (resp.\ quasi-$n_2^{-1}$-to-one) quasi-isometry (see Exercise~\ref{exo:QuasiToOne}). Given a biLipschitz equivalence $\epsilon : K_1 \to K_2$ and a quasi-inverse $\eta_1$ of $\iota_1$, we get a quasi-$n_2^{-1}n_1$-to-one quasi-isometry $\iota_2 \epsilon \circ \eta_1 : F \wr H \to F \wr H$ (again, see Exercise~\ref{exo:QuasiToOne}). The equality $n_1=n_2$ follows from the following observation:

\begin{claim}
Every quasi-isometry $F \wr H \to F \wr H$ is quasi-one-to-one.
\end{claim}

\noindent
According to Theorem~\ref{thm:AptolicQI}, it suffices to verify the statement for an arbitrary aptolic quasi-isometry $(\alpha,\beta)$. We know from Proposition~\ref{prop:AptolicQILamp} that $\beta$ must be quasi-one-to-one. Then, it follows from Exercise~\ref{exo:AptolicQuasiToOne} that $(\alpha,\beta)$ must be quasi-one-to-one as well, concluding the proof. 
\end{proof}

\subsection{Exercises}

\begin{exo}\label{exo:AmenableQI}
Let $X$ and $Y$ be two graphs of bounded degree. Show that, if $X$ and $Y$ are quasi-isometric, then $X$ is amenable if and only if so is $Y$. 
\end{exo}

\begin{exo}\label{exo:Subsemigroups}
Let $A$ and $B$ be two finitely generated groups. Assume that $A$ is non-trivial and that $B$ contains an infinite-order element, say $t \in B$. For every $a \in A \backslash \{1\}$, prove that $ta$ and $t^2a$ generate a quasi-isometrically embedded free subsemigroup in the wreath product $A \wr B$. 
\end{exo}

\begin{exo}\label{exo:QuasiToOne}
Prove the following assertions:
\begin{enumerate}
	\item A quasi-isometry between two non-amenable graphs of bounded degree is quasi-$\kappa$-to-one for every $\kappa>0$.
	\item If a quasi-isometry between two amenable graphs of bounded degree is both quasi-$\kappa_1$-to-one and quasi-$\kappa_2$-to-one, then $\kappa_1= \kappa_2$.
	\item If a quasi-isometry between two graphs of bounded degree is quasi-$\kappa$-to-one for some $\kappa>0$, then every map at finite distance is quasi-$\kappa$-to-one.
	\item Every quasi-inverse of a quasi-$\kappa$-to-one quasi-isometry is quasi-$\kappa^{-1}$-to-one. 
	\item Let $X,Y,Z$ be three graphs of bounded degree. If two quasi-isometries $\varphi_1 : X \to Y$ and $\varphi_2 : Y \to Z$ are respectively quasi-$\kappa_1$-to-one and quasi-$\kappa_2$-to-one, then $\varphi_2 \circ \varphi_1$ is quasi-$\kappa_1\kappa_2$-to-one.
	\item Given a finitely generated group $G$ and a finite-index subgroup $H \leq G$, the inclusion map $H \hookrightarrow G$ is quasi-$[G:H]^{-1}$-to-one. 
	\item There exist quasi-isometries between finitely generated groups that are not quasi-$\kappa$-to-one for every $\kappa>0$. 
\end{enumerate}
\end{exo}

\begin{exo}\label{exo:AmenableNoNtoOne}
Let $X$ be an amenable graph of bounded degree. 
\begin{enumerate}
	\item Deduce from Exercise~\ref{exo:QuasiToOne} that there is no $n$-to-one quasi-isometry $X \to X$ that lies at finite distance from the identity. 
	\item Prove the same statement just using the definition of amenability. 
\end{enumerate}
\end{exo}

\begin{exo}\label{exo:AptolicQuasiToOne}
Let $X,Y$ be two graphs of bounded degree and $n,m \geq 2$ two integers. Let $(\alpha,\beta)$ be an aptolic quasi-isometry $\mathcal{L}_n(X) \to \mathcal{L}_m(Y)$. Assuming that $\beta$ is quasi-$\kappa$-to-one for some $\kappa>0$, show that $(\alpha,\beta)$ is also quasi-$\kappa$-to-one. 
\end{exo}

\begin{exo}
Let $n \geq 2$ be an integer and $X$ a graph that contains a ball $B(x_0,R)$ whose complement has at least two unbounded connected components, say $A,B \subset X$. Let $\delta$ denote the colouring taking the value $1$ at $x_0$ and $0$ elsewhere. Prove that the map $\mathcal{L}_n(X) \to \mathcal{L}_n(X)$ defined by
$$(c,x) \mapsto \left\{ \begin{array}{cl} (c,x) & \text{if $x \in A$ or $c_{|A} \neq 0$} \\ (c+\delta,x) & \text{if $x \notin A$ and $c_{|A}=0$} \end{array} \right.$$
is a surjective $(1,2R)$-quasi-isometry but does not lie at finite distance from an aptolic quasi-isometry. 
\end{exo}

\addcontentsline{toc}{section}{References}

\bibliographystyle{alpha}
{\footnotesize\bibliography{MiniCourseLamp}}

\Address

%

\end{document}